\documentclass[reqno,english,11pt]{amsart}

\usepackage{tcolorbox}

\usepackage{mathtools}

\usepackage[margin=1in]{geometry}

\synctex=-1
\usepackage[latin9]{inputenc}
\usepackage{amsbsy}
\usepackage{amstext}
\usepackage{amsthm}
\usepackage{amssymb}
\usepackage{bm}
\usepackage{xcolor}
\usepackage{amsfonts,euscript,mathrsfs,color,amsmath,latexsym}
%\makeatletter
%%%%%%%%%%%%%%%%%%%%%%%%%%%%%% Textclass specific LaTeX commands.
\numberwithin{equation}{section}
\numberwithin{figure}{section}
\theoremstyle{plain}
\newtheorem{thm}{\protect\theoremname}[section]
\theoremstyle{remark}
\newtheorem{rem}[thm]{\protect\remarkname}
\theoremstyle{definition}
\newtheorem{defn}{\protect\definitionname}[section]

\newtheorem{lemma}{Lemma}[section]
\newtheorem{proposition}{Proposition}[section]

\newtheorem{cor}{Corollary}[section]

 %   \usepackage[notref,notcite%]{showkeys}
%\mathtoolsset{showonlyrefs}
%\usepackage{fullpage,color}

%\newtheorem{mainthm}{Theorem}
%\newtheorem{theorem}{Theorem}[section]
%\newtheorem*{theorem*}{Theorem}
%\newtheorem{corollary}[theorem]{Corollary}
%\newtheorem*{main}{Main Theorem}
%\newtheorem{lemma}[theorem]{Lemma}
%\newtheorem{proposition}[theorem]{Proposition}
%\newtheorem*{proposition*}{Proposition}
%\newtheorem{conjecture}[theorem]{Conjecture}
%\newtheorem*{conjecture*}{Conjecture}
%\newtheorem*{problem}{Problem}
%\theoremstyle{definition}
%\newtheorem{definition}[theorem]{Definition}
%\newtheorem{assumption}[theorem]{Assumption}
%\newtheorem{condition}[theorem]{Condition}
%\newtheorem{remark}[theorem]{Remark}
%\newtheorem*{notation}{Notation}

%\numberwithin{equation}{section}
\usepackage{hyperref}

\newcommand{\norm}[1]{\left\lVert#1\right\rVert}

\makeatother

\usepackage{babel}
\providecommand{\definitionname}{Definition}
\providecommand{\remarkname}{Remark}
\providecommand{\theoremname}{Theorem}

\begin{document}
	
	\title	[]{A New Structure for the 2D water wave equation: Energy stability and Global well-posedness}

%	\author{Gong Chen}
	\author{Qingtang Su}
\author{Siwei Wang}
\address[Su]{Morningside Center of Mathematics, Academy of Mathematics and Systems Sciences,
Chinese Academy of Sciences (CAS), Beijing, 100080, Peoples Republic of China}
\email{suqingtang@amss.ac.cn}
\address[Wang]{Academy of Mathematics and Systems Sciences,
Chinese Academy of Sciences (CAS), Beijing, 100080, Peoples Republic of China}
\email{wangsiwei22@mails.ucas.ac.cn}
%\date{\today}

\begin{abstract}
We study the two-dimensional gravity water waves with a one-dimensional interface with small initial data. Our main contributions include the development of two novel localization lemmas and a Transition-of-Derivatives method, which enable us to reformulate the water wave system into the following simplified structure:
$$(D_t^2-iA\partial_{\alpha})\theta=i\frac{t}{\alpha}|D_t^2\zeta|^2D_t\theta+R$$
where $R$ behaves well in the energy estimate. As a key consequence, we derive the uniform bound
\begin{equation}\label{abstract:uniform}
    \sup_{t\geq 0}\Big(\norm{D_t\zeta(\cdot,t)}_{H^{s+1/2}}+\norm{\zeta_{\alpha}(\cdot,t)-1}_{H^s}\Big)\leq C\epsilon,
\end{equation}
which enhances existing global uniform energy estimates for 2D water waves by imposing less restrictive constraints on the low-frequency components of the initial data.

\end{abstract}

	\maketitle
	
\setcounter{tocdepth}{1}

\tableofcontents

 \section{Introduction}

 \subsection{Background}
The goal of this paper is to study the structure of the two dimensional water waves and its long time behavior. When surface tension and viscosity are neglected, and the water depth is assumed to be infinite, the motion of irrotational and incompressible two-dimensional water waves is governed by the following equations:
\begin{equation}\label{euler}
\begin{cases}
v_t+v\cdot \nabla v=-\nabla P-(0,1)\ \text{in}\ \Omega(t)\\
div~v=0, ~curl~v=0\ \text{in}\ \Omega(t)\\
P\Big|_{\Sigma(t)}\equiv 0\\
(1,v) \text{ is tangent to the free surface } (t, \Sigma(t)),
\end{cases}
\end{equation}
where $v$ is the fluid velocity, $P$ is the fluid pressure. 

From ${\rm div\,} v=0$ and ${\rm curl\,} v=0$, it follows that $\bar{v}$ is holomorphic in $\Omega(t)$. Consequently, $v$ is fully determined by its boundary value on $\Sigma(t)$. Let the interface $\Sigma(t)$ be parametrized by $z=z(\alpha,t)$, where $\alpha\in\mathbb R$ serves as the Lagrangian coordinate, chosen such that $z_t(\alpha,t)=v(z(\alpha,t),t)$. 

Thus, we have $v_t+v\cdot\nabla v\Big|_{\Sigma(t)}=z_{tt}$. Since $P(z(\alpha,t),t)\equiv 0$, the gradient of the pressure along the interface can be expressed as
$$\nabla P\Big|_{\Sigma(t)}=-iaz_{\alpha},$$
where $a:=-\frac{\partial P}{\partial n}\frac1{|z_\alpha|}$ is a real-valued function. 

Therefore, the motion of $\Sigma(t)$ can be written as
	\begin{equation}
	z_{tt}-iaz_{\alpha}=-i.
	\end{equation}
	Since $\bar{z}_t$ represents the boundary value of $\bar{v}$, 
	the water wave equations (\ref{euler}) can be equivalently expressed as:
	\begin{equation}\label{system_boundaryold}
	\begin{cases}
	z_{tt}-iaz_{\alpha}=-i\\
	\bar{z}_t~\text{is}~\text{holomorphic},
	\end{cases}
	\end{equation}
	where the holomorphicity of $\bar{z}_t$ implies the existence of a bounded holomorphic function $\mathcal{V}(\cdot,t)$ on $\Omega(t)$, satisfying $\mathcal{V}(x+iy,t)\rightarrow 0$ as $y\rightarrow -\infty$, such that $\bar{z}_t(\alpha,t)=\mathcal{V}(z(\alpha,t),t)$.
		%The boundary of a holomorphic function $G_{ST}(x+iy,t)$ in $\Omega_{ST}(t)$

 The Taylor sign condition (often referred to as Rayleigh-Taylor sign conditio) $-\frac{\partial P}{\partial\Vec{n}}>0$ on the pressure is a crucial stability criterion for the water waves problem. When this condition fails, the system is generally unstable; see \cite{beale1993growth, birkhoff1962helmholtz, taylor1950instability,ebin1987equations, su2023transition} for examples. In the irrotational case, Nalimov \cite{Nalimov}, Yosihara \cite{Yosihara} and Craig \cite{Craig} proved local well-posedness for 2d water waves equation for small initial data. In the irrotational case without a bottom, Wu \cite{Wu1997, Wu1999} demonstrated the validity of the Taylor sign condition, a result crucial to establishing the first local-in-time existence theorems for large data in Sobolev spaces. Since then, a lot of interesting local well-posedness results were obtained, see for example \cite{alazard2014cauchy, ambrose2005zero, christodoulou2000motion, coutand2007well, iguchi2001well, lannes2005well, lindblad2005well, ogawa2002free, shatah2006geometry, zhang2008free}, and the references therein. See also \cite{Wu1, Wu2, Wu3, wu2019wellposedness} for water waves with non-smooth interfaces. For the formation of splash singularities, see for example \cite{castro2012finite, castro2013finite, coutand2014finite, coutand2016impossibility}.
For cases with non-trivial vorticity or a bottom, the Taylor sign condition may fail and must be assumed for the initial data.   Regarding the local-in-time well-posedness with regular vorticity, see \cite{iguchi1999free, ogawa2002free, ogawa2003incompressible, christodoulou2000motion, christodoulou2000motion, lindblad2005well, zhang2008free} and \cite{su2020long} for water waves with point vortices. 

To study the long time behavior of the water waves, we need a deeper understanding of its structure. It is known since the work of Dyachenko and Zakharov \cite{dyachenko1994free} that for the weakly nonlinear 2d
 infinite depth water waves, there are no 3-wave interactions and all of the 4-wave interaction coefficients
 vanish on the non-trivial resonant manifold. This means that, in terms of energy, there are no cubic terms in the nonlinearity. Dyachenko and Zakharov's work is formal. A rigorous justification of Dyachenko and Zakharov's observation is given by Berti, Feola, and Pusateri in \cite{berti2023birkhoff}.

In the irrotational case, in \cite{Wu2009}, Wu found an explicit cubic structure for the 2d water waves:
$$(D_t^2-iA\partial_{\alpha})\theta=-2\left[D_t\zeta, \mathcal{H}\frac{1}{\zeta_{\alpha}}+\bar{\mathcal{H}}\frac{1}{\bar{\zeta}_{\alpha}}\right]\partial_{\alpha}D_t\zeta+\frac{1}{ \pi i}\int \Big(\frac{D_t\zeta(\alpha)-D_t\zeta(\beta)}{\zeta(\alpha,t)-\zeta(\beta,t)}\Big)^2\partial_{\beta}(\zeta-\bar{\zeta})\,d\beta$$
(Here $\mathcal{H}$ is defined as the Hilbert transform associated to $\zeta$. We refer the readers to the relevant part in \cite{Wu2009} or Chapter 2; such symbol will appear frequently in the following discussion). This cubic structure confirms the absence of three-wave interactions, and using this, Wu is able to prove the almost global well-posedness of the 2d gravity water waves with small and localized initial data. See \S \ref{intro:subsub wu structure} for more details.
 
In \cite{wu2020quartic}, directly in the physical space, Wu constructed a sequence of energy functionals $E_j(t)$ which are
 explicit in the Riemann mapping variable and involve material derivatives of order $j$ of the solutions for the
 2d water wave equation, so that
 $\frac{dE_j(t)}{dt}$ is quintic or higher order. See also \cite{HunterTataruIfrim1,HunterTataruIfrim2, wang2018global, wang2019global, wang2020global, zheng2022long, ai2022two}. We also mention \cite{ifrim2015two, bieri2017motion, ginsberg2018lifespan, ginsberg2024long} and \cite{su2020long} for some investigations in the rotational case. 

In \cite{Totz2012}, Totz and Wu rigorously justify the NLS approximation of the 2d water waves. They are able to construct water waves of the form
$$\zeta(\alpha,t)=\alpha+\epsilon B(X,T)e^{ik\alpha+i\omega t}+O(\epsilon^2),$$
where $\zeta$ the modified Lagrangian coordinates of the free surface, $B$ solves the 1d cubic NLS, $X=\epsilon(\alpha-\omega' t)$, and $T=\epsilon^2 t$, $k$ and $\omega$ are constants, satisfying $\omega^2=k$. They show that this approximation is valid at least on a time scale of order $O(\epsilon^{-2})$. If we insert this ansatz back into the water wave system, then the water wave system can be reformulated in the following form:
\begin{equation}\label{structure: NLS}
    (D_t^2-iA\partial_{\alpha})\theta=2i k^{3/2}|D_t\zeta|^2 D_t\zeta +O(\epsilon^4):=G_1+O(\epsilon^4).
\end{equation}
It's remarkable that the leading order term $G_1$ in \eqref{structure: NLS} is simple and explicit. Moreover, if we define the natural energy associated to \eqref{structure: NLS}
$$E(t):=\int \frac{1}{A}|D_t\zeta|^2-i\theta_{\alpha}\bar{\theta},$$
then the leading order term $G_1$ disappears in the energy estimates. 

Based on this observation, in this paper, we show that such a nice structure is true in a more general setting. In particular, if the initial data is smooth, small, and localized, then we have 

$$(D_t^2-iA\partial_{\alpha})\theta=i\frac{t}{\alpha}|D_t^2\zeta|^2D_t\theta+R$$
where
\[
(I-\mathcal{H})R=O(\epsilon^3t^{-1-\delta})_{L^2}.
\]
As a key consequence, we derive the uniform bound
\begin{equation}\label{abstract:uniform}
    \sup_{t\geq 0}\Big(\norm{D_t\zeta(\cdot,t)}_{H^{s+1/2}}+\norm{\zeta_{\alpha}(\cdot,t)-1}_{H^s}\Big)\leq C\epsilon,
\end{equation}

%Wu's work in \cite{wu2020quartic} achieved long-time existence results without size restrictions on the initial slope or velocity, allowing for arbitrarily large steepness and magnitudes. 

\subsection{Main theorem}

Denote 
$$\xi_0=z(\alpha,0)-\alpha, \quad v_0=z_t(\alpha,0).$$
Our main theorem is as follows.
\begin{thm}[Main theorem]\label{main}
  Let $s\in \mathbb{N}$ and $s\geq 10$. Let $\epsilon_0>0$ be sufficiently small. For all $0<\epsilon\leq\epsilon_0$, assume
    \begin{equation}
        \norm{v_0}_{H^{s+1/2}}+\norm{\xi_0}_{\dot{H}^{1/2}}+\norm{\partial_{\alpha} \xi_0}_{H^{s}}+\norm{\alpha\partial_{\alpha}v_0}_{H^{s-1/2}}+\norm{\alpha\partial_{\alpha}\partial_{\alpha}\xi_0}_{H^{s-1}}\leq \epsilon
    \end{equation}
\begin{itemize}
    \item [(1)] We can reformulate the water wave equation in the following form
\begin{equation}
    (D_t^2-iA\partial_{\alpha})\theta=i\frac{t}{\alpha}|D_t^2\zeta|^2D_t\theta+R,
\end{equation}
where
\[
(I-\mathcal{H})R=O(\epsilon^3t^{-1-\delta})_{L^2}.
\]
\item [(2)] The water wave system \eqref{system_boundaryold} is globally wellposed and moreover, there exists a constant $C$ depends on $s$ only, such that 
\begin{equation}
    \sup_{t\geq 0}(\norm{z_t}_{H^{s+1/2}}+\norm{\xi}_{\dot{H}^{1/2}}+\norm{z_{\alpha}-1}_{H^s}+\norm{z_{tt}}_{H^s})\leq C\epsilon,
\end{equation}
and for all $t\geq 1$,
\begin{equation}
   \norm{z_{\alpha}(\cdot,t)-1}_{W^{s-2,\infty}}+ \norm{z_{tt}(\cdot,t)}_{W^{s-2,\infty}}+\norm{\partial_{\alpha}z_t}_{W^{s-3,\infty}}\leq C\epsilon t^{-1/2}.
\end{equation}
\end{itemize}

\end{thm}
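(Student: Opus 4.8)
The plan is to establish the two assertions of the theorem simultaneously via a bootstrap (continuity) argument over the time interval $[0,T]$, taking as the central quantity the high-order energy together with a suitable weighted energy controlling the low-frequency behavior of the data. Concretely, I would fix the bootstrap assumptions
\[
\sup_{0\leq t\leq T}\Big(\norm{z_t}_{H^{s+1/2}}+\norm{\xi}_{\dot H^{1/2}}+\norm{z_\alpha-1}_{H^s}+\norm{z_{tt}}_{H^s}\Big)\leq 2C\epsilon,
\]
together with the decay assumptions $\norm{z_\alpha-1}_{W^{s-2,\infty}}+\norm{z_{tt}}_{W^{s-2,\infty}}+\norm{\partial_\alpha z_t}_{W^{s-3,\infty}}\leq 2C\epsilon t^{-1/2}$ for $t\geq 1$, and then show that if $\epsilon_0$ is small enough these can be improved to the bounds with $C$ in place of $2C$. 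The opening step is purely algebraic: use the two localization lemmas and the Transition-of-Derivatives method (which by hypothesis are already proved earlier in the paper) to rewrite the water-wave system \eqref{system_boundaryold} in the stated form
\[
(D_t^2-iA\partial_\alpha)\theta = i\frac{t}{\alpha}|D_t^2\zeta|^2 D_t\theta + R,
\]
and to verify the remainder bound $(I-\mathcal H)R = O(\epsilon^3 t^{-1-\delta})_{L^2}$ under the bootstrap assumptions. This is assertion (1) and it is essentially bookkeeping once the localization lemmas are in hand, though one must be careful that the weighted norms $\norm{\alpha\partial_\alpha v_0}_{H^{s-1/2}}$ and $\norm{\alpha\partial_\alpha^2\xi_0}_{H^{s-1}}$ propagate in time — this is precisely what the localization lemmas are designed to deliver, and it is what allows the $t/\alpha$ factor to be meaningful.

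The heart of assertion (2) is the energy estimate for the reformulated equation. Following the template indicated in the introduction, I would introduce the natural energy
\[
E(t) := \sum_{k\leq s} \int \frac{1}{A}\big|D_t^{k+1}\zeta\big|^2 - i\,\partial_\alpha(D_t^k\theta)\,\overline{D_t^k\theta}\, d\alpha
\]
(together with the half-derivative and weighted pieces needed to close at the stated regularity), observe that $E(t)$ is comparable to the square of the bootstrapped norm because $A\geq 1-O(\epsilon)>0$ by the Taylor sign condition, and compute $\frac{d}{dt}E(t)$. The key structural point — the analogue of what happens for \eqref{structure: NLS} — is that the leading quasilinear term $i\frac{t}{\alpha}|D_t^2\zeta|^2 D_t\theta$ is, up to commutators, \emph{self-adjoint and real} in the energy pairing, so its contribution to $\frac{d}{dt}E$ vanishes to leading order; what survives is controlled by $\|\frac{t}{\alpha}|D_t^2\zeta|^2\|_{L^\infty}$ times $E(t)$, and using $|D_t^2\zeta|=|z_{tt}|\lesssim \epsilon t^{-1/2}$ from the decay bootstrap one gets a factor $t\cdot(\epsilon t^{-1/2})^2 \cdot \|\frac1\alpha(\cdots)\|$ which is integrable in $t$ provided the $1/\alpha$ singularity is absorbed by the localization (the weighted norms ensure $D_t^2\zeta$ vanishes appropriately near $\alpha=0$ in the relevant averaged sense, or more precisely $\frac{t}{\alpha}|D_t^2\zeta|^2$ is bounded in $L^\infty_\alpha$ with integrable-in-$t$ norm). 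The remainder $R$ contributes $\int \frac{1}{A} \overline{D_t^k\zeta}\,(\text{derivatives of }R)$, and here one uses $(I-\mathcal H)R=O(\epsilon^3 t^{-1-\delta})_{L^2}$ together with the fact that $(I+\mathcal H)$ applied to the holomorphic part is essentially zero, so only the $(I-\mathcal H)$ projection of $R$ pairs nontrivially — this gives a contribution $\lesssim \epsilon^3 t^{-1-\delta}\cdot(\epsilon)$, again time-integrable. Summing, $\frac{d}{dt}E(t)\lesssim \epsilon\,(1+t)^{-1-\delta}E(t) + \epsilon^4(1+t)^{-1-\delta}$, whence by Gronwall $E(t)\leq E(0)e^{C\epsilon}+C\epsilon^4 \leq C'\epsilon^2$, which improves the energy bootstrap.

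To close the decay bootstrap I would combine the uniform energy bound just obtained with a dispersive estimate for the linear group $e^{it\sqrt{|\partial_\alpha|}}$ (or equivalently a pointwise estimate derived from the equation $(D_t^2-iA\partial_\alpha)\theta=\text{cubic}+R$ via the standard vector-field / stationary-phase argument, or an ODE-on-the-profile argument as in Ifrim–Tataru): the weighted norm $\|\alpha\partial_\alpha v\|_{H^{s-1/2}}$-type quantities play the role of the scaling vector field $L=t\partial_t + 2\alpha\partial_\alpha$ acting on the solution, so control of them plus the energy gives, by Klainerman–Sobolev on $\mathbb R$, the $t^{-1/2}$ decay of $z_{tt}$, $z_\alpha-1$, $\partial_\alpha z_t$ in $W^{k,\infty}$. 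The main obstacle I anticipate is \emph{not} the algebraic reformulation but the interplay between the $\frac{t}{\alpha}$ weight and the low-frequency control: one must show that the weighted energy (the $\|\alpha\partial_\alpha(\cdots)\|$ pieces) is itself propagated by the dynamics with only $\epsilon t^{-1-\delta}$-type losses, because the singular factor $\frac{t}{\alpha}$ in the main equation, when differentiated by the scaling field, produces terms that are naively \emph{growing} in $t$; taming these requires the Transition-of-Derivatives identities and the precise cancellation structure of the quasilinear term, and verifying that this cancellation survives the commutators with $\alpha\partial_\alpha$ and with $D_t^k$ at the top order is the delicate technical core of the argument. A secondary subtlety is ensuring the initial weighted data, which is only assumed in terms of $\xi_0$ and $v_0$, translates into the corresponding bounds for $\theta$, $D_t\theta$, $D_t^2\zeta$ at $t=0$ — a finite chain of elliptic estimates using the Riemann-mapping formulation and the smallness of $\epsilon$.
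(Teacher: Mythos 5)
Your plan follows the same template as the paper (bootstrap; cubic structure; exact cancellation of the leading quasilinear term in the energy; vector fields plus a dispersive estimate for decay), and the algebraic observations you make about $(I\pm\mathcal H)$ and about the imaginary leading coefficient are the right ones. However, there are two places where the argument as sketched would not close.

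First, the claim that ``what survives is controlled by $\|\frac{t}{\alpha}|D_t^2\zeta|^2\|_{L^\infty}$ times $E(t)$, \ldots which is integrable in $t$ provided the $1/\alpha$ singularity is absorbed'' is not correct as stated. For $|\alpha|\sim t$ (the region where the solution concentrates) one has $\frac{t}{\alpha}|D_t^2\zeta|^2\sim\epsilon^2 t^{-1}$, which is \emph{not} integrable in $t$: Gronwall would then give only $E(t)\lesssim\epsilon^2 t^{C\epsilon^2}$, i.e.\ almost-global but not global uniform bounds, which is exactly the limitation the paper is overcoming. The resolution is not to absorb $1/\alpha$ but to show that the leading term really does contribute \emph{zero} to $Re\,\frac{d}{dt}E$ (being pure imaginary after the localization lemma rewrites the singular integral in the form $2\frac{it^2}{4\alpha^2}fgh$), and that the residue is genuinely $O(\epsilon^4 t^{-1-\delta})_{L^1}$. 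Establishing this residue bound is precisely Proposition \ref{key prop}, and it is where the localization lemmas and the Transition-of-Derivatives identities do the heavy lifting; it is not a corollary of the decay bootstrap plus a self-adjointness observation.

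Second, and more serious, the decay bootstrap cannot be closed by a Klainerman--Sobolev or vector-field estimate alone in the region $t^{1-\mu}\lesssim|\alpha|\lesssim t^{1+\mu}$. The vector-field norms $\|L_0 f\|_{H^{s-2}}$, $\|\Omega_0 f\|_{H^{s-2}}$ are propagated only with a polynomial loss, $\lesssim\epsilon t^{\delta_0}$, so the pointwise bound extracted from \eqref{region:ggt}--\eqref{region:llt} in that region is $|f|\lesssim t^{-1/2+\delta_0}$, strictly worse than the required $t^{-1/2}$. The paper's decay estimate therefore needs the testing-functional / modified-scattering ODE argument of Section 7: one fixes $\nu$, pairs the equation against $u=\chi\big(\frac{\alpha-\nu t}{\sqrt{t}}\big)e^{-it^2/4\alpha}$, shows $\tilde E(t)=\sqrt{t}\,W(\nu t,t)+O(\epsilon t^{-1/20+\delta_0})$ and $\frac{d}{dt}\tilde E=i(\text{real})\tilde E+O(\epsilon^3 t^{-1-\delta})$, and thus obtains the sharp $t^{-1/2}$ decay along rays. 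You mention the ``ODE-on-the-profile argument'' as one of several alternatives, but in the paper's scheme it is not an alternative to the vector-field estimate --- it is the mandatory companion to it, covering exactly the region where vector fields lose $t^{\delta_0}$. Without it the decay bootstrap does not improve and the continuity argument fails.

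A minor remark: your energy uses $D_t^k$ where the paper uses $\partial_\alpha^k$ (applied to $\theta$ and $\sigma=D_t\theta$). This is a legitimate alternative in principle, but the localization lemmas in the paper are phrased for $\partial_\alpha^s$ of the nonlinearity, so switching to $D_t^k$ would force you to rework them or to commute $D_t$ through the equation at each order; the paper's choice is the one that fits cleanly with Lemma \ref{decouple} and \ref{decouple-2}.
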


\begin{rem}
We emphasize that:
   \begin{itemize}
       \item This result implies the nonlinear stability of the equilibrium state.
       \item Compared to \cite{wang2018global} and \cite{ai2022two}, we assume $z_t\in H^{s+1/2}$, whereas \cite{wang2018global} essentially assumes $|\partial_{\alpha}|^{-3/10}z_t\in H^{s'}$ and \cite{ai2022two} assumes $|\partial_{\alpha}|^{-1/4}z_t\in H^{s'}$. Our assumptions on the initial data are less restrictive for low frequencies. 
       \item Notably, we prove an almost conservation law: let $\mathcal{E}_s(t)$ be an appropriately defined $s$-th order energy, with $\mathcal{E}_s(0)\sim \epsilon^2$. We have
       \begin{equation}
           \sup_{t\geq 0}|\mathcal{E}_s(t)-\mathcal{E}_s(0)|\lesssim \epsilon^{4}.
       \end{equation}
       
       \item The assumption that $s\geq 10$ is certainly not optimal. In fact, $s\geq 4$ would suffice. Reducing the value of $s$ is not the primary focus of this paper.

       \end{itemize} 
\end{rem}

\subsection{The difficulty and the main idea} 

\subsubsection{The main difficulty} The water wave equations are nonlocal, quasilinear, and governed by a free boundary, making their structure inherently complex and difficult to fully understand. In \cite{Wu2009}, Wu uncovered that the water wave system possesses a cubic structure, which she leveraged to prove the almost global well-posedness of the 2D water wave equations. Building on this insight, Ionescu and Pusateri established global well-posedness by combining Wu's cubic structure with the technique of modified scattering. Around the same time, Alazard and Delort employed the paradifferential calculus to obtain the delicate $L^2$ to $L^{\infty}$ decay estimates and achieve global well-posedness. To elucidate the underlying ideas, we consider a simplified toy model. Let $f\in C^{\infty}$ be given. Suppose we are given a PDE system:

\begin{equation}\label{toy-model}
    \begin{cases}
        u_{tt}+|\partial_{\alpha}|u=f(|u_t|^2)u_t\\
    \norm{u_t(t=0)}_{H^s}+\norm{|\partial_{\alpha}|^{1/2}u(t=0)}_{H^s}=\epsilon\ll 1.
    \end{cases}
\end{equation}
Suppose $u_t$ has pointwise decay $\norm{u_t}_{\infty}\leq \epsilon (1+t)^{-1/2}$. Define the energy
$$E(t)=\norm{u_t(\cdot,t)}_{L^2}^2+\norm{\Lambda u(\cdot,t)}_{L^2}^2.$$
Then one has
$$\frac{d E(t)}{dt}=2 Re \int O(|u_t|^2)|u_t|^2 \leq C\epsilon^2 (1+t)^{-1}E(t).$$
From which we obtain
\begin{equation}\label{bound-toy}
    E(t)\leq CE(0)t^{C\epsilon^2}.
\end{equation}

This toy model illustrates the idea of Wu\cite{Wu2009}, Ionescu and Pusateri\cite{Ionescu2015}, among others. They reformulated the water wave equations into a structure analogous to \eqref{toy-model}, enabling them to establish that the energy of the solution satisfies a bound of the form \eqref{bound-toy}. However, since their analysis does not fully exploit the intricate structure of the nonlinearity, they did not demonstrate that $\sup_{t\geq 0}E(t)<\infty$. 

\subsubsection{Our idea}
Our main observation is that the nonlinearity of the water wave equation exhibits an additional structure. Specifically, if $f(|u_t|^2)=i|u_t|^2$, then $E'(t)=0$. So $E(t)=E(0)$. For the water wave equations, we can reformulate them into the following form:
\begin{equation}\label{toy-model-new}
    \begin{cases}
        u_{tt}+|\partial_{\alpha}|u=i\frac{t^2}{\alpha^2}|u_t|^2 u_t+O(\epsilon^2 t^{-1-\delta})_{L^{\infty}}u_t\\
    \norm{u_t(t=0)}_{H^s}+\norm{|\partial_{\alpha}|^{1/2}u(t=0)}_{H^s}=\epsilon\ll 1.
    \end{cases}
\end{equation}
Let $E(t)$ be the energy as defined above. Then,
$$\frac{dE(t)}{dt}=2Re\int \bar{u}_t\Big(i\frac{t^2}{\alpha^2}|u_t|^2 u_t+O(\epsilon^2 t^{-1-\delta})_{L^{\infty}}u_t\Big)=2Re\int \bar{u}_t O(t^{-1-\delta})_{L^{\infty}}u_t\lesssim \epsilon^2 t^{-1-\delta} E(t).$$
From this, we conclude that $\sup_{t\geq 0}E(t)\lesssim E(0)$,  thereby proving the nonlinear stability of the equilibrium.

It is noteworthy that the leading term in the nonlinearity is local. This emerges naturally from our "localization lemma," which effectively reduces nonlocal terms to local ones.

The key elements in establishing $\sup_{t\geq 0}E(t)\lesssim E(0)$ are as follows.
\begin{itemize}
\item [(1)] Cubic Nonlinearity: The nonlinearity of the system consists of cubic and higher-order terms.

\item [(2)] Good Structure: The leading-order term in the nonlinearity cancels out in the energy estimates, simplifying the analysis.

\item [(3)] Decay estimates: $\norm{u_t(\cdot,t)}_{L^{\infty}}\lesssim \epsilon t^{-1/2}$, ensuring sufficient decay over time. 
\end{itemize}
If the water wave equations satisfy these three conditions, a similar approach can be applied to establish their nonlinear stability.

In the following, we detail how to verify these three properties for the water wave equations.

\subsubsection{Change of coordinates}\label{intro:subsub wu structure}
In \cite{Wu2009}, Wu introduced the modified Lagrangian coordinates to demonstrate that the water wave equations possess a cubic structure. We briefly outline her approach below.

Let $\kappa(\cdot,t):\mathbb{R}\rightarrow \mathbb{R}$ be a diffeomorphism. Denote $\zeta:=z\circ \kappa^{-1}$. The diffeomorphism $\kappa$ is chosen such that $\bar{\zeta}-\alpha$ is holomorphic in the sense described earlier.

By composing the equation \eqref{system_boundaryold} with the diffeomorphism $\kappa^{-1}$, we find that 	$\zeta$ satisfies
 
	\begin{equation}\label{system_boundary}
	\begin{cases}
	(D_t^2-iA\partial_{\alpha})\zeta=-i\\
	D_t\bar{\zeta}, \quad \bar{\zeta}-\alpha~\text{are}~\text{holomorphic},
	\end{cases}
	\end{equation}
where the following notations are used:
 \begin{equation}\label{eq:DAintro}
	D_{t}=\partial_t+b\partial_{\alpha},\,\,
	b=(\partial_{t}\kappa)\circ \kappa^{-1},\,\,
	A:=(a\kappa_{\alpha})\circ \kappa^{-1}.
	\end{equation}

	This coordinate system (which we call \emph{Wu's coordinate}) was first introduced in \cite{Wu2009} to establish the almost global well-posedness of 2D water waves with small, localized data. It has since been employed in works such as \cite{Wu2011, Totz2012, Totz2015, su2020partial, su2020long, Ionescu2015, chen2023nonlinear},  to study water wave problems on longer time scales.
    
Once the existence of such coordinates is established, the water wave system can be directly analyzed in this framework without reverting to the original Lagrangian coordinates. In \S \ref{section:structure}, by deriving expressions for $b$ and $A$, we formulate the water wave system in terms of the $\zeta$ variables and avoid explicitly using the change of variables $\kappa$. By Corollary \ref{control of change of coordinates}, $\norm{\kappa_{\alpha}(\cdot,t)-\kappa_{\alpha}(\cdot,0)}_{\infty}\lesssim \epsilon^2$, so the change of variables can be uniformly controlled over time.

\vspace*{2ex}
$\bullet$ Cubic structure:  In Wu's Lagrangian coordinates, Wu \cite{Wu2009} finds the following cubic structure:
    \begin{equation}\label{intro:cubic}
        (D_t^2-iA\partial_{\alpha})(I-\mathcal{H})(\zeta-\alpha)=G,
    \end{equation}
    where $D_t=\partial_t+b\partial_{\alpha}$ with $b$ and $A-1$ quadratic, and 
    $$G=-2\left[D_t\zeta, \mathcal{H}\frac{1}{\zeta_{\alpha}}+\bar{\mathcal{H}}\frac{1}{\bar{\zeta}_{\alpha}}\right]\partial_{\alpha}D_t\zeta+\frac{1}{\pi i}\int \Big(\frac{D_t\zeta(\alpha,t)-D_t\zeta(\beta,t)}{\zeta(\alpha,t)-\zeta(\beta,t)}\Big)^2(\bar{\zeta}_{\beta}-1)d\beta:=G_1+G_2.$$

\vspace*{2ex}

$\bullet$ The energy functional

Let $\theta=(I-\mathcal{H})(\zeta-\alpha)$. Define energy
$$E_0^\theta(t)=\int \frac{1}{A}|D_t\theta|^2d\alpha +i\int\theta\partial_\alpha\bar{\theta}d\alpha
$$
Then we have 
		\begin{equation}\label{eqn:basic}
			\frac{dE_0^\theta}{dt}=Re\left\{\int\frac{2D_t\bar{\theta}}{A}Gd\alpha\right\}-\int\frac{|D_t\theta|^2}{A}\left(\frac{a_t}{a}\circ\kappa^{-1}\right)d\alpha
		\end{equation}
We will demonstrate that $\frac{a_t}{a}\circ\kappa^{-1}$ decays at a rate of $t^{-1-\delta}$ (refer to Proposition \ref{norm of at/a}). Thus, it remains to address $G$.

\subsubsection{The good structure of $G$}
Our key observation is the following decomposition for the nonlinearities (see Proposition \ref{key prop}). 
\begin{proposition}\label{prop:intro}
    We have 
    \begin{equation}
        \norm{(I-\mathcal{H})G_1}_{L^2}\lesssim \epsilon^3 t^{-1-\delta},
    \end{equation}
    and
    \begin{equation}
        G_2=i\frac{t}{\alpha}|D_t^2\zeta|^2D_t\theta+e, \quad with\quad \norm{e}_{L^2}\lesssim \epsilon^3 t^{-1-\delta}.
    \end{equation}
\end{proposition}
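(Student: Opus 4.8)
The plan is to prove the two assertions separately, reducing each to an application of the localization lemmas after an algebraic reduction that exploits holomorphicity and the water wave equation, and then closing with the decay estimates, the $L^2$ bounds, the weighted bounds on $\alpha\partial_\alpha(\cdot)$ furnished by the Transition-of-Derivatives method, and standard $L^2$ bounds for Calderón commutators (uniform in the chord-arc constant, which is itself under control from the bootstrap).

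For $(I-\mathcal H)G_1$: after an integration by parts in $\beta$, $G_1$ becomes a combination of singular integrals of squared difference quotients $\big(\tfrac{D_t\zeta(\alpha)-D_t\zeta(\beta)}{\zeta(\alpha)-\zeta(\beta)}\big)^2$ (and the analogue with $\bar\zeta$) against $\zeta_\beta$ (resp.\ $\bar\zeta_\beta$). Using $D_t\zeta=\overline{\mathcal V\circ\zeta}$ with $\mathcal V$ holomorphic in $\Omega(t)$, a Cauchy-integral/contour-deformation argument — this is Wu's cubic-structure mechanism — cancels the principal parts, and $I-\mathcal H$ then annihilates the holomorphic residual; what survives is cubic, with a factor carried by $\zeta_\alpha-1$ (or a commutator $[\partial_\alpha,\mathcal H]$, or $b$, $A-1$). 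Crude estimates by the commutator bounds with the decay $\norm{\partial_\alpha D_t\zeta}_{L^\infty},\norm{\zeta_\alpha-1}_{L^\infty}\lesssim\epsilon t^{-1/2}$ give only partial decay; to reach $t^{-1-\delta}$ one splits $\mathbb R$ into $\{|\alpha|\gtrsim t\}$, where the weighted norms force rapid decay of $\partial_\alpha D_t\zeta$, $\zeta_\alpha-1$, and $\{|\alpha|\lesssim t\}$, where one applies the localization lemma together with an interpolation against a higher Sobolev norm (comfortably available since $s\ge 10$).

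For $G_2$: use the water wave equation, $D_t^2\zeta=i(A\zeta_\alpha-1)$, to replace $\bar\zeta_\beta-1=\overline{\zeta_\beta-1}$ by $\tfrac iA\overline{D_t^2\zeta}$ at the cost of a term with an extra factor $A-1$, hence $O(\epsilon^3 t^{-1-\delta})_{L^2}$. The remaining nonlocal integral $\tfrac1{\pi i}\int\big(\tfrac{D_t\zeta(\alpha)-D_t\zeta(\beta)}{\zeta(\alpha)-\zeta(\beta)}\big)^2\tfrac iA\overline{D_t^2\zeta}(\beta)\,d\beta$ is exactly of the type handled by the localization lemma: the integrand is built from the dispersive profiles $D_t\zeta$, $D_t^2\zeta$, whose local frequency near the light cone is $\sim t^2/\alpha^2$, and the lemma identifies the integral with the diagonal value $\big(\tfrac{\partial_\alpha D_t\zeta}{\zeta_\alpha}\big)^2\tfrac iA\overline{D_t^2\zeta}$ times a weight that the dispersive scaling forces to be $\sim(\alpha/t)^2$, plus an $L^2$ remainder $\lesssim\epsilon^3 t^{-1-\delta}$. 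It remains to rewrite the local main term: the Transition-of-Derivatives identities $\partial_\alpha D_t\zeta=-\tfrac t{2\alpha}D_t^2\zeta+(\text{l.o.t.})$ and $D_t^2\zeta=-\tfrac{it}{2\alpha}D_t\zeta+(\text{l.o.t.})$ (both obtained by trading $\alpha\partial_\alpha$ for $t\partial_t$ via the scaling vector field), together with $\zeta_\alpha=1+O(\epsilon t^{-1/2})$, $A=1+O(\epsilon t^{-1/2})$, and $D_t\theta=2D_t\zeta+(\text{l.o.t.})$, collapse it to $i\tfrac t\alpha|D_t^2\zeta|^2D_t\theta$; all the ``l.o.t.'' pieces are $O(\epsilon^3 t^{-1-\delta})_{L^2}$ and are absorbed into $e$.

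The main obstacle is the localization lemma itself: establishing, \emph{with the sharp remainder $t^{-1-\delta}$}, that a singular integral against a dispersive profile concentrates on the diagonal with precisely the weight $(\alpha/t)^2$ — the reciprocal of the local frequency $t^2/\alpha^2$. This requires a stationary/non-stationary phase analysis adapted to the Lagrangian-type solution: a careful expansion of the kernel and amplitude on the near-diagonal block, and exploitation of the oscillation of the phase $\sim t^2/\alpha$ off the diagonal. It is precisely here that the weighted norms $\norm{\alpha\partial_\alpha v}_{H^{s-1/2}}$ and $\norm{\alpha\partial_\alpha^2\xi}_{H^{s-1}}$ enter, through the Transition-of-Derivatives method, both to generate the $t/\alpha$ weight and to upgrade the natural $t^{-1}$ decay to $t^{-1-\delta}$. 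A secondary, purely technical difficulty is the bookkeeping: each rearrangement above — the commutators $[D_t,\mathcal H]$, the $b$- and $(A-1)$-terms, the off-diagonal remainders of the localization, the error in replacing $\bar\zeta_\beta-1$ — generates error terms, and one must check that each is genuinely $O(\epsilon^3 t^{-1-\delta})_{L^2}$ and not merely $O(\epsilon^3 t^{-1})_{L^2}$.
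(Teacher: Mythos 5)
Your overall plan is in the same family as the paper's: localization of the singular integrals near the diagonal for $t^{1-\mu}\lesssim|\alpha|\lesssim t^{1+\mu}$, cheaper weighted/decay bounds off that region, and the transition-of-derivatives identities to collapse the resulting local trilinear main term into $i\tfrac t\alpha|D_t^2\zeta|^2D_t\theta$. But there are two substantive problems.

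First, your description of $G_1$ is structurally wrong. Integrating $G_1=-2\big[D_t\zeta,\mathcal H\tfrac1{\zeta_\alpha}+\bar{\mathcal H}\tfrac1{\bar\zeta_\alpha}\big]\partial_\alpha D_t\zeta$ by parts does \emph{not} produce squared difference quotients $\bigl(\tfrac{D_t\zeta(\alpha)-D_t\zeta(\beta)}{\zeta(\alpha)-\zeta(\beta)}\bigr)^2$ against $\zeta_\beta$ (that is the shape of $G_2$, not $G_1$). What $G_1$ becomes is a trilinear singular integral with \emph{one} difference quotient of $f=\zeta-\alpha$ (split into $f$ and $\bar f$) and \emph{one} difference quotient of $D_t\zeta$, against $\partial_\beta D_t\zeta$. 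Consequently your "Cauchy-integral/contour-deformation cancels the principal part" step does not apply as described. The actual mechanism for $(I-\mathcal H)G_1$ in the paper is: for the piece carrying $f$ (two anti-holomorphic slow factors against a holomorphic fast factor), the localization lemma of Case~II gives a \emph{vanishing} main term by the explicit phase computation with $\mathbb H e^{\pm i\xi}$; for the piece carrying $\bar f$, the Case~I localization gives a genuine main term, which is then rewritten by transition-of-derivatives so as to factor through the holomorphic function $\tfrac{\partial_\alpha D_t\bar\zeta}{\zeta_\alpha}$, after which $(I-\mathcal H)$ of that product is only a commutator of lower order. Neither of these two steps is a contour deformation in the integral itself.

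Second, your treatment of $G_2$ handles only the $\bar\zeta_\beta-1$ piece. Since $(\zeta-\bar\zeta)_\beta=(\zeta_\beta-1)-(\bar\zeta_\beta-1)$, there is also the piece $\tfrac1{\pi i}\int\bigl(\tfrac{D_t\zeta(\alpha)-D_t\zeta(\beta)}{\zeta(\alpha)-\zeta(\beta)}\bigr)^2(\zeta_\beta-1)\,d\beta$, which is \emph{not} covered by the substitution $\bar\zeta_\beta-1\to\tfrac iA\overline{D_t^2\zeta}$. This is exactly the place in the paper where the Cauchy-integral/holomorphicity argument is used: taking complex conjugates, the kernel is built from the holomorphic functions $D_t\bar\zeta$ and $(\bar\zeta_\alpha-1)/\zeta_\alpha$, and the part with the almost-holomorphic projection $(I+\mathcal H)$ vanishes by the Cauchy integral theorem, leaving a quartic remainder. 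You have essentially applied this argument in the wrong place ($G_1$ instead of $G_{21}$) and omitted it where it is needed. Without the $\zeta_\beta-1$ piece being shown quartic, the asserted identity $G_2=i\tfrac t\alpha|D_t^2\zeta|^2D_t\theta+e$ does not follow from what you have written. Once these two points are repaired, the remaining parts of your proposal (the localization of the $\bar\zeta_\beta-1$ piece, the replacement $\bar\zeta_\alpha-1\approx i\overline{D_t^2\zeta}$ modulo $(A-1)$, and the transition-of-derivatives to obtain the final local form) line up with the paper's treatment of $G_{22}$.
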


The key idea in proving Proposition \ref{prop:intro} is a \textbf{localization lemma}, which stems from the \textbf{profile decomposition} described below.

Denote  
$$I(f,g,h)=\frac{1}{\pi i}\int \frac{(f(\alpha)-f(\beta))(g(\alpha)-g(\beta))}{(\alpha-\beta)^2}\bar{h}_{\beta}d\beta.$$
Where $f, g, h$ are anti-holomorphic. For the linear water wave equations $u_{tt}-iu_{\alpha}=0$, the leading order term of $u(\alpha,t)$ is $\frac{1}{\sqrt{t}}e^{-it^2/4\alpha}\phi(\frac{t^2}{\alpha^2})$. While $u$ and its derivatives have decay $t^{-1/2}$ in $L^{\infty}$,  the profile $U:= \frac{1}{\sqrt{t}}\phi(\frac{t^2}{\alpha^2})$ satisfies
$$\norm{\partial_t U}_{L^2}\lesssim t^{-1}, \quad \norm{\partial_{\alpha}U}_{L^2}\lesssim t^{-1}.$$

This motivates us to make the profile decomposition
$$f(\alpha,t)=e^{-it^2/4\alpha}F(\alpha,t), \quad g(\alpha,t)=e^{-it^2/4\alpha}G(\alpha,t), \quad h(\alpha,t)=e^{-it^2/4\alpha}H(\alpha,t).$$
Assume a priori that for some small $\delta>0$
\begin{equation}\label{intro:derivativegood}
\norm{F_{\alpha}}_{L^2}+\norm{G_{\alpha}}_{L^2}+\norm{H_{\alpha}}_{L^2}\lesssim \epsilon t^{-1+\delta} 
\end{equation}
Then we have 
\begin{equation}
\begin{split}
I(f,g,h)(\alpha,t)=&FGH(\alpha,t) \frac{1}{\pi i}\int \frac{(e^{-it^2/4\alpha}-e^{-it^2/4\beta})^2}{(\alpha-\beta)^2} e^{it^2/4\beta}d\beta+O(t^{-1-\delta})_{L^2}\\
=&\frac{it^2}{2\alpha^2}FGH(\alpha,t) e^{-it^2/4\alpha}+O(t^{-1-\delta})_{L^2}\\
=&\frac{it^2}{2\alpha^2}fgh(\alpha,t)+O(t^{-1-\delta})_{L^2},
\end{split}
\end{equation}
where $O(t^{-1-\delta})_{L^2}$ means the $L^2$ norm of this term decays like $t^{-1-\delta}$. Refer to Lemma \ref{decouple} and Lemma \ref{decouple-2} for detailed explanations and proofs. By applying these localization lemmas to the nonlinearities $G=G_1+G_2$, we derive Proposition \ref{prop:intro}.
\begin{rem}\label{rem:whyL0s-1}
To rigorously justify \eqref{intro:derivativegood}, note that
\begin{equation}
    F_{\alpha}=e^{it^2/4\alpha}\Big(\frac{1}{\alpha}L_0f-\frac{t}{2\alpha^2}\Omega_0f\Big).
\end{equation}
So it suffices to guarantee that $\norm{L_0f}_{L^2}+\norm{\Omega_0f}_{L^2}\lesssim \epsilon t^{\delta}$. Therefore, in order to apply the localization lemma to $\partial_{\alpha}^s I(f,g,h)$, we need to control $\norm{L_0f}_{H^{s-1}}$, $\norm{\Omega_0f}_{H^{s-1}}$, among others. This is why we need to control $L_0D_t\zeta$ and $L_0D_t^2\zeta$ up to $s-1$ derivatives.
\end{rem}

Applying Proposition \ref{prop:intro} to \eqref{eqn:basic}, we obtain
\begin{align*}
    \frac{dE_0^{\theta}}{dt}=Re\int \frac{2D_t\bar{\theta}}{A}\Big(i\frac{t}{\alpha}|D_t^2\zeta|^2D_t\theta\Big)+Re\int \frac{2D_t\bar{\theta}}{A} e-\int\frac{|D_t\theta|^2}{A}\left(\frac{a_t}{a}\circ\kappa^{-1}\right)d\alpha
\end{align*}

Note that the first integral vanishes. Since $\norm{\frac{a_t}{a}\circ\kappa^{-1}}_{L^{\infty}}\lesssim t^{-1-\delta}$ (see Proposition \ref{norm of at/a}), we obtain
$$\frac{dE_0^\theta}{dt}\lesssim \epsilon^3 t^{-1-\delta}E_0^\theta,$$
from which it follows that
\begin{equation}\label{almost conserved law}
\sup_{t\geq 0}|E_0^\theta(t)-E_0^\theta(0)|\lesssim \epsilon^4.
\end{equation}
Therefore, the energy $E_0^{\theta}(t)$ is almost conserved and, in particular, is uniformly bounded over time. We apply the same method to show that the higher-order energies are also almost conserved. See Corollary \ref{cor:almost conservation} for more details.

\begin{rem}
    We compare our structure with the previous works.
    \begin{itemize}
\item In \cite{alazard2015global}, directly in physical space, and in conjunction with a modified scattering approach, Alazard and Delort derive an equation of the form $\partial_t W=O(t^{-1-\delta})$ which allows them to obtain sharp decay estimates. This enables them to bound the lower-order energies uniformly in time. However, they are unable to achieve the same for the highest-order energy.

\item In \cite{Ionescu2015}, Ionescu and Pusateri took the advantages of Wu's structure in \cite{Wu2009}, the normal form, and the modified scattering to prove the global well-posedness. They use Wu's cubic structure to derive energy estimates, and the sharp decay estimate is obtained by using the normal form together with the modified scattering. The lower-order energies are bounded uniformly in this article. However, they are unable to achieve the same for the highest-order energy.

\item In \cite{deng2022wave}, Deng et al. established a novel quartic energy bound through an innovative proof. By combining this result with decay estimates from \cite{Ionescu2015}, they successfully derived uniform energy estimates at the highest derivative order. Their analysis assumed $h\in H^s$,  a condition that particularly constrains low-frequency components.

        \item In contrast, we employ a different method to handle the difficulties mentioned above. First, our work uses a formulation in the physical space that is more explicit and simpler to manipulate. Additionally, we develop a novel technique, which we term the "transition of derivatives" to avoid the loss of derivatives (see \S \ref{section:transition} for a more detailed discussion) and to lower the requirement for the low frequency part of the initial data. Also, such a technique, combining with localization lemma, helps us writing out the further structure of the nonlinearity in the equation, which enables us to establish the uniform-in-time bounds for the highest-order energy and overcoming the limitations of earlier methods.

  %      \item In \cite{Ionescu2015}, \cite{alazard2015global}, and \cite{ai2022two}, the authors proved the global well-posedness of small localized water waves. However, their papers allow for the potential growth of the highest Sobolev norms, which leaves open the possibility of an energy cascade (energy transfer from low to high frequencies). In contrast, we eliminate the possibility of such an energy transfer.

\end{itemize}
\end{rem}

%We start with the local wellposedness of the water waves system \eqref{system_boundary}. Note that the system \eqref{system_boundary} is a fully nonlinear system. 

\subsubsection{The decay estimates} The preceding discussion relies on the a priori assumption that 
\begin{equation}\label{decay:apriori}
    \norm{\zeta_{\alpha}(\cdot,t)-1}_{W^{s-2,\infty}}+\norm{D_t^2\zeta(\cdot,t)}_{W^{s-2,\infty}}+\norm{\partial_{\alpha}D_t\zeta}_{W^{s-3,\infty}}\lesssim \epsilon t^{-1/2}.
\end{equation}

To rigorously establish the decay estimate \eqref{decay:apriori}, we divide the spatial domain into two regions and analyze them separately:

 (1) $|\alpha|$ is far away from $t$. In this region, the solution exhibits better decay. Specifically, when $|\alpha|$ is sufficiently large or small compared to $t$, we can use the fact that the solution decays faster. To be precise, for regions where $|\alpha|\ge t^{1+\mu}$ or $|\alpha|\le t^{1-\mu}$, following Proposition 3.1 in \cite{Wu2009}, we have 
\begin{equation}\label{region:alpha>t}
\alpha^2\partial_{\alpha}(e^{it^2/4\alpha}f)=e^{it^2/4\alpha}\Big(\alpha L_0f-\frac{t}{2}\Omega_0f\Big),
\end{equation}
and
 \begin{equation}\label{region:alpha<t}
        \frac{t^2i}{4}f=-\alpha L_0f+\frac{t}{2}\Omega_0f+\alpha^2\partial_{\alpha}f.
    \end{equation}
\begin{itemize}
    \item [(I)] If $|\alpha|\geq t$, by \eqref{region:alpha>t}, we have
    \begin{equation}\label{region:ggt}
        |f(\alpha,t)|\leq \frac{2}{|\alpha|^{1/2}}\norm{L_0f(\cdot,t)}_{L^2}+\frac{t}{|\alpha|^{3/2}}\norm{\Omega_0f(\cdot,t)}_{L^2},
        \end{equation}
        where $L_0=\frac{1}{2}t\partial_t+\alpha\partial_{\alpha}$ and $\Omega_0=\alpha\partial_t+\frac{1}{2}ti$.
    \item [(II)] For $|\alpha|\leq t$, by \eqref{region:alpha<t},
    \begin{equation}\label{region:llt}
        |f(\alpha,t)|\leq \frac{4|\alpha|}{t^2}\norm{L_0f}_{H^1}+\frac{4}{t}\norm{\Omega_0f}_{H^1}+\frac{8|\alpha|^{3/2}}{t^2}\norm{L_0\partial_{\alpha}f}_{L^2}+\frac{|\alpha|^{1/2}}{t}\norm{\Omega_0\partial_{\alpha}f}_{L^2}.
    \end{equation}
\end{itemize}
If we had
\begin{align*}
\sup_{t\in [0,T]}(\norm{L_0f}_{H^1}+\norm{\Omega_0f}_{H^1})\leq C
\end{align*}
for some $C$ independent of $T$, then we deduce the decay estimate $|f(\alpha,t)|\lesssim t^{-1/2}$. However, for $f=D_t\zeta, D_t^2\zeta$ and $\zeta_{\alpha}-1$, we cannot bound $L_0f$ and $\Omega_0f$ uniformly in time. Instead, 
$$\norm{L_0f}_{H^1}+\norm{\Omega_0f}_{H^1}\lesssim \epsilon t^{\delta}$$
for some $0<\delta\ll 1$. Therefore, for fixed $\mu>2\delta$ and for $|\alpha|\geq t^{1+\mu}$ or $|\alpha|\leq t^{1-\mu}$, by \eqref{region:ggt} and \eqref{region:llt}, we have 
$$|f(\alpha,t)|\lesssim t^{-1/2-\mu/2+\delta_0}.$$

\vspace*{2ex}
 
 (2) For regions $t^{1-\mu}\lesssim |\alpha|\lesssim t^{1+\mu}$, the estimates from \eqref{region:ggt} and \eqref{region:llt} do not provide the sharp $t^{-1/2}$ decay. To handle this, we take advantage of the structure of the solution, particularly focusing on the asymptotic behavior. 

Using the localization lemma, we reformulate the equation:
\begin{equation}\label{decay:reformulate}
(\partial_t^2-i\partial_{\alpha})D_t\theta=-3i\left(\frac{t}{2\alpha}\right)^3|D_t\zeta|^2D_t\zeta+O(t^{-5/4-\delta})_{H^{s-1}}.
\end{equation}

For $\nu$ fixed. Define a functional
$$\tilde{E}(t):=\int D_t^2\theta \bar{u}-D_t\theta \partial_t\bar{u}, \quad u=\chi\left(\frac{\alpha-\nu t}{\sqrt{t}}\right)e^{-it^2/4\alpha}.$$
On one hand, we have 
\begin{equation}\label{decay nu 1}
    \tilde{E}(t)=\sqrt{t}e^{it^2/4\alpha}D_t^2\theta(\nu t,t)+O(\epsilon t^{-1/40}).
\end{equation}
On the other hand, we have
\begin{equation}\label{decay nu 2}
\frac{d}{dt}\tilde{E}(t)=\frac{3i|D_t^2\zeta(\nu t,t)|^2}{\nu}\tilde{E}(t)+O(\epsilon^3t^{-1-\delta}).
\end{equation}
So we obtain
\begin{equation}\label{decay nu 3}
        \norm{\tilde{E}(t)}_{L^\infty}\le C\epsilon^2.
\end{equation}
Combining \eqref{decay nu 1}-\eqref{decay nu 2}-\eqref{decay nu 3}, we obtain 
\begin{equation}
    |D_t^2\theta(\nu t, t)|\lesssim \epsilon t^{-1/2},
\end{equation}
for $t^{-1/5}\lesssim |\nu|\lesssim t^{1/5}$. Therefore, we obtain the desired decay of $D_t^2\theta$ for $\alpha$ near $t$. The decay estimates for $\partial_{\alpha}D_t\theta$ can be established using a similar approach.

\begin{rem}
    Similar functional $\tilde{E}(t)$ was used in \cite{ai2022two}. However, because we have reformulated the water wave equations in a more efficient manner, the estimate for $\tilde{E}(t)$ is significantly simpler in our approach.
    
    %This highlights the power of a well-designed reformulation in nonlinear stability analysis.

\end{rem}

\subsubsection{Control of vector fields and the Transition-of -derivatives method}\label{section:transition} To close the decay estimates and the localization lemma, it remains to control $L_0D_t\zeta$, $L_0D_t^2\zeta$ and $L_0(\zeta_{\alpha}-1)$. By Remark \ref{rem:whyL0s-1}, we need to control $\norm{L_0f}_{H^{s-1}}$ for $f=D_t\zeta$ and $D_t^2\zeta$.
The estimates for $\Omega_0$ can be derived from the formula
\begin{equation}
    \Omega_0\partial_{\alpha}f=L_0\partial_tf-\frac{t}{2}(\partial_t^2-i\partial_{\alpha})f.
\end{equation}
We direct readers to Corollary \ref{cor:Omega_0} for further details. To estimate $L_0D_t\theta$, we apply $L_0$ on both sides of the cubic equation \eqref{intro:cubic} and obtain
\begin{equation}
\begin{split}
   & (D_t^2-iA\partial_{\alpha})L_0\theta=L_0G+G\\
    &-\Big\{L_0D_tb-(L_0b-\frac{1}{2}b)b_{\alpha}\Big\}\partial_{\alpha}\theta+\Big(L_0b-\frac{1}{2}b\Big)\Big(D_t\partial_{\alpha}+\partial_{\alpha}D_t\Big)\theta-i(L_0A)\partial_{\alpha}\theta.
\end{split}
\end{equation}
To estimate $\norm{L_0D_t^2\theta}_{H^{s-1}}$, we must estimate $\norm{L_0b\partial_{\alpha}D_t^2\theta}_{H^{s-1}}$. The most challenging aspect of this arises from the term $\norm{L_0b\partial_{\alpha}^s D_t^2\theta}_{L^2}$, which presents significant difficulties in the analysis. To illustrate this, we can approximately treat $b=D_t\zeta (\bar{\zeta}_{\alpha}-1)$. Thus, we have the expression
$$L_0b \partial_{\alpha}^s D_t^2\theta=(\bar{\zeta}_{\alpha}-1) \partial_{\alpha}^sD_t\theta L_0D_t\zeta+ D_t\zeta\partial_{\alpha}^sD_t\theta L_0(\bar{\zeta}_{\alpha}-1).$$
The more problematic term is the second one, since we have to estimate $\partial_{\alpha}^sD_t\theta$ in $L^2$ (without any decay), and we also lack decay estimates for $L_0(\bar{\zeta}_{\alpha}-1)$. Furthermore, the decay for 
$D_t\zeta$ is only of order $t^{-1/4}$, as shown in Lemma \ref{decay of D_tzeta}. Therefore, it appears that
$$\norm{D_t\zeta\partial_{\alpha}^sD_t^2\theta L_0(\bar{\zeta}_{\alpha}-1)}_{L^2}\lesssim \norm{D_t\zeta}_{L^{\infty}}\norm{L_0(\bar{\zeta}_{\alpha}-1)}_{L^{\infty}}\norm{D_t^2\zeta}_{H^s}\lesssim \epsilon t^{-1/4}\norm{L_0(\zeta_{\alpha}-1)}_{H^1}\norm{D_t^2\zeta}_{H^s},$$
which leads to insufficient decay. 

To address this issue, we employ a Transition-of-Derivatives method, which we outline below. Using \eqref{region:alpha>t} and \eqref{region:alpha<t}, we derive the following lemma (refer to Lemma \ref{transition-lemma}).
\begin{lemma}\label{transition-derivatives}
    We have
    \begin{equation}
     f(\alpha,t)=-\frac{2\alpha}{it}\partial_tf(\alpha,t)+\frac{2}{it}\Omega_0 f(\alpha,t),
     \end{equation}
    \begin{equation}
     \partial_t f(\alpha,t)=-\frac{it}{2\alpha}f(\alpha,t)+\frac{1}{\alpha}\Omega_0 f(\alpha,t),
     \end{equation}
     \begin{equation}\label{raise the derivative}
         f(\alpha,t)=-\frac{4\alpha}{it^2}L_0f+\frac{2}{it}\Omega_0 f+\frac{4\alpha^2}{it^2}\partial_{\alpha}f,
     \end{equation}
          \begin{equation}
         \partial_{\alpha}f(\alpha,t)=\frac{it^2}{4\alpha^2}f+\frac{1}{\alpha}L_0f-\frac{t}{2\alpha^2}\Omega_0 f,
     \end{equation}
\end{lemma}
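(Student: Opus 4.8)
The statement to prove is Lemma~\ref{transition-derivatives} (equivalently Lemma~\ref{transition-lemma}), which consists of four algebraic identities relating $f$, $\partial_t f$, $\partial_\alpha f$, and the vector fields $L_0 = \tfrac12 t\partial_t + \alpha\partial_\alpha$ and $\Omega_0 = \alpha\partial_t + \tfrac12 ti$. The plan is purely computational: all four are consequences of the two master identities \eqref{region:alpha>t} and \eqref{region:alpha<t} stated just above, which themselves follow from the definitions of $L_0$ and $\Omega_0$. So I would not reprove those; I would just do linear algebra on the three unknowns $\partial_t f$, $\partial_\alpha f$, $f$ given two linear relations among them plus the definitions of $L_0,\Omega_0$.

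\begin{proof}
All four identities are immediate algebraic rearrangements of the definitions
\[
L_0 = \tfrac12 t\partial_t + \alpha\partial_\alpha, \qquad \Omega_0 = \alpha\partial_t + \tfrac12 t\,i,
\]
together with the two identities \eqref{region:alpha>t} and \eqref{region:alpha<t}. We record the derivation of each.

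\textbf{The first two identities.} From the definition $\Omega_0 f = \alpha\,\partial_t f + \tfrac12 t\,i\,f$ we solve for $\partial_t f$:
\[
\partial_t f(\alpha,t) = -\frac{it}{2\alpha} f(\alpha,t) + \frac{1}{\alpha}\Omega_0 f(\alpha,t),
\]
which is the second identity. Multiplying through by $-\tfrac{2\alpha}{it}$ and solving for $f$ gives
\[
f(\alpha,t) = -\frac{2\alpha}{it}\partial_t f(\alpha,t) + \frac{2}{it}\Omega_0 f(\alpha,t),
\]
which is the first identity. (Equivalently, the first identity is just the relation $\Omega_0 f - \alpha\partial_t f = \tfrac12 t\,i\,f$ divided by $\tfrac12 t\,i$.)

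\textbf{The fourth identity.} Expanding \eqref{region:alpha>t},
\[
\alpha^2\partial_\alpha\!\big(e^{it^2/4\alpha}f\big) = e^{it^2/4\alpha}\Big(\alpha L_0 f - \tfrac{t}{2}\Omega_0 f\Big).
\]
Compute the left-hand side directly: $\partial_\alpha\big(e^{it^2/4\alpha}f\big) = e^{it^2/4\alpha}\big(-\tfrac{it^2}{4\alpha^2} f + \partial_\alpha f\big)$, so
\[
\alpha^2\Big(-\tfrac{it^2}{4\alpha^2} f + \partial_\alpha f\Big) = \alpha L_0 f - \tfrac{t}{2}\Omega_0 f.
\]
Dividing by $\alpha^2$ and solving for $\partial_\alpha f$:
\[
\partial_\alpha f(\alpha,t) = \frac{it^2}{4\alpha^2} f + \frac{1}{\alpha}L_0 f - \frac{t}{2\alpha^2}\Omega_0 f,
\]
which is the fourth identity.

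\textbf{The third identity.} Solving the fourth identity for $f$ (multiply by $\tfrac{4\alpha^2}{it^2}$ and rearrange, using $\tfrac{4\alpha^2}{it^2}\cdot\tfrac1\alpha = \tfrac{4\alpha}{it^2}$ and $\tfrac{4\alpha^2}{it^2}\cdot\tfrac{t}{2\alpha^2} = \tfrac{2}{it}$) gives
\[
f(\alpha,t) = -\frac{4\alpha}{it^2}L_0 f + \frac{2}{it}\Omega_0 f + \frac{4\alpha^2}{it^2}\partial_\alpha f,
\]
which is the third identity. This is precisely \eqref{region:alpha<t} rewritten, since \eqref{region:alpha<t} states $\tfrac{it^2}{4}f = -\alpha L_0 f + \tfrac{t}{2}\Omega_0 f + \alpha^2\partial_\alpha f$; dividing by $\tfrac{it^2}{4}$ yields the claimed form.

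This completes the proof.
\end{proof}

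\begin{rem}
There is no real obstacle here: the only thing to be careful about is bookkeeping of the prefactors $\tfrac{1}{\alpha}$, $\tfrac{1}{t}$, $\tfrac{it^2}{4\alpha^2}$ and their reciprocals when passing between the ``derivative in terms of $f$ and vector fields'' form and the ``$f$ in terms of derivative and vector fields'' form; each identity is obtained from its companion by solving a single linear equation.
\end{rem}
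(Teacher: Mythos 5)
Your proof is correct and takes essentially the same approach as the paper: all four identities are linear-algebraic rearrangements of the definitions of $L_0$ and $\Omega_0$ together with the profile relations \eqref{region:alpha>t}--\eqref{region:alpha<t}. The only stylistic difference is that for the first two identities you read them off directly from $\Omega_0 f = \alpha\partial_t f + \tfrac{it}{2}f$ without introducing the conjugated profile $F = e^{it^2/4\alpha}f$, whereas the paper computes $\partial_t F$ and then inverts; the underlying algebra is the same.
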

\begin{rem}
This lemma allows us to transit the derivatives in the following way:
\begin{itemize}
    \item For a single function $f$, the lemma is especially useful in the region where $|\alpha|$ is close to $t$, since $\partial_{\alpha}f=\frac{it^2}{4\alpha^2}f$ plus a fast decay error term $\frac{1}{\alpha}L_0f-\frac{t}{2\alpha^2}\Omega_0 f$, provided that $L_0f$ and $\Omega_0f$ grow slowly. A similar argument can be applied to $\partial_tf$. This capability is particularly advantageous as it enables us to relax the assumptions on the low-frequency components of $f$. It can also be used to relax the assumptions on high-frequency too. We hope to explore this feature in future works.

    \item For products like $f\partial_{\alpha}g$, as long as we can control $L_0f, \Omega_0f$ and $L_0g, \Omega_0g$, we can transit the derivatives of $g$ to $f$, incurring only a negligible cost. See Lemma \ref{lemma:transit derivatives}, Lemma \ref{lemma:transit2} for further details. Using these lemmas, we obtain the estimates
    $$\norm{D_t\zeta \partial_{\alpha}^sD_t^2\zeta}_{L^{\infty}(|\alpha|\geq t^{3/4})}\lesssim \epsilon^2 t^{-1};$$
    for $|\alpha|<t^{3/4}$, we apply (\ref{raise the derivative}) to obtain
    \[
    \norm{D_t\zeta}_{L^\infty(|\alpha|<t^{3/4})}\le C\epsilon t^{-1+\delta_0}\ln t.
    \]
    Combining them together yields
    \[
    \norm{D_t\zeta\partial_{\alpha}^sD_t^2\theta}_{L^2}\le C\epsilon^2t^{-1+\delta_0}\ln t
    \]
    which is sufficient to close the highest order energy estimates for $L_0D_t^2\theta$. For other orders, there holds a sharper estimate since Lemma \ref{lemma:transit2} holds for all $\alpha\in\mathbb{R}$ in such case.
\end{itemize}
\end{rem}
%Specifically:

%\begin{itemize}
%    \item Increasing Derivatives: The lemma allows us to replace a higher derivative $\partial_{\alpha}^mf$ with a lower derivative $f$, up to a structured leading term and a rapidly decaying error term. This is useful in regions where higher derivatives are challenging to estimate directly. By transitioning to lower derivatives, we simplify the analysis and utilize the inherent structure of the solution.

 %   \item Decreasing Derivatives: Conversely, we can replace lower derivatives or even the function $f$ itself with higher derivatives, such as $\partial_{\alpha}f$, when it is more convenient for the analysis. This flexibility is crucial in regions where higher-order derivatives exhibit faster decay or where their behavior is better understood.

%\end{itemize}

\subsection{Outline of the paper} In \S 2, we present fundamental analytical tools and key formulas that will be used throughout the paper. In \S 3, we formulate a main proposition and demonstrate how it leads to the proof of the main theorem. In \S 4, under the bootstrap assumptions, we establish estimates for relevant quantities that are crucial for both the energy and decay estimates. In \S 5, we provide the energy estimates. In \S 6, we extend these estimates to include the energy associated with vector fields. In \S 7, we prove the decay estimates for $\alpha$ near $t$. Finally, in the appendix, we compile essential tools from harmonic analysis for reference.

\subsection{Notation and convention}\label{notation} Assume $f$ a function on the boundary of $\Omega(t)$. By saying $f$ holomorphic,  we mean $f$ is the boundary value of a holomorphic function in $\Omega(t)$. 
The commutator $[A,B]=AB-BA$. By $f=O(A)_{L^2}$, we mean $\norm{f}_{L^2}\leq CA$ for some constant $C>0$. By $f=O(A)$, we mean $\norm{f}_{L^{\infty}}\leq CA$ for some constant $C>0$. Throughout this paper, if not specified, a constant $C$ always means a constant that depends on $s$ only.

\section{Preliminaries}
\subsection{The Hilbert transform and the characterization of holomorphic functions}
\begin{defn}[Hilbert transform]
Assume that $z(\alpha)$ satisfies
\begin{equation}\label{chordchordarcarc}
    \beta_0|\alpha-\beta|\leq |z(\alpha)-z(\beta)|\leq \beta_1|\alpha-\beta|, \quad \quad \forall \alpha,\beta\in \mathbb{R},
\end{equation}
where $0<\beta_0<\beta_1<\infty$ are two absolute constants.
We define the Hilbert transform associated to a curve $z(\alpha)$ as 
\begin{equation}
\mathfrak{H}f(\alpha):=\frac{1}{\pi i}p.v.\int_{-\infty}^{\infty}\frac{z_{\beta}(\beta)}{z(\alpha)-z(\beta)}f(\beta)d\beta.
\end{equation}
We use the notation $\mathcal{H}$ to denote the Hilbert transform associated with $\zeta$:
$$\mathcal{H}f(\alpha,t):=\frac{1}{\pi i}p.v.\int_{-\infty}^{\infty}\frac{\zeta_{\beta}}{\zeta(\alpha,t)-\zeta(\beta,t)}f(\beta,t)d\beta.$$
The standard Hilbert transform is the Hilbert transform associated with $z(\alpha)=\alpha$, which is denoted by
\begin{equation}
\mathbb{H}f(\alpha):=\frac{1}{\pi i}p.v.\int_{-\infty}^{\infty}\frac{1}{\alpha-\beta}f(\beta)d\beta.
\end{equation}
\end{defn}
It's well-known (see \cite{david1984operateurs}, Theorem 6) that if $\zeta(\alpha)$ satisfies (\ref{chordchordarcarc}), then $\mathcal{H}$ is bounded on $L^2$.
\begin{lemma}\label{boundednesshilbert}
Assume that $\zeta(\alpha)$ satisfies (\ref{chordchordarcarc}), then 
\begin{equation}
    \|\mathcal{H}f\|_{L^2}\leq C\|f\|_{L^2},
\end{equation}
for some constant that depends on $\beta_0$ and $\beta_1$ only.
\end{lemma}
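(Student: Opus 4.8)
The plan is to recognize Lemma~\ref{boundednesshilbert} as the classical $L^2$-boundedness theorem for the Cauchy integral on a chord-arc curve (Calder\'on for small constants, Coifman--McIntosh--Meyer for Lipschitz graphs, David in the stated generality) and to reproduce its proof architecture. \emph{Step 1: reduction to the curve.} Condition \eqref{chordchordarcarc} forces $\zeta$ to be bi-Lipschitz, hence differentiable a.e.\ with $\beta_0\le|\zeta_\beta|\le\beta_1$. The substitution $w=\zeta(\alpha)$, $w'=\zeta(\beta)$ identifies $L^2(\mathbb{R},d\beta)$ with $L^2(\Gamma,d\sigma)$, where $\Gamma:=\zeta(\mathbb{R})$ carries arc length $d\sigma$, up to constants depending only on $\beta_0,\beta_1$, and it conjugates $\mathcal{H}$ into the operator $\mathcal{C}_\Gamma g(w)=\frac{1}{\pi i}\,\mathrm{p.v.}\int_\Gamma\frac{g(w')}{w-w'}\,dw'$ on $L^2(\Gamma,d\sigma)$. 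The chord-arc condition makes $\Gamma$ Ahlfors--David $1$-regular and makes $K(w,w')=\frac{\tau(w')}{\pi i(w-w')}$ (where $dw'=\tau(w')\,d\sigma(w')$ with $|\tau|=1$) a standard Calder\'on--Zygmund kernel on $(\Gamma,d\sigma)$: the size bound $|K(w,w')|\lesssim|w-w'|^{-1}$ is immediate, and the Lipschitz estimate $\lesssim|w-w'|^{-2}$ for its differences follows from $|w-w'|\gtrsim\sigma(w,w')$; all constants depend only on $\beta_0,\beta_1$. Hence boundedness of $\mathcal{H}$ on $L^2(\mathbb{R})$ is equivalent to that of $\mathcal{C}_\Gamma$ on $L^2(\Gamma,d\sigma)$.

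\emph{Step 2: the $L^2(\Gamma)$ bound.} I would obtain this from the $T(b)$ theorem of David--Journ\'e--Semmes on the space of homogeneous type $(\Gamma,d\sigma)$, with the para-accretive function $b$ equal to the a.e.-defined unit complex tangent $\tau=\zeta_\beta/|\zeta_\beta|$; para-accretivity with constant $\beta_0/\beta_1$ is precisely the chord-arc estimate $\big|\frac{1}{\sigma(I)}\int_I\tau\,d\sigma\big|=\frac{|w_+-w_-|}{\sigma(I)}\ge\beta_0/\beta_1$ for every sub-arc $I$ of $\Gamma$ with endpoints $w_\pm$. One then has to verify (i) the weak boundedness property and (ii) $\mathcal{C}_\Gamma b$ and $\mathcal{C}_\Gamma^{\ast}b$ lie in $\mathrm{BMO}(\Gamma)$; both reduce to a contour-deformation computation, since $b\,d\sigma=dw'$ gives $\int_I\frac{b(w')\,d\sigma(w')}{w-w'}=\int_I\frac{dw'}{w-w'}$, a branch of $\log\frac{w-w_-}{w-w_+}$ whose oscillation over $\Gamma$, and whose truncations over short arcs, are controlled purely by $\beta_0,\beta_1$. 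Alternatively one can run David's original argument: a stopping-time/corona decomposition covers $\Gamma$, off a set of small $d\sigma$-measure at each scale, by graphs of functions of uniformly bounded Lipschitz norm; one applies Coifman--McIntosh--Meyer on each graph and sums the pieces using Ahlfors--David regularity, again producing a constant that is a function of $\beta_0,\beta_1$ only.

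\emph{Step 3: conclusion.} Uniform $L^2$ bounds for the truncated operators yield, by a routine maximal-function argument, the a.e.\ existence of the principal value and the asserted inequality; scale- and translation-invariance of every estimate above is what makes $C$ depend on $\beta_0$ and $\beta_1$ alone, which is exactly the form needed to apply the lemma uniformly in $t$ to $\zeta(\cdot,t)$ in the later sections. The only genuine obstacle lies in the $L^2(\Gamma)$ estimate of Step~2: because $\Gamma$ is merely chord-arc rather than $C^1$, pointwise derivatives of $\zeta$ and classical Plemelj jump formulas are unavailable, so one must invoke the (deep) $T(b)$ machinery; everything else is bookkeeping. Since the result is classical, in the paper one may simply cite \cite{david1984operateurs}, Theorem~6.
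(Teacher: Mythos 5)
Your proposal is correct, and in fact the paper gives no proof at all: it simply invokes \cite{david1984operateurs}, Theorem~6, exactly as you observe in your closing sentence. The additional material in Steps~1--3 (reduction to the Cauchy integral on the chord-arc curve $\Gamma$, $T(b)$ with $b=\zeta_\beta/|\zeta_\beta|$ or David's corona/stopping-time decomposition over Lipschitz graphs, tracking the dependence on $\beta_0,\beta_1$) is an accurate sketch of what underlies that citation, but it is not needed for the paper and the authors do not reproduce it.
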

We can use the Hilbert transform to characterize the boundary value of holomorphic functions. 
\begin{lemma}\label{holomorphic}
Let $f\in L^2(\mathbb{R})$. Then $f$ is the boundary value of a holomorphic function in $\Omega(t)$ if and only if $(I-\mathcal{H})f=0$. $f$ is the boundary value of a holomorphic function in $\Omega(t)^c$ if and only if $(I+\mathcal{H})f=0$. 
\end{lemma}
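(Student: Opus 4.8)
\textbf{Proof proposal for Lemma \ref{holomorphic}.}

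The plan is to establish the equivalence by connecting the condition $(I-\mathcal{H})f=0$ to the Plemelj/Cauchy-integral characterization of holomorphic boundary values. First I would introduce the Cauchy integral of $f$ along the curve $\zeta$, namely
\begin{equation*}
\mathcal{C}f(w):=\frac{1}{2\pi i}\int_{-\infty}^{\infty}\frac{\zeta_\beta(\beta,t)}{\zeta(\beta,t)-w}f(\beta,t)\,d\beta,
\end{equation*}
which is well-defined and holomorphic for $w\notin\Sigma(t)$, with the correct decay at infinity thanks to $f\in L^2$ and the chord-arc condition \eqref{chordchordarcarc}. The key analytic input is the Plemelj jump formula: as $w$ approaches a boundary point $\zeta(\alpha,t)$ nontangentially from inside $\Omega(t)$ one gets $\mathcal{C}f\to \tfrac12(I+\mathcal{H})f$, and from the complementary domain $\Omega(t)^c$ one gets $-\tfrac12(I-\mathcal{H})f$ (with signs depending on the chosen orientation of $\Sigma(t)$). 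This boundary-value statement for chord-arc curves is classical; I would cite the same source used for Lemma \ref{boundednesshilbert} (David \cite{david1984operateurs}) or standard references on the Cauchy integral on Lipschitz graphs.

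Granting the jump formula, the forward direction is immediate: if $f$ is the boundary value of a holomorphic function $F$ on $\Omega(t)$ that vanishes appropriately at infinity, then by Cauchy's theorem $\mathcal{C}f=F$ inside $\Omega(t)$ and $\mathcal{C}f\equiv 0$ in $\Omega(t)^c$; taking the interior nontangential limit gives $f=\tfrac12(I+\mathcal{H})f$, i.e. $(I-\mathcal{H})f=0$, while taking the exterior limit of the identically-zero exterior piece gives $(I-\mathcal{H})f=0$ again, consistently. Conversely, if $(I-\mathcal{H})f=0$, then the exterior nontangential limit of $\mathcal{C}f$ is $-\tfrac12(I-\mathcal{H})f=0$; since $\mathcal{C}f$ is holomorphic in the connected open set $\Omega(t)^c$, decays at infinity, and has vanishing nontangential boundary values in $L^2$, a uniqueness argument (e.g. via the $F.$ and M. Riesz theorem / Privalov-type uniqueness for the Cauchy integral, or simply $\|\mathcal{C}f\|_{L^2(\partial)}=0$ forcing $\mathcal{C}f\equiv0$) shows $\mathcal{C}f\equiv 0$ in $\Omega(t)^c$. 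Then the interior limit is $\tfrac12(I+\mathcal{H})f=\tfrac12\big((I-\mathcal{H})f+2\mathcal H f\big)$; combined with $(I-\mathcal{H})f=0$ one checks the jump across $\Sigma(t)$ equals $f$, so $F:=\mathcal{C}f|_{\Omega(t)}$ is holomorphic with boundary value $f$ and the desired decay. The statement for $\Omega(t)^c$ and $(I+\mathcal{H})f=0$ follows by the same argument with the roles of the two domains (and the sign in the jump formula) interchanged.

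The main obstacle is the rigorous justification of the nontangential boundary-value (Plemelj) formula and the attendant uniqueness statement on a general chord-arc curve $\zeta(\alpha,t)$: one must know that $\mathcal{C}f$ has nontangential limits a.e. equal to the principal-value operators $\tfrac12(I\pm\mathcal H)f$, and that a function holomorphic off $\Sigma(t)$, decaying at infinity, with zero $L^2$ boundary data must vanish. Both are standard consequences of the Coifman–McIntosh–Meyer / David theory of the Cauchy integral on Lipschitz (here chord-arc) curves, but they are exactly the points one should invoke by reference rather than reprove. Everything else is bookkeeping with the jump relations and the elementary identity $(I+\mathcal H)+(I-\mathcal H)=2I$.
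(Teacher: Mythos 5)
The paper states Lemma \ref{holomorphic} without proof, treating it as a classical fact (it follows the Hilbert transform definition and the citation to David for $L^2$-boundedness), so there is no in-paper argument to compare against. Your Cauchy-integral/Plemelj approach is the standard way to prove it, and the overall logic is sound: reduce the statement to the jump relations for $\mathcal{C}f$ across $\Sigma(t)$, use Cauchy's theorem and decay at infinity for the forward direction, and an $L^2$-boundary-value uniqueness theorem for holomorphic functions vanishing at infinity for the converse. Two points worth tightening. First, the signs: with $\mathcal{C}f(w)=\frac{1}{2\pi i}\int\frac{\zeta_\beta}{\zeta(\beta)-w}f\,d\beta$ and $\Omega(t)$ lying to the right of the upward-oriented curve $\alpha\mapsto\zeta(\alpha,t)$, the nontangential limits are $-\tfrac12(I+\mathcal{H})f$ from inside $\Omega(t)$ and $+\tfrac12(I-\mathcal{H})f$ from $\Omega(t)^c$, not the signs you wrote; you hedge on orientation, but since the whole lemma hinges on distinguishing $(I-\mathcal{H})$ from $(I+\mathcal{H})$, the orientation convention is the content of the statement and should be pinned down once and carried through. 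Correspondingly, Cauchy's formula with this parametrization gives $\mathcal{C}f=-F$ on $\Omega(t)$ and $\mathcal{C}f\equiv 0$ on $\Omega(t)^c$, and the jump across $\Sigma(t)$ equals $-f$; the bookkeeping still produces $(I-\mathcal{H})f=0$, but it is worth doing carefully so the $(I+\mathcal{H})$ statement for $\Omega(t)^c$ comes out on the right side. Second, the decay condition at infinity (implicit in \emph{``bounded holomorphic''} with $\mathcal{V}\to 0$ as $y\to-\infty$, as in the paper's setup) must be built into what ``boundary value of a holomorphic function'' means, otherwise constants break the ``only if'' direction; you do mention it, but it deserves to be stated explicitly as part of the hypothesis being used. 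With those clarifications your proposal is a correct proof by the standard route.
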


\subsection{Water waves in Wu's coordinates}
As discussed in the introduction, we formulate the water waves by the following
	\begin{equation}\label{system_newvariables}
	\begin{cases}
	(D_t^2-iA\partial_{\alpha})\zeta=-i\\
	D_t\bar{\zeta}, ~~\bar{\zeta}(\alpha,t)-\alpha\quad \text{holomorphic}
	\end{cases}
	\end{equation}
where
\begin{equation}\label{eq:Dt}
    D_{t}:=\partial_t+b\partial_{\alpha}
\end{equation}for some real-valued function $b$.

By Lemma \ref{holomorphic}, due to the holomorphic conditions in \eqref{system_newvariables}, we have
	\begin{equation}\label{holo:Dtzeta}
	    (I-\mathcal{H})D_t\bar{\zeta}=0,
	\end{equation} and 
	\begin{equation}\label{holo:barzeta}
	    (I-\mathcal{H})(\bar{\zeta}(\alpha,t)-\alpha)=0.
	\end{equation}

\subsection{Boundary value of the velocity potential} Note that we have Bernoulli equation (define $\phi$ as the velocity potential, i.e. $\nabla\phi =v$.)
\begin{equation}
    \phi_t+\frac{1}{2}|v|^2+y=P.
\end{equation}

Define $\psi=\phi(z(\alpha,t),t)$ and $Q=(I-\mathcal{H})(\psi\circ\kappa^{-1})$, then we have
\begin{proposition}\label{formula for derivatives of Q}
    \begin{equation}\label{formula of Q_alpha}
        \begin{aligned}
            \bar{Q}_\alpha
            &=D_t\bar{\zeta}+(\zeta_\alpha-1)D_t\bar{\zeta}-\frac{1}{2}\left(\zeta_\alpha\mathcal{H}\frac{1}{\zeta_\alpha}+\bar{\zeta}_\alpha\bar{\mathcal{H}}\frac{1}{\bar{\zeta}_\alpha}\right)(\zeta_\alpha D_t\bar{\zeta}),
        \end{aligned}
    \end{equation}
    and
    \begin{equation}\label{formula of Dt Q}
        D_tQ=-[D_t\zeta,\mathcal{H}]\frac{Re\{\bar{\zeta}_\alpha D_t\zeta\}}{\zeta_\alpha}+\frac{1}{2}(I-\mathcal{H})|D_t\zeta|^2-\frac{1}{2i}(I-\mathcal{H})(\zeta-\bar{\zeta}).
    \end{equation}
\end{proposition}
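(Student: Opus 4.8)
The plan is to prove Proposition \ref{formula for derivatives of Q} by working directly from the definitions $\psi=\phi(z(\alpha,t),t)$, $Q=(I-\mathcal{H})(\psi\circ\kappa^{-1})$, and the Bernoulli equation. First I would establish the relationship between $\psi\circ\kappa^{-1}$ and the trace of the (anti-)holomorphic velocity $\mathcal V$. Since $\bar v$ is holomorphic in $\Omega(t)$ and $\phi$ is (up to a harmonic conjugate) its potential, the boundary value $\psi\circ\kappa^{-1}$ is, after the change of variables to Wu's coordinate, related to $D_t\zeta$ via $\partial_\alpha(\psi\circ\kappa^{-1}) = \operatorname{Re}\{\bar\zeta_\alpha D_t\zeta\}$ — this is the standard identity that the tangential derivative of the potential equals the real part of (conjugate arc-length times velocity). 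Differentiating $Q$ in $\alpha$ then gives $Q_\alpha = (I-\mathcal H)\partial_\alpha(\psi\circ\kappa^{-1}) - [\partial_\alpha,\mathcal H](\psi\circ\kappa^{-1})$, and I would compute the commutator $[\partial_\alpha,\mathcal H]$ explicitly from the definition of $\mathcal H$ (it produces an integral kernel with $\zeta_\alpha$'s), so that after taking conjugates and using the holomorphicity relations \eqref{holo:Dtzeta}--\eqref{holo:barzeta} the terms reorganize into the claimed formula \eqref{formula of Q_alpha}. The key algebraic input here is that $(I-\mathcal H)D_t\bar\zeta = 0$ and $(I+\bar{\mathcal H})$ annihilates the conjugate-side data, which lets me rewrite $(I-\mathcal H)\operatorname{Re}\{\zeta_\alpha D_t\bar\zeta\}$ as $D_t\bar\zeta$ minus the symmetrized Hilbert-transform term.

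For the second identity \eqref{formula of Dt Q}, I would apply $D_t$ to $Q=(I-\mathcal H)(\psi\circ\kappa^{-1})$. This produces two pieces: $(I-\mathcal H)D_t(\psi\circ\kappa^{-1})$ and the commutator $[D_t,\mathcal H](\psi\circ\kappa^{-1})$. For the first piece I would use the Bernoulli equation $\phi_t + \tfrac12|v|^2 + y = P$, noting $P\equiv 0$ on $\Sigma(t)$, so that $D_t(\psi\circ\kappa^{-1}) = (\phi_t + v\cdot\nabla\phi)\circ z\circ\kappa^{-1} = (-\tfrac12|v|^2 - y + |v|^2)\circ\cdots = \tfrac12|D_t\zeta|^2 - \operatorname{Im}\zeta$ (using $y = \operatorname{Im}\zeta$ and $|v| = |D_t\zeta|$ along the interface), which after writing $\operatorname{Im}\zeta = \tfrac1{2i}(\zeta-\bar\zeta)$ gives exactly the last two terms of \eqref{formula of Dt Q}. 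For the commutator term I would use the standard identity $[D_t,\mathcal H]f = [D_t\zeta - b\zeta_\alpha \cdot(\text{stuff}),\cdots]$; more precisely the known commutator formula $[D_t,\mathcal H]f = \tfrac1{\pi i}\int \big(\tfrac{D_t\zeta(\alpha)-D_t\zeta(\beta)}{\zeta(\alpha)-\zeta(\beta)} - \tfrac{b(\alpha)-b(\beta)}{\alpha-\beta}\cdot(\cdots)\big)\zeta_\beta f\,d\beta$ together with the fact that $b$ is chosen exactly so that $D_t\bar\zeta$ stays holomorphic, which forces a cancellation reducing $[D_t,\mathcal H](\psi\circ\kappa^{-1})$ to $-[D_t\zeta,\mathcal H]\tfrac{\partial_\alpha(\psi\circ\kappa^{-1})}{\zeta_\alpha} = -[D_t\zeta,\mathcal H]\tfrac{\operatorname{Re}\{\bar\zeta_\alpha D_t\zeta\}}{\zeta_\alpha}$, which is the first term of \eqref{formula of Dt Q}.

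The main obstacle I anticipate is bookkeeping the change of variables $\kappa$ correctly and tracking which Hilbert transform ($\mathcal H$ versus $\bar{\mathcal H}$, and their interaction with complex conjugation) appears at each step — in particular, getting the symmetrized operator $\tfrac12(\zeta_\alpha\mathcal H\tfrac1{\zeta_\alpha} + \bar\zeta_\alpha\bar{\mathcal H}\tfrac1{\bar\zeta_\alpha})$ to emerge requires carefully combining the contribution of $(I-\mathcal H)$ acting on a real-valued quantity with the holomorphicity constraints, since a real function is neither holomorphic nor anti-holomorphic and splits into both parts. I would handle this by writing $\operatorname{Re}\{\zeta_\alpha D_t\bar\zeta\} = \tfrac12\zeta_\alpha D_t\bar\zeta + \tfrac12\bar\zeta_\alpha D_t\zeta$ and applying $\mathcal H$ to each half separately, using $(I-\mathcal H)(\zeta_\alpha D_t\bar\zeta) = (I-\mathcal H)(\partial_\alpha(\text{holomorphic}))$ type identities on one half and the conjugate relation $\bar{\mathcal H}\bar f = \overline{\mathcal H f}$ on the other. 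Everything else — the commutator computations and the Bernoulli substitution — is routine once the correct form of the $[D_t,\mathcal H]$ and $[\partial_\alpha,\mathcal H]$ commutators (all of which are recalled or derivable from the material in Chapter 2 / \cite{Wu2009}) is in hand.
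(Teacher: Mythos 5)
Your proposal is correct and tracks the paper's own proof closely: both use the identity $\partial_\alpha(\psi\circ\kappa^{-1})=\operatorname{Re}\{\bar\zeta_\alpha D_t\zeta\}$ together with $\partial_\alpha\mathcal H f=\zeta_\alpha\mathcal H\frac{\partial_\alpha f}{\zeta_\alpha}$ for \eqref{formula of Q_alpha}, and the Bernoulli equation plus the commutator $[D_t,\mathcal H]f=[D_t\zeta,\mathcal H]\frac{\partial_\alpha f}{\zeta_\alpha}$ for \eqref{formula of Dt Q}. The one place you diverge is cosmetic: in extracting the symmetrized operator you split $\operatorname{Re}\{\zeta_\alpha D_t\bar\zeta\}=\tfrac12\zeta_\alpha D_t\bar\zeta+\tfrac12\bar\zeta_\alpha D_t\zeta$ and apply $(I-\zeta_\alpha\mathcal H\tfrac1{\zeta_\alpha})$ to each half (killing the holomorphic half outright), whereas the paper solves the implicit equations $Q_\alpha=(I-\zeta_\alpha\mathcal H\tfrac1{\zeta_\alpha})\operatorname{Re}\{\zeta_\alpha D_t\bar\zeta\}$ and $iQ_\alpha=(I-\zeta_\alpha\mathcal H\tfrac1{\zeta_\alpha})\operatorname{Im}\{\zeta_\alpha D_t\bar\zeta\}$ for the real and imaginary parts and then recombines — these are algebraically equivalent (your form and the paper's differ by $(I-\zeta_\alpha\mathcal H\tfrac1{\zeta_\alpha})(\zeta_\alpha D_t\bar\zeta)=0$), so either is fine. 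One small caution: your description of the $[D_t,\mathcal H]$ commutator as arising from a cancellation between a $D_t\zeta$ kernel and a $b$ kernel is more tangled than it needs to be; the clean statement $[D_t,\mathcal H]f=[D_t\zeta,\mathcal H]\frac{\partial_\alpha f}{\zeta_\alpha}$ follows in one integration by parts after observing that $D_t$ hits the kernel as a total $\beta$-derivative, and stating it that way would avoid the appearance of gaps.
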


\begin{proof}
    We first prove (\ref{formula of Dt Q}). Recall that
    \[
    \phi_t+\frac{1}{2}|z_t|^2+y=P,
    \]
    and we restrict it to the boundary to obtain
    \[
    \psi_t-\frac{1}{2}|z_t|^2+y=0,
    \]
    since the pressure vanishes on the boundary. This implies
    \[
    D_t(\psi\circ\kappa^{-1})-\frac{1}{2}|D_t\zeta|^2+\eta=0
    \]
    where $\eta=Im\ \zeta$. Apply both sides by $I-\mathcal{H}$ to get
    \[
    (I-\mathcal{H})D_t(\psi\circ\kappa^{-1})=\frac{1}{2}(I-\mathcal{H})|D_t\zeta|^2-\frac{1}{2i}(I-\mathcal{H})(\zeta-\bar{\zeta});
    \]
    we further deduce that
    \begin{align*}
        (I-\mathcal{H})D_t(\psi\circ\kappa^{-1})
        &=D_tQ+[D_t,\mathcal{H}](\psi\circ\kappa^{-1}),\\
        &=D_tQ+[D_t\zeta,\mathcal{H}]\frac{\partial_\alpha(\psi\circ\kappa^{-1})}{\zeta_\alpha}.
    \end{align*}
    Since $\nabla\psi(\zeta,t)=v(\zeta,t)=D_t\zeta$, we have
    \begin{equation}\label{formula for psi_alpha}
        \partial_\alpha(\psi\circ\kappa^{-1})=\partial_\alpha\psi(\zeta(\alpha,t),t)=\nabla\psi(\zeta,t)\cdot\partial_\alpha\zeta=Re\left\{\bar{\zeta}_\alpha D_t\zeta\right\},
    \end{equation}
    and (\ref{formula of Dt Q}) is thus proved.

    Next we derive (\ref{formula of Q_alpha}). According to (\ref{formula for psi_alpha}) and $\partial_\alpha\mathcal{H}f=\zeta_\alpha\mathcal{H}\frac{\partial_\alpha f}{\zeta_\alpha}$, there holds
    \begin{align*}
        Q_\alpha
        =\partial_\alpha(I-\mathcal{H})(\psi\circ\kappa^{-1})
        =\left(I-\zeta_\alpha\mathcal{H}\frac{1}{\zeta_\alpha}\right)\partial_\alpha(\psi\circ\kappa^{-1})
        =\left(I-\zeta_\alpha\mathcal{H}\frac{1}{\zeta_\alpha}\right)Re\{\zeta_\alpha D_t\bar{\zeta}\},
    \end{align*}
    and thus
    \[
    Re\{\zeta_\alpha D_t\bar{\zeta}\}
    =Q_\alpha+\zeta_\alpha\mathcal{H}\frac{1}{\zeta_\alpha}Re\{\zeta_\alpha D_t\bar{\zeta}\}
    =ReQ_\alpha+\frac{1}{2}\left(\zeta_\alpha\mathcal{H}\frac{1}{\zeta_\alpha}+\bar{\zeta}_\alpha\bar{\mathcal{H}}\frac{1}{\bar{\zeta}_\alpha}\right)Re\{\zeta_\alpha D_t\bar{\zeta}\}.
    \]
    Also, since $(I-\mathcal{H})D_t\bar{\zeta}=0$, there holds
    \[
    \left(I-\zeta_\alpha\mathcal{H}\frac{1}{\zeta_\alpha}\right)(\zeta_\alpha D_t\bar{\zeta})=0,
    \]
    hence
    \[
    Q_\alpha=-i\left(I-\zeta_\alpha\mathcal{H}\frac{1}{\zeta_\alpha}\right)Im\{\zeta_\alpha D_t\bar{\zeta}\}.
    \]
    Similarly, from such formula we derive that
    \[
    Im\{\zeta_\alpha D_t\bar{\zeta}\}
    =iQ_\alpha+\zeta_\alpha\mathcal{H}\frac{1}{\zeta_\alpha}Im\{\zeta_\alpha D_t\bar{\zeta}\}
    =Re\{iQ_\alpha\}+\frac{1}{2}\left(\zeta_\alpha\mathcal{H}\frac{1}{\zeta_\alpha}+\bar{\zeta}_\alpha\bar{\mathcal{H}}\frac{1}{\bar{\zeta}_\alpha}\right)Im\{\zeta_\alpha D_t\bar{\zeta}\}.
    \]
    Combining them yields (\ref{formula of Q_alpha}).
\end{proof}

\subsection{Formulae for important quantities}\label{section:structure}
To obtain a closed system in \eqref{system_newvariables}, we need formulae for quantities such as $b$, $A$, $D_tb$, $\frac{a_t}{a}\circ \kappa^{-1}$ in terms of the unknown $\zeta$. We record the formulae in the following and refer the readers to section 2 of \cite{Wu2009}.

\subsubsection{Formula for the quantities $b$ and $D_tb$}
We have 
\begin{equation}\label{forb}
\begin{split}
(I-\mathcal{H})b
=& -[D_t\zeta, \mathcal{H}]\frac{\bar{\zeta}_{\alpha}-1}{\zeta_{\alpha}}.
\end{split}
\end{equation}
And
\begin{equation}
\begin{split}
(I-\mathcal{H})D_t b=&[D_t\zeta,\mathcal{H}]\frac{\partial_{\alpha}b}{\zeta_{\alpha}}-[D_t^2\zeta,\mathcal{H}]\frac{\bar{\zeta}_{\alpha}-1}{\zeta_{\alpha}}-[D_t\zeta,\mathcal{H}]\frac{\partial_{\alpha}D_t\bar{\zeta}}{\zeta_{\alpha}}\\
&+\frac{1}{\pi i}\int \Big(\frac{D_t\zeta(\alpha,t)-D_t\zeta(\beta,t)}{\zeta(\alpha,t)-\zeta(\beta,t)}\Big)^2(\bar{\zeta}_{\beta}(\beta,t)-1)d\beta.
\end{split}
\end{equation}
So $b$ and $D_tb$ is quadratic.

\subsubsection{The quantity $A$} We have
\begin{equation}
\begin{split}
(I-\mathcal{H})A=&1+i[D_t\zeta,\mathcal{H}]\frac{\partial_{\alpha}D_t\bar{\zeta}}{\zeta_{\alpha}}+i[D_t^2\zeta,\mathcal{H}]\frac{\bar{\zeta}_{\alpha}-1}{\zeta_{\alpha}}.
\end{split}
\end{equation}
So $A-1$ is quadratic.

\subsubsection{The quantity $\frac{a_t}{a}\circ\kappa^{-1}$} We have
\begin{equation}\label{atanew}
\begin{split}
&(I-\mathcal{H})\frac{a_t}{a}\circ\kappa^{-1}A\bar{\zeta}_{\alpha}\\=&2i[D_t^2\zeta, \mathcal{H}]\frac{\partial_{\alpha}D_t\bar{\zeta}}{\zeta_{\alpha}}+2i[D_t\zeta, \mathcal{H}] \frac{\partial_{\alpha}D_t^2\bar{\zeta}}{\zeta_{\alpha}}-\frac{1}{\pi }\int \Big(\frac{D_t\zeta(\alpha,t)-D_t\zeta(\beta,t)}{\zeta(\alpha,t)-\zeta(\beta,t)}\Big)^2 (D_t\bar{\zeta})_{\beta}d\beta.
\end{split}
\end{equation}
So $\frac{a_t}{a}\circ\kappa^{-1}$ is quadratic.

	\subsection{Cubic structure}
To obtain the long-time existence of $\zeta$, we need a cubic structure for the system \eqref{system_newvariables}.
%we can derive a cubic structure for its equation.

Setting $\theta:=(I-\mathcal{H})(\zeta-\bar{\zeta})$. Since $(I-\mathcal{H})(\bar{\zeta}-\alpha)=0$, then one has
\begin{equation}
    \theta=(I-\mathcal{H})(\zeta-\alpha).
\end{equation}
In \cite{Wu2009}, Wu derived the following cubic structure.
	\begin{equation}\label{cubic}
	\begin{split}
	(D_t^2-iA\partial_{\alpha})\theta=&-2[D_t\zeta, \mathcal{H}\frac{1}{\zeta_{\alpha}}+\bar{\mathcal{H}}\frac{1}{\bar{\zeta}_{\alpha}}]\partial_{\alpha}D_t\zeta+\frac{1}{ \pi i}\int \Big(\frac{D_t\zeta(\alpha)-D_t\zeta(\beta)}{\zeta(\alpha,t)-\zeta(\beta,t)}\Big)^2\partial_{\beta}(\zeta-\bar{\zeta})\,d\beta\\
	:=& G_1+G_2,
	\end{split}
	\end{equation}
where $G_1$ and $G_2$ are cubic or higher-power nonlinearities. 

\subsection{Vector fields and equations for vector fields} We need to use the following vector fields.
\begin{defn}
    Let $$L_0=\frac{1}{2}t\partial_t+\alpha\partial_{\alpha}, \quad \Omega_0=\alpha\partial_t+\frac{1}{2}ti.$$
Define
$$\tilde{\Omega}_0:=\bar{\Omega}_0=\alpha\partial_t-\frac{1}{2}ti.$$
\end{defn}
\begin{rem}
    We make the following remarks.
    \begin{itemize}
        \item [(1)] We use the formula $\Omega_0\partial_{\alpha}=L_0\partial_t-\frac{t}{2}(\partial_t^2-i\partial_{\alpha})$ to derive the estimates for $\Omega_0$;

        \item [(2)] If $f$ is almost anti-holomorphic, then we control $\Omega_0 f$. If $f$ is almost holomorphic, then we control $\tilde{\Omega}_0f$. 
    \end{itemize}
\end{rem}

Let $\mathcal{P}:=D_t^2-iA\partial_{\alpha}$. We have 
\begin{equation}
\begin{split}
    [L_0, \mathcal{P}]=& -\mathcal{P}+\Big\{L_0D_tb-(L_0b-\frac{1}{2}b)b_{\alpha}\Big\}\partial_{\alpha}\\
    &+\Big(L_0b-\frac{1}{2}b\Big)\Big(D_t\partial_{\alpha}+\partial_{\alpha}D_t\Big)-i(L_0A)\partial_{\alpha}.
\end{split}
\end{equation}

\subsection{Energy functional}
Define
\begin{equation}
E_k^{\theta}:=\int \frac{1}{A}|D_t\theta_k|^2+i\theta_k\partial_\alpha\bar{\theta}_kd\alpha.
\end{equation}
where $\theta_k=\partial_\alpha^k\theta$. By Wu's basic energy lemma (lemma 4.1, \cite{Wu2009}), we have
\begin{equation}\label{energy1}
\frac{d}{dt}E_k^{\theta}=Re\int\frac{2}{A}D_t\bar{\theta}_kG_k^\theta-\int \frac{1}{A}\frac{a_t}{a}\circ\kappa^{-1}|D_t\theta_k|^2
\end{equation}

\noindent Define
\begin{equation}
E_k^{\sigma}:=\int \frac{1}{A}|D_t\sigma_k|^2+i\sigma_k\partial_\alpha\bar{\sigma}_kd\alpha.
\end{equation}
where $\sigma_k=\partial_\alpha^k\sigma=\partial_\alpha^kD_t\theta$.
Then we have 
\begin{equation}\label{energy2}
\frac{d}{dt}E_k^{\sigma}=Re\int\frac{2}{A}D_t\bar{\sigma}_k G_k^\sigma-\int \frac{1}{A}\frac{a_t}{a}\circ\kappa^{-1}|D_t\sigma_k|^2
\end{equation}
Define
\begin{equation}
\mathcal{E}_s:=\sum_{k=0}^s(E_k^{\theta}+E_k^{\sigma}).
\end{equation}
Define
\begin{equation}\label{energy3}
E_{L_0\theta}^k=\int \frac{1}{A}|D_t L_0\theta_k|^2+iL_0\theta_k\partial_\alpha L_0\bar{\theta}_kd\alpha
\end{equation}
Define
\begin{equation}\label{energy4}
E_{L_0\sigma}^k=\int \frac{1}{A}|D_t L_0\sigma_k|^2+iL_0\sigma_k\partial_\alpha L_0\bar{\sigma}_kd\alpha
\end{equation}
Define
\begin{equation}
    \mathcal{E}_s^{L_0}:=\sum_{k=0}^{s-1}(E_{L_0\theta}^{k}+E_{L_0\sigma}^k).
\end{equation}

\section{Main proposition and the proof of the Main Theorem}

\subsection{Bootstrap assumptions}

We make the following bootstrap assumptions. Given $T\gg 1$, $0<\delta_0\ll 1$ sufficiently small. On $[0,T]$, assume
\begin{enumerate}
    \item 
\begin{equation}\label{bootstrap-1}
    \sup_{t\in [0,T]}\Big(\norm{\Lambda(\zeta-\alpha)}_{H^s}+\norm{D_t^2\zeta}_{H^{s}}+\norm{D_t\zeta}_{H^{s+1/2}}\Big)\leq 100\epsilon.
\end{equation}

\item  For all $t\in [0,T]$,
\begin{equation}\label{bootstrap-2}
    \Big(\norm{L_0D_t^2\zeta}_{H^{s-2}} +\norm{L_0D_t\zeta}_{H^{s-3/2}}\Big)\leq 100\epsilon (1+t)^{\delta_0},
\end{equation}

\begin{equation}\label{bootstrap-2'}
    \Big(\norm{L_0D_t^2\zeta}_{\dot{H}^{s-1}}+\norm{L_0D_t\zeta}_{\dot{H}^{s-1}}\Big)\leq 100\epsilon (1+t)^{2\delta_0},
\end{equation}

and
\begin{equation}
     \norm{\Omega_0\theta_{\alpha}}_{H^{s-2}}+\norm{\partial_{\alpha}\Omega_0  Q_{\alpha}}_{H^{s-3}}\leq 100\epsilon (1+t)^{\delta_0},
\end{equation}

\begin{equation}
     \norm{\Omega_0\theta_{\alpha}}_{\dot{H}^{s-1}}+\norm{\Omega_0  Q_{\alpha}}_{\dot{H}^{s-1}}\leq 100\epsilon (1+t)^{2\delta_0}.
\end{equation}

\item For all $t\in [0,T]$, 
\begin{equation}\label{bootstrap-3}
    \norm{(\partial_{\alpha}D_t\zeta, \zeta_{\alpha}-1, D_t^2\zeta)}_{W^{s-3,\infty}\times W^{s-2,\infty}\times W^{s-2,\infty}}\leq 100\epsilon t^{-1/2}.
\end{equation}
\end{enumerate}

\subsection{Main proposition}

\begin{proposition}\label{main-prop}
    Under the bootstrap assumptions \eqref{bootstrap-1}-\eqref{bootstrap-2}-\eqref{bootstrap-3}, we have the improved bounds
    \begin{itemize}

\item [(1)] 
\begin{equation}\label{improve-1}
    \sup_{t\in [0,T+1]}\Big(\norm{\Lambda(\zeta-\alpha)}_{H^s}+\norm{D_t^2\zeta}_{H^{s}}+\norm{D_t\zeta}_{H^{s+1/2}}\Big)\leq 10\epsilon.
\end{equation}

\item [(2)] For all $t\in [0,T+1]$,
\begin{equation}\label{improve-2}
    \Big( \norm{L_0D_t^2\zeta}_{H^{s-2}} +\norm{L_0D_t\zeta}_{H^{s-2}}\Big)\leq 10\epsilon (1+t)^{\delta_0},
\end{equation}
\begin{equation}\label{improve-2'}
    \Big( \norm{L_0D_t^2\zeta}_{\dot{H}^{s-1}} +\norm{L_0D_t\zeta}_{\dot{H}^{s-1}}\Big)\leq 10\epsilon (1+t)^{2\delta_0},
\end{equation}

and
\begin{equation}
     \norm{\Omega_0\theta_{\alpha}}_{H^{s-2}}+\norm{\partial_{\alpha}\Omega_0  Q_{\alpha}}_{H^{s-3}}\leq 10\epsilon (1+t)^{\delta_0},
\end{equation}

\begin{equation}
     \norm{\Omega_0\theta_{\alpha}}_{\dot{H}^{s-1}}+\norm{\Omega_0  Q_{\alpha}}_{\dot{H}^{s-1}}\leq 10\epsilon (1+t)^{2\delta_0}.
\end{equation}

\item [(3)] For all $t\in [0,T+1]$, 
\begin{equation}\label{improve-3}
    \norm{(\partial_{\alpha}D_t\zeta, \zeta_{\alpha}-1, D_t^2\zeta)}_{W^{s-3,\infty}\times W^{s-2,\infty}\times W^{s-2,\infty}}\leq 10\epsilon t^{-1/2}
\end{equation}
\end{itemize}
\end{proposition}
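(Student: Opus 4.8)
\textbf{Proof plan for Proposition \ref{main-prop}.} The structure of the argument is a standard bootstrap/continuity scheme: assuming the bounds \eqref{bootstrap-1}--\eqref{bootstrap-3} hold on $[0,T]$ with constant $100\epsilon$ (and $100\epsilon(1+t)^{\delta_0}$, etc.), I will prove the strictly better bounds \eqref{improve-1}--\eqref{improve-3} with constant $10\epsilon$ on $[0,T+1]$, so that a continuation argument closes the estimates for all time. The three parts must be proved essentially simultaneously, because the energy estimates feed on the decay estimates and vice versa; the logical ordering I would adopt is: (i) first extract, from the bootstrap assumptions alone, good bounds on all the auxiliary quantities $b$, $D_tb$, $A-1$, $\frac{a_t}{a}\circ\kappa^{-1}$, $Q_\alpha$, $D_tQ$ via the formulae of \S\ref{section:structure} and Proposition \ref{formula for derivatives of Q}, together with the commutator/localization estimates (Lemma \ref{decouple}, Lemma \ref{decouple-2}) — in particular establishing $\norm{\frac{a_t}{a}\circ\kappa^{-1}}_{L^\infty}\lesssim \epsilon^2 t^{-1-\delta}$ (Proposition \ref{norm of at/a}); (ii) then run the energy estimates for $\mathcal{E}_s$ and $\mathcal{E}_s^{L_0}$; (iii) finally upgrade to the decay bounds \eqref{improve-3}.

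For part (1), I would use the energy identities \eqref{energy1}--\eqref{energy2} for $E_k^\theta$ and $E_k^\sigma$, $0\le k\le s$. The key input is Proposition \ref{prop:intro}: the commutator term $G_1$ and the error in $G_2$ are $O(\epsilon^3 t^{-1-\delta})_{L^2}$ after applying $I-\mathcal{H}$, so that the first term on the right of \eqref{energy1} is $\lesssim \epsilon^3 t^{-1-\delta}\sqrt{\mathcal{E}_s}$ except for the genuinely cubic leading piece $i\frac{t}{\alpha}|D_t^2\zeta|^2 D_t\theta$, which is purely imaginary against $\frac{2}{A}D_t\bar\theta_k$ up to commutators with $\partial_\alpha^k$ and with multiplication by $\frac{t}{\alpha}|D_t^2\zeta|^2$ — those commutators being lower order and controlled by the decay bound \eqref{bootstrap-3}. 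The second term in \eqref{energy1} is handled by $\norm{\frac{a_t}{a}\circ\kappa^{-1}}_{L^\infty}\lesssim \epsilon^2 t^{-1-\delta}$. Combining, $\frac{d}{dt}\mathcal{E}_s\lesssim \epsilon^2 t^{-1-\delta}\mathcal{E}_s + \epsilon^3 t^{-1-\delta}\sqrt{\mathcal{E}_s}$, whence Gr\"onwall gives $\mathcal{E}_s(t)\le \mathcal{E}_s(0)+C\epsilon^4\le (10\epsilon)^2$ for $\epsilon$ small (this is the almost-conservation law \eqref{almost conserved law}). The passage from $\mathcal{E}_s$ back to the stated Sobolev norms uses the coercivity of $E_k^\theta, E_k^\sigma$ (Wu's basic lemma) plus the elliptic relations between $\theta$, $\zeta-\alpha$, $D_t\zeta$, $D_t^2\zeta$ and the fact that $I-\mathcal{H}$ is invertible on the relevant subspaces modulo controllable errors.

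For part (2), I would apply $L_0$ (and, through the identity $\Omega_0\partial_\alpha=L_0\partial_t-\frac{t}{2}(\partial_t^2-i\partial_\alpha)$, also $\Omega_0$) to the cubic equation \eqref{cubic}, using the commutator $[L_0,\mathcal{P}]$ computed in \S 2.7. The resulting equation for $L_0\theta$ has source terms $L_0G+G$ plus the commutator terms involving $L_0b$, $L_0D_tb$, $L_0A$; running the same energy identity for $E_{L_0\theta}^k$, $E_{L_0\sigma}^k$ gives $\frac{d}{dt}\mathcal{E}_s^{L_0}\lesssim \epsilon^2 t^{-1+\delta_0}\mathcal{E}_s^{L_0}+(\text{source})$, which integrates to the $t^{\delta_0}$ (resp.\ $t^{2\delta_0}$ for the homogeneous norm) growth. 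The main obstacle — and the reason the paper introduces the Transition-of-Derivatives method — is the top-order term $L_0b\,\partial_\alpha^s D_t^2\theta$ (schematically $D_t\zeta\,\partial_\alpha^s D_t^2\theta\,L_0(\bar\zeta_\alpha-1)$), which a naive $L^\infty\times L^\infty\times L^2$ split only bounds by $\epsilon t^{-1/4}\norm{L_0(\zeta_\alpha-1)}_{H^1}\norm{D_t^2\zeta}_{H^s}$, insufficient for integrability. Here I would invoke Lemma \ref{transition-derivatives}: on the region $|\alpha|\ge t^{3/4}$, transfer the $\partial_\alpha$-derivatives off $D_t^2\theta$ onto $D_t\zeta$ using Lemma \ref{lemma:transit2} (legitimate because $L_0 D_t\zeta$, $\Omega_0 D_t\zeta$ grow only like $t^{\delta_0}$), gaining $\norm{D_t\zeta\,\partial_\alpha^s D_t^2\zeta}_{L^\infty(|\alpha|\ge t^{3/4})}\lesssim\epsilon^2 t^{-1}$; on $|\alpha|<t^{3/4}$, use \eqref{raise the derivative} to get $\norm{D_t\zeta}_{L^\infty(|\alpha|<t^{3/4})}\lesssim \epsilon t^{-1+\delta_0}\ln t$, so that altogether $\norm{D_t\zeta\,\partial_\alpha^s D_t^2\theta\,L_0(\bar\zeta_\alpha-1)}_{L^2}\lesssim \epsilon^2 t^{-1+\delta_0}\ln t$, which Gr\"onwall absorbs. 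Part (3) is then obtained by feeding the improved $\mathcal{E}_s$ and $\mathcal{E}_s^{L_0}$ bounds into the dichotomy of \S 3.x: for $|\alpha|\notin[t^{1-\mu},t^{1+\mu}]$ the pointwise bounds \eqref{region:ggt}--\eqref{region:llt} already give decay faster than $t^{-1/2}$, while for $|\alpha|$ comparable to $t$ one uses the test-function/ODE argument for $\tilde E(t)$ around \eqref{decay nu 1}--\eqref{decay nu 3} (relying on the reformulation \eqref{decay:reformulate}, itself a consequence of the localization lemma), obtaining $|D_t^2\theta(\nu t,t)|\lesssim \epsilon t^{-1/2}$ and similarly for $\partial_\alpha D_t\theta$; converting from $\theta$-quantities to $\zeta_\alpha-1$, $D_t^2\zeta$, $\partial_\alpha D_t\zeta$ finishes \eqref{improve-3}. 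I expect step (ii)--(2), i.e. closing the top-order $L_0$-energy via the transition-of-derivatives device, to be the crux; everything else is bookkeeping with the localization lemmas and Wu-type commutator estimates.
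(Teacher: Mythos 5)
Your proposal matches the paper's actual route: the paper's one-line proof simply cites Corollaries \ref{sec 5:energy bound}, \ref{sec 6:L0bound}, \ref{cor:Omega_0}, \ref{sec 7:decay bound}, Lemma \ref{bouns decay for small or big alpha}, and local wellposedness, and your sketch (auxiliary-quantity estimates $\to$ almost-conserved $\mathcal{E}_s$ via the localization lemma $\to$ vector-field energies with the transition-of-derivatives device for the top-order $L_0 b\,\partial_\alpha^s D_t^2\theta$ term $\to$ the dichotomy plus $\tilde E(t)$ argument for decay) is exactly what those citations unpack in \S\S 4--7. The only thing worth making explicit is the role of local wellposedness in extending from $[0,T]$ to $[0,T+1]$, which you gesture at via the continuity-argument framing but do not spell out.
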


\begin{proof}
    The proposition follows from Corollary \ref{sec 5:energy bound}, Corollary \ref{sec 6:L0bound}, Corollary \ref{cor:Omega_0}, Corollary \ref{sec 7:decay bound}, Lemma \ref{bouns decay for small or big alpha}, and the local wellposedness of the water wave system.
\end{proof}

\subsection{Proof of the main theorem}
Given the main proposition, we prove the main theorem.
\begin{proof}[Proof of Theorem \ref{main}]
Denote
\begin{equation}
    \mathcal{T}:=\{T\in [0,\infty): \forall t\in [0,T], \eqref{bootstrap-1}-\eqref{bootstrap-2}-\eqref{bootstrap-2'}-\eqref{bootstrap-3}~hold\}.
\end{equation}
Since $0\in \mathcal{T}$, so $\mathcal{T}\neq \emptyset$. Clearly, $\mathcal{T}$ is a closed set. Let $T\in \mathcal{T}$, by Proposition \ref{main-prop}, $\mathcal{T}$ is also an open set in $[0,\infty)$. So $\mathcal{T}=[0,\infty)$. This completes the proof of Theorem \ref{main}.
\end{proof}

\section{Estimates for relevant quantities}
In this section, if not specified, we will always assume the bootstrap assumptions \eqref{bootstrap-1}-\eqref{bootstrap-2}-\eqref{bootstrap-2'}-\eqref{bootstrap-3}.

First, we show that $\zeta(\alpha,t)-\zeta(\beta,t)$ behaves like $\alpha-\beta$ in the following sense.
\begin{lemma}\label{nearly alpha}
    We have 
    \begin{equation}
        \sup_{\alpha,\beta\in \mathbb{R}}\Big|\frac{\zeta(\alpha,t)-\zeta(\beta,t)}{\alpha-\beta}-1\Big|\leq C\epsilon (1+t)^{-1/2}.
    \end{equation}
\end{lemma}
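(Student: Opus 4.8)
The plan is to write $\zeta(\alpha,t) - \zeta(\beta,t) = (\alpha-\beta) + \big((\zeta-\alpha)(\alpha,t) - (\zeta-\beta)(\beta,t)\big)$, so that the claim reduces to
\[
\sup_{\alpha\neq\beta}\left|\frac{(\zeta-\mathrm{id})(\alpha,t) - (\zeta-\mathrm{id})(\beta,t)}{\alpha-\beta}\right| \le C\epsilon(1+t)^{-1/2},
\]
which is precisely a bound on the $L^\infty$ norm of the difference quotient of the function $\xi(\cdot,t) := \zeta(\cdot,t) - \alpha$. First I would record the elementary fact that for any Lipschitz $f$ one has $\sup_{\alpha\neq\beta}|f(\alpha)-f(\beta)|/|\alpha-\beta| = \|f'\|_{L^\infty}$ (mean value inequality in one direction, taking $\beta\to\alpha$ in the other). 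Hence it suffices to show $\|\zeta_\alpha(\cdot,t) - 1\|_{L^\infty} \le C\epsilon(1+t)^{-1/2}$.

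For $t\ge 1$ this is immediate from the decay bootstrap assumption \eqref{bootstrap-3}, which gives $\|\zeta_\alpha - 1\|_{W^{s-2,\infty}} \le 100\epsilon t^{-1/2}$, and in particular controls the $L^\infty$ norm (since $s\ge 10$ means $s-2\ge 0$). For the range $0\le t\le 1$, I would instead invoke the energy bootstrap assumption \eqref{bootstrap-1}: there $\|\Lambda(\zeta-\alpha)\|_{H^s} \le 100\epsilon$, and combined with $\|\zeta_\alpha - 1\|$ being controlled (via $\theta = (I-\mathcal{H})(\zeta-\alpha)$ and the holomorphicity relation $(I-\mathcal{H})(\bar\zeta - \alpha) = 0$, one recovers $\zeta_\alpha - 1$ from $\theta_\alpha$ and lower-order quadratic terms, all bounded by $\epsilon$ in $H^{s-1}$), Sobolev embedding $H^{s-1}\hookrightarrow L^\infty$ for $s\ge 2$ gives $\|\zeta_\alpha - 1\|_{L^\infty} \le C\epsilon$ uniformly on $[0,1]$; since $(1+t)^{-1/2} \ge 2^{-1/2}$ there, this is the desired bound after adjusting $C$. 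Patching the two ranges yields the claim for all $t\ge 0$.

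The only genuine subtlety — and where I would be most careful — is extracting the pointwise bound on $\zeta_\alpha - 1$ from the quantities that are actually bootstrapped, since the bootstrap is phrased in terms of $\theta$, $D_t\zeta$, $D_t^2\zeta$ and their vector-field variants rather than $\zeta_\alpha$ directly. The mechanism is that $\theta_\alpha = (I-\mathcal{H})(\zeta_\alpha - 1) - [\mathcal{H}, \cdot]$-type commutator terms, and since $\bar\zeta - \alpha$ is holomorphic, $(I+\bar{\mathcal{H}})(\bar\zeta_\alpha - 1) = $ (commutator), so $\zeta_\alpha - 1$ differs from $\mathrm{Re}$ or $\mathrm{Im}$ combinations of $\theta_\alpha$ only by quadratic commutator expressions which, by Lemma \ref{nearly alpha}'s neighbors (the chord-arc bounds guaranteeing $\mathcal{H}$ is bounded on $L^2$, here implicitly available since the $t\le 1$ smallness keeps $\zeta$ close to the identity), are $O(\epsilon^2)$ in the relevant norms. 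In fact, this circularity is only apparent: Lemma \ref{nearly alpha} is the first estimate established under the bootstrap, and the cleanest route avoids it entirely by simply noting that $t\le 1$ smallness of the initial data propagates on the $O(1)$ time interval via local well-posedness, so that the chord-arc condition \eqref{chordchordarcarc} holds with constants close to $1$ and Sobolev embedding applies directly. I expect the write-up to split cleanly as: (i) reduce to $\|\zeta_\alpha - 1\|_{L^\infty}$; (ii) handle $t\ge 1$ by \eqref{bootstrap-3}; (iii) handle $t\le 1$ by \eqref{bootstrap-1} plus Sobolev embedding.
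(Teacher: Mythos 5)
Your reduction of the difference quotient to $\|\zeta_\alpha-1\|_{L^\infty}$ followed by the bootstrap bound is exactly the paper's one-line proof. The lengthy digression about recovering $\zeta_\alpha-1$ from $\theta_\alpha$ and commutators is an unnecessary detour, since bootstrap assumption \eqref{bootstrap-3} already bounds $\|\zeta_\alpha-1\|_{W^{s-2,\infty}}$ directly, and for $t\le 1$ the $H^{s-1/2}$ control coming from $\|\Lambda(\zeta-\alpha)\|_{H^s}$ in \eqref{bootstrap-1} plus Sobolev embedding gives the $L^\infty$ bound with no need to invoke $\theta$ at all.
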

\begin{proof}
    We have
    \begin{align*}
        \Big|\frac{\zeta(\alpha,t)-\zeta(\beta,t)}{\alpha-\beta}-1\Big|=&\Big| \frac{\zeta(\alpha,t)-\alpha-(\zeta(\beta,t)-\beta)}{\alpha-\beta}\Big|\leq \norm{\zeta_{\alpha}-1}_{\infty}\leq C\epsilon t^{-1/2}.
    \end{align*}
\end{proof}

\begin{lemma}\label{decay of D_tzeta}
    Assume the bootstrap assumptions. There holds
    \begin{equation}\label{equ: decay dtzeta}
        \norm{D_t\zeta}_{L^\infty}\le C\epsilon t^{-\frac{1}{4}},
    \end{equation}
    \begin{equation}\label{equ: decay second highest}
        \norm{\partial_\alpha^{s-1}D_t\zeta}_{L^\infty}\le C\epsilon t^{-\frac{1}{4}}\ln t.
    \end{equation}
    and
    \begin{equation}\label{equ: decayhighest}
       \norm{\partial_{\alpha}^{s-1}D_t^2\zeta}_{L^{\infty}} +\norm{\partial_\alpha^s\zeta}_{L^\infty}\le C\epsilon t^{-\frac{1}{6}}\ln t.
    \end{equation}
\end{lemma}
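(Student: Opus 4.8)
\textbf{Proof proposal for Lemma \ref{decay of D_tzeta}.}

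The plan is to extract pointwise decay from the $L^2$-type bounds in the bootstrap assumptions \eqref{bootstrap-1}--\eqref{bootstrap-2'}, using the weighted interpolation inequalities \eqref{region:ggt}--\eqref{region:llt} together with the vector-field structure recorded in \S\ref{section:transition}. The crucial point is that we do \emph{not} have uniform-in-time control of $L_0 f$ and $\Omega_0 f$ for $f = D_t\zeta, D_t^2\zeta$; we only have the slow-growth bounds $\|L_0 f\|, \|\Omega_0 f\| \lesssim \epsilon t^{\delta_0}$ (for $\Omega_0$, derived from $L_0\partial_t$ and $(\partial_t^2 - i\partial_\alpha)$ via the identity $\Omega_0\partial_\alpha = L_0\partial_t - \frac{t}{2}(\partial_t^2 - i\partial_\alpha)$). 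So the weighted estimates only give the sharp $t^{-1/2}$ decay away from the light cone $|\alpha|\sim t$; near the cone one loses powers of $t^{\delta_0}$, and the stated exponents $-1/4$, $-1/6$ reflect exactly the worst-case balance. Note that $D_t\zeta$ is (almost) anti-holomorphic, so after the profile substitution $D_t\zeta = e^{-it^2/4\alpha}F$ we should control $\Omega_0 D_t\zeta$; the bootstrap hypotheses supply $\|\Omega_0\theta_\alpha\|_{H^{s-2}}$, $\|\Omega_0 Q_\alpha\|_{\dot H^{s-1}}$ etc., which together with $D_t\zeta = \bar Q_\alpha + \text{(quadratic)}$ from \eqref{formula of Q_alpha} give the vector-field control we need on $D_t\zeta$ and its derivatives.

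First I would prove \eqref{equ: decay dtzeta}. Split into the regions $|\alpha|\le t^{1-\mu}$, $t^{1-\mu}\le |\alpha|\le t^{1+\mu}$, and $|\alpha|\ge t^{1+\mu}$ for a small fixed $\mu$ with $\delta_0 \ll \mu \ll 1$. On the outer regions, apply \eqref{region:ggt} (for $|\alpha|\ge t$, so in particular $|\alpha|\ge t^{1+\mu}$) and \eqref{region:llt} (for $|\alpha|\le t$, in particular $|\alpha| \le t^{1-\mu}$) to $f = D_t\zeta$ and $f = \partial_\alpha D_t\zeta$; using $\|L_0 D_t\zeta\|_{H^1} + \|\Omega_0 D_t\zeta\|_{H^1} \lesssim \epsilon t^{\delta_0}$ (a consequence of \eqref{bootstrap-2}, \eqref{formula of Q_alpha}, and Sobolev embedding, since $s - 3/2 \ge 1$), one gets $|D_t\zeta(\alpha,t)| \lesssim \epsilon t^{-1/2 - \mu/2 + \delta_0}$ there, which beats $t^{-1/4}$. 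For the middle region $t^{1-\mu}\le|\alpha|\le t^{1+\mu}$ the weighted estimates only yield $t^{-1/2+C\delta_0}$ — wait, more carefully: in this region \eqref{region:llt} gives terms of size $\frac{|\alpha|}{t^2}\|L_0 f\|_{H^1} \sim t^{-1+\mu+\delta_0}$ and $\frac1t\|\Omega_0 f\|_{H^1}\sim t^{-1+\delta_0}$, and the half-derivative terms $\frac{|\alpha|^{3/2}}{t^2}\|L_0\partial_\alpha f\|_{L^2}\sim t^{-1/2 + (3/2)\mu + \delta_0}$, $\frac{|\alpha|^{1/2}}{t}\|\Omega_0\partial_\alpha f\|_{L^2}\sim t^{-1/2+\mu/2+\delta_0}$; choosing $\mu$ small enough that $(3/2)\mu + \delta_0 < 1/4$ (say $\mu < 1/8$, $\delta_0 < 1/16$) gives $|D_t\zeta|\lesssim \epsilon t^{-1/4}$ uniformly. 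Combining the three regions proves \eqref{equ: decay dtzeta}. For \eqref{equ: decay second highest}, repeat with $f = \partial_\alpha^{s-1} D_t\zeta$; now one needs $\|L_0\partial_\alpha^{s-1}D_t\zeta\|_{L^2} + \|\Omega_0\partial_\alpha^{s-1}D_t\zeta\|_{L^2}\lesssim \epsilon t^{2\delta_0}$, which is exactly the $\dot H^{s-1}$ bound \eqref{bootstrap-2'} (plus the $\Omega_0 Q_\alpha$ bound and \eqref{formula of Q_alpha}); the extra $\ln t$ comes from summing the dyadic pieces of the region decomposition (or from a logarithmic loss in the interpolation when $|\alpha|$ ranges over a full dyadic scale near $t$). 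The exponent $-1/4$ persists because the half-derivative term $\tfrac{|\alpha|^{3/2}}{t^2}\|L_0\partial_\alpha(\partial_\alpha^{s-1}D_t\zeta)\|_{L^2}$ would need $\partial_\alpha^s D_t\zeta \in \dot H^{1}\sim H^{s+1/2}$, available from \eqref{bootstrap-1}, with weight $t^{-1/2+(3/2)\mu}$ — one absorbs the $\mu$ into the $\ln t$ by optimizing $\mu \to 0$ scale by scale.

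Finally, for \eqref{equ: decayhighest} I would apply the same machinery to $f = \partial_\alpha^{s-1} D_t^2\zeta$ and to $f = \partial_\alpha^s \zeta = \partial_\alpha^{s-1}\zeta_\alpha$. The degradation from $t^{-1/4}$ to $t^{-1/6}$ is the price of needing one more derivative's worth of the vector field at top order: in the middle region, \eqref{region:llt} applied to $\partial_\alpha^{s-1}D_t^2\zeta$ forces us to estimate $L_0\partial_\alpha(\partial_\alpha^{s-1}D_t^2\zeta) = L_0\partial_\alpha^s D_t^2\zeta$, which is only controlled in $\dot H^{-1}$ of the highest energy, i.e. we must use \eqref{bootstrap-2'} at the very endpoint together with $\|D_t^2\zeta\|_{H^s}\lesssim \epsilon$ from \eqref{bootstrap-1}, and the resulting weight $\tfrac{|\alpha|^{3/2}}{t^2} t^{2\delta_0}$ with $|\alpha|\sim t^{1+\mu}$ balances against $\tfrac{|\alpha|}{t^2}t^{2\delta_0}$ only when $\mu$ is taken comparable to a fixed fraction, leaving $t^{-1/6}$ after optimization; the $\zeta_\alpha - 1$ estimate follows by writing $\partial_\alpha(\zeta - \alpha) = \partial_\alpha\theta + \mathcal{H}\partial_\alpha(\zeta - \alpha)$-type identities, or more simply by noting $\zeta_\alpha - 1$ is controlled through $\theta_\alpha$ and the (quadratically small) difference $(\zeta-\alpha) - \theta = \mathcal{H}(\zeta - \alpha)$, reducing everything to vector-field bounds on $\theta$ already in the bootstrap list. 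The main obstacle, and the reason this lemma is stated with these non-sharp exponents rather than $t^{-1/2}$, is precisely the light-cone region $|\alpha|\sim t$: there the only tool is the weighted interpolation \eqref{region:llt}, which trades the missing $t^{-1/2}$ decay for $(|\alpha|/t)$-powers and half-derivatives of the vector fields, and since those vector fields grow like $t^{\delta_0}$ the best one can extract is a polynomially-small-but-not-sharp rate; obtaining the genuinely sharp $t^{-1/2}$ decay in that region is the subject of the separate, harder argument in \S\ref{section:transition} and \S 7 using the testing functional $\tilde E(t)$, and Lemma \ref{decay of D_tzeta} is only the weaker preliminary bound needed to bootstrap that analysis.
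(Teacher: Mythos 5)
Your proposal takes a genuinely different route from the paper, and it contains a gap that the paper is explicitly at pains to avoid.

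You propose to run the weighted Klainerman--Sobolev-type estimates \eqref{region:ggt}--\eqref{region:llt} on $f = D_t\zeta$ directly. Both of these require $\Omega_0 f$ to be controlled in $L^2$ (respectively $H^1$), and you assert $\|\Omega_0 D_t\zeta\|_{H^1}\lesssim \epsilon t^{\delta_0}$ as a consequence of \eqref{bootstrap-2}. But inspect the bootstrap list: it controls $\Omega_0\theta_\alpha$ and $\partial_\alpha\Omega_0 Q_\alpha$ in Sobolev spaces (i.e.\ $\Omega_0$ applied to $D_t^2\zeta$, and to \emph{one extra derivative} of $D_t\zeta \approx Q_\alpha$) --- it does \emph{not} control $\|\Omega_0 Q_\alpha\|_{L^2}$. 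The paper flags this explicitly in the sentence immediately preceding the lemma on $\Omega_0 P_{\geq t^{-k}}Q_\alpha$: ``We cannot directly control $\Omega_0 Q_\alpha$ in $L^2$.'' Your identity $\Omega_0\partial_\alpha = L_0\partial_t - \frac{t}{2}(\partial_t^2-i\partial_\alpha)$ also only produces $\Omega_0\partial_\alpha D_t\zeta$, one derivative too high. So the $\frac{4}{t}\|\Omega_0 f\|_{H^1}$ term in \eqref{region:llt} and the $\frac{t}{|\alpha|^{3/2}}\|\Omega_0 f\|_{L^2}$ term in \eqref{region:ggt} cannot be estimated for $f = D_t\zeta$. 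This is not a cosmetic point: $D_t\zeta$ is deliberately the low-regularity object in this paper (only $H^{s+1/2}$ with no negative-order weight), and it is precisely the quantity whose $\Omega_0$ vector field fails to be square-integrable. A consistency check makes the same point from another angle: if your argument went through, it would yield $|D_t\zeta|\lesssim \epsilon t^{-1/2+C\delta_0}$ uniformly, a genuinely sharper rate than the $t^{-1/4}$ stated in the lemma; the weaker rate is not an artifact of a rough estimate but a reflection of the missing $\Omega_0 D_t\zeta\in L^2$. (The $t^{-1/2}$ rate is recovered in \S 7 for $D_t^2\theta$, $\partial_\alpha D_t\theta$, $\theta_\alpha$, all of which do have $\Omega_0\in L^2$ --- never for $D_t\zeta$ itself.)

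The paper's actual proof makes no use of vector fields at all for this lemma. It is a short Bernstein/Littlewood--Paley interpolation entirely in frequency space: split $D_t\zeta = P_{\leq N}D_t\zeta + P_{>N}D_t\zeta$. The low-frequency piece is controlled by Bernstein and the $L^2$ energy bound \eqref{bootstrap-1}, $\|P_{\leq N}D_t\zeta\|_{L^\infty}\leq CN^{1/2}\|D_t\zeta\|_{L^2}\leq CN^{1/2}\epsilon$; the high-frequency piece is controlled by the pointwise bootstrap \eqref{bootstrap-3}, $\|P_{>N}D_t\zeta\|_{L^\infty}\lesssim N^{-1}\|\partial_\alpha D_t\zeta\|_{L^\infty}\lesssim N^{-1}\epsilon t^{-1/2}$; optimizing over $N$ gives the stated decay. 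The top-order estimates \eqref{equ: decay second highest}--\eqref{equ: decayhighest} are obtained by the same interpolation, now trading between $\|\partial_\alpha^{s-2}D_t\zeta\|_{L^\infty}\leq\epsilon t^{-1/2}$ (from \eqref{bootstrap-3}) on the low side and $\|D_t\zeta\|_{\dot H^{s+1/2}}$, $\|D_t^2\zeta\|_{H^s}$ (from \eqref{bootstrap-1}) on the high side, with the $\ln t$ coming from the logarithmic number of dyadic scales in the intermediate range. The lemma is thus a cheap preliminary that converts the $L^2$-type energy bounds and the pointwise bootstrap into (weaker) pointwise decay of the undifferentiated and top-order quantities; no vector-field machinery enters.
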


\begin{proof}
Define $f_L:=P_{\leq t^{-1/2}}f$ and $f_H:=f-f_L$. On one hand, according to Bernstein's inequality (Lemma \ref{Bernstein}),
    \[
    \norm{(D_t\zeta)_L}_{L^\infty}\le Ct^{-\frac{1}{4}}\norm{(D_t\zeta)_L}_{L^2}\le C\epsilon t^{-\frac{1}{4}};
    \]
    on the other hand, we have
    \[
    \norm{(D_t\zeta)_H}_{L^\infty}\le Ct^{\frac{1}{4}}\norm{\partial_\alpha(D_t\zeta)_H}_{L^\infty}\le C\epsilon t^{-\frac{1}{4}}.
    \]
    This proves \eqref{equ: decay dtzeta}. By Bernstein's inequality, and the bootstrap assumptions,
    \begin{align*}
\norm{\partial_\alpha^{s-1}D_t\zeta}_{L^\infty}\leq &\ \norm{P_{\leq t^{1/4}}\partial_\alpha^{s-1}D_t\zeta}_{L^\infty}+\norm{P_{\geq t^{1/4}}\partial_\alpha^{s-1}D_t\zeta}_{L^\infty}\\
\leq &\  t^{1/4}\norm{P_{\leq t^{1/4}}\partial_\alpha^{s-2}D_t\zeta}_{L^\infty}+\sum_{j\geq 0: 2^j\leq t^2}\norm{P_{2^j t^{1/4}}\partial_\alpha^{s-1}D_t\zeta}_{L^\infty}+\norm{P_{\geq t^{9/4}}\partial_\alpha^{s-1}D_t\zeta}_{L^\infty}\\
\leq &\  Ct^{1/4-1/2}\epsilon+\sum_{j\geq 0: 2^j\leq t^2}2^{-j}t^{-1/4}\norm{\partial_{\alpha}^sD_t\zeta}_{\dot{H}^{1/2}}+C\epsilon t^{-1}\norm{\partial_{\alpha}^s D_t\zeta}_{L^2}\\
\leq &\  C\epsilon t^{-1/4}\ln t.
    \end{align*}
    This completes the proof of \eqref{equ: decay second highest}. The proof for \eqref{equ: decay second highest} is similar to that of \eqref{equ: decayhighest}. So the proof of the lemma is completed.
\end{proof}

Also, we record some formla that we will use frequently here.
\begin{lemma}\label{transition-lemma}
    We have
    \begin{equation}\label{equ: lemma transition sec 4 1}
     f(\alpha,t)=-\frac{2\alpha}{it}\partial_tf(\alpha,t)+\frac{2}{it}\Omega_0 f(\alpha,t),
     \end{equation}
    \begin{equation}\label{equ: lemma transition sec 4 2}
     \partial_t f(\alpha,t)=-\frac{it}{2\alpha}f(\alpha,t)+\frac{1}{\alpha}\Omega_0 f(\alpha,t),
     \end{equation}
     \begin{equation}\label{equ:increase a derivative}
         f(\alpha,t)=-\frac{4\alpha}{it^2}L_0f+\frac{2}{it}\Omega_0 f+\frac{4\alpha^2}{it^2}\partial_{\alpha}f,
     \end{equation}
          \begin{equation}\label{equ:increase a derivative 2}
         \partial_{\alpha}f(\alpha,t)=\frac{it^2}{4\alpha^2}f+\frac{1}{\alpha}L_0f-\frac{t}{2\alpha^2}\Omega_0 f,
     \end{equation}
\end{lemma}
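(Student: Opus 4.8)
\textbf{Proof strategy for Lemma \ref{transition-lemma}.}

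The plan is to derive the four identities directly from the definitions $L_0=\frac{1}{2}t\partial_t+\alpha\partial_{\alpha}$ and $\Omega_0=\alpha\partial_t+\frac{1}{2}ti$ by elementary linear algebra, treating $\partial_t f$, $\partial_\alpha f$, and $f$ itself as three unknowns constrained by two linear relations (the definitions of $L_0 f$ and $\Omega_0 f$). First I would record the two basic equations
\begin{equation}\label{plan:basic}
L_0 f=\tfrac{1}{2}t\,\partial_t f+\alpha\,\partial_\alpha f,\qquad \Omega_0 f=\alpha\,\partial_t f+\tfrac{1}{2}t i\, f.
\end{equation}
From the second equation in \eqref{plan:basic} one immediately solves for $\partial_t f$ in terms of $f$ and $\Omega_0 f$: rearranging gives $\alpha\,\partial_t f=\Omega_0 f-\tfrac{1}{2}ti f$, and dividing by $\alpha$ yields \eqref{equ: lemma transition sec 4 2}; multiplying instead by the appropriate scalar and solving for $f$ yields \eqref{equ: lemma transition sec 4 1}, since $\tfrac{1}{2}ti f=\Omega_0 f-\alpha\partial_t f$ forces $f=\frac{2}{it}\Omega_0 f-\frac{2\alpha}{it}\partial_t f$. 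So the two first identities are just the two ways of reading off a single scalar relation, and require no input beyond the definition of $\Omega_0$.

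For \eqref{equ:increase a derivative} and \eqref{equ:increase a derivative 2} I would eliminate $\partial_t f$ between the two equations in \eqref{plan:basic}. Multiplying the first equation in \eqref{plan:basic} by $2/t$ gives $\partial_t f=\frac{2}{t}L_0 f-\frac{2\alpha}{t}\partial_\alpha f$; substituting this into the second equation in \eqref{plan:basic} produces $\Omega_0 f=\frac{2\alpha}{t}L_0 f-\frac{2\alpha^2}{t}\partial_\alpha f+\frac{1}{2}ti f$. Solving this last relation for $\partial_\alpha f$ gives \eqref{equ:increase a derivative 2} after dividing by $\frac{2\alpha^2}{t}$ and rearranging, and solving the same relation for $f$ gives \eqref{equ:increase a derivative} after dividing by $\frac{1}{2}ti$. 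Each step is a one-line algebraic manipulation; one only needs $t\neq 0$ and $\alpha\neq 0$, which is the implicit domain of validity for these formulas (they are used precisely in the regions where $\alpha$ and $t$ are comparable or at least nonzero).

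There is essentially no obstacle here: the lemma is a bookkeeping statement packaging the commutation/scaling structure of the linear operator $\partial_t^2-i\partial_\alpha$ into a form convenient for the ``transition of derivatives'' arguments later in the paper. The only care needed is sign and coefficient tracking, in particular keeping the factor $\frac{1}{2}ti$ (not $\frac{1}{2}t$) in $\Omega_0$ and the factor $\frac{1}{2}t$ (not $t$) in $L_0$; a useful consistency check is that applying $\partial_\alpha$ to \eqref{equ: lemma transition sec 4 1} should reproduce \eqref{equ:increase a derivative 2} up to the commutators $[\partial_\alpha,\Omega_0]$ and $[\partial_\alpha,L_0]$, and that \eqref{equ:increase a derivative} is consistent with \eqref{equ:increase a derivative 2} upon substituting the latter into the former. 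I would present the derivation as the two eliminations above and leave these cross-checks to the reader.
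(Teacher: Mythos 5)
Your proposal is correct and arrives at the same identities by the same elementary algebra; the paper's own proof is cosmetically different in that it introduces the profile $F=e^{it^2/4\alpha}f$ and computes $\partial_t F=\frac{1}{\alpha}e^{it^2/4\alpha}\Omega_0 f$, then undoes the phase to read off \eqref{equ: lemma transition sec 4 2}, but multiplying that relation through by $e^{-it^2/4\alpha}$ gives exactly your rearrangement of the definition of $\Omega_0$. One small caveat: your proposed cross-check (applying $\partial_\alpha$ to \eqref{equ: lemma transition sec 4 1} to recover \eqref{equ:increase a derivative 2}) is not quite right as stated since it introduces second-order derivatives $\partial_\alpha\partial_t f$ rather than the combination appearing in \eqref{equ:increase a derivative 2}, but this is a peripheral remark and does not affect the validity of your elimination argument.
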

\begin{proof}
Let $F(\alpha,t):=e^{it^2/4\alpha}f(\alpha,t)$, then 
\begin{align*}
\partial_t F=\partial_t(e^{it^2/4\alpha}f)=&\frac{it}{2\alpha}e^{it^2/4\alpha}f+e^{it^2/4\alpha}f_t=\frac{1}{\alpha}e^{it^2/4\alpha}(\alpha\partial_t+\frac{it}{2})f=\frac{1}{\alpha}e^{it^2/4\alpha}\Omega_0f.
\end{align*}
Therefore,
\begin{align*}
    f_t=\partial_t(e^{-it^2/4\alpha}F)=-\frac{it}{2\alpha}f+e^{-it^2/4\alpha}F_t=-\frac{it}{2\alpha}f+\frac{1}{\alpha}\Omega_0f,
\end{align*}
which proves \eqref{equ: lemma transition sec 4 2}. \eqref{equ: lemma transition sec 4 1} is equivalent to \eqref{equ: lemma transition sec 4 2}. \eqref{equ:increase a derivative} and \eqref{equ:increase a derivative 2} follow in a similar way. 
\end{proof}
Lemma \ref{transition-lemma} allows us to increase or decrease the derivatives of $f$ provided that we can control $\Omega_0 f$ and $L_0f$. We can use the bootstrap assumptions \eqref{bootstrap-2}-\eqref{bootstrap-3} to estimate $\norm{L_0D_t\zeta}_{H^{s-1/2}}$, $\norm{L_0(\zeta_{\alpha}-1)}_{H^{s-1}}$, $\norm{\Omega_0 \theta_{\alpha}}_{H^{s-1}}$, etc. We cannot directly control $\Omega_0 Q_{\alpha}$ in $L^2$. Instead, we have the following. 

\begin{lemma}
For any fixed $0<k\le 2$, we have
\begin{equation}\label{Omega_0D_t zeta high}
\norm{\Omega_0 P_{\geq t^{-k}} Q_{\alpha}}_{L^{\infty}}\leq C\epsilon t^{\delta_0}\ln t. 
\end{equation}
\begin{equation}\label{L0D_t zeta high}
\norm{L_0 P_{\geq t^{-k}}Q_{\alpha}}_{L^2}\leq C\epsilon t^{\delta_0}.
\end{equation}
\end{lemma}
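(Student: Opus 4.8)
The two estimates \eqref{Omega_0D_t zeta high} and \eqref{L0D_t zeta high} cannot be obtained directly because $Q_\alpha$ is only controlled through its relation to $D_t\bar\zeta$ via \eqref{formula of Q_alpha}, and a bare $\Omega_0$ or $L_0$ applied to $Q_\alpha$ is not part of the bootstrap hierarchy. The strategy is therefore to \emph{transfer the vector fields onto the holomorphic unknown $D_t\bar\zeta$} using formula \eqref{formula of Q_alpha}, commute $\Omega_0$ (resp.\ $L_0$) through the leading term and through the commutator/Hilbert-transform structure, and bound the resulting pieces using the bootstrap assumptions \eqref{bootstrap-2}--\eqref{bootstrap-2'}, Lemma \ref{decay of D_tzeta}, Lemma \ref{nearly alpha}, and the $L^2$-boundedness of $\mathcal H$ (Lemma \ref{boundednesshilbert}). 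The restriction to the high-frequency projection $P_{\geq t^{-k}}$ is what makes the two target norms finite: $P_{\geq t^{-k}}$ costs at worst a factor $t^{k}\lesssim t^{2}$ when we need to ``undo'' a derivative, and this polynomial loss is absorbed once we exhibit the genuine $t^{-1}$-type decay of the nonlinear pieces.

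\textbf{Step 1: reduce to $D_t\bar\zeta$.} Start from \eqref{formula of Q_alpha},
\[
\bar Q_\alpha = D_t\bar\zeta + (\zeta_\alpha-1)D_t\bar\zeta - \tfrac12\Bigl(\zeta_\alpha\mathcal H\tfrac1{\zeta_\alpha}+\bar\zeta_\alpha\bar{\mathcal H}\tfrac1{\bar\zeta_\alpha}\Bigr)(\zeta_\alpha D_t\bar\zeta).
\]
The first term on the right is the main term; the second and third are quadratic (each carries a factor $\zeta_\alpha-1$, $\mathcal H\frac1{\zeta_\alpha}+\bar{\mathcal H}\frac1{\bar\zeta_\alpha}$, which by Lemma \ref{nearly alpha} and the decay bootstrap \eqref{bootstrap-3} are $O(\epsilon t^{-1/2})$). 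So it suffices to (a) bound $L_0 P_{\geq t^{-k}}D_t\bar\zeta$ in $L^2$ and $\Omega_0 P_{\geq t^{-k}}D_t\bar\zeta$ in $L^\infty$, and (b) estimate the contribution of the quadratic remainder after applying $L_0$ or $\Omega_0$, where the vector fields hit either the $O(\epsilon t^{-1/2})$ factors (using $L_0$- and $\Omega_0$-bounds on $\zeta_\alpha-1$, which follow from those on $\Omega_0\theta_\alpha$, $L_0 D_t\zeta$, and the cubic structure), or the $D_t\bar\zeta$ factor (reducing to (a) again). Point (a) for $L_0$ is immediate from \eqref{bootstrap-2'}, since $\norm{L_0 D_t\zeta}_{\dot H^{s-1}}\lesssim \epsilon t^{2\delta_0}$ and $P_{\geq t^{-k}}$ converts $\dot H^{s-1}$ control into $L^2$ control with a loss of $t^{k(s-1)}$ --- wait, that is too lossy, so instead one uses $\norm{L_0 D_t\zeta}_{H^{s-3/2}}\lesssim \epsilon t^{\delta_0}$ from \eqref{bootstrap-2} directly (no frequency loss needed for the $L^2$ norm since $s-3/2\geq 0$), giving \eqref{L0D_t zeta high} for the main term.

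\textbf{Step 2: the $L^\infty$ bound for $\Omega_0$.} For \eqref{Omega_0D_t zeta high} one cannot afford an $L^2\to L^\infty$ embedding at top order, so split $P_{\geq t^{-k}}=P_{t^{-k}\leq\cdot\leq t^{M}}+P_{\geq t^M}$ for a large fixed $M$. On the very-high-frequency piece $P_{\geq t^M}$, Bernstein converts $\dot H^{s-1}$-control of $\Omega_0\theta_\alpha$, $\Omega_0 Q_\alpha$ (bootstrap) into rapidly-decaying $L^\infty$ bounds. On the middle range, apply Bernstein $P_j: L^2\to L^\infty$ with the $2^{j/2}$ loss, sum the dyadic pieces $2^{-k}\leq 2^j\leq t^M$ (this is where the $\ln t$ in \eqref{Omega_0D_t zeta high} comes from), and in each dyadic block bound $\Omega_0 P_j D_t\bar\zeta$ in $L^2$ using $\norm{\Omega_0 D_t\zeta}$-type control --- which in turn comes from $\Omega_0\partial_\alpha f = L_0\partial_t f - \frac t2(\partial_t^2-i\partial_\alpha)f$ (the remark after the definition of the vector fields) applied to $f=D_t\zeta$, together with the equation \eqref{system_newvariables} to rewrite $(\partial_t^2-i\partial_\alpha)D_t\zeta$ in terms of lower-order quantities and the cubic nonlinearity. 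The quadratic-remainder terms from \eqref{formula of Q_alpha} are handled as in Step 1, putting the $O(\epsilon t^{-1/2})$ factors in $L^\infty$ and the $D_t\bar\zeta$ or $\Omega_0 D_t\bar\zeta$ factor in $L^2$, picking up at most $t^{\delta_0}$ growth.

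\textbf{Main obstacle.} The delicate point is controlling $\Omega_0$ and $L_0$ acting on the commutator term $\frac12(\zeta_\alpha\mathcal H\frac1{\zeta_\alpha}+\bar\zeta_\alpha\bar{\mathcal H}\frac1{\bar\zeta_\alpha})(\zeta_\alpha D_t\bar\zeta)$: the vector fields do not commute cleanly with $\mathcal H$, producing commutator terms $[\Omega_0,\mathcal H]$ and $[L_0,\mathcal H]$ whose kernels must be estimated (these are handled as in \cite{Wu2009}, using that $L_0$ and $\Omega_0$ act almost like the scaling and a rotation/modulation on the kernel $\frac{\zeta_\beta}{\zeta(\alpha)-\zeta(\beta)}$, so the commutators reduce to expressions of the type $[f,\mathcal H]g_\alpha$ plus $\frac1{\pi i}\int(\frac{f(\alpha)-f(\beta)}{\zeta(\alpha)-\zeta(\beta)})^2(\cdots)d\beta$, which are controlled by the localization lemmas / Lemma \ref{nearly alpha} and the bootstrap bounds). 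Keeping the frequency-truncation losses below the available $t^{-1+\delta_0}\ln t$ decay throughout these commutator estimates is the part that requires the most care; everything else is a routine application of the bootstrap assumptions, Bernstein's inequality, and $L^2$-boundedness of the Hilbert transforms.
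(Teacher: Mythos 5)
Your approach has a genuine gap, and it comes from reversing the paper's key reduction. The bootstrap hypotheses control $\norm{\Lambda\Omega_0 Q_\alpha}_{L^2}$ and $\norm{\partial_\alpha\Omega_0 Q_\alpha}_{L^2}$, but \emph{not} $\norm{\Omega_0 D_t\zeta}_{L^2}$ or $\norm{\Lambda\Omega_0 D_t\zeta}_{L^2}$; the remark after Proposition~\ref{good-replace} says explicitly that ``direct estimates for $\Lambda\Omega_0 D_t\zeta$ and $\Omega_0 D_t^2\zeta$ are not available,'' and this is precisely why the lemma is stated for $Q_\alpha$ rather than $D_t\zeta$. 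Your Step~1 unwinds this, passing from $Q_\alpha$ back to $D_t\bar\zeta$, and your Step~2 then proposes to control $\Omega_0 P_j D_t\bar\zeta$ in $L^2$ via the identity $\Omega_0\partial_\alpha=L_0\partial_t-\tfrac{t}{2}(\partial_t^2-i\partial_\alpha)$. But that identity only yields $\Omega_0\partial_\alpha D_t\zeta$, one derivative too high; undoing the derivative at the lowest admissible frequency $2^j\sim t^{-k}$ costs $2^{-j}\sim t^k$ in $L^2$, or $2^{-j/2}\sim t^{k/2}$ after Bernstein to $L^\infty$, and for $k\le 2$ this is a power of $t$, far beyond the target $t^{\delta_0}\ln t$. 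The $\ln t$ in the statement is not produced by your ``$2^{j/2}$ loss then sum'' scheme (that gives a power of $t$, not a logarithm); it comes from a $j$-uniform bound $\norm{P_{2^j}\Omega_0 Q_\alpha}_{L^\infty}\lesssim\norm{\Lambda\Omega_0 Q_\alpha}_{L^2}$ summed over the $O(\ln t)$ dyadic scales in $[t^{-k},t^k]$, and that half-derivative $L^2$ control is available in the bootstrap for $Q_\alpha$ but not for $D_t\zeta$.

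The second, independently serious omission is the commutator $[\Omega_0,P_{\geq t^{-k}}]Q_\alpha$ (and its $L_0$ analogue), which occupies the bulk of the paper's proof and which your proposal silently commutes away. Since $\Omega_0=\alpha\partial_t+\tfrac{it}{2}$ involves both a $t$-derivative and the weight $\alpha$, and since the cutoff scale $t^{-k}$ in $P_{\geq t^{-k}}$ is itself $t$-dependent, neither $[\partial_t,P_{\geq t^{-k}}]$ nor $[\alpha,P_{\geq t^{-k}}]$ vanishes. The paper decomposes $[\alpha\partial_t,P_{\ge t^{-k}}]Q_\alpha=[\alpha,P_{\ge t^{-k}}]Q_{t\alpha}+\alpha[\partial_t,P_{\ge t^{-k}}]Q_\alpha$, computes the symbol $-kt^{-1}\xi\phi'(t^k\xi)$ of $[\partial_t,P_{\ge t^{-k}}]$, and then performs a further three-way split ($I_1$, $I_{21}$, $I_{22}$ in the text) to handle the $\alpha$-weight via Bernstein, Young, and the $L_0$ bootstrap. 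Your ``main obstacle'' paragraph instead points at $[\Omega_0,\mathcal H]$ arising from the formula for $Q_\alpha$ --- a commutator the paper's proof never touches, because it treats $Q_\alpha$ as a black box controlled by the bootstrap. Without addressing the frequency-projection commutators, the argument does not close.
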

\begin{proof}
We have 
    $$\Omega_0 P_{\geq t^{-k}} Q_{\alpha}=P_{\geq t^{-k}}\Omega_0 Q_{\alpha}+[\Omega_0, P_{\geq t^{-k}}]Q_{\alpha}=P_{\geq t^{-k}}\Omega_0 Q_{\alpha}+[\alpha\partial_t, P_{\geq t^{-k}}]Q_{\alpha}$$
By the bootstrap assumptions and the Bernstein inequality (Lemma \ref{Bernstein}), we have
\begin{align*}
    \norm{P_{\geq t^{-k}}\Omega_0 Q_{\alpha}}_{L^{\infty}}\leq &\  \sum_{t^{-k}\le 2^j\le t^{k}}\norm{P_{2^j}\Omega_0 Q_{\alpha}}_{L^{\infty}}+\sum_{2^j>t^k}\norm{P_{2^j}\Omega_0 Q_{\alpha}}_{L^{\infty}}\\
    \leq &\  \sum_{t^{-k}\le 2^j\leq t^k}\norm{\Lambda \Omega_0 Q_{\alpha}}_{L^2}+\sum_{2^j>t^k}2^{-j/2}\norm{\partial_{\alpha}\Omega_0 Q_{\alpha}}_{L^{2}}\\
    \leq &\  C\ln t\norm{\Lambda \Omega_0 Q_{\alpha}}_{L^2}+t^{-\frac{k}{2}}\norm{\partial_{\alpha}\Omega_0 Q_{\alpha}}_{L^{2}}\\
    \leq &\  C\epsilon t^{\delta_0}\ln t.
\end{align*}
For $[\alpha\partial_t, P_{\geq t^{-k}}]Q_{\alpha}$, note that
\begin{align*}
    [\alpha\partial_t, P_{\geq t^{-k}}]Q_{\alpha}=[\alpha,P_{\ge t^{-k}}]Q_{t\alpha}+\alpha[\partial_t,P_{\ge t^{-k}}]Q_\alpha;
\end{align*}
We have
\begin{align*}
    [\alpha,P_{\ge t^{-k}}]Q_{t\alpha}=\sum_{2^j\ge t^{-k}}[\alpha,P_{2^j}]Q_{t\alpha}=\sum_{2^j\ge t^{-k}}i2^{-j}\tilde{P}_{2^j}Q_{t\alpha}
\end{align*}
where $\tilde{P}$ is the Littlewood-Paley operator associated to $\phi'$ and $\phi$ is the notation which is used in Appendix \ref{sec-bernstein}.
By the Bernstein inequality, for $t^{-k}\le 2^j\leq t^k$,
\begin{equation}
    \norm{[\alpha, P_{2^j}]Q_{t\alpha}}_{L^{\infty}}\leq 2^{-j}\norm{\tilde{P}_{2^j}Q_{t\alpha}}_{L^{\infty}}\leq C\norm{\Lambda \tilde{P}_{2^j}Q_t}_{L^2}\leq C\norm{\Lambda Q_t}_{L^2}\leq C\epsilon.
\end{equation}
For $2^j\geq t^k$, by the Bernstein inequality again, we have
\begin{equation}
    \norm{[\alpha, P_{2^j}]Q_{t\alpha}}_{L^{\infty}}\leq 2^{-\frac{j}{2}}\norm{\partial_\alpha Q_{t\alpha}}_{L^2}\le C\epsilon 2^{-\frac{j}{2}}.
\end{equation}
So,
\begin{equation}
    \norm{ [\alpha, P_{\geq t^{-k}}]Q_{t\alpha}}_{L^{\infty}}\leq \sum_{t^{-k}\le 2^j\le t^k} \norm{[\alpha, P_{2^j}]Q_{t\alpha}}_{L^{\infty}}+\sum_{2^j>t^k} \norm{[\alpha, P_{2^j}]Q_{t\alpha}}_{L^{\infty}}\leq C\epsilon \ln t.
\end{equation}
For $\alpha[\partial_t,P_{\ge t^{-k}}]Q_\alpha$, we have
\begin{align*}
\widehat{[\partial_t,P_{\ge t^{-k}}]f}(\xi)=\left(\partial_t((1-\phi(t^k\xi))\hat{f}(\xi))-(1-\phi(t^k\xi))\partial_t\hat{f}(\xi)\right)
=-kt^{k-1}\xi\phi'(t^k\xi)\hat{f}(\xi).
\end{align*}
Define $\psi(\xi)=\xi\phi'(\xi)$ (note that $\psi$ is supported on an annulus rather than a disk) and let $P^*$ be the corresponding Littlewood-Paley operator, then
\[
\alpha[\partial_t,P_{\ge t^{-k}}]Q_\alpha=-\alpha kt^{-1}P^*_{\ge t^{-k}}Q_\alpha=-\alpha kt^{-1}\sum_{2^j\ge t^{-k}}P^*_{2^j}Q_\alpha.
\]
Set $\chi=F^{-1}[\psi]$, then
\begin{align*}
    \alpha kt^{-1}P^*_{2^j}Q_\alpha
    &=\alpha kt^{-1}2^j\int\chi(2^j(\alpha-\beta))Q_\beta d\beta\\
    &=kt^{-1}2^j\int(\alpha-\beta)\chi(2^j(\alpha-\beta))Q_\beta d\beta+kt^{-1}2^j\int\chi(2^j(\alpha-\beta))\beta Q_\beta d\beta\\
    &\overset{\text{def}}{=}I_1+I_2.
\end{align*}
Let $\lambda(x)=x\chi(x)$ and we derive
\[
I_1=kt^{-1}\int\lambda(2^j(\alpha-\beta))Q_\beta d\beta
=kt^{-1}2^{-j}P^{**}_{2^j}Q_\alpha
\]
where $\widehat{P^{**}_{N}f}=F[\lambda](\frac{\xi}{N})\hat{f}(\xi)$. Now we have
\[
|I_1|\le kt^{-1}2^{-j}\norm{P^{**}_{2^j}Q_\alpha}_{L^\infty}
\le kt^{-1}2^{-\frac{j}{2}}\norm{Q_\alpha}_{L^2}\le C\epsilon t^{-1}2^{-\frac{j}{2}}
\]
which implies
\[
\left|\alpha kt^{-1}\sum_{2^j\ge t^{-k}}P_{2^j}^*Q_\alpha\right|
\le C\epsilon t^{-1}\sum_{2^j\ge t^{-k}}2^{-\frac{j}{2}}
\le C\epsilon t^{-1+\frac{k}{2}}
\le C\epsilon
\]
since $k\le 2$.

In addition, note that
\[
I_2=kt^{-1}P^*_{2^j}(\alpha Q_\alpha)=-kt^{-1}P^*_{2^j}L_0Q+\frac{k}{2}P^*_{2^j}Q_t:=I_{21}+I_{22}
\]
and we have, according to Bernstein's inequality,
\[
|I_{21}|\le Ct^{-1}\norm{P_{2^j}^*L_0Q}_{L^\infty}\le Ct^{-1}2^{-\frac{j}{2}}\norm{\partial_\alpha L_0Q}_{L^2}\le C\epsilon t^{-1+\delta_0}2^{-\frac{j}{2}}
\]
which implies
\[
\sum_{2^j\ge t^{-k}}|I_{21}|\le C\epsilon t^{-1+\delta_0+\frac{k}{2}}\le C\epsilon t^{\delta_0}.
\]
Finally, for $I_{22}$, we have
\[
|I_{22}|\le C\norm{P^*_{2^j}Q_t}_{L^\infty}\le C\norm{\Lambda Q_t}_{L^2}\le C\epsilon
\]
and
\[
|I_{22}|\le C2^{-j}\norm{\partial_\alpha Q_t}_{L^\infty}\le C\epsilon 2^{-j}.
\]
Combining them yields
\[
\sum_{2^j\ge t^{-k}}|I_{22}|\le \sum_{t^{-k}\le 2^j\le t^k}|I_{22}|+\sum_{2^j>t^k}|I_{22}|
\le C\epsilon\ln t+C\epsilon t^{-k}
\le C\epsilon\ln t.
\]
In summary,
\[
\norm{\alpha[\partial_t,P_{\ge t^{-k}}]Q_\alpha}_{L^\infty}
=\norm{\alpha kt^{-1}\sum_{2^j\ge t^{-k}}P^*_{2^j}Q_\alpha}
\le C\epsilon t^{\delta_0}\ln t
\]
which completes the proof for \eqref{Omega_0D_t zeta high}. For \eqref{L0D_t zeta high}, direct calculation gives 
$[L_0, P_j]=P^*_{2^j},$
hence
\begin{align*}
    \norm{L_0P_{\geq t^{-k}}Q_{\alpha}}_{L^2}^2\leq &\  C\sum_{2^j\ge t^{-k}}\norm{L_0P_{2^j}Q_{\alpha}}_{L^2}^2\\
    \leq &\  C\left(\sum_{2^j\ge t^{-k}}\norm{P_{2^j} L_0 Q_{\alpha}}_{L^2}^2 + \sum_{2^j\ge t^{-k}}\norm{P^*_{2^j} Q_{\alpha}}_{L^2}^2\right)\\
    \leq &\  C(\norm{L_0Q_{\alpha}}_{L^2}^2+\norm{Q_{\alpha}}_{L^2}^2)\\
    \leq &\  C\epsilon^2 t^{2\delta_0},
\end{align*}
which implies \eqref{L0D_t zeta high}.
\end{proof}

Since quantities such as $b$ and $A-1$ are represented in terms of $(I-\mathcal{H})b$ and $(I-\mathcal{H})(A-1)$, we need to control the former by the latter.

First, we have the following lemma.

\begin{lemma}\label{curve-flat hilbert}
Assume the bootstrap assumptions. We have
\begin{equation}
    \norm{(\mathcal{H}-\mathbb{H})f}_{H^s}+\norm{\left(\mathcal{H}\frac{1}{\zeta_{\alpha}}-\mathbb{H}\right)f}_{H^s}\leq C\epsilon t^{-1/2}\norm{f}_{H^s},
\end{equation}
For any $2\leq q<\infty$, there holds
\begin{equation}
    \norm{(\mathcal{H}-\mathbb{H})f}_{W^{s-2,\infty}}+\norm{\left(\mathcal{H}\frac{1}{\zeta_{\alpha}}-\mathbb{H}\right)f}_{W^{s-2,\infty}}\leq C(q-1)(\epsilon t^{-1/2})^{1-2/q}\norm{f}_{W^{s-2,\infty}}.
\end{equation}
\end{lemma}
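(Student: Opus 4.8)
\textbf{Proof proposal for Lemma \ref{curve-flat hilbert}.}

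The plan is to write $\mathcal{H}-\mathbb{H}$ as an integral operator with a kernel that vanishes when $\zeta(\alpha,t)=\alpha$, extract a factor of $\zeta_\alpha-1$ (equivalently $\partial_\alpha\xi$), and then estimate the resulting operator using the smallness and decay of $\zeta_\alpha-1$ from the bootstrap assumption \eqref{bootstrap-3}, together with the chord-arc bound from Lemma \ref{nearly alpha}. Concretely, for the first piece one writes
\[
(\mathcal{H}-\mathbb{H})f(\alpha)=\frac{1}{\pi i}\int\left(\frac{\zeta_\beta(\beta)}{\zeta(\alpha)-\zeta(\beta)}-\frac{1}{\alpha-\beta}\right)f(\beta)\,d\beta,
\]
and the kernel equals $\frac{1}{\pi i}\cdot\frac{(\alpha-\beta)\zeta_\beta(\beta)-(\zeta(\alpha)-\zeta(\beta))}{(\alpha-\beta)(\zeta(\alpha)-\zeta(\beta))}$; using $\zeta(\alpha)-\zeta(\beta)=(\alpha-\beta)+\big(\xi(\alpha)-\xi(\beta)\big)$ and $\zeta_\beta-1=\xi_\beta$, the numerator becomes $(\alpha-\beta)\xi_\beta(\beta)-(\xi(\alpha)-\xi(\beta))$, which is a first-order difference of $\xi$-type quantities. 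This exhibits $\mathcal{H}-\mathbb{H}$ as a sum of commutator-type operators of the general form $[\,a,\,\mathbb{H}\,]\partial_\alpha$ or $\mathbb{H}$ conjugated by factors close to $1$; for $\mathcal{H}\frac{1}{\zeta_\alpha}-\mathbb{H}$ one argues identically after absorbing the extra $1/\zeta_\alpha=1/(1+\xi_\alpha)$, which is $1+O(\epsilon t^{-1/2})$ in every norm we use by Lemma \ref{nearly alpha} and a Neumann series.

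For the $H^s$ estimate, after the algebraic reduction above I would invoke the standard Sobolev boundedness of commutators of $\mathbb{H}$ (or more generally of singular integral operators with Calderón-type kernels) with multipliers built from $\xi_\alpha$ and $\xi$; these are exactly the estimates collected in the harmonic-analysis appendix referenced as Appendix \ref{sec-bernstein}. The key quantitative input is that the kernel depends on $\xi$ only through first differences, so every term carries at least one factor of $\partial_\alpha\xi=\zeta_\alpha-1$ (or its differences) in $L^\infty$ or in $H^s$; by \eqref{bootstrap-3} and \eqref{bootstrap-1} this factor is $O(\epsilon t^{-1/2})$ in $W^{s-2,\infty}$ and $O(\epsilon)$ in $H^s$, and one checks that in the worst term the $\epsilon t^{-1/2}$ factor lands in $L^\infty$ while the full $H^s$ norm of $f$ is kept, giving the claimed bound $C\epsilon t^{-1/2}\norm{f}_{H^s}$. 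Leibniz' rule distributes the $s$ derivatives, and the top-order term is the one where all derivatives hit $f$; lower-order terms only improve because extra derivatives on $\xi$ are still controlled with decay.

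For the $W^{s-2,\infty}$ estimate the strategy is the same reduction, but now the boundedness of the Hilbert transform (and Calderón commutators) on $L^\infty$ must be replaced by boundedness on $L^q$ for large but finite $q$ followed by a Sobolev-type embedding, which is the source of the factor $(q-1)$ and of the exponent $1-2/q$ on the small quantity: one interpolates the $L^2\to L^2$ bound (with the small factor $\epsilon t^{-1/2}$ to the first power) against an $L^\infty$-type bound (with no decay), obtaining on $L^q$ a norm of order $(q-1)(\epsilon t^{-1/2})^{1-2/q}$, the $(q-1)$ tracking the operator norm growth of $\mathbb{H}$ on $L^q$, and then pass from $L^q$ control of $\partial_\alpha^{s-2}$ of the output to $W^{s-2,\infty}$ by choosing $q$ large (Bernstein/Gagliardo–Nirenberg). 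The main obstacle is bookkeeping rather than conceptual: one must verify that in every term produced by Leibniz' rule at least one $\xi$-factor can be placed in the slot carrying the small-with-decay norm while the remaining factors (including all of $f$'s derivatives) are placed so that the operator bounds apply, and one must track the $q$-dependence carefully through the commutator estimates so that the final constant is exactly of the stated form $C(q-1)(\epsilon t^{-1/2})^{1-2/q}$; the chord-arc condition \eqref{chordchordarcarc}, valid by Lemma \ref{nearly alpha} with constants close to $1$, is what guarantees all the singular-integral bounds apply with constants independent of $t$.
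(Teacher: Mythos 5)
Your algebraic reduction — writing the kernel of $\mathcal{H}-\mathbb{H}$ as a first-order difference in $\xi=\zeta-\alpha$ — is correct and matches the paper's starting point, and using the singular-integral/commutator estimates for the $H^s$ bound is plausible (though note those are in Appendix A (Lemma \ref{singular}), not Appendix \ref{sec-bernstein}, which is the Bernstein inequality section). The genuine gap is in the $W^{s-2,\infty}$ argument. You propose to interpolate an $L^2\to L^2$ operator bound carrying the factor $\epsilon t^{-1/2}$ against an $L^\infty$-type bound with no decay; Riesz--Thorin would then give $L^q\to L^q$ with the small factor raised to the power $2/q$, not $1-2/q$ as claimed. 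In fact the roles are reversed: the natural $L^2$-type smallness available from \eqref{bootstrap-1} is only $O(\epsilon)$ (no time decay), while the pointwise smallness from \eqref{bootstrap-3} is $O(\epsilon t^{-1/2})$, and it is the latter that must appear to the power $1-2/q$. Moreover the operator $\mathcal{H}-\mathbb{H}$ is not a priori bounded $L^\infty\to L^\infty$ at all: the kernel is only $O(\epsilon t^{-1/2}/|\alpha-\beta|)$, which is not integrable, so there is no ``$L^\infty$ bound with no decay'' to interpolate against without first cutting the integral.

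The paper's route is more elementary and avoids these problems. It splits the $\beta$-integral into $|\alpha-\beta|\geq 1$ and $|\alpha-\beta|<1$. On the far region it puts $f$ in $L^\infty$, puts $\zeta_\alpha-1$ in $L^q$ (and this is exactly where $(\epsilon t^{-1/2})^{1-2/q}$ arises, by interpolating $\|\zeta_\alpha-1\|_{L^2}=O(\epsilon)$ against $\|\zeta_\alpha-1\|_{L^\infty}=O(\epsilon t^{-1/2})$), and puts $(\alpha-\beta)^{-1}\mathbf{1}_{|\alpha-\beta|\geq 1}$ in $L^{q'}$ (source of the $(q-1)$ factor). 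On the near region it Taylor-expands the kernel to land $\|\zeta_{\alpha\alpha}\|_{L^\infty}\|f\|_{L^\infty}\lesssim\epsilon t^{-1/2}\|f\|_{L^\infty}$. Higher derivatives are then handled by integrating by parts to the logarithmic kernel $\ln\frac{\zeta(\alpha)-\zeta(\beta)}{\alpha-\beta}$ and distributing $\partial_\alpha^{s-2}=((\partial_\alpha+\partial_\beta)-\partial_\beta)^{s-2}$, which reduces to the same two-region estimate. You would need to adopt some version of this near/far decomposition; the operator-interpolation shortcut as you stated it does not close.
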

\begin{proof}
We provide details for the estimates for $(\mathcal{H}-\mathbb{H})f$ only. The counterparts for $(\mathcal{H}\frac{1}{\zeta_{\alpha}}-\mathbb{H})f$ are similar. Indeed,
            \[
            (\mathcal{H}-\mathbb{H})f=\frac{1}{\pi i}\int\left(\frac{\zeta_\beta}{\zeta(\alpha)-\zeta(\beta)}-\frac{1}{\alpha-\beta}\right)f(\beta)d\beta
            \]
            and it's clear that
            \begin{align*}
                \left|\int_{|\alpha-\beta|\ge 1}\left(\frac{\zeta_\beta}{\zeta(\alpha)-\zeta(\beta)}-\frac{1}{\alpha-\beta}\right)f(\beta)d\beta\right|
                &\le\norm{f}_{L^\infty}\int_{|\alpha-\beta|\ge 1}\left|\frac{\zeta_\beta-1}{\zeta(\alpha)-\zeta(\beta)}+\frac{\alpha-\zeta(\alpha)-(\beta-\zeta(\beta))}{(\zeta(\alpha)-\zeta(\beta))(\alpha-\beta)}d\beta\right|,\\
                &\le C\norm{f}_{L^\infty}\norm{\zeta_\alpha-1}_{L^q}\norm{\frac{1}{\alpha-\beta}1_{|\alpha-\beta|\geq 1}}_{L^{q'}},\\
                &\le C(q-1)(\epsilon t^{-1/2})^{1-2/q}\norm{f}_{W^{s-2,\infty}}.
            \end{align*}
            On the other hand, we apply Taylor's expansion to get
            \begin{align*}
                &\left|\int_{|\alpha-\beta|<1}\left(\frac{\zeta_\beta}{\zeta(\alpha)-\zeta(\beta)}-\frac{1}{\alpha-\beta}\right)f(\beta)d\beta\right|\\
                =&\ \left|\int_{|\alpha-\beta|<1}\left(\frac{\zeta_\beta(\alpha-\beta)-(\zeta(\alpha)-\zeta(\beta))}{(\zeta(\alpha)-\zeta(\beta))(\alpha-\beta)}\right)f(\beta)d\beta\right|,\\
                =&\ \left|\int_{|\alpha-\beta|<1}\left(\frac{\zeta_\beta(\alpha-\beta)-\zeta_\alpha(\alpha-\beta)-\zeta''(\xi)(\alpha-\beta)^2}{(\zeta(\alpha)-\zeta(\beta))(\alpha-\beta)}\right)f(\beta)d\beta\right|,\\
                =&\ \left|\int_{|\alpha-\beta|<1}\left(\frac{(\zeta''(\eta)-\zeta''(\xi))(\alpha-\beta)^2}{(\zeta(\alpha)-\zeta(\beta))(\alpha-\beta)}\right)f(\beta)d\beta\right|,\\
                \le&\ C\norm{\zeta_{\alpha\alpha}}_{L^\infty}\norm{f}_{L^\infty},\\
                \le&\ C\epsilon t^{-\frac{1}{2}}\norm{f}_{L^\infty}.
            \end{align*}
For higher derivatives, integration by parts, we have
$$(\mathcal{H}-\mathbb{H})f(\alpha,t)=\frac{1}{\pi i}\int \left(\ln \frac{\zeta(\alpha,t)-\zeta(\beta,t)}{\alpha-\beta}\right) f_{\beta}(\beta,t)d\beta.$$
Writing 
$$\partial_{\alpha}^{s-2}=((\partial_{\alpha}+\partial_{\beta})-\partial_{\beta})^{s-2}=(-\partial_{\beta})^{s-2}+\sum_{1\leq k\leq s-2}C_{s-2,k}(\partial_{\alpha}+\partial_{\beta})^{s-2},$$
we have
\begin{align*}
    \partial_{\alpha}^{s-2}(\mathcal{H}-\mathbb{H})f(\alpha,t)=& \frac{1}{\pi i}\int \Big\{ (-\partial_{\beta})^{s-2}\Big(\ln \frac{\zeta(\alpha,t)-\zeta(\beta,t)}{\alpha-\beta}\Big)\Big\} f_{\beta}(\beta,t)d\beta\\
    &+\frac{1}{\pi i}\sum_{1\leq k\leq s-2}\int \left\{(\partial_{\alpha}+\partial_{\beta})^k (-\partial_{\beta})^{s-2-k}\ln \frac{\zeta(\alpha,t)-\zeta(\beta,t)}{\alpha-\beta}  \right\}f_{\beta}\\
    =& \frac{1}{\pi i}\int \left\{ (-\partial_{\beta})\left(\ln \frac{\zeta(\alpha,t)-\zeta(\beta,t)}{\alpha-\beta}\right)\right\} \partial_{\beta}^{s-2}f(\beta,t)d\beta\\
    &+\frac{1}{\pi i}\sum_{1\leq k\leq s-2}\int \left\{(\partial_{\alpha}+\partial_{\beta})^k \ln \frac{\zeta(\alpha,t)-\zeta(\beta,t)}{\alpha-\beta}  \right\}(-\partial_{\beta})^{s-1-k}f,\\
    :=&I+II.
\end{align*}
Note that $I=(\mathcal{H}-\mathbb{H})\partial_{\alpha}^{s-2}f$. By the argument above, we have 
\begin{equation}
    |I|\leq C(q-1)(\epsilon t^{-1/2})^{1-2/q}\norm{\partial_{\alpha}^{s-2}f}_{L^{\infty}}.
\end{equation}
Note that $(\partial_{\alpha}+\partial_{\beta})\frac{1}{\alpha-\beta}=0$. A similar argument as above gives
\begin{equation}
    |II|\leq C(q-1)(\epsilon t^{-1/2})^{1-2/q}\norm{f}_{W^{s-2,\infty}}.
\end{equation}
This completes the proof of the lemma.
\end{proof}

Also, we have the following $L^{\infty}$ estimates of the Hilbert transform.
\begin{lemma}\label{L infty of hilbert}
We have
\begin{equation}
    \norm{\mathcal{H}f}_{L^{\infty}}\leq Ct^{-2}\norm{f}_{H^1}+\norm{f}_{L^{\infty}}\ln t.
\end{equation}
\end{lemma}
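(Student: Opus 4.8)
The plan is to split $f$ into low- and high-frequency parts relative to a threshold chosen so that the two regimes balance, and to exploit that the Hilbert transform $\mathcal{H}$ (associated to $\zeta$) is uniformly bounded on $L^2$ and differs from the flat Hilbert transform $\mathbb{H}$ by a negligible operator under the bootstrap assumptions (Lemma \ref{curve-flat hilbert}, Lemma \ref{boundednesshilbert}). Concretely, first I would write $\mathcal{H}f = \mathbb{H}f + (\mathcal{H}-\mathbb{H})f$. The correction term is harmless: by Lemma \ref{curve-flat hilbert} with a suitable choice of $q$ (say $q$ fixed, e.g. $q=4$), $\norm{(\mathcal{H}-\mathbb{H})f}_{L^\infty}\lesssim (\epsilon t^{-1/2})^{1/2}\norm{f}_{W^{s-2,\infty}}\lesssim \norm{f}_{L^\infty}$, which is absorbed into the $\norm{f}_{L^\infty}\ln t$ term. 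So it remains to bound $\norm{\mathbb{H}f}_{L^\infty}$ by $Ct^{-2}\norm{f}_{H^1}+\norm{f}_{L^\infty}\ln t$.

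For the flat Hilbert transform, decompose $f = P_{\le 1}f + \sum_{1\le 2^j\le t^2}P_{2^j}f + P_{\ge t^2}f$ (a Littlewood–Paley decomposition as in the Appendix). Since $\mathbb{H}$ is a Fourier multiplier of modulus $1$, it commutes with all the $P_{2^j}$ and is bounded on every $L^p$; in particular on each dyadic block $\norm{\mathbb{H}P_{2^j}f}_{L^\infty}\le C\norm{P_{2^j}f}_{L^\infty}\le C\norm{f}_{L^\infty}$ (the $P_{2^j}$ are uniformly bounded on $L^\infty$), so summing the $O(\ln t)$ middle blocks and the low block gives the contribution $C\norm{f}_{L^\infty}\ln t$. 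For the very high block, use Bernstein's inequality (Lemma \ref{Bernstein}) and the boundedness of $\mathbb{H}$ on $L^2$:
\begin{equation}
\norm{\mathbb{H}P_{\ge t^2}f}_{L^\infty}\le \sum_{2^j\ge t^2}2^{j/2}\norm{\mathbb{H}P_{2^j}f}_{L^2}\le C\sum_{2^j\ge t^2}2^{j/2}2^{-j}\norm{\partial_\alpha P_{2^j}f}_{L^2}\le Ct^{-1}\norm{f}_{H^1}.
\end{equation}
This already gives a $t^{-1}$ gain; pushing the threshold up to $t^4$ instead of $t^2$ (and paying only an extra $O(\ln t)$ in the middle sum, which is still $O(\ln t)$) upgrades this to $Ct^{-2}\norm{f}_{H^1}$, matching the claimed bound.

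The only mild subtlety — and the place I expect to have to be slightly careful — is bookkeeping the threshold so that the high-frequency tail genuinely produces $t^{-2}$ rather than merely $t^{-1}$, while the logarithmically-many middle dyadic blocks still only cost $\ln t$ in the $L^\infty$ norm; this is purely a matter of choosing the cutoff at $2^j\sim t^4$ and noting $\log_2(t^4)=4\log_2 t = O(\ln t)$. Everything else is a routine application of Bernstein, $L^2$-boundedness of the Hilbert transform, and Lemma \ref{curve-flat hilbert} to discard the difference $\mathcal{H}-\mathbb{H}$. No loss of derivatives occurs and no new a priori information beyond the bootstrap assumptions is needed.
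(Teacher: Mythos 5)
Your approach is a genuinely different route from the paper's. You reduce $\mathcal{H}$ to the flat transform $\mathbb{H}$ via Lemma \ref{curve-flat hilbert} and then decompose in frequency, whereas the paper works directly with the singular integral defining $\mathcal{H}$ and splits the physical-space integration domain into $|\zeta(\alpha)-\zeta(\beta)|\le t^{-4}$ (Cauchy--Schwarz plus Hardy's inequality gives $Ct^{-2}\|f\|_{\dot H^1}$), the middle annulus $t^{-4}\le|\zeta(\alpha)-\zeta(\beta)|\le t^{4}$ (giving $\|f\|_{L^\infty}\ln t$), and $|\zeta(\alpha)-\zeta(\beta)|\ge t^{4}$ (Cauchy--Schwarz gives $Ct^{-2}\|f\|_{L^2}$). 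Your high-frequency Bernstein step is precisely the frequency-side analogue of the near region, and your middle dyadic blocks mirror the intermediate region.

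There is, however, a genuine gap in the low-frequency piece. You place the bottom cutoff at $2^0=1$ and assert that the block $\mathbb{H}P_{\le 1}f$ is controlled in $L^\infty$ by $\|f\|_{L^\infty}$, lumping it into the $O(\ln t)$ contribution. This fails because $\mathbb{H}P_{\le 1}$ is \emph{not} bounded on $L^\infty$: its symbol $-i\,\mathrm{sgn}(\xi)\phi(\xi)$ is discontinuous at $\xi=0$ (since $\phi(0)=1$), so the kernel decays only like $|x|^{-1}$ at infinity and is not in $L^1$. Equivalently, $P_{\le 1}=\sum_{j\le 0}P_{2^j}$ has infinitely many dyadic pieces, and while each $\mathbb{H}P_{2^j}$ is uniformly $L^\infty$-bounded (the symbol is smooth on a dyadic annulus away from the origin), the individual bounds do not sum over $j\le 0$. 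The repair is small and brings you in line with the paper's far region: push the low cutoff down to $2^j\sim t^{-4}$ and estimate
\begin{equation*}
\|\mathbb{H}P_{\le t^{-4}}f\|_{L^\infty}\le Ct^{-2}\|\mathbb{H}P_{\le t^{-4}}f\|_{L^2}\le Ct^{-2}\|f\|_{L^2}
\end{equation*}
by Bernstein and $L^2$-boundedness of $\mathbb{H}$. The middle sum over $t^{-4}\le 2^j\le t^{4}$ still has only $O(\ln t)$ blocks, and your high-frequency estimate at $2^j\ge t^{4}$ gives $Ct^{-2}\|\partial_\alpha f\|_{L^2}$ as you computed. With this change the argument closes and gives the same estimate by an independent frequency-side proof.
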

\begin{proof}
We have
\begin{align*}
\mathcal{H}f(\alpha,t)=& \frac{1}{\pi i}p.v.\int_{-\infty}^{\infty}\frac{\zeta_{\beta}}{\zeta(\alpha,t)-\zeta(\beta,t)}f(\beta,t)d\beta\\
=& \frac{1}{\pi i}p.v.\int_{|\zeta(\alpha,t)-\zeta(\beta,t)|\leq t^{-4}}\frac{\zeta_{\beta}}{\zeta(\alpha,t)-\zeta(\beta,t)}f(\beta,t)d\beta\\
&+\frac{1}{\pi i}p.v.\int_{t^{-4}|\leq \zeta(\alpha,t)-\zeta(\beta,t)|\leq t^{4}}\frac{\zeta_{\beta}}{\zeta(\alpha,t)-\zeta(\beta,t)}f(\beta,t)d\beta\\
&+ \frac{1}{\pi i}p.v.\int_{|\zeta(\alpha,t)-\zeta(\beta,t)|\geq t^{4}}\frac{\zeta_{\beta}}{\zeta(\alpha,t)-\zeta(\beta,t)}f(\beta,t)d\beta\\
:=& I+II+III.
\end{align*}
By the definition of principal values, the Cauchy-Schwartz inequality, following by the Hardy's inequality, we have 
\begin{align*}
    I=& \frac{1}{\pi i}\int_{|\zeta(\alpha,t)-\zeta(\beta,t)|\leq t^{-4}}\frac{\zeta_{\beta}}{\zeta(\alpha,t)-\zeta(\beta,t)}\Big(f(\beta,t)-f(\alpha,t)\Big)d\beta\\
    \leq &\  \frac{1}{\pi}\Big(\int \frac{|f(\alpha,t)-f(\beta,t)|^2}{(\alpha-\beta)^2}d\beta\Big)^{1/2}\Big(\int_{|\zeta(\alpha,t)-\zeta(\beta,t)|\leq t^{-4}} d\beta\Big)^{1/2}\norm{\zeta_{\alpha}}_{L^{\infty}}\\
    \leq &\  Ct^{-2}\norm{f}_{\dot{H}^1}.
\end{align*}
For $II$, using Lemma \ref{nearly alpha}, we have 
\begin{align*}
    |II|\leq \int_{t^{-4}\leq |\zeta(\alpha,t)-\zeta(\beta,t)|\leq t^4}\frac{1}{|\zeta(\alpha,t)-\zeta(\beta,t)|}d\beta \norm{f}_{L^{\infty}}\leq C\norm{f}_{L^{\infty}}\ln t.
\end{align*}
For $III$, by the Cauchy-Schwartz inequality, we obtain
\begin{align*}
    |III|\leq C\norm{f}_{L^2}\int_{|\zeta(\alpha,t)-\zeta(\beta,t)|\geq t^4}\frac{1}{|\zeta(\alpha,t)-\zeta(\beta,t)|}d\beta\leq Ct^{-2}\norm{f}_{L^2}.
\end{align*}
This completes the proof of the Lemma.

\end{proof}

\begin{lemma}\label{control f by using (I-H)f}
            Let $f$ be a real-valued function. Suppose that $(I-\mathcal{H})f=g$ or $(I-\mathcal{H})f\bar{\zeta}_\alpha=g$, then for $0\le k\le s$,
            \begin{equation}
                \norm{f}_{H^k}\le C\norm{g}_{H^k};
            \end{equation}
            furthermore, for $0\le l\le s-2$,
            \begin{equation}\label{infinite norm of f and (I-H)f}
                \norm{f}_{W^{{l},\infty}}\le C\norm{g}_{W^{{l},\infty}}.
            \end{equation}
        \end{lemma}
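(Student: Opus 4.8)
The plan is to reduce both cases to an explicit inversion formula for the flat Hilbert transform $\mathbb{H}$, exploiting that $f$ is real-valued. The elementary input is that $\mathbb{H}h$ is purely imaginary for every real-valued $h$ (the kernel $\frac{1}{\pi i}\frac{1}{\alpha-\beta}$ is purely imaginary), so that $h=\mathrm{Re}\,(I-\mathbb{H})h$ for real $h$. I would split $\mathcal{H}=\mathbb{H}+(\mathcal{H}-\mathbb{H})$ and invoke Lemma \ref{curve-flat hilbert} — together with the $t$-uniform bound $\norm{\zeta_\alpha-1}_{H^{s-1/2}}\le C\epsilon$ coming from \eqref{bootstrap-1}, which handles small times where the $t^{-1/2}$ factor in Lemma \ref{curve-flat hilbert} is not itself small — to conclude that, for $\epsilon_0$ small, $\mathcal{H}-\mathbb{H}$ has operator norm $\le\tfrac12$ on $H^k$ for every $0\le k\le s$ and on $W^{l,\infty}$ for every $0\le l\le s-2$ (in the $W^{l,\infty}$ case one fixes a suitable finite $q$ in Lemma \ref{curve-flat hilbert}).

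For the case $(I-\mathcal{H})f=g$: applying $h\mapsto\mathrm{Re}\,(I-\mathbb{H})h$ to the real function $f$ and using $\mathcal{H}=\mathbb{H}+(\mathcal{H}-\mathbb{H})$ yields $f=\mathrm{Re}\,g+\mathrm{Re}\,(\mathcal{H}-\mathbb{H})f$. Hence $\norm{f}_{H^k}\le\norm{g}_{H^k}+\tfrac12\norm{f}_{H^k}$, and absorbing the last term gives $\norm{f}_{H^k}\le 2\norm{g}_{H^k}$; the $W^{l,\infty}$ bound is obtained the same way. (Here $f$ is a priori in the relevant space in all the applications in the paper under the bootstrap assumptions; abstractly one can bootstrap from $f\in L^2$ using the smoothing of $\mathcal{H}-\mathbb{H}$. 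The hypothesis that $f$ be real is exactly what selects $f$ among the solutions of $(I-\mathcal{H})f=g$, since $\ker(I-\mathcal{H})$ is the space of holomorphic functions.)

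For the case $(I-\mathcal{H})(f\bar\zeta_\alpha)=g$ I would reduce to the previous one. Using $[\mathcal{H},1]=0$ and the identity $(I-\mathcal{H})(f\bar\zeta_\alpha)=\bar\zeta_\alpha(I-\mathcal{H})f-[\mathcal{H},\bar\zeta_\alpha-1]f$, together with the fact that $\bar\zeta_\alpha$ is bounded away from $0$ with $\bar\zeta_\alpha-1$ small (chord-arc, via Lemma \ref{nearly alpha} and \eqref{bootstrap-1}), one gets $(I-\mathcal{H})f=\bar\zeta_\alpha^{-1}\big(g+[\mathcal{H},\bar\zeta_\alpha-1]f\big)$. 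Feeding this into the first case — using that $1/\bar\zeta_\alpha$ is a bounded multiplier on $H^k$ and $W^{l,\infty}$ with $1/\bar\zeta_\alpha-1$ small — reduces everything to absorbing the commutator, i.e. to $\norm{[\mathcal{H},\bar\zeta_\alpha-1]f}_{H^k}\le C\epsilon\norm{f}_{H^k}$ and its $W^{l,\infty}$ analogue, after which the small multiple of $\norm{f}$ passes to the left-hand side. I expect the \textbf{main obstacle} to be this last step at top order $k=s$ (resp.\ $l=s-2$): a naive Leibniz expansion of $\partial_\alpha^k[\mathcal{H},\bar\zeta_\alpha-1]f$ costs $\partial_\alpha^{s+1}\zeta$, which is not controlled. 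The fix is to use the order $-1$ smoothing of the Hilbert commutator — integrating by parts in $\beta$ in $[\mathcal{H},a]f=\frac{1}{\pi i}\int\frac{\zeta_\beta(a(\beta)-a(\alpha))}{\zeta(\alpha)-\zeta(\beta)}f(\beta)\,d\beta$ with $a=\bar\zeta_\alpha-1$, so that the top-order derivative reaches $a$ only through $\partial_\alpha^{s-2}(\bar\zeta_\alpha-1)$, which is precisely what \eqref{bootstrap-1} and \eqref{bootstrap-3} control — combined with the standard commutator estimates from the appendix; the required smallness then comes from $\norm{\zeta_\alpha-1}_{W^{s-2,\infty}}\le C\epsilon$. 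A secondary point throughout is that all these smallness absorptions must be uniform in $t\ge0$, which is why one uses the $H^{s-1/2}$ control of $\zeta_\alpha-1$, not only its $t^{-1/2}$ decay, near $t=0$.
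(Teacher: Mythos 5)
Your treatment of the case $(I-\mathcal{H})f=g$ is exactly the paper's argument: take real parts of $f=g+\mathbb{H}f+(\mathcal{H}-\mathbb{H})f$, use that $\mathbb{H}$ sends real functions to purely imaginary ones, invoke Lemma \ref{curve-flat hilbert} (with $q=2$ in the $L^\infty$ case) to make $\mathcal{H}-\mathbb{H}$ contractive, and absorb. The paper proves only the $l=0$ case of \eqref{infinite norm of f and (I-H)f} in this way and dismisses all the others, including the whole $(I-\mathcal{H})(f\bar\zeta_\alpha)=g$ alternative, with ``others are similar.''

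Where you genuinely diverge is the second case. Your reduction via the identity $(I-\mathcal{H})(f\bar\zeta_\alpha)=\bar\zeta_\alpha(I-\mathcal{H})f-[\mathcal{H},\bar\zeta_\alpha-1]f$ is algebraically correct and ultimately closes, but it is heavier than what the paper's ``similar'' presumably intends: since $f$ is real, one can just write $f\bar\zeta_\alpha=f+f(\bar\zeta_\alpha-1)$, so that taking real parts of $f\bar\zeta_\alpha=g+\mathbb{H}(f\bar\zeta_\alpha)+(\mathcal{H}-\mathbb{H})(f\bar\zeta_\alpha)$ isolates $f$ directly modulo the $O(\epsilon)$ perturbations $f(\mathrm{Re}\,\zeta_\alpha-1)$, $\mathrm{Re}\,\mathbb{H}(f(\bar\zeta_\alpha-1))$ and $\mathrm{Re}\,(\mathcal{H}-\mathbb{H})(f\bar\zeta_\alpha)$ — all absorbed exactly as in the first case, with no commutator to smooth. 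Your commutator route is still viable: the order-one smoothing of $[\mathcal{H},a]$ (Lemma \ref{singular}, taking $Y_1=H^{s-1}$ so that only $\partial_\alpha a=\partial_\alpha^2\bar\zeta\in H^{s-1}$ is needed) handles $k=s$ since $\norm{\zeta_\alpha-1}_{H^s}\le C\epsilon$ under the bootstrap, so your ``main obstacle'' is real but resolvable by the standard commutator estimates rather than by the specific integration-by-parts gymnastics you sketch (which would stop at $\partial_\alpha^{s-1}$, not $\partial_\alpha^{s-2}$, of $\bar\zeta_\alpha-1$). Finally, your remark about $t$-uniformity is a legitimate point the paper glosses over: the statement of Lemma \ref{curve-flat hilbert} carries a $t^{-1/2}$ factor in the $H^s$ bound that is not small for $t\lesssim1$, and closing the absorption there requires falling back on the $t$-independent bounds $\norm{\zeta_\alpha-1}_{H^{s-1/2}}\le C\epsilon$ and $\norm{\partial_\alpha^2\zeta}_{L^\infty}\le C\epsilon$ from \eqref{bootstrap-1}, exactly as you say; at $q=2$ in the $L^\infty$ estimate this issue does not even arise since the $t$-exponent is $1-2/q=0$.
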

        \begin{proof}
            We shall only prove (\ref{infinite norm of f and (I-H)f}) for $s=0$ as others are similar to it. Note that
            \[
            f=Re\left\{g+(\mathcal{H}-\mathbb{H})f\right\}
            \]
            and it suffices to control $\norm{(\mathcal{H}-\mathbb{H})f}_{L^\infty}$. By Lemma \ref{curve-flat hilbert} with $q=2$, we have
            \begin{align*}
                \norm{(\mathcal{H}-\mathbb{H})f}_{L^{\infty}}\leq C\epsilon \norm{f}_{L^{\infty}}.
            \end{align*}
            So we conclude that
            \[
            \norm{f}_{L^\infty}\le\norm{g}_{L^\infty}+C\epsilon\norm{f}_{L^\infty}
            \]
            which implies (\ref{infinite norm of f and (I-H)f}) (for $s=0$ and sufficiently small $\epsilon>0$).
        \end{proof}

\begin{proposition}\label{good-replace}
    Assume the bootstrap assumptions. We have
    \begin{equation}
        \norm{D_t\zeta-Q_{\alpha}}_{H^s}\leq C\epsilon^2 t^{-1/2}, \quad \norm{2iD_t^2\zeta+\theta_{\alpha}}_{H^s}\leq C\epsilon^2 t^{-1/2},
    \end{equation}
and
\begin{equation}
    \norm{D_t\zeta-Q_{\alpha}}_{W^{s-2,\infty}}\leq C\epsilon^2 t^{-\frac{2}{3}}\ln t, \quad \norm{2iD_t^2\zeta+\theta_{\alpha}}_{W^{s-2,\infty}}\leq C\epsilon^2 t^{-\frac{2}{3}}\ln t,
\end{equation}
\end{proposition}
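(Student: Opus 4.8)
\emph{The plan} is to derive closed-form expressions for the two differences that display them as sums of manifestly quadratic terms, and then to estimate those terms with the Hilbert-transform comparison (Lemma \ref{curve-flat hilbert}) and the commutator estimates of the appendix. I would first produce the formulas. For $D_t\zeta-Q_\alpha$, conjugating identity \eqref{formula of Q_alpha} of Proposition \ref{formula for derivatives of Q} gives
\[
D_t\zeta-Q_\alpha=-(\bar\zeta_\alpha-1)D_t\zeta+\tfrac12\Big(\zeta_\alpha\mathcal{H}\tfrac1{\zeta_\alpha}+\bar\zeta_\alpha\bar{\mathcal{H}}\tfrac1{\bar\zeta_\alpha}\Big)(\bar\zeta_\alpha D_t\zeta).
\]
For $2iD_t^2\zeta+\theta_\alpha$, I would combine $D_t^2\zeta=iA\zeta_\alpha-i$ from \eqref{system_newvariables}, the identity $\theta_\alpha=\big(I-\zeta_\alpha\mathcal{H}\tfrac1{\zeta_\alpha}\big)(\zeta_\alpha-1)$ (which follows from $\theta=(I-\mathcal{H})(\zeta-\alpha)$ and $\partial_\alpha\mathcal{H}f=\zeta_\alpha\mathcal{H}\tfrac{\partial_\alpha f}{\zeta_\alpha}$), and the formula for $(I-\mathcal{H})(A-1)$ from \S\ref{section:structure}, together with the holomorphicity relations $(I-\mathcal{H})D_t\bar\zeta=0$ and $(I-\mathcal{H})(\bar\zeta_\alpha-1)=0$. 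After using Lemma \ref{control f by using (I-H)f} to pass from $(I-\mathcal{H})(A-1)$ to $A-1$, this expresses $2iD_t^2\zeta+\theta_\alpha$ as a combination of: the symmetrized operator $\zeta_\alpha\mathcal{H}\tfrac1{\zeta_\alpha}+\bar\zeta_\alpha\bar{\mathcal{H}}\tfrac1{\bar\zeta_\alpha}$ (which vanishes identically when $\zeta(\alpha)=\alpha$) applied to $O(\epsilon)$ quantities, commutators $[\mathcal{H},D_t\zeta]\tfrac{\partial_\alpha D_t\bar\zeta}{\zeta_\alpha}$ and $[\mathcal{H},D_t^2\zeta]\tfrac{\bar\zeta_\alpha-1}{\zeta_\alpha}$, and pure products such as $(\zeta_\alpha-1)D_t^2\zeta$; in both formulas every summand is a product of two individually small quantities.

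For the $H^s$ bounds I would estimate term by term. Writing the symmetrized operator as $\big(\zeta_\alpha\mathcal{H}\tfrac1{\zeta_\alpha}-\mathbb{H}\big)+\big(\bar\zeta_\alpha\bar{\mathcal{H}}\tfrac1{\bar\zeta_\alpha}+\mathbb{H}\big)$, Lemma \ref{curve-flat hilbert} shows it maps $H^s$ into $H^s$ with operator norm $\le C\epsilon t^{-1/2}$, so with a product estimate and the bootstrap \eqref{bootstrap-1} all such terms are $O(\epsilon^2t^{-1/2})_{L^2}$. The commutator terms are handled by the Calderón-type commutator estimates of the appendix, which effectively shift a derivative off the factor living in $L^2$ onto the slowly decaying factor; together with $\norm{\partial_\alpha D_t\zeta}_{W^{s-3,\infty}}+\norm{\zeta_\alpha-1}_{W^{s-2,\infty}}+\norm{D_t^2\zeta}_{W^{s-2,\infty}}\le C\epsilon t^{-1/2}$ these are again $O(\epsilon^2t^{-1/2})_{L^2}$. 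The pure products are treated with the Leibniz rule, sorted by where the derivatives fall: the summands carrying a derivative on each factor use the $W^{\cdot,\infty}$ decay of $\partial_\alpha D_t\zeta$ and $\zeta_{\alpha\alpha}$, the summands with all derivatives on $D_t\zeta$ or $D_t^2\zeta$ use $\norm{\zeta_\alpha-1}_{L^\infty}\le C\epsilon t^{-1/2}$, and the single delicate summand (the top-order derivative falling on $\zeta_\alpha-1$) is dealt with as below.

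The $W^{s-2,\infty}$ bounds follow the same pattern, now invoking the $L^\infty$ half of Lemma \ref{curve-flat hilbert} (optimizing the parameter $q$) for the symmetrized operators, Lemma \ref{L infty of hilbert} whenever a bare $\mathcal{H}$ acts on a quantity that is controlled only in $L^2$ at the relevant derivative order, and the $L^\infty$ commutator estimates. The exponent weakens to $t^{-2/3}\ln t$ only because at top order the best available $L^\infty$ decay is $\norm{\partial_\alpha^{s-1}D_t^2\zeta}_{L^\infty}+\norm{\partial_\alpha^s\zeta}_{L^\infty}\le C\epsilon t^{-1/6}\ln t$ (Lemma \ref{decay of D_tzeta}), which then gets multiplied by the $\epsilon t^{-1/2}$ coming from the curve-to-flat Hilbert comparison; every other contribution decays at least like $t^{-1}$.

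I expect the main obstacle to be the top-order contribution $(\partial_\alpha^s(\bar\zeta_\alpha-1))D_t\zeta$ in the $H^s$ estimate of $D_t\zeta-Q_\alpha$ (and its analogues for $2iD_t^2\zeta+\theta_\alpha$): here $\bar\zeta_\alpha-1$ exhausts the regularity the bootstrap provides — only $\Lambda(\zeta-\alpha)$, $D_t\zeta\in H^{s+1/2}$ and $\zeta-\alpha\in\dot H^{1/2}$ are controlled — while $D_t\zeta$ decays only like $t^{-1/4}$ in $L^\infty$ by Lemma \ref{decay of D_tzeta}, so a naive product estimate is short of $t^{-1/2}$ by a factor $t^{1/4}$. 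The way around this is that this term never stands alone: it always appears together with the matching top-order piece of the symmetrized Hilbert term, and that combination is of commutator type — equivalently an $(I-\mathcal{H})$ acting on a product of a holomorphic and a non-holomorphic factor — which is one derivative smoother than the bare product. This lets one transfer the extra derivative from $\bar\zeta_\alpha-1$ onto $D_t\zeta$, where $\norm{\partial_\alpha D_t\zeta}_{W^{s-3,\infty}}\le C\epsilon t^{-1/2}$ supplies exactly the missing decay, while the $\dot H^{1/2}$-control of $\zeta-\alpha$ (propagated from $\norm{\xi_0}_{\dot H^{1/2}}$) closes the low-frequency end. Carrying this bookkeeping through each commutator and each $(I-\mathcal{H})$-of-product in the two formulas yields the claimed bounds.
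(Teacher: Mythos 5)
Your decomposition of the two differences into symmetrized-Hilbert, commutator, and pure-product pieces is exactly what the paper does, and you have correctly singled out the bottleneck: the top-order piece $\partial_\alpha^s(\zeta_\alpha-1)\,D_t\bar\zeta$, where $\zeta_\alpha-1$ exhausts the bootstrap regularity and $\|D_t\zeta\|_{L^\infty}$ supplies only $t^{-1/4}$, so the naive product bound falls short of $t^{-1/2}$ by a quarter power.

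The resolution you propose for that piece, however, does not hold up. Both $\bar Q_\alpha$ and $D_t\bar\zeta$ are boundary values of functions holomorphic in $\Omega(t)$, so $(I-\mathcal{H})(\bar Q_\alpha-D_t\bar\zeta)\equiv 0$: passing to an "$(I-\mathcal{H})$ of a product'' representation yields a tautology, not a gain of one derivative. If instead you try to pair the outside copy of $\partial_\alpha^s(\zeta_\alpha-1)\,D_t\bar\zeta$ with the piece of $\partial_\alpha^s$ of the symmetrized operator $\tfrac12\bar\zeta_\alpha(\mathcal{H}\tfrac1{\zeta_\alpha}+\bar{\mathcal{H}}\tfrac1{\bar\zeta_\alpha})(\zeta_\alpha D_t\bar\zeta)$ in which all $s$ derivatives land on $\zeta_\alpha-1$ inside, you get the same function hit by the symmetrized operator, i.e.\ a term of order $\epsilon t^{-1/2}$ times the bad quantity — not a cancellation of it. The Calder\'on commutator estimates let you trade a derivative from the $L^2$ factor onto the kernel factor in expressions like $[f,\mathcal{H}]\partial_\alpha g$, but $(\zeta_\alpha-1)D_t\bar\zeta$ is not of that form and the formula for $\bar Q_\alpha-D_t\bar\zeta$ does not rearrange it into one.

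What the paper actually does with this piece is different in mechanism. First it swaps $D_t\bar\zeta$ for $\bar Q_\alpha$ (the error is controlled at lower derivative order), then frequency-truncates so that only $P_{\ge t^{-2}}\bar Q_\alpha$ matters (the low frequencies give $t^{-1}$ by Bernstein), and then splits by $|\alpha|\le t$ versus $|\alpha|>t$. On $|\alpha|\le t$ it uses the transition-of-derivatives identity \eqref{equ:increase a derivative} to write $P_{\ge t^{-2}}\bar Q_\alpha$ in terms of $\partial_\alpha P_{\ge t^{-2}}\bar Q_\alpha$, $L_0 P_{\ge t^{-2}}\bar Q_\alpha$ and $\Omega_0 P_{\ge t^{-2}}\bar Q_\alpha$ weighted by powers of $\alpha/t$; the $L^\infty$ control of $\Omega_0 P_{\ge t^{-2}} Q_\alpha$ and the $L^2$ control of $L_0 P_{\ge t^{-2}} Q_\alpha$ from \eqref{Omega_0D_t zeta high}--\eqref{L0D_t zeta high} are exactly what makes the weighted pieces decay like $t^{-1/2}$. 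On $|\alpha|>t$ it instead replaces $\zeta_\alpha-1$ by $\tfrac12\theta_\alpha$ (recursively using the $H^s$ bound being proved, at lower order) and then uses \eqref{equ:increase a derivative 2} twice to shed derivatives from $\theta$ and land on a quantity with a known $W^{s-2,\infty}$ bound. So the missing $t^{1/4}$ is supplied not by a commutator structure but by the vector-field bounds on $Q_\alpha$ via the transition lemma, which is the new technical ingredient of the paper; your write-up would need to bring those into play, since the $\dot H^{1/2}$ control of $\zeta-\alpha$ alone does not close this term.
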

\begin{proof}
    We first prove the estimate for $\theta$. Note that
    \[
    \partial_\alpha\theta=2\partial_\alpha(\zeta-\alpha)+\partial_\alpha(\mathcal{H}+\bar{\mathcal{H}})(\zeta-\alpha)=-2iD_t^2\zeta+(A-1)\zeta_\alpha+\partial_\alpha(\mathcal{H}+\bar{\mathcal{H}})(\zeta-\alpha)
    \]
    and it's easy to verify $\norm{A-1}_{H^s}\le C\epsilon^2t^{-\frac{1}{2}}$ and $\norm{A-1}_{W^{s-1,\infty}}\le C\epsilon^2t^{-\frac{3}{4}}\ln t$ using Lemma \ref{singular}. For $\partial_\alpha(\mathcal{H}+\bar{\mathcal{H}})(\zeta-\alpha)$, write
    \begin{align*}
        \partial_\alpha(\bar{\mathcal{H}}+\mathcal{H})(\zeta-\alpha)
        &=(\bar{\mathcal{H}}+\mathcal{H})(\zeta_\alpha-1)+[\partial_\alpha,\bar{\mathcal{H}}+\mathcal{H}](\zeta-\alpha),\\
        &=\frac{2}{\pi}\int\frac{Im\left\{\zeta_\beta(\bar{\zeta}(\alpha)-\bar{\zeta}(\beta))\right\}}{|\zeta(\alpha)-\zeta(\beta)|^2}(\zeta_\beta-1)d\beta+[\zeta_\alpha,\mathcal{H}]\frac{\zeta_\alpha-1}{\zeta_\alpha}+[\bar{\zeta}_\alpha,\bar{\mathcal{H}}]\frac{\zeta_\alpha-1}{\bar{\zeta}_\alpha},
    \end{align*}
    and, by Lemma \ref{singular}, we have
    \[
    \norm{\partial_\alpha(\bar{\mathcal{H}}+\mathcal{H})(\zeta-\alpha)}_{H^s}
    \le C\epsilon^2t^{-1/2}.
    \]
    Also, since $\norm{\partial_\alpha^s\zeta}_{L^\infty}\le C\epsilon t^{-\frac{1}{6}}\ln t$, we derive that
    \begin{align*}
    &\norm{\frac{2}{\pi i}\int\frac{Im\{\zeta_\beta(\partial_\alpha^{s-2}\bar{\zeta}-\partial_\beta^{s-2}\bar{\zeta})\}}{|\zeta(\alpha)-\zeta(\beta)|^2}(\zeta_\beta-1)d\beta}_{L^\infty}\\
    \le&\ C\norm{\partial_\alpha^{s-1}\bar{\zeta}}_{W^{1,\infty}}\norm{\zeta_\alpha-1}_{W^{1,\infty}}+C\norm{\partial_\alpha^{s-1}\bar{\zeta}}_{L^\infty}\norm{\zeta_\alpha-1}_{L^\infty}\ln t\\
    &+Ct^{-\frac{1}{2}}\norm{\partial_\alpha^{s-1}\bar{\zeta}}_{L^\infty}\norm{\zeta_\alpha-1}_{L^2}\\
    \le&\ C\epsilon^2t^{-\frac{2}{3}}\ln t.
    \end{align*}
    This gives the conclusion. For the estimate of $Q_\alpha-D_t\zeta$, we write (\ref{formula of Q_alpha}) as
    \begin{align*}
        \bar{Q}_\alpha-D_t\bar{\zeta}
        &=(\zeta_\alpha-1)D_t\bar{\zeta}-\frac{1}{2}\bar{\zeta}_\alpha\left(\mathcal{H}\frac{1}{\zeta_\alpha}+\bar{\mathcal{H}}\frac{1}{\bar{\zeta}_\alpha}\right)(\zeta_\alpha D_t\bar{\zeta})-\frac{1}{2}(\zeta_\alpha-\bar{\zeta}_\alpha)\mathcal{H}D_t\bar{\zeta},\\
        &=(\zeta_\alpha-1)D_t\bar{\zeta}-\frac{1}{2}\bar{\zeta}_\alpha\left(\mathcal{H}\frac{1}{\zeta_\alpha}+\bar{\mathcal{H}}\frac{1}{\bar{\zeta}_\alpha}\right)(\zeta_\alpha D_t\bar{\zeta})-iIm\{\zeta_\alpha-1\}D_t\bar{\zeta}
    \end{align*}
    and we first deal with the $L^2$ and $L^\infty$ cases. It's easy to verify that
    \begin{align*}
        \norm{D_t\zeta-Q_\alpha}_{L^2}
        &\le C\epsilon^2t^{-\frac{1}{2}}+C\norm{\left(\mathcal{H}\frac{1}{\zeta_\alpha}+\bar{\mathcal{H}}\frac{1}{\bar{\zeta}_\alpha}\right)(\zeta_\alpha D_t\bar{\zeta})}_{L^2},\\
        &\le C\epsilon^2t^{-\frac{1}{2}}+C\norm{\int\frac{Im\{\zeta(\alpha)-\zeta(\beta)\}}{|\zeta(\alpha)-\zeta(\beta)|^2}\zeta_\beta D_t\bar{\zeta}(\beta)d\beta}_{L^2},\\
        &\le C\epsilon^2t^{-\frac{1}{2}}
    \end{align*}
    and
    \[
    \norm{D_t\zeta-Q_\alpha}_{L^\infty}\le C\epsilon^2t^{-\frac{3}{4}}\ln t
    \]
    by applying the estimate in Lemma \ref{singular}.
    
    We treat the $s$ order derivatives as follows: for $\partial_\alpha^s(\zeta_\alpha-1)D_t\bar{\zeta}$, it suffices to consider the case where all derivatives apply to $\zeta_\alpha-1$. Actually, there holds
    \begin{align*}
        \norm{\partial_\alpha^s(\zeta_\alpha-1)D_t\bar{\zeta}}_{L^2}
        &\le\sum_{j=0}^sC_{j,s}\norm{\partial_\alpha^j(\zeta_\alpha-1)\partial_\alpha^{s-j}D_t\bar{\zeta}}_{L^2}\\
        &\le C\sum_{j=0}^{s-1}\norm{\partial_\alpha^j(\zeta_\alpha-1)}_{L^2}\norm{\partial_\alpha^{s-j}D_t\bar{\zeta}}_{L^\infty}+\norm{\partial_\alpha^s(\zeta_\alpha-1)D_t\bar{\zeta}}_{L^2}\\
        &\le C\epsilon^2t^{-\frac{1}{2}}+\norm{\partial_\alpha^s(\zeta_\alpha-1)\bar{Q}_\alpha}_{L^2}+\norm{\partial_\alpha^s(\zeta_\alpha-1)}_{L^2}\norm{\bar{Q}_\alpha-D_t\bar{\zeta}}_{L^\infty}\\
        &\le C\epsilon^2t^{-\frac{1}{2}}+\norm{\partial_\alpha^s(\zeta_\alpha-1)P_{\ge t^{-2}}\bar{Q}_\alpha}_{L^2}.
    \end{align*}
    Here we have used Bernstein's inequality to prove $\norm{P_{\le t^{-2}}\bar{Q}_\alpha}_{L^\infty}\le C\epsilon t^{-1}$ in the last step. For the second term, we have
    \begin{align*}
        \norm{\partial_\alpha^s(\zeta_\alpha-1)P_{\ge t^{-2}}\bar{Q}_\alpha}_{L^2(|\alpha|\le t)}
        &\le C\epsilon\norm{P_{\ge t^{-2}}\bar{Q}_\alpha}_{L^\infty(|\alpha|\le t)}\\
        &\le C\epsilon\norm{\frac{4\alpha^2}{it^2}\partial_\alpha P_{\ge t^{-2}}\bar{Q}_\alpha-\frac{4\alpha}{it^2}L_0P_{\ge t^{-2}}\bar{Q}_\alpha+\frac{2}{it}\Omega_0P_{\ge t^{-2}}\bar{Q}_\alpha}_{L^\infty(|\alpha|\le t)}\\
        &\le C\epsilon^2 t^{-\frac{1}{2}}.
    \end{align*}
    Here we have used \eqref{Omega_0D_t zeta high} and \eqref{L0D_t zeta high}. On the other hand, note that according to the $H^s$ estimate for $\theta_\alpha+2iD_t^2\zeta$, we have
    \begin{align*}
        \norm{\partial_\alpha^s(\zeta_\alpha-1)P_{\ge t^{-2}}\bar{Q}_\alpha}_{L^2(|\alpha|>t)}
        &\le\frac{1}{2}\norm{\partial_\alpha^s\theta_\alpha P_{\ge t^{-2}}\bar{Q}_\alpha}_{L^2(|\alpha|>t)}\\
        &\ \ \ +\frac{1}{2}\norm{2\partial_\alpha^s(\zeta_\alpha-1)-\partial_\alpha^s\theta_\alpha}_{L^2}\norm{P_{\ge t^{-2}}\bar{Q}_\alpha}_{L^\infty}\\
        &\le C\epsilon\norm{\partial_\alpha^s\theta_\alpha}_{L^\infty(|\alpha|>t)}+C\epsilon^2t^{-\frac{3}{4}}\\
        &\le C\epsilon^2 t^{-\frac{1}{2}}.
    \end{align*}
    Here we have used the formula
    \[
    \partial_\alpha f=\frac{it^2}{4\alpha^2}f+\frac{1}{\alpha}L_0f-\frac{t}{2\alpha^2}\Omega_0f
    \]
    twice to lower the derivatives on $\theta$ and the estimate $\norm{\partial_\alpha^{s-2}\theta_\alpha}_{L^\infty}\le C\epsilon t^{-\frac{1}{2}}$. In summary, we have
    \[
    \norm{\partial_\alpha^s(\zeta_\alpha-1)D_t\bar{\zeta}}_{L^2}\le C\epsilon^2t^{-\frac{1}{2}}.
    \]
    Similarly, $\norm{Im\{\zeta_\alpha-1\}D_t\bar{\zeta}}_{H^s}\le C\epsilon^2t^{-\frac{1}{2}}$ and we are left with
    \[
    \norm{\int\frac{Im\{\zeta(\alpha)-\zeta(\beta)\}}{|\zeta(\alpha)-\zeta(\beta)|^2}\zeta_\beta D_t\bar{\zeta}(\beta)d\beta}_{H^s}.
    \]
    With the experience above, we know that it suffices to consider
    \[
    \norm{\int\frac{Im\{\partial_\alpha^s\zeta-\partial_\beta^s\zeta\}}{|\zeta(\alpha)-\zeta(\beta)|^2}\zeta_\beta D_t\bar{\zeta}(\beta)d\beta}_{L^2}.
    \]
    We split the integral into two parts: for $|\alpha-\beta|\ge \sqrt{t}$, there holds
    \begin{align*}
        \norm{\int_{|\alpha-\beta|\ge\sqrt{t}}\frac{Im\{\partial_\alpha^s\zeta-\partial_\beta^s\zeta\}}{|\zeta(\alpha)-\zeta(\beta)|^2}\zeta_\beta D_t\bar{\zeta}(\beta)d\beta}_{L^2}
        &\le C\norm{\partial_\alpha^s\zeta}_{L^2}\int_{|\alpha-\beta|\ge \sqrt{t}}\frac{|D_t\bar{\zeta}(\beta)|}{(\alpha-\beta)^2}d\beta\\
        &\ \ \ +C\norm{\int_{|\alpha-\beta|\ge\sqrt{t}}\frac{|\partial_\beta^s\zeta D_t\bar{\zeta}|}{(\alpha-\beta)^2}d\beta}_{L^2}\\
        &\le C\epsilon^2t^{-\frac{1}{2}}+C\norm{\frac{1}{\alpha^2}}_{L^1(|\alpha|\ge\sqrt{t})}\norm{\partial_\alpha^s\zeta D_t\bar{\zeta}}_{L^2}\\
        &\le C\epsilon^2t^{-\frac{1}{2}}.
    \end{align*}
    We have used the Young's inequality here. For $|\alpha-\beta|<\sqrt{t}$, note that $|\alpha|\le t$ implies $|\beta|\le 2t$ and we derive
    \[
    \norm{\int_{|\alpha-\beta|<\sqrt{t}}\frac{Im\{\partial_\alpha^s\zeta-\partial_\beta^s\zeta\}}{|\zeta(\alpha)-\zeta(\beta)|^2}\zeta_\beta D_t\bar{\zeta}(\beta)d\beta}_{L^2(|\alpha|\le t)}
    \le C\norm{\partial_\alpha^{s+1}\zeta}_{L^2(|\alpha|\le 2t)}\norm{D_t\bar{\zeta}}_{L^\infty(|\alpha|\le 2t)};
    \]
    by the relevant part in the discussion for $\norm{\partial_\alpha^s(\zeta_\alpha-1)D_t\bar{\zeta}}_{L^2}$ we know that $\norm{D_t\bar{\zeta}}_{L^\infty(|\alpha|\le 2t)}\le C\epsilon t^{-\frac{1}{2}}$. This gives
    \[
    \norm{\int_{|\alpha-\beta|<\sqrt{t}}\frac{Im\{\partial_\alpha^s\zeta-\partial_\beta^s\zeta\}}{|\zeta(\alpha)-\zeta(\beta)|^2}\zeta_\beta D_t\bar{\zeta}(\beta)d\beta}_{L^2(|\alpha|\le t)}\le C\epsilon^2t^{-\frac{1}{2}}.
    \]
    For $|\alpha|>t$, there holds $|\beta|>t/2$ and we have
    \begin{align*}
        &\norm{\int_{|\alpha-\beta|<\sqrt{t}}\frac{Im\{\partial_\alpha^s\zeta-\partial_\beta^s\zeta\}}{|\zeta(\alpha)-\zeta(\beta)|^2}\zeta_\beta D_t\bar{\zeta}(\beta)d\beta}_{L^2(|\alpha|>t)}\\
        \le&\ \frac{1}{2}\norm{\int_{|\alpha-\beta|<\sqrt{t}}\frac{Im\{\partial_\alpha^{s-1}[2(\zeta_\alpha-1)-\theta_\alpha]-\partial_\beta^{s-1}[2(\zeta_\beta-1)-\theta_\beta]\}}{|\zeta(\alpha)-\zeta(\beta)|^2}\zeta_\beta D_t\bar{\zeta}(\beta)d\beta}_{L^2(|\alpha|>t)}\\
        &+\frac{1}{2}\norm{\int_{|\alpha-\beta|<\sqrt{t}}\frac{Im\{\partial_\alpha^{s-1}\theta_\alpha-\partial_\beta^{s-1}\theta_\beta\}}{|\zeta(\alpha)-\zeta(\beta)|^2}\zeta_\beta D_t\bar{\zeta}(\beta)d\beta}_{L^2(|\alpha|>t)}\\
        \le&\ C\norm{2(\zeta_\alpha-1)-\theta_\alpha}_{H^s}\norm{D_t\bar{\zeta}}_{L^\infty}
        +C\norm{\partial_\alpha^s\theta_\alpha}_{L^\infty(|\alpha|>t/2)}\norm{D_t\bar{\zeta}}_{L^2}\\
        \le&\ C\epsilon^2t^{-\frac{1}{2}}.
    \end{align*}
    Now we conclude that
    \[
    \norm{\int\frac{Im\{\partial_\alpha^s\zeta-\partial_\beta^s\zeta\}}{|\zeta(\alpha)-\zeta(\beta)|^2}\zeta_\beta D_t\bar{\zeta}(\beta)d\beta}_{L^2}\le C\epsilon^2t^{-\frac{1}{2}}
    \]
    which implies $\norm{D_t\zeta-Q_\alpha}_{H^s}\le C\epsilon^2t^{-\frac{1}{2}}$. The proof for $W^{s-2,\infty}$ is quite similar and we shall write out the only difference:
    \begin{align*}
        &\norm{\int_{|\alpha-\beta|<\sqrt{t}}\frac{Im\{\partial_\alpha^{s-3}[2(\zeta_\alpha-1)-\theta_\alpha]-\partial_\beta^{s-3}[2(\zeta_\beta-1)-\theta_\beta]\}}{|\zeta(\alpha)-\zeta(\beta)|^2}\zeta_\beta D_t\bar{\zeta}(\beta)d\beta}_{L^\infty(|\alpha|>t)}\\
        \le&\ C\norm{2(\zeta_\alpha-1)-\theta_\alpha}_{W^{s-1,\infty}}\norm{D_t\bar{\zeta}}_{W^{1,\infty}}+C\epsilon^2t^{-\frac{2}{3}}\ln t\\
        \le&\ C\epsilon t^{-\frac{1}{4}}\norm{2(\zeta_\alpha-1)-\theta_\alpha}_{H^s}+C\epsilon^2t^{-\frac{2}{3}}\ln t\\
        \le&\ C\epsilon^2t^{-\frac{2}{3}}\ln t.
    \end{align*}
    This completes the proof.
\end{proof}

\begin{rem}
    The advantage of $Q_\alpha$ and $\theta_\alpha$  lies in the fact that we have control over $\Lambda\Omega_0 Q_\alpha$ and $\Omega_0\theta_\alpha$, whereas direct estimates for $\Lambda\Omega_0 D_t\zeta$ and $\Omega_0 D_t^2\zeta$ are not available.
    As indicated in Proposition \ref{good-replace}, whenever necessary, we may substitute $D_t\zeta$ and $D_t^2\zeta$ with $Q_{\alpha}$ and $\theta_{\alpha}$ respectively, and the error term behaves better than the leading term. For simplicity, we will continue to use the notation $D_t\zeta$ and $D_t^2\zeta$, even when these substitutions are implicitly applied.

    Also, the $W^{s-2,\infty}$ estimate in Proposition \ref{good-replace} can be improved given $|\alpha|$ is near $t$.
\end{rem}

Proposition \ref{good-replace} allows us to apply Lemma \ref{transition-lemma} to $D_t\zeta$ to derive the decay for $\norm{D_t\zeta}_{L^\infty(|\alpha|\le Ct^{1+\mu})}$.
\begin{cor}\label{decay for Dt zeta near t}
    Suppose that $\mu>-\frac{1}{2}+\delta_0$. We have
    \begin{equation}
        \norm{D_t\zeta}_{L^\infty(|\alpha|\le Ct^{1+\mu})}\le C\epsilon t^{-\frac{1}{2}+\mu}.
    \end{equation}
    And for $1\leq k\leq s$,
            \begin{equation}
        \norm{\partial_{\alpha}^k (\zeta-\alpha)}_{L^\infty(|\alpha|\ge Ct^{1-\mu})}\le C\epsilon t^{-\frac{1}{2}+\mu}.
    \end{equation}
        \begin{equation}
        \norm{\partial_{\alpha}^k D_t\zeta}_{L^\infty(|\alpha|\ge Ct^{1-\mu})}\le C\epsilon t^{-\frac{1}{2}+\mu}.
    \end{equation}
\end{cor}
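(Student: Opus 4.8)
The plan is to prove the three bounds by working separately in the near-field region $\{|\alpha|\le Ct^{1+\mu}\}$ and the far-field region $\{|\alpha|\ge Ct^{1-\mu}\}$, and in each case reduce the $L^\infty$ estimate, via the purely algebraic identities in Lemma \ref{transition-lemma}, to the $L_0$- and $\Omega_0$-bounds that are already available. The crucial preliminary is Proposition \ref{good-replace}: it lets us replace $D_t\zeta$ and $\zeta_\alpha-1$ (for which $\Omega_0(\cdot)$ is not directly controlled) by $Q_\alpha$ and $\tfrac12\theta_\alpha$ (for which, through \eqref{Omega_0D_t zeta high}--\eqref{L0D_t zeta high} and the bootstrap, it is), at the cost of an error that is $O(\epsilon^2t^{-1/2})$ in $H^s$. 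Everywhere, the frequencies $\lesssim t^{-2}$ are disposed of by Bernstein's inequality (Lemma \ref{Bernstein}): e.g. $\norm{P_{\le t^{-2}}Q_\alpha}_{L^\infty}\le t^{-1}\norm{Q_\alpha}_{L^2}\le C\epsilon t^{-1}$, which lies below the target $C\epsilon t^{-1/2+\mu}$ because $\mu>-\tfrac12$.

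For the first estimate I would put $g=P_{\ge t^{-2}}Q_\alpha$ and apply \eqref{equ:increase a derivative},
$$g=-\frac{4\alpha}{it^2}L_0g+\frac{2}{it}\Omega_0 g+\frac{4\alpha^2}{it^2}\partial_\alpha g,$$
estimating each summand on $\{|\alpha|\le Ct^{1+\mu}\}$ by $Ct^{\mu-1}\norm{L_0 g}_{L^\infty}$, $Ct^{-1}\norm{\Omega_0 g}_{L^\infty}$, $Ct^{2\mu}\norm{\partial_\alpha g}_{L^\infty}$ respectively; by \eqref{L0D_t zeta high}, \eqref{Omega_0D_t zeta high} and the bootstrap $W^{s-3,\infty}$-bound on $\partial_\alpha D_t\zeta$ these are $C\epsilon t^{\mu-1+\delta_0}$, $C\epsilon t^{-1+\delta_0}\ln t$, and $C\epsilon t^{2\mu-1/2}$. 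The first and second are $\le C\epsilon t^{-1/2+\mu}$ exactly because $\mu>-\tfrac12+\delta_0$, the strict inequality absorbing the logarithm; the third is fine for $\mu\le 0$, while for $\mu>0$ one further splits $\{|\alpha|\le Ct^{1+\mu}\}$ at $|\alpha|\sim t$, keeping \eqref{equ:increase a derivative} on $\{|\alpha|\lesssim t\}$ (where $\alpha^2/t^2\lesssim 1$) and using the large-$\alpha$ formula \eqref{region:alpha>t} on $\{t\lesssim|\alpha|\le Ct^{1+\mu}\}$ (where $|\alpha|^{-1/2},\,t|\alpha|^{-3/2}\lesssim t^{-1/2}$), both giving $\lesssim\epsilon t^{-1/2+O(\delta_0)}$. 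The replacement error $D_t\zeta-Q_\alpha=O(\epsilon^2t^{-1/2})_{H^s}$ is below the target for $\mu\ge 0$; for $\mu<0$ one revisits its quadratic structure from the proof of Proposition \ref{good-replace}, bounding its $D_t\zeta$-factor by the very quantity being estimated, which closes because of the spare factor $\epsilon t^{-1/2}$.

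For the two far-field estimates I would apply the large-$\alpha$ representation \eqref{region:alpha>t} to $f=\partial_\alpha^{k-1}\theta_\alpha$ and to $f=\partial_\alpha^{k}Q_\alpha$ — legitimate after the substitutions $\zeta_\alpha-1\leftrightarrow\tfrac12\theta_\alpha$ (from the identity $\theta_\alpha=2(\zeta_\alpha-1)+O(\epsilon^2t^{-1/2})_{H^s}$ in the proof of Proposition \ref{good-replace}) and $D_t\zeta\leftrightarrow Q_\alpha$ — obtaining $|f(\alpha)|\lesssim|\alpha|^{-1/2}\norm{L_0 f}_{L^2}+t|\alpha|^{-3/2}\norm{\Omega_0 f}_{L^2}$, whose prefactors are $\lesssim t^{-1/2+\mu/2}$ and $\lesssim t^{-1/2+3\mu/2}$ on $\{|\alpha|\ge Ct^{1-\mu}\}$. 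Commuting $L_0,\Omega_0$ past the $\partial_\alpha$'s (the commutators being scalar multiples of the same power of $\partial_\alpha$) expresses $\norm{L_0 f}_{L^2},\norm{\Omega_0 f}_{L^2}$ through the $L_0$- and $\Omega_0$-Sobolev norms of $\theta_\alpha$ and $Q_\alpha$ supplied by the bootstrap assumptions, all $\lesssim\epsilon t^{2\delta_0}$; for $k=s$, where these are available only at order $\dot H^{s-1}$, the top derivative is first shifted off $\theta$ (resp. $Q$) by \eqref{equ:increase a derivative 2}. Splitting $\{|\alpha|\ge Ct^{1-\mu}\}$ at $|\alpha|=t$ — using \eqref{region:alpha<t} on $\{Ct^{1-\mu}\le|\alpha|\le t\}$, where one extra $\partial_\alpha f$ term costing one more derivative appears (affordable since $k\le s$) — and checking exponents yields $\lesssim\epsilon t^{-1/2+\mu}$.

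The main obstacle I anticipate is the bookkeeping for $\mu$ close to the threshold $-\tfrac12+\delta_0$: there the region $\{|\alpha|\le Ct^{1+\mu}\}$ is thin and the target decay almost as strong as $t^{-1+\delta_0}$, so (i) the term $\tfrac{2}{it}\Omega_0 g$ is genuinely borderline — this is precisely why one needs $\mu>-\tfrac12+\delta_0$ rather than merely $\mu\ge-\tfrac12$ — and (ii) the $Q_\alpha$-versus-$D_t\zeta$ (and $\theta_\alpha$-versus-$\zeta_\alpha-1$) replacement error must itself be shown to inherit the $t^{\mu}$ gain on that thin region, which forces the short self-improving estimate above in place of a direct appeal to the global $H^s$/$W^{s-2,\infty}$ bounds of Proposition \ref{good-replace}.
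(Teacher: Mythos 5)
Your argument reaches the right kind of bound but selects the wrong member of Lemma \ref{transition-lemma}, which creates an avoidable case split in $\mu$ that then does not close. The paper's own proof of the first inequality uses \eqref{equ: lemma transition sec 4 1}, not \eqref{equ:increase a derivative}: writing $g=P_{\ge t^{-2}}Q_\alpha=-\tfrac{2\alpha}{it}\partial_t g+\tfrac{2}{it}\Omega_0 g$, the coefficient $\tfrac{2\alpha}{it}$ is $O(t^\mu)$ on $\{|\alpha|\le Ct^{1+\mu}\}$, and since $\partial_t Q_\alpha$ equals $D_t^2\zeta$ up to quadratic corrections, the pointwise bootstrap bound $\norm{D_t^2\zeta}_{L^\infty}\lesssim\epsilon t^{-1/2}$ supplies the full $\epsilon t^{-1/2+\mu}$ in a single step, uniformly over the sign of $\mu$; the $\Omega_0$-term contributes $\lesssim\epsilon t^{-1+\delta_0}\ln t\lesssim\epsilon t^{-1/2+\mu}$ precisely because $\mu>-\tfrac12+\delta_0$. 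Your choice of \eqref{equ:increase a derivative} introduces the extra term $\tfrac{4\alpha^2}{it^2}\partial_\alpha g$, whose coefficient is only $O(t^{2\mu})$; since $\norm{\partial_\alpha D_t\zeta}_{L^\infty}\lesssim\epsilon t^{-1/2}$ this gives $\epsilon t^{2\mu-1/2}$, too weak once $\mu>0$. Your patch for $\mu>0$ — invoking \eqref{region:ggt} on $\{t\lesssim|\alpha|\le Ct^{1+\mu}\}$ — only produces $\epsilon t^{-1/2+\delta_0}\ln t$, which does not dominate $\epsilon t^{-1/2+\mu}$ for $0<\mu<\delta_0$. So the near-field argument has a gap on part of the admissible range.

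The far-field step contains a genuine exponent error. From \eqref{region:ggt} on $\{|\alpha|\ge Ct^{1-\mu}\}$ the prefactors are $|\alpha|^{-1/2}\lesssim t^{-1/2+\mu/2}$ and $t|\alpha|^{-3/2}\lesssim t^{-1/2+3\mu/2}$, and after multiplying by the vector-field norms $\lesssim\epsilon t^{\delta_0}$ the two contributions are $\epsilon t^{-1/2+\mu/2+\delta_0}$ and $\epsilon t^{-1/2+3\mu/2+\delta_0}$. Dominating the first by the target $\epsilon t^{-1/2+\mu}$ forces $\mu\ge 2\delta_0$, while dominating the second forces $\mu\le -2\delta_0$; these are never simultaneously satisfiable, so the phrase ``checking exponents yields $\lesssim\epsilon t^{-1/2+\mu}$'' is simply false for this route. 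In particular the far-field bound cannot be extracted from \eqref{region:ggt} alone on the regime $\mu\le 0$, which is exactly where it conveys information beyond the $W^{s-2,\infty}$ bootstrap. A smaller slip: you cite \eqref{L0D_t zeta high} for an $L^\infty$ bound on $L_0 g$, but that reference is an $L^2$ estimate; to pass to $L^\infty$ you must invoke Sobolev embedding and the $H^{s-2}$-bootstrap on $L_0 D_t\zeta$.
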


\begin{proof}
    Write
    \[
    D_t\zeta=Q_\alpha+O(\epsilon t^{-\frac{1}{2}})=P_{\ge t^{-2}}Q_\alpha+P_{<t^{-2}}Q_\alpha
    \]
    and note that
    \[
    P_{\ge t^{-2}}Q_\alpha=-\frac{2\alpha}{it}\partial_tP_{\ge t^{-2}}Q_\alpha+\frac{2}{it}\Omega_0(P_{\ge t^{-2}}Q_\alpha)
    \]
    hence, according to (\ref{Omega_0D_t zeta high}),
    \[
    \norm{P_{\ge t^{-2}}Q_\alpha}_{L^\infty}
    \le C\epsilon t^{-\frac{1}{2}+\mu}+C\epsilon t^{-1+\delta_0}\ln t
    \le C\epsilon t^{-\frac{1}{2}+\mu}.
    \]
    Also, there holds
    \[
    \norm{P_{<t^{-2}}Q_\alpha}_{L^\infty}
    \le Ct^{-1}\norm{P_{<t^{-2}}Q_\alpha}_{L^2}\le C\epsilon t^{-1}.
    \]
    This completes the proof.
\end{proof}

Define $\tilde{\sigma}=D_t\theta$ and we will need the following expansion of $D_t\theta$ and $D_t\tilde{\sigma}$ in Chapter 5:
\begin{lemma}\label{remainder of Dt theta}
    There holds
    \begin{equation}\label{expansion of Dt theta}
        D_t\theta=D_t(I-\mathcal{H})(\zeta-\bar{\zeta})=2D_t\zeta-2b-(\bar{\mathcal{H}}+\mathcal{H})D_t(\zeta-\alpha)-[D_t,\bar{\mathcal{H}}+\mathcal{H}](\zeta-\alpha).
    \end{equation}
    and thus, if we assume Lemma \ref{norms of b} for a while,
    \[
    D_t\bar{\theta}=2D_t\bar{\zeta}+O(\epsilon^2t^{-\frac{1}{2}})_{H^s}.
    \]
\end{lemma}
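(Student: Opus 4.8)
The plan is to first prove the identity \eqref{expansion of Dt theta} by pure operator algebra, and then read off the estimate from the bounds already available for $b$ (Lemma \ref{norms of b}) and for the curve-to-flat Hilbert transform differences (Lemma \ref{curve-flat hilbert}). The key intermediate observation is the symmetrized representation
\[
\theta=2(\zeta-\alpha)-(\mathcal{H}+\bar{\mathcal{H}})(\zeta-\alpha).
\]
Indeed, we already know $\theta=(I-\mathcal{H})(\zeta-\alpha)$, and since $\bar\zeta-\alpha$ is holomorphic we have $\mathcal{H}(\bar\zeta-\alpha)=\bar\zeta-\alpha$; conjugating and using $\overline{\mathcal{H}f}=\bar{\mathcal{H}}\bar f$ gives $\bar{\mathcal{H}}(\zeta-\alpha)=\zeta-\alpha$, so that $(I-\mathcal{H})(\zeta-\alpha)=2(\zeta-\alpha)-\bigl[\bar{\mathcal{H}}(\zeta-\alpha)+\mathcal{H}(\zeta-\alpha)\bigr]$. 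Applying $D_t$, commuting it past $\mathcal{H}+\bar{\mathcal{H}}$, and using $D_t\alpha=\partial_t\alpha+b\,\partial_\alpha\alpha=b$ so that $D_t(\zeta-\alpha)=D_t\zeta-b$, I obtain exactly \eqref{expansion of Dt theta}.

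For the second assertion, by \eqref{expansion of Dt theta} and the conjugation invariance of the $H^s$ norm (together with $\overline{D_t\theta}=D_t\bar\theta$ and $\bar b=b$), it suffices to show that each of $b$, $(\bar{\mathcal{H}}+\mathcal{H})D_t(\zeta-\alpha)$, and $[D_t,\bar{\mathcal{H}}+\mathcal{H}](\zeta-\alpha)$ is $O(\epsilon^2 t^{-1/2})_{H^s}$. The bound $\norm{b}_{H^s}\lesssim\epsilon^2 t^{-1/2}$ is Lemma \ref{norms of b}. For the second term, note that since $\bar{\mathbb{H}}=-\mathbb{H}$ one has $\mathcal{H}+\bar{\mathcal{H}}=(\mathcal{H}-\mathbb{H})+(\bar{\mathcal{H}}-\bar{\mathbb{H}})$ as operators, which is bounded on $H^s$ with norm $\lesssim\epsilon t^{-1/2}$ by Lemma \ref{curve-flat hilbert} (applied to the two pieces, the second via conjugation); since $D_t(\zeta-\alpha)=D_t\zeta-b$ has $H^s$ norm $\lesssim\epsilon$ by the bootstrap assumptions and Lemma \ref{norms of b}, this term is $O(\epsilon^2 t^{-1/2})_{H^s}$. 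For the commutator, the identity $[D_t,\mathcal{H}]f=[D_t\zeta,\mathcal{H}]\frac{f_\alpha}{\zeta_\alpha}$ gives $[D_t,\mathcal{H}](\zeta-\alpha)=[D_t\zeta,\mathcal{H}]\frac{\zeta_\alpha-1}{\zeta_\alpha}$ and, by conjugation, $[D_t,\bar{\mathcal{H}}](\zeta-\alpha)=\overline{[D_t\zeta,\mathcal{H}]\frac{\bar\zeta_\alpha-1}{\zeta_\alpha}}$; these differ from $-(I-\mathcal{H})b$ in \eqref{forb} only by replacing $\bar\zeta_\alpha$ with $\zeta_\alpha$ in the numerator, so the same commutator estimate that proves Lemma \ref{norms of b} applies and bounds them by $O(\epsilon^2 t^{-1/2})_{H^s}$. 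Summing and conjugating yields $D_t\bar\theta=2D_t\bar\zeta+O(\epsilon^2 t^{-1/2})_{H^s}$.

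This lemma is essentially routine once the machinery of $\S 2$ and Lemma \ref{norms of b} are in place; the only points requiring attention are bookkeeping ones. First, $\zeta-\alpha$ is controlled a priori only in homogeneous norms ($\dot H^{1/2}$ and $\dot H^{s+1/2}$), not in $L^2$, so one cannot apply Lemma \ref{curve-flat hilbert} directly to $(\mathcal{H}+\bar{\mathcal{H}})(\zeta-\alpha)$; keeping $D_t$ inside, so that it lands on $D_t\zeta-b\in H^s$, circumvents this, which is precisely why the form \eqref{expansion of Dt theta} is stated with the commutator split out rather than collapsed into $D_t(\mathcal{H}+\bar{\mathcal{H}})(\zeta-\alpha)$. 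Second, in the commutator one must keep the structure $[D_t\zeta,\mathcal{H}]\frac{\zeta_\alpha-1}{\zeta_\alpha}$ intact, so that the $t^{-1/2}$ decay is supplied by $\partial_\alpha D_t\zeta$ (bounded by $\epsilon t^{-1/2}$ in $W^{s-3,\infty}$ by the bootstrap) rather than by $D_t\zeta$ itself, which decays only like $t^{-1/4}$ by Lemma \ref{decay of D_tzeta}. Beyond these, the computation is direct.
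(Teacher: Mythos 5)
Your proof is correct and the operator identity is derived faithfully to the lemma statement, but you take a genuinely different route to estimate the $(\mathcal{H}+\bar{\mathcal{H}})D_t(\zeta-\alpha)$ term. You write $\mathcal{H}+\bar{\mathcal{H}}=(\mathcal{H}-\mathbb{H})+(\bar{\mathcal{H}}-\bar{\mathbb{H}})$, exploiting $\mathbb{H}+\bar{\mathbb{H}}=0$, and invoke the $H^s$ operator bound of Lemma \ref{curve-flat hilbert}. The paper instead decomposes
\[
(\bar{\mathcal{H}}+\mathcal{H})f=\left(\bar{\mathcal{H}}\tfrac{1}{\bar{\zeta}_\alpha}+\mathcal{H}\tfrac{1}{\zeta_\alpha}\right)f+\bar{\mathcal{H}}\left(1-\tfrac{1}{\bar{\zeta}_\alpha}\right)f+\mathcal{H}\left(1-\tfrac{1}{\zeta_\alpha}\right)f,
\]
where the first term has the explicit kernel $\frac{\eta(\alpha)-\eta(\beta)}{|\zeta(\alpha)-\zeta(\beta)|^2}$ (gaining smallness from $Im\,\zeta$), and the other two gain smallness from $\zeta_\alpha-1$; all pieces are then estimated by Lemma \ref{singular}. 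Both routes deliver the same $O(\epsilon^2 t^{-1/2})_{H^s}$ bound; yours is shorter and more operator-theoretic, the paper's is more hands-on with the kernel and avoids leaning on the $H^s$-to-$H^s$ statement of Lemma \ref{curve-flat hilbert}, whose proof in the paper is only detailed for $W^{s-2,\infty}$. For the commutator terms you reach the same cancellation as the paper: both identify the commutators as (conjugates of) the quantity $(I-\mathcal{H})b$ from \eqref{forb}. One small point worth flagging in your favor: the paper's proof computes $[D_t,\mathcal{H}](\bar{\zeta}-\alpha)$ and $[D_t,\bar{\mathcal{H}}](\bar{\zeta}-\alpha)$ and considers $f=D_t\bar{\zeta}$, i.e.\ it works with $\bar{\zeta}-\alpha$ rather than the $\zeta-\alpha$ appearing in the stated identity \eqref{expansion of Dt theta}; your derivation is the one consistent with the lemma's statement, and the estimates are equivalent either way since they are conjugates of each other.
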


\begin{proof}
     First, we know that $\norm{b}_{H^s}\le C\epsilon^2t^{-\frac{1}{2}}$ from Lemma \ref{norms of b}; next, straight calculation yields
    \[
    (\bar{\mathcal{H}}+\mathcal{H})f=\left(\bar{\mathcal{H}}\frac{1}{\bar{\zeta}_\alpha}+\mathcal{H}\frac{1}{\zeta_\alpha}\right)f+\bar{\mathcal{H}}\left(1-\frac{1}{\bar{\zeta}_\alpha}\right)f+\mathcal{H}\left(1-\frac{1}{\zeta_\alpha}\right)f.
    \]
    And it's enough to consider $f=D_t\bar{\zeta}$ since $D_t\alpha=b$ behaves better. First we have
    \[
    \left\|\left(\bar{\mathcal{H}}\frac{1}{\bar{\zeta}_\alpha}+\mathcal{H}\frac{1}{\zeta_\alpha}\right)D_t\zeta\right\|_{H^s}
    =C\left\|\int\frac{\eta(\alpha)-\eta(\beta)}{|\zeta(\alpha)-\zeta(\beta)|^2}D_t\bar{\zeta}d\beta\right\|_{H^s}
    \le C\epsilon^2t^{-\frac{1}{2}}.
    \]
    Also, by the $L^2$ boundedness of $\mathcal{H}$ and the bootstrap assumptions, 
    \begin{align*}
        \left\|\bar{\mathcal{H}}\left(1-\frac{1}{\bar{\zeta}_\alpha}\right)D_t\bar{\zeta}\right\|_{H^s}
       \le C\norm{(1-\frac{1}{\bar{\zeta}_{\alpha}})D_t\bar{\zeta}}_{H^s}\leq C\epsilon^2t^{-\frac{1}{2}}.
    \end{align*}
    Similar process applies to $\mathcal{H}\left(1-\frac{1}{\zeta_\alpha}\right)D_t\bar{\zeta}$, so we conclude that $\|(\bar{\mathcal{H}}+\mathcal{H})D_t(\bar{\zeta}-\alpha)\|_{H^s}\le C\epsilon^2t^{-\frac{1}{2}}$.
		
For the last term in (\ref{expansion of Dt theta}), we have
\[
    [D_t,\mathcal{H}](\bar{\zeta}-\alpha)
    =[D_t\zeta,\mathcal{H}]\frac{\bar{\zeta}_\alpha-1}{\zeta_\alpha},\ [D_t,\bar{\mathcal{H}}](\bar{\zeta}-\alpha)=[D_t\bar{\zeta},\bar{\mathcal{H}}]\frac{\bar{\zeta}_\alpha-1}{\bar{\zeta}_\alpha}
\]
and thus
\[
    \|[D_t,\mathcal{H}](\bar{\zeta}-\alpha)\|_{H^s}=\norm{(I-\mathcal{H})b}_{H^s}\le C\epsilon^2t^{-\frac{1}{2}}.
\]
Also, since both $D_t\bar{\zeta}$ and $\frac{\bar{\zeta}_\alpha-1}{\bar{\zeta}_\alpha}$ are holomorphic in $\Omega(t)$, we have
\begin{align*}
    [D_t\bar{\zeta},\bar{\mathcal{H}}]\frac{\bar{\zeta}_\alpha-1}{\bar{\zeta}_\alpha}
&=[D_t\bar{\zeta},\mathcal{H}]\frac{\bar{\zeta}_\alpha-1}{\zeta_\alpha}-\left[D_t\bar{\zeta},\bar{\mathcal{H}}\frac{1}{\bar{\zeta}_\alpha}+\mathcal{H}\frac{1}{\zeta_\alpha}\right](\bar{\zeta}_\alpha-1),\\
&=-\left[D_t\bar{\zeta},\bar{\mathcal{H}}\frac{1}{\bar{\zeta}_\alpha}+\mathcal{H}\frac{1}{\zeta_\alpha}\right](\bar{\zeta}_\alpha-1)
\end{align*}
and thus
\[
\norm{[D_t,\bar{\mathcal{H}}](\bar{\zeta}-\alpha)}_{H^s}=\norm{[D_t\bar{\zeta},\bar{\mathcal{H}}]\frac{\bar{\zeta}_\alpha-1}{\bar{\zeta}_\alpha}}_{H^s}\le C\epsilon^3t^{-\frac{3}{4}}.
\]
This completes the proof.
\end{proof}

\begin{lemma}\label{structure of Dt sigma}
    If we assume Lemma \ref{norms of b} for a while, there holds
    \[
    D_t\tilde{\sigma}=2D_t^2\zeta+e
    \]
    where $\norm{e}_{H^{s}}\le C\epsilon^2t^{-\frac{1}{2}}$.
\end{lemma}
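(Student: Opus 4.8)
The plan is to differentiate the expansion of $D_t\theta$ obtained in Lemma~\ref{remainder of Dt theta}; this is more convenient than working from the cubic equation \eqref{cubic} directly, because \eqref{expansion of Dt theta} already exhibits $D_t\theta$ as $2D_t\zeta$ plus quantities ($b$ and curved-Hilbert-transform commutators) whose $D_t$-derivatives are controlled by the structure formulae of \S\ref{section:structure}. Since $\tilde\sigma=D_t\theta$ and, by \eqref{expansion of Dt theta},
\[
\tilde\sigma=2D_t\zeta-2b-(\bar{\mathcal{H}}+\mathcal{H})D_t(\zeta-\alpha)-[D_t,\bar{\mathcal{H}}+\mathcal{H}](\zeta-\alpha),
\]
applying $D_t$ (using $D_t\alpha=b$, so $D_t^2(\zeta-\alpha)=D_t^2\zeta-D_tb$, and commuting $D_t$ past the first $\bar{\mathcal{H}}+\mathcal{H}$) gives $D_t\tilde\sigma=2D_t^2\zeta+e$ with
\[
e=-2D_tb-(\bar{\mathcal{H}}+\mathcal{H})(D_t^2\zeta-D_tb)-[D_t,\bar{\mathcal{H}}+\mathcal{H}]D_t(\zeta-\alpha)-D_t[D_t,\bar{\mathcal{H}}+\mathcal{H}](\zeta-\alpha).
\]
It then remains to bound each of these four pieces in $H^s$ by $C\epsilon^2t^{-1/2}$.

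The first three pieces are handled essentially as in the proof of Lemma~\ref{remainder of Dt theta}. For $D_tb$: the formula in \S\ref{section:structure} expresses $(I-\mathcal{H})D_tb$ as a sum of Calder\'on-type commutators $[D_t\zeta,\mathcal{H}]\tfrac{\partial_\alpha b}{\zeta_\alpha}$, $[D_t^2\zeta,\mathcal{H}]\tfrac{\bar\zeta_\alpha-1}{\zeta_\alpha}$, $[D_t\zeta,\mathcal{H}]\tfrac{\partial_\alpha D_t\bar\zeta}{\zeta_\alpha}$ and one quadratic singular integral, each of which is $O(\epsilon^2t^{-1/2})_{H^s}$ by the singular-integral estimates of Lemma~\ref{singular} together with \eqref{bootstrap-1}, \eqref{bootstrap-3}, Lemma~\ref{norms of b} and Proposition~\ref{good-replace}; since $D_tb$ is real-valued, Lemma~\ref{control f by using (I-H)f} then gives $\norm{D_tb}_{H^s}\le C\epsilon^2t^{-1/2}$. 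For $(\bar{\mathcal{H}}+\mathcal{H})(D_t^2\zeta-D_tb)$: decompose $\bar{\mathcal{H}}+\mathcal{H}=(\bar{\mathcal{H}}\tfrac1{\bar\zeta_\alpha}+\mathcal{H}\tfrac1{\zeta_\alpha})+\bar{\mathcal{H}}(1-\tfrac1{\bar\zeta_\alpha})+\mathcal{H}(1-\tfrac1{\zeta_\alpha})$ as in Lemma~\ref{remainder of Dt theta}; the first summand has the small kernel $\tfrac{\eta(\alpha)-\eta(\beta)}{|\zeta(\alpha)-\zeta(\beta)|^2}$, and the last two produce the factor $1-\tfrac1{\zeta_\alpha}=\tfrac{\zeta_\alpha-1}{\zeta_\alpha}$, so using $\norm{D_t^2\zeta}_{H^s}+\norm{D_tb}_{H^s}\le C\epsilon$, $\norm{\zeta_\alpha-1}_{H^s}\le C\epsilon$ and $\norm{\zeta_\alpha-1}_{W^{s-2,\infty}}\le C\epsilon t^{-1/2}$ (bootstrap plus Proposition~\ref{good-replace}) this term is $O(\epsilon^2t^{-1/2})_{H^s}$. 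Finally $[D_t,\bar{\mathcal{H}}+\mathcal{H}]D_t(\zeta-\alpha)=[D_t\zeta,\mathcal{H}]\tfrac{\partial_\alpha(D_t\zeta-b)}{\zeta_\alpha}+[D_t\bar\zeta,\bar{\mathcal{H}}]\tfrac{\partial_\alpha(D_t\zeta-b)}{\bar\zeta_\alpha}$ is, again by Lemma~\ref{singular} and the bootstrap, $O(\epsilon^2t^{-1/2})_{H^s}$.

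The fourth piece, $D_t[D_t,\bar{\mathcal{H}}+\mathcal{H}](\zeta-\alpha)$, is the main one. Using $[D_t,\mathcal{H}](\zeta-\alpha)=[D_t\zeta,\mathcal{H}]\tfrac{\zeta_\alpha-1}{\zeta_\alpha}$ (and its $\bar{\mathcal{H}}$-analogue), then applying $D_t$ together with $[D_t,\mathcal{H}]g=[D_t\zeta,\mathcal{H}]\tfrac{\partial_\alpha g}{\zeta_\alpha}$, one obtains a finite sum of terms of the schematic types $[D_t^2\zeta,\mathcal{H}]\tfrac{\zeta_\alpha-1}{\zeta_\alpha}$, $[D_t\zeta,\mathcal{H}]D_t\tfrac{\zeta_\alpha-1}{\zeta_\alpha}$, $D_t\zeta\cdot[D_t\zeta,\mathcal{H}]\tfrac{1}{\zeta_\alpha}\partial_\alpha\tfrac{\zeta_\alpha-1}{\zeta_\alpha}$ and $[D_t\zeta,\mathcal{H}]\tfrac{1}{\zeta_\alpha}\partial_\alpha\big(D_t\zeta\cdot\tfrac{\zeta_\alpha-1}{\zeta_\alpha}\big)$; every factor appearing ($D_t\zeta$, $D_t^2\zeta$, $\tfrac{\zeta_\alpha-1}{\zeta_\alpha}$, $D_t\tfrac{\zeta_\alpha-1}{\zeta_\alpha}$) is either quadratically small or carries pointwise $t^{-1/2}$ decay through \eqref{bootstrap-3} and Proposition~\ref{good-replace}, so Lemma~\ref{singular} and the bootstrap yield $O(\epsilon^2t^{-1/2})_{H^s}$. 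The genuinely delicate point is a top-order term such as $\partial_\alpha^s\big(D_t\zeta\cdot[D_t\zeta,\mathcal{H}]\tfrac{1}{\zeta_\alpha}\partial_\alpha\tfrac{\zeta_\alpha-1}{\zeta_\alpha}\big)$ in which all $s$ derivatives fall on a single factor: there only the weak global decay $\norm{D_t\zeta}_{L^\infty}+\norm{\partial_\alpha^{s-1}D_t\zeta}_{L^\infty}\le C\epsilon t^{-1/4}\ln t$ of Lemma~\ref{decay of D_tzeta} is directly available, which is insufficient. I would resolve this exactly as in Proposition~\ref{good-replace}: for a small $\mu>\delta_0$ split $\mathbb{R}_\alpha$ into $\{|\alpha|\le Ct^{1-\mu}\}$, $\{Ct^{1-\mu}<|\alpha|<Ct^{1+\mu}\}$ and $\{|\alpha|\ge Ct^{1+\mu}\}$; on the middle band use $\norm{D_t\zeta}_{L^\infty(|\alpha|\le Ct^{1+\mu})}\le C\epsilon t^{-1/2+\mu}$ and, on $\{|\alpha|\ge Ct^{1-\mu}\}$, the improved decay of $\partial_\alpha^kD_t\zeta$ and $\partial_\alpha^k(\zeta-\alpha)$, both from Corollary~\ref{decay for Dt zeta near t}; on the inner region transfer the derivative onto $D_t\zeta$ via \eqref{equ:increase a derivative}, using \eqref{Omega_0D_t zeta high}, \eqref{L0D_t zeta high} and the bootstrap on $L_0,\Omega_0$. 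Taking $\mu$ and $\delta_0$ small absorbs the $t^{\mu}$ and $\ln t$ factors into $t^{-1/2}$, and collecting the four estimates gives $\norm{e}_{H^s}\le C\epsilon^2t^{-1/2}$.
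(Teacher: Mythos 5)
The paper states Lemma~\ref{structure of Dt sigma} without a proof, so there is no paper argument to compare against. Your reconstruction is the natural one and is internally consistent with the way the paper treats the companion estimate in Lemma~\ref{remainder of Dt theta}: starting from \eqref{expansion of Dt theta}, applying $D_t$ (with $D_t\alpha=b$), and collecting
\[
e=-2D_tb-(\bar{\mathcal{H}}+\mathcal{H})(D_t^2\zeta-D_tb)-[D_t,\bar{\mathcal{H}}+\mathcal{H}]D_t(\zeta-\alpha)-D_t[D_t,\bar{\mathcal{H}}+\mathcal{H}](\zeta-\alpha)
\]
is algebraically correct, and the toolbox you invoke (the formula for $(I-\mathcal{H})D_tb$ from \S\ref{section:structure}, Lemma~\ref{singular}, Lemma~\ref{control f by using (I-H)f}, Proposition~\ref{good-replace}, and the bootstrap bounds) is exactly what the paper would use. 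It is also worth noting the small algebraic simplification $[D_t,\bar{\mathcal{H}}](\zeta-\alpha)=-(I-\bar{\mathcal{H}})b$, which lets you fold part of the fourth piece into further $D_tb$-type quantities; this doesn't change the estimates but shortens the bookkeeping.

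One inaccuracy in the diagnostics: the term you single out as ``the genuinely delicate point'', $\partial_\alpha^s\bigl(D_t\zeta\cdot[D_t\zeta,\mathcal{H}]\tfrac1{\zeta_\alpha}\partial_\alpha\tfrac{\zeta_\alpha-1}{\zeta_\alpha}\bigr)$, is a \emph{cubic} quantity, not a quadratic one. The bracket $[D_t\zeta,\mathcal{H}]\tfrac1{\zeta_\alpha}\partial_\alpha\tfrac{\zeta_\alpha-1}{\zeta_\alpha}$ already decays like $\epsilon^2t^{-1}\ln t$ in $L^\infty$ by Lemma~\ref{singular}(3) together with \eqref{bootstrap-3}; hence even with the weak bounds $\|\partial_\alpha^sD_t\zeta\|_{L^2}\le C\epsilon$ and $\|D_t\zeta\|_{L^\infty}\le C\epsilon t^{-1/4}$, every distribution of the $s$ derivatives lands at least at $O(\epsilon^3 t^{-3/4}\ln t)$, well inside the required $t^{-1/2}$. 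The places where $D_t\zeta$'s slow $t^{-1/4}$ decay actually bites are the \emph{bilinear} products of the schematic form $\partial_\alpha^sA\cdot D_t\zeta$ in $L^2$ with $A$ carrying no decay in $L^2$ — precisely the situation the paper handles by the region split and the transition-of-derivatives in the proof of Proposition~\ref{good-replace} for $\partial_\alpha^s(\zeta_\alpha-1)D_t\bar\zeta$. Since that is the toolbox you propose anyway, your argument still closes; you have simply flagged a harmless term as the bottleneck while the mechanism you invoke is needed (if at all) elsewhere.
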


\begin{lemma}\label{bouns decay for small or big alpha}
    Assume the bootstrap assumptions. We have
    \begin{equation}
        \norm{(\partial_{\alpha}D_t\zeta, D_t^2\zeta, \zeta_{\alpha}-1)}_{W^{s-3,\infty}\times W^{s-2,\infty}\times W^{s-2,\infty}(E_t)}\leq C \epsilon t^{-1/2-1/10+\delta_0},
    \end{equation}
    where $E_t:=\{\alpha: |\alpha|\geq t^{6/5}~or~|\alpha|\leq t^{4/5}\}$.
\end{lemma}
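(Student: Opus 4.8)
\emph{Plan.} The plan is to bound each of the three quantities pointwise on $E_t$ by trading $L^2$/Sobolev control of the vector fields $L_0,\Omega_0$ for pointwise decay, which is exactly the mechanism behind the estimates \eqref{region:ggt} (for $|\alpha|\ge t$) and \eqref{region:llt} (for $|\alpha|\le t$); these follow from Lemma \ref{transition-lemma} together with Sobolev embedding and the Cauchy--Schwarz inequality on a half-line. We may assume $t\ge 1$, since for $t\le 1$ the bootstrap assumption \eqref{bootstrap-1} and Sobolev embedding already bound the left-hand side by $C\epsilon\le C\epsilon t^{-1/2-1/10+\delta_0}$. I would also record at the outset, using Lemma \ref{decay of D_tzeta}, the argument in Proposition \ref{good-replace}, and the equation, that $\zeta_\alpha-1=-iD_t^2\zeta+i(A-1)\zeta_\alpha$ with $\norm{A-1}_{W^{s-1,\infty}}\le C\epsilon^2t^{-3/4}\ln t\le C\epsilon t^{-1/2-1/10+\delta_0}$; hence it suffices to estimate $\partial_\alpha^k D_t\zeta$ for $1\le k\le s-2$ and $\partial_\alpha^k D_t^2\zeta$ for $0\le k\le s-2$ in $L^\infty(E_t)$.

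\emph{Reduction to vector-field bounds.} Fix $g\in\{D_t\zeta,D_t^2\zeta\}$, fix $0\le l\le s-2$ (with $l\ge 1$ if $g=D_t\zeta$), and put $f=\partial_\alpha^l g$. On $\{|\alpha|\ge t^{6/5}\}\subset\{|\alpha|\ge t\}$, \eqref{region:ggt} gives
\[
|f(\alpha,t)|\le \frac{2}{t^{3/5}}\norm{L_0 f}_{L^2}+\frac{1}{t^{4/5}}\norm{\Omega_0 f}_{L^2},
\]
and on $\{|\alpha|\le t^{4/5}\}\subset\{|\alpha|\le t\}$, \eqref{region:llt} gives
\[
|f(\alpha,t)|\le \frac{4}{t^{6/5}}\norm{L_0 f}_{H^1}+\frac{4}{t}\norm{\Omega_0 f}_{H^1}+\frac{8}{t^{1/5}}\norm{L_0\partial_\alpha f}_{L^2}+\frac{1}{t^{3/5}}\norm{\Omega_0\partial_\alpha f}_{L^2}.
\]
Thus it remains to prove that for $0\le l\le s-2$ and $g\in\{D_t\zeta,D_t^2\zeta\}$ one has $\norm{L_0\partial_\alpha^l g}_{H^1}+\norm{\Omega_0\partial_\alpha^l g}_{H^1}\le C\epsilon(1+t)^{2\delta_0}$; granting this, both displays are $\le C\epsilon t^{-3/5+2\delta_0}=C\epsilon t^{-1/2-1/10+2\delta_0}$ on $E_t$, which is the asserted estimate (the harmless loss $2\delta_0$ in place of $\delta_0$ can be absorbed by running the bootstrap with $\delta_0$ replaced by $\delta_0/2$). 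The case $g=\zeta_\alpha-1$ is covered by the identity $\zeta_\alpha-1=-iD_t^2\zeta+i(A-1)\zeta_\alpha$ recorded above.

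\emph{The vector-field bounds, and the main obstacle.} The terms $L_0\partial_\alpha^l D_t\zeta$, $L_0\partial_\alpha^l D_t^2\zeta$ are the easy half: using $[L_0,\partial_\alpha]=-\partial_\alpha$ they reduce to $\norm{L_0 D_t\zeta}_{\dot H^l}+\norm{L_0 D_t\zeta}_{\dot H^{l+1}}$ and the analogue for $D_t^2\zeta$, plus $\norm{D_t\zeta}_{\dot H^{l+1}}+\norm{D_t^2\zeta}_{\dot H^{l+1}}$, all of which are $\le C\epsilon(1+t)^{\delta_0}$ from \eqref{bootstrap-2} for $l\le s-3$ and $\le C\epsilon(1+t)^{2\delta_0}$ at the top order $l=s-2$ from \eqref{bootstrap-2'}, together with \eqref{bootstrap-1}. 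The main obstacle will be the $\Omega_0$ terms in $L^2$, because the bootstrap furnishes $\Omega_0\theta_\alpha$ and $\Omega_0 Q_\alpha$, not $\Omega_0 D_t^2\zeta$ or $\Omega_0 D_t\zeta$. The plan here is: first substitute $D_t\zeta$ by $Q_\alpha$ and $D_t^2\zeta$ by $\tfrac{i}{2}\theta_\alpha$ by Proposition \ref{good-replace}, checking that $L_0$ and $\Omega_0$ of the errors $D_t\zeta-Q_\alpha$, $2iD_t^2\zeta+\theta_\alpha$ still decay (of size $\epsilon^2 t^{-1/2+2\delta_0}$ in $\dot H^{s-1}$); then control $\Omega_0$ of the $\partial_\alpha$-derivatives of $Q_\alpha$ and $\theta_\alpha$ via Corollary \ref{cor:Omega_0} — equivalently through $\Omega_0\partial_\alpha h=L_0\partial_t h-\tfrac t2(\partial_t^2-i\partial_\alpha)h$, after rewriting $\partial_t$ via $D_t=\partial_t+b\partial_\alpha$ and the equation $\mathcal{P}\theta=G$ — and the bootstrap bounds on $\Omega_0\theta_\alpha$, $\Omega_0 Q_\alpha$ in $H^{s-2}$ and $\dot H^{s-1}$. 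The rest is routine bookkeeping of the two growth rates, $t^{\delta_0}$ below the top order and $t^{2\delta_0}$ at order $s-2$.
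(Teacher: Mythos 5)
Your proof is correct and uses the same underlying mechanism as the paper's: trade $L_0,\Omega_0$ control in $L^2$/Sobolev for pointwise decay via the transition identities of Lemma~\ref{transition-lemma} (i.e., \eqref{region:ggt} for $|\alpha|\ge t^{6/5}$ and \eqref{region:llt} for $|\alpha|\le t^{4/5}$), and then feed the bootstrap bounds \eqref{bootstrap-2}--\eqref{bootstrap-2'}. A few remarks.

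First, a small arithmetic slip: in the $|\alpha|\le t^{4/5}$ display, $\frac{8|\alpha|^{3/2}}{t^2}\le 8t^{-4/5}$, not $8t^{-1/5}$ — harmless since $t^{-4/5}$ decays faster than the critical $t^{-3/5}$, but worth fixing. Similarly, the identity you record is $\zeta_\alpha-1=-iD_t^2\zeta-(A-1)\zeta_\alpha$ (no extra $i$), which again has no effect on the estimate.

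Second, you correctly flag that the top-order term, which involves $\norm{L_0\partial_\alpha f}_{L^2}+\norm{\Omega_0\partial_\alpha f}_{L^2}$ at $s-1$ derivatives, gives $t^{2\delta_0}$ growth and hence the bound $C\epsilon t^{-3/5+2\delta_0}$ rather than the stated $\delta_0$. The paper's own written proof (for $\partial_\alpha Q_\alpha$ in $W^{s-3,\infty}$) does not use \eqref{region:llt}; instead it applies the transition identity once and controls the raised derivative $\partial_\alpha^2 Q_\alpha$ in $L^\infty$ by $\epsilon t^{-1/4}\ln t$ via Lemma~\ref{decay of D_tzeta}, giving $t^{-2/5}\cdot t^{-1/4}\ln t$, which is strictly below $t^{-3/5}$ and so achieves the $\delta_0$ rate. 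However — and this is worth noting — that one-step argument does not directly port to $D_t^2\zeta$ or $\zeta_\alpha-1$ in $W^{s-2,\infty}$ (which the paper dismisses as "similar"): there the raised derivative at top order is $\partial_\alpha^{s-1}D_t^2\zeta$, for which Lemma~\ref{decay of D_tzeta} only gives $\epsilon t^{-1/6}\ln t$, and $t^{-2/5-1/6}\ln t=t^{-17/30}\ln t$ decays slower than $t^{-3/5}$. Your route via \eqref{region:llt}, which replaces the pointwise bound on the raised derivative by its $L_0,\Omega_0$ bounds in $L^2$, is precisely what closes the top-order $W^{s-2,\infty}$ case, at the harmless cost of $2\delta_0$.

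Third, the reduction through $\Omega_0$ of the error terms $D_t\zeta-Q_\alpha$ and $2iD_t^2\zeta+\theta_\alpha$ is more work than you need and is the only place your plan leaves nontrivial work unexplained: applying $\Omega_0$ to these singular-integral remainders in $\dot H^{s-1}$ is not routine. The cleaner route, which the paper takes, is to apply \eqref{region:ggt}/\eqref{region:llt} directly to $f=\partial_\alpha^l Q_\alpha$ and $f=\partial_\alpha^l\theta_\alpha$, obtain the $L^\infty(E_t)$ bounds for these, and then pass to $\partial_\alpha D_t\zeta$, $D_t^2\zeta$, $\zeta_\alpha-1$ using only the pointwise $W^{s-2,\infty}$ bounds on the replacement errors from Proposition~\ref{good-replace} (of size $\epsilon^2 t^{-2/3}\ln t$, which is below $t^{-3/5+\delta_0}$). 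This completely avoids any need for $\Omega_0$ control of the errors.
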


\begin{proof}
    We prove $ \norm{\partial_{\alpha}D_t\zeta}_{W^{s-3,\infty}(E_t)}\leq C\epsilon t^{-1/2-1/10+\delta_0}$. The estimates for $D_t^2\zeta$ and $\zeta_{\alpha}-1$ are similar. By Proposition \ref{good-replace}, it suffices to prove $ \norm{\partial_{\alpha}Q_{\alpha}}_{W^{s-3,\infty}(E_t)}\leq C\epsilon t^{-1/2-1/10+\delta_0}$. For $|\alpha|\leq t^{4/5}$, by Lemma \ref{transition-lemma}, we have
    \begin{align*}
        \partial_{\alpha}Q_{\alpha}=-\frac{4\alpha}{it^2}L_0\partial_{\alpha}Q_{\alpha}+\frac{2}{it}\Omega_0 \partial_{\alpha}Q_{\alpha}+\frac{4\alpha^2}{it^2}\partial_{\alpha}\partial_{\alpha}Q_{\alpha}
    \end{align*}
    Clearly, by the bootstrap assumptions \eqref{bootstrap-1}-\eqref{bootstrap-2'} and Sobolev embedding,
    \begin{align*}
        \norm{-\frac{4\alpha}{it^2}L_0\partial_{\alpha}Q_{\alpha}+\frac{2}{it}\Omega_0 \partial_{\alpha}Q_{\alpha}}_{W^{s-3,\infty}(|\alpha|\leq  t^{4/5})}\leq &\  Ct^{-1}\Big(\norm{L_0 \partial_{\alpha}Q_{\alpha}}_{H^{s-2}}+\norm{\Omega_0\partial_{\alpha}Q_{\alpha}}_{H^{s-2}}\Big)\\
        \leq &\  C\epsilon t^{-1+2\delta_0}.
    \end{align*}
    By the Lemma \ref{decay of D_tzeta} and Proposition \ref{good-replace}, we have 
    \begin{align*}
        \norm{\frac{4\alpha^2}{it^2}\partial_{\alpha}\partial_{\alpha}Q_{\alpha}}_{W^{s-3,\infty}(|\alpha|\leq t^{4/5})}\leq C\epsilon \frac{t^{8/5}}{t^2}\norm{\partial_{\alpha}^2Q_{\alpha}}_{W^{s-3,\infty}}
        \le C\epsilon^2t^{-\frac{2}{5}-\frac{1}{4}}\ln t
        = C\epsilon^2t^{-\frac{1}{2}-\frac{3}{10}}\ln t.
    \end{align*}

For $|\alpha|\geq t^{6/5}$, we use the following (see (3.6) of \cite{Wu2009})
\begin{equation}
    |f(\alpha,t)|\leq \frac{2}{|\alpha|^{1/2}}\norm{L_0f}_2+\frac{t}{|\alpha|^{3/2}}\norm{\Omega_0f}_2
\end{equation}
to obtain
\begin{align*}
    \norm{\partial_{\alpha}Q_{\alpha}}_{W^{s-3,\infty}(|\alpha|\geq t^{6/5})} \leq \left.\left(\frac{2}{|\alpha|^{1/2}}\norm{L_0 \partial_{\alpha}Q_{\alpha}}_2+\frac{t}{|\alpha|^{3/2}}\norm{\Omega_0 \partial_{\alpha}Q_{\alpha}}_2\right)\right|_{|\alpha|\geq t^{6/5}}\leq C\epsilon t^{-1/2-1/10+\delta_0}.
\end{align*}
This completes the proof.
\end{proof}

Before addressing the estimates for $b$ and $A-1$, we present a useful lemma that will play a key role in several proofs, particularly in Chapter 6. 
\begin{lemma}\label{lemma:transit derivatives}
For any $\alpha,\beta\in \mathbb{R}$ such that 
$$\frac{1}{2}|\alpha|\leq |\beta|\leq 2|\alpha|,$$
we have
    \begin{equation}
        |D_t\zeta(\alpha,t)D_t^3(\beta,t)|+|D_t\zeta(\alpha,t) D_t^3\bar{\zeta}(\beta,t)|\le C\epsilon^2t^{-1}.
    \end{equation}
\end{lemma}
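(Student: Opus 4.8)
The plan is to replace $D_t^3\zeta$ by a bounded multiple of $\partial_\alpha D_t\zeta$ modulo an error that is harmless against $D_t\zeta(\alpha)$, and then to use the comparability $\tfrac12|\alpha|\le|\beta|\le 2|\alpha|$ to place both arguments at the same scale, where the localization estimates give each factor its sharp $t^{-1/2}$-type decay. A direct estimate is hopeless here: Lemma \ref{decay of D_tzeta} only yields $\norm{D_t\zeta}_{L^\infty}\le C\epsilon t^{-1/4}$, and $D_t^3\zeta$ sits one derivative above the quantities that the bootstrap assumptions control; the hypothesis $|\alpha|\sim|\beta|$ is exactly what repairs both defects.

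\emph{Step 1 (reduction of $D_t^3\zeta$).} Applying $D_t$ to $D_t^2\zeta=iA\zeta_\alpha-i$ and using $D_t\zeta_\alpha=\partial_\alpha D_t\zeta-b_\alpha\zeta_\alpha$ (which follows from $[D_t,\partial_\alpha]=-b_\alpha\partial_\alpha$) gives $D_t^3\zeta=iA\,\partial_\alpha D_t\zeta+i(D_tA)\zeta_\alpha-iAb_\alpha\zeta_\alpha$. Since $b$, $A-1$ and $D_tA$ are quadratic, the formulae of \S\ref{section:structure}, Lemma \ref{norms of b}, Lemma \ref{singular} and the bootstrap assumptions give $\norm{i(D_tA)\zeta_\alpha-iAb_\alpha\zeta_\alpha}_{W^{s-2,\infty}}\le C\epsilon^2t^{-3/4}\ln t$, so this part contributes $O(\epsilon^3t^{-1}\ln t)$ to $D_t\zeta(\alpha)D_t^3\zeta(\beta)$ and is absorbed. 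The same computation on $D_t^2\bar\zeta=-iA\bar\zeta_\alpha+i$ reduces $D_t^3\bar\zeta$ to $-iA\,\partial_\alpha D_t\bar\zeta$ plus a similar error. Hence it suffices to prove $|D_t\zeta(\alpha,t)\,\partial_\alpha D_t\zeta(\beta,t)|+|D_t\zeta(\alpha,t)\,\partial_\alpha D_t\bar\zeta(\beta,t)|\le C\epsilon^2t^{-1}$ whenever $\tfrac12|\alpha|\le|\beta|\le 2|\alpha|$.

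\emph{Step 2 (splitting by scale).} Fix a large constant $C_0$. If $|\alpha|\le C_0 t$, then $|\beta|\le 2C_0 t$; Corollary \ref{decay for Dt zeta near t} with $\mu=0$ gives $|D_t\zeta(\alpha,t)|\le C\epsilon t^{-1/2}$, while \eqref{bootstrap-3} gives $|\partial_\alpha D_t\zeta(\beta,t)|\le C\epsilon t^{-1/2}$ (and likewise for $\partial_\alpha D_t\bar\zeta$, the relevant quantities $\zeta_\alpha-1$ and $2(\zeta_\alpha-1)-\theta_\alpha$ being controlled there), so the product is $\le C\epsilon^2t^{-1}$. If instead $|\alpha|>C_0 t$, then $|\beta|>\tfrac{C_0}{2}t$ and both points are large; replacing $D_t\zeta$ by $Q_\alpha$ via Proposition \ref{good-replace} and applying the pointwise bound \eqref{region:ggt} (i.e. (3.6) of \cite{Wu2009}) to $D_t\zeta$ and to $\partial_\alpha D_t\zeta$, together with the bootstrap control of $L_0$ and $\Omega_0$ acting on $Q_\alpha$ and $\partial_\alpha Q_\alpha$ in $L^2$, gives $|D_t\zeta(\alpha,t)|\lesssim(|\alpha|^{-1/2}+t|\alpha|^{-3/2})\epsilon t^{C\delta_0}$ and the analogous bound for $\partial_\alpha D_t\zeta(\beta,t)$ with $\beta$ in place of $\alpha$. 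Since $|\alpha|\sim|\beta|>\tfrac{C_0}{2}t$, multiplying and choosing $C_0$ large yields $\le C\epsilon^2t^{-1+C\delta_0}$; in the outer range $|\alpha|\ge t^{6/5}$ the weights $|\alpha|^{-1/2},|\alpha|^{-3/2}$ already supply enough extra decay to defeat $t^{C\delta_0}$, so only in the window $C_0 t<|\alpha|<t^{6/5}$ does one retain a $\delta_0$-loss, of the same type already tolerated in the Chapter 6 energy estimates. The case $\partial_\alpha D_t\bar\zeta$ is identical, with $\tilde{\Omega}_0$ replacing $\Omega_0$ since $D_t\bar\zeta$ is holomorphic.

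The main obstacle is precisely the large-$\alpha$ case: because $D_t\zeta$ carries only the weak a priori decay $t^{-1/4}$, the two factors cannot be estimated separately, and the role of the hypothesis $|\alpha|\sim|\beta|$ is to confine both arguments to one common regime — $|\alpha|\lesssim t$, controlled by Corollary \ref{decay for Dt zeta near t}, or $|\alpha|\gg t$, controlled by \eqref{region:ggt} — in which each factor regains the sharp $t^{-1/2}$-type decay; the remaining work is the bookkeeping of the $\delta_0$-powers coming from the vector-field bounds in that regime.
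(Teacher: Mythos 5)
Your Step 1 reduction ($D_t^3\zeta\mapsto iA\partial_\alpha D_t\zeta$ plus errors) is sound and in fact matches the paper's own Corollary~\ref{use in chapter 6}, although the error bound you cite, $O(\epsilon^2 t^{-3/4}\ln t)$, is too crude to be ``absorbed'' as you claim: multiplied by $D_t\zeta(\alpha)=O(\epsilon t^{-1/4})$ it gives $O(\epsilon^3 t^{-1}\ln t)$, which is not $\le C\epsilon^2 t^{-1}$ for all $t$. This piece is salvageable because in fact $D_tA-Ab_\alpha=A\big(\tfrac{a_t}{a}\circ\kappa^{-1}\big)$, so Proposition~\ref{norm of at/a} upgrades the error to $O(\epsilon^2 t^{-1-\delta})$; you would need to invoke that identity explicitly. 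Your Step 2 case $|\alpha|\le C_0 t$, using Corollary~\ref{decay for Dt zeta near t} with $\mu=0$ together with the bootstrap bound on $\partial_\alpha D_t\zeta$, is correct.

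The genuine gap is the case $|\alpha|>C_0 t$. You invoke \eqref{region:ggt}, which requires both $\|L_0 f\|_{L^2}$ and $\|\Omega_0 f\|_{L^2}$, and you assert ``bootstrap control of $L_0$ and $\Omega_0$ acting on $Q_\alpha$ \dots in $L^2$.'' But the bootstrap assumptions deliberately do \emph{not} control $\|\Omega_0 Q_\alpha\|_{L^2}$ (or $\|\Omega_0 D_t\zeta\|_{L^2}$): they only control $\Omega_0\theta_\alpha$ in $H^{s-2}$, $\partial_\alpha\Omega_0 Q_\alpha$ in $H^{s-3}$, and $\Omega_0 Q_\alpha$ in $\dot H^{s-1}$; the paper itself flags this in the Localization Lemma discussion (``$\Omega_0 P_{\geq t^{-2}}Q_\alpha$ is not assumed to be in $L^2$. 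We can only estimate \dots in $L^\infty$''). With only the $L^\infty$ bound $\|\Omega_0 P_{\ge t^{-2}}Q_\alpha\|_{L^\infty}\lesssim\epsilon t^{\delta_0}\ln t$ in hand, the argument underlying \eqref{region:ggt} produces $|D_t\zeta(\alpha)|\lesssim \frac{t}{|\alpha|}\epsilon t^{\delta_0}\ln t + |\alpha|^{-1/2}\epsilon t^{\delta_0}$, which for $|\alpha|\sim C_0 t$ is $O(\epsilon t^{\delta_0}\ln t)$ — nowhere near the $t^{-1/2}$ decay your factor-by-factor strategy needs. So the two factors genuinely \emph{cannot} be estimated separately in the regime $|\alpha|\gtrsim t$; this is exactly why the paper never tries. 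Its proof instead performs a transition of derivatives on the \emph{product}: it writes $D_t^3\zeta(\beta)=\partial_t D_t^2\zeta(\beta)+b\partial_\alpha D_t^2\zeta(\beta)$, uses $\partial_tD_t^2\zeta(\beta)=-\tfrac{it}{2\beta}D_t^2\zeta(\beta)+\tfrac1\beta\Omega_0D_t^2\zeta(\beta)$, and then converts $-\tfrac{it}{2\beta}D_t\zeta(\alpha)$ into $\tfrac{\alpha}{\beta}\big(\partial_tD_t\zeta(\alpha)-\tfrac1\alpha\Omega_0D_t\zeta(\alpha)\big)$, so that the bad $t/\beta$ weight becomes the bounded ratio $\alpha/\beta$ and the leading term is $\tfrac{\alpha}{\beta}D_t^2\zeta(\alpha)D_t^2\zeta(\beta)=O(\epsilon^2t^{-1})$, while the $\Omega_0$ pieces only ever enter in $L^\infty$ weighted by $1/|\beta|\lesssim t^{-3/4}$. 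Your proposal would need to replace the separate pointwise bounds for $|\alpha|>C_0t$ by this coupled-transition argument (or an equivalent one that never requires $\Omega_0 D_t\zeta\in L^2$).
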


\begin{proof}
    Without loss of generality we may assume that $\widehat{D_t\zeta}$ is supported in $|\xi|\in[t^{-2},t^2]$, otherwise we can decompose $D_t\zeta$ as
    $$D_t\zeta=P_{\leq t^{-2}}D_t\zeta+P_{>t^{-2}}D_t\zeta.$$
    Using the Bernstein inequality, it's clear that 
    $$\norm{P_{\leq t^{-2}}D_t\zeta}_{\infty}\leq C\epsilon t^{-1}.$$ Thus, it suffices to estimate $\norm{(P_{>t^{-2}}D_t\zeta) D_t^3\zeta}_{\infty}$ and we will omit the symbol $P_{>t^{-2}}$ for the remainder of the proof.
    
    For $|\alpha|\leq t^{3/4}$, we have 
    \[
    \norm{D_t\zeta}_{L^\infty(|\alpha|\le t^{3/4})}
    =\norm{\frac{4\alpha^2}{it^2}\partial_\alpha D_t\zeta-\frac{4\alpha}{it^2}L_0D_t\zeta+\frac{2}{it}\Omega_0D_t\zeta}_{L^\infty}
    \le C\epsilon t^{-1+\delta_0}.
    \]
    This implies
    \[
    \sup_{|\alpha|\leq t^{3/4}, \frac{|\alpha|}{2}\leq |\beta|\leq 2|\alpha|}\Big| D_t\zeta(\alpha,t) D_t^3\zeta(\beta,t)\Big|\le C\epsilon^2t^{-1}.
    \]
    For $|\alpha|>t^{3/4}$, we proceed as follows.
    \begin{align*}
        D_t\zeta(\alpha,t) D_t^3\zeta(\beta,t)
        &=D_t\zeta(\alpha,t) \partial_tD_t^2\zeta(\beta,t)+D_t\zeta(\alpha,t) b\partial_{\alpha}D_t^2\zeta(\beta,t),\\
        &=D_t\zeta(\alpha,t)\left(-\frac{it}{2\beta}D_t^2\zeta(\beta,t)+\frac{1}{\beta}\Omega_0D_t^2\zeta(\beta,t)\right)+o(\epsilon^4t^{-1}),\\
        &=\frac{\alpha}{\beta}\left(\partial_tD_t\zeta(\alpha,t)-\frac{1}{\alpha}\Omega_0D_t\zeta\right)D_t^2\zeta(\beta,t)+\frac{1}{\beta}D_t\zeta(\alpha,t)\Omega_0D_t^2\zeta(\beta,t)+o(\epsilon^4t^{-1}),\\
        &=\frac{\alpha}{\beta}D_t^2\zeta(\alpha,t)D_t^2\zeta(\beta,t)-\frac{1}{\alpha}\Omega_0D_t\zeta(\alpha,t) D_t^2\zeta(\beta,t)+\frac{1}{\beta}D_t\zeta(\alpha,t)\Omega_0D_t^2\zeta(\beta,t)+o(\epsilon^4t^{-1})
    \end{align*}
    which implies (recall that $\norm{\Omega_0P_{>t^{-2}}D_t\zeta}_{L^\infty}\le C\epsilon t^{\delta_0}\ln t$)
    \[
    \sup_{|\alpha|\geq t^{3/4}, \frac{|\alpha|}{2}\leq |\beta|\leq 2|\alpha|}|D_t\zeta(\alpha,t)D_t^3\zeta(\beta,t)|\le C\epsilon^2t^{-1}.
    \]
    The same arguments apply to $\norm{D_t\zeta D_t^3\bar{\zeta}}_{L^\infty}$. This completes the proof.
\end{proof}

\begin{cor}\label{use in chapter 6}
    \begin{equation}
        \norm{D_t\zeta \partial_\alpha D_t\zeta}_{L^\infty}+\norm{D_t\zeta \partial_\alpha D_t\bar{\zeta}}_{L^\infty}\le C\epsilon^2t^{-1}.
    \end{equation}
\end{cor}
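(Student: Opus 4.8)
The plan is to reduce the product $D_t\zeta\,\partial_\alpha D_t\zeta$ (and its conjugate counterpart) to the quantity $D_t\zeta\,D_t^3\zeta$ already controlled in Lemma~\ref{lemma:transit derivatives}, at the cost of a remainder carrying the Rayleigh--Taylor quantity $\frac{a_t}{a}\circ\kappa^{-1}$, which decays like $t^{-1-\delta}$ by Proposition~\ref{norm of at/a}. Throughout I take $t\ge1$; the range $0\le t\le 1$ is trivial since \eqref{bootstrap-1} and Sobolev embedding give $\norm{D_t\zeta}_{L^\infty}+\norm{\partial_\alpha D_t\zeta}_{L^\infty}\le C\epsilon$, hence $\norm{D_t\zeta\,\partial_\alpha D_t\zeta}_{L^\infty}\le C\epsilon^2\le C\epsilon^2 t^{-1}$ there.

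The first step is to establish the identities
\begin{align*}
\partial_\alpha D_t\zeta&=-\frac{i}{A}D_t^3\zeta-\Big(\frac{a_t}{a}\circ\kappa^{-1}\Big)\zeta_\alpha,\\
\partial_\alpha D_t\bar\zeta&=\frac{i}{A}D_t^3\bar\zeta-\Big(\frac{a_t}{a}\circ\kappa^{-1}\Big)\bar\zeta_\alpha.
\end{align*}
To obtain the first, I would start from $D_t^2\zeta=iA\zeta_\alpha-i$, apply $D_t$, and use $[D_t,\partial_\alpha]=-b_\alpha\partial_\alpha$ to get $D_t^3\zeta=iA\,\partial_\alpha D_t\zeta+i(D_tA-Ab_\alpha)\zeta_\alpha$. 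The crucial point is the cancellation $D_tA-Ab_\alpha=A\,\frac{a_t}{a}\circ\kappa^{-1}$: since $D_t(f\circ\kappa^{-1})=(\partial_tf)\circ\kappa^{-1}$ for $D_t=\partial_t+b\partial_\alpha$ with $b=\kappa_t\circ\kappa^{-1}$, one has $\frac{D_tA}{A}=\big(\frac{a_t}{a}+\frac{\kappa_{\alpha t}}{\kappa_\alpha}\big)\circ\kappa^{-1}$ while $b_\alpha=\frac{\kappa_{\alpha t}}{\kappa_\alpha}\circ\kappa^{-1}$, so the two pieces $D_tA$ and $Ab_\alpha$ — each only of size $O(\epsilon^2t^{-1/2})$ — collapse into $A\,\frac{a_t}{a}\circ\kappa^{-1}$. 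Dividing by $iA$, which is legitimate since $\norm{A-1}_{L^\infty}$ is small (cf. the proof of Proposition~\ref{good-replace}) and hence $A^{-1}$ is bounded, gives the stated identity; the conjugate one follows the same way from $D_t^2\bar\zeta=-iA\bar\zeta_\alpha+i$.

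Multiplying these identities by $D_t\zeta$ gives
\[
D_t\zeta\,\partial_\alpha D_t\zeta=-\frac{i}{A}D_t\zeta\,D_t^3\zeta-\Big(\frac{a_t}{a}\circ\kappa^{-1}\Big)D_t\zeta\,\zeta_\alpha,
\]
and the analogous formula for $D_t\zeta\,\partial_\alpha D_t\bar\zeta$ with $D_t^3\bar\zeta$ and $\bar\zeta_\alpha$. For the first term, $A^{-1}$ is uniformly bounded and Lemma~\ref{lemma:transit derivatives}, applied with $\beta=\alpha$ (which trivially satisfies $\frac12|\alpha|\le|\beta|\le2|\alpha|$), gives $\norm{D_t\zeta\,D_t^3\zeta}_{L^\infty}+\norm{D_t\zeta\,D_t^3\bar\zeta}_{L^\infty}\le C\epsilon^2t^{-1}$. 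For the second term, $\norm{\frac{a_t}{a}\circ\kappa^{-1}}_{L^\infty}\le C\epsilon^2t^{-1-\delta}$ by Proposition~\ref{norm of at/a}, $\norm{D_t\zeta}_{L^\infty}\le C\epsilon$ by \eqref{bootstrap-1} and Sobolev embedding, and $\norm{\zeta_\alpha}_{L^\infty}\le1+C\epsilon t^{-1/2}\le C$ by \eqref{bootstrap-3}, so this term is $O(\epsilon^3t^{-1-\delta})\le C\epsilon^2t^{-1}$ for $\epsilon\le\epsilon_0$. Adding the two contributions (and the same for the conjugate) yields the corollary.

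The main obstacle is recognizing and verifying the cancellation $D_tA-Ab_\alpha=A\,\frac{a_t}{a}\circ\kappa^{-1}$. A naive expansion $\partial_\alpha D_t\zeta=D_t\zeta_\alpha+b_\alpha\zeta_\alpha$ would instead leave the term $b_\alpha\zeta_\alpha\,D_t\zeta$, which decays only like $\epsilon^3t^{-1/2}$ — far short of the required $t^{-1}$ — so it is essential that $b_\alpha$ and $D_tA$ appear only through the combination reducing to the fast-decaying Rayleigh--Taylor quantity, and that the genuinely borderline factor $D_t^3\zeta$ be handled through Lemma~\ref{lemma:transit derivatives} rather than by crude pointwise bounds.
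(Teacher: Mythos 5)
Your proof is correct, and the key step — rewriting $\partial_\alpha D_t\zeta$ in terms of $D_t^3\zeta$ via the evolution equation and then applying Lemma~\ref{lemma:transit derivatives} with $\beta=\alpha$ — is exactly the paper's route. Where you diverge is the treatment of the remainder. The paper writes $D_t^3\zeta = i\partial_\alpha D_t\zeta + i(D_tA - b_\alpha)\zeta_\alpha + i(A-1)\partial_\alpha D_t\zeta$ and disposes of the two error terms by invoking "simple estimates for $b$ and $A-1$ derived from Lemma~\ref{singular}" — in effect bounding $D_tA$ and $b_\alpha$ separately. You instead observe the cancellation $D_tA - Ab_\alpha = A\,\frac{a_t}{a}\circ\kappa^{-1}$ (which you verify correctly via $\frac{D_tA}{A}=\big(\frac{a_t}{a}+\frac{\kappa_{\alpha t}}{\kappa_\alpha}\big)\circ\kappa^{-1}$ and $b_\alpha=\frac{\kappa_{\alpha t}}{\kappa_\alpha}\circ\kappa^{-1}$ — this identity is also implicit in the paper's computation $[D_t, -iA\partial_\alpha]=i(\frac{a_t}{a}\circ\kappa^{-1})A\partial_\alpha$ in Section 5), collapsing the remainder into the single Rayleigh--Taylor factor whose decay $t^{-1-\delta}$ you read off from Proposition~\ref{norm of at/a}. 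This is a cleaner algebraic structure and avoids any logarithmic loss.

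Two caveats. First, your appeal to Proposition~\ref{norm of at/a} is a forward reference: that proposition is stated and proved after Corollary~\ref{use in chapter 6}. Tracing the dependency chain (Proposition~\ref{norm of at/a} $\to$ Proposition~\ref{norms of A-1} $\to$ Lemma~\ref{norms of b}, Lemma~\ref{good-replace}, Lemma~\ref{decay of D_tzeta}, etc.) shows none of these uses Corollary~\ref{use in chapter 6}, so there is no circularity, but it is worth flagging if the proof is to sit in the paper's stated location. Second, your closing remark that the "naive" expansion fails because $b_\alpha\zeta_\alpha D_t\zeta$ decays only like $\epsilon^3 t^{-1/2}$ overstates the case: the paper's Lemma~\ref{singular}(3) already gives $\|b_\alpha\|_{L^\infty}\lesssim\epsilon^2 t^{-1}\ln t$ (and the refined Lemma~\ref{norms of b} gives $\epsilon^2 t^{-7/6+\delta}$), so combined with $\|D_t\zeta\|_{L^\infty}\lesssim\epsilon t^{-1/4}$ from Lemma~\ref{decay of D_tzeta} the separate-bounds route does close. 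Your cancellation is elegant and sharper, but it is not strictly necessary here.
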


\begin{proof}
    By taking $D_t$ derivative on both sides of the equation $(D_t^2-iA\partial_\alpha)\zeta=-i$, one gets
    \[
    D_t^3\zeta
    =i\partial_\alpha D_t\zeta+i(D_tA-b_\alpha)\zeta_\alpha+i(A-1)\partial_\alpha D_t\zeta
    =i\partial_\alpha D_t\zeta+O(\epsilon^3t^{-1}).
    \]  
    Here we have invoked a simple estimate for $b$ and $A-1$, which can be derived by applying Lemma \ref{singular}. A similar argument applies to $\norm{D_t\zeta \partial_\alpha D_t\bar{\zeta}}_{L^\infty}$. This completes the proof.
\end{proof}

Using the same argument as before, we have the following.
\begin{lemma}\label{lemma:transit2}
For any $\alpha,\beta\in \mathbb{R}$ such that 
$$\frac{1}{2}|\alpha|\leq |\beta|\leq 2|\alpha|,$$
we have for $1\leq k\leq s-1$,
    \begin{equation}
|D_t\zeta(\alpha,t)\partial_{\alpha}^kD_t\zeta(\beta,t)|+|D_t\zeta(\alpha,t) \partial_{\beta}^kD_t\bar{\zeta}(\beta,t)|\le C\epsilon^2t^{-1}
    \end{equation}
    and
    \begin{equation}
        |D_t\zeta(\alpha,t)\partial_{\alpha}^{k+1}\zeta(\beta,t)|+|D_t\zeta(\alpha,t) \partial_{\beta}^{k+1}\bar{\zeta}(\beta,t)|\le C\epsilon^2t^{-1}
    \end{equation}
\end{lemma}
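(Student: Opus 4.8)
The plan is to mimic the proof of Lemma \ref{lemma:transit derivatives} essentially verbatim, the only new feature being the iteration of the transition identities of Lemma \ref{transition-lemma} so as to move up to $k$ (resp.\ $k{+}1$) derivatives, keeping roughly half of them on each of the two factors. As there, we first write $D_t\zeta=P_{\le t^{-2}}D_t\zeta+P_{>t^{-2}}D_t\zeta$; by Bernstein (Lemma \ref{Bernstein}) the low-frequency piece is $O(\epsilon t^{-1})$ in $L^\infty$ and, multiplied by a bounded factor, is harmless, so we may assume $\widehat{D_t\zeta}$ is supported in $|\xi|\in[t^{-2},t^{2}]$, and likewise for every derivative of $D_t\zeta$ occurring below. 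Throughout, $\tfrac12|\alpha|\le|\beta|\le 2|\alpha|$, and we split $\mathbb R$ into $|\alpha|\le t^{3/4}$ and $|\alpha|>t^{3/4}$.

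On $|\alpha|\le t^{3/4}$, identity \eqref{equ:increase a derivative} applied to $D_t\zeta$, together with \eqref{bootstrap-1}, \eqref{bootstrap-2'} and Sobolev embedding, gives $\norm{D_t\zeta}_{L^\infty(|\alpha|\le t^{3/4})}\le C\epsilon t^{-1+\delta_0}$, exactly as in the proof of Lemma \ref{lemma:transit derivatives}. Since for every $1\le k\le s-1$ we have $\norm{\partial_\alpha^{k}D_t\zeta}_{L^\infty}+\norm{\partial_\alpha^{k+1}\zeta}_{L^\infty}\le C\epsilon t^{-1/6}\ln t$ by \eqref{bootstrap-3} and Lemma \ref{decay of D_tzeta}, the product is $\le C\epsilon^{2}t^{-1}$ on this set.

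On $|\alpha|>t^{3/4}$ (so $|\beta|>\tfrac12 t^{3/4}$ as well) we transfer derivatives. Iterating \eqref{equ:increase a derivative 2} we expand $\partial_\beta^{k}D_t\zeta(\beta)$ as $\big(\tfrac{it^{2}}{4\beta^{2}}\big)^{k}D_t\zeta(\beta)$ plus a sum of error terms, each carrying an extra $\tfrac1\beta$ or $\tfrac t{\beta^{2}}$, a vector field $L_0$ or $\Omega_0$, and fewer $\beta$-derivatives; the commutators $[L_0,\partial_\beta^{j}]=-j\partial_\beta^{j}$ only relabel terms. Writing $\tfrac{it^{2}}{4\beta^{2}}=\tfrac{\alpha^{2}}{\beta^{2}}\cdot\tfrac{it^{2}}{4\alpha^{2}}$ with $\tfrac{\alpha^{2}}{\beta^{2}}$ bounded, the leading contribution of the product is, up to $s$-dependent constants, $\big(\tfrac{it^{2}}{4\alpha^{2}}\big)^{k}D_t\zeta(\alpha)\cdot D_t\zeta(\beta)$; we balance by converting, again via \eqref{equ:increase a derivative 2}, $\lceil k/2\rceil$ of the multipliers $\tfrac{it^{2}}{4\alpha^{2}}$ into $\partial_\alpha$ acting on the $\alpha$-factor and $\lfloor k/2\rfloor$ into $\partial_\beta$ acting on the $\beta$-factor, so the main term becomes $c\,\partial_\alpha^{\lceil k/2\rceil}D_t\zeta(\alpha)\,\partial_\beta^{\lfloor k/2\rfloor}D_t\zeta(\beta)$ with $|c|\le C$. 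As $s\ge 4$, $\lceil k/2\rceil,\lfloor k/2\rfloor\le s-2$ for $k\le s-1$, so by \eqref{bootstrap-3} each factor is $\le C\epsilon t^{-1/2}$ and the main term is $\le C\epsilon^{2}t^{-1}$. For the error terms one uses the extra $\tfrac1\beta\le 2t^{-3/4}$ (or $\tfrac t{\beta^{2}}\le 2t^{-1/2}$) together with $\norm{L_0(\cdot)}_{H^{s-1}}+\norm{\Omega_0(\cdot)}_{L^{\infty}}\le C\epsilon t^{2\delta_0}$, available from \eqref{bootstrap-2}-\eqref{bootstrap-2'}, Corollary \ref{cor:Omega_0}, \eqref{L0D_t zeta high}-\eqref{Omega_0D_t zeta high} and Proposition \ref{good-replace}; for the $D_t\zeta(\alpha)$-factor still present, we apply \eqref{equ:increase a derivative} once more to $\tfrac1\alpha D_t\zeta(\alpha)$, which produces the gain $\tfrac{4\alpha}{it^{2}}\partial_\alpha D_t\zeta(\alpha)$ etc.\ when $|\alpha|\lesssim t$, and for $|\alpha|\gtrsim t$ we instead invoke Corollary \ref{decay for Dt zeta near t} and the bound $|D_t\zeta(\alpha)|\le\tfrac{2}{|\alpha|^{1/2}}\norm{L_0D_t\zeta}_{L^2}+\tfrac t{|\alpha|^{3/2}}\norm{\Omega_0D_t\zeta}_{L^2}$. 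Either way every error term is $\le C\epsilon^{2}t^{-1}$. The estimates involving $\partial_\beta^{k+1}\zeta(\beta)=\partial_\beta^{k}(\zeta_\alpha-1)(\beta)$, and the anti-holomorphic ones with $\partial_\beta^{k}D_t\bar\zeta(\beta)$, $\partial_\beta^{k+1}\bar\zeta(\beta)$, follow by the identical argument, using that $\zeta_\alpha-1$, $D_t\bar\zeta$, $\bar\zeta_\alpha-1$ obey the same $L^\infty$-decay and vector-field bounds as $D_t\zeta$ (indeed $\zeta_\alpha-1=-iD_t^2\zeta+O(\epsilon^2t^{-1/2})_{H^s}$ by Proposition \ref{good-replace}), with $\tilde\Omega_0$ replacing $\Omega_0$ in the holomorphic case.

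The main obstacle is the bookkeeping in the top-order case $k=s-1$: one must never let either of the two factors carry $s$ or more spatial derivatives, since those are not controlled in $L^\infty$ at rate $t^{-1/2}$, and one must verify that the finitely many $t^{O(\delta_0)}$-losses from the $L_0$- and $\Omega_0$-error terms are always recovered — which is precisely why we split the derivatives evenly between the two factors and perform the additional $\tfrac1\alpha D_t\zeta(\alpha)$-reduction. As noted in the remark following Lemma \ref{transition-derivatives}, for $k\le s-2$ the transfer argument runs uniformly for all $\alpha\in\mathbb R$, with no need for the split at $t^{3/4}$, and yields the clean bound directly.
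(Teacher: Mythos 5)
There is a genuine gap at $k=1$ in your derivative-splitting step. After the iterated expansion you claim the main term is $c\,\partial_\alpha^{\lceil k/2\rceil}D_t\zeta(\alpha)\,\partial_\beta^{\lfloor k/2\rfloor}D_t\zeta(\beta)$ with "each factor $\le C\epsilon t^{-1/2}$ by \eqref{bootstrap-3}." But when $k=1$, $\lfloor k/2\rfloor=0$, and the $\beta$-factor is just $D_t\zeta(\beta)$, which is not controlled by \eqref{bootstrap-3} and decays only like $t^{-1/4}$ (Lemma \ref{decay of D_tzeta}, equation \eqref{equ: decay dtzeta}); the product $\partial_\alpha D_t\zeta(\alpha)D_t\zeta(\beta)$ is then only $O(\epsilon^2 t^{-3/4})$, short of the claimed $t^{-1}$. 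The extra reduction you invoke afterwards is applied to "the $D_t\zeta(\alpha)$-factor still present" in the error terms, not to the $D_t\zeta(\beta)$-factor in the main term, so it does not repair this. The fix is exactly what the paper's proof of Lemma \ref{lemma:transit derivatives} does and what Corollary \ref{use in chapter 6} exploits: for $k=1$ one should not split in $\alpha$-derivatives at all. Either reduce $\partial_\beta D_t\zeta(\beta)=-iD_t^3\zeta(\beta)+O(\epsilon^3t^{-1})$ by the equation $(D_t^2-iA\partial_\alpha)\zeta=-i$ and fall back on Lemma \ref{lemma:transit derivatives}, or convert the single factor $\tfrac{it^2}{4\beta^2}=(-\tfrac{it}{2\beta})^2(-1)$ into two time-transitions, placing one $-\tfrac{it}{2\alpha}$ on $D_t\zeta(\alpha)$ and one $-\tfrac{it}{2\beta}$ on $D_t\zeta(\beta)$, so the main term becomes $-\tfrac{\alpha}{\beta}D_t^2\zeta(\alpha)D_t^2\zeta(\beta)$ with each factor of size $O(\epsilon t^{-1/2})$.

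Beyond that gap, your proof is doing considerably more work than the paper's. The paper's own proof is a one-line remark: since $\norm{\partial_\alpha^kD_t\zeta}_{L^\infty}\le C\epsilon t^{-1/4}\ln t$ and $\norm{\partial_\alpha^{k+1}\zeta}_{L^\infty}\le C\epsilon t^{-1/6}\ln t$ for all $1\le k\le s-1$ (Lemma \ref{decay of D_tzeta}), one simply repeats the argument of Lemma \ref{lemma:transit derivatives} with $D_t^3\zeta(\beta)$ replaced by these new second factors — for $k\ge2$ transferring a \emph{single} derivative already yields $\partial_\alpha D_t\zeta(\alpha)\partial_\alpha^{k-1}D_t\zeta(\beta)$ with both factors in $W^{s-3,\infty}$, hence of size $O(\epsilon t^{-1/2})$ by \eqref{bootstrap-3}. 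There is no need to iterate the transition identity $k$ times and to balance derivatives with $\lceil k/2\rceil$–$\lfloor k/2\rfloor$; moving one derivative suffices. The iterated expansion also forces you to control $L_0\partial_\alpha^{j}D_t\zeta$ and $\Omega_0\partial_\alpha^{j}D_t\zeta$ in $L^\infty$ for $j$ up to $k-1=s-2$, which at the top order requires the $\dot H^{s-1}$ bounds \eqref{bootstrap-2'} and an interpolation step you do not spell out — an additional, avoidable complication.
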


\begin{proof}
    It's enough to note that $\norm{\partial_\alpha^kD_t\zeta}_{L^\infty}\le C\epsilon t^{-\frac{1}{4}}\ln t$ and $\norm{\partial_\alpha^{k+1}\zeta}_{L^\infty}\le C\epsilon t^{-\frac{1}{6}}\ln t$ (by Lemma \ref{decay of D_tzeta}) for any $1\le k\le s-1$.
\end{proof}

\begin{rem}
    We emphasize that Lemma \ref{lemma:transit2} doesn't work if $k=s$, since we cannot estimate $\norm{\partial_\alpha^sD_t\zeta}_{L^\infty}$ or $\norm{\partial_\alpha^{s+1}\zeta}_{L^\infty}$; however, by examining the proof of Lemma \ref{lemma:transit derivatives}, one finds that Lemma \ref{lemma:transit2} holds even for $k=s$ if $|\alpha|\ge t^{3/4}$.
\end{rem}

\subsection{The quantities $b$, $A$, etc}

\begin{lemma}\label{norms of b}
    We have, for any $\delta>\delta_0$,
    \begin{equation}\label{L2 norm of b}
        \norm{b(\cdot,t)}_{L^2}\leq C\epsilon^2 t^{-1/2},
    \end{equation}
    \begin{equation}\label{infinity norm of b}
    	\norm{b(\cdot,t)}_{L^{\infty}}\le C\epsilon^2 t^{-\frac{3}{4}}\ln t,
    \end{equation}
    \begin{equation}\label{H s-1 norm of b_alpha}
        \norm{\partial_\alpha b(\cdot,t)}_{H^{s-1}}\le C\epsilon^2t^{-1+\delta}
    \end{equation}
        \begin{equation}\label{W s-2 norm of b_alpha}
        \norm{\partial_\alpha b(\cdot,t)}_{W^{s-2,\infty}}\le C\epsilon^2t^{-7/6+\delta}
    \end{equation}
            \begin{equation}\label{W s-2 norm of b_alpha FOR LARGE ALPHA}
        \norm{\partial_\alpha b(\cdot,t)}_{W^{s-2,\infty}(|\alpha|\geq t)}\le C\epsilon^2t^{-3/2+\delta}
    \end{equation}
    and
    \begin{equation}\label{W s-3 norm of b_alpha}
        \norm{\partial_\alpha b(\cdot,t)}_{W^{s-3,\infty}}\le C\epsilon^2t^{-3/2+\delta}.
    \end{equation}
\end{lemma}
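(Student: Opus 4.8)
The plan is to read off Wu's formula $(I-\mathcal{H})b=-[D_t\zeta,\mathcal{H}]\frac{\bar{\zeta}_{\alpha}-1}{\zeta_{\alpha}}$ recorded in \S\ref{section:structure}, and to use Lemma \ref{control f by using (I-H)f} (which applies since $b$ is real-valued and solves an equation of the form $(I-\mathcal{H})b=g$) to reduce every bound on $b$, resp.\ on $\partial_\alpha b$, to the same bound on the commutator $[D_t\zeta,\mathcal{H}]\frac{\bar{\zeta}_{\alpha}-1}{\zeta_{\alpha}}$, resp.\ on its derivatives. After this reduction everything is a question of singular-integral and commutator estimates, for which I would use Lemma \ref{singular} together with the mapping properties of $\mathcal{H}$, $\mathcal{H}-\mathbb{H}$ and $\mathcal{H}\frac1{\zeta_\alpha}-\mathbb{H}$ from Lemmas \ref{boundednesshilbert}, \ref{curve-flat hilbert}, \ref{L infty of hilbert}; the $\frac1{\zeta_\alpha}$ factor is harmless and is absorbed by Lemma \ref{curve-flat hilbert}. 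It is also convenient to note that $\frac{\bar\zeta_\alpha-1}{\zeta_\alpha}$ is ``holomorphic'' (fixed by $\mathcal{H}$, since $(I-\mathcal{H})(\bar\zeta-\alpha)=0$ and $\partial_\alpha\mathcal{H}=\zeta_\alpha\mathcal{H}\frac1{\zeta_\alpha}\partial_\alpha$), so the commutator collapses to $-(I-\mathcal{H})\bigl(D_t\zeta\,\frac{\bar\zeta_\alpha-1}{\zeta_\alpha}\bigr)$, which streamlines the low-order estimates.

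For \eqref{L2 norm of b} and \eqref{infinity norm of b} I would exploit the bilinear structure in $D_t\zeta$ and $\bar\zeta_\alpha-1$: placing $\bar\zeta_\alpha-1$ in $L^\infty$ using the bootstrap decay $\norm{\zeta_\alpha-1}_{W^{s-2,\infty}}\le C\epsilon t^{-1/2}$ from \eqref{bootstrap-3}, and $D_t\zeta$ in $L^2$ (giving $\epsilon^2 t^{-1/2}$ for \eqref{L2 norm of b}) or in $L^\infty$ via $\norm{D_t\zeta}_{L^\infty}\le C\epsilon t^{-1/4}$ from Lemma \ref{decay of D_tzeta} (giving $\epsilon^2 t^{-3/4}$, up to the logarithm coming from Lemma \ref{L infty of hilbert} applied to the $\mathcal{H}(\cdot)$ part of the commutator), which is \eqref{infinity norm of b}. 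Whenever a genuine product of $D_t\zeta$ with a derivative is needed, Proposition \ref{good-replace} allows replacing $D_t\zeta$ by $Q_\alpha$ with a better error.

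For the derivative bounds \eqref{H s-1 norm of b_alpha}–\eqref{W s-3 norm of b_alpha} I would distribute the $\le s-1$ derivatives across the commutator. Terms in which at most $s-1$ derivatives hit a single factor are controlled by the same bilinear scheme together with the $H^{s+1/2}$/$H^{s}$ bounds \eqref{bootstrap-1} and the pointwise bounds \eqref{bootstrap-3}, with Lemma \ref{singular} supplying the needed commutator inequalities. \textbf{The main obstacle} is the single top-order term, in which all derivatives land on $\bar\zeta_\alpha-1$, producing $\partial_\alpha^{s}(\bar\zeta-\alpha)$, which carries no pointwise decay in $L^2$ and only $t^{-1/6}\ln t$ in $L^\infty$ (Lemma \ref{decay of D_tzeta}); pairing this naively with $\norm{D_t\zeta}_{L^\infty}\lesssim\epsilon t^{-1/4}$ loses far too much. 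This is where the Transition-of-Derivatives method enters: after localizing the commutator kernel to $\frac12|\alpha|\le|\beta|\le2|\alpha|$, the dangerous contribution is a pointwise product $D_t\zeta(\alpha)\,\partial_\beta^{k+1}\zeta(\beta)$ (or $D_t\zeta(\beta)\,\partial_\beta^{k+1}\zeta(\beta)$) with $1\le k\le s-1$, which is $O(\epsilon^2 t^{-1})$ by Lemma \ref{lemma:transit2} (and Lemma \ref{lemma:transit derivatives} together with Corollary \ref{use in chapter 6}); on the complementary region where $|\alpha|$ is far from $t$ one instead invokes the enhanced decay of Corollary \ref{decay for Dt zeta near t} and Lemma \ref{bouns decay for small or big alpha}, and on $|\alpha|\le t^{3/4}$ one uses the identity \eqref{equ:increase a derivative} with \eqref{Omega_0D_t zeta high}–\eqref{L0D_t zeta high} to trade the top derivative for $L_0,\Omega_0$ vector fields at the cost of a factor $\alpha^2/t^2$. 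Careful bookkeeping of which region yields which rate then produces the sharper exponents $t^{-1+\delta}$ in \eqref{H s-1 norm of b_alpha}, $t^{-7/6+\delta}$ in \eqref{W s-2 norm of b_alpha}, and $t^{-3/2+\delta}$ in \eqref{W s-2 norm of b_alpha FOR LARGE ALPHA} and \eqref{W s-3 norm of b_alpha} — the last two cases benefiting both from the extra smallness available when $|\alpha|\ge t$ and from the remark that Lemma \ref{lemma:transit2} remains valid up to order $k=s$ once $|\alpha|\ge t^{3/4}$.
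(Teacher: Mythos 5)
Your overall architecture is right: you start from $(I-\mathcal{H})b=-[D_t\zeta,\mathcal{H}]\frac{\bar\zeta_\alpha-1}{\zeta_\alpha}$, collapse it to $-(I-\mathcal{H})\bigl(D_t\zeta\frac{\bar\zeta_\alpha-1}{\zeta_\alpha}\bigr)$ using holomorphicity, invoke Lemma~\ref{control f by using (I-H)f}, and get \eqref{L2 norm of b}, \eqref{infinity norm of b} by placing $\bar\zeta_\alpha-1$ in $L^\infty$ and $D_t\zeta$ in $L^2$ (or both in $L^\infty$ through Lemma~\ref{L infty of hilbert}). That is exactly the paper's argument for the undifferentiated bounds. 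You also correctly identify the top-order term where all derivatives fall on $\bar\zeta_\alpha-1$ as the obstacle, and you correctly identify the transition-of-derivatives idea as the cure.

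Where the proposal has a genuine gap is in the $L^2$ bound \eqref{H s-1 norm of b_alpha}. You say the dangerous contribution is controlled pointwise, $O(\epsilon^2 t^{-1})$, by Lemma~\ref{lemma:transit2}, and that ``careful bookkeeping of regions'' then yields $t^{-1+\delta}$ in $L^2$. But a pointwise product bound does not convert to the needed $L^2$ rate: splitting $\norm{D_t\zeta\,\partial_\alpha^s\bar\zeta}_{L^2}^2\le\norm{D_t\zeta\,\partial_\alpha^s\bar\zeta}_{L^\infty}\norm{D_t\zeta}_{L^2}\norm{\partial_\alpha^s\bar\zeta}_{L^2}$ only gives $t^{-1/2}$, and $D_t\zeta$ by itself has no $t^{-1/2}$ decay to fall back on. What actually closes the estimate is the profile decomposition the paper uses: with $F=e^{it^2/4\alpha}(D_t\zeta)_H$, $G=e^{-it^2/4\alpha}\partial_\alpha^{s-1}\bar\zeta$, one writes $\partial_\alpha(FG)=G\partial_\alpha F+F\partial_\alpha G$, and crucially $\partial_\alpha F$, $\partial_\alpha G$ contain \emph{only} $L_0$- and $\Omega_0$-type terms (no naked $\partial_\alpha^s\bar\zeta$). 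The term $-\frac{t}{2\alpha^2}\partial_\alpha^{s-1}\bar\zeta\,\Omega_0(D_t\zeta)_H$ that survives is then hit with \eqref{equ:increase a derivative 2} \emph{again} to trade the bad prefactor $\frac{t}{2\alpha^2}$ for $\frac{2}{t}$ against $\partial_\alpha^s\bar\zeta$, and pairing the resulting $\frac{2}{t}\norm{\partial_\alpha^s\bar\zeta}_{L^2}$ with the $L^\infty$ control \eqref{Omega_0D_t zeta high} on $\Omega_0(D_t\zeta)_H$ gives $\epsilon^2 t^{-1+\delta_0}\ln t$. This double use of the transition identity (once hidden in the profile derivative, once explicitly to move a derivative onto $\bar\zeta$) is the precise mechanism; Lemma~\ref{lemma:transit2} is a pointwise corollary of the same identities and is what the paper uses for the $L^\infty$ bounds \eqref{W s-2 norm of b_alpha}--\eqref{W s-3 norm of b_alpha}, where your region decomposition via Corollary~\ref{decay for Dt zeta near t} and \eqref{equ:increase a derivative} is indeed the right route. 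A smaller inaccuracy: after collapsing to $(I-\mathcal{H})\bigl(D_t\zeta\frac{\bar\zeta_\alpha-1}{\zeta_\alpha}\bigr)$ there is no commutator kernel left to localize in $\beta$; one commutes $\partial_\alpha^{s-1}$ with $\mathcal{H}$, uses $L^2$-boundedness of $\mathcal{H}$, and reduces to a same-point product $D_t\zeta(\alpha)\partial_\alpha^s\bar\zeta(\alpha)$.
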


\begin{proof}
        For (\ref{L2 norm of b}), by Lemma \ref{control f by using (I-H)f}, we have
        \[
        \norm{b}_{L^2}\le C\norm{(I-\mathcal{H})b}_{L^2}
        =C\norm{(I-\mathcal{H})D_t\zeta\frac{\bar{\zeta}_\alpha-1}{\zeta_\alpha}}_{L^2}
        \le C\epsilon^2t^{-\frac{1}{2}}.
        \]
	Next we prove (\ref{infinity norm of b}). By applying Lemma \ref{control f by using (I-H)f}, followed by Lemma \ref{L infty of hilbert}, Lemma \ref{decay of D_tzeta}, and using the bootstrap assumptions, we obtain 
	\[
    \begin{aligned}
	\norm{b}_{L^\infty}\le &\ C\norm{(I-\mathcal{H})b}_{L^\infty}\leq \norm{(I-\mathcal{H})D_t\zeta\frac{\bar{\zeta}_{\alpha}-1}{\zeta_{\alpha}}}_{L^{\infty}}\\
    \leq &\ C\norm{D_t\zeta\frac{\bar{\zeta}_{\alpha}-1}{\zeta_{\alpha}}}_{L^{\infty}}+Ct^{-2}\norm{D_t\zeta\frac{\bar{\zeta}_{\alpha}-1}{\zeta_{\alpha}}}_{H^1}+C\norm{D_t\zeta\frac{\bar{\zeta}_{\alpha}-1}{\zeta_{\alpha}}}_{L^{\infty}}\ln t\\
    \leq &\ C\epsilon^2t^{-\frac{3}{4}}+Ct^{-2}\epsilon^2 t^{-1/2}+C\epsilon^2 t^{-3/4}\ln t\\
    \leq &\ C\epsilon^2 t^{-3/4}\ln t.
    \end{aligned}
	\]
    For (\ref{H s-1 norm of b_alpha}), we only prove the $s-1$ order as the other cases follow similarly and are easier to handle. A straightforward computation yields:
    \[
    \partial_\alpha^{s-1}(I-\mathcal{H})D_t\zeta\frac{\bar{\zeta}_\alpha-1}{\zeta_\alpha}
    =(I-\mathcal{H})\partial_\alpha^{s-1}\left(D_t\zeta\frac{\bar{\zeta}_\alpha-1}{\zeta_\alpha}\right)-[\partial_\alpha^{s-1},\mathcal{H}]D_t\zeta\frac{\bar{\zeta}_\alpha-1}{\zeta_\alpha}.
    \]
    It's easy to verify that
    \[
    [\partial_\alpha^{s-1},\mathcal{H}]D_t\zeta\frac{\bar{\zeta}_\alpha-1}{\zeta_\alpha}
    =\sum_{j=1}^{s-1}\partial_\alpha^{s-1-j}[\zeta_\alpha,\mathcal{H}]\frac{1}{\zeta_\alpha}\partial_\alpha^{j}\left(D_t\zeta\frac{\bar{\zeta}_\alpha-1}{\zeta_\alpha}\right)
    \]
    and thus
    \[
    \norm{[\partial_\alpha^{s-1},\mathcal{H}]D_t\zeta\frac{\bar{\zeta}_\alpha-1}{\zeta_\alpha}}_{L^2}\le C\epsilon^3t^{-1}.
    \]
    Let $(D_t\zeta)_L=P_{<t^{-2}}D_t\zeta$ and $(D_t\zeta)_H=P_{\ge t^{-2}}D_t\zeta$, then
    \[
    \norm{(D_t\zeta)_L}_{L^\infty}\le t^{-1}\norm{(D_t\zeta)_L}_{L^2}\le C\epsilon t^{-1}
    \]
    which is sufficient. For the high frequency part, define
    \[
    F=e^{it^2/4\alpha}(D_t\zeta)_H,\ G=e^{-it^2/4\alpha}\partial_\alpha^{s-1}\bar{\zeta}.
    \]
    We only need to verify
    \[
    \norm{(I-\mathcal{H})\partial_\alpha(FG)}_{L^2}\le C\epsilon^2t^{-1+\delta_0}
    \]
   since this case represents the most extreme scenario. For $|\alpha|>t^{1/2+2\delta_0}$, we note that
    \[
    \partial_\alpha F=e^{\frac{it^2}{4\alpha}}\left(\frac{1}{\alpha} L_0(D_t\zeta)_H-\frac{t}{2\alpha^2}\Omega_0(D_t\zeta)_H\right),\ \partial_\alpha G=e^{-\frac{it^2}{4\alpha}}\left(\frac{1}{\alpha}L_0\partial_\alpha^{s-1}\bar{\zeta}+\frac{t}{2\alpha^2}\tilde{\Omega}_0\partial_\alpha^{s-1}\bar{\zeta}\right),
    \]
    which implies
    \[
    G\partial_\alpha F=\partial_\alpha^{s-1}\bar{\zeta}\left(\frac{1}{\alpha}L_0(D_t\zeta)_H-\frac{t}{2\alpha^2}\Omega_0(D_t\zeta)_H\right).
    \]
    Observe that
    \begin{align*}
        -\frac{t}{2\alpha^2}\partial_\alpha^{s-1}\bar{\zeta}\Omega_0(D_t\zeta)_H
        &=-\frac{2}{t}\left(-\partial_\alpha^s\bar{\zeta}+\frac{1}{\alpha}L_0\partial_\alpha^{s-1}\bar{\zeta}-\frac{t}{2\alpha^2}\tilde{\Omega}_0\partial_\alpha^{s-1}\bar{\zeta}\right)\Omega_0(D_t\zeta)_H\\
        &=O(\epsilon^2 t^{-1+\delta_0}\ln t)_{L^2}.
    \end{align*}
    similarly,
    \[
    \norm{\frac{1}{\alpha}\partial_\alpha^{s-1}\bar{\zeta}L_0(D_t\zeta)_H}_{L^2(|\alpha|>t^{1/2+2\delta_0})}\le C\epsilon^2t^{-1+\delta_0}.
    \]
    This leads to $\norm{G\partial_\alpha F}_{L^2(|\alpha|>t^{1/2+2\delta_0})}\le C\epsilon^2t^{-1+\delta_0}\ln t$. Applying a similar argument to $F\partial_\alpha G$ gives $$\norm{F\partial_\alpha G}_{L^2(|\alpha|>t^{1/2+2\delta_0})}\le C\epsilon^2t^{-1+\delta_0}\ln t.$$ 
    Therefore,
    \[
    \norm{\partial_\alpha(FG)}_{L^2(|\alpha|>t^{1/2+2\delta_0})}\le C\epsilon^2t^{-1+\delta_0}\ln t.
    \]
    For $|\alpha|<t^{1/2+2\delta_0}$, using \eqref{equ:increase a derivative}, we have
    \[
    \norm{(D_t\zeta)_H}_{L^\infty(|\alpha|\le t^{1/2+2\delta_0})}\le \norm{\frac{4\alpha^2}{it^2}\partial_\alpha(D_t\zeta)_H}_{L^\infty(|\alpha|\le t^{1/2+2\delta_0})}+C\epsilon t^{-1+\delta_0}\ln t\le C\epsilon t^{-1+\delta_0}\ln t
    \]
    and thus
    \begin{align*}
        \norm{\partial_\alpha(FG)}_{L^2(|\alpha|<t^{1/2+2\delta_0})}
        &\le\norm{(D_t\zeta)_H\partial_\alpha^{s}\bar{\zeta}}_{L^2(|\alpha|<t^{1/2+2\delta_0})}+\norm{\partial_\alpha(D_t\zeta)_H\partial_\alpha^{s-1}\bar{\zeta}}_{L^2(|\alpha|<t^{1/2+2\delta_0})}\\
        &\le C\epsilon^2t^{-1+\delta_0}\ln t.
    \end{align*}
    Finally,
    \[
    \norm{(I-\mathcal{H})\partial_\alpha(FG)}_{L^2}\le C\norm{\partial_\alpha(FG)}_{L^2}\le C\epsilon^2t^{-1+\delta_0}\ln t,
    \]
    and (\ref{H s-1 norm of b_alpha}) is thus proved.

    For (\ref{W s-2 norm of b_alpha}), we still only need to consider $\partial_\alpha(FG)$. Note that, according to the calculation above,
    \begin{align*}
        \norm{G\partial_\alpha F}_{L^\infty(|\alpha|>t^{5/6})}
        &\le C\norm{\partial_\alpha^{s-1}\bar{\zeta}}_{L^\infty}\norm{\frac{1}{\alpha}L_0(D_t\zeta)_H-\frac{t}{2\alpha^2}\Omega_0(D_t\zeta)_H}_{L^\infty(|\alpha|>t^{5/6})},\\
        &\le C\epsilon^2 t^{-\frac{7}{6}+\delta_0}\ln t
    \end{align*}
    also, by using
    \[
    \frac{t}{2\alpha^2}(D_t\zeta)_H=\frac{2}{t}\left(-\partial_\alpha(D_t\zeta)_H+\frac{1}{\alpha}L_0(D_t\zeta)_H+\frac{t}{2\alpha^2}\Omega_0(D_t\zeta)_H\right)
    \]
    one finds $\norm{F\partial_\alpha G}_{L^\infty(|\alpha|>t^{5/6})}\le C\epsilon^2t^{-\frac{7}{6}}$. i.e. $\norm{\partial_\alpha(FG)}_{L^\infty(|\alpha|>t^{5/6})}\le C\epsilon^2t^{-\frac{7}{6}+\delta_0}\ln t$.

    For $|\alpha|\le t^{5/6}$, we raise the derivative of $(D_t\zeta)_H$ twice to obtain
    \[
    \norm{(D_t\zeta)_H}_{L^\infty(|\alpha|\le t^{5/6})}\le C\epsilon t^{-1+\delta_0}\ln t
    \]
    and use Lemma \ref{decay of D_tzeta} to get $\norm{\partial_\alpha^s\bar{\zeta}}_{L^\infty}\le C\epsilon t^{-1/6}\ln t$. Hence we may conclude that $\norm{\partial_\alpha(FG)}_{L^\infty}\le C\epsilon^2t^{-7/6+\delta_0}(\ln t)^2$. Now (\ref{W s-2 norm of b_alpha}) follows from Lemma \ref{L infty of hilbert}. Similar process works for (\ref{W s-3 norm of b_alpha}), since in that case we have $\norm{\partial_\alpha^{s-1}\bar{\zeta}}_{L^\infty}\le C\epsilon t^{-1/2}$. Finally, (\ref{W s-2 norm of b_alpha FOR LARGE ALPHA}) can be verified by slightly modifying the proof.
\end{proof}

\begin{proposition}\label{norms of A-1}
    We have 
\begin{equation}\label{L2 norm of A-1}
    \norm{A-1}_{H^s}\leq C\epsilon^2 t^{-1+\delta_0}\ln t,
\end{equation}
and
    \begin{equation}\label{W^{k,infty} of A-1}
        \norm{A-1}_{W^{s-2,\infty}}\le C\epsilon^2 t^{-1-\frac{1}{4}+\delta}
    \end{equation}
    for any $\delta>\delta_0$.
\end{proposition}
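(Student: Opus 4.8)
The plan is to work from the formula for $A$ recorded in \S\ref{section:structure}. Since $A-1$ is real-valued and $(I-\mathcal{H})1=1$ contributes no quadratic error, we have $(I-\mathcal{H})(A-1)=i[D_t\zeta,\mathcal{H}]\tfrac{\partial_{\alpha}D_t\bar{\zeta}}{\zeta_{\alpha}}+i[D_t^2\zeta,\mathcal{H}]\tfrac{\bar{\zeta}_{\alpha}-1}{\zeta_{\alpha}}=:iB_1+iB_2$, a purely quadratic right-hand side. By Lemma \ref{control f by using (I-H)f} it then suffices to bound $B_1$ and $B_2$ in $H^s$ and in $W^{s-2,\infty}$, and the whole proof reduces to estimating these two commutators.

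For the $H^s$ estimate, the first move is to integrate by parts in $\beta$ in each commutator, writing $[f,\mathcal{H}]\tfrac{\partial_{\alpha}g}{\zeta_{\alpha}}=\tfrac{1}{\pi i}\int\tfrac{f(\alpha)-f(\beta)}{\zeta(\alpha)-\zeta(\beta)}\partial_{\beta}g(\beta)\,d\beta$ as the sum of $\mathcal{H}\big(\tfrac{f_{\alpha}g}{\zeta_{\alpha}}\big)$ and a double-commutator term carrying an $(\alpha-\beta)^{-2}$-type kernel; both are controlled by the singular-integral estimates of Lemma \ref{singular}. This exhibits $B_1$ as a bilinear expression in $(\partial_{\alpha}D_t\zeta,\ D_t\bar{\zeta})$ and $B_2$ in $(\partial_{\alpha}D_t^2\zeta,\ \bar{\zeta}-\alpha)$. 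Distributing $\partial_{\alpha}^s$ by the Leibniz rule, all terms except the one in which every derivative lands on the highest-order factor are handled with $\norm{D_t\zeta}_{H^{s+1/2}}+\norm{D_t^2\zeta}_{H^s}+\norm{\zeta_{\alpha}-1}_{H^s}\le C\epsilon$ together with the $W^{\cdot,\infty}$ decay from \eqref{bootstrap-3} and Lemma \ref{decay of D_tzeta}. For the top-order term, e.g.\ $\norm{(\partial_{\alpha}D_t\zeta)\,\partial_{\alpha}^s D_t\bar{\zeta}}_{L^2}$, the crude Hölder bound $\norm{\partial_{\alpha}D_t\zeta}_{L^{\infty}}\norm{D_t\bar{\zeta}}_{H^s}\lesssim\epsilon^2 t^{-1/2}$ is too weak, so I would keep the integral form, split $|\alpha-\beta|\lesssim|\alpha|$ versus $|\alpha-\beta|\gtrsim|\alpha|$, and on the near-diagonal part use the product estimate $|D_t\zeta(\alpha)\,\partial_{\beta}^k D_t\bar{\zeta}(\beta)|\lesssim\epsilon^2 t^{-1}$ from Lemma \ref{lemma:transit2} and Corollary \ref{use in chapter 6}, while on the far part the kernel is integrable and Young's inequality against the $L^2$ norms closes the bound — the same mechanism used for $\norm{\partial_{\alpha}b}_{H^{s-1}}$ in Lemma \ref{norms of b}. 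This yields $\norm{B_1}_{H^s}+\norm{B_2}_{H^s}\le C\epsilon^2 t^{-1+\delta_0}\ln t$.

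For the $W^{s-2,\infty}$ estimate I would run the identical decomposition with all norms measured in $L^\infty$, now using the pointwise decay $\norm{\partial_{\alpha}D_t\zeta}_{W^{s-3,\infty}}+\norm{D_t^2\zeta}_{W^{s-2,\infty}}+\norm{\zeta_{\alpha}-1}_{W^{s-2,\infty}}\le C\epsilon t^{-1/2}$ from \eqref{bootstrap-3}, Lemma \ref{L infty of hilbert} to convert the singular-integral output into an $L^\infty$ bound at the cost of a $\ln t$, and Proposition \ref{good-replace}/the transit lemmas to extract the extra $t^{-1/4}$ (for instance, replacing a bare $D_t\zeta$, which decays only like $t^{-1/4}$, by the product $|D_t\zeta\,\partial_{\alpha}D_t\bar{\zeta}|\lesssim\epsilon^2 t^{-1}$ of Corollary \ref{use in chapter 6}, or raising a derivative onto $D_t\zeta$ via \eqref{equ:increase a derivative} when $|\alpha|$ is small). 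The $\ln t$ and the $t^{\delta_0}$ loss coming from the vector-field norms are absorbed into $t^{\delta}$ for any $\delta>\delta_0$, giving $\norm{A-1}_{W^{s-2,\infty}}\le C\epsilon^2 t^{-5/4+\delta}$.

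The main obstacle is precisely this top-order term in the $H^s$ bound: $D_t\zeta$ carries only $H^{s+1/2}$ Sobolev regularity and only $t^{-1/4}$ pointwise decay, both one notch short of what a naive estimate needs when the companion factor is differentiated $s$ times. Overcoming it requires the Transition-of-Derivatives method exactly as in Lemma \ref{norms of b} and Proposition \ref{good-replace} — splitting by frequency and by the region $|\alpha|\gtrless t^{3/4}$, and transferring derivatives onto $D_t\zeta$ at the cost of the vector-field norms $L_0D_t\zeta$, $\Omega_0(\cdot)$, which the bootstrap assumptions \eqref{bootstrap-2}-\eqref{bootstrap-2'} control by $C\epsilon t^{\delta_0}$. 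Once that device is in place, the remaining steps are routine Sobolev and singular-integral bookkeeping.
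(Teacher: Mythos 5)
Your reduction to $(I-\mathcal{H})(A-1)=iB_1+iB_2$ is correct, but the plan to ``bound $B_1$ and $B_2$ in $H^s$ and in $W^{s-2,\infty}$'' separately will not produce the stated rates, and this is a genuine gap rather than a routine omission. The leading quadratic parts of the two pieces are, via Lemma~\ref{transition-lemma} and $\bar\zeta_\alpha-1\approx iD_t^2\bar\zeta$,
\[
D_t\zeta\,\partial_\alpha D_t\bar\zeta\approx -i\,|D_t^2\zeta|^2,
\qquad
D_t^2\zeta(\bar\zeta_\alpha-1)\approx +i\,|D_t^2\zeta|^2,
\]
and the quantity $|D_t^2\zeta|^2$ (equivalently $(I-\mathcal{H})(D_t^2\zeta D_t^2\bar\zeta)=[D_t^2\zeta,\mathcal{H}]D_t^2\bar\zeta$) obeys only $\norm{\cdot}_{L^2}\lesssim\epsilon^2 t^{-1/2}$ and $\norm{\cdot}_{L^\infty}\lesssim\epsilon^2 t^{-1}$; no amount of transit-of-derivatives, region-splitting, or frequency decomposition applied to $B_1$ or $B_2$ alone gets past these barriers, because they are sharp for the single quadratic product. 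Corollary~\ref{use in chapter 6}, which you invoke for the extra decay, gives precisely this $\epsilon^2 t^{-1}$ in $L^\infty$ — still $t^{1/4}$ short of $t^{-5/4+\delta}$ — and in $L^2$ the analogous product bound stops at $t^{-1/2}$.

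The paper's proof hinges on the cancellation between the two pieces: it rewrites
\[
D_t^2\zeta(\bar\zeta_\alpha-1)+D_t\zeta\,\partial_\alpha D_t\bar\zeta
=\partial_t\bigl(D_t\zeta(\bar\zeta_\alpha-1)\bigr)+\text{cubic}
=\tfrac12\,\partial_t(Q_\alpha\bar\theta_\alpha)+R,
\]
with $R$ consisting of cubic terms and the $\partial_t$ of the Proposition~\ref{good-replace} errors. The crucial observation is that $Q_\alpha\bar\theta_\alpha$ is a product of a nearly-antiholomorphic factor with a nearly-holomorphic one, so the oscillatory phases $e^{\mp it^2/4\alpha}$ cancel: writing $(Q_\alpha)_H=Fe^{-it^2/4\alpha}$, $\bar\theta_\alpha=Ge^{it^2/4\alpha}$ gives $\partial_t((Q_\alpha)_H\bar\theta_\alpha)=\partial_t(FG)=(\partial_tF)G+F(\partial_tG)$ — the dangerous $\frac{it}{2\alpha}FG$ contributions that appear in $\partial_t Q_\alpha\cdot\bar\theta_\alpha$ and in $Q_\alpha\cdot\partial_t\bar\theta_\alpha$ separately (and that equal the $\pm i|D_t^2\zeta|^2$ obstruction above) drop out. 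The profile derivatives $\partial_tF,\partial_tG$ then carry the vector fields $\Omega_0,\tilde\Omega_0$, and the machinery from Lemma~\ref{norms of b} delivers the full $t^{-1+\delta_0}\ln t$ in $L^2$ and the improved $L^\infty$ rate. Without this reorganization — which your proposal never mentions — the proof cannot close; the remaining tools you cite (integration by parts, Leibniz, Lemma~\ref{lemma:transit2}, region splitting $|\alpha|\gtrless t^{3/4}$) are all used in the paper as well, but only \emph{after} the $\partial_t(Q_\alpha\bar\theta_\alpha)$ structure has been put in place.
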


\begin{proof}
    First, we consider the case where there is no derivative. For (\ref{W^{k,infty} of A-1}), recall that
    \[
    (I-\mathcal{H})(A-1)=i(I-\mathcal{H})\left(D_t^2\zeta\frac{\bar{\zeta}_\alpha-1}{\zeta_\alpha}+D_t\zeta\frac{\partial_\alpha D_t\bar{\zeta}}{\zeta_\alpha}\right), 
    \]
    which can be rewritten as
    \begin{align*}
         (I-\mathcal{H})(A-1)=& i(I-\mathcal{H})\left( \frac{1}{2}\partial_t(Q_{\alpha} \bar{\theta}_{\alpha})+R\right),
    \end{align*}
    where
    \begin{align*}
        R=& D_t^2\zeta(\bar{\zeta}_{\alpha}-1)\left(\frac{1}{\zeta_{\alpha}}-1\right)+D_t\zeta \partial_{\alpha}D_t\bar{\zeta}\left(\frac{1}{\zeta_{\alpha}}-1\right)+D_t\zeta\frac{b_{\alpha}\bar{\zeta}_{\alpha}}{\zeta_{\alpha}}+b\partial_{\alpha}(D_t\zeta (\bar{\zeta}_{\alpha}-1))\\
        &+\partial_t\left((D_t\zeta (\bar{\zeta}_{\alpha}-1)- \frac{1}{2}Q_{\alpha} \bar{\theta}_{\alpha}\right).
    \end{align*}
    Due to Lemma \ref{decay of D_tzeta}, the bootstrap assumptions, and Corollary \ref{norms of b}, we have 
    \begin{equation}
        \norm{D_t^2\zeta(\bar{\zeta}_{\alpha}-1)\left(\frac{1}{\zeta_{\alpha}}-1\right)+D_t\zeta \partial_{\alpha}D_t\bar{\zeta}\left(\frac{1}{\zeta_{\alpha}}-1\right)+D_t\zeta\frac{b_{\alpha}\bar{\zeta}_{\alpha}}{\zeta_{\alpha}}+b\partial_{\alpha}(D_t\zeta (\bar{\zeta}_{\alpha}-1))}_{L^2}\leq C\epsilon^3 t^{-1}, 
    \end{equation}
    and
    \begin{equation}
       \norm{D_t^2\zeta(\bar{\zeta}_{\alpha}-1)\left(\frac{1}{\zeta_{\alpha}}-1\right)+D_t\zeta \partial_{\alpha}D_t\bar{\zeta}\left(\frac{1}{\zeta_{\alpha}}-1\right)+D_t\zeta\frac{b_{\alpha}\bar{\zeta}_{\alpha}}{\zeta_{\alpha}}+b\partial_{\alpha}(D_t\zeta (\bar{\zeta}_{\alpha}-1))}_{L^{\infty}}\leq C\epsilon^3 t^{-\frac{3}{2}}\ln t.
    \end{equation}
    Also, the estimate for $\partial_t((D_t\zeta(\bar{\zeta}_\alpha-1)-\frac{1}{2}Q_\alpha\bar{\theta}_\alpha))$ can be achieved by using a simple interpolation
    \[
    D_t\zeta(\bar{\zeta}_\alpha-1)-\frac{1}{2}Q_\alpha\bar{\theta}_\alpha
    =\frac{1}{2}(D_t\zeta-Q_\alpha)\bar{\theta}_\alpha+D_t\zeta\left((\bar{\zeta}_\alpha-1)-\frac{1}{2}\bar{\theta}_\alpha\right)
    \]
    and applying Lemma \ref{good-replace}. This completes the estimate for $R$.
    
    Now we move to the main term. Let $(Q_{\alpha})_L=P_{<t^{-2}}Q_\alpha$ and $(Q_{\alpha})_H=Q_\alpha-(Q_\alpha)_L$. By Bernstein inequality,  
    \begin{align*}
        \norm{\partial_t(Q_{\alpha})_L (\bar{\zeta}_{\alpha}-1)}_{L^2}
        \leq &\ \norm{(Q_{\alpha t})_L \bar{\theta}_{\alpha}}_{L^2}+\norm{(Q_{\alpha})_L \partial_t \bar{\theta}_{\alpha}}_{L^2}\\
        \leq &\  Ct^{-1}\norm{Q_{\alpha t}}_{L^2}\norm{\theta_{\alpha}}_{L^2}+t^{-1}\norm{Q_{\alpha}}_{L^{2}}\norm{\partial_t\theta_{\alpha}}_{L^2},\\
        \leq &\  C\epsilon^2 t^{-1}, 
    \end{align*}
Similarly,
    \begin{align*}
       \norm{\partial_t(Q_{\alpha})_L (\bar{\zeta}_{\alpha}-1)}_{L^\infty}\leq C\epsilon^2 t^{-3/2}
    \end{align*}
 Next, define $(Q_{\alpha})_H=Fe^{-it^2/4\alpha}$ and $\bar{\theta}_{\alpha}=Ge^{it^2/4\alpha}$. Then, it follows that
 $$\partial_t((Q_{\alpha})_H\bar{\theta}_{\alpha})=\partial_t(FG).$$
 Note that
 \[
 \partial_tF=\frac{1}{\alpha}e^{\frac{it^2}{4\alpha}}\Omega_0(Q_\alpha)_H,\ \partial_tG=\frac{1}{\alpha}e^{-\frac{it^2}{4\alpha}}\tilde{\Omega}_0\theta_\alpha.
 \]
 Repeating the relevant proof for $b$ gives
$$\norm{\partial_t(FG)}_{L^2}\le C\epsilon^2t^{-1+\delta_0}\ln t,\quad \norm{\partial_t(FG)}_{L^\infty}\le C\epsilon^2t^{-\frac{3}{2}+\delta},\quad \forall \delta>\delta_0.$$

Thus, by choosing $\delta=1/10$ and using Lemma \ref{L infty of hilbert}, we obtain
\begin{equation}
\norm{(I-\mathcal{H})(A-1)}_{\infty}\leq C\epsilon^2 t^{-1-\frac{2}{5}},
\end{equation}
and
\begin{equation}
\norm{(I-\mathcal{H})(A-1)}_{2}\leq C\epsilon^2 t^{-1+\delta_0}\ln t.
\end{equation}
Using the same argument, we obtain
\begin{equation}
\norm{(I-\mathcal{H})(A-1)}_{W^{s-3,\infty}}\leq C\epsilon^2 t^{-1-\frac{2}{5}},
\end{equation}
and
\begin{equation}
\norm{(I-\mathcal{H})(A-1)}_{H^s}\leq C\epsilon^2 t^{-1+\delta_0}\ln t.
\end{equation}
Note that this argument encounters a slight issue when applied to 
$\norm{\partial_\alpha^{s-2}(I-\mathcal{H})(A-1)}_{L^\infty}$, since a term of highest order cannot provide the necessary $t^{-1/2}$ decay. We address this case in detail as follows. Since $A-1$ is real-valued, by Lemma \ref{control f by using (I-H)f},
\begin{align*}
    \norm{\partial_{\alpha}^{s-2}(A-1)}_{L^{\infty}}\leq C\norm{(I-\mathcal{H})(A-1)}_{W^{s-2,\infty}}.
\end{align*}
We have already obtain
$$\norm{(I-\mathcal{H})(A-1)}_{W^{s-3,\infty}}\leq C\epsilon^2 t^{-3/2+\delta_0}.$$
For $\norm{\partial_{\alpha}^{s-2}(I-\mathcal{H})(A-1)}_{L^{\infty}}$, note that
\begin{align*}
    &\partial_{\alpha}^{s-2}(I-\mathcal{H})(A-1)
    = \partial_{\alpha}^{s-2}(I-\mathcal{H})\left(D_t^2\zeta\frac{\bar{\zeta}_\alpha-1}{\zeta_\alpha}+D_t\zeta\frac{\partial_\alpha D_t\bar{\zeta}}{\zeta_\alpha}\right)\\
    =&(I-\mathcal{H})\partial_{\alpha}^{s-2}\left(D_t^2\zeta\frac{\bar{\zeta}_\alpha-1}{\zeta_\alpha}+D_t\zeta\frac{\partial_\alpha D_t\bar{\zeta}}{\zeta_\alpha}\right)-[\partial_{\alpha}^{s-2},\mathcal{H}]\left(D_t^2\zeta\frac{\bar{\zeta}_\alpha-1}{\zeta_\alpha}+D_t\zeta\frac{\partial_\alpha D_t\bar{\zeta}}{\zeta_\alpha}\right).
\end{align*}
It's easy to obtain
\begin{equation}
    \norm{[\partial_{\alpha}^{s-2},\mathcal{H}]\left(D_t^2\zeta\frac{\bar{\zeta}_\alpha-1}{\zeta_\alpha}+D_t\zeta\frac{\partial_\alpha D_t\bar{\zeta}}{\zeta_\alpha}\right)}_{L^{\infty}}\leq C\epsilon^2 t^{-3/2+\delta_0}.
\end{equation}
By Lemma \ref{L infty of hilbert}, it suffices to estimate
$$\norm{\partial_{\alpha}^{s-2}\left(D_t^2\zeta\frac{\bar{\zeta}_\alpha-1}{\zeta_\alpha}+D_t\zeta\frac{\partial_\alpha D_t\bar{\zeta}}{\zeta_\alpha}\right)}_{L^{\infty}}.$$
We focus on estimating the term
\[
\norm{D_t^2\zeta\frac{\partial_\alpha^{s-2}(\bar{\zeta}_\alpha-1)}{\zeta_\alpha}+(D_t\zeta)_H\frac{\partial_\alpha^{s-1}D_t\bar{\zeta}}{\zeta_\alpha}}_{L^{\infty}}
\]
as this is the most challenging term that arises in the estimates.

For $|\alpha|\ge t^{5/6}$, we have 
\begin{align*}
    D_t^2\zeta\partial_\alpha^{s-2}(\bar{\zeta}_\alpha-1)+(D_t\zeta)_H\partial_\alpha^{s-1}D_t\bar{\zeta}
    &=D_t(D_t\zeta)_L\partial_\alpha^{s-1}\bar{\zeta}+D_t((D_t\zeta)_H\partial_\alpha^{s-1}\bar{\zeta})-(D_t\zeta)_H[\partial_\alpha^{s-1},D_t]\bar{\zeta}.
\end{align*}
Define $F=e^{it^2/4\alpha}(D_t\zeta)_H,\ G=e^{-it^2/4\alpha}\partial_\alpha^{s-1}\bar{\zeta}$, then we have
\[
D_t(FG)=\partial_t(FG)+b\partial_\alpha(FG);
\]
note that
\[
\norm{\partial_tF}_{L^\infty(|\alpha|\ge t^{5/6})}=\norm{e^{\frac{it^2}{4\alpha}}\frac{1}{\alpha}\Omega_0(D_t\zeta)_H}_{L^\infty(|\alpha|\ge t^{5/6})}\le C\epsilon t^{-\frac{5}{6}+\delta_0}\ln t,
\]
and
\[\ \norm{\partial_tG}_{L^\infty(|\alpha|\ge t^{5/6})}\le C\epsilon t^{-\frac{5}{6}+\delta_0}.\]
Thus, we get
$$\norm{\partial_t(FG)}_{L^\infty(|\alpha|\ge t^{5/6})}\le C\epsilon^2t^{-\frac{1}{2}-\frac{5}{6}+\delta_0}\ln t\le C\epsilon^2t^{-1-\frac{1}{3}+\delta_0}\ln t.$$
Similarly, we have
\[
\norm{b\partial_\alpha(FG)}_{L^\infty(|\alpha|\ge t^{5/6})}\le C\epsilon^2t^{-1-\frac{1}{3}-\delta'}
\]
and thus $\norm{D_t(FG)}_{L^\infty(|\alpha|\ge t^{5/6})}\le C\epsilon^2t^{-1-\frac{1}{3}+2\delta_0}$.

For $|\alpha|<t^{5/6}$, simply note that
\[
\norm{(D_t\zeta)_H}_{L^\infty(|\alpha|<t^{5/6})}\le C\epsilon t^{-1+\delta_0}\ln t
\]
(raise its derivative twice) and use Lemma \ref{decay of D_tzeta} to get $\norm{\partial_\alpha^{s-1}D_t\bar{\zeta}}_{L^\infty}\le C\epsilon t^{-1/4}\ln t$, which implies
\[
\norm{D_t^2\zeta\frac{\partial_\alpha^{s-2}(\bar{\zeta}_\alpha-1)}{\zeta_\alpha}+(D_t\zeta)_H\frac{\partial_\alpha^{s-1}D_t\bar{\zeta}}{\zeta_\alpha}}_{L^\infty(|\alpha|<t^{5/6})}\le C\epsilon t^{-1-\frac{1}{4}+\delta_0}(\ln t)^2.
\]

Now we consider the error terms. For the commutator, note that
\[
[\partial_\alpha^{s-1},D_t]\bar{\zeta}=[\partial_\alpha^{s-1},b\partial_\alpha]\bar{\zeta}=\sum_{j=1}^{s-1}C_{j,k}\partial_\alpha^jb\partial_\alpha^{s-j}\bar{\zeta}
\]
where $C_{j,k}=k!/j!(k-j)!$.
Note that $\norm{(D_t\zeta)_H}_{L^\infty(|\alpha|<t)}\le C\epsilon t^{-1/2}$, so we invoke (\ref{W s-3 norm of b_alpha}) and (\ref{W s-2 norm of b_alpha FOR LARGE ALPHA}) to obtain
\begin{align*}
    \norm{(D_t\zeta)_H[\partial_\alpha^{s-1},D_t]\bar{\zeta}}_{L^\infty}
    &\le C\epsilon^4t^{-\frac{1}{4}-\frac{3}{2}+\delta-\frac{1}{2}}+\norm{(D_t\zeta)_H\partial_\alpha^{s-1}b\partial_\alpha\bar{\zeta}}_{L^\infty(|\alpha|\ge t)}\\
    &\ \ \ +\norm{(D_t\zeta)_H\partial_\alpha^{s-1}b\partial_\alpha\bar{\zeta}}_{L^\infty(|\alpha|<t)},\\
    &\le C\epsilon^3t^{-2-\frac{1}{4}+\delta}+C\epsilon^3t^{-\frac{1}{4}-\frac{3}{2}+\delta}+C\epsilon^3t^{-\frac{1}{2}-\frac{7}{6}+\delta},\\
    &\le C\epsilon^3t^{-\frac{3}{2}}.
\end{align*}
For the low frequency term, there holds
\[
[D_t,P_{<t^{-2}}]=2t^{-1}\tilde{P}_{<t^{-2}},
\]
where $\tilde{P}_{<t^{-2}}$ is another frequency cut-off, which leads to
\[
\norm{D_t(D_t\zeta)_L\partial_\alpha^{s-1}\bar{\zeta}}_{L^\infty}\le C\epsilon^2t^{-\frac{3}{2}}.
\]
Now we have successfully obtained the desired estimate for the extreme term, and thus the proof is completed.
\end{proof}

\begin{rem}\label{decay for A-1 near t}
    Proposition \ref{norms of A-1} implies that there exists $C>0$ such that $1-C\epsilon^2\le A\le 1+C\epsilon^2$, which will be frequently used without explanation.
    
    Also, one may infer from the proof that
    \[
    \norm{A-1}_{W^{s-2,\infty}(|\alpha|\ge t^{4/5})}\le C\epsilon^2t^{-1-\frac{3}{10}+\delta_0}\ln t=C\epsilon^2t^{-\frac{5}{4}-\frac{1}{20}+\delta_0}\ln t
    \]
    which is important in Chapter 7.
\end{rem}

Also, the result of Proposition \ref{norms of A-1} allows us to analyze the asymptotic behavior of $b$:

\begin{cor}\label{structure of b}
    Suppose that $Ct^{1-\mu}\le|\alpha|\le Ct^{1+\mu}$, then
    \begin{equation}
        b=-\frac{2\alpha}{t}|D_t^2\zeta|^2+O(\epsilon^2t^{-1-\delta})
    \end{equation}
    for some $\delta>0$.
\end{cor}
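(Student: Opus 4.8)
The plan is to expand $(I-\mathcal{H})b$ from \eqref{forb}, namely $(I-\mathcal{H})b=-[D_t\zeta,\mathcal{H}]\tfrac{\bar\zeta_\alpha-1}{\zeta_\alpha}$, modulo errors of size $O(\epsilon^2t^{-1-\delta})$ in $L^\infty$, and then recover $b$ itself by Lemma \ref{control f by using (I-H)f}: since $b$ is real one has $b=Re\{(I-\mathcal{H})b\}$ up to the defect $\tfrac12(\mathcal{H}-\bar{\mathcal{H}})b$, which by Lemma \ref{curve-flat hilbert} and the pointwise bound \eqref{infinity norm of b} for $b$ is $O(\epsilon^3t^{-5/4}\ln t)$. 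The first reduction is to replace $\tfrac{\bar\zeta_\alpha-1}{\zeta_\alpha}$ by $iD_t^2\bar\zeta$: conjugating $(D_t^2-iA\partial_\alpha)\zeta=-i$ gives $(D_t^2+iA\partial_\alpha)\bar\zeta=i$, hence $\bar\zeta_\alpha-1=\tfrac{i}{A}D_t^2\bar\zeta-\tfrac{A-1}{A}$, and expanding $\tfrac1{\zeta_\alpha}=1-(\zeta_\alpha-1)+\cdots$ one sees that $\tfrac{\bar\zeta_\alpha-1}{\zeta_\alpha}-iD_t^2\bar\zeta$ is a sum of quadratic terms each carrying a genuine extra decay factor; here the $W^{s-2,\infty}$ bound $\|A-1\|\lesssim\epsilon^2t^{-5/4+\delta}$ of Proposition \ref{norms of A-1} is exactly what is needed, which is the use of that proposition announced after its statement. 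Commuting $[D_t\zeta,\mathcal{H}]$ against these corrections and estimating $\|[D_t\zeta,\mathcal{H}]g\|_{L^\infty}\le\|D_t\zeta\|_{L^\infty}\|\mathcal{H}g\|_{L^\infty}+\|\mathcal{H}(D_t\zeta g)\|_{L^\infty}$ via the $L^\infty$ Hilbert-transform bound of Lemma \ref{L infty of hilbert} absorbs all of them, so $(I-\mathcal{H})b=-i[D_t\zeta,\mathcal{H}]D_t^2\bar\zeta+O(\epsilon^2t^{-1-\delta})$.

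Next I would use that $D_t\bar\zeta$ is holomorphic, so $(I-\mathcal{H})D_t^2\bar\zeta=[D_t\zeta,\mathcal{H}]\tfrac{\partial_\alpha D_t\bar\zeta}{\zeta_\alpha}$ is quadratic; inserting $\mathcal{H}D_t^2\bar\zeta=D_t^2\bar\zeta-[D_t\zeta,\mathcal{H}]\tfrac{\partial_\alpha D_t\bar\zeta}{\zeta_\alpha}$ into $[D_t\zeta,\mathcal{H}]D_t^2\bar\zeta=D_t\zeta\,\mathcal{H}D_t^2\bar\zeta-\mathcal{H}(D_t\zeta D_t^2\bar\zeta)$ and discarding the resulting cubic term (again by Lemma \ref{L infty of hilbert}) gives $(I-\mathcal{H})b=-i(I-\mathcal{H})(D_t\zeta\,D_t^2\bar\zeta)+O(\epsilon^2t^{-1-\delta})$. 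Now the hypothesis $Ct^{1-\mu}\le|\alpha|\le Ct^{1+\mu}$ enters through Lemma \ref{transition-lemma}: by \eqref{equ: lemma transition sec 4 1}, $D_t\zeta=-\tfrac{2\alpha}{it}\partial_tD_t\zeta+\tfrac2{it}\Omega_0D_t\zeta$, and since $\partial_tD_t\zeta=D_t^2\zeta-b\,\partial_\alpha D_t\zeta$ this reads $D_t\zeta=\tfrac{2i\alpha}{t}D_t^2\zeta+E$ with $\|E\,D_t^2\bar\zeta\|_{L^\infty}=O(\epsilon^2t^{-1-\delta})$: the $\tfrac2{it}\Omega_0D_t\zeta$ piece is controlled after replacing $D_t\zeta$ by $Q_\alpha$ via Proposition \ref{good-replace} and invoking \eqref{Omega_0D_t zeta high}, and is subleading precisely because $|\alpha|\ge Ct^{1-\mu}$ lets the prefactor $t^{-1}$ beat the slow growth $t^{\delta_0}$; the $b\,\partial_\alpha D_t\zeta$ piece is handled by Lemma \ref{norms of b} and the decay \eqref{bootstrap-3}, with $|\alpha|\le Ct^{1+\mu}$ keeping $\tfrac{|\alpha|}{t}\lesssim t^\mu$. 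Hence $D_t\zeta\,D_t^2\bar\zeta=\tfrac{2i\alpha}{t}|D_t^2\zeta|^2+O(\epsilon^2t^{-1-\delta})$ on the annulus.

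Putting the pieces together, $(I-\mathcal{H})b=-i(I-\mathcal{H})\bigl(\tfrac{2i\alpha}{t}|D_t^2\zeta|^2\bigr)+O(\epsilon^2t^{-1-\delta})=\pm\tfrac2t(I-\mathcal{H})(\alpha|D_t^2\zeta|^2)+O(\epsilon^2t^{-1-\delta})$; taking real parts, using that $\alpha|D_t^2\zeta|^2$ is real so that $Re\,\mathcal{H}(\alpha|D_t^2\zeta|^2)=\tfrac12(\mathcal{H}-\bar{\mathcal{H}})(\alpha|D_t^2\zeta|^2)$ is negligible by Lemma \ref{curve-flat hilbert}, and that the $\tfrac12(\mathcal{H}-\bar{\mathcal{H}})b$ defect is equally negligible, gives $b=\pm\tfrac{2\alpha}{t}|D_t^2\zeta|^2+O(\epsilon^2t^{-1-\delta})$ with $\delta>0$ fixed small in terms of $\mu$ and $\delta_0$; a careful tracking of the $\mathcal{H}$- and profile-sign conventions is what produces the stated $-\tfrac{2\alpha}{t}$. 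I expect the main obstacle to be the error bookkeeping in the first two paragraphs: one must verify that every discarded term is either genuinely cubic carrying two decaying factors --- so that the $\ln t$-lossy $L^\infty$ bound on $\mathcal{H}$ in Lemma \ref{L infty of hilbert}, which requires both an $L^\infty$ and an auxiliary $H^1$ bound, suffices --- or else the single linear $\Omega_0$-type correction of Lemma \ref{transition-lemma}, which is subleading exactly on $Ct^{1-\mu}\le|\alpha|\le Ct^{1+\mu}$, and that the $A-1$ correction indeed gains the extra $t^{-1/4}$ supplied by Proposition \ref{norms of A-1}.
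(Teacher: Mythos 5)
Your proposal follows essentially the same route as the paper's proof: start from \eqref{forb}, use the equation $D_t^2\bar\zeta+iA\bar\zeta_\alpha=i$ together with Proposition~\ref{norms of A-1} to replace $\frac{\bar\zeta_\alpha-1}{\zeta_\alpha}$ by $iD_t^2\bar\zeta$, apply Lemma~\ref{transition-lemma} on the annulus $t^{1-\mu}\lesssim|\alpha|\lesssim t^{1+\mu}$ (after a frequency cutoff and replacing $D_t\zeta$ by $Q_\alpha$ so that $\Omega_0$ is controllable) to transit $D_t\zeta\mapsto\frac{2i\alpha}{t}D_t^2\zeta$, and then recover $b$ from $(I-\mathcal{H})b$ by taking real parts and controlling the curved-vs-flat Hilbert defect. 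The only cosmetic difference is that the paper makes the $\mathcal{H}\to\mathbb{H}$ replacement at the outset and works with $(I-\mathbb{H})b$ throughout, while you carry $\mathcal{H}$ and remove it at the end; and your paragraph-two manipulation to rewrite $[D_t\zeta,\mathcal{H}]D_t^2\bar\zeta$ as $(I-\mathcal{H})(D_t\zeta D_t^2\bar\zeta)$ modulo a cubic error can be bypassed by applying the identity $[D_t\zeta,\mathcal{H}]g=(I-\mathcal{H})(D_t\zeta g)$ while $g=\frac{\bar\zeta_\alpha-1}{\zeta_\alpha}$ is still exactly holomorphic, \emph{before} substituting $g\to iD_t^2\bar\zeta$. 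One genuine slip worth correcting: for real $b$ the defect in $b=Re\{(I-\mathcal{H})b\}+Re\,\mathcal{H}b$ is $Re\,\mathcal{H}b=\tfrac12(\mathcal{H}+\bar{\mathcal{H}})b$, not $\tfrac12(\mathcal{H}-\bar{\mathcal{H}})b$ (the latter is purely imaginary for real $b$ and contributes nothing); the correct defect splits as $\tfrac12(\mathcal{H}-\mathbb{H})b+\tfrac12(\bar{\mathcal{H}}+\mathbb{H})b$ and is still of the size you claim via Lemma~\ref{curve-flat hilbert} together with \eqref{infinity norm of b}, so the estimate survives. Your final sign-tracking is left as a $\pm$, which is honest but does need to be pinned down to match the stated $-\frac{2\alpha}{t}$.
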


\begin{proof}
    Note that
    \[
    (I-\mathbb{H})b=(I-\mathcal{H})b+(\mathbb{H}-\mathcal{H})b
    \]
    and we assert that
    \[
    \norm{(\mathbb{H}-\mathcal{H})b}_{L^\infty}\le C\epsilon^3t^{-1-\delta}
    \]
    for certain $\delta>0$. First, it's clear that
            \begin{align*}
                &\left|\int_{|\alpha-\beta|<1}\left(\frac{\zeta_\beta}{\zeta(\alpha)-\zeta(\beta)}-\frac{1}{\alpha-\beta}\right)b(\beta)d\beta\right|\\
                =&\ \left|\int_{|\alpha-\beta|<1}\left(\frac{\zeta_\beta(\alpha-\beta)-(\zeta(\alpha)-\zeta(\beta))}{(\zeta(\alpha)-\zeta(\beta))(\alpha-\beta)}\right)b(\beta)d\beta\right|,\\
                =&\ \left|\int_{|\alpha-\beta|<1}\left(\frac{\zeta_\beta(\alpha-\beta)-\zeta_\alpha(\alpha-\beta)-\zeta''(\xi)(\alpha-\beta)^2}{(\zeta(\alpha)-\zeta(\beta))(\alpha-\beta)}\right)b(\beta)d\beta\right|,\\
                =&\ \left|\int_{|\alpha-\beta|<1}\left(\frac{(\zeta''(\eta)-\zeta''(\xi))(\alpha-\beta)^2}{(\zeta(\alpha)-\zeta(\beta))(\alpha-\beta)}\right)b(\beta)d\beta\right|,\\
                \le&\ C\norm{\zeta_{\alpha\alpha}}_{L^\infty}\norm{b}_{L^\infty},\\
                \le&\ C\epsilon^3t^{-1-\delta}.
            \end{align*}
    On the other hand,
    \begin{align*}
      & \left|\int_{|\alpha-\beta|\ge   1}\left(\frac{\zeta_\beta}{\zeta(\alpha)-\zeta(\beta)}-\frac{1}{\alpha-\beta}\right)b(\beta)d\beta\right|\\
      \le&\ \norm{b}_{L^\infty}\left(\int_{1\le|\alpha-\beta|\le t^2}+\int_{|\alpha-\beta|\ge t^2}\right)\left|\frac{\zeta_\beta-1}{\zeta(\alpha)-\zeta(\beta)}+\frac{\alpha-\zeta(\alpha)-(\beta-\zeta(\beta))}{(\zeta(\alpha)-\zeta(\beta))(\alpha-\beta)}\right|d\beta,\\
      \le&\ C\epsilon^2t^{-\frac{1}{2}-\delta}\left(C\norm{\zeta_\alpha-1}_{L^\infty}\ln t+Ct^{-2}\norm{\zeta_\alpha-1}_{L^2}\right),\\
      \le&\ C\epsilon^3t^{-1-\delta'}.
    \end{align*}
    So we have
    \begin{equation}
        (I-\mathbb{H})b
        =(I-\mathcal{H})b+O(\epsilon^3t^{-1-\delta})
        =(I-\mathcal{H})D_t\zeta(\bar{\zeta}_\alpha-1)+O(\epsilon^3t^{-1-\delta}).
    \end{equation}
    Since $D_t^2\zeta-iA\zeta_\alpha=-i$, we have $\bar{\zeta}_\alpha-1=iD_t^2\bar{\zeta}-(A-1)\bar{\zeta}_\alpha$ and thus
    \begin{align*}
        (I-\mathcal{H})D_t\zeta(\bar{\zeta}_\alpha-1)
        &=(I-\mathcal{H})iD_t\zeta D_t^2\bar{\zeta}-(I-\mathcal{H})D_t\zeta(A-1)\zeta_\alpha.
    \end{align*}
    It's clear that
    \[
    \norm{D_t\zeta(A-1)\zeta_\alpha}_{L^\infty}
    \le C\epsilon\norm{A-1}_{L^\infty}
    \le C\epsilon^3t^{-1-\delta}
    \]
    and
    \[
    \norm{\partial_\alpha(D_t\zeta(A-1)\zeta_\alpha)}_{L^\infty}
    \le C\epsilon^3t^{-1-\delta}.
    \]
    Hence, using the same technique in the last part of Proposition \ref{norms of A-1}, we get
    \[
    \norm{(I-\mathcal{H})(D_t\zeta(A-1)\zeta_\alpha)}_{L^\infty}\le C\epsilon^3t^{-1-\delta}.
    \]

    Now we have
    \begin{equation}
        (I-\mathbb{H})b=(I-\mathcal{H})iD_t\zeta D_t^2\bar{\zeta}+O(\epsilon^3t^{-1-\delta}).
    \end{equation}
    Let $(D_t\zeta)_L$ and $(D_t\zeta)_H$ be as before. Clearly, by Bernstein inequality,
$$\norm{(I-\mathcal{H})((D_t\zeta)_L D_t^2\bar{\zeta})}_{\infty}\leq C\norm{(D_t\zeta)_L D_t^2\bar{\zeta}}_{W^{1,\infty}}\leq C\norm{(D_t\zeta)_L}_{W^{1,\infty}}\norm{D_t^2\zeta}_{W^{1,\infty}}\leq C\epsilon^2 t^{-3/2}.$$
Using
\begin{equation}
    (D_t\zeta)_H(\alpha,t)=\frac{2\alpha}{it}\partial_t (D_t\zeta)_H+\frac{1}{\alpha}\Omega_0 (D_t\zeta)_H,
\end{equation}
and
\begin{equation}
    \norm{\frac{1}{\alpha}\Omega_0 (D_t\zeta)_H}_{L^{\infty}(|\alpha|\geq t^{1-\mu})}\leq Ct^{-1+\mu}\norm{\Omega_0(D_t\zeta)_H}_{\infty}\leq Ct^{-1+\mu+\delta_0}\ln t.
\end{equation}
So 
\begin{align*}
(I-\mathbb{H})b=-(I-\mathcal{H})\frac{2\alpha}{t}\partial_t(D_t\zeta)_H D_t^2\bar{\zeta}+O(\epsilon^2 t^{-3/2}),
\end{align*}
which can be rewritten as
\begin{align*}
(I-\mathbb{H})b=-(I-\mathcal{H})\frac{2\alpha}{t} |D_t^2\zeta|^2-(I-\mathcal{H})\Big(b\partial_{\alpha}(D_t\zeta)_H D_t^2\bar{\zeta}+(D_t\zeta)_L D_t^2\bar{\zeta}\Big)+O(\epsilon^2 t^{-3/2}).
\end{align*}
Note that
\begin{align*}
    \norm{(I-\mathcal{H})\Big(b\partial_{\alpha}(D_t\zeta)_H D_t^2\bar{\zeta}+(D_t\zeta)_L D_t^2\bar{\zeta}\Big)}_{\infty}\leq C\epsilon^2 t^{-3/2},
\end{align*}
    so we get
    \[
    b=-\frac{2\alpha}{t}|D_t^2\zeta|^2+O(\epsilon^2t^{-3/2})
    \]
    as desired.
\end{proof}

\begin{rem}
    One may infer that $\norm{b}_{L^\infty(Ct^{1-\mu}\le|\alpha|\le Ct^{1+\mu})}\le C\epsilon^2t^{-1+\mu}$.
\end{rem}

Using Corollary \ref{structure of b}, we obtain the uniform control of change of variables.
\begin{cor}\label{control of change of coordinates}
Let $T$ be as in the bootstrap assumptions. We have
\begin{equation}
    \sup_{t\in [0,T]}\norm{\kappa_{\alpha}(\cdot,t)-\kappa_{\alpha}(\cdot,0)}_{\infty}\leq C\epsilon^2.
\end{equation}
\end{cor}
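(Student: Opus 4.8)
The plan is to convert the defining relation $b=(\partial_{t}\kappa)\circ\kappa^{-1}$ into a scalar ODE for $\log\kappa_{\alpha}$ and integrate it in time. Writing the relation as $\partial_{t}\kappa(\alpha,t)=b(\kappa(\alpha,t),t)$ and differentiating in $\alpha$ gives
\[
\partial_{t}\kappa_{\alpha}(\alpha,t)=b_{\alpha}(\kappa(\alpha,t),t)\,\kappa_{\alpha}(\alpha,t),
\]
and, since $\kappa_{\alpha}>0$, this is equivalent to $\partial_{t}\log\kappa_{\alpha}(\alpha,t)=(b_{\alpha}\circ\kappa)(\alpha,t)$. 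Integrating from $0$ to $t$,
\[
\log\kappa_{\alpha}(\alpha,t)=\log\kappa_{\alpha}(\alpha,0)+\int_{0}^{t}(b_{\alpha}\circ\kappa)(\alpha,\tau)\,d\tau .
\]
Passing to the logarithm is the device that removes the apparent circularity of having to know in advance that $\kappa_{\alpha}$ stays bounded.

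The heart of the argument is the uniform bound $\displaystyle\sup_{t\in[0,T]}\int_{0}^{t}\norm{b_{\alpha}(\cdot,\tau)}_{L^{\infty}}\,d\tau\le C\epsilon^{2}$. Since for each $\tau$ the map $\kappa(\cdot,\tau):\mathbb{R}\to\mathbb{R}$ is an increasing diffeomorphism, $\norm{(b_{\alpha}\circ\kappa)(\cdot,\tau)}_{L^{\infty}}=\norm{b_{\alpha}(\cdot,\tau)}_{L^{\infty}}$, so it suffices to control the integral of $\norm{b_{\alpha}(\cdot,\tau)}_{L^{\infty}}$. For $\tau\ge 1$ we use Lemma \ref{norms of b}, estimate \eqref{W s-2 norm of b_alpha}, which gives $\norm{b_{\alpha}(\cdot,\tau)}_{L^{\infty}}\le\norm{b_{\alpha}(\cdot,\tau)}_{W^{s-2,\infty}}\le C\epsilon^{2}\tau^{-7/6+\delta}$; choosing $\delta_{0}<\delta<1/6$ makes the exponent strictly less than $-1$, so $\int_{1}^{\infty}\tau^{-7/6+\delta}\,d\tau\le C$. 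For $\tau\le 1$ the local-in-time well-posedness theory for \eqref{system_boundaryold}, together with the initial-data hypotheses of Theorem \ref{main}, gives $\norm{b_{\alpha}(\cdot,\tau)}_{L^{\infty}}\le C\epsilon^{2}$, whence $\int_{0}^{1}\norm{b_{\alpha}(\cdot,\tau)}_{L^{\infty}}\,d\tau\le C\epsilon^{2}$ as well. Adding the two contributions proves the claimed bound.

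It remains to transfer this to $\kappa_{\alpha}$ itself. By the local theory (or, at $t=0$, the relation $\kappa_{\alpha}=z_{\alpha}/(\zeta_{\alpha}\circ\kappa)$ together with $\norm{z_{\alpha}(\cdot,0)-1}_{L^{\infty}}\lesssim\norm{\partial_{\alpha}\xi_{0}}_{H^{s}}\le\epsilon$ and the corresponding smallness of $\zeta_{\alpha}(\cdot,0)-1$), we have $\tfrac12\le\kappa_{\alpha}(\cdot,0)\le 2$ once $\epsilon$ is small. Combining with the bound on the exponent and using $|e^{x}-1|\le 2|x|$ for $|x|\le 1$,
\[
\bigl|\kappa_{\alpha}(\alpha,t)-\kappa_{\alpha}(\alpha,0)\bigr|=\kappa_{\alpha}(\alpha,0)\left|\exp\!\left(\int_{0}^{t}(b_{\alpha}\circ\kappa)(\alpha,\tau)\,d\tau\right)-1\right|\le C\epsilon^{2},
\]
uniformly in $\alpha\in\mathbb{R}$; taking the supremum over $\alpha$ and over $t\in[0,T]$ yields the corollary.

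The only genuine obstacle is securing time-integrability of $\norm{b_{\alpha}(\cdot,t)}_{L^{\infty}}$ at large times. One can describe the leading behavior of $b$ — hence of $\kappa$ — via Corollary \ref{structure of b}, but differentiating that description only returns $\norm{b_{\alpha}}_{L^{\infty}}\lesssim\epsilon^{2}t^{-1}$, which is borderline non-integrable; what actually closes the estimate is the sharper rate $t^{-7/6+\delta}$ for $\norm{b_{\alpha}}_{W^{s-2,\infty}}$ in Lemma \ref{norms of b}, itself a payoff of the Transition-of-Derivatives method. Everything else — preservation of $L^{\infty}$ under composition with a diffeomorphism, the logarithmic substitution, and the short-time bounds — is routine.
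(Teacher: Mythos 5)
Your proof is correct and follows essentially the same route as the paper: differentiate $\partial_t\kappa = b\circ\kappa$ in $\alpha$ to obtain a scalar ODE $\partial_t\kappa_\alpha = (b_\alpha\circ\kappa)\,\kappa_\alpha$, then integrate in time and exploit the integrable decay of $\|b_\alpha(\cdot,t)\|_{L^\infty}$. The one cosmetic difference is how you handle the nonlinearity $\kappa_\alpha$ on the right: you take the logarithm, obtaining $\log\kappa_\alpha(t) - \log\kappa_\alpha(0) = \int_0^t b_\alpha\circ\kappa$, which linearizes the problem exactly; the paper instead leaves the factor of $\kappa_\alpha$ inside the integral and closes a bootstrap inequality of the form $\sup_{[0,T]}\|\kappa_\alpha(t)-\kappa_\alpha(0)\|_\infty \le C\epsilon^2(\sup_{[0,T]}\|\kappa_\alpha(t)-\kappa_\alpha(0)\|_\infty + \|\kappa_\alpha(0)\|_\infty)$. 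Both work for small $\epsilon$; your version avoids the bootstrap at the cost of a logarithm.

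Your closing remark is a useful clarification. The paper attributes the decay of $b_\alpha$ to Corollary \ref{structure of b}, but as you note, that corollary gives a leading-order profile for $b$ near $|\alpha|\sim t$ whose $\alpha$-derivative is not obviously better than $t^{-1}$ (in fact the cross term $\tfrac{\alpha}{t}\partial_\alpha|D_t^2\zeta|^2$ can be as large as $\epsilon^2 t^{-1+\mu}$ at the edge of the region $|\alpha|\sim t^{1+\mu}$). What actually supplies the integrable rate $(1+\tau)^{-1-\delta'}$ the paper uses in its integral is the estimate $\|b_\alpha\|_{W^{s-2,\infty}}\lesssim \epsilon^2 t^{-7/6+\delta}$ from Lemma \ref{norms of b}, which is what you cite. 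So your sourcing is the more accurate one, and your parenthetical observation about short times ($\tau\le 1$, handled by local well-posedness) fills a small gap that the paper's proof leaves implicit.
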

\begin{proof}
    Recall that $b=\kappa_t\circ \kappa^{-1}$. So we have $\kappa_t(\alpha,t)=b(\kappa(\alpha,t),t)$. Therefore
    $$\partial_t \kappa_{\alpha}=b_{\alpha}\circ\kappa \kappa_{\alpha}.$$
    By Corollary \ref{structure of b}, we have 
    \begin{align*}
        |\kappa_{\alpha}(\alpha,t)-\kappa_{\alpha}(\alpha,0)|=&\Big|\int_0^t b_{\alpha}\circ\kappa(\alpha,\tau)\kappa_{\alpha}(\alpha,\tau)d\tau\Big|\\
        \leq &\  C\epsilon^2\int_0^t (1+\tau)^{-1-\delta'}d\tau \sup_{\tau\in [0,t]} \Big(\norm{\kappa_{\alpha}(\cdot,\tau)-\kappa_{\alpha}(\cdot,0)}_{\infty}+\norm{\kappa_{\alpha}(\cdot,0)}_{\infty}\Big),
    \end{align*}
    which implies
    \begin{align*}
        \sup_{t\in [0,T]}\norm{\kappa_{\alpha}(\cdot,t)-\kappa_{\alpha}(\cdot,0)}_{\infty}\leq C\epsilon^2\Big( \sup_{t\in [0,T]}\norm{\kappa_{\alpha}(\cdot,t)-\kappa_{\alpha}(\cdot,0)}_{\infty}+\norm{\kappa_{\alpha}(\cdot,0)}_{\infty}\Big)
    \end{align*}
    and this completes the proof.
\end{proof}

Also, the technique of Proposition \ref{norms of A-1} straightly applies to the estimate of $\frac{a_t}{a}\circ \kappa^{-1}$.
\begin{proposition}\label{norm of at/a}
We have 
\begin{equation} 
    \norm{\frac{a_t}{a}\circ \kappa^{-1}}_{W^{s-2,\infty}}\le C\epsilon^2t^{-1-\delta},
\end{equation}
\begin{equation}
    \norm{\frac{a_t}{a}\circ\kappa^{-1}}_{H^s}\le C\epsilon^2 t^{-1+\delta_0}\ln t.
\end{equation}
\end{proposition}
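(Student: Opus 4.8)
The plan is to start from the identity \eqref{atanew}, which writes $(I-\mathcal{H})\big(\tfrac{a_t}{a}\circ\kappa^{-1}\,A\bar\zeta_\alpha\big)$ as the sum of two commutator terms, $2i[D_t^2\zeta,\mathcal{H}]\tfrac{\partial_\alpha D_t\bar\zeta}{\zeta_\alpha}$ and $2i[D_t\zeta,\mathcal{H}]\tfrac{\partial_\alpha D_t^2\bar\zeta}{\zeta_\alpha}$, plus one quadratic-kernel integral. Since $A\bar\zeta_\alpha=1+O(\epsilon)$ in $H^s\cap W^{s-2,\infty}$ by Proposition \ref{norms of A-1} and the bootstrap bounds, Lemma \ref{control f by using (I-H)f} reduces the whole proposition to bounding the right-hand side of \eqref{atanew} in $H^s$ and in $W^{s-2,\infty}$; peeling off the factor $A\bar\zeta_\alpha$ produces only cubic, better-behaved corrections handled by the product and commutator estimates. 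Throughout, the Hilbert transform is converted to pointwise control by Lemma \ref{L infty of hilbert} and Lemma \ref{control f by using (I-H)f}, at the acceptable cost of a $\ln t$ and a $t^{-2}\norm{\cdot}_{H^1}$ term, while $(I-\mathcal{H})$ is $L^2$-bounded.

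For the $H^s$ bound I would mimic the proof of Proposition \ref{norms of A-1}. The naive product estimate on $D_t^2\zeta\,\partial_\alpha D_t\bar\zeta$ and $D_t\zeta\,\partial_\alpha D_t^2\bar\zeta$ gives only $\epsilon^3 t^{-1}$ decay (with a $\ln t$ loss at top order), so I rewrite the leading pieces as time derivatives of profile products plus a genuinely cubic remainder controlled by Lemma \ref{decay of D_tzeta}, Lemma \ref{norms of b}, and Proposition \ref{good-replace}. Splitting $D_t\zeta$ and $D_t^2\zeta$ into $P_{<t^{-2}}$ and $P_{\ge t^{-2}}$ parts, the low-frequency contribution is handled by Bernstein; for the high-frequency part I set $F=e^{it^2/4\alpha}(\cdot)_H$, $G=e^{-it^2/4\alpha}\partial_\alpha^{k}(\cdot)$ and use $\partial_t F=\alpha^{-1}e^{it^2/4\alpha}\Omega_0(\cdot)_H$, $\partial_t G=\alpha^{-1}e^{-it^2/4\alpha}\tilde\Omega_0\partial_\alpha^{k}(\cdot)$ to gain a factor $\alpha^{-1}$. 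For $|\alpha|\gtrsim t^{1/2+2\delta_0}$ this yields $\epsilon^2 t^{-1+\delta_0}\ln t$; for $|\alpha|<t^{1/2+2\delta_0}$ I instead raise the derivative on the high-frequency factor twice via \eqref{equ:increase a derivative} to get $\norm{(D_t\zeta)_H}_{L^\infty}\lesssim \epsilon t^{-1+\delta_0}\ln t$ there. The third term of \eqref{atanew} carries two $D_t\zeta$-differences and one factor $(D_t\bar\zeta)_\beta$, hence is already cubic; it is bounded in $H^s$ directly using Lemma \ref{nearly alpha}, the product/commutator estimates, and the same frequency splitting, exactly as for the analogous quadratic integrals in \S 4.

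For the $W^{s-2,\infty}$ bound I argue region by region. Away from $t$, using the weighted pointwise bound $|f|\le 2|\alpha|^{-1/2}\norm{L_0f}_{L^2}+t|\alpha|^{-3/2}\norm{\Omega_0 f}_{L^2}$ from \cite{Wu2009}, the extra decay in $|\alpha|$ combines with the $\partial_t$-gain to upgrade $t^{-1}$ to $t^{-1-\delta}$; for $|\alpha|\lesssim t^{5/6}$ I raise derivatives on the high-frequency factors via \eqref{equ:increase a derivative} to trade missing decay for powers of $\alpha^2/t^2\lesssim t^{-1/3}$, combined with the $L^\infty$ decay of $\partial_\alpha^{s-1}D_t\bar\zeta$ and $\partial_\alpha^{s-1}D_t^2\bar\zeta$ from Lemma \ref{decay of D_tzeta}. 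I expect the main obstacle to be precisely the top-order case $\partial_\alpha^{s-2}$ in $L^\infty$, where no single factor can absorb a full $t^{-1/2}$: as in the last part of the proof of Proposition \ref{norms of A-1}, one must pull the top derivatives onto $\bar\zeta$, commute $\partial_\alpha^{s-2}$ through $\mathcal{H}$ (the commutator being of lower order), then split the principal term at $|\alpha|\sim t^{5/6}$ using the $\Omega_0,L_0$ bounds on one side and double derivative-raising on the other, and finally control the commutator $[\partial_\alpha^{s-1},D_t]\bar\zeta=\sum_j C_j\,\partial_\alpha^j b\,\partial_\alpha^{s-j}\bar\zeta$ via the refined $b$-estimates \eqref{W s-3 norm of b_alpha} and \eqref{W s-2 norm of b_alpha FOR LARGE ALPHA}; this bookkeeping is where the argument is most delicate.
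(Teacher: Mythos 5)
Your proposal is correct and follows the same route as the paper, which disposes of this proposition in a single sentence by remarking that ``the technique of Proposition~\ref{norms of A-1} straightly applies'': starting from \eqref{atanew}, reducing via Lemma~\ref{control f by using (I-H)f} and the boundedness of $A^{-1}$, recognizing the paired quadratic terms as $\partial_t(D_t\zeta\,\partial_\alpha D_t\bar\zeta)$ plus cubic errors, and then running the same profile-decomposition, region-splitting, and top-order commutator bookkeeping as in the proof of Proposition~\ref{norms of A-1}. Your elaboration fills in exactly the details the paper leaves implicit and identifies the same delicate point (the $\partial_\alpha^{s-2}$ case in $L^\infty$).
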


\subsection{Localization lemma} 
Singular integrals are non-local operators, which can be inconvenient in certain situations. To address this, we establish two localization lemmas for trilinear singular integrals. Consider the following expression:
$$I(f,g,h)(\alpha,t):=\frac{1}{\pi i}\int \frac{(f(\alpha,t)-f(\beta,t))(g(\alpha,t)-g(\beta,t))}{(\alpha-\beta)^2}h(\beta,t)d\beta.$$
We consider two cases:

\noindent \textbf{Case I:} $f$ and $g$ almost antiholomorphic, and $h$ almost holomorphic. In this case, we establish Lemma \ref{decouple}.

\noindent \textbf{Case II:} $f$ and $h$ almost antiholomorphic, and $g$ almost holomorphic. In this case, we establish Lemma \ref{decouple-2}.

\subsubsection{Case I.}
For our purposes, we take $f$, $g$, and $h$ as follows:
\begin{itemize}
\item [(1)] $f=P_{\geq t^{-2}}Q_{\alpha}$, or $f=\partial_{\alpha}^jQ_{\alpha}$, $j=1, \cdots, s$.

\item [(2)] $g=P_{\geq t^{-2}}Q_{\alpha}$, or $g=\partial_{\alpha}^kQ_{\alpha}$, $k=1, \cdots, s$. 

\item [(3)] $h=\partial_{\alpha}^{l+1}\bar{\theta}$ or $h=\partial_{\alpha}^{l+1}(\bar{Q}_{\alpha})$, where $0\leq l\leq s$.
In addition, we require $j+k+l\leq s$.
\end{itemize}
Let's recall the bounds for such functions $f$, $g$, and $h$ as follows:
\begin{itemize}
\item [(1)] $\Omega_0 P_{\ge t^{-2}}Q_\alpha$ is not assumed to be in $ L^2$. We can only estimate $\Omega_0 P_{\ge t^{-2}}Q_\alpha$ in $L^{\infty}$. By Lemma \ref{Omega_0D_t zeta high}, we have
\begin{equation}\label{equ:localization vector bound 1}
    \norm{\Omega_0P_{\ge t^{-2}}Q_\alpha}_{L^{\infty}}\leq C\epsilon t^{\delta_0}\ln t.
\end{equation}

\item [(2)] By the bootstrap assumptions,
\begin{equation}\label{equ:localization vector bound 2}
    \norm{\Omega_0 \partial_{\alpha}^jQ_{\alpha}}_{L^2}\leq C\epsilon t^{2\delta_0},\quad \norm{\Omega_0 \partial_{\alpha}^j\theta_{\alpha}}_{L^2}\leq C\epsilon t^{2\delta_0}, \quad j=1,\cdots, s-1,
\end{equation}
and
\begin{equation}\label{equ:localization vector bound 3}
    \norm{L_0 \partial_{\alpha}^jQ_{\alpha}}_{L^2}\leq C\epsilon t^{2\delta_0},\quad \norm{L_0 \partial_{\alpha}^j\theta_{\alpha}}_{L^2}\leq C\epsilon t^{2\delta_0}, \quad j=0,\cdots, s-1.
\end{equation}
\end{itemize}
We now proceed with the analysis.

\begin{lemma}\label{decouple}
    Under the bootstrap assumptions, define $S(t)=\{\alpha\in \mathbb{R}\ |\ t^{4/5}\le|\alpha|\le t^{6/5}\}$. Then, we have
\begin{equation}
    1_{S(t)}(\alpha)I(f,g,h)(\alpha,t)=2\frac{it^2}{4\alpha^2}fgh+e,
\end{equation}
where
\begin{equation}
    \norm{e}_{L^2(S(t))}\le C\epsilon^3 t^{-\frac{5}{4}-\delta}
\end{equation}
for some $\delta>0$. Specifically, we can choose $\delta=\frac{1}{16}$.
\end{lemma}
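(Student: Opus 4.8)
The plan is a profile decomposition adapted to the linear phase, followed by an explicit evaluation of an oscillatory kernel and a multilinear singular-integral estimate for the remainder; the restriction to $S(t)$ is used precisely to make the resulting exponents work. Since $f,g$ are (almost) anti-holomorphic and $h$ is (almost) holomorphic, I would write $f=e^{-it^2/4\alpha}F$, $g=e^{-it^2/4\alpha}G$, $h=e^{it^2/4\alpha}H$, with $F:=e^{it^2/4\alpha}f$, etc. By Lemma~\ref{transition-lemma} (and its holomorphic analogue, obtained by replacing $\Omega_0$ with $\tilde\Omega_0$ and $\tfrac{it^2}{4\alpha^2}$ with $-\tfrac{it^2}{4\alpha^2}$) one has $\partial_\alpha F=e^{it^2/4\alpha}\big(\tfrac1\alpha L_0 f-\tfrac{t}{2\alpha^2}\Omega_0 f\big)$ and $\partial_\alpha H=e^{-it^2/4\alpha}\big(\tfrac1\alpha L_0 h-\tfrac{t}{2\alpha^2}\tilde\Omega_0 h\big)$, and similarly for $G$. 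On $S(t)$, where $|\alpha|^{-1}\le t^{-4/5}$ and $t|\alpha|^{-2}\le t^{-3/5}$, the bootstrap bounds \eqref{bootstrap-2}--\eqref{bootstrap-2'} and \eqref{equ:localization vector bound 1}--\eqref{equ:localization vector bound 3} (together with $L_0\bar u=\overline{L_0 u}$, $\tilde\Omega_0\bar u=\overline{\Omega_0 u}$ and Proposition~\ref{good-replace}) then control $\partial_\alpha F,\partial_\alpha G,\partial_\alpha H$ — in $L^2(S(t))$ with gain $\lesssim\epsilon t^{-3/5+2\delta_0}$, or in $L^\infty$ with gain $\lesssim\epsilon t^{-3/5+\delta_0}\ln t$ in the cases where only \eqref{equ:localization vector bound 1} is available (one of $f,g$ being $P_{\ge t^{-2}}Q_\alpha$) — while $F,G$ are $O(\epsilon t^{-1/4}\ln t)_{L^\infty}$, $H$ is $O(\epsilon)_{L^2}$, and $f,g$ themselves enjoy the improved pointwise decay of Corollary~\ref{decay for Dt zeta near t} on $S(t)$.

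Next I would freeze the profiles: $f(\alpha)-f(\beta)=F(\alpha)(e^{-it^2/4\alpha}-e^{-it^2/4\beta})+e^{-it^2/4\beta}(F(\alpha)-F(\beta))$, likewise for $g$, and $h(\beta)=H(\alpha)e^{it^2/4\beta}+e^{it^2/4\beta}(H(\beta)-H(\alpha))$. Expanding, $I(f,g,h)=F(\alpha)G(\alpha)H(\alpha)\,\mathcal K(\alpha)+\mathrm{Err}$, with $\mathcal K(\alpha):=\tfrac1{\pi i}\int\tfrac{(e^{-it^2/4\alpha}-e^{-it^2/4\beta})^2}{(\alpha-\beta)^2}e^{it^2/4\beta}\,d\beta$ and $\mathrm{Err}$ a finite sum of trilinear integrals carrying at least one difference quotient of a profile. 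To evaluate $\mathcal K$, expand $(e^{-it^2/4\alpha}-e^{-it^2/4\beta})^2e^{it^2/4\beta}=e^{-it^2/2\alpha}e^{it^2/4\beta}-2e^{-it^2/4\alpha}+e^{-it^2/4\beta}$; since $e^{it^2/4\beta}-1$ and $e^{-it^2/4\beta}-1$ are boundary values of bounded holomorphic functions on $\Omega(t)$ and $\Omega(t)^c$ respectively, the flat version of Lemma~\ref{holomorphic} gives $\mathbb H e^{\pm it^2/4\beta}=\pm\big(e^{\pm it^2/4\alpha}-1\big)$, and differentiating in $\alpha$ gives $\tfrac1{\pi i}\int\tfrac{e^{\pm it^2/4\beta}}{(\alpha-\beta)^2}\,d\beta=\tfrac{it^2}{4\alpha^2}e^{\pm it^2/4\alpha}$ (the constant term $\tfrac1{\pi i}\int\tfrac{d\beta}{(\alpha-\beta)^2}$ vanishing as a finite part); hence $\mathcal K(\alpha)=\tfrac{it^2}{4\alpha^2}e^{-it^2/4\alpha}+\tfrac{it^2}{4\alpha^2}e^{-it^2/4\alpha}=\tfrac{it^2}{2\alpha^2}e^{-it^2/4\alpha}$, so that $F(\alpha)G(\alpha)H(\alpha)\mathcal K(\alpha)=\tfrac{it^2}{2\alpha^2}e^{-it^2/4\alpha}FGH=\tfrac{it^2}{2\alpha^2}fgh=2\cdot\tfrac{it^2}{4\alpha^2}fgh$, which is exactly the asserted main term.

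It remains to show $\|\mathrm{Err}\|_{L^2(S(t))}\le C\epsilon^3 t^{-5/4-1/16}$, and this is where I expect the real work. Every term of $\mathrm{Err}$ has the form $\tfrac1{\pi i}\int\tfrac{a(\alpha)-a(\beta)}{\alpha-\beta}\,\tfrac{b(\alpha)-b(\beta)}{\alpha-\beta}\,c(\beta)\,d\beta$ times a decaying prefactor, with one of $a,b$ a profile difference quotient (supplying, through $\|\partial_\alpha(\cdot)\|$, the decisive gain $\lesssim\epsilon t^{-3/5+2\delta_0}$), a second slot being $e^{\mp it^2/4\alpha}$ (whose difference quotient is $\le 2|\alpha-\beta|^{-1}$ and also $\le\|\phi'\|_{L^\infty(|\beta|\gtrsim t^{4/5})}\le t^{2/5}$ on $S(t)$), and $c$ a bounded product of exponentials and profiles (possibly a constant) estimated by the bootstrap assumptions and Corollary~\ref{decay for Dt zeta near t}. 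I would split $\mathbb R=\{|\beta-\alpha|<\tfrac12|\alpha|\}\cup\{|\beta-\alpha|\ge\tfrac12|\alpha|\}$, handling the far piece by the extra $|\alpha-\beta|^{-1}\lesssim t^{-4/5}$ and the rapid decay of $h$ at infinity, and the near piece ($|\beta|\sim|\alpha|\in[t^{4/5},t^{6/5}]$) by the $C_1$/$C_2$-type multilinear singular-integral bounds (Lemma~\ref{singular}), placing the derivative-gaining profile in $L^2$, the cut-off exponential in $L^\infty$, and $c$ in $L^2$. The main obstacle is to balance these exponents — the single profile-derivative gain, the borderline factor $\|\phi'\|_{L^\infty(S(t))}\sim t^{2/5}$, the slack $2\delta_0$ from the vector fields, and the pointwise decay rates of $f,g,h$ on $S(t)$ — tightly enough, and in the right $L^p$'s, to reach the exponent $-5/4-\tfrac1{16}$; in the single top-derivative case ($h=\partial_\alpha^{s-2}\bar\theta_\alpha$, controlled only in $L^2$, with one of $f,g$ equal to $P_{\ge t^{-2}}Q_\alpha$) the unavailable $L^\infty$ bound on $h$ must instead be recovered by the Bernstein/profile splitting already used in the proofs of Lemma~\ref{norms of b} and Proposition~\ref{norms of A-1}. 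By contrast, the kernel identity of the second step is routine once the half-plane analyticity of $e^{\pm it^2/4\beta}-1$ is observed.
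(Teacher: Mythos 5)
Your approach is the same as the paper's: profile decomposition, explicit evaluation of the oscillatory kernel $\mathcal K$, and a telescoping error estimate driven by the profile-derivative gain over $S(t)$. The kernel evaluation in your second step is in fact cleaner than the paper's: observing that $e^{\pm it^2/4\beta}-1$ are boundary values of bounded holomorphic functions on the lower/upper $\beta$-half-plane gives $\mathbb H e^{\pm it^2/4\beta}=\pm(e^{\pm it^2/4\alpha}-1)$ directly in the original variable, and differentiating yields $\mathcal K=\tfrac{it^2}{2\alpha^2}e^{-it^2/4\alpha}$, whereas the paper first changes variables to $\xi=t^2/4\alpha$, $\eta=t^2/4\beta$ and integrates by parts. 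The one caveat is that your expansion turns a convergent integral into a sum of three divergent ones (each $\int(\alpha-\beta)^{-2}e^{\pm it^2/4\beta}\,d\beta$ has a non-integrable singularity at $\beta=\alpha$), so you are implicitly computing Hadamard finite parts; the paper's integration by parts, which exploits the quadratic vanishing of the integrand at $\eta=\xi$, avoids that regularization entirely and is the safer presentation. Your cutoff $|\alpha-\beta|<\tfrac12|\alpha|$ instead of $t^{9/16}$ is admissible (it decays faster on the far piece) but changes the bookkeeping relative to the stated $\delta=\tfrac1{16}$, which the paper's exponent $\tfrac9{16}$ is calibrated to produce against $\|f\|_{L^2}\|g\|_\infty\|h\|_\infty$.

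The genuine gap is in the error estimate in the case $f=P_{\ge t^{-2}}Q_\alpha$ (or the same with $g$). Your scheme places the derivative-gaining profile in $L^2$ and the remaining factors in $L^\infty$, which is fine when $f=\partial_\alpha^j Q_\alpha$, $j\ge1$, since then $\|\partial_\alpha F\|_{L^2(S(t))}\lesssim\epsilon t^{-3/5+2\delta_0}$ via \eqref{equ:localization vector bound 2}--\eqref{equ:localization vector bound 3}. But for $f=P_{\ge t^{-2}}Q_\alpha$ only $\|\Omega_0 P_{\ge t^{-2}}Q_\alpha\|_{L^\infty}\lesssim\epsilon t^{\delta_0}\ln t$ is available (estimate \eqref{equ:localization vector bound 1}), so $\partial_\alpha F$ is \emph{not} $L^2$-controllable and the $\|GH\|_\infty\|\partial_\alpha F\|_{L^2}$-type bound fails; switching to $\|\partial_\alpha F\|_{L^\infty(S(t))}\lesssim\epsilon t^{-3/5+\delta_0}\ln t$ forces $G$ or $H$ into $L^2$ at the global rate $\lesssim\epsilon$, and the exponents no longer reach $5/4+\tfrac1{16}$. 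Your suggestion that "Bernstein/profile splitting" recovers the missing bound does not address this. The paper's Case 2 uses a different mechanism, the transition of derivatives of Lemma~\ref{transition-lemma}: it writes $f(\alpha)-f(\beta)=-\tfrac{2}{it}\big(\alpha\partial_t f(\alpha)-\beta\partial_t f(\beta)\big)+\tfrac{2}{it}\big(\Omega_0 f(\alpha)-\Omega_0 f(\beta)\big)$, disposes of the $\Omega_0 f$ difference using the extra $t^{-1}$ together with its $L^\infty$ bound, replaces $\partial_t f = D_t^2\zeta + O(\epsilon^2 t^{-1})$ (whose profile derivative \emph{is} $L^2$-controllable via the bootstrap), and then trades $-\tfrac{2\alpha}{it}D_t^2\zeta$ back for $(D_t\zeta)_H$ at the end. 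This step is essential and is precisely what closes the exponents; without it, the proposal leaves unproved exactly the configurations that the lemma is invoked for in the applications to $\tilde G_{22}$, $\tilde G_{12}$ in Proposition~\ref{formula of dE_s/dt} and in Remark~\ref{localization lemma for highest derivative}.
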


\begin{proof}
 Recall that $f=P_{\ge t^{-2}}Q_\alpha$ or $\partial_{\alpha}^kQ_{\alpha}$ for $1\leq k\leq s$. Without loss of generality, we assume $k\leq s-2$. Otherwise, we replace $f$ by $g$. 
Decompose $I(f,g,h)$ as follows:
\begin{align*}
    I(f,g,h)
    &=\left(\frac{1}{\pi i}\int_{|\alpha-\beta|\ge t^{9/16}}+\frac{1}{\pi i}\int_{|\alpha-\beta|\le t^{9/16}}\right)\frac{(f(\alpha)-f(\beta))(g(\alpha)-g(\beta))}{(\alpha-\beta)^2}h(\beta)d\beta,\\
    &=I_1(f,g,h)+I_2(f,g,h).
\end{align*}

\noindent \textbf{Case 1:} $f=\partial_\alpha^kQ_\alpha$ for $1\le k\le s-2$.

We define $F(\alpha,t)=e^{\frac{it^2}{4\alpha}}f(\alpha,t)$, $G=e^{\frac{it^2}{4\alpha}}g(\alpha,t)$, $H(\alpha,t)=e^{-\frac{it^2}{4\alpha}}h(\alpha,t)$ and $E_{\alpha,t}=\{\beta\in \mathbb{R}\ |\ |\alpha-\beta|\le t^{9/16}\}$. We now decompose $I_2$ as
\begin{align*}
    I_2(f,g,h)
    &=\frac{1}{\pi i}\int_{E_{\alpha,t}} \frac{(e^{-\frac{it^2}{4\alpha}}F(\alpha,t)-e^{-\frac{it^2}{4\beta}}F(\beta,t))(e^{-\frac{it^2}{4\alpha}}G(\alpha,t)-e^{-\frac{it^2}{4\beta}}G(\beta,t))}{(\alpha-\beta)^2}e^{\frac{it^2}{4\beta}}H(\beta,t)d\beta\\
    &=I_{m}+I_{e},
\end{align*}
where the main term
$$I_{m}:= FGH\frac{1}{\pi i}\int_{E_{\alpha,t}}\left(\frac{e^{-\frac{it^2}{4\alpha}}-e^{-\frac{it^2}{4\beta}}}{\alpha-\beta}\right)^2e^{\frac{it^2}{4\beta}}d\beta,$$
and the error term
\begin{align*}
    I_{e}=&H(\alpha,t)\frac{1}{\pi i}\int_{E_{\alpha,t}} \frac{(G(\alpha,t)-G(\beta,t))(e^{-\frac{it^2}{4\alpha}}F(\alpha,t)-e^{-\frac{it^2}{4\beta}}F(\beta,t))}{(\alpha-\beta)^2}e^{-\frac{it^2}{4\beta}}e^{\frac{it^2}{4\beta}}d\beta\\
    &+G(\alpha,t)H(\alpha,t)\frac{1}{\pi i}\int_{E_{\alpha,t}} \frac{(e^{-\frac{it^2}{4\alpha}}-e^{-\frac{it^2}{4\beta}})(F(\alpha,t)-F(\beta,t))}{(\alpha-\beta)^2}e^{-\frac{it^2}{4\beta}}e^{\frac{it^2}{4\beta}}d\beta\\
&+\frac{1}{\pi i}\int_{E_{\alpha,t}} \frac{(e^{-\frac{it^2}{4\alpha}}F(\alpha,t)-e^{-\frac{it^2}{4\beta}}F(\beta,t))(e^{-\frac{it^2}{4\alpha}}G(\alpha,t)-e^{-\frac{it^2}{4\beta}}G(\beta,t))}{(\alpha-\beta)^2}e^{\frac{it^2}{4\beta}}\Big(H(\beta,t)-H(\alpha,t)\Big)d\beta\\
    :=& I_{e1}+I_{e2}+I_{e3}.
\end{align*}
We first handle the main term. We claim that
\[
I_m=\frac{it^2}{2\alpha^2}e^{-\frac{it^2}{4\alpha}}FGH+O(\epsilon^3t^{-\frac{5}{4}-\frac{5}{16}})_{H^s}=\frac{it^2}{2\alpha^2}fgh+O(\epsilon^3t^{-\frac{5}{4}-\frac{1}{16}})_{H^s}.
\]
First, observe that
\[
\norm{\frac{1}{\pi i}\int_{R-E_{\alpha,t}}\left(\frac{e^{-\frac{it^2}{4\alpha}}-e^{-\frac{it^2}{4\beta}}}{\alpha-\beta}\right)^2e^{\frac{it^2}{4\beta}}d\beta}_{L^\infty(S(t))}\le C\int_{t^{\frac{9}{16}}}^\infty\frac{dr}{r^2}\le Ct^{-\frac{9}{16}},
\]
which implies
\begin{align*}
    \norm{FGH\frac{1}{\pi i}\int_{R-E_{\alpha,t}}\left(\frac{e^{-\frac{it^2}{4\alpha}}-e^{-\frac{it^2}{4\beta}}}{\alpha-\beta}\right)^2e^{\frac{it^2}{4\beta}}d\beta}_{L^2(S(t))}
    \le Ct^{-\frac{9}{16}}\norm{fgh}_{L^2(S(t))}
    \le C\epsilon^3t^{-\frac{5}{4}-\frac{1}{16}}.
\end{align*}
Now we have
\[
I_m=\frac{1}{\pi i}FGH\int\left(\frac{e^{-\frac{it^2}{4\alpha}}-e^{-\frac{it^2}{4\beta}}}{\alpha-\beta}\right)^2e^{\frac{it^2}{4\beta}}d\beta+O(\epsilon^3t^{-\frac{5}{4}-\frac{1}{16}})_{L^2}.
\]

The integral is computed straightly. Set $\xi=\frac{t^2}{4\alpha},\ \eta=\frac{t^2}{4\beta}$ (note that they are real) and we derive
\begin{align*}
    \frac{1}{\pi i}\int\left(\frac{e^{-\frac{it^2}{4\alpha}}-e^{-\frac{it^2}{4\beta}}}{\alpha-\beta}\right)^2e^{\frac{it^2}{4\beta}}d\beta
    &=-\frac{4\xi^2}{t^2}\frac{1}{\pi i}\int\left(\frac{e^{-i\xi}-e^{-i\eta}}{\xi-\eta}\right)^2e^{i\eta}d\eta.
\end{align*}
Recall that the integral is of Cauchy principal value (which implies $\int\frac{1}{\xi-\eta}d\eta=0$), and observe that it isn't a singular integral near $\eta=\xi$, so we apply integration by parts to obtain
\begin{align*}
    -\frac{4\xi^2}{t^2}\frac{1}{\pi i}\int\left(\frac{e^{-i\xi}-e^{-i\eta}}{\xi-\eta}\right)^2e^{i\eta}d\eta
    &=\frac{4\xi^2}{t^2}\frac{1}{\pi i}\int\frac{e^{i\eta}}{\xi-\eta}\left(-2i(e^{-i\xi}-e^{-i\eta})e^{-i\eta}+i(e^{-i\xi}-e^{-i\eta})^2\right)d\eta,\\
    &=-\frac{8i\xi^2}{t^2}\frac{1}{\pi i}\int\frac{e^{-i\xi}-e^{-i\eta}}{\xi-\eta}d\eta+\frac{4i\xi^2}{t^2}\frac{1}{\pi i}\int\frac{(e^{-i\xi}-e^{-i\eta})^2}{\xi-\eta}e^{i\eta}d\eta,\\
    &=\frac{8i\xi^2}{t^2}\mathbb{H}e^{-i\xi}+\frac{4i\xi^2}{t^2}e^{-2i\xi}\mathbb{H}e^{i\xi}+\frac{4i\xi^2}{t^2}\mathbb{H}e^{-i\xi},\\
    &=\frac{8i\xi^2}{t^2}e^{-i\xi},\\
    &=\frac{it^2}{2\alpha^2}e^{-\frac{it^2}{4\alpha}}.
\end{align*}
We have used $\mathbb{H}e^{-i\xi}=e^{-i\xi}$ and $\mathbb{H}e^{i\xi}=-e^{i\xi}$ here. This proves our assertion.

Next, we consider $L^2$ norms of error terms including $I_1(f,g,h)$ and $I_{e_j}\ (j=1,2,3)$. For $I_1(f,g,h)$, if $f$ has the most derivatives, we write
\[
I_1(f,g,h)
=\frac{f(\alpha)}{\pi i}\int_{|\alpha-\beta|\ge t^{9/16}}\frac{g(\alpha)-g(\beta)}{(\alpha-\beta)^2}h(\beta)d\beta
+\frac{1}{\pi i}\int_{|\alpha-\beta|\ge t^{9/16}}\frac{g(\alpha)-g(\beta)}{(\alpha-\beta)^2}f(\beta)h(\beta)d\beta
\]
and we have
\begin{equation}
\norm{I_1(f,g,h)}_{L^2}\leq Ct^{-9/16}\norm{f}_{L^2}\norm{g}_{L^{\infty}}\norm{h}_{L^{\infty}}\le C\epsilon^3t^{-\frac{5}{4}-\frac{1}{16}}.
\end{equation}
The parallel argument holds when $g$ has the most derivatives. When most derivatives are in $h$ (say, $h=\partial_\alpha^k \tilde{h}$ where $k\ge [s/2]$), one first uses integration by parts to obtain
\begin{align*}
    I_1(f,g,h)
    &=\frac{1}{\pi i}\int_{|\alpha-\beta|\ge t^{9/16}}\left(\frac{f(\alpha)-f(\beta)}{(\alpha-\beta)^2}g'+\frac{g(\alpha)-g(\beta)}{(\alpha-\beta)^2}f'\right)\partial_\beta^{k-1}\tilde{h}d\beta\\
    &\ \ \ +\frac{2}{\pi i}\int_{|\alpha-\beta|\ge t^{9/16}}\frac{(f(\alpha)-f(\beta))(g(\alpha)-g(\beta))}{(\alpha-\beta)^3}\partial_\beta^{k-1}\tilde{h}d\beta\\
    &\ \ \ +\frac{1}{\pi i}\left.\frac{(f(\alpha)-f(\beta))(g(\alpha)-g(\beta))}{(\alpha-\beta)^2}\partial_\beta^{k-1}\tilde{h}\right|_{|\alpha-\beta|=t^{9/16}}.
\end{align*}
Then, we apply Young's inequality to each integral and estimate the boundary term by
\begin{align*}
    \norm{\frac{1}{\pi i}\left.\frac{(f(\alpha)-f(\beta))(g(\alpha)-g(\beta))}{(\alpha-\beta)^2}\partial_\beta^{k-1}\tilde{h}(\beta)\right|_{|\alpha-\beta|=t^{9/16}}}_{L^2}
    &\le Ct^{-\frac{9}{16}}\norm{f}_{L^\infty}\norm{\partial_\alpha g}_{L^\infty}\norm{\partial_\alpha^{k-1}\tilde{h}}_{L^2}\\
    &\le C\epsilon^3t^{-\frac{5}{4}-\frac{1}{16}}.
\end{align*}
This gives the desired result.

Next we estimate $\norm{I_{e_j}}_{L^2(S(t))}$ for $j=1,2,3$. Note that $\forall\beta\in E_{\alpha,t}$, $t^{\frac{4}{5}}\lesssim|\alpha|\lesssim t^{\frac{6}{5}}$ implies $t^{\frac{4}{5}}\lesssim|\beta|\lesssim t^{\frac{6}{5}}$. For $I_{e_2}$, we have
\begin{align*}
\norm{I_{e_2}}_{L^2(S(t))}
\le C\norm{GH}_{L^\infty}\norm{\partial_\alpha F}_{L^2(t^{4/5}\lesssim|\alpha|\lesssim t^{6/5})}
\le C\epsilon^3t^{-\frac{5}{4}-\frac{1}{10}+2\delta_0},
\end{align*}
since
\[
\norm{\partial_\alpha F}_{L^2(t^{4/5}\lesssim|\alpha|\lesssim t^{6/5})}
=\norm{\frac{1}{\alpha^2}e^{\frac{it^2}{4\alpha}}\left(\alpha L_0f-\frac{t}{2}\Omega_0f\right)}_{L^2(t^{4/5}\lesssim|\alpha|\lesssim t^{6/5})}
\le C\epsilon t^{-\frac{3}{5}+2\delta_0}.
\]
Such an estimate applies to $I_{e_1},I_{e_3}$\footnote{Actually, this requires $g,h$ are not of highest order derivatives since we only control the $s$ order vector field. Hence such proof need a slightly modification for such case, see the remark below Lemma \ref{decouple-2}}, too. So we derive that
\begin{align*}
   \sum_{j=1}^3 \norm{I_{ej}}_{L^2(S(t))}\leq C\epsilon^3 t^{-\frac{5}{4}-\frac{1}{10}+2\delta_0}
\end{align*}
and we conclude that $\norm{e}_{L^2(S(t))}\le C\epsilon^3t^{-\frac{5}{4}-\frac{1}{16}}$, given $\delta_0$ sufficiently small.

\noindent \textbf{Case 2:} $f=P_{\ge t^{-2}}Q_\alpha$.
The method for case 1 still applies to each term except
\[
\tilde{I}=GH\frac{1}{\pi i}\int_{E_{\alpha,t}}\frac{(e^{-\frac{it^2}{4\alpha}}-e^{-\frac{it^2}{4\beta}})(f(\alpha)-f(\beta))}{(\alpha-\beta)^2}e^{\frac{it^2}{4\beta}}d\beta,
\]
since if we still define $F=e^{it^2/4\alpha}f$, then $\partial_\alpha F$ can't be estimated in $L^2$, due to the restriction on $\Omega_0P_HQ_\alpha$. However, by the formula
\[
f(\alpha)=-\frac{2\alpha}{it}\partial_tf+\frac{2}{it}\Omega_0f,
\]
we have
\begin{align*}
    \tilde{I}
    &=-GH\frac{1}{\pi i}\int_{E_{\alpha,t}}\frac{(e^{-\frac{it^2}{4\alpha}}-e^{-\frac{it^2}{4\beta}})\left(\frac{2\alpha}{it}\partial_tf(\alpha)-\frac{2\beta}{it}\partial_tf(\beta)\right)}{(\alpha-\beta)^2}e^{\frac{it^2}{4\beta}}d\beta\\
    &\ \ \ +\frac{2GH}{it}\frac{1}{\pi i}\int_{E_{\alpha,t}}\frac{(e^{-\frac{it^2}{4\alpha}}-e^{-\frac{it^2}{4\beta}})\left(\Omega_0f(\alpha)-\Omega_0f(\beta)\right)}{(\alpha-\beta)^2}e^{\frac{it^2}{4\beta}}d\beta
\end{align*}
Observe that
\begin{align*}
    \norm{\frac{1}{\pi i}\frac{2GH}{it}\int_{E_{\alpha,t}}\frac{(\Omega_0f(\alpha)-\Omega_0f(\beta))(e^{-\frac{it^2}{4\alpha}}-e^{-\frac{it^2}{4\beta}})}{(\alpha-\beta)^2}e^{\frac{it^2}{4\beta}}d\beta}_{L^2(S(t))}
    \le C\epsilon^3t^{-\frac{3}{2}+2\delta_0}.
\end{align*}
Define $F=e^{it^2/4\alpha}D_t^2\zeta$, then
\begin{align*}
    \tilde{I}
    &=-GH\frac{1}{\pi i}\int_{E_{\alpha,t}}\frac{(e^{-\frac{it^2}{4\alpha}}-e^{-\frac{it^2}{4\beta}})\left(\frac{2\alpha}{it}e^{-\frac{it^2}{4\alpha}}F(\alpha)-\frac{2\beta}{it}e^{-\frac{it^2}{4\beta}}F(\beta)\right)}{(\alpha-\beta)^2}e^{\frac{it^2}{4\beta}}d\beta
    +O(\epsilon^3t^{-\frac{5}{4}-\delta})_{L^2}\\
    &=-2\frac{it^2}{4\alpha^2}e^{-\frac{it^2}{4\alpha}}\frac{2\alpha}{it}FGH-GH\frac{1}{\pi i}\int_{E_{\alpha,t}}\frac{(e^{-\frac{it^2}{4\alpha}}-e^{-\frac{it^2}{4\beta}})\left(\frac{2\alpha}{it}F(\alpha)-\frac{2\beta}{it}F(\beta)\right)}{(\alpha-\beta)^2}d\beta+O(\epsilon^3t^{-\frac{5}{4}-\delta})_{L^2}.
\end{align*}
Note that
\[
\partial_\alpha\left(\frac{2\alpha}{it}F(\alpha)\right)
=\frac{2}{it}F(\alpha)+\frac{2\alpha}{it}\frac{1}{\alpha^2}e^{\frac{it^2}{4\alpha}}\left(\alpha L_0D_t^2\zeta-\frac{t}{2}\Omega_0D_t^2\zeta\right)
\]
which implies
\[
\norm{\partial_\alpha\left(\frac{2\alpha}{it}F(\alpha)\right)}_{L^2(t^{4/5}\lesssim|\alpha|\lesssim t^{6/5})}\le C\epsilon t^{-\frac{4}{5}+2\delta_0}.
\]
and thus
\[
\tilde{I}=-2\frac{it^2}{4\alpha^2}\frac{2\alpha}{it}D_t^2\zeta gh+O(\epsilon^3t^{-\frac{5}{4}-\delta})_{L^2}.
\]
Finally, we have
\begin{align*}
    -\frac{2\alpha}{it}D_t^2\zeta
    &=-\frac{2\alpha}{it}b\partial_\alpha (D_t\zeta)_H-\frac{2\alpha}{it}\partial_t(D_t\zeta)_H+O(\epsilon t^{-1})\\
    &=(D_t\zeta)_H-\frac{2}{it}\Omega_0(D_t\zeta)_H+O(\epsilon t^{-1})\\
    &=(D_t\zeta)_H+O(\epsilon t^{-1+\delta_0}\ln t),
\end{align*}
which implies the desired result.
\end{proof}
\subsubsection{Case II.}
We also need to estimate singular integrals $I(f,g,h)$, with $f, h$ almost anti-holomorphic, while $g$ almost holomorphic. In this case, the main term of $I(f,g,h)$ vanishes. For our purposes, we take $f$, $g$, and $h$ as follows:
\begin{itemize}
\item [(1)] $f=P_{\geq t^{-2}}Q_{\alpha}$, or $f=\partial_{\alpha}^jQ_{\alpha}$, $j=1, \cdots, s$.

\item [(2)] $g=P_{\geq t^{-2}}\bar{Q}_{\alpha}$, or $g=\partial_{\alpha}^k\bar{Q}_{\alpha}$, $k=1, \cdots, s$. 

\item [(3)] $h=\partial_{\alpha}^{l+1}D_t\theta$ or $h=\partial_{\alpha}^{l+1}\theta$, where $0\leq l\leq s$.
In addition, we require $j+k+l\leq s$.
\end{itemize}

\noindent More precisely, we have the following lemma.
\begin{lemma}\label{decouple-2}
    Under the bootstrap assumptions, we have
\begin{equation}
    \norm{I(f,g,h)}_{L^2}\lesssim \epsilon^3 t^{-5/4-\delta}.
\end{equation}
\end{lemma}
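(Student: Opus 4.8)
The plan is to mimic the proof of Lemma \ref{decouple}, but to exploit that here the leading oscillatory factor cancels rather than producing a main term. Write $f(\alpha,t)=e^{-it^2/4\alpha}F(\alpha,t)$, $h(\alpha,t)=e^{-it^2/4\alpha}H(\alpha,t)$ (both almost anti-holomorphic, so $\Omega_0$ of each is controlled), and $g(\alpha,t)=e^{+it^2/4\alpha}\tilde{G}(\alpha,t)$ (almost holomorphic, so $\tilde{\Omega}_0\tilde{G}$ is controlled). Substituting into $I(f,g,h)$, the integrand carries the phase $(e^{-it^2/4\alpha}-e^{-it^2/4\beta})(e^{it^2/4\alpha}-e^{it^2/4\beta})e^{-it^2/4\beta}$, whose product of the first two factors equals $2-e^{-it^2/4\alpha}e^{it^2/4\beta}-e^{it^2/4\alpha}e^{-it^2/4\beta}$; in particular, the purely non-oscillatory piece of this product decays like $|\alpha-\beta|^2 (t^2/4\alpha^2)^2$ only after we keep track of cancellation, so I first split the domain as $|\alpha-\beta|\le t^{9/16}$ and $|\alpha-\beta|> t^{9/16}$ exactly as before.

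For the far region $|\alpha-\beta|> t^{9/16}$, I distribute the differences onto whichever of $f,g,h$ carries the most derivatives, integrate by parts if that is $h$ (to move a derivative off $\partial_\alpha^{l+1}$ onto the kernel), and apply Young's inequality together with $\norm{|\alpha-\beta|^{-2}1_{|\alpha-\beta|>t^{9/16}}}_{L^1}\lesssim t^{-9/16}$; using the bootstrap bounds on $f,g,h$ in $L^2$ and in $L^\infty$ (via Lemma \ref{decay of D_tzeta}, Proposition \ref{good-replace}, and Bernstein) this contributes $O(\epsilon^3 t^{-5/4-1/16})$. For the near region, I Taylor-expand each factor $f(\alpha)-f(\beta)$, etc., and the key point is that the \emph{main term} $FGH\cdot \frac{1}{\pi i}\int_{E_{\alpha,t}}\frac{(e^{-it^2/4\alpha}-e^{-it^2/4\beta})(e^{it^2/4\alpha}-e^{it^2/4\beta})}{(\alpha-\beta)^2}e^{-it^2/4\beta}\,d\beta$ is computed directly (substitute $\xi=t^2/4\alpha$, $\eta=t^2/4\beta$, integrate by parts, and use $\mathbb{H}e^{\pm i\xi}=\pm e^{\pm i\xi}$): this time the three resulting Hilbert-transform terms sum to $0$ rather than reproducing the input, so there is no main term. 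What remains are the commutator/error terms $I_{e1},I_{e2},I_{e3}$ coming from replacing $H(\beta),\tilde{G}(\beta),F(\beta)$ by their values at $\alpha$, each of which carries a factor $\partial_\alpha F$, $\partial_\alpha \tilde{G}$ or $\partial_\alpha H$ on the small set $E_{\alpha,t}$; as in Lemma \ref{decouple} we bound $\norm{\partial_\alpha F}_{L^2(t^{4/5}\lesssim|\alpha|\lesssim t^{6/5})}\lesssim \epsilon t^{-3/5+2\delta_0}$ (using $\partial_\alpha F = \alpha^{-2}e^{it^2/4\alpha}(\alpha L_0 f - \tfrac{t}{2}\Omega_0 f)$ and the analogous identity with $\tilde\Omega_0$ for $\tilde G$), giving $O(\epsilon^3 t^{-5/4-1/10+2\delta_0})$, which is acceptable for $\delta_0$ small; outside $S(t)$, i.e. for $|\alpha|\le t^{4/5}$ or $|\alpha|\ge t^{6/5}$, I instead invoke Lemma \ref{bouns decay for small or big alpha} and Corollary \ref{decay for Dt zeta near t} to get extra decay for $f,g,h$ directly, so the whole integral is trivially $O(\epsilon^3 t^{-5/4-\delta})$ there.

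The main obstacle is the case $f=P_{\ge t^{-2}}Q_\alpha$ (or $g=P_{\ge t^{-2}}\bar Q_\alpha$), where $\Omega_0 F$ is only controlled in $L^\infty$ with a loss $t^{\delta_0}\ln t$ and not in $L^2$, so $\partial_\alpha F$ cannot be estimated in $L^2$ on the annulus. As in Case 2 of Lemma \ref{decouple}, I handle this by the transition-of-derivatives identity $f(\alpha)=-\frac{2\alpha}{it}\partial_t f+\frac{2}{it}\Omega_0 f$ (Lemma \ref{transition-lemma}): the $\Omega_0 f$ piece is harmless in $L^\infty$ against the $t^{-1}$ prefactor, and for $\partial_t f = D_t^2\zeta - b\partial_\alpha D_t\zeta$ I write $\partial_t f = e^{-it^2/4\alpha}\tilde F$ with $\tilde F = e^{it^2/4\alpha}D_t^2\zeta + O(\epsilon^2 t^{-1})$, for which $\partial_\alpha \tilde F = \alpha^{-2}e^{it^2/4\alpha}(\alpha L_0 D_t^2\zeta - \tfrac{t}{2}\Omega_0 D_t^2\zeta)$ \emph{is} controlled in $L^2$ on $t^{4/5}\lesssim|\alpha|\lesssim t^{6/5}$ by the bootstrap assumptions. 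Tracking the extra factor of $2\alpha/it \sim t^{\mu}/t$ that this substitution introduces, one still lands at $O(\epsilon^3 t^{-5/4-\delta})$ with, say, $\delta=\tfrac{1}{16}$, which completes the proof.
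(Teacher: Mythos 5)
Your argument follows the paper's own proof almost verbatim: the same profile decomposition $f=e^{-it^2/4\alpha}F$, $g=e^{it^2/4\alpha}G$, $h=e^{-it^2/4\alpha}H$, the same split of the $\beta$-integral at $|\alpha-\beta|=t^{9/16}$, the same explicit change of variables $\xi=t^2/4\alpha$, $\eta=t^2/4\beta$ followed by integration by parts and the identities $\mathbb{H}e^{\pm i\xi}=\pm e^{\pm i\xi}$ to show the oscillatory main-term integral vanishes, the same bounds on the error terms $I_{e1},I_{e2},I_{e3}$ via $L^2$-control of $\partial_\alpha F,\partial_\alpha G,\partial_\alpha H$ on the annulus $t^{4/5}\lesssim|\alpha|\lesssim t^{6/5}$, and the same transition-of-derivatives device for $f=P_{\ge t^{-2}}Q_\alpha$. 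That cancellation computation is essentially the \emph{entire} content of the paper's proof — it simply records that the main term is zero and refers to Lemma \ref{decouple} for everything else.

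One caveat: your closing claim that for $\alpha$ outside $S(t)$ the integral is ``trivially'' $O(\epsilon^3 t^{-5/4-\delta})$ by invoking Lemma \ref{bouns decay for small or big alpha} and Corollary \ref{decay for Dt zeta near t} is stated too loosely. Running the budget — e.g. via Lemma \ref{small or big alpha}, the dominant near-diagonal contribution is
\[
\norm{\partial_\alpha f}_{W^{1,\infty}(E_t)}\,\norm{g}_{L^2}\,\norm{h}_{W^{1,\infty}}\ln t
\;\approx\; \epsilon t^{-3/5+\delta_0}\cdot\epsilon\cdot\epsilon t^{-1/2}\ln t
\;=\;\epsilon^3 t^{-11/10+\delta_0}\ln t,
\]
which does \emph{not} reach $t^{-5/4-\delta}$. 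The honest reading is that Lemma \ref{decouple-2}, like Lemma \ref{decouple}, is really an $L^2(S(t))$ bound: the paper's own proof never addresses $\alpha$ outside $S(t)$ (the error-term estimates borrowed from Lemma \ref{decouple} are restricted to the annulus), and in every application the region outside $S(t)$ is handled separately by Lemma \ref{small or big alpha} at a weaker rate, which is adequate because the application context supplies additional decaying factors. Your core argument is therefore exactly right; the blanket assertion about the exterior region should either be deleted or replaced by the observation that the lemma is only invoked, and only proved, on $S(t)$.
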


\begin{proof}
	We shall only state the difference between this case and Lemma \ref{decouple}. Write
	\[
	I_2(f,g,h)=\frac{1}{\pi i}\int_{|\alpha-\beta|\le t^{9/16}}\frac{(e^{-\frac{it^2}{4\alpha}}F(\alpha)-e^{-\frac{it^2}{4\beta}}F(\beta))(e^{\frac{it^2}{4\alpha}}G(\alpha)-e^{\frac{it^2}{4\beta}}G(\beta))}{(\alpha-\beta)^2}e^{-\frac{it^2}{4\beta}}Hd\beta
	\]
	and decompose it to $I_m+I_e$ as in Lemma \ref{decouple}. We only need to calculate 
        \[
        \frac{1}{\pi i}\int\frac{(e^{-\frac{it^2}{4\alpha}}-e^{-\frac{it^2}{4\beta}})(e^{\frac{it^2}{4\alpha}}-e^{\frac{it^2}{4\beta}})}{(\alpha-\beta)^2}e^{-\frac{it^2}{4\beta}}d\beta.
	\]
        Still, let $\xi=\frac{t^2}{4\alpha},\ \eta=\frac{t^2}{4\beta}$ and we derive
        \begin{align*}
            &\frac{1}{\pi i}\int\frac{(e^{-\frac{it^2}{4\alpha}}-e^{-\frac{it^2}{4\beta}})(e^{\frac{it^2}{4\alpha}}-e^{\frac{it^2}{4\beta}})}{(\alpha-\beta)^2}e^{-\frac{it^2}{4\beta}}d\beta\\
            =&\ -\frac{4\xi^2}{t^2}\frac{1}{\pi i}\int\frac{(e^{-i\xi}-e^{-i\eta})(e^{i\xi}-e^{i\eta})}{(\xi-\eta)^2}e^{-i\eta}d\eta,\\
            =&\ -\frac{4\xi^2}{t^2}\frac{1}{\pi i}\int(2-e^{-i\xi+i\eta}-e^{-i\eta+i\xi})e^{-i\eta}d\frac{1}{\xi-\eta},\\
            =&\ -\frac{4\xi^2}{t^2}\frac{1}{\pi i}\int\frac{1}{\xi-\eta}\left((-ie^{-i\xi+i\eta}+ie^{-i\eta+i\xi})e^{-i\eta}-ie^{-i\eta}(2-e^{-i\xi+i\eta}-e^{-i\eta+i\xi})\right)d\eta,\\
            =&\ -\frac{4i\xi^2}{t^2}e^{i\xi}\mathbb{H}e^{-2i\xi}+\frac{8i\xi^2}{t^2}\mathbb{H}e^{-i\xi}-\frac{4i\xi^2}{t^2}e^{i\xi}\mathbb{H}e^{-2i\xi},\\
            =&\ 0.
        \end{align*}
        i.e. the main term vanishes, and this completes the proof.
\end{proof}

\begin{rem}\label{localization lemma for highest derivative}
    The proof of Localization Lemma needs to be modified if one of $f,g,h$ is of highest order derivative. For instance, consider the integral
    \[
    \frac{1}{\pi i}\int\frac{((D_t\zeta)_H(\alpha)-(D_t\zeta)_H(\beta))(\partial_\alpha^sD_t^2\zeta-\partial_\beta^sD_t^2\zeta)}{(\zeta(\alpha)-\zeta(\beta))^2}(\bar{\zeta}_\beta-1)d\beta,
    \]
    which will be analyzed in Chapter 5. 
    
    \noindent First, according to Lemma \ref{nearly alpha}, replacing $(\zeta(\alpha)-\zeta(\beta))^2$ by $(\alpha-\beta)^2$ does not affect the result\footnote{This type of argument will be used frequently throughout the paper, but we will not write out each instance explicitly.}.
    
    \noindent However, note that $\norm{L_0\partial_\alpha^sD_t^2\zeta}_{L^2}$ cannot be directly controlled, meaning our estimate does not apply to some remainder terms. To address this, we redefine variables as follows: let $F=e^{it^2/4\alpha}(D_t\zeta)_H,\ G=e^{it^2/4\alpha}\partial_\alpha^{s-1}D_t^2\zeta$, $H=e^{-it^2/4\alpha}(\bar{\zeta}_\alpha-1)$, then
    \[
    \partial_\alpha^sD_t^2\zeta
    =\partial_\alpha(e^{-\frac{it^2}{4\alpha}}G)
    =e^{-\frac{it^2}{4\alpha}}\partial_\alpha G+\frac{it^2}{4\alpha^2}e^{-\frac{it^2}{4\alpha}}G,
    \]
    and
    \[
    \norm{\partial_\alpha G}_{L^2}=\norm{\frac{1}{\alpha^2}e^{\frac{it^2}{4\alpha}}\left(\alpha L_0\partial_\alpha^{s-1}D_t^2\zeta-\frac{t}{2}\Omega_0\partial_\alpha^{s-1}D_t^2\zeta\right)}_{L^2},
    \]
which exhibits favorable decay. Consequently, the main term becomes
    \begin{align*}
            &\frac{1}{\pi i}\int_{E_{\alpha,t}}\frac{(e^{-\frac{it^2}{4\alpha}}F-e^{-\frac{it^2}{4\beta}}F)(\frac{it^2}{4\alpha^2}e^{-\frac{it^2}{4\alpha}}G-\frac{it^2}{4\beta^2}e^{-\frac{it^2}{4\beta}}G)}{(\zeta(\alpha)-\zeta(\beta))^2}e^{\frac{it^2}{4\beta}}H d\beta\\
        =&\ \frac{it^2}{4\alpha^2}FGH\frac{1}{\pi i}\int\left(\frac{e^{-\frac{it^2}{4\alpha}}-e^{-\frac{it^2}{4\beta}}}{\alpha-\beta}\right)^2d\beta+O(\epsilon^3t^{-\frac{5}{4}-\delta})_{L^2},\\
        =&\ 2\left(\frac{it^2}{4\alpha^2}\right)^2e^{-\frac{it^2}{4\alpha}}FGH+O(\epsilon^3t^{-\frac{5}{4}-\delta})_{L^2},\\
        =&\ 2\left(\frac{it^2}{4\alpha^2}\right)^2(D_t\zeta)_H\partial_\alpha^{s-1}D_t^2\zeta(\bar{\zeta}_\alpha-1)+O(\epsilon^3t^{-\frac{5}{4}-\delta})_{L^2}.
    \end{align*}
This uses the same decomposition method as before. Additionally, the formula
    \[
    \partial_\alpha f=\frac{it^2}{4\alpha^2}f+\frac{1}{\alpha}L_0f-\frac{t}{2\alpha^2}\Omega_0f
    \]
allows us to refine the main term further as
    \[
    2\left(\frac{it^2}{4\alpha^2}\right)^2(D_t\zeta)_H\partial_\alpha^{s-1}D_t^2\zeta(\bar{\zeta}_\alpha-1)
    =2\frac{it^2}{4\alpha^2}(D_t\zeta)_H\partial_\alpha^sD_t^2\zeta(\bar{\zeta}_\alpha-1)+O(\epsilon^3t^{-\frac{5}{4}-\delta})_{L^2}
    \]
    as desired.

    A different adjustment is required when the highest-order derivative applies to 
 $h$, i.e. $h=\partial_\alpha^{s+1}D_t\zeta$ (or $\partial_\alpha^{s+1}D_t\bar{\zeta}$). In this case, $L_0\partial_\alpha^sD_t\zeta$ cannot be controlled, so it must not appear in the final expression. For example, consider
    \[
    I=\frac{1}{\pi i}\int_{|\alpha-\beta|\le t^{9/16}}\frac{(D_t\bar{\zeta})_H(\alpha)-(D_t\bar{\zeta})_H(\beta))(D_t^2\bar{\zeta}(\alpha)-D_t^2\bar{\zeta}(\beta)}{(\zeta(\alpha)-\zeta(\beta))^2}\partial_\beta^{s+1}D_t\zeta d\beta
    \]
    and claim that
    \[
    I=2\left(\frac{it^2}{4\alpha^2}\right)^2(D_t\bar{\zeta})_HD_t^2\bar{\zeta}\partial_\alpha^sD_t\zeta+O(\epsilon^3t^{-\frac{5}{4}-\delta})_{L^2}.
    \]
    Actually, we write
    \[
    \partial_\beta^{s+1}D_t\zeta=\partial_\beta\left(\frac{it^2}{4\beta^2}\partial_\beta^{s-1}D_t\zeta+\frac{1}{\beta}L_0\partial_\beta^{s-1}D_t\zeta-\frac{t}{2\beta^2}\Omega_0\partial_\beta^{s-1}D_t\zeta\right)
    :=\partial_\beta\left(\frac{it^2}{4\beta^2}\partial_\beta^{s-1}D_t\zeta+R\right)
    \]
    Then such a case reduces to
    \[
    \tilde{I}=\frac{1}{\pi i}\int_{|\alpha-\beta|\le t^{9/16}}\frac{(D_t\bar{\zeta})_H(\alpha)-(D_t\bar{\zeta})_H(\beta))(D_t^2\bar{\zeta}(\alpha)-D_t^2\bar{\zeta}(\beta)}{(\zeta(\alpha)-\zeta(\beta))^2}\partial_\beta\left(\frac{it^2}{4\beta^2}\partial_\beta^{s-1}D_t\zeta\right)d\beta,
    \]
    since there holds
    \begin{align*}
        &\norm{\frac{1}{\pi i}\int_{|\alpha-\beta|\le t^{9/16}}\frac{(D_t\bar{\zeta})_H(\alpha)-(D_t\bar{\zeta})_H(\beta))(D_t^2\bar{\zeta}(\alpha)-D_t^2\bar{\zeta}(\beta)}{(\zeta(\alpha)-\zeta(\beta))^2}
        \partial_\beta R d\beta}_{L^2(S(t))}\\
        \le&\ C\norm{\partial_\alpha (D_t\bar{\zeta})_H}_{L^\infty}\norm{\partial_\alpha D_t^2\bar{\zeta}}_{L^\infty}\norm{R}_{L^2(t^{4/5}\lesssim|\alpha|\lesssim t^{6/5})}\\
        \le&\ C\epsilon^3t^{-\frac{5}{4}-\delta}.
    \end{align*}
    It's clear that $\tilde{I}$ can be handled by the method shown before. This completes the proof.
\end{rem}

\subsection{The structure of the water wave equations}
As a consequence of Lemma \ref{decouple} and Lemma \ref{decouple-2}, we can rewrite the water wave equations in a form $(D_t^2-iA\partial_{\alpha})\theta=G_1+G_2$ such that essentially
$$G_1+G_2=i\frac{t}{\alpha}|D_t^2\zeta|^2 D_t\zeta+R$$ where $\norm{(I-\mathcal{H})R}_{H^s}\le C\epsilon^3t^{-1-\delta}$.
More precisely, we have 
\begin{proposition}[Key proposition]\label{key prop}
    We have 
    \begin{equation}
        \norm{(I-\mathcal{H})G_1}_{L^2}\lesssim \epsilon^3 t^{-1-\delta},
    \end{equation}
    and
    \begin{equation}
        G_2=i\frac{t}{\alpha}|D_t^2\zeta|^2D_t\zeta+O(\epsilon^3t^{-1-\delta})_{L^2}.
    \end{equation}
\end{proposition}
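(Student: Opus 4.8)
The plan is to apply the two localization lemmas (Lemma~\ref{decouple} and Lemma~\ref{decouple-2}) directly to the explicit formulas for $G_1$ and $G_2$ coming from the cubic structure \eqref{cubic}, after first using Proposition~\ref{good-replace} to replace $D_t\zeta$ by $Q_\alpha$ and $D_t^2\zeta$ by $-\tfrac{1}{2i}\theta_\alpha$ wherever convenient, and after reducing $\mathcal{H}$ to $\mathbb{H}$ (and the kernel $(\zeta(\alpha)-\zeta(\beta))^{-2}$ to $(\alpha-\beta)^{-2}$) using Lemma~\ref{nearly alpha} and Lemma~\ref{curve-flat hilbert}. First I would split physical space into the ``bulk'' region $S(t)=\{t^{4/5}\le|\alpha|\le t^{6/5}\}$ and its complement $E_t$. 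On $E_t$ the decay estimates of Lemma~\ref{bouns decay for small or big alpha} (together with $L^2$ boundedness of the singular integrals and the bootstrap norm bounds) already give the $O(\epsilon^3 t^{-1-\delta})_{L^2}$ bound with room to spare, since each of the three factors in the trilinear integrals is a derivative of $D_t\zeta$, $D_t^2\zeta$ or $\zeta_\alpha-1$, all of which gain an extra $t^{-1/10+\delta_0}$ there. So the real content is on $S(t)$.

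On $S(t)$ I would treat $G_1$ first. Writing $G_1=-2[D_t\zeta,\mathcal H\frac1{\zeta_\alpha}+\bar{\mathcal H}\frac1{\bar\zeta_\alpha}]\partial_\alpha D_t\zeta$ and expanding the commutator, $(I-\mathcal H)G_1$ becomes (up to the harmless curve-to-flat and $\mathcal H$-to-$\mathbb H$ reductions) a sum of terms of the form $I(f,g,h)$ with $f,h$ anti-holomorphic and $g$ holomorphic — this is exactly \textbf{Case II}. By Lemma~\ref{decouple-2} the leading term of each such integral vanishes, leaving $O(\epsilon^3 t^{-5/4-\delta})_{L^2}$ on $S(t)$; combined with the $E_t$ estimate this yields $\norm{(I-\mathcal H)G_1}_{L^2}\lesssim \epsilon^3 t^{-1-\delta}$. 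For $G_2=\frac1{\pi i}\int\big(\frac{D_t\zeta(\alpha)-D_t\zeta(\beta)}{\zeta(\alpha)-\zeta(\beta)}\big)^2\partial_\beta(\zeta-\bar\zeta)\,d\beta$, after the reductions this is $I(D_t\zeta,D_t\zeta,\partial_\beta(\zeta-\bar\zeta))$, with $f=g=D_t\zeta$ anti-holomorphic and $h=\zeta_\alpha-\bar\zeta_\alpha$; the holomorphic part $\bar\zeta_\alpha-1$ of $h$ falls into \textbf{Case I} while the part $\zeta_\alpha-1=-\big(\bar\zeta_\alpha-1\big)+\theta_\alpha/?$… more precisely I split $\partial_\beta(\zeta-\bar\zeta)=\theta_\alpha+(\mathbb H+\bar{\mathbb H})(\ldots)$-type error plus the anti-holomorphic piece. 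The Case~I contribution, by Lemma~\ref{decouple}, equals $2\cdot\frac{it^2}{4\alpha^2}(D_t\zeta)^2(\bar\zeta_\alpha-1)+O(\epsilon^3 t^{-5/4-1/16})_{L^2}$ on $S(t)$. Now I would use the equation $\bar\zeta_\alpha-1 = iD_t^2\bar\zeta-(A-1)\bar\zeta_\alpha$ (so $\bar\zeta_\alpha-1=iD_t^2\bar\zeta+O(\epsilon^2t^{-1+\delta_0}\ln t)$ by Proposition~\ref{norms of A-1}) and Corollary~\ref{structure of b}'s expansion machinery — concretely, $\frac{it^2}{4\alpha^2}(D_t\zeta)^2\cdot iD_t^2\bar\zeta = \frac{it^2}{4\alpha^2}|D_t\zeta|^2\cdot i\,\frac{D_t\zeta}{D_t^2\bar\zeta}$… rather: I rewrite $\frac{it^2}{4\alpha^2}D_t\zeta = \partial_\alpha D_t\zeta + (\text{fast error})$ is not quite it; the cleaner route is to note $\frac{it^2}{2\alpha^2}\cdot\tfrac12 = \frac{it^2}{4\alpha^2}$ and use $\frac{t}{\alpha}=\frac{2\alpha}{t}\cdot\frac{t^2}{2\alpha^2}$ together with $b=-\frac{2\alpha}{t}|D_t^2\zeta|^2+O(\epsilon^2t^{-1-\delta})$ from Corollary~\ref{structure of b} and the relation between $(D_t\zeta)_H$ and $\partial_t(D_t\zeta)_H,\Omega_0(D_t\zeta)_H$ from Lemma~\ref{transition-lemma} — exactly as in the last display of the proof of Lemma~\ref{decouple} — to convert $2\cdot\frac{it^2}{4\alpha^2}(D_t\zeta)^2(\bar\zeta_\alpha-1)$ into $i\frac{t}{\alpha}|D_t^2\zeta|^2 D_t\zeta$ modulo $O(\epsilon^3 t^{-1-\delta})_{L^2}$.

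The anti-holomorphic part of $h$ in $G_2$ (the piece $-(\zeta_\alpha-1)$ that is, after subtracting the holomorphic part, essentially $-\bar\theta_\alpha$-type anti-holomorphic) produces $I(f,g,h)$ with $f=g=D_t\zeta$ anti-holomorphic and $h$ anti-holomorphic — this is a third configuration where the phase factors multiply to $e^{-3it^2/4\alpha}$ and the analogous stationary-phase integral $\frac1{\pi i}\int\frac{(e^{-i\xi}-e^{-i\eta})^2}{(\xi-\eta)^2}e^{-i\eta}d\eta$ must be computed; integrating by parts as in Lemma~\ref{decouple}/\ref{decouple-2} it reduces to combinations of $\mathbb{H}e^{\pm i\xi}$, $\mathbb{H}e^{-3i\xi}$, and one checks it vanishes (all pieces are $\mathbb H$-applied to negative-frequency exponentials of the \emph{wrong} relative sign, or cancel), so this term is also $O(\epsilon^3 t^{-5/4-\delta})_{L^2}$ on $S(t)$. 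Throughout, the highest-derivative cases ($h=\partial_\alpha^{s+1}D_t\zeta$, etc.) are handled by the device in Remark~\ref{localization lemma for highest derivative}: lower one derivative via $\partial_\alpha f=\frac{it^2}{4\alpha^2}f+\frac1\alpha L_0 f-\frac{t}{2\alpha^2}\Omega_0 f$, so that only $L_0,\Omega_0$ of order $\le s-1$ appear.

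\textbf{Main obstacle.} I expect the crux to be the algebraic bookkeeping that turns the raw output $2\cdot\frac{it^2}{4\alpha^2}(D_t\zeta)^2(\bar\zeta_\alpha-1)$ of Lemma~\ref{decouple} into the clean local term $i\frac{t}{\alpha}|D_t^2\zeta|^2 D_t\zeta$: this requires simultaneously (i) substituting $\bar\zeta_\alpha-1=iD_t^2\bar\zeta$ up to fast-decaying error (needs Proposition~\ref{norms of A-1} and care that $A-1$ only decays like $t^{-1+\delta_0}\ln t$ in $L^2$, so one must localize to high frequencies / use the $W^{s-2,\infty}$ decay of $A-1$), and (ii) replacing one factor of $\frac{it^2}{4\alpha^2}D_t\zeta$ by something proportional to $\frac{t}{\alpha}D_t^2\zeta$ using the $b$-expansion and the $\partial_t$-vs-$\Omega_0$ identity — the same two-step manipulation that already appears, somewhat tersely, at the end of the proof of Lemma~\ref{decouple}. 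Keeping all the error terms genuinely $O(\epsilon^3 t^{-1-\delta})_{L^2}$ (not merely $O(\epsilon^2 t^{-1})$, which is not summable) while juggling the low-frequency truncations $P_{\ge t^{-2}}$ is the delicate point; everything else is a routine application of the localization lemmas plus the decay estimates on $E_t$.
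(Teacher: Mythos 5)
Your proposal shares the paper's overall architecture (split into $S(t)$ and $E_t$, apply the localization lemmas, transit derivatives), which is indeed how the paper argues — it defers the proof to the $s$-th–derivative version inside Proposition~\ref{formula of dE_s/dt}. But there is a genuine gap in your treatment of $G_1$. You claim that expanding the commutator reduces $(I-\mathcal{H})G_1$ to a sum of $I(f,g,h)$ all in Case II, whence the leading term vanishes by Lemma~\ref{decouple-2}. That is not so: the kernel of $\mathcal{H}\frac{1}{\zeta_\alpha}+\bar{\mathcal{H}}\frac{1}{\bar\zeta_\alpha}$ is proportional to $\frac{\eta(\alpha)-\eta(\beta)}{|\zeta(\alpha)-\zeta(\beta)|^2}$ with $\eta=\operatorname{Im}(\zeta-\alpha)=\frac{1}{2i}\big[(\zeta-\alpha)-(\bar\zeta-\alpha)\big]$. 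The $\zeta-\alpha$ piece gives Case~II, but the $\bar\zeta-\alpha$ piece (the paper's $G_{12}$) gives a trilinear integral with $\bar\zeta-\alpha$ and $D_t\bar\zeta$ holomorphic and $D_t\zeta$ anti-holomorphic, i.e.\ the \emph{conjugate} of Case~I. Its leading term on $S(t)$ is a genuine $O(\epsilon^3 t^{-1})_{L^2}$ quantity — not summable, and not killed by either localization lemma. Closing the bound $\|(I-\mathcal{H})G_1\|_{L^2}\lesssim\epsilon^3 t^{-1-\delta}$ requires a further step absent from your proposal: identify that leading term explicitly, use the transition-of-derivatives identities to rewrite it so one factor is the holomorphic $\frac{\partial_\alpha D_t\bar\zeta}{\zeta_\alpha}$ (with the remaining factors smooth), and only then observe that $(I-\mathcal{H})\big(\frac{\partial_\alpha D_t\bar\zeta}{\zeta_\alpha}\cdot\Phi\big)=[\Phi,\mathcal{H}]\frac{\partial_\alpha D_t\bar\zeta}{\zeta_\alpha}$ is small by a singular-integral commutator estimate. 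This is the step the $(I-\mathcal{H})$ in the statement is actually carrying, and without it your argument would (falsely) bound $G_1$ itself by $\epsilon^3 t^{-5/4-\delta}$ on $S(t)$.

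On the all–anti-holomorphic piece of $G_2$ (the $\zeta_\beta-1$ part) you propose a ``third configuration'' localization lemma in which the profile integral $\frac{1}{\pi i}\int\frac{(e^{-i\xi}-e^{-i\eta})^2}{(\xi-\eta)^2}e^{-i\eta}\,d\eta$ vanishes; your algebra here is correct, but this is a lemma you would need to formulate and prove, including its error terms. The paper instead takes complex conjugates and notes that $D_t\bar\zeta$, $\frac{\bar\zeta_\alpha-1}{\zeta_\alpha}$, and $(I+\mathcal{H})D_t\bar\zeta$ are all holomorphic, so the Cauchy integral theorem kills the dominant piece outright and shows this term is quartic, $O(\epsilon^4 t^{-3/2}\ln t)_{L^2}$ — stronger and cheaper than a new localization lemma. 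Finally, your derivation of the clean local term $i\frac{t}{\alpha}|D_t^2\zeta|^2D_t\zeta$ from $\frac{it^2}{2\alpha^2}(D_t\zeta)^2(\bar\zeta_\alpha-1)$ is visibly hand-waved, though the two substitutions you name — $\bar\zeta_\alpha-1=iD_t^2\bar\zeta-(A-1)\bar\zeta_\alpha$ and the transition identity $\partial_t f=-\frac{it}{2\alpha}f+\frac{1}{\alpha}\Omega_0 f$ applied once to a single factor $D_t\zeta$ — are the right ones and do close the algebra.
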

We will not prove Proposition \ref{key prop} directly; instead, we will prove the version for its $s$-th derivative in Chapter 5 (see the relevant part of Proposition \ref{formula of dE_s/dt}). The latter is more challenging, and any difficulties encountered in the former are fully captured in the latter. In other words, the issues that arise when dealing with the former will also appear in the latter. In fact, the methods used for the latter can be directly applied to the former.

\section{Energy estimates}
Throughout this section, unless specified otherwise, we always assume the bootstrap assumptions and that $t\in [0,T]$ with the $T$ given in the bootstrap assumptions. We begin with the original equation. Recall that
	\begin{equation}\label{0 order equation}
		(D_t^2-iA\partial_\alpha)\theta=G.
	\end{equation}
	where
	\begin{equation}\label{formula for G}
		G=-2\left[D_t\zeta,\mathcal{H}\frac{1}{\zeta_\alpha}+\bar{\mathcal{H}}\frac{1}{\bar{\zeta}_\alpha}\right]\partial_\alpha D_t\bar{\zeta}+\frac{1}{\pi i}\int\left(\frac{D_t\zeta(\alpha,t)-D_t\zeta(\beta,t)}{\zeta(\alpha,t)-\zeta(\beta,t)}\right)^2(\zeta-\bar{\zeta})_\beta d\beta:=G_1+G_2.
	\end{equation}
	Define the energy functional for $\theta$ as:
	\begin{equation}
		E_0^\theta=\int\frac{|D_t\theta|^2}{A}d\alpha+i\int\theta\partial_\alpha\bar{\theta}d\alpha
	\end{equation}
We then obtain the following equality for the $t$ derivative of $E_0^\theta$:
		\begin{equation}\label{basic lemma}
			\frac{dE_0^\theta}{dt}=2Re\left\{\int\frac{D_t\bar{\theta}}{A}Gd\alpha\right\}-\int\frac{|D_t\theta|^2}{A}\left(\frac{a_t}{a}\circ\kappa^{-1}\right)d\alpha.
		\end{equation}
The proof of \eqref{basic lemma} is straightforward and can be found in detail in Lemma 4.1 of \cite{Wu2009}.

Next, we consider the equation for $\sigma=D_t\theta$. We have
 \[
 (D_t^2-iA\partial_\alpha)D_t\theta=[-iA\partial_\alpha,D_t]\theta+D_tG\overset{\text{def}}{=}\tilde{G},
 \]
 or equivalently
 \[
 (\partial_t^2-ia\partial_\alpha)(\sigma\circ\kappa)
 =[-ia\partial_\alpha,\partial_t](\theta\circ\kappa)+\partial_t(G\circ\kappa)=i\frac{a_t}{a}a\partial_{\alpha}\theta\circ\kappa+\partial_t(G\circ\kappa).
 \]
 Hence,
 \[\tilde{G}=i\frac{a_t}{a}\circ\kappa^{-1}A\theta_{\alpha}+D_tG.\]
Define the energy functional for $\sigma$ as:
 \[
 E_0^\sigma(t)=\int\frac{|D_t\sigma|^2}{A}d\alpha+i\int \sigma\partial_\alpha\bar{\sigma} d\alpha.
 \]
Using the basic energy estimate in \eqref{basic lemma}, we obtain the following expression for $\frac{d}{dt}E_0^{\sigma}$:

 \begin{equation}\label{formula of dE_{1/2}/dt}
     \begin{aligned}
         \frac{d}{dt}E_0^\sigma(t)
         &=2Re\left\{\int\frac{D_t\bar{\sigma}}{A}D_tG d\alpha\right\}-\int\frac{|D_t\sigma|^2}{A}\left(\frac{a_t}{a}\circ\kappa^{-1}\right)d\alpha\\
         &\ \ \ +Re\left\{2i\int\frac{1}{A}D_t\bar{\sigma}\left(\frac{a_t}{a}\circ\kappa^{-1}\right)A\partial_\alpha\theta d\alpha\right\}.
     \end{aligned}
 \end{equation}
Next, we consider the equation for higher-order derivatives. Recall the corresponding equations:
\begin{equation}
    \begin{aligned}
        &(D_t^2-iA\partial_\alpha)\theta_k=G_k^\theta,\\
        &(D_t^2-iA\partial_\alpha)\sigma_k=G_k^\sigma,
    \end{aligned}
\end{equation}
where $1\le k\le s$, and
\begin{equation}
    \theta_k\overset{\text{def}}{=}\partial_\alpha^k\theta,\ \sigma_k\overset{\text{def}}{=}\partial_\alpha^k\sigma.
\end{equation}
The corresponding expressions for $G_k^{\theta}$ and $G_k^{\sigma}$ are:
\begin{equation}
    G_k^\theta=\partial_\alpha^kG+[D_t^2-iA\partial_\alpha,\partial_\alpha^k]\theta, \quad \quad 
    G_k^\sigma=\partial_\alpha^k\tilde{G}+[D_t^2-iA\partial_\alpha,\partial_\alpha^k]\sigma.
\end{equation}
Define the $k$-th order energies $E_k^{\theta}$ and $E_k^{\sigma}$ as:
\begin{equation}
    E_k^\theta(t)=\int\frac{|D_t\theta_k|^2}{A}d\alpha+i\int\theta_k\partial_\alpha\bar{\theta}_kd\alpha,
\end{equation}
\begin{equation}
    E_k^\sigma(t)=\int\frac{|D_t\sigma_k|^2}{A}d\alpha+i\int \sigma_k\partial_\alpha\bar{\sigma}_k d\alpha.
\end{equation}
From the basic energy estimate \eqref{basic lemma}, we have:
\begin{equation}\label{formula for dE_k/dt}
    \frac{d}{dt}E_k^\theta(t)=Re\left\{\int\frac{2D_t\bar{\theta}_k}{A}G_k^\theta d\alpha\right\}-\int\frac{|D_t\theta_k|^2}{A}\left(\frac{a_t}{a}\circ\kappa^{-1}\right)d\alpha,
\end{equation}
and similarly for $\sigma_k$:
\begin{equation}
    \frac{d}{dt}E_k^\sigma(t)=Re\left\{\int\frac{2D_t\bar{\sigma}_k}{A}G_k^\sigma d\alpha\right\}-\int\frac{|D_t\sigma_k|^2}{A}\left(\frac{a_t}{a}\circ\kappa^{-1}\right)d\alpha.
\end{equation}
The total energy $\mathcal{E}_s$ is defined as:
\begin{equation}
    \mathcal{E}_s(t):= \sum_{k=0}^s\left(E_k^{\theta}(t)+E_k^{\sigma}(t)\right).
\end{equation}
We aim to prove the following energy estimate:

\begin{thm}\label{thm:sec 5main}
    There exists $\delta>0$ such that 
    \begin{equation}\label{energy estimate}
\frac{d\mathcal{E}_s(t)}{dt}\leq C\epsilon^2 (1+t)^{-1-\delta}\mathcal{E}_s(t)+C\epsilon^4 (1+t)^{-1-\delta}.
    \end{equation}
\end{thm}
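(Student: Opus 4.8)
The plan is to use the hierarchy of energy identities \eqref{formula for dE_k/dt} and its $\sigma$-analogue, summing over $0\le k\le s$, and to show that each term on the right side is controlled either by $\epsilon^2(1+t)^{-1-\delta}\mathcal{E}_s$ or by $\epsilon^4(1+t)^{-1-\delta}$. The contribution of the $\frac{a_t}{a}\circ\kappa^{-1}$ terms is immediate from Proposition~\ref{norm of at/a}, which gives $\norm{\frac{a_t}{a}\circ\kappa^{-1}}_{L^\infty}\le C\epsilon^2 t^{-1-\delta}$, so that $\int\frac{|D_t\theta_k|^2}{A}\big(\frac{a_t}{a}\circ\kappa^{-1}\big)\,d\alpha\lesssim \epsilon^2 t^{-1-\delta}E_k^\theta$ and similarly for $\sigma_k$, and the cross term in \eqref{formula of dE_{1/2}/dt} is handled the same way after using $\norm{\theta_\alpha}_{H^s}\lesssim\epsilon$. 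Thus the whole problem reduces to bounding the ``forcing'' terms $Re\int\frac{2D_t\bar\theta_k}{A}G_k^\theta\,d\alpha$ and $Re\int\frac{2D_t\bar\sigma_k}{A}G_k^\sigma\,d\alpha$.

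The main step is therefore the analysis of $G_k^\theta=\partial_\alpha^k G+[D_t^2-iA\partial_\alpha,\partial_\alpha^k]\theta$ and $G_k^\sigma$. I would split $\partial_\alpha^k G=\partial_\alpha^k G_1+\partial_\alpha^k G_2$ and invoke the $s$-th derivative version of Proposition~\ref{key prop} (to be established as ``the relevant part of Proposition~\ref{formula of dE_s/dt}''), which says that $(I-\mathcal{H})\partial_\alpha^s G_1=O(\epsilon^3 t^{-1-\delta})_{L^2}$ and $\partial_\alpha^s G_2 = i\frac{t}{\alpha}|D_t^2\zeta|^2\partial_\alpha^s D_t\zeta + O(\epsilon^3 t^{-1-\delta})_{L^2}$ (modulo lower-order commutator errors that are likewise $O(\epsilon^3 t^{-1-\delta})_{L^2}$). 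For the $(I-\mathcal{H})$-exact part: since $D_t\bar\theta_k$ is (almost) anti-holomorphic and $(I-\mathcal{H})$ projects onto anti-holomorphic functions, we have $\int \overline{(I+\bar{\mathcal{H}})g}\,\overline{D_t\theta_k}\,d\alpha$-type cancellations — concretely $\int \frac{D_t\bar\theta_k}{A}\mathcal{H}w\,d\alpha$ can be rewritten using $\mathcal{H}^*$ and the near-holomorphicity of $D_t\bar\theta_k/A$ so that only an $O(\epsilon)$-small error of the claimed size survives; this is where $\norm{(\mathcal{H}-\mathbb{H})f}$, Lemma~\ref{curve-flat hilbert}, and the fact that $(I-\mathcal{H})D_t\bar\zeta=0$ enter. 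For the leading $G_2$ term, the crucial algebraic cancellation is that $i\frac{t}{\alpha}|D_t^2\zeta|^2$ is real, so
\[
Re\int \frac{2D_t\bar\theta_k}{A}\Big(i\frac{t}{\alpha}|D_t^2\zeta|^2 D_t\theta_k\Big)\,d\alpha = Re\Big(2i\int \frac{t}{\alpha A}|D_t^2\zeta|^2|D_t\theta_k|^2\,d\alpha\Big)=0,
\]
up to the error of replacing $D_t\theta_k$ by $\partial_\alpha^s D_t\zeta$ etc., which by Lemma~\ref{remainder of Dt theta} and Proposition~\ref{good-replace} costs $O(\epsilon^2 t^{-1/2})$ and, paired with $\norm{D_t\bar\theta_k}_{L^2}\lesssim\epsilon$ and $\norm{\frac{t}{\alpha}|D_t^2\zeta|^2}_{L^\infty}\lesssim \epsilon^2 t^{-1}$ on the relevant region (using Corollary~\ref{structure of b} / the decay $|D_t^2\zeta|\lesssim\epsilon t^{-1/2}$), yields $O(\epsilon^4 t^{-1-\delta})$. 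The remaining $O(\epsilon^3 t^{-1-\delta})_{L^2}$ pieces of $\partial_\alpha^s G$ pair with $\norm{D_t\bar\theta_k/A}_{L^2}\lesssim\epsilon$ to give $O(\epsilon^4 t^{-1-\delta})$; lower-order $k<s$ forcing terms are strictly easier since then more derivatives fall on decaying factors.

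The commutator $[D_t^2-iA\partial_\alpha,\partial_\alpha^k]\theta = [D_t^2,\partial_\alpha^k]\theta - i[A,\partial_\alpha^k]\partial_\alpha\theta$ is the quasilinear source of a potential derivative loss: the worst term is of the form $b_\alpha\partial_\alpha^s D_t\theta$ or $A_\alpha \partial_\alpha^s\theta$, i.e. top-order in $\theta$ or $\sigma$ multiplied by a quadratic-and-decaying coefficient. For $\sigma_k$ with $k=s$ the analogous term $b_\alpha\partial_\alpha^s D_t^2\theta$ is genuinely dangerous; this is precisely the obstruction the Transition-of-Derivatives method (Lemma~\ref{transition-lemma}, Lemma~\ref{lemma:transit derivatives}, Lemma~\ref{lemma:transit2}) is designed to resolve — one trades a derivative on the top-order factor for a derivative on the coefficient, at the cost of the harmless error terms $\frac1\alpha L_0(\cdot)$, $\frac{t}{2\alpha^2}\Omega_0(\cdot)$, controlled by \eqref{bootstrap-2}, \eqref{bootstrap-2'} and \eqref{Omega_0D_t zeta high}. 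Combining the region split $|\alpha|\lessgtr t^{3/4}$ as in the remark after Lemma~\ref{transition-derivatives}, one gets $\norm{b_\alpha\partial_\alpha^s D_t^2\theta}_{L^2}\lesssim \epsilon^2 t^{-1+\delta_0}\ln t$ which, against $\norm{D_t\bar\sigma_s}_{L^2}\lesssim\epsilon$, contributes $O(\epsilon^3 t^{-1+\delta_0}\ln t)\cdot\norm{D_t\bar\sigma_s}_{L^2}\le C\epsilon^2 t^{-1-\delta}\mathcal{E}_s^{1/2}\cdot\mathcal{E}_s^{1/2}$, absorbable into the right side of \eqref{energy estimate} after adjusting $\delta<\delta_0$ suitably (or, more honestly, keeping this term as $C\epsilon^2 t^{-1-\delta}\mathcal{E}_s$). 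Using Lemma~\ref{norms of b}, Proposition~\ref{norms of A-1}, the decay estimates \eqref{improve-3}/bootstrap \eqref{bootstrap-3}, and the product/commutator estimates of \S4, all remaining commutator terms are $O(\epsilon^2 t^{-1-\delta})\mathcal{E}_s$ or better. Summing over $0\le k\le s$ and over the $\theta$ and $\sigma$ towers then gives \eqref{energy estimate}. I expect the genuine difficulty to be the top-order commutator term for $E_s^\sigma$ together with the careful bookkeeping of the $(I-\mathcal{H})$-projection cancellations for $G_1$; the ``good structure'' cancellation of the $G_2$ leading term is algebraically trivial once the reformulation is in place, and everything else is routine given the estimates of \S4.
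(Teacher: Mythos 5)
Your overall skeleton matches the paper: reduce via the basic energy identity, push the $\frac{a_t}{a}\circ\kappa^{-1}$ terms through Proposition~\ref{norm of at/a}, decompose the forcing into $G_1+G_2$ plus the $[D_t^2-iA\partial_\alpha,\partial_\alpha^k]$ commutator, use near-holomorphicity and the $(I-\mathcal{H})$ projection for $G_1$, and exploit the leading-order cancellation for $G_2$. A small wording slip: $i\frac{t}{\alpha}|D_t^2\zeta|^2$ is pure imaginary, not real --- that is precisely why $Re\,\big(2i\int\frac{t}{\alpha A}|D_t^2\zeta|^2|D_t\theta_k|^2\big)=0$, since the integral is real and the prefactor $i$ rotates it onto the imaginary axis.

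There is, however, a genuine gap in your treatment of the commutator, which you yourself identify as the ``genuine difficulty.'' You propose to bound $b_\alpha\partial_\alpha^sD_t^2\theta$ via the Transition-of-Derivatives region split $|\alpha|\lessgtr t^{3/4}$, landing on $\norm{b_\alpha\partial_\alpha^sD_t^2\theta}_{L^2}\lesssim\epsilon^2t^{-1+\delta_0}\ln t$, and then claim this is ``absorbable into the right side of \eqref{energy estimate} after adjusting $\delta<\delta_0$.'' This cannot work: $t^{-1+\delta_0}\ln t$ grows relative to $t^{-1-\delta}$, so pairing it with $\norm{D_t\bar\sigma_s}_{L^2}\lesssim\epsilon$ gives a contribution $O(\epsilon^3t^{-1+\delta_0}\ln t)$ that is \emph{not} dominated by $\epsilon^2t^{-1-\delta}\mathcal{E}_s\sim\epsilon^4t^{-1-\delta}$, and integrating it in $t$ would force $\mathcal{E}_s$ to grow --- the opposite of what Theorem~\ref{thm:sec 5main} asserts. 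The Transition-of-Derivatives estimate $t^{-1+\delta_0}\ln t$ is precisely what the paper uses in Chapter~6 to close the vector-field bootstrap \eqref{bootstrap-2}--\eqref{bootstrap-2'}, where the target norms are \emph{allowed} to grow like $t^{\delta_0}$; it is the wrong tool here.

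What actually closes the Chapter~5 commutator (as in Lemma~\ref{calculate for k order commutator} and its $\sigma$-analogue) is the sharp \emph{pointwise} decay of the quadratic coefficients: Lemma~\ref{norms of b} gives $\norm{b_\alpha}_{W^{s-3,\infty}}\le C\epsilon^2t^{-3/2+\delta}$ (and Proposition~\ref{norms of A-1} gives $\norm{A-1}_{W^{s-2,\infty}}\le C\epsilon^2t^{-5/4+\delta}$), so the top-order term is simply
\[
\norm{b_\alpha\,\partial_\alpha^sD_t\sigma}_{L^2}\le\norm{b_\alpha}_{L^\infty}\norm{\partial_\alpha^sD_t\sigma}_{L^2}\le C\epsilon^3t^{-3/2+\delta},
\]
which paired with $\norm{D_t\bar\sigma_s}_{L^2}\lesssim\epsilon$ yields $O(\epsilon^4t^{-1-\delta'})$ --- integrable, with room to spare. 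The same $L^\infty\times L^2$ pairing (putting the top derivative on $\theta$ or $\sigma$ in $L^2$ and the quadratic coefficient, or its $D_t$-derivative, in $L^\infty$) handles every term in $[D_t^2-iA\partial_\alpha,\partial_\alpha^k]$; no derivative transfer is needed. Your proposal misses that $b_\alpha$ and $A-1$ already decay strictly faster than $t^{-1}$, which is the key \S4 input making the commutator innocuous in the energy estimate, and substitutes a method from the vector-field chapter that gives an insufficient bound.
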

It is straightforward to verify that \eqref{energy estimate} holds for $0\leq t\leq 1$. Therefore, we assume $t\geq 1$ in the remainder of this section.

At first, we have the following estimates.
 \begin{lemma}\label{third term}
There holds
    \begin{align*}
        \sum_{k=0}^s\left(\left| \int\frac{|D_t\theta_k|^2}{A}\left(\frac{a_t}{a}\circ\kappa^{-1}\right)d\alpha\right|+\left| \int\frac{|D_t\sigma_k|^2}{A}\left(\frac{a_t}{a}\circ\kappa^{-1}\right)d\alpha\right|\right)
     \le C\epsilon^2t^{-1-\delta}\mathcal{E}_s.
    \end{align*}

 \end{lemma}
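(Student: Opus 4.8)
The plan is to reduce this estimate to two facts already in hand: the $L^\infty$ decay of $\frac{a_t}{a}\circ\kappa^{-1}$ and the uniform two-sided bound on $A$. Since the $\sigma_k$-integrals are obtained from the $\theta_k$-integrals by replacing $\theta_k$ with $\sigma_k$ throughout, I will only treat the $\theta_k$-terms and note that the $\sigma_k$-terms follow by the identical argument.

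First, I would pull the relevant sup-norms out of each integral. By Proposition \ref{norm of at/a} one has $\|\frac{a_t}{a}\circ\kappa^{-1}\|_{L^\infty}\le C\epsilon^2 t^{-1-\delta}$, and by Remark \ref{decay for A-1 near t} one has $1-C\epsilon^2\le A\le 1+C\epsilon^2$, so $\|A^{-1}\|_{L^\infty}\le 2$ for $\epsilon$ small. Hence, for each fixed $k$,
\[
\left|\int\frac{|D_t\theta_k|^2}{A}\Big(\frac{a_t}{a}\circ\kappa^{-1}\Big)\,d\alpha\right|
\le \|A^{-1}\|_{L^\infty}\Big\|\frac{a_t}{a}\circ\kappa^{-1}\Big\|_{L^\infty}\|D_t\theta_k\|_{L^2}^2
\le C\epsilon^2 t^{-1-\delta}\|D_t\theta_k\|_{L^2}^2.
\]
It then remains to absorb $\sum_{k=0}^s\big(\|D_t\theta_k\|_{L^2}^2+\|D_t\sigma_k\|_{L^2}^2\big)$ into $\mathcal{E}_s$, i.e. to invoke the coercivity of the energy functionals. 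For this I would use $A^{-1}\ge 1-C\epsilon^2$ to get $\int A^{-1}|D_t\theta_k|^2\,d\alpha\ge (1-C\epsilon^2)\|D_t\theta_k\|_{L^2}^2$, together with the observation that $\theta$, and hence each $\theta_k=\partial_\alpha^k\theta$, is the boundary value of a (nearly) holomorphic function in $\Omega(t)^c$, since $(I+\mathcal{H})\theta=(I-\mathcal{H}^2)(\zeta-\alpha)=0$. For such functions the quadratic form $i\int\theta_k\partial_\alpha\bar\theta_k\,d\alpha$ equals $\||\partial_\alpha|^{1/2}\theta_k\|_{L^2}^2$ up to an $O(\epsilon)$-error arising from replacing $\mathcal{H}$ with $\mathbb{H}$ (controlled by Lemma \ref{curve-flat hilbert} and the bootstrap assumptions), hence is non-negative modulo a term absorbable into $C\epsilon\,\mathcal{E}_s$. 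This gives $\|D_t\theta_k\|_{L^2}^2\le C E_k^\theta$ and, likewise, $\|D_t\sigma_k\|_{L^2}^2\le C E_k^\sigma$; summing the finitely many indices $0\le k\le s$ yields $\sum_{k=0}^s\big(\|D_t\theta_k\|_{L^2}^2+\|D_t\sigma_k\|_{L^2}^2\big)\le C\mathcal{E}_s$.

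Combining the two displays and summing over $k$ then gives the asserted bound, with the same $\delta$ as in Proposition \ref{norm of at/a}. I do not anticipate a genuine obstacle: the estimate is an immediate consequence of the decay of $\frac{a_t}{a}\circ\kappa^{-1}$ and the positivity of the energy, and the only step deserving care is the coercivity inequality $\sum_k\|D_t\theta_k\|_{L^2}^2\le C\mathcal{E}_s$, which rests on the near-holomorphicity of $\theta$ and $\sigma$ and the lower bound $A\ge 1-C\epsilon^2$. This lemma is exactly what produces the first summand $C\epsilon^2(1+t)^{-1-\delta}\mathcal{E}_s$ on the right-hand side of Theorem \ref{thm:sec 5main}.
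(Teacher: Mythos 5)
Your proof is correct and follows the same route as the paper's: pull out $\|\frac{a_t}{a}\circ\kappa^{-1}\|_{L^\infty}\lesssim\epsilon^2 t^{-1-\delta}$ via Proposition \ref{norm of at/a}, use the two-sided bound $A\approx 1$, and absorb $\|D_t\theta_k\|_{L^2}^2$, $\|D_t\sigma_k\|_{L^2}^2$ into $E_k^\theta$, $E_k^\sigma$. The only thing you do beyond the paper's (very terse) proof is to spell out why the energy controls $\|D_t\theta_k\|_{L^2}^2$ -- i.e.\ the near-nonnegativity of $i\int\theta_k\partial_\alpha\bar\theta_k\,d\alpha$ coming from $(I+\mathcal{H})\theta=0$ -- which the paper takes for granted in this lemma (and essentially establishes in Lemma \ref{Hs and order s energy}); note that for the $\sigma_k$ terms one cannot quite say ``identical argument,'' since $(I+\mathcal{H})\sigma$ is only quadratically small rather than zero, but this is also handled in Lemma \ref{Hs and order s energy} and does not affect the final estimate.
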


 \begin{proof}
Using Proposition \ref{norm of at/a}, we first bound each term in the sum:
\begin{itemize}
    \item [(1)] For the first term, we have:
     \[
     \left|\int\frac{|D_t\theta_k|^2}{A}\left(\frac{a_t}{a}\circ\kappa^{-1}\right)d\alpha\right|
     \le C\norm{D_t\theta_k}_{L^2}^2\norm{\frac{a_t}{a}\circ\kappa^{-1}}_{L^\infty}\le C\epsilon^2t^{-1-\delta}E_{k}^\theta.
     \] 
     \item [(2)] For the second term, we have:
         \[
     \left|\int\frac{|D_t\sigma_k|^2}{A}\left(\frac{a_t}{a}\circ\kappa^{-1}\right)d\alpha\right|
     \le C\norm{D_t\sigma_k}_{L^2}^2\norm{\frac{a_t}{a}\circ\kappa^{-1}}_{L^\infty}\le C\epsilon^2t^{-1-\delta}E_{k}^\sigma.
     \] 
\end{itemize}
Combining these, we conclude the desired bound and complete the proof.
 \end{proof}
 To complete the proof of Theorem \ref{thm:sec 5main}, it remains to establish the inequality
 \begin{equation}\label{equ:energy-remaining}
     \sum_{k=0}^s\left( Re\left|\int \frac{2D_t\bar{\theta}_k}{A}G_k^{\theta}d\alpha\right|+Re\left|\int \frac{2D_t\bar{\sigma}_k}{A}G_k^{\sigma}d\alpha\right|\right)\leq C\epsilon^2t^{-1-\delta}\mathcal{E}_s+C\epsilon^4 t^{-1-\delta}.
 \end{equation}
The proof of \eqref{equ:energy-remaining} is divided into two parts:
 \begin{itemize}
     \item [(1)] First, we prove the $s$-order energy estimates for $\theta$:
     $$\frac{dE_s^{\theta}}{dt}\leq C\epsilon^2t^{-1-\delta}\mathcal{E}_s+C\epsilon^4 t^{-1-\delta},$$

\item [(2)] Next, we establish the $s$-order energy estimates for $\sigma$:
     $$\frac{dE_s^{\sigma}}{dt}\leq C\epsilon^2t^{-1-\delta}\mathcal{E}_s+C\epsilon^4 t^{-1-\delta}.$$
 \end{itemize}
 The primary challenge in both cases lies in addressing the low-frequency components and components with too many derivatives, particularly the insufficient decay of $\zeta-\alpha$ and $\partial_\alpha^sD_t^2\zeta$. The lower order energy estimate is omitted, as all the difficulties that arise in the low-order energy estimates also appear in the $s$-order energy estimate.

\subsection{Some useful lemmas}
First, let us briefly highlight the relationship between several norms frequently used throughout the proof and the energy functional $\mathcal{E}_s$.

\begin{lemma}\label{Hs and order s energy}
    There holds
    \[
    \norm{\zeta_\alpha-1}_{H^s}+\norm{D_t\zeta}_{H^{s+1/2}}+\norm{D_t^2\zeta}_{H^s}+\norm{\Lambda(\zeta-\alpha)}_{H^s}\le C\sqrt{\mathcal{E}_s}+C\epsilon^{\frac{3}{2}}t^{-\frac{1}{4}}\ln t.
    \]
\end{lemma}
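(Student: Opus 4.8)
## Proof proposal for Lemma \ref{Hs and order s energy}

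The plan is to reconstruct each of the four norms on the left-hand side from the building blocks $\theta_k, \sigma_k$ (and their first $D_t$-derivatives) that appear in the definition of $\mathcal{E}_s=\sum_{k=0}^s(E_k^\theta+E_k^\sigma)$, paying for each reconstruction with either an elliptic/Hilbert-transform estimate or one of the already-proven ``good replacement'' results. Recall first that $E_k^\theta = \int \frac{1}{A}|D_t\theta_k|^2 + i\int \theta_k\partial_\alpha\bar\theta_k$, and that by Proposition \ref{norms of A-1} we have $1-C\epsilon^2\le A\le 1+C\epsilon^2$, so $\frac{1}{A}$ is harmless; moreover $i\int \theta_k\partial_\alpha\bar\theta_k = \int |\xi|\,|\widehat{\theta_k}(\xi)|^2\,d\xi \gtrsim \norm{\theta_k}_{\dot H^{1/2}}^2$ because $\theta$ is (almost) holomorphic, i.e. $(I-\mathcal H)\theta$-type cancellations make the quadratic form $i\int f\partial_\alpha\bar f$ nonnegative and comparable to $\norm{f}_{\dot H^{1/2}}^2$ for holomorphic $f$. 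Hence from $\mathcal{E}_s$ one directly controls $\sum_{k\le s}\big(\norm{D_t\theta_k}_{L^2}^2 + \norm{\theta_k}_{\dot H^{1/2}}^2 + \norm{D_t\sigma_k}_{L^2}^2+\norm{\sigma_k}_{\dot H^{1/2}}^2\big)$, which gives $\norm{\theta}_{\dot H^{1/2}}+\norm{\theta_\alpha}_{H^{s-1/2}}\lesssim\sqrt{\mathcal E_s}$, i.e. $\norm{\Lambda(\zeta-\alpha)}_{H^s}$ up to replacing $\zeta-\alpha$ by $\theta=(I-\mathcal H)(\zeta-\alpha)$ — which costs only $\norm{(\mathcal H+\bar{\mathcal H})(\zeta-\alpha)}_{H^{s+1/2}}$, a quadratically small term handled by Lemma \ref{curve-flat hilbert} and the boundedness of $\mathcal H$. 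This last piece is where the $\epsilon^{3/2}t^{-1/4}\ln t$ error enters, since $\norm{\zeta-\alpha}$ itself only has the weak pointwise decay of Lemma \ref{decay of D_tzeta}.

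Next I would recover $\norm{D_t^2\zeta}_{H^s}$ and $\norm{D_t\zeta}_{H^{s+1/2}}$. For $D_t^2\zeta$: Proposition \ref{good-replace} gives $\norm{2iD_t^2\zeta+\theta_\alpha}_{H^s}\le C\epsilon^2 t^{-1/2}$, so $\norm{D_t^2\zeta}_{H^s}\le \tfrac12\norm{\theta_\alpha}_{H^s}+C\epsilon^2 t^{-1/2}$; but $\norm{\theta_\alpha}_{H^s}=\norm{\theta}_{\dot H^1}+\norm{\theta_s}_{\dot H^1}$ and the top piece $\norm{\theta_s}_{\dot H^1}=\norm{\sigma}$-type... — here one must be slightly careful, since $\mathcal E_s$ controls $\theta_s$ only in $\dot H^{1/2}$, not $\dot H^1$. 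The resolution is to use $E_s^\sigma$ with $\sigma=D_t\theta$: from the holomorphicity/cancellation and the equation $(D_t^2-iA\partial_\alpha)\theta=G$, one has $A\partial_\alpha\theta = D_t^2\theta - G$, so $\norm{\partial_\alpha\theta_s}_{L^2}\lesssim \norm{D_t^2\theta_s}_{L^2}+\norm{G_s}_{L^2}$, and $\norm{D_t^2\theta_s}_{L^2}=\norm{D_t\sigma_s}_{L^2}+(\text{commutator }\sim\norm{b_\alpha}\cdots)\lesssim \sqrt{\mathcal E_s}+ \text{(lower order)}$, while $\norm{G}_{H^s}$ is cubic and small. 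This converts the $\dot H^{1/2}$-control of $\theta_s$ into $\dot H^1$-control at the cost of lower-order energies and cubic remainders, all of which are $\lesssim \sqrt{\mathcal E_s}+C\epsilon^{3/2}t^{-1/4}\ln t$. Then $\norm{D_t\zeta}_{H^{s+1/2}}$ follows from Proposition \ref{good-replace}'s $\norm{D_t\zeta-Q_\alpha}_{H^s}\le C\epsilon^2t^{-1/2}$ combined with $\norm{Q_\alpha}_{H^{s+1/2}}\sim\norm{\sigma}_{H^{s-1/2}}$-type identities — more directly, $D_t\theta = 2D_t\zeta + O(\epsilon^2 t^{-1/2})_{H^s}$ by Lemma \ref{remainder of Dt theta}, so $\norm{D_t\zeta}_{H^{s+1/2}}\le\tfrac12\norm{\sigma}_{\dot H^{1/2}}+\tfrac12\norm{\sigma_s}_{\dot H^{1/2}}+C\epsilon^2t^{-1/2}\lesssim\sqrt{\mathcal E_s}+C\epsilon^2t^{-1/2}$.

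Finally $\norm{\zeta_\alpha-1}_{H^s}$: write $2(\zeta_\alpha-1) = \theta_\alpha - (\mathcal H+\bar{\mathcal H})(\zeta-\alpha)_\alpha$; the first term is bounded as above, and for the second term expand $(\mathcal H+\bar{\mathcal H})(\zeta_\alpha-1)$ as in the proof of Proposition \ref{good-replace} into $\frac{2}{\pi}\int \frac{\mathrm{Im}\{\zeta_\beta(\bar\zeta(\alpha)-\bar\zeta(\beta))\}}{|\zeta(\alpha)-\zeta(\beta)|^2}(\zeta_\beta-1)\,d\beta + \text{commutators}$, each of which is quadratically small in $H^s$ by Lemma \ref{singular} — again with the weakest term producing the $\epsilon^{3/2}t^{-1/4}\ln t$ loss when one of the factors must be taken in $L^\infty$ via Lemma \ref{decay of D_tzeta}. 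Collecting the four estimates and absorbing all the $O(\epsilon^2 t^{-1/2})$ and $O(\epsilon^3 t^{-1})$ remainders into the single error $C\epsilon^{3/2}t^{-1/4}\ln t$ (valid since $\epsilon$ is small, so $\epsilon^2\le \epsilon^{3/2}$ and $t^{-1/2}\le t^{-1/4}$) completes the proof. \textbf{The main obstacle} is the gap between $\dot H^{1/2}$-control of the top-order $\theta_s$ (which is all $\mathcal E_s$ directly gives) and the $\dot H^1$/$H^{s}$-type control the statement demands for $D_t^2\zeta$ and $\zeta_\alpha-1$; bridging it requires carefully using the $\sigma$-equation together with the cubic smallness of $G$ and the quadratic smallness of the commutators $[D_t^2-iA\partial_\alpha,\partial_\alpha^k]$, without re-introducing a loss of derivatives — essentially the same bookkeeping that underlies the energy estimate of Theorem \ref{thm:sec 5main}.
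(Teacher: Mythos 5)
Your proposal captures the essential strategy of the paper's proof—decompose $\theta_k$, $\sigma_k$ into (anti-)holomorphic parts to extract the $\dot H^{1/2}$ content from $i\int\theta_k\partial_\alpha\bar\theta_k$, then translate back to $D_t\zeta$, $D_t^2\zeta$, $\zeta_\alpha-1$ via algebraic relations—but you take a somewhat different route in the middle, and you leave a genuine top-order gap untreated. On the route: the paper goes directly from the energy to the physical quantities via $D_t\sigma_k = 2\partial_\alpha^k D_t^2\zeta + e$ (Lemma \ref{structure of Dt sigma}) and the pointwise identity $\bar\zeta_\alpha - 1 = iD_t^2\bar\zeta - (A-1)\bar\zeta_\alpha$ coming from $(D_t^2-iA\partial_\alpha)\zeta=-i$; you instead first recover $\norm{\theta_\alpha}_{H^s}$ and then bridge the $\dot H^{1/2}$-to-$\dot H^1$ gap by re-invoking $iA\partial_\alpha\theta_s = D_t^2\theta_s - G^\theta_s$. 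That works, but it forces you to track commutators $[D_t^2-iA\partial_\alpha,\partial_\alpha^s]\theta$ and $\norm{G^\theta_s}_{L^2}$ as extra errors, which the paper's direct identities avoid; the two routes are logically interchangeable here, and the paper's is simply shorter.

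There is, however, a real gap you do not address. First, the asserted identity $i\int\theta_k\partial_\alpha\bar\theta_k = \int|\xi||\widehat{\theta_k}|^2\,d\xi$ is not an identity: the quadratic form equals $\int\xi|\widehat{\theta_k}|^2\,d\xi$ (no absolute value), and only after splitting $\theta_k = \eta_\theta^k + r_\theta^k$ with $\eta_\theta^k=\tfrac{I-\mathcal H}{2}\theta_k$, $r_\theta^k=\tfrac{I+\mathcal H}{2}\theta_k = -\tfrac12[\partial_\alpha^k,\mathcal H]\theta$ does one recover $\norm{\Lambda\eta_\theta^k}_{L^2}^2$ plus a controllable remainder $R_\theta^k$; your ``$(I-\mathcal H)\theta$-type cancellations'' gesture at this but never produce the decomposition or the bound $|R_\theta^k|\lesssim\epsilon\mathcal E_s+\epsilon^3t^{-1/2}\ln t$. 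Second, and more seriously, the top-order remainder $R_\sigma^s$ involves $r_\sigma^s = -\tfrac12\partial_\alpha^s[D_t\zeta,\mathcal H]\frac{\partial_\alpha\theta}{\zeta_\alpha} - \tfrac12[\partial_\alpha^s,\mathcal H]D_t\theta$, and one must estimate $\norm{\Lambda r_\sigma^s}_{L^2}$, i.e.\ this commutator in $\dot H^{1/2}$; the worst term $\partial_\alpha^sD_t\zeta\cdot\mathcal H\tfrac{\partial_\alpha\theta}{\zeta_\alpha}$ puts $s$ derivatives on $D_t\zeta$, and taking another half derivative cannot be done by the $L^2$ commutator estimates alone—the paper needs the half-derivative commutator estimate Lemma \ref{estimate for half derivative} together with Lemma 2.14 of \cite{Yosihara} to move $\Lambda$ onto $\partial_\alpha^s D_t\zeta$ without losing more than $\dot H^{-1/2}$. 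The same issue appears in your bound for $\norm{D_t\zeta}_{H^{s+1/2}}$: Lemma \ref{remainder of Dt theta} gives the error $e$ in $D_t\theta = 2D_t\zeta + e$ only in $H^s$, so $\norm{\partial_\alpha^s D_t\zeta}_{\dot H^{1/2}}\le\tfrac12\norm{\sigma_s}_{\dot H^{1/2}} + \tfrac12\norm{e}_{\dot H^{s+1/2}}$ requires an additional argument (which the paper sketches as ``$\norm{\Lambda e}_{H^s}\le C\epsilon\sqrt{\mathcal E_s}+C\epsilon^2t^{-1/2}\ln t$'') again relying on the half-derivative estimate. Your proof would need this ingredient made explicit to close.
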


\begin{proof}
    Recall that
    \[
    \mathcal{E}_s=\sum_{j=0}^s(E_k^\theta+E_{k}^\sigma)
    =\sum_{j=0}^{s}\left(\int\frac{|D_t\theta_k|^2}{A}d\alpha+i\int\theta_k\partial_\alpha\bar{\theta}_kd\alpha+\int\frac{|D_t\sigma_k|^2}{A}d\alpha+i\int\sigma_k\partial_\alpha\bar{\sigma}_kd\alpha\right).
    \]
    At first, note that (see Lemma \ref{structure of Dt sigma} for an example)
    \[
    D_t\sigma_k=2\partial_\alpha^kD_t^2\zeta+error
    \]
    and thus
    \[
    \norm{D_t^2\zeta}_{H^s}\le C\sqrt{\mathcal{E}_s}+C\epsilon^2 t^{-\frac{1}{2}}.
    \]
    Also, since $(\bar{\zeta}_\alpha-1)=iD_t^2\bar{\zeta}-(A-1)\bar{\zeta}_\alpha$, we have
    \[
    \norm{\zeta_\alpha-1}_{H^s}\le C\sqrt{\mathcal{E}_s}+O(\epsilon^2 t^{-\frac{1}{2}}).
    \]
    Similarly, using $\theta=2(\zeta-\alpha)+e$ we obtain
    \[
    \norm{D_t\zeta}_{H^s}\le C\sqrt{\mathcal{E}_s}+O(\epsilon^2 t^{-\frac{1}{2}}).
    \]
    When it comes to $\norm{\Lambda(\zeta-\alpha)}_{H^s}$ and $\norm{\Lambda D_t\zeta}_{H^s}$, some extra preparation is needed. Define
    \[
    \eta_\theta^k=\frac{I-\mathcal{H}}{2}\theta_k,\ r_\theta^k=\frac{I+\mathcal{H}}{2}\theta_k,\ \eta_\sigma^k=\frac{I-\mathcal{H}}{2}\sigma_k,\ r_\sigma^k=\frac{I+\mathcal{H}}{2}\sigma_k,\ 
    \]
    and
    \[
    R_\theta^k=i\int\eta_\theta^k\partial_\alpha\bar{r}_\theta^kd\alpha+i\int r_\theta^k\partial_\alpha\bar{\eta}_\theta^kd\alpha+i\int r_\theta^k\partial_\alpha\bar{r}_\theta^kd\alpha,
    \]
    \[
    R_\sigma^k=i\int\eta_\sigma^k\partial_\alpha\bar{r}_\sigma^kd\alpha+i\int r_\sigma^k\partial_\alpha\bar{\eta}_\sigma^kd\alpha+i\int r_\sigma^k\partial_\alpha\bar{r}_\sigma^kd\alpha.
    \]
    Then it's clear that
    \[
    i\int\theta_k\partial_\alpha\bar{\theta}_kd\alpha=\int|\Lambda\eta_\theta^k|^2d\alpha+R_\theta^k,
    \]
    \[
    i\int\sigma_k\partial_\alpha\bar{\sigma}_kd\alpha=\int|\Lambda\eta_\sigma^k|^2d\alpha+R_\sigma^k
    \]
    and
    \[
    R_\theta^k=i\int\eta_\theta^k\partial_\alpha\bar{r}_\theta^kd\alpha-i\int\bar{\eta}_\theta^k\partial_\alpha r_\theta^kd\alpha+i\int r_\theta^k\partial_\alpha\bar{r}_\theta^kd\alpha
    =2Re\left\{i\int\eta_\theta^k\partial_\alpha \bar{r}_\theta^kd\alpha\right\}+i\int r_\theta^k\partial_\alpha\bar{r}_\theta^kd\alpha,
    \]
    \[
    R_\sigma^k=2Re\left\{i\int\eta_\sigma^k\partial_\alpha \bar{r}_\sigma^kd\alpha\right\}+i\int r_\sigma^k\partial_\alpha \bar{r}_\sigma^kd\alpha.
    \]
    We verify that $|R_\theta^k|\le C\epsilon\mathcal{E}_s+C\epsilon^3t^{-\frac{1}{2}}\ln t,\ \forall 0\le k\le s$. For $k=0$, there holds $r_\theta^0=0$; for $k\ge 1$, since $(I+\mathcal{H})\theta=0$, we derive that
    \begin{align*}
        \partial_\alpha r_\theta^k
        =\frac{1}{2}\partial_\alpha(I+\mathcal{H})\partial_\alpha^k\theta
        =-\frac{1}{2}\partial_\alpha[\partial_\alpha^k,\mathcal{H}]\theta
        &=-\frac{1}{2}\sum_{j=1}^k\partial_\alpha^{k-j+1}[\partial_\alpha,\mathcal{H}]\partial_\alpha^{j-1}\theta,\\
        &=-\frac{1}{2}\sum_{j=1}^k\partial_\alpha^{k-j+1}[\zeta_\alpha,\mathcal{H}]\frac{\partial_\alpha^j\theta}{\zeta_\alpha}.
    \end{align*}
    Note that
    \[
    \norm{[\zeta_\alpha,\mathcal{H}]\frac{\partial_\alpha^{k+1}\theta}{\zeta_\alpha}}_{L^2}\le C\norm{\partial_\alpha^2\zeta}_{L^\infty}\norm{\partial_\alpha^k\theta}_{L^2}
    \le C\epsilon^2t^{-\frac{1}{2}}
    \]
    and, thanks to Lemma \ref{L infty of hilbert},
    \begin{align*}
        \norm{[\partial_\alpha^{k+1}\zeta,\mathcal{H}]\frac{\partial_\alpha\theta}{\zeta_\alpha}}_{L^2}
        &\le\norm{\partial_\alpha^{k+1}\zeta}_{L^2}\norm{\mathcal{H}\frac{\partial_\alpha\theta}{\zeta_\alpha}}_{L^\infty}+\norm{\mathcal{H}\partial_\alpha^{k+1}\zeta\frac{\partial_\alpha\theta}{\zeta_\alpha}}_{L^2},\\
        &\le C\epsilon^2t^{-\frac{1}{2}}\ln t.
    \end{align*}
    Hence we may conclude that $\norm{r_\theta^k}_{H^1}\le C\epsilon^2t^{-\frac{1}{2}}\ln t$. Which implies
    \[
    |R_\theta^k|\le C\norm{\eta_\theta^k}_{L^2}\norm{\partial_\alpha r_\theta^k}_{L^2}+C\norm{r_\theta^k}_{H^1}^2
    \le C\epsilon^3t^{-\frac{1}{2}}\ln t.
    \]
    For $R_\sigma^k$, we mainly consider the case $k=s$, since others can be handled by the method above. Note that
    \[
    |R_\sigma^s|\le\norm{\Lambda\eta_\sigma^s}_{L^2}\norm{\Lambda r_\sigma^s}_{L^2}+\norm{\Lambda r_\sigma^s}_{L^2}^2,
    \]
    and
    \begin{align*}
        r_\sigma^s
        =\frac{1}{2}(I+\mathcal{H})\partial_\alpha^sD_t\theta
        &=\frac{1}{2}\partial_\alpha^s(I+\mathcal{H})D_t\theta-\frac{1}{2}[\partial_\alpha^s,\mathcal{H}]D_t\theta\\
        &=-\frac{1}{2}\partial_\alpha^s[D_t,\mathcal{H}]\theta-\frac{1}{2}[\partial_\alpha^s,\mathcal{H}]D_t\theta\\
        &=-\frac{1}{2}\partial_\alpha^s[D_t\zeta,\mathcal{H}]\frac{\partial_\alpha\theta}{\zeta_\alpha}-\frac{1}{2}[\partial_\alpha^s,\mathcal{H}]D_t\theta.
    \end{align*}
    Note that, by repeating the process above, one may derive $\norm{\Lambda[\partial_\alpha^s,\mathcal{H}]D_t\theta}_{L^2}\le\norm{[\partial_\alpha^s,\mathcal{H}]D_t\theta}_{H^1}\le C\epsilon^2t^{-1/2}\ln t$. For the first term, we only consider the extreme case, i.e. all derivatives apply to $D_t\zeta$ or $\partial_\alpha\theta$. Let
    \[
    I_1=[\partial_\alpha^sD_t\zeta,\mathcal{H}]\frac{\partial_\alpha\theta}{\zeta_\alpha},\ I_2=[D_t\zeta,\mathcal{H}]\frac{\partial_\alpha^{s+1}\theta}{\zeta_\alpha}.
    \]
    Note that
    \[
    I_1=\partial_\alpha^sD_t\zeta\mathcal{H}\frac{\partial_\alpha\theta}{\zeta_\alpha}+\mathcal{H}\partial_\alpha^sD_t\zeta\frac{\partial_\alpha\theta}{\zeta_\alpha}
    \]
    and we have, according to Lemma 2.14 in \cite{Yosihara} and Lemma \ref{estimate for half derivative},
    \begin{align*}
        \norm{\partial_\alpha^sD_t\zeta\mathcal{H}\frac{\partial_\alpha\theta}{\zeta_\alpha}}_{\dot{H}^{\frac{1}{2}}}
        &\le\norm{(\Lambda\partial_\alpha^sD_t\zeta)\mathcal{H}\frac{\partial_\alpha\theta}{\zeta_\alpha}}_{L^2}+\norm{\left[\Lambda,\mathcal{H}\frac{\partial_\alpha\theta}{\zeta_\alpha}\right]\partial_\alpha^sD_t\zeta}_{L^2}\\
        &\le C\epsilon\norm{\mathcal{H}\frac{\partial_\alpha\theta}{\zeta_\alpha}}_{L^\infty}+\norm{\mathcal{H}\frac{\partial_\alpha\theta}{\zeta_\alpha}}_{H^2}\norm{\partial_\alpha^sD_t\zeta}_{H^{-\frac{1}{2}}}\\
        &\le C\epsilon^2t^{-\frac{1}{2}}\ln t+C\epsilon\sqrt{\mathcal{E}_s}.
    \end{align*}
    Also,
    \[
    \norm{\mathcal{H}\partial_\alpha^sD_t\zeta\frac{\partial_\alpha\theta}{\zeta_\alpha}}_{\dot{H}^{\frac{1}{2}}}
    \le\norm{\partial_\alpha^sD_t\zeta\frac{\partial_\alpha\theta}{\zeta_\alpha}}_{H^{\frac{1}{2}}}
    \le C\epsilon^2t^{-\frac{1}{2}}+C\epsilon\sqrt{\mathcal{E}_s}.
    \]
    In addition, observe that
    \[
    \norm{I_2}_{\dot{H}^{\frac{1}{2}}}\le\norm{I_2}_{H^1}\le C\epsilon^2t^{-\frac{1}{2}}.
    \]
    Hence we conclude that
    \[
    \norm{\Lambda r_\sigma^s}_{L^2}\le C\epsilon\sqrt{\mathcal{E}_s}+C\epsilon^2t^{-\frac{1}{2}}\ln t,
    \]
    which leads to
    \[
    |R_\sigma^s|\le C\epsilon\mathcal{E}_s+C\epsilon^3t^{-\frac{1}{2}}\ln t.
    \]
    Finally, we derive
    \[
    \norm{\Lambda\eta_\theta^k}_{L^2}^2+\norm{\Lambda\eta_\sigma^k}_{L^2}^2\le\mathcal{E}_s-R_\theta^k-R_\sigma^k
    \le C\mathcal{E}_s+C\epsilon^3t^{-\frac{1}{2}}\ln t.
    \]
    For $\norm{\Lambda(\zeta-\alpha)}_{H^s}$, note that $\forall 0\le k\le s$,
    \[
    \norm{\Lambda\theta_k}_{L^2}=\norm{\Lambda r_\theta^k+\Lambda\eta_\theta^k}_{L^2}
    \le\norm{\Lambda\eta_\theta^k}_{L^2}+C\epsilon^2t^{-\frac{1}{2}}\ln t
    \le C\sqrt{\mathcal{E}_s}+C\epsilon^{\frac{3}{2}}t^{-\frac{1}{4}}\ln t.
    \]
    This implies
    \[
    \norm{\Lambda(\zeta-\alpha)}_{H^s}\le C\sqrt{\mathcal{E}_s}+\norm{\Lambda e}_{H^s}+C\epsilon^{\frac{3}{2}}t^{-\frac{1}{4}}\ln t
    \]
    and it's easy to verify $\norm{\Lambda e}_{H^s}\le C\epsilon\sqrt{\mathcal{E}_s}+C\epsilon^2t^{-\frac{1}{2}}\ln t$ by using the estimate we have shown above. Similar estimate holds for $\norm{\Lambda D_t\zeta}_{H^s}$.
\end{proof}

\begin{lemma}\label{small or big alpha}
    Fix small $\mu>0$. Define 
    \[
    C(f,g,h)=\frac{1}{\pi i}\int\frac{(f(\alpha)-f(\beta))(g(\alpha)-g(\beta))}{(\zeta(\alpha)-\zeta(\beta))^2}\partial_\beta hd\beta.
    \]
    then there exists $C>0$ such that
    \begin{equation}
        \begin{aligned}
             \norm{C(f,g,h)}_{L^2(|\alpha|\le t^{1-\mu})}
        &\le C(\norm{\partial_\alpha f}_{W^{1,\infty}(|\alpha|\lesssim t^{1-\mu})}+\epsilon t^{-\frac{1}{2}}\norm{\partial_\alpha f}_{W^{1,\infty}})\norm{g}_{L^2}\norm{\partial_\alpha h}_{W^{1,\infty}}\ln t\\
        &\ \ \ +Ct^{-1+\mu}\norm{f}_{L^\infty}\norm{g}_{L^2}\norm{\partial_\alpha h}_{L^\infty},
        \end{aligned}
    \end{equation}
    and
    \begin{equation}
        \begin{aligned}
            \norm{C(f,g,h)}_{L^2(|\alpha|\ge t^{1+\mu})}
        &\le C(\norm{\partial_\alpha f}_{W^{1,\infty}(|\alpha|\gtrsim t^{1+\mu})}+\epsilon t^{-\frac{1}{2}}\norm{\partial_\alpha f}_{W^{1,\infty}})\norm{g}_{L^2}\norm{\partial_\alpha h}_{W^{1,\infty}}\ln t\\
        &\ \ \ +Ct^{-1-\mu}\norm{f}_{L^\infty}\norm{g}_{L^2}\norm{\partial_\alpha h}_{L^\infty}.
        \end{aligned}
    \end{equation}
\end{lemma}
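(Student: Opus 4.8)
\textbf{Proof proposal for Lemma \ref{small or big alpha}.}

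The plan is to split the integral defining $C(f,g,h)$ according to whether $\beta$ is in a comparable dyadic range to $\alpha$ or not, and in each regime exploit a different mechanism: when $|\alpha-\beta|$ is small compared to $|\alpha|$ (so $|\alpha|$ and $|\beta|$ are comparable, both either $\lesssim t^{1-\mu}$ or $\gtrsim t^{1+\mu}$), I would use the pointwise smallness of the kernel together with the fact that the numerator carries a factor $(f(\alpha)-f(\beta))(g(\alpha)-g(\beta))$ which I can rewrite via the mean value theorem so that at least one difference becomes $(\alpha-\beta)\partial_\alpha f$; when $|\alpha-\beta|$ is large, the kernel $\tfrac{1}{(\zeta(\alpha)-\zeta(\beta))^2}$ is itself integrable against the tail and I get the $t^{-1+\mu}$ or $t^{-1-\mu}$ gains directly from $\int_{|\alpha-\beta|\ge t^{1\mp\mu}}|\alpha-\beta|^{-2}d\beta\lesssim t^{-1\pm\mu}$.

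Concretely, I would first use Lemma \ref{nearly alpha} to replace $(\zeta(\alpha)-\zeta(\beta))^2$ by $(\alpha-\beta)^2$ at the cost of an error of size $\epsilon t^{-1/2}$ times the same trilinear form with the flat kernel — this is the source of the $\epsilon t^{-1/2}\norm{\partial_\alpha f}_{W^{1,\infty}}$ term in the bounds. Then, on the region $|\alpha-\beta|\le 1$ (or more generally $|\beta|\sim|\alpha|$), after the integration by parts in $\beta$ that moves $\partial_\beta$ off $h$, I would Taylor-expand $f(\alpha)-f(\beta)=-(\alpha-\beta)f'(\beta)+O((\alpha-\beta)^2\norm{f''}_\infty)$ and similarly for $g$ so that the $(\alpha-\beta)^{-2}$ is absorbed; the surviving factor is a convolution of $g$ with an $L^1$ kernel localized to $|\alpha-\beta|\lesssim 1$, bounded by $\norm{g}_{L^2}$ after Young, with prefactor $\norm{\partial_\alpha f}_{W^{1,\infty}}\norm{\partial_\alpha h}_{W^{1,\infty}}$; the restriction of the $\partial_\alpha f$ norm to $|\alpha|\lesssim t^{1-\mu}$ (resp. $\gtrsim t^{1+\mu}$) is exactly what we get because $\beta$ stays in that range. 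The logarithmic loss $\ln t$ comes from summing this over the dyadic shells $1\le|\alpha-\beta|\le t^{1-\mu}$ (resp. up to the full range), where in each shell one uses $\norm{g}_{L^2}$ on the shell and the kernel bound. On the complementary far region $|\alpha-\beta|\ge t^{1-\mu}$ (resp. $\ge t^{1+\mu}$), I bound $|f(\alpha)-f(\beta)|\le 2\norm{f}_{L^\infty}$, keep the kernel as is, integrate it to get the $t^{-1+\mu}$ (resp. $t^{-1-\mu}$) factor, and put $g$ in $L^2$ and $\partial_\alpha h$ in $L^\infty$, which yields the second line of each estimate; here one should be a little careful that after the integration by parts there is also a boundary term at $|\alpha-\beta|=t^{1\mp\mu}$ and a term with an extra $(\alpha-\beta)^{-3}$, both of which are handled the same way.

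The main obstacle I anticipate is bookkeeping the region $|\beta|\sim|\alpha|$ versus $|\beta|\not\sim|\alpha|$ cleanly while keeping the $\partial_\alpha f$ norm localized: when $|\alpha-\beta|$ is moderately large but still $\le t^{1-\mu}$, $\beta$ can in principle leave the region $|\beta|\lesssim t^{1-\mu}$ only if $|\alpha|$ is itself close to $t^{1-\mu}$, so one has to track constants and perhaps shrink $\mu$ slightly, or split once more at $|\alpha-\beta|\sim|\alpha|$. A second, more routine technical point is justifying the integration by parts and the principal-value manipulations when $h$ is only assumed to have $\partial_\alpha h\in W^{1,\infty}$; this is standard once one notes the kernel is non-singular away from $\beta=\alpha$ and the near-diagonal contribution was already dispatched by the Taylor expansion. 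I expect no genuinely new difficulty beyond these, since the structure is parallel to the near-/far-field splittings already used in the proof of Lemma \ref{decouple} and in Proposition \ref{norms of A-1}.
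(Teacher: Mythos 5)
Your overall decomposition agrees with the paper's: reduce to the flat kernel using $\|\zeta_\alpha-1\|_\infty\lesssim\epsilon t^{-1/2}$ (producing the $\epsilon t^{-1/2}\|\partial_\alpha f\|_{W^{1,\infty}}$ error via Lemma~\ref{useful lemma}), split the $\beta$-integral at scale $\sim t^{1\mp\mu}$, use that on the near part $\beta$ stays in the same range as $\alpha$ (so the $\partial_\alpha f$ norm is localized, at the cost of a $\ln t$), and on the far part gain the $t^{-1\pm\mu}$ from the tail of the kernel together with Young's inequality. The paper disposes of the near part $I_1$ in one line via the truncated singular-integral estimate (Lemma~\ref{singular}(3) / Lemma~\ref{useful lemma}); your dyadic-shell count is a more hands-on way to see the same $\ln t$, and the bookkeeping issue you raise at the end is a non-issue: with the split at $|\alpha-\beta|\le\tfrac12 t^{1-\mu}$ and $|\alpha|\le t^{1-\mu}$ one simply has $|\beta|\le\tfrac32 t^{1-\mu}$, no shrinking of $\mu$ required.

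However, two steps of your near-field argument, as written, would not close. First, you propose an integration by parts in $\beta$ ``that moves $\partial_\beta$ off $h$.'' The integrand of $C(f,g,h)$ already carries $\partial_\beta h$, which is precisely the quantity the lemma controls (via $\|\partial_\alpha h\|_{W^{1,\infty}}$); removing that derivative would leave a bare $h$, for which the hypotheses give no bound at all, and would worsen the kernel to $(\alpha-\beta)^{-3}$. No integration by parts should be performed here. Second, you propose Taylor-expanding both $f(\alpha)-f(\beta)$ and $g(\alpha)-g(\beta)$ so that ``the $(\alpha-\beta)^{-2}$ is absorbed''; the resulting leading term is then $f'(\beta)g'(\beta)\partial_\beta h$, and bounding that in $L^2_\alpha$ costs $\|\partial_\alpha g\|_{L^2}$, not the $\|g\|_{L^2}$ that appears in the statement. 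The fix is to MVT only the $f$-difference, leaving $(g(\alpha)-g(\beta))/(\alpha-\beta)$, and then split $g(\alpha)-g(\beta)$ into the two summands: the $g(\alpha)$ piece pulls out $\|g\|_{L^2}$ against the $L^\infty$ norm of a truncated singular integral of $f'\,\partial_\alpha h$ (where the $\ln t$ comes from), and the $g(\beta)$ piece is a convolution against the truncated $|\alpha-\beta|^{-1}$ kernel, again giving $\|g\|_{L^2}\ln t$. This is exactly what Lemma~\ref{useful lemma} packages; with these two corrections your plan reproduces the paper's proof.
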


\begin{proof}
    We first consider the case $|\alpha|\le t^{1-\mu}$. Write
    \begin{align*}
        C(f,g,h)
        &=\frac{1}{\pi i}\int\frac{(f(\alpha)-f(\beta))(g(\alpha)-g(\beta))}{(\alpha-\beta)^2}\partial_\beta hd\beta+R,\\
        &=\frac{1}{\pi i}\left(\int_{|\alpha-\beta|\le\frac{t^{1-\mu}}{2}}+\int_{|\alpha-\beta|\ge\frac{t^{1-\mu}}{2}}\right)\frac{(f(\alpha)-f(\beta))(g(\alpha)-g(\beta))}{(\alpha-\beta)^2}\partial_\beta hd\beta+R,\\
        &=I_1+I_2+R
    \end{align*}
    where $\norm{R}_{L^2}\le C\epsilon t^{-\frac{1}{2}}\norm{\partial_\alpha f}_{W^{1,\infty}}\norm{g}_{L^2}\norm{\partial_\alpha h}_{W^{1,\infty}}\ln t$, according to Lemma \ref{useful lemma};
    and it's clear that
    \begin{align*}
        \norm{I_1}_{L^2(|\alpha|\le t^{1-\mu})}
        &\le C\norm{g}_{L^2}\norm{\partial_\alpha f}_{W^{1,\infty}(|\alpha|\le \frac{3}{2}t^{1-\mu})}\norm{\partial_\alpha h}_{W^{1,\infty}}\ln t.
    \end{align*}
    For $I_2$, according to Young's inequality,
    \begin{align*}
       &\norm{\int_{|\alpha-\beta|\ge\frac{t^{1-\mu}}{2}}\frac{f(\alpha)-f(\beta)}{(\alpha-\beta)^2}g(\beta)\partial_\beta hd\beta}_{L^2(|\alpha|\le t^{1-\mu})}\\
       &\le\norm{f}_{L^\infty}\norm{\int_{|\alpha-\beta|\ge\frac{t^{1-\mu}}{2}}\frac{g(\beta)\partial_\beta h}{(\alpha-\beta)^2}d\beta}_{L^2}
       +\norm{\int_{|\alpha-\beta|\ge\frac{t^{1-\mu}}{2}}\frac{f(\beta)g(\beta)\partial_\beta h}{(\alpha-\beta)^2}d\beta}_{L^2},\\
       &\le C\norm{f}_{L^\infty}\norm{\frac{1}{\alpha^2}}_{L^1(|\alpha|\ge\frac{t^{1-\mu}}{2})}\norm{g\partial_\alpha h}_{L^2}
       +\norm{\frac{1}{\alpha^2}}_{L^1(|\alpha|\ge\frac{t^{1-\mu}}{2})}\norm{fg\partial_\alpha h}_{L^2},\\
       &\le Ct^{-1+\mu}\norm{f}_{L^\infty}\norm{g}_{L^2}\norm{\partial_\alpha h}_{L^\infty}.
    \end{align*}
    Also, it's obvious that
    \[
    \norm{g(\alpha)\int_{|\alpha-\beta|\ge\frac{t^{1-\mu}}{2}}\frac{f(\alpha)-f(\beta)}{(\alpha-\beta)^2}\partial_\beta hd\beta}_{L^2(|\alpha|\le t^{1-\mu})}
    \le Ct^{-1+\mu}\norm{f}_{L^\infty}\norm{g}_{L^2}\norm{\partial_\alpha h}_{L^\infty}.
    \]
    Hence we conclude that
    \begin{align*}
        \norm{C(f,g,h)}_{L^2(|\alpha|\le t^{1-\mu})}
        &\le C(\norm{\partial_\alpha f}_{W^{1,\infty}(|\alpha|\lesssim t^{1-\mu})}+\epsilon t^{-\frac{1}{2}}\norm{\partial_\alpha f}_{W^{1,\infty}})\norm{g}_{L^2}\norm{\partial_\alpha h}_{W^{1,\infty}}\ln t\\
        &\ \ \ +Ct^{-1+\mu}\norm{f}_{L^\infty}\norm{g}_{L^2}\norm{\partial_\alpha h}_{L^\infty}.
    \end{align*}
    Observe that the same proof works for $|\alpha|\ge t^{1+\mu}$, and this completes the proof.
\end{proof}
\subsection{Estimate $E_s^\theta$}
\begin{lemma}\label{calculate for k order commutator}
    There holds
    \begin{equation}
        \left|\int\frac{D_t\bar{\theta}_k}{A}[D_t^2-iA\partial_\alpha,\partial_\alpha^k]\theta\right|\le C\epsilon^2t^{-1-\delta}\mathcal{E}_s,\ \forall 1\le k\le s.
    \end{equation}
\end{lemma}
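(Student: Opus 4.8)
\emph{Strategy.} Write $\mathcal{P}=D_t^2-iA\partial_\alpha$ with $D_t=\partial_t+b\partial_\alpha$ and expand the commutator using the elementary identities
\[
[D_t,\partial_\alpha^k]f=-\sum_{j=1}^k\binom{k}{j}(\partial_\alpha^jb)\,\partial_\alpha^{k+1-j}f,\qquad
[A\partial_\alpha,\partial_\alpha^k]f=-\sum_{j=1}^k\binom{k}{j}(\partial_\alpha^jA)\,\partial_\alpha^{k+1-j}f,
\]
together with $[D_t^2,\partial_\alpha^k]=D_t[D_t,\partial_\alpha^k]+[D_t,\partial_\alpha^k]D_t$. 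Carrying out the $D_t$ differentiations and using $D_t\partial_\alpha^j=\partial_\alpha^jD_t+[D_t,\partial_\alpha^j]$ and $D_tb_\alpha=(D_tb)_\alpha-b_\alpha^2$, one writes $[\mathcal{P},\partial_\alpha^k]\theta$ as a finite sum (with $k$-dependent coefficients) of terms of the schematic form $c\cdot\psi$, where $c$ is a spatial derivative of $b$, of $D_tb$, or of $A$ carrying at least one derivative, and $\psi$ is a spatial derivative of $\theta$ or of $D_t\theta$ of order at most $k$ with at most one extra $D_t$. The plan is to pair each such term against $\tfrac{D_t\bar\theta_k}{A}$, integrate, take the real part, and estimate by H\"older, distinguishing the terms in which $\psi$ carries the full $k$ spatial derivatives from those in which it carries at most $s-1$.

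\emph{Top-order terms.} Only the $j=1$ contributions put all $k$ derivatives on $\psi$; up to the harmless constant $k$ these are
\[
b_\alpha\,\partial_\alpha^kD_t\theta=b_\alpha\sigma_k,\qquad b_\alpha\,D_t\theta_k,\qquad (D_tb_\alpha)\,\theta_k,\qquad A_\alpha\,\theta_k.
\]
Pairing $b_\alpha D_t\theta_k$ against $\tfrac{D_t\bar\theta_k}{A}$ gives $-k\,\mathrm{Re}\int\tfrac{|D_t\theta_k|^2}{A}b_\alpha\,d\alpha$, bounded by $\norm{b_\alpha}_{L^\infty}E_k^\theta$; the other three are bounded by $\norm{D_t\theta_k}_{L^2}$ times the $L^\infty$ norm of the coefficient times the $L^2$ norm of $\sigma_k$ or $\theta_k$. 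Here $\norm{b_\alpha}_{L^\infty}\le C\epsilon^2t^{-7/6+\delta}$ by Lemma \ref{norms of b}, $\norm{A_\alpha}_{L^\infty}\le C\epsilon^2t^{-5/4+\delta}$ by Proposition \ref{norms of A-1}, $\norm{b_\alpha^2}_{L^\infty}\le C\epsilon^4t^{-7/3+\delta}$, and for the one coefficient not supplied by these lemmas, $(D_tb)_\alpha$, one has $\norm{(D_tb)_\alpha}_{L^\infty}\le C\epsilon^2t^{-1-\delta}$ by rerunning the arguments of Lemma \ref{norms of b} and Proposition \ref{norms of A-1} on the explicit formula for $(I-\mathcal{H})D_tb$ (its $G_2$-type integral term being exactly of the kind handled by Lemma \ref{decouple}). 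Finally $\norm{D_t\theta_k}_{L^2}\le C\sqrt{E_k^\theta}+C\epsilon^{3/2}t^{-1/4}\ln t$, and by Lemma \ref{remainder of Dt theta}, Proposition \ref{good-replace} and Lemma \ref{Hs and order s energy}, $\norm{\theta_k}_{L^2}+\norm{\sigma_k}_{L^2}\le C\sqrt{\mathcal{E}_s}+C\epsilon^{3/2}t^{-1/4}\ln t$. Collecting, each top-order term is $\le C\epsilon^2t^{-1-\delta}\mathcal{E}_s$.

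\emph{Lower-order terms.} In every remaining term $\psi$ has at most $s-1$ spatial derivatives (strictly fewer than $k$ when $k=s$), so by the bootstrap assumption \eqref{bootstrap-3}, Lemma \ref{decay of D_tzeta} and Proposition \ref{good-replace} it obeys $\norm{\psi}_{L^\infty}\le C\epsilon t^{-1/4}\ln t$, with the sharper $\le C\epsilon t^{-1/2}$ once the derivative count drops one more; the accompanying coefficient $c$ — here $\partial_\alpha^jb$ with $j\ge2$, $\partial_\alpha^jA$ with $j\ge2$, $\partial_\alpha^j(D_tb)$, or a product of two $b$-derivatives arising from an inner commutator — is placed in $L^2$ and bounded by $C\epsilon^2t^{-1+\delta}(\ln t)$ via Lemma \ref{norms of b} and Proposition \ref{norms of A-1}. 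Combined with $\norm{D_t\theta_k}_{L^2}\le C\sqrt{\mathcal{E}_s}+C\epsilon^{3/2}t^{-1/4}\ln t$ and the bootstrap smallness $\sqrt{\mathcal{E}_s}\le C\epsilon$, each such term is $\le C\epsilon^3t^{-5/4+\delta}(\ln t)\sqrt{\mathcal{E}_s}\le C\epsilon^2t^{-1-\delta}\mathcal{E}_s$. Summing the finitely many contributions gives the lemma.

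\emph{Main obstacle.} The genuinely delicate point is the top-order regime $k=s$: there $\theta_s$, $\sigma_s$ and $D_t\theta_s$ are controlled only in $L^2$ by $\sqrt{\mathcal{E}_s}$ and carry no pointwise decay, so for the four $j=1$ terms \emph{all} the time decay must come from the $L^\infty$ norm of the coefficient, which therefore has to beat $t^{-1}$. For $b_\alpha$ and $A_\alpha$ this is precisely what Lemma \ref{norms of b} and Proposition \ref{norms of A-1} provide; the one additional input is the corresponding bound $\norm{(D_tb)_\alpha}_{W^{s-2,\infty}}\le C\epsilon^2t^{-1-\delta}$ — equivalently, control of the coefficient that $D_t$ generates when it strikes $b_\alpha$ inside $D_t[D_t,\partial_\alpha^s]\theta$ — and securing this via the localization/transition machinery of \S4 applied to $(I-\mathcal{H})D_tb$ is where the real work sits. (One may also bypass estimating $D_tb_\alpha$ for the term $\int\tfrac{D_t\bar\theta_s}{A}(D_tb_\alpha)\theta_s$ by the time-integration-by-parts $D_tb_\alpha\,\theta_s=D_t(b_\alpha\theta_s)-b_\alpha D_t\theta_s$ and the equation for $D_t^2\bar\theta_s$, the factor $\overline{iA\theta_{s+1}}$ dropping out of the real part after an integration by parts in $\alpha$; this merely relocates the same difficulty.) Everything else in the proof is routine bookkeeping of H\"older products against norms already recorded in \S4.
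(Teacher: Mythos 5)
Your proof follows the same route the paper takes: expand $[D_t^2,\partial_\alpha^k]=D_t[D_t,\partial_\alpha^k]+[D_t,\partial_\alpha^k]D_t$ and $[A\partial_\alpha,\partial_\alpha^k]$ via Leibniz, then estimate each resulting product by H\"older against $D_t\bar\theta_k/A$, separating the top-order terms (where the full $k$ derivatives fall on $\theta$ or $D_t\theta$ and the coefficient must be put in $L^\infty$) from the rest (where the derivative-bearing factor can be put in $L^\infty$ and the coefficient in $L^2$). The one place you go beyond the paper's text is worth flagging: the paper asserts the needed $L^2$ and $L^\infty$ decay of $D_t\partial_\alpha^jb$ with only a citation to Lemma \ref{norms of b}, whose statement covers $\partial_\alpha b$ but not its $D_t$ (or $\partial_t$) derivative; you correctly identify that the genuine input is an $L^\infty$ bound on $(D_tb)_\alpha$ of order $t^{-1-\delta}$, and that this must be extracted from the explicit formula for $(I-\mathcal{H})D_tb$—whose commutator terms mirror $(I-\mathcal{H})(A-1)$ and whose kernel term is $G_2$-type—by the same localization and transition-of-derivatives arguments used for Proposition \ref{norms of A-1}. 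That is the correct reading of the paper's elliptical citation, and spelling it out, together with the observation that $D_tb_\alpha=(D_tb)_\alpha-b_\alpha^2$ and that the $b_\alpha D_t\theta_k$ term contributes a benign real quadratic form $\int |D_t\theta_k|^2 b_\alpha/A$, makes your write-up somewhat more transparent than the original. Your suggested alternative of time-integration-by-parts for the $(D_tb)_\alpha\theta_s$ term is correctly identified as not bypassing the difficulty. No gap; this is the paper's argument with the implicit $D_tb$ estimate made explicit.
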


\begin{proof}
    Straight calculation yields
    \[
    [D_t^2,\partial_\alpha^k]=D_t[D_t,\partial_\alpha^k]+[D_t,\partial_\alpha^k]D_t.
    \]
    Note that
    \[
    [D_t,\partial_\alpha^k]
    =[b\partial_\alpha,\partial_\alpha^k]
    =-\sum_{j=1}^kC_{j,k}\partial_\alpha^jb\partial_\alpha^{k-j+1},\quad C_{j,k}:=\frac{k!}{j!(k-j)!},
    \]
    hence there holds
    \begin{align*}
        \left|\int\frac{D_t\bar{\theta}_k}{A}[D_t^2,\partial_\alpha^k]\theta\right|
        &\le C\norm{D_t\bar{\theta}_k}_{L^2}\left(\norm{D_t\sum_{j=1}^kC_{j,k}(\partial_\alpha^jb)\theta_{k-j+1}}_{L^2}+\norm{\sum_{j=1}^kC_{j,k}(\partial_\alpha^jb)\sigma_{k-j+1}}_{L^2}\right).
    \end{align*}
    Using Lemma \ref{norms of b}, we derive that
    \[
    \norm{D_t\partial_\alpha^jb}_{L^2}\le C\epsilon^2t^{-1+\delta_0}\ln t,\ \forall 1\le j\le s.
    \]
    If $k-j+1\le s-2$, we have
    \[
    \norm{D_t\partial_\alpha^jb\theta_{k-j+1}}_{L^2}\le\norm{D_t\partial_\alpha^jb}_{L^2}\norm{\theta_{k-j+1}}_{L^\infty}\le C\epsilon^2t^{-1-\delta}\sqrt{\mathcal{E}_s}.
    \]
    Otherwise we have
    \[
    \norm{D_t\partial_\alpha^jb\theta_{k-j+1}}_{L^2}\le\norm{D_t\partial_\alpha^j}_{L^2}\le\norm{D_t\partial_\alpha^jb}_{L^\infty}\norm{\theta_{k-j+1}}_{L^2}\le C\epsilon^2t^{-1-\delta}\sqrt{\mathcal{E}_s}.
    \]
    Similar estimates hold for the other part.

    So far we have
    \begin{equation}
        \left|\int\frac{D_t\bar{\theta}_k}{A}[D_t^2,\partial_\alpha^k]\theta\right|\le C\epsilon^2t^{-1-\delta}\mathcal{E}_s;
    \end{equation}
    for the other half, we compute
    \[
    [A\partial_\alpha,\partial_\alpha^k]=-\sum_{j=1}^kC_{j,k}\partial_\alpha^jA\partial_\alpha^{k-j+1}
    \]
    and the proof is thus finished by applying Proposition \ref{norms of A-1}.
\end{proof}

\begin{proposition}\label{formula of dE_s/dt}
There exists $\delta>0$ such that
\begin{equation}
    \left|\frac{dE_s^\theta}{dt}\right|\le C\epsilon^2t^{-1-\delta}\mathcal{E}_s+O(\epsilon^4t^{-1-\delta}).
\end{equation}
\end{proposition}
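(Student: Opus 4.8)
The plan is to begin from the basic energy identity \eqref{formula for dE_k/dt} with $k=s$,
\[
\frac{dE_s^\theta}{dt}=Re\Big\{\int\frac{2D_t\bar{\theta}_s}{A}\,\partial_\alpha^sG\,d\alpha\Big\}+Re\Big\{\int\frac{2D_t\bar{\theta}_s}{A}\,[D_t^2-iA\partial_\alpha,\partial_\alpha^s]\theta\,d\alpha\Big\}-\int\frac{|D_t\theta_s|^2}{A}\Big(\frac{a_t}{a}\circ\kappa^{-1}\Big)d\alpha,
\]
with $G=G_1+G_2$ as in \eqref{formula for G}. The last term is $\le C\epsilon^2t^{-1-\delta}\mathcal E_s$ by Proposition \ref{norm of at/a} combined with $\norm{D_t\theta_s}_{L^2}\lesssim\sqrt{\mathcal E_s}+\epsilon^{3/2}t^{-1/4}\ln t$ (Lemma \ref{Hs and order s energy}), and the commutator term is $\le C\epsilon^2t^{-1-\delta}\mathcal E_s$ by Lemma \ref{calculate for k order commutator}. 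So everything reduces to estimating $\mathcal I:=Re\int\frac{2D_t\bar{\theta}_s}{A}\,\partial_\alpha^sG\,d\alpha$. The organizing principle is that $\theta$ is anti-holomorphic ($(I+\mathcal H)\theta=0$), so $\bar\theta$ --- and, up to the commutators $[\partial_\alpha^s,D_t]\theta$, $[\partial_\alpha^s,\mathcal H]D_t\bar\theta$ and the differences $\mathcal H-\mathbb H$, $\bar{\mathcal H}-\mathbb H$ bounded by Lemma \ref{curve-flat hilbert} --- also $D_t\bar\theta_s$, is almost holomorphic, while $\tfrac1A=1+O(\epsilon^2t^{-1+\delta_0}\ln t)_{H^s}$ by Proposition \ref{norms of A-1}. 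Consequently $\mathcal I$ is negligible as soon as the relevant part of $\partial_\alpha^sG$ is itself almost holomorphic: the integral over $\mathbb R$ of a product of two holomorphic functions equals minus its integral against $(\zeta_\alpha-1)\,d\alpha$ (Cauchy's theorem on $\partial\Omega(t)$), hence is lower order by Lemma \ref{nearly alpha}; and the genuinely non-negligible piece of $\partial_\alpha^sG$ will be purely imaginary once paired with $D_t\bar\theta_s$.

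The first and principal step is the $H^s$-version of the Key Proposition \ref{key prop}: $(I-\mathcal H)\partial_\alpha^sG_1=O(\epsilon^3t^{-1-\delta})_{L^2}$ and $\partial_\alpha^sG_2=\partial_\alpha^s\big(i\tfrac t\alpha|D_t^2\zeta|^2D_t\theta\big)+O(\epsilon^3t^{-1-\delta})_{L^2}$, with $D_t\theta=2D_t\zeta+O(\epsilon^2t^{-1/2})_{H^s}$ by Lemma \ref{remainder of Dt theta}. This is proved region by region. On $|\alpha|\ge t^{6/5}$ and $|\alpha|\le t^{4/5}$ no structure is needed: Lemma \ref{bouns decay for small or big alpha}, Lemma \ref{small or big alpha}, and the transition formulas of Lemma \ref{transition-lemma} give enough extra decay directly on $G$ (and on $\zeta-\alpha$, $\partial_\alpha^sD_t\zeta$). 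On the central region $t^{4/5}\le|\alpha|\le t^{6/5}$ one first replaces $\zeta(\alpha)-\zeta(\beta)$ by $\alpha-\beta$ (Lemma \ref{nearly alpha}) and $D_t\zeta,D_t^2\zeta$ by $Q_\alpha,\tfrac i2\theta_\alpha$ (Proposition \ref{good-replace}), then applies the localization Lemmas \ref{decouple} and \ref{decouple-2} to collapse the nonlocal commutator and trilinear structures of $G_1$ and $G_2$ to explicit local products of the form $\tfrac{it^2}{2\alpha^2}fgh$ (respectively to $0$); the $(I-\mathcal H)$-structure of $G_1$ and the explicit leading term $i\tfrac t\alpha|D_t^2\zeta|^2D_t\theta$ of $G_2$ can then be read off, using $\partial_\alpha f=\tfrac{it^2}{4\alpha^2}f+\tfrac1\alpha L_0f-\tfrac t{2\alpha^2}\Omega_0f$ to reconcile the powers of $\tfrac t\alpha$. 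The terms in which all $s$ derivatives fall on one factor $D_t\zeta$ or $D_t^2\zeta$ --- yielding $\partial_\alpha^sD_t\zeta$ or $\partial_\alpha^sD_t^2\zeta$, controlled only in $L^2$ --- require the modified argument of Remark \ref{localization lemma for highest derivative} (transferring one derivative onto an oscillatory exponential so the remaining profile has the favourable $L^2$ decay coming from $L_0,\Omega_0$ at order $s-1$), together with Lemma \ref{lemma:transit derivatives}, Lemma \ref{lemma:transit2}, the bound $\norm{D_t\zeta\,\partial_\alpha^sD_t^2\theta}_{L^2}\le C\epsilon^2t^{-1+\delta_0}\ln t$, and integration by parts inside the commutators so that no factor ever carries more than $s$ derivatives in $L^2$ (or $s+\tfrac12$ in the energy).

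With this structural identity in hand, $\mathcal I$ is estimated as follows. For $\partial_\alpha^sG_1$, split $\partial_\alpha^sG_1=\mathcal H(\partial_\alpha^sG_1)+(I-\mathcal H)\partial_\alpha^sG_1$: the second summand pairs against $\tfrac{D_t\bar\theta_s}{A}\in L^2$ of size $\sqrt{\mathcal E_s}$ to give $O(\epsilon^4t^{-1-\delta})$, while the first is holomorphic and, paired with the almost-holomorphic $\tfrac{D_t\bar\theta_s}{A}$, leaves only the $(\zeta_\alpha-1)$-defect plus commutator errors, all absorbed into $C\epsilon^2t^{-1-\delta}\mathcal E_s+C\epsilon^4t^{-1-\delta}$ via Lemma \ref{norms of b}, Proposition \ref{norms of A-1} and Lemma \ref{calculate for k order commutator}. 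For $\partial_\alpha^sG_2$, after the substitutions one distributes $\partial_\alpha^s$ over $i\tfrac t\alpha|D_t^2\zeta|^2D_t\theta$: every term in which some derivative hits the real coefficient $\tfrac t\alpha|D_t^2\zeta|^2$ has extra decay (from $\partial_\alpha\tfrac t\alpha=-\tfrac t{\alpha^2}\sim t^{-1}$ on the central region, or from a factor $\partial_\alpha^jD_t^2\zeta$ with $j\ge1$, decaying like $t^{-1/2}$ by \eqref{bootstrap-3}), giving $O(\epsilon^2t^{-1-\delta}\mathcal E_s)$; the one surviving term $i\tfrac t\alpha|D_t^2\zeta|^2\partial_\alpha^sD_t\theta$, paired with $\tfrac{2D_t\bar\theta_s}{A}=\tfrac{2\overline{\partial_\alpha^sD_t\theta}}{A}+O(\epsilon^2t^{-1-\delta})_{L^2}$, produces $Re\big(2i\int\tfrac t\alpha|D_t^2\zeta|^2\tfrac{|\partial_\alpha^sD_t\theta|^2}{A}\big)=0$ because $A$ and $\tfrac t\alpha|D_t^2\zeta|^2$ are real. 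This is the exact cancellation of the leading nonlinearity on which the whole argument rests. The main obstacle is the first step at top order --- closing $(I-\mathcal H)\partial_\alpha^sG_1$ and $\partial_\alpha^sG_2$ with no loss of derivatives, since the localization lemmas are available only with $s$ vector fields and the raw factors $\partial_\alpha^sD_t\zeta$, $\partial_\alpha^{s+1}\zeta$ admit no $L^\infty$ bound; this is exactly where the Transition-of-Derivatives device, the region decomposition, and Remark \ref{localization lemma for highest derivative} are indispensable.
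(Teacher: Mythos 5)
Your outline reproduces the paper's argument: the basic identity \eqref{formula for dE_k/dt} at $k=s$, disposal of the commutator and $\tfrac{a_t}{a}\circ\kappa^{-1}$ contributions via Lemma \ref{calculate for k order commutator} and Lemma \ref{third term}, the $(I\pm\mathcal H)$ duality pairing to neutralize $\partial_\alpha^sG_1$ after showing $(I-\mathcal H)\partial_\alpha^sG_1=O(\epsilon^3t^{-1-\delta})_{L^2}$, and the localization--transition machinery plus the imaginary-part cancellation for $\partial_\alpha^sG_2$. The ingredients (region decomposition, Lemma \ref{small or big alpha}, Lemma \ref{decouple}/\ref{decouple-2}, Lemma \ref{transition-lemma}, Remark \ref{localization lemma for highest derivative}) are the right ones and in the right places, so the plan is essentially the paper's.

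One statement you should sharpen. You assert that, once $\partial_\alpha^s$ is distributed over the local profile $i\tfrac t\alpha|D_t^2\zeta|^2D_t\theta$, every term in which a derivative falls on $\tfrac t\alpha|D_t^2\zeta|^2$ has \emph{extra} time decay, ``from a factor $\partial_\alpha^jD_t^2\zeta$ with $j\ge1$, decaying like $t^{-1/2}$.'' That is not extra decay: $D_t^2\zeta$ itself already decays like $t^{-1/2}$ by \eqref{bootstrap-3}, so replacing $D_t^2\zeta$ by $\partial_\alpha^jD_t^2\zeta$ gains no power of $t$, and a term such as $i\tfrac t\alpha\,\partial_\alpha D_t^2\zeta\,D_t^2\bar\zeta\,\partial_\alpha^{s-1}D_t\theta$ has the \emph{same} critical size $\epsilon^2t^{-1}\mathcal E_s$ as the leading one when paired against $D_t\bar\theta_s$. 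What saves these terms is not faster decay but the transition formula: on the central region, $\partial_\alpha D_t^2\zeta\approx\tfrac{it^2}{4\alpha^2}D_t^2\zeta$ and $\partial_\alpha^{s-1}D_t\theta\approx\tfrac{4\alpha^2}{it^2}\partial_\alpha^sD_t\theta$, so after the transition the term collapses back onto the canonical $i\tfrac t\alpha|D_t^2\zeta|^2\partial_\alpha^sD_t\theta$ — which cancels by the imaginary-part argument — while the transition errors (involving $L_0$, $\Omega_0$ of factors of order $\le s-1$) are what actually carry the $t^{-1-\delta}$ gain. The paper sidesteps the issue by only displaying the extreme cases $G_{11},\dots,G_{22}$ and declaring the rest ``similar''; if you want a self-contained argument for the distributed derivatives, invoke cancellation-after-transition rather than extra decay. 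A lesser point: $D_t\bar\theta_s-2\,\overline{\partial_\alpha^sD_t\theta}$ is $O(\epsilon^2t^{-1/2})_{L^2}$, not $O(\epsilon^2t^{-1-\delta})_{L^2}$; this is still sufficient because the factor it pairs against already carries $\epsilon^2t^{-1}$, but the stated rate is optimistic.
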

	
\begin{proof}
    We have
    \[
    \frac{dE_s^\theta}{dt}=Re\left\{\int\frac{2D_t\bar{\theta}_s}{A}G_s^\theta d\alpha\right\}-\int\frac{|D_t\theta_s|^2}{A}\left(\frac{a_t}{a}\circ\kappa^{-1}\right)d\alpha
    \]
    where $G_s^\theta=\partial_\alpha^s(G_1+G_2)+[D_t^2-iA\partial_\alpha,\partial_\alpha^s]\theta$. The commutator is handled by Lemma \ref{calculate for k order commutator}, while $\frac{a_t}{a}\circ\kappa^{-1}$ is handled by Lemma \ref{third term}. It suffices to deal with $\partial_\alpha^s(G_1+G_2)$.
    
Decompose $G_1,G_2$ as (let $f(\alpha):=\zeta-\alpha$, and $\eta:=Im(\zeta-\alpha)$)
\[
\begin{aligned}
    G_1=-2\left[D_t\bar{\zeta},\mathcal{H}\frac{1}{\zeta_\alpha}+\bar{\mathcal{H}}\frac{1}{\bar{\zeta}_\alpha}\right]\partial_\alpha D_t\zeta
    &=\frac{4}{\pi}\int\frac{(\eta(\alpha)-\eta(\beta))(D_t\bar{\zeta}(\alpha)-D_t\bar{\zeta}(\beta))}{|\zeta(\alpha)-\zeta(\beta)|^2}\partial_\beta D_t\zeta d\beta,\\
    &=\frac{4}{\pi}\int\frac{(\eta(\alpha)-\eta(\beta))(D_t\bar{\zeta}(\alpha)-D_t\bar{\zeta}(\beta))}{(\zeta(\alpha)-\zeta(\beta))^2}\partial_\beta D_t\zeta d\beta\\
    &\ \ \ +C\int\frac{(\eta(\alpha)-\eta(\beta))^2(D_t\bar{\zeta}(\alpha)-D_t\bar{\zeta}(\beta))}{(\zeta(\alpha)-\zeta(\beta))|\zeta(\alpha)-\zeta(\beta)|^2}\partial_\beta D_t\zeta d\beta,\\
    &=\frac{2}{\pi i}\int\frac{(f(\alpha)-f(\beta))(D_t\bar{\zeta}(\alpha)-D_t\bar{\zeta}(\beta)}{(\zeta(\alpha)-\zeta(\beta))^2}\partial_\beta D_t\zeta d\beta\\
    &\ \ \ -\frac{2}{\pi i}\int\frac{(\bar{f}(\alpha)-\bar{f}(\beta))(D_t\bar{\zeta}(\alpha)-D_t\bar{\zeta}(\beta)}{(\zeta(\alpha)-\zeta(\beta))^2}\partial_\beta D_t\zeta d\beta\\
    &\ \ \ +C\int\frac{(\eta(\alpha)-\eta(\beta))^2(D_t\bar{\zeta}(\alpha)-D_t\bar{\zeta}(\beta))}{(\zeta(\alpha)-\zeta(\beta))|\zeta(\alpha)-\zeta(\beta)|^2}\partial_\beta D_t\zeta d\beta
\end{aligned}
\]
\[
    G_2=\frac{1}{\pi i}\int\left(\frac{D_t\zeta(\alpha)-D_t\zeta(\beta)}{\zeta(\alpha)-\zeta(\beta)}\right)^2(\zeta_{\beta}-1) d\beta-\frac{1}{\pi i}\int\left(\frac{D_t\zeta(\alpha)-D_t\zeta(\beta)}{\zeta(\alpha)-\zeta(\beta)}\right)^2\overline{(\zeta_{\beta}-1)}d\beta
\]
and we only consider those terms in $\partial_\alpha^sG_1,\ \partial_\alpha^sG_2$, as they are most challenging to handle, and their treatment essentially encompasses all the methods we will use:
\[
G_{11}=\frac{2}{\pi i}\int\frac{(f(\alpha)-f(\beta))(\partial_\alpha^sD_t\bar{\zeta}(\alpha)-\partial_\beta^sD_t\bar{\zeta}(\beta)}{(\zeta(\alpha)-\zeta(\beta))^2}\partial_\beta D_t\zeta d\beta,
\]
\[
G_{12}=\frac{2}{\pi i}\int\frac{(\bar{f}(\alpha)-\bar{f}(\beta))(\partial_\alpha^sD_t\bar{\zeta}(\alpha)-\partial_\beta^sD_t\bar{\zeta}(\beta)}{(\zeta(\alpha)-\zeta(\beta))^2}\partial_\beta D_t\zeta d\beta,
\]
\[
G_{21}=\frac{1}{\pi i}\int\frac{(D_t\zeta(\alpha)-D_t\zeta(\beta))(\partial_\alpha^sD_t\zeta-\partial_\beta^sD_t\zeta)}{(\zeta(\alpha)-\zeta(\beta))^2}(\zeta_\beta-1)d\beta,
\]
\[
G_{22}=\frac{1}{\pi i}\int\frac{(D_t\zeta(\alpha)-D_t\zeta(\beta))(\partial_\alpha^sD_t\zeta-\partial_\beta^sD_t\zeta)}{(\zeta(\alpha)-\zeta(\beta))^2}(\bar{\zeta}_\beta-1)d\beta.
\]
We divide the rest of the proof into two parts:
\begin{itemize}
    \item [(1)] We prove that
    \[
    G_{21}=O(\epsilon^4t^{-1-\delta})_{L^2},
    \]
    \[
   G_{22}=\frac{4\alpha}{it}\partial_\alpha^sD_t\zeta|\partial_\alpha D_t\zeta|^2+O(\epsilon^3t^{-1-\delta})_{L^2},
    \]
    and
    \begin{equation}
        \frac{2}{A}D_t\bar{\theta}_sG_2=\frac{16\alpha}{iAt}|D_t^2\zeta|^2|\partial_\alpha^sD_t\zeta|^2+O(\epsilon^3t^{-1-\delta}\mathcal{E}_s)_{L^1},
    \end{equation}
    for certain $\delta>0$. And thus
    \[
    Re\int\frac{2}{A}D_t\bar{\theta}G_2=O(\epsilon^3t^{-1-\delta}\mathcal{E}_s).
    \]
\end{itemize}
\textbf{Subcase:} For $G_{21}$

\vspace*{1ex}

\noindent Note that it suffices to consider

\[
    \left\|\overline{\frac{1}{\pi i}\int\frac{(D_t\zeta(\alpha)-D_t\zeta(\beta))(\partial_\alpha^sD_t\zeta-\partial_\beta^sD_t\zeta)}{(\zeta(\alpha)-\zeta(\beta))^2}(\zeta_\beta-1)d\beta}\right\|_{L^2}.
\]
Note that $D_t\bar{\zeta},(\bar{\zeta}_\alpha-1)/\zeta_\alpha$, $(I+\mathcal{H})\partial_\alpha^sD_t\bar{\zeta}$ are holomorphic in $\Omega(t)$ and we write
\begin{align*}
    \overline{\frac{\partial_\alpha^sD_t\zeta-\partial_\beta^sD_t\zeta}{(\zeta(\alpha)-\zeta(\beta)^2}(\zeta_\beta-1)}
    &=\frac{\partial_\alpha^sD_t\bar{\zeta}-\partial_\beta^sD_t\bar{\zeta}}{(\zeta(\alpha)-\zeta(\beta)^2}(\bar{\zeta}_\beta-1)\\
    &\ \ \ +(\partial_\alpha^sD_t\bar{\zeta}-\partial_\beta^sD_t\bar{\zeta})\frac{4iRe\{\zeta(\alpha)-\zeta(\beta)\}Im\{\zeta(\alpha)-\zeta(\beta)\}}{|\zeta(\alpha)-\zeta(\beta)|^4}(\bar{\zeta}_\beta-1).
\end{align*}
Observe that
\[
    \begin{aligned}
	\text{Im}\{\zeta(\alpha)-\zeta(\beta)\}
	&=\frac{1}{2i}(\zeta(\alpha)-\zeta(\beta)-\overline{\zeta(\alpha)-\zeta(\beta)}),\\
	&=\frac{1}{2i}\left\{[\zeta(\alpha)-\alpha-(\zeta(\beta)-\beta)]-\overline{[\zeta(\alpha)-\alpha-(\zeta(\beta)-\beta)]}\right\}
    \end{aligned}
\]
and thus, by Lemma \ref{useful lemma},
\begin{align*}
    &\norm{\int(D_t\bar{\zeta}(\alpha)-D_t\bar{\zeta}(\beta))(\partial_\alpha^sD_t\bar{\zeta}-\partial_\beta
    ^sD_t\bar{\zeta})\frac{4iRe\left\{\zeta(\alpha)-\zeta(\beta)\right\}Im\left\{\zeta(\alpha)-\zeta(\beta)\right\}}{|\zeta(\alpha)-\zeta(\beta)|^4}(\bar{\zeta}_\beta-1)d\beta}_{L^2}\\
    \le&\ C\norm{\partial_\alpha^sD_t\zeta}_{L^2}\norm{\partial_\alpha D_t\zeta}_{L^\infty}\norm{\zeta_\alpha}_{L^\infty}\norm{\zeta_\alpha-1}_{W^{1,\infty}}^2\ln t,\\
    \le&\ C\epsilon^4t^{-\frac{3}{2}}\ln t.
\end{align*}
Hence we focus on the first term. Claim that
\[
\norm{(I-\mathcal{H})\partial_\alpha^sD_t\bar{\zeta}}_{L^2}\le C\epsilon^2t^{-\frac{1}{2}}.
\]
Actually, since $(I-\mathcal{H})D_t\bar{\zeta}=0$, there holds
\[
\norm{(I-\mathcal{H})\partial_\alpha^s D_t\bar{\zeta}}_{L^2}
=\norm{[\partial_\alpha^s,\mathcal{H}]D_t\bar{\zeta}}_{L^2}
\le C\epsilon^2t^{-\frac{1}{2}}
\]
as in the proof of Lemma \ref{Hs and order s energy}. Now we write
\begin{align*}
    &\int(D_t\bar{\zeta}(\alpha)-D_t\bar{\zeta}(\beta))\frac{\partial_\alpha^sD_t\bar{\zeta}-\partial_\beta^sD_t\bar{\zeta}}{(\zeta(\alpha)-\zeta(\beta)^2}(\bar{\zeta}_\beta-1)d\beta\\
    =&\ \int\frac{I+\mathcal{H}}{2}\left(\partial_\alpha^sD_t\bar{\zeta}-\partial_\beta^sD_t\bar{\zeta}\right)\frac{D_t\bar{\zeta}(\alpha)-D_t\bar{\zeta}(\beta)}{(\zeta(\alpha)-\zeta(\beta)^2}\frac{\bar{\zeta}_\beta-1}{\zeta_\beta}d\beta\\
    &+\int\frac{I-\mathcal{H}}{2}\left(\partial_\alpha^sD_t\bar{\zeta}-\partial_\beta^sD_t\bar{\zeta}\right)\frac{D_t\bar{\zeta}(\alpha)-D_t\bar{\zeta}(\beta)}{(\zeta(\alpha)-\zeta(\beta)^2}\frac{\bar{\zeta}_\beta-1}{\zeta_\beta}d\beta\\
    =&\ 0+(\epsilon^4t^{-\frac{3}{2}}\ln t)_{L^2}.
\end{align*}
The first integral on the right hand side of the equality is zero due to the Cauchy integral Theorem. Therefore, we conclude that
\[
\norm{G_{21}}_{L^2}\le C\epsilon^4t^{-\frac{3}{2}}\ln t.
\]

\noindent \textbf{Subcase:} For $G_{22}$:

\vspace*{1ex}

\noindent We introduce the frequency decomposition: $f_L=P_{<t^{-2}}f$ and $f_H=P_{\ge t^{-2}}f$. Applying this frequency decomposition to $D_t\zeta$ in $G_{22}$, it is straightforward to verify (one should use Lemma \ref{Bernstein} to obtain $\norm{\partial_\alpha(D_t\zeta)_L}_{L^\infty}\le C\epsilon t^{-1}$) that any term containing at least one low-frequency factor decays faster than $t^{-1}$.

Thus, it's enough to consider
\begin{equation}\label{the main term of G2}
    \tilde{G}_{22}=\frac{1}{\pi i}\int\frac{((D_t\zeta)_H(\alpha)-(D_t\zeta)_H(\beta))(\partial_\alpha^sD_t\zeta-\partial_\beta^sD_t\zeta)}{(\zeta(\alpha)-\zeta(\beta))^2}(\bar{\zeta}_\beta-1)d\beta
\end{equation}
and we shall study it by considering the position of $|\alpha|$. Let $\mu>0$ be a small constant. 

\textbf{Subsubcase:} $|\alpha|$ is far from $t$. 

\vspace*{1ex}

\noindent $\bullet$ For $|\alpha|<t^{1-\mu}$:

\vspace*{1ex}Note that by Lemma \ref{transition-lemma} and \eqref{Omega_0D_t zeta high},
\[
\partial_\alpha(D_t\zeta)_H
=\frac{4\alpha^2}{it^2}\partial_\alpha^2(D_t\zeta)_H-\frac{4\alpha}{it^2}L_0\partial_\alpha(D_t\zeta)_H+\frac{2}{it}\Omega_0\partial_\alpha(D_t\zeta)_H
=\frac{4\alpha^2}{it^2}\partial_\alpha^2(D_t\zeta)_H+O(\epsilon t^{-1+\delta_0}\ln t).
\]
This implies that 
$$\norm{\partial_\alpha(D_t\zeta)_H}_{L^\infty(|\alpha|<Ct^{1-\mu})}\le C\epsilon t^{-\frac{1}{2}-2\mu},$$ 
and by Lemma \ref{small or big alpha}, we obtain
\begin{align*}
    \norm{\tilde{G}_{22}}_{L^2(|\alpha|<t^{1-\mu})}
    &\le C\left(\norm{\partial_{\alpha}(D_t\zeta)_H}_{W^{1,\infty}(|\alpha|\lesssim t^{1-\mu})}+\epsilon t^{-1/2}\norm{\partial_{\alpha}D_t\zeta}_{W^{1,\infty}}\right)\\
    &\ \ \ \times\norm{\partial_{\alpha}^sD_t\zeta}_{L^2}\norm{\zeta_{\alpha}-1}_{W^{1,\infty}}\ln t
    +Ct^{-1+\mu}\norm{D_t\zeta}_{L^\infty}\norm{\partial_\alpha^sD_t\zeta}_{L^2}\norm{\zeta_\alpha-1}_{L^\infty}\\
    &\le C\epsilon^2t^{-1-2\mu}\ln t\sqrt{\mathcal{E}_s}+C\epsilon^3t^{-1-\delta}.
\end{align*}

\noindent $\bullet$ For $|\alpha|>t^{1+\mu}$:

We have
\begin{align*}
    \partial_\alpha D_t\zeta
    =\partial_t(\zeta_\alpha-1)+O(\epsilon^2t^{-1})
    &=-\frac{it}{2\alpha}(\zeta_\alpha-1)+\frac{1}{\alpha}\Omega_0(\zeta_\alpha-1)+O(\epsilon^2t^{-1}),\\
    &=-\frac{it}{2\alpha}(\zeta_\alpha-1)+O(\epsilon t^{-1-\mu+\delta_0}),
\end{align*}
which implies
\[
\norm{\partial_\alpha D_t\zeta}_{L^\infty(|\alpha|>Ct^{1+\mu})}\le C\epsilon t^{-\frac{1}{2}-\mu}.
\]
Hence, applying Lemma \ref{small or big alpha},
\begin{align*}
   \norm{\tilde{G}_{22}}_{L^2(|\alpha|>t^{1+\mu})}\le &\  C\left(\norm{\partial_{\alpha}D_t\zeta}_{W^{1,\infty}(|\alpha|\gtrsim t^{1+\mu})}+\epsilon t^{-1/2}\norm{\partial_{\alpha}D_t\zeta}_{W^{1,\infty}}\right)\\
   &\ \ \ \times\norm{\partial_{\alpha}^sD_t\zeta}_{L^2}\norm{\zeta_{\alpha}-1}_{W^{1,\infty}}\ln t
   +Ct^{-1-\mu}\norm{D_t\zeta}_{L^{\infty}}\norm{\partial_{\alpha}^sD_t\zeta}_{L^2}\norm{\zeta_{\alpha}-1}_{L^{\infty}}\\
    \le &\  C\epsilon^2t^{-1-\mu}\ln t\sqrt{\mathcal{E}_s}+C\epsilon^3t^{-1-\delta}.
\end{align*}

\noindent \textbf{Subsubcase:} $\alpha\in S(t)=\{\alpha\in\mathbb{R}\ |\ t^{1-\mu}\le|\alpha|\le t^{1+\mu}\}$:

\vspace*{1ex} Applying $f=P_{\ge t^{-2}}Q_\alpha$, $g=\partial_{\alpha}^s Q_{\alpha}$, and $h=\bar{\theta}_{\alpha}$ to localization lemma \ref{decouple}, together with Lemma \ref{good-replace}, we obtain:
\begin{align*}
    \frac{1}{\pi i}1_{S(t)}\int\frac{((D_t\zeta)_H(\alpha)-(D_t\zeta)_H(\beta))(\partial_\alpha^sD_t\zeta-\partial_\beta^sD_t\zeta)}{(\zeta(\alpha)-\zeta(\beta))^2}(\bar{\zeta}_\beta-1)d\beta
    =& 2\frac{it^2}{4\alpha^2}1_{S(t)}(Q_\alpha)_H\partial_\alpha^sQ_\alpha\partial_\alpha\bar{\theta}+e
\end{align*}
where $\|e\|_{L^2}\le C\epsilon^3t^{-\frac{5}{4}-\delta'}$ for some $\delta'>0$. We use the formula
\[
\partial_\alpha f=\frac{it^2}{4\alpha^2}f+\frac{1}{\alpha}L_0f-\frac{t}{2\alpha^2}\Omega_0f
\]
and
\[
\partial_t f=-\frac{it}{2\alpha}f+\frac{1}{\alpha}\Omega_0f=\frac{it}{2\alpha}f+\frac{1}{\alpha}\tilde{\Omega}_0f
\]
($f$ represents an arbitrary function here)
to derive
\begin{align*}
    \frac{it^2}{4\alpha^2}1_{S(t)}(Q_\alpha)_H\partial_\alpha^sQ_\alpha\partial_\alpha\bar{\theta}
    &=\left(\frac{it^2}{4\alpha^2}\right)^s1_{S(t)}(Q_\alpha)_H\partial_\alpha Q_\alpha\bar{\theta}_\alpha+O(\epsilon^3t^{-1-\delta})_{L^2}\\
    &=\left(\frac{it^2}{4\alpha^2}\right)^{s-1}\frac{t}{2\alpha}1_{S(t)}(Q_\alpha)_H\partial_\alpha Q_\alpha\partial_\alpha D_t\bar{\theta}+O(\epsilon^3t^{-1-\delta})_{L^2}\\
    &=\frac{4\alpha^2}{it^2}\frac{t}{2\alpha}1_{S(t)}\partial_\alpha^s(Q_\alpha)_H\partial_\alpha Q_\alpha\partial_\alpha D_t\bar{\theta}+O(\epsilon^3t^{-1-\delta})_{L^2}\\
    &=\frac{4\alpha}{it}\partial_\alpha^sD_t\zeta|\partial_\alpha D_t\zeta|^2+O(\epsilon^3t^{-1-\delta})_{L^2}.
\end{align*}
Here we have used Lemma \ref{remainder of Dt theta} and Lemma \ref{good-replace}. Also,
\[
D_t\bar{\theta}_s=[D_t,\partial_\alpha^s]\bar{\theta}+\partial_\alpha^sD_t\bar{\theta}
=2\partial_\alpha^sD_t\bar{\zeta}+O(\epsilon^2t^{-\frac{1}{2}})_{L^2},
\]
    hence
\begin{align*}
    \frac{2}{A}D_t\bar{\theta}_s\tilde{G}_{22}=\frac{16\alpha}{iAt}|\partial_\alpha^sD_t\bar{\zeta}|^2|\partial_\alpha D_t\zeta|^2+O(\epsilon^4t^{-1-\delta})_{L^1}.
\end{align*}
Note that the first term is pure imaginary, which vanishes after taking the real part. Now we finally get
\begin{equation}
    \left|Re\int\frac{2}{A}D_t\bar{\theta}_s G_{22} d\alpha\right|\leq C\epsilon^3 t^{-1-\delta}\mathcal{E}_s+C\epsilon^4t^{-1-\delta}.
\end{equation}

\begin{enumerate}
    \item [(2)] We aim to prove
    \begin{equation}\label{decay of (I-H)G1}
        (I-\mathcal{H})(G_{11}+G_{12})=O(\epsilon^3t^{-1-\delta})_{L^2}
    \end{equation}
    for certain $\delta>0$. Once (\ref{decay of (I-H)G1}) is established, it directly follows that (note the trivial bound $\norm{G_{11}}_{L^2}+\norm{G_{22}}_{L^2}\le C\epsilon^3t^{-1}\ln t$)
    \begin{align*}
        \left|\int D_t\bar{\theta}_s(G_{11}+G_{12})\right|
        &\le \left|\int\partial_\alpha^sD_t\bar{\theta}G_1\right|+C\epsilon^4t^{-1-\delta}\\
        &\le 2\left|\int\partial_\alpha^sD_t\bar{\zeta}(G_{11}+G_{12})\right|+C\epsilon^4t^{-1-\delta}\\
        &\le 2\left|\int \frac{I+\mathcal{H}}{2}\partial_\alpha^sD_t\bar{\zeta}(G_{11}+G_{12})\right|+C\epsilon^4t^{-1-\delta}\\
        &=2\left|\int \frac{I+\mathcal{H}}{2}\partial_\alpha^sD_t\bar{\zeta}\frac{I-\mathcal{H}}{2}(G_{11}+G_{12})\right|+C\epsilon^4t^{-1-\delta}\\
        &\le C\epsilon^4t^{-1-\delta}.
    \end{align*}
\end{enumerate}
\vspace*{2ex}

\noindent $\bullet$ Verification of (\ref{decay of (I-H)G1}). 

\vspace*{2ex}

\noindent \textbf{Subcase:} For $G_{11}$:

Note that in this part, $f(\alpha)=\zeta-\alpha$. 
Let us define the frequency decomposition as $f_L=P_{<t^{-1/2-\delta}}f,\ f_H=P_{\ge t^{-1/2-\delta}}f$ for some $\delta>0$. First, we use the bound
    \[
    \norm{\partial_\alpha f_L}_{L^\infty}\le Ct^{-\frac{1}{2}-\delta}\norm{\Lambda f_L}_{L^2}\le C\epsilon t^{-\frac{1}{2}-\delta},
    \]
to handle terms involving the low-frequency component: 
\begin{align*}
    \norm{\int\frac{(f_L(\alpha)-f_L(\beta))(\partial_\alpha^sD_t\bar{\zeta}-\partial_\beta^sD_t\bar{\zeta})}{(\zeta(\alpha)-\zeta(\beta))^2}\partial_\beta D_t\zeta d\beta}_{L^2}\leq &\  C\norm{\partial_{\alpha}f_L}_{L^{\infty}}\norm{\partial_{\alpha}^sD_t\zeta}_{L^2}\norm{\partial_{\alpha}D_t\zeta}_{W^{1,\infty}}\\
    \leq &\  C\epsilon^2t^{-1-\delta}\sqrt{\mathcal{E}_s}.
\end{align*}
For the high-frequency component:
\[
\frac{2}{\pi i}\int\frac{(f_H(\alpha)-f_H(\beta))(\partial_\alpha^sD_t\bar{\zeta}-\partial_\beta^sD_t\bar{\zeta})}{(\zeta(\alpha)-\zeta(\beta))^2}\partial_\beta D_t\zeta d\beta,
\]
we apply Lemma \ref{decouple-2} to establish the desired decay \footnote{In practice, we need to raise the derivative of $f$ before applying Lemma \ref{decouple-2}; this process will be detailed in the treatment of $G_{12}$.}. Thus, we conclude the estimate for $\norm{G_{11}}_{L^2}$. 

\noindent \textbf{Subcase:} For $G_{12}$:

We first handle the low frequency part as above. Afterward, the remaining contribution from $G_{12}$ is
\begin{equation}\label{the main term of G1}
    \tilde{G}_{12}=-\frac{2}{\pi i}\int\frac{(\bar{f}_H(\alpha)-\bar{f}_H(\beta))(\partial_\alpha^sD_t\bar{\zeta}-\partial_\beta^sD_t\bar{\zeta})}{(\zeta(\alpha)-\zeta(\beta))^2}\partial_\beta D_t\zeta d\beta.
\end{equation}
\noindent $\bullet$ When $|\alpha|$ is far from $t$. 

\noindent As in the treatment of $G_{22}$, we apply Lemma \ref{small or big alpha} to $\tilde{G}_{12}$ to get the desired decay. One should notice that
\begin{align*}
    \norm{f_H}_{L^\infty}
    &\le \norm{P_{>t^2}f}_{L^\infty}+\norm{P_{t^{-1/2-\delta}<|\cdot|\le t^2}f}_{L^\infty},\\
    &\le Ct^{-2}\norm{\partial_\alpha P_{>t^2}f}_{L^\infty}+C\norm{\Lambda P_{t^{-1/2-\delta}<|\cdot|\le t^2}f}_{L^2}\ln t,\\
    &\le C\epsilon\ln t.
\end{align*}

\noindent $\bullet$ When $\alpha\in S(t)$. 

\noindent Decompose $1_{S(t)}\tilde{G}_{12}$ as
\begin{align*}
    1_{S(t)}\tilde{G}_{12}
    &=-\frac{2}{\pi i}1_{S(t)}\left(\int_{|\alpha-\beta|<t^{3/4+1/40}}+\int_{|\alpha-\beta|\ge t^{3/4+1/40}}\right)\frac{(\bar{f}_H(\alpha)-\bar{f}_H(\beta))(\partial_\alpha^sD_t\bar{\zeta}-\partial_\beta^sD_t\bar{\zeta})}{(\zeta(\alpha)-\zeta(\beta))^2}\partial_\beta D_t\zeta d\beta,\\
    &=1_{S(t)}\tilde{G}_{12}^1+1_{S(t)}\tilde{G}_{12}^2.
\end{align*}
\textbf{Subsubcase:} For $\tilde{G}_{12}^2$:

\noindent 
Using Young's inequality, we have
\[
\norm{\tilde{G}_{12}^2}_{L^2}\le \norm{\bar{f}_H}_{L^\infty}\norm{\partial_\alpha^sD_t\bar{\zeta}}_{L^2}\norm{\partial_\alpha D_t\zeta}_{L^\infty}\int_{t^{3/4+1/40}}^\infty\frac{dr}{r^2}
\le C\epsilon^3t^{-\frac{5}{4}-\frac{1}{40}}\ln t.
\]
\textbf{Subsubcase:} For $\tilde{G}_{12}^1$, using Lemma \ref{transition-lemma}, write\footnote{Note that in the following proof we need to derive $|\alpha|\sim|\beta|$ from $\alpha\in S(t)$, which implies that we must assume $3/4+1/40<1-\mu$; indeed, one may take $\mu=1/5$.}
\begin{align*}
        \tilde{G}_{12}^1
        &=\frac{1}{\pi i}\int_{|\alpha-\beta|\le t^{3/4+1/40}}\frac{\left(\frac{2\alpha}{it}\partial_t\bar{f}_H-\frac{2\beta}{it}\partial_t\bar{f}_H\right)(\partial_\alpha^sD_t\bar{\zeta}-\partial_\beta^sD_t\bar{\zeta})}{(\zeta(\alpha)-\zeta(\beta))^2}\partial_\beta D_t\zeta d\beta\\
        &\ \ \ +\frac{1}{\pi i}\int_{|\alpha-\beta|\le t^{3/4+1/40}}\frac{\left(\frac{2}{it}\tilde{\Omega}_0\bar{f}_H(\alpha)-\frac{2}{it}\tilde{\Omega}_0\bar{f}_H(\beta)\right)(\partial_\alpha^sD_t\bar{\zeta}-\partial_\beta^sD_t\bar{\zeta})}{(\zeta(\alpha)-\zeta(\beta))^2}\partial_\beta D_t\zeta d\beta.
    \end{align*}
    Recall that
    \[
    \norm{\partial_\alpha\tilde{\Omega}_0\bar{f}_H}_{L^\infty}=\norm{\partial_{\alpha}\Omega_0f_H}_{L^{\infty}}\le C\epsilon t^{\delta_0}\ln t,
    \]
    Hence we can estimate the second term of $\tilde{G}_{12}^1$ as follows:
    \begin{align*}
        &\norm{\frac{1}{\pi i}\int_{|\alpha-\beta|\le t^{3/4+1/40}}\frac{\left(\frac{2}{it}\tilde{\Omega}_0\bar{f}_H(\alpha)-\frac{2}{it}\tilde{\Omega}_0\bar{f}_H(\beta)\right)(\partial_\alpha^sD_t\bar{\zeta}-\partial_\beta^sD_t\bar{\zeta})}{(\zeta(\alpha)-\zeta(\beta))^2}\partial_\beta D_t\zeta d\beta}_{L^2}\\
        &\le\norm{\frac{2}{it}\partial_\alpha\tilde{\Omega}_0\bar{f}_H}_{W^{1,\infty}}\norm{\partial_\alpha^sD_t\bar{\zeta}}_{L^2}\norm{\partial_\alpha D_t\zeta}_{W^{1,\infty}}\ln t,\\
        &\le C\epsilon^3t^{-\frac{3}{2}+\delta_0}(\ln t)^2.
    \end{align*}
    Decompose the first term as
    \begin{align*}
        &\frac{1}{\pi i}\int_{|\alpha-\beta|\le t^{3/4+1/40}}\frac{(\partial_t\bar{f}_H(\alpha)-\partial_t\bar{f}_H(\beta))(\partial_\alpha^sD_t\bar{\zeta}-\partial_\beta^sD_t\bar{\zeta})}{(\zeta(\alpha)-\zeta(\beta))^2}\frac{2\beta}{it}\partial_\beta D_t\zeta d\beta\\
        &\ \ \ +\frac{2}{it}\partial_t\bar{f}_H\frac{1}{\pi i}\int_{|\alpha-\beta|\le t^{3/4+1/40}}\frac{(\alpha-\beta)(\partial_\alpha^sD_t\bar{\zeta}-\partial_\beta^sD_t\bar{\zeta})}{(\zeta(\alpha)-\zeta(\beta))^2}\partial_\beta D_t\zeta d\beta,\\
        &=-\frac{1}{\pi i}\int_{|\alpha-\beta|\le t^{3/4+1/40}}\frac{((D_t\bar{\zeta})_H(\alpha)-(D_t\bar{\zeta})_H(\beta))(\partial_\alpha^sD_t\bar{\zeta}-\partial_\beta^sD_t\bar{\zeta})}{(\zeta(\alpha)-\zeta(\beta))^2}(\zeta_\beta-1)d\beta\\
        &\ \ \ +\frac{2}{it}\partial_t\bar{f}_H\frac{1}{\pi i}\int_{|\alpha-\beta|\le t^{3/4+1/40}}\frac{(\alpha-\beta)(\partial_\alpha^sD_t\bar{\zeta}-\partial_\beta^sD_t\bar{\zeta})}{(\zeta(\alpha)-\zeta(\beta))^2}\partial_\beta D_t\zeta d\beta+O(\epsilon^3t^{-\frac{5}{4}-\delta})_{L^2},\\
        &:=I+II+O(\epsilon^3t^{-\frac{5}{4}-\delta})_{L^2}. 
    \end{align*}
Applying Lemma \ref{useful lemma}, we obtain the following estimate for $II$:
\begin{align*}
\norm{II}_{L^2}\leq \frac{C}{t}\norm{\partial_t\bar{f}_H}_{L^\infty}\norm{\partial_\alpha^sD_t\bar{\zeta}}_{L^2}\norm{\partial_\alpha D_t\zeta}_{W^{1,\infty}}\ln t
\le C\epsilon^3t^{-\frac{7}{4}}\ln t.
\end{align*}
Note that this approach is sufficient to handle the high frequency part of $G_{11}$, as it has no leading-order term according to Lemma \ref{decouple-2}. 
Using the Localization Lemma \ref{decouple}(note that for $\mu\le 1/5$, the Localization Lemma \ref{decouple} applies to $1_{S(t)}I$), we derive
\[
1_{S(t)}I=-1_{S(t)}\frac{it^2}{2\alpha^2}(D_t\bar{\zeta})_H\partial_\alpha^sD_t\bar{\zeta}(\zeta_\alpha-1)+O(\epsilon^3t^{-\frac{5}{4}-\delta})_{L^2},
\]
i.e.
\begin{align*}
    1_{S(t)}\tilde{G}_{12}
    =-1_{S(t)}\frac{it^2}{2\alpha^2}(D_t\bar{\zeta})_H\partial_\alpha^s D_t\bar{\zeta}(\zeta_\alpha-1)+O(\epsilon^3t^{-\frac{5}{4}-\delta})_{L^2}.
\end{align*}
Next, we apply the Transition of Derivatives Lemma \ref{transition-lemma} to rewrite the term $\frac{it^2}{2\alpha^2}(D_t\bar{\zeta})_H\partial_\alpha^sD_t\bar{\zeta}(\zeta_\alpha-1)$ in the following manner:
\begin{itemize}
\item [(1)] We first distribute $\frac{it^2}{4\alpha^2}$ to $\zeta_\alpha-1$. By Lemma \ref{transition-lemma}, for $\alpha\in S(t)$,
$$\frac{it^2}{4\alpha^2}(\zeta_\alpha-1)=\partial_\alpha(\zeta_\alpha-1)+O(\epsilon t^{-\frac{3}{5}+\delta_0}).$$
This yields
$$1_{S(t)}\tilde{G}_{12}=-1_{S(t)}2(D_t\bar{\zeta})_H\partial_\alpha^sD_t\bar{\zeta}\zeta_{\alpha\alpha}+(\epsilon^3t^{-1-\delta})_{L^2}.$$

\item [(2)] Next, we move one derivative from $\partial_{\alpha}^s D_t\bar{\zeta}$ to both $(D_t\bar{\zeta})_H$ and $\zeta_{\alpha\alpha}$, respectively:
\begin{align*}
    (D_t\bar{\zeta})_H\partial_\alpha^sD_t\bar{\zeta}\zeta_{\alpha\alpha}
    =&(D_t\bar{\zeta})_H\left(\frac{it^2}{4\alpha^2}\right)^2\partial_\alpha^{s-2}D_t\bar{\zeta}\zeta_{\alpha\alpha}+(\epsilon^3t^{-1-\delta})_{L^2}\\
    =&\left(\frac{it^2}{4\alpha^2}(D_t\bar{\zeta})_H\right)\partial_\alpha^{s-2}D_t\bar{\zeta}\left(\frac{it^2}{4\alpha^2}\zeta_{\alpha\alpha}\right)+(\epsilon^3t^{-1-\delta})_{L^2}\\
    =&-\partial_{\alpha}D_t\bar{\zeta}\partial_{\alpha}^{s-2}D_t\bar{\zeta}\partial_\alpha^3\zeta+(\epsilon^3t^{-1-\delta})_{L^2}.
\end{align*}

\item [(3)] Finally, we rewrite $\partial_{\alpha}D_t\bar{\zeta}$ as
$$\partial_{\alpha}D_t\bar{\zeta}=\frac{\partial_{\alpha}D_t\bar{\zeta}}{\zeta_{\alpha}}+\partial_{\alpha}D_t\bar{\zeta}\left(1-\frac{1}{\zeta_\alpha}\right).$$
Substituting this expression into the formula, we obtain
\begin{align*}
    1_{S(t)}\tilde{G}_{12}=-2 \frac{\partial_{\alpha}D_t\bar{\zeta}}{\zeta_{\alpha}}\partial_{\alpha}^{s-2}D_t\bar{\zeta}\partial_\alpha^3\zeta+(\epsilon^3t^{-1-\delta})_{L^2}.
\end{align*}
\end{itemize}

%\begin{align*}
%    1_{S(t)}\tilde{G}_{12}
 %   &=\frac{2}{i}D_t^2\bar{\zeta}\partial_\alpha^{s-2}D_t\bar{\zeta}\partial_\alpha^3 D_t\zeta+O(\epsilon^3t^{-1-\delta})_{L^2}\\
%    &=-2(\bar{\zeta}_\alpha-1)\partial_\alpha^{s-2}D_t\bar{\zeta}\partial_\alpha^3D_t\zeta+O(\epsilon^3t^{-1-\delta})_{L^2}\\
%    &=-2\partial_\alpha^{s-2}\bar{\zeta}\partial_\alpha^3D_t\zeta\frac{\partial_\alpha D_t\bar{\zeta}}{\zeta_\alpha}+O(\epsilon^3t^{-1-\delta})_{L^2}.
%\end{align*}
Since $(I-\mathcal{H})\frac{\partial_{\alpha}D_t\bar{\zeta}}{\zeta_{\alpha}}=0$, we have 
\begin{align*}
    (I-\mathcal{H})\left(\frac{\partial_{\alpha}D_t\bar{\zeta}}{\zeta_{\alpha}}\partial_{\alpha}^{s-2}D_t\bar{\zeta}\partial_\alpha^3\zeta\right)
    &=[\partial_{\alpha}^{s-2}D_t\bar{\zeta}\partial_\alpha^3\zeta,\mathcal{H}]\frac{\partial_\alpha D_t\bar{\zeta}}{\zeta_\alpha}.
\end{align*}
From the proof of \eqref{W s-2 norm of b_alpha}, we have
\begin{align*}
\norm{\partial_{\alpha}\Big(\partial_{\alpha}^{s-2}D_t\bar{\zeta}\partial_\alpha^3\zeta\Big)}_{L^{\infty}}\leq C\epsilon^2 t^{-7/6+\delta_0}.
\end{align*}
Hence, by Lemma \ref{singular}, we have 
\begin{align*}
    \norm{(I-\mathcal{H})\tilde{G}_{12}}_{L^2}\leq &\  \norm{2 [\partial_{\alpha}^{s-2}D_t\bar{\zeta}\partial_\alpha^3\zeta,\mathcal{H}]\frac{\partial_\alpha D_t\bar{\zeta}}{\zeta_\alpha}}_{L^2}+C\epsilon^3 t^{-1-\delta'}\\
    \leq &\  C\norm{\partial_{\alpha}\Big(\partial_{\alpha}^{s-2}D_t\bar{\zeta}\partial_\alpha^3\zeta\Big)}_{L^{\infty}}\norm{D_t\bar{\zeta}}_{L^2}+ C\epsilon^3 t^{-1-\delta'}\\
    \leq &\   C\epsilon^3 t^{-1-\delta'}.
\end{align*}
This completes the proof.

\end{proof}

\subsection{Estimate $E_s^{\sigma}$}

Recall that
\[
\frac{dE_s^\sigma}{dt}=Re\left\{\int\frac{2D_t\bar{\sigma}_s}{A}G_s^\sigma d\alpha\right\}-\int\frac{|D_t\sigma_s|^2}{A}\left(\frac{a_t}{a}\circ\kappa^{-1}\right)d\alpha,
\]
where
\[
G_s^\sigma=\partial_\alpha^sD_tG+i\partial_\alpha^s\left(\frac{a_t}{a}\circ\kappa^{-1}A\theta_\alpha\right)+[D_t^2-iA\partial_\alpha,\partial_\alpha^s]\sigma,
\]

As for $G_s^\theta$, we have the following estimate.
\begin{lemma}
    There holds
    \[
    \left|\int\frac{D_t\bar{\sigma}_k}{A}[D_t^2-iA\partial_\alpha,\partial_\alpha^k]\sigma d\alpha\right|\le C\epsilon^4t^{-1-\delta},\ \forall 1\le k\le s.
    \]
\end{lemma}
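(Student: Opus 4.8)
The plan is to mirror the proof of Lemma~\ref{calculate for k order commutator}, keeping track of the fact that here $\sigma=D_t\theta$, so that after expanding the commutator the ``high-order'' factors are $\partial_\alpha^m D_t\theta$ and $\partial_\alpha^m D_t^2\theta$ rather than $\partial_\alpha^m\theta$ and $\partial_\alpha^m D_t\theta$. First I would write
\[
[D_t^2-iA\partial_\alpha,\partial_\alpha^k]\sigma=D_t[D_t,\partial_\alpha^k]\sigma+[D_t,\partial_\alpha^k]D_t\sigma+i[A\partial_\alpha,\partial_\alpha^k]\sigma,
\]
using $[D_t,\partial_\alpha^k]=-\sum_{j=1}^k C_{j,k}(\partial_\alpha^j b)\partial_\alpha^{k-j+1}$ and $[A\partial_\alpha,\partial_\alpha^k]=-\sum_{j=1}^k C_{j,k}(\partial_\alpha^j A)\partial_\alpha^{k-j+1}$. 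Pairing with $D_t\bar\sigma_k/A$, using $1-C\epsilon^2\le A\le 1+C\epsilon^2$ and $\|D_t\sigma_k\|_{L^2}\le C\sqrt{\mathcal{E}_s}\le C\epsilon$ (the latter from $\mathcal{E}_s\sim\epsilon^2$ and Lemma~\ref{Hs and order s energy}), Cauchy--Schwarz reduces the claim to showing that each individual summand, measured in $L^2$, is $O(\epsilon^3 t^{-1-\delta})$; since $k=s$ carries the most derivatives it suffices to treat that case.

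For the coefficient factors I would invoke Lemma~\ref{norms of b} (in particular $\|\partial_\alpha b\|_{H^{s-1}}\le C\epsilon^2 t^{-1+\delta}$, $\|\partial_\alpha b\|_{W^{s-2,\infty}}\le C\epsilon^2 t^{-7/6+\delta}$, and $\|D_t\partial_\alpha^j b\|_{L^2}\le C\epsilon^2 t^{-1+\delta_0}\ln t$ for $1\le j\le s$) and Proposition~\ref{norms of A-1} ($\|A-1\|_{H^s}\le C\epsilon^2 t^{-1+\delta_0}\ln t$, $\|A-1\|_{W^{s-2,\infty}}\le C\epsilon^2 t^{-5/4+\delta}$). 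For the high-order factors I would use $\mathcal{E}_s$-control, namely $\|\partial_\alpha^m D_t\theta\|_{L^2}+\|\partial_\alpha^m D_t^2\theta\|_{L^2}\le C\sqrt{\mathcal{E}_s}+C\epsilon^2 t^{-1/2}\ln t$ for $0\le m\le s$ (for $D_t^2\theta$ one first writes $\partial_\alpha^s D_t^2\theta=D_t\sigma_s-[D_t,\partial_\alpha^s]\sigma$, the commutator being of strictly lower order and thus absorbable), together with the pointwise decay $\|\partial_\alpha^m D_t\theta\|_{L^\infty}+\|\partial_\alpha^m D_t^2\theta\|_{L^\infty}\le C\epsilon t^{-1/2}$ for $m$ small, which follows from the bootstrap assumption~\eqref{bootstrap-3} after replacing $D_t\theta$ and $D_t^2\theta$ by $2D_t\zeta$ and $2D_t^2\zeta$ via Lemma~\ref{remainder of Dt theta} and Lemma~\ref{structure of Dt sigma}. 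In each summand one splits according to whether the derivative index on the high-order factor is $\le s-2$ (put that factor in $L^\infty$, the coefficient in $L^2$) or $>s-2$ (put the factor in $L^2$, the coefficient in $L^\infty$, or observe that the complementary index is then small so the factor in fact enjoys the $t^{-1/2}$ decay).

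\textbf{Expected main obstacle.} The borderline terms are those in which nearly all derivatives land on a single factor: $D_t(\partial_\alpha^s b\cdot\partial_\alpha\sigma)$, $D_t(\partial_\alpha b\cdot\partial_\alpha^s D_t\sigma)$, and $(\partial_\alpha^s A)\partial_\alpha\sigma$, $(\partial_\alpha^{s-1}A)\partial_\alpha^2\sigma$, where the coefficient carries more derivatives than $W^{s-2,\infty}$ can absorb. For these I would bound the coefficient in $L^2$ (via the $H^{s-1}$/$H^s$ estimates above) and exploit that the surviving factor then carries few derivatives, hence has the decay of~\eqref{bootstrap-3}; for the pieces where $D_t$ falls on $\partial_\alpha^s b$ one additionally uses $\|D_t\partial_\alpha^s b\|_{L^2}\le C\epsilon^2 t^{-1+\delta_0}\ln t$. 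Multiplying out yields $O(\epsilon^3 t^{-3/2+\delta})$ or $O(\epsilon^3 t^{-7/6+\delta})$ for each term, and after the outer Cauchy--Schwarz and summation over $1\le k\le s$ this gives the claimed $O(\epsilon^4 t^{-1-\delta})$ provided $\delta$ is chosen sufficiently small (e.g. $\delta<1/6$).
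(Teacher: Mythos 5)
Your proposal is correct and takes essentially the same route as the paper, whose entire proof of this lemma is the single sentence ``Repeat the proof for Lemma~\ref{calculate for k order commutator} yields the result.'' You have simply fleshed out what that repetition entails: the same commutator decomposition $[D_t^2,\partial_\alpha^k]=D_t[D_t,\partial_\alpha^k]+[D_t,\partial_\alpha^k]D_t$ with $[D_t,\partial_\alpha^k]=-\sum_{j\ge1}C_{j,k}(\partial_\alpha^jb)\partial_\alpha^{k-j+1}$, the same coefficient bounds from Lemma~\ref{norms of b} and Proposition~\ref{norms of A-1}, and the same split according to whether the surviving factor carries few or many derivatives, with the only difference being that the high-order factors are now $\partial_\alpha^m\sigma$ and $\partial_\alpha^m D_t\sigma$ (i.e.\ $\partial_\alpha^m D_t\theta$ and $\partial_\alpha^m D_t^2\theta$) rather than $\partial_\alpha^m\theta$ and $\partial_\alpha^m D_t\theta$; the final form $C\epsilon^4 t^{-1-\delta}$ rather than $C\epsilon^2 t^{-1-\delta}\mathcal{E}_s$ is as you note just the substitution $\mathcal{E}_s\lesssim\epsilon^2$. (Minor remark: in your list of borderline terms, the piece that arises with $j=1$ from $[D_t,\partial_\alpha^s]D_t\sigma$ is $\partial_\alpha b\cdot\partial_\alpha^s D_t\sigma$ without an outer $D_t$; the estimate you propose for it still goes through, so this is only a notational slip.)
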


\begin{proof}
    Repeat the proof for Lemma \ref{calculate for k order commutator} yields the result.
\end{proof}

Also, we need to estimate the extra term that doesn't appear in $G_\theta^s$.
\begin{lemma}
    There exists $\delta>0$ such that
    \[
    \norm{\left(\frac{a_t}{a}\circ\kappa^{-1}\right)A\theta_\alpha}_{H^s}\le C\epsilon^3t^{-1-\delta}.
    \]
\end{lemma}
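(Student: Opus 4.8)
The plan is to treat $\bigl(\tfrac{a_t}{a}\circ\kappa^{-1}\bigr)A\theta_\alpha$ as a product of factors, each of which is already controlled in both $H^s$ and $W^{s-2,\infty}$, and to distribute the $s$ derivatives via the Leibniz rule so that a ``bad'' derivative (one carrying little or no time decay) is never paired with another ``bad'' factor. Concretely, I would first write $A=1+(A-1)$ and dispose of the $(A-1)$ contribution at once: by Proposition \ref{norms of A-1} we have $\norm{A-1}_{H^s}\le C\epsilon^2t^{-1+\delta_0}\ln t$ and $\norm{A-1}_{W^{s-2,\infty}}\le C\epsilon^2t^{-5/4+\delta}$, so $\bigl(\tfrac{a_t}{a}\circ\kappa^{-1}\bigr)(A-1)\theta_\alpha$ is a product of three small, well-decaying factors and a Moser-type estimate (Lemma \ref{singular}) bounds its $H^s$ norm by $C\epsilon^5t^{-1-\delta}$, far better than required. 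Thus it suffices to estimate $\norm{\bigl(\tfrac{a_t}{a}\circ\kappa^{-1}\bigr)\theta_\alpha}_{H^s}$.

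For this I would record the two relevant sets of bounds. From Proposition \ref{norm of at/a}, $\norm{\tfrac{a_t}{a}\circ\kappa^{-1}}_{H^s}\le C\epsilon^2t^{-1+\delta_0}\ln t$ and $\norm{\tfrac{a_t}{a}\circ\kappa^{-1}}_{W^{s-2,\infty}}\le C\epsilon^2t^{-1-\delta}$; for $\theta_\alpha$, Lemma \ref{Hs and order s energy} together with the bootstrap assumptions gives $\norm{\theta_\alpha}_{H^s}\le C\epsilon$, while $\theta_\alpha=-2iD_t^2\zeta+O(\epsilon^2t^{-1/2})_{W^{s-2,\infty}}$ (as in the proof of Proposition \ref{good-replace}), together with \eqref{bootstrap-3}, gives $\norm{\theta_\alpha}_{W^{s-2,\infty}}\le C\epsilon t^{-1/2}$. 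Expanding $\partial_\alpha^s\bigl((\tfrac{a_t}{a}\circ\kappa^{-1})\theta_\alpha\bigr)=\sum_{j=0}^s\binom{s}{j}\partial_\alpha^j(\tfrac{a_t}{a}\circ\kappa^{-1})\,\partial_\alpha^{s-j}\theta_\alpha$, I would split at $j=s-2$: for $0\le j\le s-2$ put $\partial_\alpha^j(\tfrac{a_t}{a}\circ\kappa^{-1})$ in $L^\infty$ (cost $\epsilon^2t^{-1-\delta}$) and $\partial_\alpha^{s-j}\theta_\alpha$ in $L^2$ (cost $\epsilon$), giving $C\epsilon^3t^{-1-\delta}$; for $j=s-1,s$ put $\partial_\alpha^j(\tfrac{a_t}{a}\circ\kappa^{-1})$ in $L^2$ (cost $\epsilon^2t^{-1+\delta_0}\ln t$) and $\partial_\alpha^{s-j}\theta_\alpha$ in $L^\infty$ — here $s-j\le 1\le s-2$, so that factor costs $\epsilon t^{-1/2}$ — giving $C\epsilon^3t^{-3/2+\delta_0}\ln t$. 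Summing over $j$ and choosing $\delta_0$ small then yields the claimed bound.

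The only point requiring a moment's care — and where I would be most careful — is the extreme term $j=0$, i.e.\ $(\tfrac{a_t}{a}\circ\kappa^{-1})\,\partial_\alpha^s\theta_\alpha$: here $\partial_\alpha^s\theta_\alpha$ sits at top order, so it carries no time decay and is bounded only by $\sqrt{\mathcal{E}_s}\lesssim\epsilon$ through Lemma \ref{Hs and order s energy}. It must therefore be paired with the full $L^\infty$ decay of $\tfrac{a_t}{a}\circ\kappa^{-1}$, namely $\norm{\tfrac{a_t}{a}\circ\kappa^{-1}}_{L^\infty}\le C\epsilon^2t^{-1-\delta}$ from Proposition \ref{norm of at/a} (equivalently the $L^\infty$ estimate obtained in the proof of Proposition \ref{norms of A-1}), which indeed produces $C\epsilon^3t^{-1-\delta}$. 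Symmetrically, the top-order derivative of $\tfrac{a_t}{a}\circ\kappa^{-1}$ only decays like $t^{-1+\delta_0}$ in $L^2$, so it is paired with a low-order factor of $\theta_\alpha$ enjoying pointwise $t^{-1/2}$ decay. Since no single term forces both ``bad'' factors at once, every term is controlled; there is no genuine obstacle here, the entire content being the correct pairing of norms and the observation that the $A-1$ piece is negligible.
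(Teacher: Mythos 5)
Your proof is correct and follows essentially the same strategy as the paper: distribute the $s$ derivatives via Leibniz and, for each term, place the $H^s$ control on whichever factor carries high-order derivatives while placing $L^\infty$ (i.e.\ $W^{s-2,\infty}$) decay on the rest, noting in particular that the undifferentiated $\frac{a_t}{a}\circ\kappa^{-1}$ supplies the $t^{-1-\delta}$ decay needed against $\partial_\alpha^s\theta_\alpha$, and conversely $\theta_\alpha$'s $t^{-1/2}$ pointwise decay compensates for the weaker $t^{-1+\delta_0}\ln t$ growth of $\norm{\frac{a_t}{a}\circ\kappa^{-1}}_{H^s}$. The only cosmetic difference is that you peel off $A-1$ at the outset, whereas the paper keeps $A$ and remarks that derivatives landing on $A$ are handled similarly; both are fine.
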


\begin{proof}
    We only verify the $L^2$ norm of $s$-order derivatives.
    Actually, we have (according to Proposition \ref{norm of at/a})
    \[
    \norm{\partial_\alpha^s\left(\frac{a_t}{a}\circ\kappa^{-1}\right)\cdot A\theta_\alpha}_{L^2}\le C\norm{\frac{a_t}{a}\circ\kappa^{-1}}_{H^s}\norm{\theta_\alpha}_{L^\infty}
    \le C\epsilon^3t^{-1-\delta}.
    \]  
    A similar estimate holds when the derivatives apply to $A$. In addition, there holds
    \[
    \norm{\left(\frac{a_t}{a}\circ\kappa^{-1}\right)A\partial_\alpha^{s}\theta_\alpha}_{L^2}\le C\norm{\frac{a_t}{a}\circ\kappa^{-1}}_{L^\infty}\norm{\theta_\alpha}_{H^s}
    \le C\epsilon^3t^{-1-\delta}.
    \]
    This implies the conclusion.
\end{proof}

 \begin{proposition}\label{estimate of dE_{1/2}/dt}
     There holds
     \begin{equation}
         \left|\frac{d}{dt}E_s^\sigma(t)\right|\le C\epsilon^2t^{-1-\delta'}\mathcal{E}_s+C\epsilon^4t^{-1-\delta'}
     \end{equation}
     for some $\delta'>0$.
 \end{proposition}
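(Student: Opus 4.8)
The plan is to follow the template already established for $E_s^\theta$ in Proposition \ref{formula of dE_s/dt}, since the energy identity for $E_s^\sigma$ has exactly the same shape: the term involving $\frac{a_t}{a}\circ\kappa^{-1}$ is absorbed by Lemma \ref{third term}, the commutator $[D_t^2-iA\partial_\alpha,\partial_\alpha^s]\sigma$ is absorbed by the lemma just proved, the extra term $i\partial_\alpha^s\big(\frac{a_t}{a}\circ\kappa^{-1}A\theta_\alpha\big)$ is absorbed by the lemma showing $\|(\frac{a_t}{a}\circ\kappa^{-1})A\theta_\alpha\|_{H^s}\le C\epsilon^3 t^{-1-\delta}$ (paired with $\|D_t\sigma_s\|_{L^2}\le C\sqrt{\mathcal{E}_s}$ via Cauchy--Schwarz), and so the whole matter reduces to estimating $\mathrm{Re}\int\frac{2D_t\bar\sigma_s}{A}\partial_\alpha^s D_t G\,d\alpha$. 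So the first step is to record these three reductions explicitly and reduce to the single quantity $\int \frac{2D_t\bar\sigma_s}{A}\,\partial_\alpha^s D_t G\,d\alpha$.

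The second and main step is to analyze $\partial_\alpha^s D_t G = \partial_\alpha^s D_t(G_1+G_2)$. Here the key observation is that applying $D_t$ to $G_1$ and $G_2$ (which are given by the trilinear singular-integral expressions in \eqref{formula for G}) produces, up to better-behaved commutator terms (of the type $[D_t,\mathcal{H}]$, which equals $[D_t\zeta,\mathcal{H}]\frac{\cdot}{\zeta_\alpha}$ and is quadratic-small), again trilinear singular integrals with the same holomorphicity structure, but now with one of the factors being $D_t^2\zeta$ or $D_t^2\bar\zeta$ (or $\partial_\alpha D_t^2\zeta$) in place of $D_t\zeta$ or $\partial_\alpha D_t\zeta$. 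In particular, $D_t\theta = \tilde\sigma$ and by Lemma \ref{structure of Dt sigma} one has $D_t\tilde\sigma = 2D_t^2\zeta + e$ with $\|e\|_{H^s}\le C\epsilon^2 t^{-1/2}$, and by Lemma \ref{remainder of Dt theta} one has $D_t\bar\theta = 2D_t\bar\zeta + O(\epsilon^2 t^{-1/2})_{H^s}$; combining these, $D_t\bar\sigma_s = 2\partial_\alpha^s D_t^2\bar\zeta + O(\epsilon^2 t^{-1/2})_{L^2}$. Then I would split $\partial_\alpha^s D_t G$ into: (i) terms where at least one factor is low-frequency ($P_{<t^{-2}}$), handled by Bernstein exactly as in the $E_s^\theta$ proof, giving decay faster than $t^{-1}$; (ii) the high-frequency leading terms, to which I apply the localization lemmas (Lemma \ref{decouple} on $S(t)=\{t^{4/5}\le|\alpha|\le t^{6/5}\}$, Lemma \ref{small or big alpha} off $S(t)$), together with Lemma \ref{decouple-2} for the pieces whose main term vanishes. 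As in Proposition \ref{formula of dE_s/dt}, the $G_1$-descended pieces satisfy $(I-\mathcal{H})(\text{leading part}) = O(\epsilon^3 t^{-1-\delta})_{L^2}$, so pairing against $\frac{I+\mathcal H}{2}D_t\bar\sigma_s$ (using holomorphicity of $D_t\bar\zeta$ and hence of $D_t^2\bar\zeta$ modulo small errors) kills them; the $G_2$-descended leading term becomes, after the localization and Transition-of-Derivatives manipulations, a purely imaginary multiple of $\frac{\alpha}{t}|D_t^2\zeta|^2|\partial_\alpha^s D_t^2\zeta|^2$ (times $\frac1A$), which disappears after $\mathrm{Re}$. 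Everything left over is $O(\epsilon^3 t^{-1-\delta}\sqrt{\mathcal{E}_s}) + O(\epsilon^4 t^{-1-\delta})$ after one more Cauchy--Schwarz against $D_t\bar\sigma_s$.

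The main obstacle, as in the $E_s^\theta$ case, is the combination of insufficient decay of the top-order factor $\partial_\alpha^s D_t^2\zeta$ (which is only controlled in $L^2$, with no pointwise decay, and whose vector field $L_0\partial_\alpha^s D_t^2\zeta$ is \emph{not} controlled) with the insufficient decay of the low-frequency / slowly-decaying factors such as $\zeta-\alpha$ and $D_t\zeta$ (decay only $t^{-1/4}$). This is resolved exactly as before: one raises derivatives using \eqref{equ:increase a derivative}--\eqref{equ:increase a derivative 2} to transfer derivatives off the top-order factor when $|\alpha|$ is near $t$ (where $\partial_\alpha f \approx \frac{it^2}{4\alpha^2}f$ up to fast-decaying $L_0,\Omega_0$ errors), uses Lemma \ref{lemma:transit derivatives}, Corollary \ref{use in chapter 6}, and Lemma \ref{lemma:transit2} to transit derivatives in products $D_t\zeta\,\partial_\alpha^k(\cdots)$, and uses Lemma \ref{bouns decay for small or big alpha} and Corollary \ref{decay for Dt zeta near t} for the regions $|\alpha|\ge t^{6/5}$ or $|\alpha|\le t^{4/5}$. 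The new wrinkle relative to Proposition \ref{formula of dE_s/dt} is purely bookkeeping: $D_t$ hitting the singular kernel $\frac{1}{(\zeta(\alpha)-\zeta(\beta))^2}$ produces extra factors of $b$ and $\partial_\alpha D_t\zeta$, but since $b = O(\epsilon^2 t^{-3/4}\ln t)$ pointwise (Lemma \ref{norms of b}) and $\partial_\alpha b \in t^{-1+\delta}H^{s-1}$, these are strictly better than the terms already handled and contribute only to the error. Hence I expect the proof to be a (somewhat lengthy) rerun of Proposition \ref{formula of dE_s/dt} with $D_t\zeta \rightsquigarrow D_t^2\zeta$, $\partial_\alpha D_t\zeta \rightsquigarrow \partial_\alpha D_t^2\zeta$, and extra $D_t$-commutator error terms, all of which fit the scheme, yielding $|\frac{d}{dt}E_s^\sigma| \le C\epsilon^2 t^{-1-\delta'}\mathcal{E}_s + C\epsilon^4 t^{-1-\delta'}$.
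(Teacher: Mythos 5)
Your proposal is correct and follows essentially the same route as the paper: reduce to $\mathrm{Re}\int\frac{2D_t\bar\sigma_s}{A}\partial_\alpha^s D_tG$, use $D_t\bar\sigma_s=2\partial_\alpha^s D_t^2\bar\zeta+O(\epsilon^2t^{-1/2})_{L^2}$ and the almost-holomorphicity of $\partial_\alpha^s D_t^2\bar\zeta$ to kill the $(I-\mathcal H)$-small pieces descended from $G_1$, and apply the localization and transition-of-derivatives lemmas to extract from the $G_2$-descended term a purely imaginary leading quantity that vanishes under $\mathrm{Re}$. One small bookkeeping slip: the leading term comes out as $\frac{t}{\alpha}|\zeta_\alpha-1|^2|\partial_\alpha^s D_t^2\zeta|^2$ (not $\frac{\alpha}{t}|D_t^2\zeta|^2|\partial_\alpha^s D_t^2\zeta|^2$), but since $|\zeta_\alpha-1|^2\approx|D_t^2\zeta|^2$ and $\alpha\sim t$ on $S(t)$ this does not affect the argument — what matters is only that the prefactor is $i$ times a real quantity.
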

 \begin{proof}
It suffices to consider
\[
Re\left\{\int\frac{2D_t\bar{\sigma}_s}{A}\partial_\alpha^sD_tG d\alpha\right\}.
\]
Furthermore, by Lemma \ref{structure of Dt sigma}, we have
\[
\int\frac{D_t\bar{\sigma}_s}{A}\partial_\alpha^s D_tG d\alpha
=2\int\frac{\partial_\alpha^sD_t^2\bar{\zeta}}{A}\partial_\alpha^sD_tG d\alpha+O(\epsilon^4t^{-1-\delta}),
\]
where the estimate $\norm{\partial_\alpha^sD_tG}_{L^2}\le C\epsilon^3t^{-1}\ln t$ follows from Lemma \ref{useful lemma}. Next, leveraging the almost-holomorphic nature of $\partial_\alpha^sD_t^2\bar{\zeta}$, we deduce
\[
2\int\frac{\partial_\alpha^sD_t^2\bar{\zeta}}{A}(I-\mathcal{H})\partial_\alpha^sD_tG d\alpha
=4\int\frac{\partial_\alpha^sD_t^2\bar{\zeta}}{A}\partial_\alpha^sD_tG d\alpha+O(\epsilon^4 t^{-1-\delta'}).
\]
This reduces the problem to controlling $\partial_\alpha^sD_tG$ or $(I-\mathcal{H})\partial_\alpha^sD_tG$, depending on our preference. Since these results will be essential in Chapter 7, we detail the treatment of some specific components of $D_tG$ which closely resemble the approach in Proposition \ref{formula of dE_s/dt}. We first consider the derivatives of $G_2$. Observe that
\begin{align*}
    \partial_t\frac{1}{\pi i}\int\left(\frac{z_t(\alpha)-z_t(\beta)}{z(\alpha)-z(\beta)}\right)^2(z-\bar{z})_\beta d\beta
&=\frac{2}{\pi i}\int\frac{z_t(\alpha)-z_t(\beta)}{z(\alpha)-z(\beta)}\partial_t\left(\frac{z_t(\alpha)-z_t(\beta)}{z(\alpha)-z(\beta)}\right)(z-\bar{z})_\beta d\beta\\
&\ \ \ +\frac{1}{\pi i}\int\left(\frac{z_t(\alpha)-z_t(\beta)}{z(\alpha)-z(\beta)}\right)^2(z_t-\bar{z}_t)_\beta d\beta,\\
&=-\frac{2}{\pi i}\int\frac{z_t(\alpha)-z_t(\beta)}{z(\alpha)-z(\beta)}\left(\frac{z_t(\alpha)-z_t(\beta)}{z(\alpha)-z(\beta)}\right)^2(z-\bar{z})_{\beta}d\beta\\
&\ \ \ +\frac{2}{\pi i}\int\frac{(z_t(\alpha)-z_t(\beta))(z_{tt}(\alpha)-z_{tt}(\beta))}{(z(\alpha)-z(\beta))^2}(z-\bar{z})_\beta d\beta\\
&\ \ \ +\frac{1}{\pi i}\int\left(\frac{z_t(\alpha)-z_t(\beta)}{z(\alpha)-z(\beta)}\right)^2(z_t-\bar{z}_t)_\beta d\beta.
\end{align*}
It is clear that the first term is easier to estimate than the others since it is quartic, whereas the remaining terms are cubic. Therefore, for $\partial_\alpha^sD_tG_2$ we focus on analyzing the following expression:
\[
\frac{2}{\pi i}\partial_\alpha^s\int\frac{(z_t(\alpha)-z_t(\beta))(z_{tt}(\alpha)-z_{tt}(\beta))}{(z(\alpha)-z(\beta))^2}(z-\bar{z})_\beta d\beta
+\frac{1}{\pi i}\partial_\alpha^s\int\left(\frac{z_t(\alpha)-z_t(\beta)}{z(\alpha)-z(\beta)}\right)^2(z_t-\bar{z}_t)_\beta d\beta.
\]
Like we did in Proposition \ref{formula of dE_s/dt}, we only provide the estimate for the extreme case:
\begin{equation}\label{first part of formula for DtG}
    \begin{aligned}
        I&=\frac{2}{\pi i}\int\frac{(D_t\zeta(\alpha)-D_t\zeta(\beta))(\partial_\alpha^s D_t^2\zeta-\partial_\beta^s D_t^2\zeta)}{(\zeta(\alpha)-\zeta(\beta))^2}(\zeta-\bar{\zeta})_\beta d\beta\\
         &\overset{\text{def}}{=}I_1+I_2,
    \end{aligned}
\end{equation}
where
\[I_1=\frac{2}{\pi i}\int\frac{(D_t\zeta(\alpha)-D_t\zeta(\beta))(\partial_\alpha^sD_t^2\zeta-\partial_\beta D_t^2\zeta)}{(\zeta(\alpha)-\zeta(\beta))^2}(\zeta_\beta-1)d\beta,
\]
\[I_2=\frac{2}{\pi i}\int\frac{(D_t\zeta(\alpha)-D_t\zeta(\beta))(\partial_\alpha D_t^2\zeta-\partial_\beta D_t^2\zeta)}{(\zeta(\alpha)-\zeta(\beta))^2}(\bar{\zeta}_\beta-1) d\beta.
\]
We claim that
\begin{equation}\label{structure of I}
    1_{S(t)}I=-\frac{it}{\alpha}\partial_\alpha^sD_t^2\zeta|\zeta_\alpha-1|^2+O(\epsilon^3t^{-\frac{5}{4}-\delta})_{L^2}.
\end{equation}

First, it is straightforward to verify that the arguments used for $G_{21}$ apply similarly to $I_1$. Thus, we obtain
\[
\norm{I_1}_{L^2}\le C\epsilon^3t^{-\frac{3}{2}}\ln t.
\]
For $1_{S(t)}I_2$, we use frequency decomposition $f_L=P_{<t^{-2}}f,\ f=f-f_L$, and apply the Localization Lemma, together with equation $(D_t^2-iA\partial_\alpha)\zeta=-i$ to obtain
\begin{align*}
    1_{S(t)}I_2
    &=1_{S(t)}\frac{it^2}{2\alpha^2}(D_t\zeta)_H\partial_\alpha^sD_t^2\zeta(\bar{\zeta}_\alpha-1)+O(\epsilon^3t^{-\frac{5}{4}-\delta})_{L^2},\\
    &=-1_{S(t)}\frac{t}{\alpha}(D_t^2\zeta)\partial_\alpha^sD_t^2\zeta(\bar{\zeta}_\alpha-1)+O(\epsilon^3t^{-\frac{5}{4}-\delta})_{L^2},\\
    &=-\frac{it}{\alpha}\partial_\alpha^sD_t^2\zeta|\zeta_\alpha-1|^2+O(\epsilon^3t^{-\frac{5}{4}-\delta})_{L^2}.
\end{align*}
Hence, (\ref{structure of I}) is established. The term $1_{S(t)^c} I$ is handled using Lemma \ref{small or big alpha}, as in Proposition \ref{formula of dE_s/dt}.
    Finally, we obtain
\begin{align*}
    2Re\int\frac{D_t\bar{\sigma}_s}{A}Id\alpha
    &=4Re\int\frac{\partial_\alpha^sD_t^2\bar{\zeta}}{A}1_{S(t)}Id\alpha+O(\epsilon^4t^{-1-\delta})\\
    &=-4Re\int\frac{it}{A\alpha}|\partial_\alpha^sD_t^2\zeta|^2|\zeta_\alpha-1|^2d\alpha+O(\epsilon^4t^{-1-\delta})\\
    &=O(\epsilon^4t^{-1-\delta}).
\end{align*}
as desired.

Next, we consider the corresponding term of $\tilde{G}_{12}$ in $D_tG$ (recall that $f_H=P_{>t^{-1/2-\delta}}f$ and $f_L=f-f_H$ here):
\[
\tilde{G}_{12}^*=\frac{1}{\pi i}\int\frac{(\bar{f}_H(\alpha)-\bar{f}_H(\beta))(\partial_\alpha^sD_t^2\bar{\zeta}-\partial_\beta^sD_t^2\bar{\zeta})}{(\zeta(\alpha)-\zeta(\beta))^2}\partial_\beta D_t\zeta d\beta.
\]

Actually, it can be treated using similar arguments as those in the proof of Proposition \ref{formula of dE_s/dt}, since we always estimate $\partial_\alpha^sD_t\zeta$ in $L^2$, and there is no difficulty in replacing $\partial_\alpha^sD_t\zeta$ by $\partial_\alpha^sD_t^2\zeta$. From now on, if a proof of an estimate is omitted, we mean that it can be proved by simply repeating the relevant part in Proposition \ref{formula of dE_s/dt}.

We only consider the case $\alpha\in S(t)$. Decompose the integral domain into $|\alpha-\beta|\le t^{3/4+1/40}$ and $|\alpha-\beta|>t^{3/4+1/40}$. For the second part, we have the estimate
\begin{align*}
    \norm{\frac{1}{\pi i}\int_{|\alpha-\beta|>t^{3/4+1/40}}\frac{(\bar{f}_H(\alpha)-\bar{f}_H(\beta))(\partial_\alpha^sD_t^2\bar{\zeta}-\partial_\beta^sD_t^2\bar{\zeta})}{(\zeta(\alpha)-\zeta(\beta))^2}\partial_\beta D_t\zeta d\beta}_{L^2}
    \le C\epsilon^3t^{-\frac{5}{4}-\frac{1}{40}}\ln t.
\end{align*}
Next, we decompose the first part in the same manner as before: 
\begin{align*}
        &\frac{1}{\pi i}\int_{|\alpha-\beta|\le t^{3/4+1/40}}\frac{\left(\frac{2\alpha}{it}\partial_t\bar{f}_H-\frac{2\beta}{it}\partial_t\bar{f}_H\right)(\partial_\alpha^sD_t^2\bar{\zeta}-\partial_\beta^sD_t^2\bar{\zeta})}{(\zeta(\alpha)-\zeta(\beta))^2}\partial_\beta D_t\zeta d\beta\\
        &\ \ \ +\frac{1}{\pi i}\int_{|\alpha-\beta|\le t^{3/4+1/40}}\frac{\left(\frac{2}{it}\tilde{\Omega}_0\bar{f}_H(\alpha)-\frac{2}{it}\tilde{\Omega}_0\bar{f}_H(\beta)\right)(\partial_\alpha^sD_t^2\bar{\zeta}-\partial_\beta^sD_t^2\bar{\zeta})}{(\zeta(\alpha)-\zeta(\beta))^2}\partial_\beta D_t\zeta d\beta
    \end{align*}
  For the second term, we obtain the following bound:
    \begin{align*}
        \norm{\frac{1}{\pi i}\int_{|\alpha-\beta|\le t^{3/4+1/40}}\frac{\left(\frac{2}{it}\tilde{\Omega}_0\bar{f}_H(\alpha)-\frac{2}{it}\tilde{\Omega}_0\bar{f}_H(\beta)\right)(\partial_\alpha^sD_t^2\bar{\zeta}-\partial_\beta^sD_t^2\bar{\zeta})}{(\zeta(\alpha)-\zeta(\beta))^2}\partial_\beta D_t\zeta d\beta}_{L^2}
        \le C\epsilon^3t^{-\frac{3}{2}+\delta_0}(\ln t)^2.
    \end{align*}
    For the first term, we have
    \begin{align*}
        &1_{S(t)}\frac{1}{\pi i}\int_{|\alpha-\beta|\le t^{3/4+1/40}}\frac{\left(\frac{2\alpha}{it}\partial_t\bar{f}_H-\frac{2\beta}{it}\partial_t\bar{f}_H\right)(\partial_\alpha^sD_t^2\bar{\zeta}-\partial_\beta^sD_t^2\bar{\zeta})}{(\zeta(\alpha)-\zeta(\beta))^2}\partial_\beta D_t\zeta d\beta\\
        =&\ -1_{S(t)}\frac{it^2}{2\alpha^2}(D_t\bar{\zeta})_H\partial_\alpha^sD_t^2\bar{\zeta}(\zeta_\alpha-1)+O(\epsilon^3t^{-\frac{5}{4}-\delta})_{L^2}.
    \end{align*}
    
    Now all we need is to transit the derivative and make an estimate similar to the parallel argument in Proposition \ref{formula of dE_s/dt} and obtain
    \begin{equation}
        (I-\mathcal{H})\tilde{G}_{12}^*=O(\epsilon^3t^{-\frac{5}{4}-\delta})_{L^2}
    \end{equation}
    for some $\delta>0$. This completes the proof.
 \end{proof}

\subsection{A priori control of the energy estimates}

In conclusion, we get the energy estimate for $\mathcal{E}_s$ by summing up all of the estimates above and conclude the proof of Theorem \ref{thm:sec 5main}. As a corollary, we have

\begin{cor}\label{sec 5:energy bound}
    There holds
    \begin{equation}
        \mathcal{E}_s(t)\le C\epsilon^2\left(e^{\frac{C\epsilon^2}{\delta}-\frac{C\epsilon^2}{\delta}(1+t)^{-\delta}}-1\right)+e^{\frac{C\epsilon^2}{\delta}-\frac{C\epsilon^2}{\delta}(1+t)^{-\delta}}\mathcal{E}_s(0).
    \end{equation}
\end{cor}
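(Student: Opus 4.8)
The plan is to integrate the differential inequality of Theorem~\ref{thm:sec 5main} by an integrating factor (a Gr\"onwall argument), exploiting the fact that the time weight $(1+t)^{-1-\delta}$ is integrable on $[0,\infty)$ so that the exponential factor generated stays bounded uniformly in $t$. Concretely, I would set $a(t):=C\epsilon^2(1+t)^{-1-\delta}$ and $b(t):=C\epsilon^4(1+t)^{-1-\delta}$, so that Theorem~\ref{thm:sec 5main} reads $\frac{d}{dt}\mathcal{E}_s(t)\le a(t)\mathcal{E}_s(t)+b(t)$ for $t\ge 0$ (recalling that the estimate was already checked for $0\le t\le 1$ and extends trivially by the same bounds there).

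Next I would introduce the antiderivative
$$A(t):=\int_0^t a(\tau)\,d\tau=\frac{C\epsilon^2}{\delta}\bigl(1-(1+t)^{-\delta}\bigr)=\frac{C\epsilon^2}{\delta}-\frac{C\epsilon^2}{\delta}(1+t)^{-\delta},$$
which satisfies $A(0)=0$ and $0\le A(t)\le C\epsilon^2/\delta$ for all $t\ge 0$. Multiplying the differential inequality by the integrating factor $e^{-A(t)}$ yields $\frac{d}{dt}\bigl(e^{-A(t)}\mathcal{E}_s(t)\bigr)\le e^{-A(t)}b(t)$, and integrating from $0$ to $t$ gives
$$\mathcal{E}_s(t)\le e^{A(t)}\mathcal{E}_s(0)+e^{A(t)}\int_0^t e^{-A(\tau)}b(\tau)\,d\tau.$$
For the remaining integral, note that $b(\tau)=C\epsilon^2\, a(\tau)$ after relabelling the constant, hence $e^{-A(\tau)}b(\tau)=C\epsilon^2\,A'(\tau)e^{-A(\tau)}=-C\epsilon^2\frac{d}{d\tau}e^{-A(\tau)}$, so that $\int_0^t e^{-A(\tau)}b(\tau)\,d\tau=C\epsilon^2\bigl(1-e^{-A(t)}\bigr)$. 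Substituting back, $\mathcal{E}_s(t)\le e^{A(t)}\mathcal{E}_s(0)+C\epsilon^2\bigl(e^{A(t)}-1\bigr)$, which is exactly the claimed bound once $A(t)$ is written out as above.

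Since this is essentially a one-line Gr\"onwall estimate, there is no genuine obstacle at this stage: all the difficulty of the corollary is already absorbed into Theorem~\ref{thm:sec 5main}. The only point worth underlining is that the bound is \emph{uniform in} $t$ because $A(t)\le C\epsilon^2/\delta$ is bounded independently of $t$; choosing $\epsilon_0$ small enough that $e^{C\epsilon^2/\delta}\le 2$ and recalling $\mathcal{E}_s(0)\sim\epsilon^2$, one deduces $\sup_{t\ge 0}\mathcal{E}_s(t)\le C\mathcal{E}_s(0)+C\epsilon^4\le C\epsilon^2$, which is precisely the input needed to improve the bootstrap bound \eqref{improve-1} (via Lemma~\ref{Hs and order s energy}).
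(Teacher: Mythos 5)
Your proof is correct and is essentially the same integrating-factor Gr\"onwall argument the paper uses: the paper multiplies by $e^{\frac{C\epsilon^2}{\delta}(1+t)^{-\delta}}$, which equals your $e^{-A(t)}$ up to the constant factor $e^{C\epsilon^2/\delta}$, and the remaining integral is evaluated exactly as you did by recognizing $b=\epsilon^2 a$. The closing observation that $A(t)\le C\epsilon^2/\delta$ gives uniformity in $t$ is the point of the corollary and is handled identically.
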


\begin{proof}
    We may rewrite (\ref{energy estimate}) as
    \[
    \left|\frac{d}{dt}\mathcal{E}_s(t)\right|\le C\epsilon^2(1+t)^{-1-\delta}\mathcal{E}_s(t)+O(\epsilon^4(1+t)^{-1-\delta})
    \]
    since each relevant term is bounded. Now we have
    \[
    \frac{d}{dt}e^{\frac{C\epsilon^2}{\delta}(1+t)^{-\delta}}\mathcal{E}_s(t)\le C'\epsilon^4(1+t)^{-1-\delta}e^{\frac{C\epsilon^2}{\delta}(1+t)^{-\delta}}
    \]
    which implies
    \[
    \mathcal{E}_s(t)\le\frac{C'\epsilon^2}{C}\left(e^{\frac{C\epsilon^2}{\delta}-\frac{C\epsilon^2}{\delta}(1+t)^{-\delta}}-1\right)+e^{\frac{C\epsilon^2}{\delta}-\frac{C\epsilon^2}{\delta}(1+t)^{-\delta}}\mathcal{E}_s(0).
    \]
\end{proof}

\begin{cor}[Almost conservation law]\label{cor:almost conservation}
    We have for any $t\in [0,T]$,
    \begin{equation}\label{equ:almost conserved 1}
        |\mathcal{E}_s(t)-\mathcal{E}_s(0)|\leq C\epsilon^4.
    \end{equation}
\end{cor}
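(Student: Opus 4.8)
The plan is to deduce the almost conservation law \eqref{equ:almost conserved 1} directly from the differential inequality of Theorem \ref{thm:sec 5main}, rather than from the crude bound of Corollary \ref{sec 5:energy bound}. First I would recall that \eqref{energy estimate} gives
\[
\left|\frac{d}{dt}\mathcal{E}_s(t)\right|\le C\epsilon^2(1+t)^{-1-\delta}\mathcal{E}_s(t)+C\epsilon^4(1+t)^{-1-\delta}
\]
for some $\delta>0$ and all $t\in[0,T]$. From Corollary \ref{sec 5:energy bound}, since $\epsilon$ is sufficiently small so that $C\epsilon^2/\delta$ is small, we have the crude uniform bound $\mathcal{E}_s(t)\le 2\mathcal{E}_s(0)+C\epsilon^4\le C\epsilon^2$ on $[0,T]$ (using $\mathcal{E}_s(0)\sim\epsilon^2$, which follows from the hypotheses on the initial data). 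Substituting this back into the differential inequality yields the much sharper pointwise bound
\[
\left|\frac{d}{dt}\mathcal{E}_s(t)\right|\le C\epsilon^4(1+t)^{-1-\delta}
\]
on $[0,T]$, where the new constant $C$ absorbs the earlier one times the $O(\epsilon^2)$ bound on $\mathcal{E}_s$.

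Next I would integrate this inequality from $0$ to any $t\in[0,T]$. Since $\int_0^\infty(1+\tau)^{-1-\delta}\,d\tau=\delta^{-1}<\infty$, we obtain
\[
|\mathcal{E}_s(t)-\mathcal{E}_s(0)|\le\int_0^t\left|\frac{d}{d\tau}\mathcal{E}_s(\tau)\right|\,d\tau\le C\epsilon^4\int_0^t(1+\tau)^{-1-\delta}\,d\tau\le \frac{C}{\delta}\epsilon^4,
\]
which is exactly \eqref{equ:almost conserved 1} after renaming the constant. Because this holds for all $t\in[0,T]$ and the bootstrap argument in the proof of Theorem \ref{main} shows $\mathcal{T}=[0,\infty)$, the estimate in fact holds for all $t\ge 0$, giving the $\sup_{t\ge 0}$ version claimed in the remark after the main theorem.

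There is essentially no obstacle here: the corollary is a soft consequence of the already-established energy estimate. The only point requiring a little care is the logical ordering — one must first invoke the weak bound $\mathcal{E}_s\lesssim\epsilon^2$ (from Corollary \ref{sec 5:energy bound}, itself valid on the bootstrap interval) to upgrade the $\mathcal{E}_s(t)$ factor on the right-hand side of \eqref{energy estimate} into the constant $O(\epsilon^2)$, and only then integrate; doing it in the opposite order would merely reproduce the weaker exponential-type bound. One should also note that the estimate is stated on $[0,T]$ for the $T$ fixed in the bootstrap assumptions, and the passage to all $t\ge 0$ is justified a posteriori once $\mathcal{T}=[0,\infty)$ has been proven; since the constant $C/\delta$ is independent of $T$, no circularity arises.
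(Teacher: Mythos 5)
Your proposal is correct and matches the paper's own proof: the paper likewise combines Theorem \ref{thm:sec 5main} with the uniform bound $\mathcal{E}_s(t)\lesssim\epsilon^2$ from Corollary \ref{sec 5:energy bound} to upgrade the right-hand side of \eqref{energy estimate} to $C\epsilon^4(1+t)^{-1-\delta}$ and then integrates. Your explicit remark on the logical ordering (first invoke the crude bound, then integrate) is a correct reading of the one-line chain of inequalities in the paper's proof.
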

\begin{proof}
    Combining Theorem \ref{thm:sec 5main} and Corollary \ref{sec 5:energy bound},
    \begin{align*}
        \frac{d\mathcal{E}_s(t)}{dt}\leq C\epsilon^2 (1+t)^{-1-\delta}\mathcal{E}_s(t)+C\epsilon^4 (1+t)^{-1-\delta}\leq C\epsilon^4 (1+t)^{-1-\delta}.
    \end{align*}
    Therefore,
    \begin{align*}
\left|\mathcal{E}_s(t)-\mathcal{E}_s(0)\right|\leq C\int_0^t \epsilon^4 (1+\tau)^{-1-\delta}d\tau\leq C\epsilon^4.
    \end{align*}
    This establishes \eqref{equ:almost conserved 1}. 
\end{proof}

\section{Vector field estimates}

In this section, we improve the vector field estimate in the bootstrap assumption. Firstly, we estimate $[L_0, D_t^2-iA\partial_{\alpha}]\partial_{\alpha}^{s-1}f$ where $f=\theta$ or $D_t\theta$. Keeping in mind that we can control at most $s-1$ derivatives of $L_0D_t\zeta$ and $s-1$ derivatives of $L_0D_t^2\zeta$ (or other equivalent quantities).

\begin{lemma}\label{s-1 order estimate for [L0,P]}
    Suppose that $f=\theta$ or $D_t\theta$. 
    Then there holds
    \begin{align*}
        &\norm{[L_0,D_t^2-iA\partial_\alpha]\partial_\alpha^{s-1}f}_{L^2(|\alpha|\ge t)}\\
     \le&\  C\epsilon^2t^{-1}(\norm{L_0(\zeta_\alpha-1)}_{L^2}+\norm{L_0D_t\zeta}_{H^1}+\norm{L_0(\zeta-\alpha)}_{\dot{H}^{\frac{1}{2}}})
        +C\epsilon^3t^{-1}.
    \end{align*}
\end{lemma}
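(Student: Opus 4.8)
The plan is to expand the commutator $[L_0,\mathcal P]\partial_\alpha^{s-1}f$ using the formula
\[
[L_0,\mathcal P]=-\mathcal P+\Big\{L_0D_tb-(L_0b-\tfrac12 b)b_\alpha\Big\}\partial_\alpha+\Big(L_0b-\tfrac12 b\Big)\big(D_t\partial_\alpha+\partial_\alpha D_t\big)-i(L_0A)\partial_\alpha,
\]
and to track how many derivatives land on $f$ after commuting $\partial_\alpha^{s-1}$ through. Since $\mathcal P\theta=G$ and $\mathcal P D_t\theta=\tilde G$ are already cubic and controlled from Chapter 5, the $-\mathcal P\partial_\alpha^{s-1}f$ piece (plus the commutator $[\mathcal P,\partial_\alpha^{s-1}]f$, which is of the type handled in Lemma \ref{calculate for k order commutator}) is harmless. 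So the real work is the three terms carrying the coefficients built from $b$, $A$ and their $L_0$-derivatives, each paired with $\partial_\alpha$ acting on $\partial_\alpha^{s-1}f$ — i.e.\ up to $\partial_\alpha^s f$.

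First I would record the needed coefficient bounds: $\|L_0D_tb\|_{L^2}$, $\|(L_0b-\tfrac12 b)b_\alpha\|_{L^2}$, $\|(L_0b-\tfrac12 b)\|_{H^1}$ and $\|L_0A\|_{L^2}$, all of which should be $\lesssim \epsilon^2 t^{-1+\delta_0}$ by differentiating the formulas for $b$, $D_tb$, $A$ in \S\ref{section:structure} and using the bootstrap assumptions together with Lemma \ref{norms of b} and Proposition \ref{norms of A-1}; the extra $L_0$ costs at most a factor $t^{\delta_0}$. Then, term by term, I distribute the derivatives. When at most $s-1$ derivatives hit $f$, Sobolev embedding lets me put $\partial_\alpha^{\le s-1}f$ in $L^\infty$ with $t^{-1/2}$ decay (bootstrap \eqref{bootstrap-3} / Lemma \ref{decay of D_tzeta}), and the coefficient in $L^2$, giving $\epsilon^3 t^{-3/2+\delta_0}$, better than needed. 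The genuinely delicate case is the top order: the coefficient in $L^2$ (no decay beyond $t^{-1+\delta_0}$) against $\partial_\alpha^s f$ also only in $L^2$ — this is exactly the scenario flagged in \S\ref{section:transition}.

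The main obstacle is therefore the term $\big(L_0b-\tfrac12 b\big)\big(D_t\partial_\alpha+\partial_\alpha D_t\big)\partial_\alpha^{s-1}f$ with all derivatives on $f$, i.e.\ $(L_0b-\tfrac12 b)\,\partial_\alpha^{s}D_tf$. Here I would exploit the restriction to $|\alpha|\ge t$: on that region I use the transition-of-derivatives identity $\partial_\alpha g=\frac{it^2}{4\alpha^2}g+\frac1\alpha L_0 g-\frac{t}{2\alpha^2}\Omega_0 g$ (Lemma \ref{transition-lemma}, eq.\ \eqref{equ:increase a derivative 2}) to rewrite $\partial_\alpha^s D_t f$ in terms of lower-order derivatives of $D_tf$ hit by $L_0$ and $\Omega_0$. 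Since $\|\frac{1}{\alpha}\mathbf 1_{|\alpha|\ge t}\|_{L^\infty}\le t^{-1}$ and $\|\frac{t}{\alpha^2}\mathbf 1_{|\alpha|\ge t}\|_{L^\infty}\le t^{-1}$, and $L_0,\Omega_0$ of $\partial_\alpha^{s-1}D_tf$ are controlled in $L^2$ by the bootstrap vector-field norms (using $\Omega_0\partial_\alpha=L_0\partial_t-\tfrac t2\mathcal P_0$ to convert $\Omega_0$ to $L_0$ bounds as in Corollary \ref{cor:Omega_0}), the factor $\frac{it^2}{4\alpha^2}$ cancels one power and one transit of a derivative costs only $t^{-1}(\|L_0\cdot\|_{L^2}+\|\Omega_0\cdot\|_{L^2})\lesssim \epsilon t^{-1+\delta_0}$. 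Combined with $\|L_0b-\tfrac12 b\|_{L^\infty}\lesssim \epsilon^2$ (from Lemma \ref{norms of b} plus the $L_0$-bound via Sobolev on a suitable frequency range) this yields the claimed $\epsilon^2 t^{-1}(\|L_0(\zeta_\alpha-1)\|_{L^2}+\|L_0D_t\zeta\|_{H^1}+\|L_0(\zeta-\alpha)\|_{\dot H^{1/2}})+\epsilon^3 t^{-1}$. The remaining terms assemble into the stated bound after noting that $\partial_\alpha^s f$ in $L^2$ is controlled by $\sqrt{\mathcal E_s}\lesssim\epsilon$ and the homogeneous $\dot H^{1/2}$ quantity accounts for the half-derivative mismatch between $\theta$ and $D_t\theta$.
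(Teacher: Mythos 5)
You correctly isolate the hard term: it is $(L_0 b-\tfrac12 b)\,\partial_\alpha^s D_t f$, and the paper's own proof flags exactly $L_0 b\cdot D_t\partial_\alpha\partial_\alpha^{s-1}f$ as the crux. However, your treatment of this term does not close. Applying the transition identity $\partial_\alpha g=\frac{it^2}{4\alpha^2}g+\frac1\alpha L_0 g-\frac{t}{2\alpha^2}\Omega_0 g$ to $g=\partial_\alpha^{s-1}D_tf$ does produce $L_0$- and $\Omega_0$-error terms carrying a $t^{-1}$ factor on $|\alpha|\ge t$, but the \emph{main} term $\frac{it^2}{4\alpha^2}\partial_\alpha^{s-1}D_tf$ is only $O(1)$ there — $\frac{t^2}{4\alpha^2}\le\frac14$ with no decay in $t$. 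So after one (or even $s$) transits you are still left with $\big(L_0 b\big)\cdot O(1)\cdot\partial_\alpha^{\le s-1}D_tf$, and the bound $\|L_0b\|_{L^\infty}\lesssim\epsilon^2$ you invoke gives $\epsilon^2\cdot\|\partial_\alpha^{\le s-1}D_tf\|_{L^2}\lesssim\epsilon^3$, missing the needed $t^{-1}$ altogether. Swapping the norms ($\|L_0 b\|_{L^2}\|D_tf\|_{L^\infty}$) also fails for $f=\theta$, since $D_t\theta\approx 2D_t\zeta$ decays only like $t^{-1/4}$ in $L^\infty$.

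The paper does not try to bound $L_0 b$ by a norm times $\partial_\alpha^sD_tf$. Instead it opens the kernel: it writes $L_0(I-\mathcal H)b$ explicitly as singular integrals in which one factor inside the integral is hit by $L_0$ (namely $L_0(\bar\zeta_\beta-1)$, $L_0D_t\zeta$, or $L_0\zeta$), places that $L_0$-factor in $L^2$, splits the integral at $|\alpha-\beta|\lessgtr t/2$ so that on the near region $\tfrac{|\alpha|}{2}\le|\beta|\le 2|\alpha|$, and then estimates the \emph{pair} $D_t\partial_\alpha^sf(\alpha)\cdot D_t\zeta(\alpha\ \text{or}\ \beta)$ in $L^\infty$ via Lemma~\ref{lemma:transit2}, which asserts precisely that such products decay like $\epsilon^2t^{-1}$ at comparable points. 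The $t^{-1}$ is extracted by transiting the two factors \emph{together}: the $\tfrac{t}{\alpha}$ produced when one factor is converted to a $\partial_t$-derivative is absorbed by transiting the other factor as well, and $\partial_tD_t\zeta\approx D_t^2\zeta$ carries the improved $t^{-1/2}$ decay. Your scheme, transiting $\partial_\alpha^sD_tf$ in isolation against a plain $L^\infty$ bound on $L_0b$, cannot manufacture this gain; you need to enter the integral representation of $L_0 b$ and pair the factors as the paper does.
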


\begin{proof}
    Recall that
\begin{equation}
    [L_0, D_t^2-iA\partial_{\alpha}]=-(D_t^2-iA\partial_{\alpha})+\Big\{L_0D_tb-(L_0b-\frac{1}{2}b)b_{\alpha}\Big\}\partial_{\alpha}+\Big(L_0b-\frac{1}{2}b\Big)(D_t\partial_{\alpha}+\partial_{\alpha}D_t)-i(L_0A)\partial_{\alpha}.
\end{equation}

We point out that the most difficult part is to estimate $\|L_0b\cdot D_t\partial_\alpha\partial_\alpha^{s-1}f\|_{L^2}$, since there are $s$ derivatives on $D_tf$ while there are no derivatives on $b$.

We first study $L_0b$ since we only have the formula of $(I-\mathcal{H})b$. Actually, according to Lemma \ref{control f by using (I-H)f} (note that $L_0b$ is real), we have
\[
\norm{L_0b}_{L^\infty}\le C\norm{(I-\mathcal{H})L_0b}_{L^\infty}
\le C\norm{L_0(I-\mathcal{H})b}_{L^\infty}+C\norm{[L_0,\mathcal{H}]b}_{L^\infty}.
\]
Note that
\[
[L_0,\mathcal{H}]b=[L_0\zeta,\mathcal{H}]\frac{\partial_\alpha b}{\zeta_\alpha}
\]
which implies
\[
\norm{[L_0,\mathcal{H}]b}_{L^\infty}\le C\norm{L_0(\zeta_\alpha-1)}_{W^{1,\infty}}\norm{\partial_\alpha b}_{W^{1,\infty}}\ln t
\le C\epsilon^3t^{-\frac{3}{2}+\delta+\delta_0}
\]
for any $\delta>0$. Here we have used Lemma \ref{singular} and Lemma \ref{norms of b}. This gives
\begin{align*}
    \norm{L_0b\cdot D_t\partial_\alpha^{s}f}_{L^2}
    &\le C\norm{L_0(I-\mathcal{H})b\cdot D_t\partial_\alpha^sf}_{L^2}+C\norm{[L_0,\mathcal{H}]b\cdot D_t\partial_\alpha^sf}_{L^2}\\
    &\le C\norm{L_0(I-\mathcal{H})b\cdot D_t\partial_\alpha^sf}_{L^2}+C\epsilon^4t^{-\frac{3}{2}+\delta+\delta_0},\ \forall\delta>0.
\end{align*}
Hence it suffices to consider $\norm{L_0(I-\mathcal{H})bD_t\partial_\alpha^sf}_{L^2}$ in the following proof. Define
\[
(I-\mathcal{H})b=\frac{1}{\pi i}\int\frac{D_t\zeta(\alpha)-D_t\zeta(\beta)}{\zeta(\alpha)-\zeta(\beta)}(\bar{\zeta}_\beta-1)d\beta
\overset{\text{def}}{=}\int K(\alpha,\beta,t)(\bar{\zeta}_\beta-1)d\beta
\overset{\text{def}}{=}K(\bar{\zeta}_\alpha-1)
\]
Note that
\[
\begin{aligned}
	L_0(I-\mathcal{H})b
	&=KL_0(\bar{\zeta}_\alpha-1)+[L_0,K](\bar{\zeta}_\alpha-1),\\
	&=\frac{1}{\pi i}\int\frac{D_t\zeta(\alpha)-D_t\zeta(\beta)}{\zeta(\alpha)-\zeta(\beta)}L_0(\bar{\zeta}_\beta-1)d\beta+\frac{1}{\pi i}\int\left(\alpha\partial_\alpha+\beta\partial_\beta+\frac{1}{2}t\partial_t\right)K(\alpha,\beta,t)(\bar{\zeta}_\beta-1)d\beta\\
        &\ \ \ +K(\bar{\zeta}_\alpha-1),\\
	&=\frac{1}{\pi i}\int\frac{D_t\zeta(\alpha)-D_t\zeta(\beta)}{\zeta(\alpha)-\zeta(\beta)}L_0(\bar{\zeta}_\beta-1)d\beta+\frac{1}{\pi i}\int\frac{L_0D_t\zeta(\alpha)-L_0D_t\zeta(\beta)}{\zeta(\alpha)-\zeta(\beta)}(\bar{\zeta}_\beta-1)d\beta\\
	&\ \ \ -\frac{1}{\pi i}\int\frac{(L_0\zeta(\alpha)-L_0\zeta(\beta))(D_t\zeta(\alpha)-D_t\zeta(\beta))}{(\zeta(\alpha)-\zeta(\beta))^2}(\bar{\zeta}_\beta-1)d\beta+(I-\mathcal{H})b,\\
	&=I+II+III+(I-\mathcal{H})b.
\end{aligned}
\]
We begin with the term $ID_t\partial_{\alpha}^s f$ and rewrite it as
\begin{equation}\label{first term of L0b}
	ID_t\partial_{\alpha}^sf=D_t\partial_{\alpha}^sf(\alpha,t) \Big(\int_{|\alpha-\beta|\leq t/2}+\int_{|\alpha-\beta|\geq t/2}\Big)\frac{D_t\zeta(\alpha)-D_t\zeta(\beta)}{\zeta(\alpha)-\zeta(\beta)}L_0(\bar{\zeta}_\beta-1)d\beta.
\end{equation}
We have
\begin{align*}
   & D_t\partial_{\alpha}^sf(\alpha,t) \int_{|\alpha-\beta|\geq t/2}\frac{D_t\zeta(\alpha)-D_t\zeta(\beta)}{\zeta(\alpha)-\zeta(\beta)}L_0(\bar{\zeta}_\beta-1)d\beta\\
   =&\frac{1}{\pi i}D_t\partial_{\alpha}^sf D_t\zeta(\alpha,t) \int_{|\alpha-\beta|\geq t/2}\frac{L_0(\bar{\zeta}_\beta-1)}{\zeta(\alpha)-\zeta(\beta)}d\beta - \frac{1}{\pi i}D_t\partial_{\alpha}^sf(\alpha,t) \int_{|\alpha-\beta|\geq t/2}\frac{D_t\zeta(\beta)}{\zeta(\alpha)-\zeta(\beta)}L_0(\bar{\zeta}_\beta-1)d\beta
\end{align*}
For the first term, by Lemma \ref{lemma:transit2} ($f=\theta$ or $D_t\theta$), the $L^2$ boundedness of the truncated singular integrals, we have 
\begin{align*}
    \norm{D_t\partial_{\alpha}^sf D_t\zeta(\alpha,t) \int_{|\alpha-\beta|\geq t/2}\frac{L_0(\bar{\zeta}_\beta-1)}{\zeta(\alpha)-\zeta(\beta)}d\beta}_{L^2(|\alpha|\geq t)}
    \leq &\  \norm{D_t\partial_{\alpha}^sf D_t\zeta}_{L^{\infty}(|\alpha|\ge t)}\norm{\int_{|\alpha-\beta|\geq t/2}\frac{L_0(\bar{\zeta}_\beta-1)}{\zeta(\alpha)-\zeta(\beta)}d\beta}_{L^2}\\
    \leq &\  C\epsilon^2 t^{-1}\norm{L_0(\zeta_{\alpha}-1)}_{L^2}.
\end{align*}
For the second term, we have
\begin{align*}
   & \norm{D_t\partial_{\alpha}^sf(\alpha,t) \int_{|\alpha-\beta|\geq t/2}\frac{D_t\zeta(\beta)}{\zeta(\alpha)-\zeta(\beta)}L_0(\bar{\zeta}_\beta-1)d\beta}_{L^2(|\alpha|\geq t)}\\
   \leq &\  \norm{D_t\partial_{\alpha}^s f}_{L^2}\norm{\int_{|\alpha-\beta|\geq t/2}\frac{D_t\zeta(\beta)}{\zeta(\alpha)-\zeta(\beta)}L_0(\bar{\zeta}_\beta-1)d\beta}_{L^{\infty}}\\
   \leq &\   Ct^{-1}\norm{D_t\partial_{\alpha}^s f}_{L^2}\norm{D_t\zeta}_{L^2}\norm{L_0(\zeta_{\alpha}-1)}_{L^2}\\
   \leq &\  C\epsilon^2 t^{-1}\norm{L_0(\zeta_{\alpha}-1)}_{L^2}.
\end{align*}
For $|\alpha|\geq t$ and $|\alpha-\beta|\leq t/2$, we have $\frac{1}{2}|\alpha|\leq |\beta|\leq 2|\alpha|$. Apply Lemma \ref{lemma:transit2} and we have
\begin{align*}
    &\norm{D_t\partial_{\alpha}^sf(\alpha,t) \int_{|\alpha-\beta|\leq t/2}\frac{D_t\zeta(\alpha)-D_t\zeta(\beta)}{\zeta(\alpha)-\zeta(\beta)}L_0(\bar{\zeta}_\beta-1)d\beta}_{L^2(|\alpha|\geq t)}\\
    \leq &\  \norm{D_t\partial_{\alpha}^sfD_t\zeta(\alpha) \int_{|\alpha-\beta|\leq t/2}\frac{L_0(\bar{\zeta}_\beta-1)}{\zeta(\alpha)-\zeta(\beta)}d\beta}_{L^2(|\alpha|\ge t)}+ \norm{ \int_{|\alpha-\beta|\leq t/2}\frac{D_t\partial_{\alpha}^sf(\alpha,t)D_t\zeta(\beta)}{\zeta(\alpha)-\zeta(\beta)}L_0(\bar{\zeta}_\beta-1)d\beta}_{L^2(|\alpha|\geq t)}\\
    \leq &\  \norm{D_t\partial_{\alpha}^sf D_t\zeta}_{L^{\infty}(|\alpha|\ge t)}\norm{L_0(\zeta_{\alpha}-1)}_{L^2}+\sup_{\frac{|\alpha|}{2}\leq |\beta|\leq 2|\alpha|}|D_t\zeta(\beta,t)D_t\partial_{\alpha}^sf|\norm{L_0(\zeta_{\alpha}-1)}_{L^2}\\
    \leq &\  C\epsilon^2 t^{-1}\norm{L_0(\zeta_{\alpha}-1)}_{L^2}.
\end{align*}

Next we consider II, i.e.
\[
\left\|\frac{1}{\pi i}D_t\partial_\alpha^{s}f\int\frac{L_0D_t\zeta(\alpha)-L_0D_t\zeta(\beta)}{\zeta(\alpha)-\zeta(\beta)}(\bar{\zeta}_\beta-1)d\beta\right\|_{L^2(|\alpha|\geq t)}.
\]
At first, note that
\[
\norm{\frac{1}{\pi i}D_t\partial_\alpha^sfL_0D_t\zeta(\alpha)\int\frac{\bar{\zeta}_\beta-1}{\zeta(\alpha)-\zeta(\beta)}d\beta}_{L^2(|\alpha|\ge t)}
=\norm{D_t\partial_\alpha^sf\frac{\bar{\zeta}_\alpha-1}{\zeta_\alpha}L_0D_t\zeta}_{L^2(|\alpha|\ge t)}
\le C\epsilon^2t^{-1}\norm{L_0D_t\zeta}_{L^2}
\]
according to Lemma \ref{lemma:transit2}. Then, since $|\alpha-\beta|\le t/2$ implies $|\alpha|/2\le|\beta|\le 2|\alpha|$, there holds
\[
\begin{aligned}
    \left\|D_t\partial_\alpha^{s}f\int_{|\alpha-\beta|\leq t/2}\frac{L_0D_t\zeta(\beta)}{\zeta(\alpha)-\zeta(\beta)}(\bar{\zeta}_\beta-1)d\beta\right\|_{L^2(|\alpha|\geq t)}
    &\le\sup_{\frac{|\alpha|}{2}\leq |\beta|\leq 2|\alpha|}|(\bar{\zeta}_{\beta}-1)D_t\partial_{\alpha}^sf|\norm{L_0D_t\zeta}_{L^2},\\
    &\le C\epsilon^2 t^{-1}\norm{L_0D_t\zeta}_{L^2}.
\end{aligned}
\]
Finally,
\[
\begin{aligned}
    \left\|D_t\partial_\alpha^{s}f\int_{|\alpha-\beta|\geq t/2}\frac{L_0D_t\zeta(\beta)}{\zeta(\alpha)-\zeta(\beta)}(\bar{\zeta}_\beta-1)d\beta\right\|_{L^2(|\alpha|\geq t)}
    &\le C\epsilon\int_{|\alpha-\beta|\ge t/2}\left|\frac{L_0D_t\zeta(\beta)}{\zeta(\alpha)-\zeta(\beta)}(\bar{\zeta}_\beta-1)\right|d\beta,\\
    &\le C\epsilon t^{-1}\norm{\bar{\zeta}_\alpha-1}_{L^2}\norm{L_0D_t\zeta}_{L^2},\\
    &\le C\epsilon^2t^{-1}\norm{L_0D_t\zeta}_{L^2}.
\end{aligned}
\]
The proof for III is easier than I,II and we shall omit it. Also, it's obvious that the proof for I,II works for $(I-\mathcal{H})b$ and gives
\[
\norm{D_t\partial_\alpha^sf(I-\mathcal{H})b}_{L^2(|\alpha|\ge t)}\le C\epsilon^3t^{-1}.
\]
This completes the proof.
\end{proof}

\begin{rem}\label{s-1 extend to k}
    There is no substantive difficulty in extending this proof to $\norm{[L_0,D_t^2-iA\partial_\alpha]\partial_\alpha^kf}_{L^2(|\alpha|\ge t)}$ for $0\le k\le s-2$, since the expression of the commutator indicates that there is at least 1 derivative on $f$ (except for the term $(D_t^2-iA\partial_\alpha)f$; however, such a term is cubic and therefore decays in $L^2$ at a rate $t^{-1}$).
\end{rem}

\begin{proposition}\label{vector field est for s-1 order on R}
    Under the assumption of Lemma \ref{s-1 order estimate for [L0,P]}, there holds
    \begin{equation}
        \begin{aligned}
            \norm{[L_0,D_t^2-iA\partial_\alpha]\partial_\alpha^{s-1}f}_{L^2(R)}
            &\le C\epsilon^2t^{-1+\delta_0}\ln t(\norm{L_0(\bar{\zeta}_\alpha-1)}_{L^2}+\norm{L_0(\zeta-\alpha)}_{\dot{H}^{\frac{1}{2}}}+\norm{L_0D_t\zeta}_{L^2})\\
            &\ \ \ +C\epsilon^3t^{-1}.
        \end{aligned}
    \end{equation}
\end{proposition}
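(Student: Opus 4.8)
The plan is to extend the estimate of Lemma \ref{s-1 order estimate for [L0,P]} from the exterior region $\{|\alpha|\ge t\}$ to all of $\mathbb{R}$, by treating the remaining region $\{|\alpha|\le t\}$. We again expand the commutator via
\[
[L_0, D_t^2-iA\partial_{\alpha}]=-(D_t^2-iA\partial_{\alpha})+\Big\{L_0D_tb-(L_0b-\tfrac12 b)b_{\alpha}\Big\}\partial_{\alpha}+\Big(L_0b-\tfrac12 b\Big)(D_t\partial_{\alpha}+\partial_{\alpha}D_t)-i(L_0A)\partial_{\alpha},
\]
and the point is that, as in Remark \ref{s-1 extend to k}, every term except $(D_t^2-iA\partial_\alpha)\partial_\alpha^{s-1}f$ carries at least one derivative on $f$, while $(D_t^2-iA\partial_\alpha)\partial_\alpha^{s-1}f=G_{s-1}^{\theta}$ (or $G_{s-1}^{\sigma}$) is cubic and hence decays like $\epsilon^3 t^{-1}$ in $L^2$ by the work of Chapter 5. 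The dominant term is still $L_0b\cdot D_t\partial_\alpha^{s}f$, so the main task is to bound $\norm{L_0b\cdot D_t\partial_\alpha^{s}f}_{L^2(|\alpha|\le t)}$ without the decay of $D_t\partial_\alpha^s f$ (which is only in $L^2$, no pointwise bound at top order).

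The key new ingredient for $|\alpha|\le t$ is the transition-of-derivatives formula \eqref{equ:increase a derivative} applied to $L_0b$ itself: on $\{|\alpha|\le t\}$ we write
\[
L_0b=-\frac{4\alpha}{it^2}L_0(L_0b)+\frac{2}{it}\Omega_0(L_0b)+\frac{4\alpha^2}{it^2}\partial_\alpha(L_0b),
\]
no wait — more efficiently, we should directly use the decomposition $L_0(I-\mathcal{H})b = I+II+III+(I-\mathcal{H})b$ from the proof of Lemma \ref{s-1 order estimate for [L0,P]}, reduce $L_0b$ to $L_0(I-\mathcal{H})b$ via Lemma \ref{control f by using (I-H)f} and the commutator estimate $\norm{[L_0,\mathcal{H}]b}_{L^\infty}\le C\epsilon^3 t^{-3/2+\delta+\delta_0}$ exactly as before, and then for each of $I,II,III$ and $(I-\mathcal{H})b$ perform the two-region split $|\alpha-\beta|\le t/2$ versus $|\alpha-\beta|\ge t/2$. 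On $|\alpha-\beta|\le t/2$ with $|\alpha|\le t$ one does \emph{not} have $|\alpha|\sim|\beta|$ in general (e.g. $\alpha$ near the origin), so Lemma \ref{lemma:transit2} is unavailable; here instead I would pull $D_t\partial_\alpha^s f$ out in $L^2$ and estimate the remaining kernel integral in $L^\infty$, using that $L_0(\bar\zeta_\alpha-1)$, $L_0 D_t\zeta$ appear paired with the truncated Cauchy kernel, together with the bootstrap bounds $\norm{L_0(\zeta_\alpha-1)}_{L^2}, \norm{L_0 D_t\zeta}_{L^2}\lesssim \epsilon t^{\delta_0}$ and $\norm{D_t\zeta}_{L^\infty(|\alpha|\le t)}\lesssim \epsilon t^{-1/2+\delta_0}$ from Corollary \ref{decay for Dt zeta near t} and Lemma \ref{decay of D_tzeta}; the $\ln t$ loss comes from the $dr/r$ integration over the annulus $t^{-4}\lesssim |\alpha-\beta|\lesssim t$ as in Lemma \ref{L infty of hilbert}. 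On $|\alpha-\beta|\ge t/2$ the kernel gains a factor $t^{-1}$, so Young's inequality with $\norm{1/\alpha}_{L^1(|\alpha|\ge t/2)}$ replaced by the $L^2$-type estimate suffices, again with at most a $\ln t$ loss.

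For the lower-order terms $L_0D_tb\cdot \partial_\alpha^{s-1}f$, $(L_0b-\tfrac12 b)b_\alpha\cdot\partial_\alpha^{s-1}f$, $(L_0b-\tfrac12 b)(D_t\partial_\alpha+\partial_\alpha D_t)\partial_\alpha^{s-1}f$ with the derivative \emph{not} all on the top term, and $(L_0A)\partial_\alpha^sf$, one has at least one derivative to spare, so these can be estimated by placing the lower-order factor in $L^\infty$ (using Lemma \ref{decay of D_tzeta}, Lemma \ref{norms of b}, Proposition \ref{norms of A-1}, and the analogue of Lemma \ref{norms of b} for $L_0D_tb$ and $L_0A$, whose $L^\infty$ or $W^{1,\infty}$ control follows from the same localization/transition machinery) and the top factor in $L^2$ bounded by $\sqrt{\mathcal{E}_s}\lesssim\epsilon$; these all yield $\epsilon^2 t^{-1+\delta_0}\ln t$ times one of the vector-field norms, or $\epsilon^3 t^{-1}$. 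Collecting the exterior estimate from Lemma \ref{s-1 order estimate for [L0,P]} and the interior estimates just sketched gives the claimed bound. The main obstacle I anticipate is precisely the interior estimate of $\norm{L_0 b\cdot D_t\partial_\alpha^s f}_{L^2(|\alpha|\le t)}$ where $|\alpha|\not\sim|\beta|$ on the diagonal piece: one must be careful to always keep $D_t\partial_\alpha^s f$ in $L^2$ and never need a pointwise bound on it, exploiting that in the $I$, $II$, $III$ decomposition the factor $D_t\zeta(\alpha)-D_t\zeta(\beta)$ (or $L_0D_t\zeta$) can be split so that the $\alpha$-evaluation combines with $D_t\partial_\alpha^s f(\alpha)$ into a product we control pointwise by Corollary \ref{decay for Dt zeta near t}, while the $\beta$-evaluation is absorbed by the $L^2$ norm of $L_0(\bar\zeta_\alpha-1)$ or $L_0 D_t\zeta$ against an $L^1_\beta$ kernel.
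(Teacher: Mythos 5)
Your high-level reading of the proof is correct: expand the commutator, reduce to $L_0 b\cdot D_t\partial_\alpha^s f$, replace $L_0 b$ by $L_0(I-\mathcal{H})b$ (paying a harmless $[L_0,\mathcal{H}]b$ term), decompose $L_0(I-\mathcal{H})b$ into the $I,II,III$ pieces, and pair $D_t\partial_\alpha^s f$ against the $\alpha$-evaluation of the kernel so that no pointwise bound on $D_t\partial_\alpha^s f$ is ever required. But your interior estimate is not quantitatively closing.

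The concrete gap is the threshold $t^{3/4}$ inside $\{|\alpha|\le t\}$. Your plan is to split at $|\alpha-\beta|\sim t/2$ and, on the near-diagonal piece, pull $D_t\partial_\alpha^s f$ into $L^2$ and control the remaining integral in $L^\infty$ using the bound $\norm{D_t\zeta}_{L^\infty(|\alpha|\le t)}\lesssim\epsilon t^{-1/2+\delta_0}$ from Corollary \ref{decay for Dt zeta near t}. That bound is too weak: combined with $\norm{L_0(\zeta_\alpha-1)}_{L^2}$ (and a $\ln t$ from the annular integration), it gives only $\epsilon^2 t^{-1/2+\delta_0}\ln t\,\norm{L_0(\zeta_\alpha-1)}_{L^2}$, which is slower than the claimed $\epsilon^2 t^{-1+\delta_0}\ln t\,\norm{L_0(\zeta_\alpha-1)}_{L^2}$ by a factor of $t^{1/2}$. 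The $t^{-1}$ rate is recovered in the paper only by splitting $\{|\alpha|\le t\}$ into $\{|\alpha|\le t^{3/4}\}$ and $E_t:=\{t^{3/4}<|\alpha|\le t\}$, because these two subregions demand genuinely different mechanisms: on $\{|\alpha|\le t^{3/4}\}$ one has the much stronger pointwise bound $\norm{(D_t\zeta)_H}_{L^\infty(|\alpha|\le Ct^{3/4})}\lesssim\epsilon t^{-1+\delta_0}\ln t$ from raising derivatives twice (and one \emph{must} put $D_t\partial_\alpha^s f$ in $L^2$ here, since Lemma \ref{lemma:transit2} fails at order $k=s$ for small $|\alpha|$, as the remark after it warns); on $E_t$, one instead cuts the kernel at $|\alpha-\beta|\le\tfrac12 t^{3/4}$ (not $t/2$) so that $|\alpha|\sim|\beta|$ holds, and then Lemma \ref{lemma:transit2} \emph{does} apply at order $k=s$ to control the product $D_t\zeta(\beta)\cdot D_t\partial_\alpha^s f(\alpha)$ pointwise by $\epsilon^2 t^{-1}$. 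Your final paragraph gestures at this pairing but attributes the pointwise bound to Corollary \ref{decay for Dt zeta near t}, which bounds $D_t\zeta$ alone and cannot bound the product at top order; the product bound is the content of Lemma \ref{lemma:transit2}, whose validity at $k=s$ is precisely region-dependent.

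Two further omissions you should be aware of to complete the argument on $\{|\alpha|\le t\}$: (i) the frequency split $P_{<t^{-1}}$ versus $P_{\ge t^{-1}}$ on $D_t\zeta$, needed so that the transition-of-derivative identities apply cleanly at low frequency without divergences; and (ii) on $E_t$ with $|\alpha-\beta|>\tfrac12 t^{3/4}$, a further split of the $\beta$-range into $|\beta|\le|\alpha|/2$, $|\alpha|/2<|\beta|\le 2|\alpha|$, and $|\beta|>2|\alpha|$, each handled by a different device (the key observation for $|\beta|\le|\alpha|/2$ being that then $|\alpha-\beta|\gtrsim|\alpha|\gtrsim t^{3/4}$, so one can trade the kernel decay for a $t/\beta$ factor on $(D_t\zeta)_H(\beta)$ and use $\Omega_0$-bounds). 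Without these the estimate does not close at the stated rate.
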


\begin{proof}
    It's enough for us to estimate $\norm{L_0(I-\mathcal{H})bD_t\partial_\alpha^sf}_{L^2(|\alpha|\le t)}$. Define $f_L=P_{<t^{-1}}f$, $f_H=f-f_L$.

    We first consider I. Note that
    \[
    \norm{(D_t\zeta)_L}_{L^\infty}\le t^{-\frac{1}{2}}\norm{(D_t\zeta)_L}_{L^2}\le C\epsilon t^{-\frac{1}{2}},
    \]
    hence
    \begin{align*}
        \norm{\int_{|\alpha-\beta|\ge t}\frac{(D_t\zeta)_{L}(\alpha)-(D_t\zeta)_{L}(\beta)}{\zeta(\alpha)-\zeta(\beta)}L_0(\bar{\zeta}_\beta-1)d\beta}_{L^\infty}
        &\le Ct^{-\frac{1}{2}}\norm{(D_t\zeta)_L}_{L^\infty}\norm{L_0(\bar{\zeta}_\alpha-1)}_{L^2},\\
        &\le C\epsilon t^{-1}\norm{L_0(\bar{\zeta}_\alpha-1)}_{L^2}.
    \end{align*}
    Also,
    \begin{align*}
        \norm{\int_{|\alpha-\beta|\le t}\frac{(D_t\zeta)_{L}(\alpha)-(D_t\zeta)_{L}(\beta)}{\zeta(\alpha)-\zeta(\beta)}L_0(\bar{\zeta}_\beta-1)d\beta}_{L^\infty}
        &\le C\norm{\partial_\alpha(D_t\zeta)_{L}}_{L^\infty}t^{\frac{1}{2}}\norm{L_0(\bar{\zeta}_\alpha-1)}_{L^2}.
    \end{align*}
    While
    \[
    \norm{\partial_\alpha(D_t\zeta)_L}_{L^\infty}\le t^{-1}\norm{(D_t\zeta)_L}_{L^\infty}\le C\epsilon t^{-\frac{3}{2}},
    \]
    we have
    \begin{equation}\label{estimate for Low frequency}
        \norm{\int \frac{(D_t\zeta)_{L}(\alpha)-(D_t\zeta)_{L}(\beta)}{\zeta(\alpha)-\zeta(\beta)}L_0(\bar{\zeta}_\beta-1)d\beta}_{L^\infty}
        \le C\epsilon t^{-1}\norm{L_0(\bar{\zeta}_\alpha-1)}_{L^2}.
    \end{equation}

    For the high frequency part (denoted by $I_h$), we write
    \[
    \norm{D_t\partial_\alpha^sfI_h}_{L^2(|\alpha|\le t)}\le \max\left\{\norm{D_t\partial_\alpha^sfI_h}_{L^2(t^{3/4}\le|\alpha|\le t)},\ \norm{D_t\partial_\alpha^sfI_h}_{L^2(|\alpha|\le t^{3/4})}\right\}.
    \]
    Note that
    \begin{align*}
        \norm{D_t\partial_\alpha^sfI_h}_{L^2(|\alpha|\le t^{3/4})}
        &\le\norm{D_t\partial_\alpha^sf\int_{|\alpha-\beta|\le 2t^{\frac{3}{4}}}\frac{(D_t\zeta)_H(\alpha)-(D_t\zeta)_H(\beta)}{\zeta(\alpha)-\zeta(\beta)}L_0(\bar{\zeta}_\beta-1)d\beta}_{L^2(|\alpha|\le t^{\frac{3}{4}})}\\
        &\ \ \ +\norm{D_t\partial_\alpha^sf(D_t\zeta)_H\int_{|\alpha-\beta|>2t^{3/4}}\frac{L_0(\bar{\zeta}_\beta-1)}{\zeta(\alpha)-\zeta(\beta)}d\beta}_{L^2(|\alpha|\le t^{3/4})}\\
        &\ \ \ +\norm{D_t\partial_\alpha^sf\int_{|\alpha-\beta|\ge 2t^{3/4}}\frac{L_0(\bar{\zeta}_\beta-1)}{\zeta(\alpha)-\zeta(\beta)}(D_t\zeta)_H(\beta)d\beta}_{L^2(|\alpha|\le t^{3/4})}.
    \end{align*}
    For the first term, we have
    \[
    (D_t\zeta)_H(\alpha)=\frac{4\alpha^2}{it^2}\partial_\alpha(D_t\zeta)_H-\frac{4\alpha}{it^2}L_0(D_t\zeta)_H+\frac{2}{it}\Omega_0(D_t\zeta)_H
    \]
    which implies (use (\ref{Omega_0D_t zeta high}))
    \[
    \norm{(D_t\zeta)_H}_{L^\infty(|\alpha|\le Ct^{\frac{3}{4}})}\le C\epsilon t^{-1+\delta_0}\ln t
    \]
    and we derive (note that $|\alpha-\beta|\le 2t^{\frac{3}{4}}$ implies $|\beta|\le 3t^{\frac{3}{4}}$)
    \begin{align*}
        &\norm{D_t\partial_\alpha^sf\int_{|\alpha-\beta|\le 2t^{\frac{3}{4}}}\frac{(D_t\zeta)_H(\alpha)-(D_t\zeta)_H(\beta)}{\zeta(\alpha)-\zeta(\beta)}L_0(\bar{\zeta}_\beta-1)d\beta}_{L^2(|\alpha|\le t^{\frac{3}{4}})}\\
        \le&\ C\norm{D_t\partial_\alpha^sf}_{L^2}\norm{\int_{|\alpha-\beta|\le 2t^{\frac{3}{4}}}\frac{(D_t\zeta)_H(\alpha)-(D_t\zeta)_H(\beta)}{\zeta(\alpha)-\zeta(\beta)}L_0(\bar{\zeta}_\beta-1)d\beta}_{L^\infty(|\alpha|\le t^{\frac{3}{4}})}\\
        \le&\  C\epsilon\norm{\int_{|\alpha-\beta|\le 1}}_{L^\infty(|\alpha|\le t^{\frac{3}{4}})}+C\epsilon\norm{\int_{1<|\alpha-\beta|\le t^{\frac{3}{4}}}}_{L^\infty(|\alpha|\le t^{\frac{3}{4}})}\\
        \le&\ C\epsilon\norm{\partial_\alpha(D_t\zeta)_H}_{L^\infty(|\alpha|\le 3t^{\frac{3}{4}})}\norm{L_0(\bar{\zeta}_\alpha-1)}_{L^2}+C\epsilon\norm{(D_t\zeta)_H}_{L^\infty(|\alpha|\le 3t^{\frac{3}{4}})}\int_{|\alpha-\beta|\ge 1}\left|\frac{L_0(\bar{\zeta}_\alpha-1)}{\zeta(\alpha)-\zeta(\beta)}\right|d\beta\\
        \le&\ C\epsilon^2t^{-1+\delta_0}\ln t\norm{L_0(\bar{\zeta}_\alpha-1)}_{L^2}.
    \end{align*}
    For the second term, there holds
    \begin{align*}
        \norm{D_t\partial_\alpha^sf(D_t\zeta)_H\int_{|\alpha-\beta|\ge 2t^{\frac{3}{4}}}\frac{L_0(\bar{\zeta}_\beta-1)}{\zeta(\alpha)-\zeta(\beta)}d\beta}_{L^2(|\alpha|\le t^{\frac{3}{4}})}
        &\le C\epsilon^2t^{-1+\delta_0}\ln t \times t^{-\frac{3}{8}}\norm{L_0(\bar{\zeta}_\alpha-1)}_{L^2},\\
        &\le C\epsilon^3t^{-1-\delta}.
    \end{align*}
    For the third term, note that $|\alpha-\beta|\ge 2t^{\frac{3}{4}}\ge 2|\alpha|$, which implies $C|\alpha-\beta|\ge|\beta|\ge t^{\frac{3}{4}}$ and thus
    \begin{equation}\label{useful argument}
        \begin{aligned}
            &\norm{D_t\partial_\alpha^sf\int_{|\alpha-\beta|\ge 2t^{\frac{3}{4}}}\frac{L_0(\bar{\zeta}_\beta-1)}{\zeta(\alpha)-\zeta(\beta)}(D_t\zeta)_H(\beta)d\beta}_{L^2(|\alpha|\le t^{\frac{3}{4}})}\\
     \le&\ C\epsilon\norm{\int_{\substack{|\alpha-\beta|\ge 2t^{\frac{3}{4}},\\|\beta|\ge t^{\frac{3}{4}}}}\left|\frac{L_0(\bar{\zeta}_\beta-1)}{\zeta(\alpha)-\zeta(\beta)}(D_t\zeta)_H(\beta)\right|d\beta}_{L^\infty(|\alpha|\le t^{\frac{3}{4}})},\\
     \le&\ C\epsilon t^{-1}\int_{|\beta|\ge t^{\frac{3}{4}}}\left|\frac{t}{\beta}(D_t\zeta)_H(\beta)\right||L_0(\bar{\zeta}_\beta-1)|d\beta,\\
     \le&\ C\epsilon t^{-1}\int_{|\beta|\ge t^{\frac{3}{4}}}\left(|\partial_t(D_t\zeta)_H|+\frac{1}{|\beta|}|\Omega_0(D_t\zeta)_H|\right)|L_0(\bar{\zeta}_\beta-1)|d\beta,\\
     \le&\ C\epsilon^2 t^{-1}\norm{L_0(\bar{\zeta}_\alpha-1)}_{L^2}.
        \end{aligned}
    \end{equation}
    So we conclude that
    \begin{equation}\label{estimate of IM for very small alpha}
        \norm{D_t\partial_\alpha^sfI_h}_{L^2(|\alpha|\le t^{\frac{3}{4}})}\le C\epsilon^2t^{-1+\delta_0}\ln t\norm{L_0(\bar{\zeta}_\alpha-1)}_{L^2}.
    \end{equation}
    For $t^{3/4}<|\alpha|\le t$, define $E_t=\left\{\alpha\in \mathbb{R}\ |\ t^{3/4}<|\alpha|\le t\right\}$. Still, we decompose the integral as
    \begin{align*}
        \norm{D_t\partial_\alpha^sfI_h}_{L^2(E_t)}
        &\le\norm{D_t\partial_\alpha^sf\int_{|\alpha-\beta|\le \frac{1}{2}t^{3/4}}\frac{(D_t\zeta)_H(\alpha)-(D_t\zeta)_H(\beta)}{\zeta(\alpha)-\zeta(\beta)}L_0(\bar{\zeta}_\beta-1)d\beta}_{L^2(E_t)}\\
        &\ \ \ +\norm{D_t\partial_\alpha^sf(D_t\zeta)_H\int_{|\alpha-\beta|\ge \frac{1}{2}t^{3/4}}\frac{L_0(\bar{\zeta}_\beta-1)}{\zeta(\alpha)-\zeta(\beta)}d\beta}_{L^2(E_t)}\\
        &\ \ \ +\norm{D_t\partial_\alpha^sf\int_{|\alpha-\beta|\ge \frac{1}{2}t^{3/4}}\frac{L_0(\bar{\zeta}_\beta-1)}{\zeta(\alpha)-\zeta(\beta)}(D_t\zeta)_H(\beta)d\beta}_{L^2(E_t)}.
    \end{align*}
    For the first term, observe that $|\alpha-\beta|\le \frac{1}{2}t^{3/4}$ and $\alpha\in E(t)$ implies $|\alpha|/2\le|\beta|\le 2|\alpha|$. So applying Lemma \ref{lemma:transit2} yields the result (note that we can transit the derivative on $E_t$). Also, the second term behaves well for a similar reason. For the third term, we decompose it as
    \begin{align*}
        &\int_{|\alpha-\beta|\ge \frac{1}{2}t^{3/4}}\frac{L_0(\bar{\zeta}_\beta-1)}{\zeta(\alpha)-\zeta(\beta)}(D_t\zeta)_H(\beta)d\beta\\
       =&\left(\int_{\substack{|\alpha-\beta|\ge \frac{1}{2}t^{3/4}\\|\beta|\le |\alpha|/2}}+\int_{\substack{|\alpha-\beta|\ge \frac{1}{2}t^{3/4}\\|\alpha|/2<|\beta|\le 2|\alpha|}}+\int_{\substack{|\alpha-\beta|\ge \frac{1}{2}t^{3/4}\\|\beta|>2|\alpha|}}\right)\frac{L_0(\bar{\zeta}_\beta-1)}{\zeta(\alpha)-\zeta(\beta)}(D_t\zeta)_H(\beta)d\beta,\\
        \overset{\text{def}}{=}&I_1+I_2+I_3.
    \end{align*}
    For $I_1$, we have
    \begin{align*}
        \norm{I_1}_{L^\infty(E_t)}
        &\le\norm{\int_{|\beta|\le |\alpha|/2}\left|\frac{L_0(\bar{\zeta}_\beta-1)}{\zeta(\alpha)-\zeta(\beta)}(D_t\zeta)_H(\beta)\right|d\beta}_{L^\infty(E_t)},\\
        &\le C\norm{\int_{|\beta|\le |\alpha|/2}\left|\frac{L_0(\bar{\zeta}_\beta-1)}{|\alpha|}\left(-\frac{2\beta}{it}\partial_t(D_t\zeta)_H+\frac{2}{it}\Omega_0(D_t\zeta)_H\right)\right|d\beta}_{L^\infty(E_t)},\\
        &\le Ct^{-1}\int_{|\beta|\le t/2}|L_0(\bar{\zeta}_\beta-1)||\partial_t(D_t\zeta)_H|d\beta\\
        &\ \ \ +C\epsilon t^{-1+\delta_0}\ln t\norm{\frac{1}{|\alpha|}\int_{|\beta|\le|\alpha|/2}|L_0(\bar{\zeta}_\beta-1)|d\beta}_{L^\infty(E_t)},\\
        &\le C\epsilon t^{-1}\norm{L_0(\bar{\zeta}_\alpha-1)}_{L^2}+C\epsilon^2t^{-1-\frac{3}{8}+2\delta_0}\ln t
    \end{align*}
    which is enough. For $I_2$, note that $|\beta|<2|\alpha|$ implies $|\alpha-\beta|<3|\alpha|\le 3t$ and we have
    \begin{align*}
        \norm{D_t\partial_\alpha^sfI_2}_{L^2(E_t)}
        &\le\sup_{|\alpha|/2\le|\beta|\le 2|\alpha|}|(D_t\zeta)_H(\beta)D_t\partial_\alpha^sf|\norm{\int_{\substack{|\alpha-\beta|>\frac{1}{2}t^{3/4}\\|\alpha|/2<|\beta|\le 2|\alpha|}}\left|\frac{L_0(\bar{\zeta}_\beta-1)}{\alpha-\beta}\right|\left|\frac{\alpha-\beta}{\zeta(\alpha)-\zeta(\beta)}\right|d\beta}_{L^2(E_t)},\\
        &\le C\epsilon^2t^{-1}\norm{\int_{\frac{1}{2}t^{3/4}<|\alpha-\beta|\le 3t}\left|\frac{L_0(\bar{\zeta}_\beta-1)}{\alpha-\beta}\right|d\beta}_{L^2},\\
        &\le C\epsilon^2t^{-1}\norm{\chi_{\frac{1}{2}t^{3/4}\le|\alpha|\le 3t}\frac{1}{|\alpha|}}_{L^1}\norm{L_0(\bar{\zeta}_\alpha-1)}_{L^2},\\
        &\le C\epsilon^2t^{-1}\ln t\norm{L_0(\bar{\zeta}_\alpha-1)}_{L^2}.
    \end{align*}
    For $I_3$, note that $|\alpha-\beta|\ge|\beta|/2$ and we thus have
    \begin{align*}
        \norm{I_3}_{L^\infty(E_t)}
        \le C\int_{|\beta|>2|\alpha|>2t^{3/4}}\left|\frac{L_0(\bar{\zeta}_\beta-1)}{\beta}(D_t\zeta)_H(\beta)\right|d\beta
        \le C\epsilon t^{-1}\norm{L_0(\bar{\zeta}_\alpha-1)}_{L^2}.
    \end{align*}
    Here we have used the same estimate in \eqref{useful argument}. So we conclude that
    \[
    \norm{D_t\partial_\alpha^sfI_h}_{L^2(|\alpha|\le t)}\le C\epsilon^2 t^{-1+\delta_0}\ln t\norm{L_0(\bar{\zeta}_\alpha-1)}_{L^2}+C\epsilon^3t^{-1-\delta}.
    \]
    
    Then we consider II. For $\alpha\in E_t$, write
    \begin{align*}
        \norm{D_t\partial_\alpha^sfII}_{L^2(E_t)}
        &\le\norm{D_t\partial_\alpha^sfL_0D_t\zeta(\alpha)\frac{\bar{\zeta}_\alpha-1}{\zeta_\alpha}}_{L^2(E_t)}
        +\norm{D_t\partial_\alpha^sf\int\frac{L_0D_t\zeta(\beta)}{\zeta(\alpha)-\zeta(\beta)}(\bar{\zeta}_\beta-1)d\beta}_{L^2(E_t)},\\
        &\le C\epsilon^2t^{-1}\norm{L_0D_t\zeta}_{L^2}+\norm{D_t\partial_\alpha^sf\int\frac{L_0D_t\zeta(\beta)}{\zeta(\alpha)-\zeta(\beta)}(\bar{\zeta}_\beta-1)d\beta}_{L^2(E_t)}
    \end{align*}
    and the second term is treated by the same method which is applied to $\norm{D_t\partial_\alpha^sfI_h}_{L^2(E_t)}$\footnote{Namely, decompose the integral into two parts: $|\alpha-\beta|\le\frac{1}{2}t^{3/4}$ and $|\alpha-\beta|>\frac{1}{2}t^{3/4}$. For the former, apply Lemma \ref{lemma:transit2}; for the latter, apply the same argument in \eqref{useful argument}.}

    For $|\alpha|\le t^{\frac{3}{4}}$, we have
    \begin{align*}
        \norm{D_t\partial_\alpha^sfII}_{L^2(|\alpha|\le t^{\frac{3}{4}})}
        &\le\norm{D_t\partial_\alpha^sf\int_{|\zeta(\alpha)-\zeta(\beta)|\le 1}\frac{L_0D_t\zeta(\alpha)-L_0D_t\zeta(\beta)}{\zeta(\alpha)-\zeta(\beta)}(\bar{\zeta}_\beta-1)d\beta}_{L^2(|\alpha|\le t^{\frac{3}{4}})}\\
        &\ \ \ +\norm{D_t\partial_\alpha^sf\int_{1<|\zeta(\alpha)-\zeta(\beta)|\le 2t^{3/4}}\frac{L_0D_t\zeta(\alpha)-L_0D_t\zeta(\beta)}{\zeta(\alpha)-\zeta(\beta)}(\bar{\zeta}_\beta-1)d\beta}_{L^2(|\alpha|\le t^{\frac{3}{4}})}\\
        &\ \ \ +\norm{D_t\partial_\alpha^sf\int_{|\zeta(\alpha)-\zeta(\beta)|\ge 2t^{3/4}}\frac{L_0D_t\zeta(\alpha)-L_0D_t\zeta(\beta)}{\zeta(\alpha)-\zeta(\beta)}(\bar{\zeta}_\beta-1)d\beta}_{L^2(|\alpha|\le t^{\frac{3}{4}})},\\
        &\overset{\text{def}}{=}II_1+II_2+II_3.
    \end{align*}
    Note that
    \[
    |\zeta(\alpha)-\zeta(\beta)|\le Ct^{3/4},|\alpha|\le t^{3/4}\Rightarrow|\beta|\le|\alpha|+\left|\frac{\alpha-\beta}{\zeta(\alpha)-\zeta(\beta)}\right||\zeta(\alpha)-\zeta(\beta)|
    \le Ct^{3/4}
    \]
    and
    \[
    \norm{\bar{\zeta}_\alpha-1}_{L^\infty(|\alpha|\le t^{\frac{3}{4}})}\le Ct^{-1}\norm{\tilde{\Omega}_0(\bar{\zeta}_\alpha-1)}_{L^\infty}+O(\epsilon t^{-1})
    \le C\epsilon t^{-1+\delta_0}.
    \]
    At first, we use Hardy's inequality to get
    \[
    II_1\le C\epsilon \norm{\partial_\alpha L_0D_t\zeta}_{L^2}\norm{\bar{\zeta}_\alpha-1}_{L^\infty(|\alpha|\le Ct^{\frac{3}{4}})}
    \le C\epsilon^2t^{-1+\delta_0}\norm{\partial_\alpha L_0D_t\zeta}_{L^2};
    \]
    then, observe that
    \[
    II_2\le C\epsilon\norm{L_0D_t\zeta}_{L^\infty}\norm{\bar{\zeta}_\alpha-1}_{L^\infty(|\alpha|\le Ct^{\frac{3}{4}})}\ln t\le C\epsilon^2t^{-1+\delta_0}\ln t\norm{L_0D_t\zeta}_{H^1}.
    \]
    For $II_3$, we split $L_0D_t\zeta(\alpha)-L_0D_t\zeta(\beta)$ into two parts, and
    \[
    D_t\partial_\alpha^sf\int_{|\zeta(\alpha)-\zeta(\beta)|\ge 2t^{3/4}}\frac{L_0D_t\zeta(\beta)}{\zeta(\alpha)-\zeta(\beta)}(\bar{\zeta}_\beta-1)d\beta
    \]
    is treated by the argument for \eqref{useful argument}. Finally, we deduce that
    \begin{align*}
        &\norm{D_t\partial_\alpha^sfL_0D_t\zeta\int_{|\zeta(\alpha)-\zeta(\beta)|\ge 2t^{3/4}}\frac{\bar{\zeta}_\beta-1}{\zeta(\alpha)-\zeta(\beta)}d\beta}_{L^2(|\alpha|\le t^{\frac{3}{4}})}\\
        &\le C\epsilon\norm{L_0D_t\zeta}_{H^1}\norm{\int_{|\zeta(\alpha)-\zeta(\beta)|\ge 2t^{3/4}}\frac{\bar{\zeta}_\beta-1}{\zeta(\alpha)-\zeta(\beta)}d\beta}_{L^\infty(|\alpha|\le t^{\frac{3}{4}})},\\
        &\le C\epsilon\norm{L_0D_t\zeta}_{H^1}\norm{\frac{\bar{\zeta}_\alpha-1}{\zeta_\alpha}}_{L^\infty(|\alpha|\le t^{\frac{3}{4}})}
        +C\epsilon\norm{L_0D_t\zeta}_{H^1}\norm{\int_{|\zeta(\alpha)-\zeta(\beta)|\le 2t^{3/4}}\frac{\bar{\zeta}_\beta-1}{\zeta(\alpha)-\zeta(\beta)}d\beta}_{L^\infty(|\alpha|\le t^{\frac{3}{4}})},\\
        &\le C\epsilon^2t^{-1+\delta_0}\ln t\norm{L_0D_t\zeta}_{H^1}.
    \end{align*}
    
    Therefore, we conclude that $\norm{D_t\partial_\alpha^sfII}_{L^2(|\alpha|\le t)}\le C\epsilon^2t^{-1+\delta_0}\ln t\norm{L_0D_t\zeta}_{H^1}+O(\epsilon^3t^{-1-\delta})$. Due to the same reason that is stated in the proof for Lemma \ref{s-1 order estimate for [L0,P]}, we omit the estimate for III and point out that
    \[
    \norm{(I-\mathcal{H})bD_t\partial_\alpha^sf}_{L^2(R)}\le C\epsilon^4t^{-1}
    \]
    without proof. This completes the proof.
\end{proof}

\begin{cor}\label{k order estimate of [L0,P]}
    Under the assumption of Lemma \ref{s-1 order estimate for [L0,P]}, there holds
    \[
    \norm{[L_0,D_t^2-iA\partial_\alpha]\partial_\alpha^kf}_{L^2(R)}\le C\epsilon^2t^{-1}(\norm{L_0(\zeta_\alpha-1)}_{L^2}+\norm{L_0D_t\zeta}_{L^2}+\norm{L_0(\zeta-\alpha)}_{\dot{H}^{\frac{1}{2}}})+C\epsilon^3t^{-1}
    \]
    for $0\le k\le s-2$.
\end{cor}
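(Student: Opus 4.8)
\textbf{Proof strategy for Corollary \ref{k order estimate of [L0,P]}.}
The plan is to adapt the proofs of Lemma \ref{s-1 order estimate for [L0,P]} and Proposition \ref{vector field est for s-1 order on R} to the lower-order case $0\le k\le s-2$, where in fact the situation is strictly easier. The key structural observation, already recorded in Remark \ref{s-1 extend to k}, is that every summand in the commutator
\[
[L_0,D_t^2-iA\partial_\alpha]=-(D_t^2-iA\partial_\alpha)+\Big\{L_0D_tb-(L_0b-\tfrac12 b)b_\alpha\Big\}\partial_\alpha+\Big(L_0b-\tfrac12 b\Big)(D_t\partial_\alpha+\partial_\alpha D_t)-i(L_0A)\partial_\alpha
\]
acting on $\partial_\alpha^k f$ either is the cubic term $(D_t^2-iA\partial_\alpha)\partial_\alpha^k f=\partial_\alpha^k G$ (resp.\ $\partial_\alpha^k\tilde G$), which by Lemma \ref{useful lemma} decays in $L^2$ at rate $t^{-1}$ with a factor $\epsilon^3$, or carries \emph{at least one} spatial derivative on $f$, so that the other factor of $f$ is differentiated at most $s-1$ times and hence is controlled in $L^\infty$ via Lemma \ref{decay of D_tzeta} / the bootstrap assumption \eqref{bootstrap-3} without any need for the half-derivative gymnastics of Lemma \ref{Hs and order s energy}.

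Concretely, I would split $\mathbb R$ into $|\alpha|\ge t$ and $|\alpha|\le t$ exactly as in the $s-1$ case. On $|\alpha|\ge t$, one repeats the analysis of Lemma \ref{s-1 order estimate for [L0,P]}: the dangerous factor $L_0 b\cdot D_t\partial_\alpha^{k+1}f$ is handled by writing $L_0(I-\mathcal H)b$ as $I+II+III+(I-\mathcal H)b$ with the same kernel decomposition, and then using Lemma \ref{lemma:transit2} to transfer derivatives off $D_t\partial_\alpha^{k+1}f$; since $k+1\le s-1$, Lemma \ref{lemma:transit2} applies for \emph{all} $\alpha$, which actually simplifies the argument relative to the top order. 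The remaining terms $L_0D_tb\,\partial_\alpha^{k+1}f$, $(L_0b-\tfrac12b)b_\alpha\partial_\alpha^{k+1}f$, and $(L_0A)\partial_\alpha^{k+1}f$ are estimated directly using the bounds $\norm{L_0D_t\partial_\alpha^j b}_{L^2}\le C\epsilon^2 t^{-1+\delta_0}\ln t$ (from Lemma \ref{norms of b} and its $D_t$-version), $\norm{L_0 A}_{\text{appropriate}}$-type bounds, together with $\norm{\partial_\alpha^{k+1}f}_{L^\infty}\le C\epsilon t^{-1/2}$ or $\norm{\partial_\alpha^{k+1}f}_{L^2}\le C\sqrt{\mathcal E_s}$, placing each factor in whichever of $L^2/L^\infty$ makes the product integrable; all produce $C\epsilon^2 t^{-1}(\norm{L_0(\zeta_\alpha-1)}_{L^2}+\norm{L_0D_t\zeta}_{L^2})$ or better. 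On $|\alpha|\le t$, I would mirror the frequency decomposition $f_L=P_{<t^{-1}}(D_t\zeta)$, $f_H=P_{\ge t^{-1}}(D_t\zeta)$ of Proposition \ref{vector field est for s-1 order on R}, reuse the low-frequency bound \eqref{estimate for Low frequency}, and treat the high-frequency piece by splitting $|\alpha-\beta|\le \tfrac12 t^{3/4}$ versus $|\alpha-\beta|>\tfrac12 t^{3/4}$ and the sub-regions $|\alpha|\le t^{3/4}$, $t^{3/4}<|\alpha|\le t$; in the near-diagonal regime $|\alpha|\sim|\beta|$ and Lemma \ref{lemma:transit2} applies directly (no $|\alpha|\ge t^{3/4}$ restriction is needed now), while the far regime is handled by the estimate \eqref{useful argument} verbatim.

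The only point requiring a word of care — and what I'd flag as the "main obstacle," though it is minor — is bookkeeping the $\dot H^{1/2}$ term: it enters precisely through those terms where $f=D_t\theta$ and one wants to measure $\partial_\alpha^{k+1}D_t\zeta$ or $L_0D_t\zeta$ in a norm half a derivative above $L^2$, mirroring the role of $\norm{L_0(\zeta-\alpha)}_{\dot H^{1/2}}$ in Proposition \ref{vector field est for s-1 order on R}. Since for $k\le s-2$ we are genuinely two derivatives below top order, the cheap Sobolev estimate $\norm{\partial_\alpha^{k+1}f}_{\dot H^{1/2}}\le C\norm{f}_{H^{s}}$ suffices and no delicate commutator with $\Lambda$ is needed, so the $\dot H^{1/2}$-norm can be absorbed into $\norm{L_0(\zeta-\alpha)}_{\dot H^{1/2}}$ on the right-hand side (or even dropped in favor of $L^2$ norms in most summands). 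Summing the contributions over the finitely many regions and over $0\le k\le s-2$ then yields the claimed bound. The whole argument is, in effect, "run the $s-1$ proof, noticing that every inequality improves," so I would present it as such rather than re-deriving each estimate.
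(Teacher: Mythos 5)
Your high-level strategy (``run the $s-1$ proof noticing every inequality improves, because $\partial_\alpha^{k+1}f$ with $k+1\le s-1$ can be placed in $L^\infty$'') is the right one and is indeed what the paper does. But the way you deploy it in the region $|\alpha|\le t$ has a genuine gap, and in fact as stated your argument would not close.

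The key point you miss is quantitative: Proposition~\ref{vector field est for s-1 order on R} does \emph{not} give the clean $t^{-1}$ bound — it gives $C\epsilon^2 t^{-1+\delta_0}\ln t(\cdots)+C\epsilon^3 t^{-1}$. Merely ``mirroring the frequency decomposition and reusing \eqref{estimate for Low frequency}'' would reproduce exactly that bound with the same $t^{\delta_0}\ln t$ loss, which is what Corollary~\ref{k order estimate of [L0,P]} is supposed to \emph{remove}. You need to locate precisely where in the $s-1$ proof the loss occurs and replace the mechanism there. The paper's proof does exactly this: it isolates the three terms in Proposition~\ref{vector field est for s-1 order on R} that do not already decay at speed $t^{-1}$ (the near-diagonal $|\alpha|\le t^{3/4}$ term, the $I_2$-type term on $E_t=\{t^{3/4}<|\alpha|\le t\}$, and the $II_j$ terms), and in each case swaps the estimate $\norm{D_t\partial_\alpha^s f}_{L^2}\le C\epsilon$ for the pointwise bound $\norm{D_t\partial_\alpha^{k+1}f}_{L^\infty}\le C\epsilon t^{-1/6}\ln t$ coming from Lemma~\ref{decay of D_tzeta}. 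That extra $t^{-1/6}$ is exactly what absorbs the $t^{\delta_0}\ln t$ loss and converts those contributions into pure $C\epsilon^3 t^{-1-\delta}$ errors. You observe near the top that $\partial_\alpha^{k+1}f$ is controlled in $L^\infty$, but you never actually use this in the $|\alpha|\le t$ discussion — you revert to ``Lemma~\ref{lemma:transit2} applies directly.''

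Worse, your assertion that ``in the near-diagonal regime $|\alpha|\sim|\beta|$ and Lemma~\ref{lemma:transit2} applies directly (no $|\alpha|\ge t^{3/4}$ restriction is needed now)'' is incorrect in the sub-region $|\alpha|\le t^{3/4}$. There, $|\alpha-\beta|\le\tfrac12 t^{3/4}$ does \emph{not} imply $|\alpha|/2\le|\beta|\le 2|\alpha|$ (take $\alpha$ near $0$), so the hypothesis of Lemma~\ref{lemma:transit2} fails — the ``$k\le s-1$ means no restriction on $\alpha$'' relaxation pertains to the derivative count inside Lemma~\ref{lemma:transit2}, not to its requirement $|\alpha|\sim|\beta|$. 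This is precisely why the paper handles $|\alpha|\le t^{3/4}$ (and $t^{3/4}<|\alpha|\le t^{6/7}$, via an additional $t^{6/7}$ cut that you do not mention) by the $L^\infty$ route rather than by Lemma~\ref{lemma:transit2}, reserving the latter for the sub-region $|\alpha|>t^{6/7}$ where the near-diagonal constraint really does force $|\alpha|\sim|\beta|$. Finally, your paragraph about ``absorbing'' the $\dot H^{1/2}$ norm via a Sobolev estimate on $\partial_\alpha^{k+1}f$ is confused: the $\dot H^{1/2}$ on the right-hand side sits on $L_0(\zeta-\alpha)$, not on powers of $\partial_\alpha f$, and it is carried along unchanged from Lemma~\ref{s-1 order estimate for [L0,P]} (the $III$ term); it is not something you manufacture from $f$.
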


\begin{proof}
    According to Remark \ref{s-1 extend to k}, we shall focus on $|\alpha|\le t$. Still, we only consider the term $L_0bD_t\partial_\alpha\partial_\alpha^k f$. At first, thanks to Lemma \ref{decay of D_tzeta}, we have $\norm{D_t\partial_\alpha^{k+1}f}_{L^\infty}\le C\epsilon t^{-1/6}\ln t$.
    Then, we list out the estimate for each term of $D_t\partial_\alpha^{k+1}fI$ which does not decay with speed $t^{-1}$ in $L^2$ in the proof of Proposition \ref{vector field est for s-1 order on R}.

    \begin{enumerate}
        \item 
        \begin{align*}
            &\norm{D_t\partial_\alpha^{k+1}f\int_{|\alpha-\beta|\le 2t^{\frac{3}{4}}}\frac{(D_t\zeta)_H(\alpha)-(D_t\zeta)_H(\beta)}{\zeta(\alpha)-\zeta(\beta)}L_0(\bar{\zeta}_\beta-1)d\beta}_{L^2(|\alpha|\le t^{\frac{3}{4}})}\\
            &\le C\norm{D_t\partial_\alpha^{k+1}f}_{L^\infty}\norm{(D_t\zeta)_H}_{L^\infty(|\alpha|\le 3t^{3/4})}\norm{L_0(\bar{\zeta}_\alpha-1)}_{L^2},\\
            &\le C\epsilon^3 t^{-\frac{1}{6}-1+\delta_0}(\ln t)^2,\\
            &\le C\epsilon^3t^{-1-\delta}.
        \end{align*}
        \item 
        \begin{align*}
            &\norm{D_t\partial_\alpha^{k+1}f\int_{\substack{|\alpha-\beta|>\frac{1}{2}t^{3/4}\\|\alpha|/2<|\beta|\le 2|\alpha|}}\frac{L_0(\bar{\zeta}_\beta-1)}{\zeta(\alpha)-\zeta(\beta)}(D_t\zeta)_H(\beta)d\beta}_{L^2(E_t)}\\
            &\le\norm{D_t\partial_\alpha^{k+1}f\int_{\substack{\frac{1}{2}t^{3/4}<|\alpha-\beta|\le\frac{1}{2}t^{6/7}\\|\alpha|/2<|\beta|\le 2|\alpha|}}\frac{L_0(\bar{\zeta}_\beta-1)}{\zeta(\alpha)-\zeta(\beta)}(D_t\zeta)_H(\beta)d\beta}_{L^2(E_t)}\\
            &\ \ \ +\norm{D_t\partial_\alpha^{k+1}f\int_{\substack{|\alpha-\beta|>\frac{1}{2}t^{6/7}\\|\alpha|/2<|\beta|\le 2|\alpha|}}\frac{L_0(\bar{\zeta}_\beta-1)}{\zeta(\alpha)-\zeta(\beta)}(D_t\zeta)_H(\beta)d\beta}_{L^2(E_t)},\\
            &\le \norm{D_t\partial_\alpha^{k+1}f\int_{\substack{\frac{1}{2}t^{3/4}<|\alpha-\beta|\le\frac{1}{2}t^{6/7}\\|\alpha|/2<|\beta|\le 2|\alpha|}}\frac{L_0(\bar{\zeta}_\beta-1)}{\zeta(\alpha)-\zeta(\beta)}(D_t\zeta)_H(\beta)d\beta}_{L^2(E_t)}\\
            &\ \ \ +C\norm{D_t\partial_\alpha^{k+1}f}_{L^\infty}t^{-6/7}\norm{(D_t\zeta)_H}_{L^2}\norm{L_0(\bar{\zeta}_\alpha-1)}_{L^2},\\
            &\le\norm{D_t\partial_\alpha^{k+1}f\int_{\substack{\frac{1}{2}t^{3/4}<|\alpha-\beta|\le\frac{1}{2}t^{6/7}\\|\alpha|/2<|\beta|\le 2|\alpha|}}\frac{L_0(\bar{\zeta}_\beta-1)}{\zeta(\alpha)-\zeta(\beta)}(D_t\zeta)_H(\beta)d\beta}_{L^2(E_t)}+C\epsilon^3t^{-1-\frac{1}{42}+\delta_0}\ln t.
        \end{align*}
        Denote the integral by $\tilde{I}$ and we have
        \begin{align*}
            \norm{D_t\partial_\alpha^{k+1}f\tilde{I}}_{L^2(t^{3/4}<|\alpha|\le t^{6/7})}
            &\le \norm{D_t\partial_\alpha^{k+1}f}_{L^2}\norm{(D_t\zeta)_H}_{L^\infty(|\alpha|\le 2t^{6/7})}\int_{|\alpha-\beta|\ge\frac{1}{2}t^{3/4}}\left|\frac{L_0(\bar{\zeta}_\beta-1)}{\zeta(\alpha)-\zeta(\beta)}\right|d\beta,\\
            &\le C\epsilon^2t^{-\frac{1}{2}-\frac{1}{7}}t^{-\frac{3}{8}}\norm{L_0(\bar{\zeta}_\alpha-1)}_{L^2},\\
            &\le C\epsilon^3t^{-1-\frac{1}{42}+\delta_0}.
        \end{align*}
        Finally, since
        \[
        |\alpha|>t^{6/7},|\alpha-\beta|\le\frac{1}{2}t^{6/7}\Rightarrow |\alpha-\beta|<\frac{|\alpha|}{2}\Rightarrow \frac{|\alpha|}{2}\le|\beta|\le 2|\alpha|,
        \]
        we have
        \[
        1_{|\alpha|>t^{6/7}}\tilde{I}=\int_{\frac{1}{2}t^{3/4}<|\alpha-\beta|\le\frac{1}{2}t^{6/7}}\frac{L_0(\bar{\zeta}_\beta-1)}{\zeta(\alpha)-\zeta(\beta)}(D_t\zeta)_H(\beta)d\beta
        \]
        which allows us to apply T1 Theorem:
        \begin{align*}
            \norm{D_t\partial_\alpha^{k+1}f\tilde{I}}_{L^2(t^{6/7}<|\alpha|\le t)}
            &\le\sup_{|\alpha|/2\le|\beta|\le 2|\alpha|}|D_t\partial_\alpha^{k+1}f(D_t\zeta)_H(\beta)|\norm{L_0(\bar{\zeta}_\alpha-1)}_{L^2},\\
            &\le C\epsilon^2t^{-1}\norm{L_0(\bar{\zeta}_\alpha-1)}_{L^2}.
        \end{align*}
        And we conclude that
        \[
        \norm{D_t\partial_\alpha^{k+1}f\int_{\substack{|\alpha-\beta|>\frac{1}{2}t^{3/4}\\|\alpha|/2<|\beta|\le 2|\alpha|}}\frac{L_0(\bar{\zeta}_\beta-1)}{\zeta(\alpha)-\zeta(\beta)}(D_t\zeta)_H(\beta)d\beta}_{L^2(E_t)}
        \le C\epsilon^2t^{-1}\norm{L_0(\bar{\zeta}_\alpha-1)}_{L^2}+C\epsilon^3t^{-1-\delta}.
        \]
        \item 
        \begin{align*}
            II_1,II_2,II_3\le \norm{D_t\partial_\alpha^{k+1}f}_{L^\infty}\norm{L_0D_t\zeta}_{L^2}\norm{\bar{\zeta}_\alpha-1}_{W^{1,\infty}(|\alpha|\le t^{\frac{3}{4}})}\ln t
            \le C\epsilon^3t^{-1-\delta}.
        \end{align*}
    \end{enumerate}
    This completes the proof.
\end{proof}

\begin{thm}\label{energy estimate for vec}
    Set
    \[
    E_{L_0\theta}^k(t)=\int\frac{|D_tL_0\theta_k|^2}{A}d\alpha+i\int L_0\theta_k\partial_\alpha L_0\bar{\theta}_kd\alpha,
    \]
    \[
    E_{L_0\sigma}^k(t)=\int\frac{|D_tL_0\sigma_k|^2}{A}d\alpha+i\int L_0\sigma_k\partial_\alpha L_0\bar{\sigma}_kd\alpha
    \]
    and
    \[
    \mathcal{E}_k^{L_0}(t)=\sum_{j=0}^{k}(E_{L_0\theta}^j(t)+E_{L_0\sigma}^j(t))
    \]
    for $0\le k\le s-1$,
    then
    \begin{enumerate}
        \item
        \begin{align*}
            &\norm{L_0(\zeta_\alpha-1)}_{H^k}+\norm{L_0D_t\zeta}_{H^k}+\norm{L_0\Lambda(\zeta-\alpha)}_{H^k}+\norm{L_0\Lambda D_t\zeta}_{H^k}+\norm{L_0D_t^2\zeta}_{H^k}\\
         \le&\ C\sqrt{\mathcal{E}_k^{L_0}(t)}+C\epsilon^{\frac{3}{2}}t^{-\frac{1}{4}}\ln t,\ \forall\ 0\le k\le s-1.
        \end{align*}
        \item
        \begin{equation}\label{vec field estimate for k}
            \left|\frac{d}{dt}(E_{L_0\theta}^k(t)+E_{L_0\sigma}^k(t))\right|\le C\epsilon^2t^{-1}\left(\mathcal{E}_{k}^{L_0}+\epsilon\sqrt{\mathcal{E}_k^{L_0}}\right),\ \forall 0\le k\le s-2.
        \end{equation}
        \item 
        \begin{equation}\label{vec field estimate for s-1}
            \left|\frac{d}{dt}(E_{L_0\theta}^{s-1}+E_{L_0\sigma}^{s-1})(t)\right|\le C\epsilon^2t^{-1}\left(\mathcal{E}_{s-1}^{L_0}+\epsilon\sqrt{\mathcal{E}_{s-1}^{L_0}}\right)+C\epsilon^3t^{-1+3\delta_0}\ln t\sqrt{\mathcal{E}_{0}^{L_0}}.
        \end{equation}
    \end{enumerate}
\end{thm}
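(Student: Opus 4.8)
\emph{Plan.} The plan is to derive evolution equations for $w_k:=L_0\theta_k$ and $\tilde w_k:=L_0\sigma_k$ (where $\theta_k=\partial_\alpha^k\theta$, $\sigma_k=\partial_\alpha^kD_t\theta$), apply Wu's basic energy lemma, and then reduce to three types of source term. Using $\mathcal P=D_t^2-iA\partial_\alpha$, the commutator identity from Section~2 (i.e. $[\mathcal P,L_0]=\mathcal P-\{L_0D_tb-(L_0b-\tfrac12 b)b_\alpha\}\partial_\alpha-(L_0b-\tfrac12 b)(D_t\partial_\alpha+\partial_\alpha D_t)+i(L_0A)\partial_\alpha$), and $\mathcal P\theta_k=G_k^\theta$, $\mathcal P\sigma_k=G_k^\sigma$, one gets $\mathcal P w_k=L_0G_k^\theta+[\mathcal P,L_0]\theta_k$ and similarly for $\tilde w_k$. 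The source splits into: (a) the original source $G_k^\theta$ (resp.\ $G_k^\sigma$) reappearing through the $\mathcal P$ in $[\mathcal P,L_0]$; (b) the new commutator pieces, which together equal $-[L_0,\mathcal P]\theta_k$ (resp.\ $-[L_0,\mathcal P]\sigma_k$) modulo~(a); and (c) the $L_0$-differentiated nonlinearity $L_0G_k^\theta=L_0\partial_\alpha^kG+L_0[\mathcal P,\partial_\alpha^k]\theta$ (resp.\ with $\tilde G=i\frac{a_t}{a}\circ\kappa^{-1}A\theta_\alpha+D_tG$). Applying Wu's basic energy lemma (\cite{Wu2009}, Lemma~4.1; cf.\ \eqref{basic lemma}) to $w_k$ and $\tilde w_k$, $\frac{d}{dt}(E_{L_0\theta}^k+E_{L_0\sigma}^k)$ becomes the pairing of $\frac{2D_t\bar w_k}{A}$ (resp.\ $\frac{2D_t\bar{\tilde w}_k}{A}$) against these sources, minus $\int\frac{|D_tw_k|^2}{A}\big(\frac{a_t}{a}\circ\kappa^{-1}\big)$, which is $\le C\epsilon^2t^{-1-\delta}\mathcal E_k^{L_0}$ by Proposition~\ref{norm of at/a}. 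As always we may assume $t\ge 1$.

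\emph{Part (1).} This is the exact analogue of Lemma~\ref{Hs and order s energy}. Since $(I+\mathcal H)\theta=0$, $(I+\mathcal H)D_t\theta=-[D_t\zeta,\mathcal H]\tfrac{\theta_\alpha}{\zeta_\alpha}$ and $[L_0,\mathcal H]f=[L_0\zeta,\mathcal H]\tfrac{\partial_\alpha f}{\zeta_\alpha}$, the ``wrong-chirality'' projections $\tfrac{I+\mathcal H}{2}L_0\theta_k$ and $\tfrac{I+\mathcal H}{2}L_0\sigma_k$ are commutator-type quantities whose $H^1$ norms are $O(\epsilon^2t^{-1/2}\ln t)$ by Lemma~\ref{singular}, Lemma~\ref{L infty of hilbert} and the bootstrap bounds on $L_0$-quantities. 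Writing $i\int L_0\theta_k\partial_\alpha\overline{L_0\theta_k}=\|\Lambda\tfrac{I-\mathcal H}{2}L_0\theta_k\|_{L^2}^2+R$ with $|R|\le C\epsilon\,\mathcal E_k^{L_0}+C\epsilon^3t^{-1/2}\ln t$ as there, and converting $L_0$ of $\theta,D_t\theta$ back to $L_0$ of $\zeta-\alpha,D_t\zeta,D_t^2\zeta$ via $\theta=2(\zeta-\alpha)+e$, $D_t\sigma_k=2\partial_\alpha^kD_t^2\zeta+\mathrm{err}$, $\bar\zeta_\alpha-1=iD_t^2\bar\zeta-(A-1)\bar\zeta_\alpha$ (applying $L_0$ to the quadratic errors and estimating with Lemma~\ref{norms of b}, Proposition~\ref{norms of A-1}) gives the stated bound with remainder $C\epsilon^{3/2}t^{-1/4}\ln t$.

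\emph{Parts (2) and (3).} Term~(a), $\int\frac{2D_t\bar w_k}{A}G_k^\theta$, is controlled by $\|G_k^\theta\|_{L^2}\le C\epsilon^3t^{-1}\ln t$ and $\|D_tw_k\|_{L^2}\le C\sqrt{\mathcal E_k^{L_0}}$; for $k\le s-2$ the resulting $C\epsilon^3t^{-1}\ln t\sqrt{\mathcal E_k^{L_0}}$ is absorbed into $C\epsilon^3t^{-1}\sqrt{\mathcal E_k^{L_0}}$ after a mild improvement using the $(I-\mathcal H)$-structure of $G_k^\theta$ against the almost holomorphic $D_t\bar w_k$, and for $k=s-1$ it is already of the form of the last term of \eqref{vec field estimate for s-1}. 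Term~(b) equals $-[L_0,\mathcal P]\theta_k$ (resp.\ $-[L_0,\mathcal P]\sigma_k$), and is estimated by Corollary~\ref{k order estimate of [L0,P]} for $0\le k\le s-2$ and by Lemma~\ref{s-1 order estimate for [L0,P]} together with Proposition~\ref{vector field est for s-1 order on R} for $k=s-1$; pairing against $\frac{2D_t\bar w_k}{A}$ and bounding $\|L_0(\zeta_\alpha-1)\|_{L^2},\|L_0D_t\zeta\|_{L^2},\|L_0(\zeta-\alpha)\|_{\dot H^{1/2}}$ by part~(1), i.e.\ by $C\sqrt{\mathcal E_k^{L_0}}+C\epsilon^{3/2}t^{-1/4}\ln t$, produces exactly the terms $C\epsilon^2t^{-1}\mathcal E_k^{L_0}$ and $C\epsilon^3t^{-1}\sqrt{\mathcal E_k^{L_0}}$ of \eqref{vec field estimate for k}, and at $k=s-1$ the extra loss $C\epsilon^3t^{-1+3\delta_0}\ln t\,\sqrt{\mathcal E_0^{L_0}}$ coming from the $t^{\delta_0}\ln t$ factors in Proposition~\ref{vector field est for s-1 order on R}. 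For term~(c) I will use that $D_t\bar w_k=\partial_\alpha^kD_t\bar\zeta+2L_0\partial_\alpha^kD_t\bar\zeta+O(\epsilon^2t^{-1/2})_{L^2}$ is almost holomorphic (since $\tfrac12\partial_t\bar\theta_k$ and $L_0D_t\bar\theta_k$ are, modulo quadratic errors), so the pairing with $L_0\partial_\alpha^kG$ reduces to estimating $(I-\mathcal H)L_0\partial_\alpha^kG$; writing $(I-\mathcal H)L_0=L_0(I-\mathcal H)+[L_0,\mathcal H]$ and expanding $L_0[\mathcal P,\partial_\alpha^k]\theta$, I re-run the analysis of Proposition~\ref{formula of dE_s/dt} (localization Lemmas~\ref{decouple}, \ref{decouple-2} on $t^{1-\mu}\le|\alpha|\le t^{1+\mu}$, Lemma~\ref{small or big alpha} elsewhere) with exactly one factor ($D_t\zeta$, $\zeta-\alpha$, $\zeta_\alpha-1$, or a kernel factor $\zeta(\alpha)-\zeta(\beta)$) replaced by its $L_0$-image; since each $L_0$-image grows at most like $t^{2\delta_0}$ rather than decaying, and Lemmas~\ref{transition-lemma}, \ref{lemma:transit2} still offload derivatives from the uncontrollable top-order $L_0$-quantities, one obtains $\|(I-\mathcal H)L_0\partial_\alpha^kG\|_{L^2}\le C\epsilon^3t^{-1-\delta'}$ (for $k=s-1$, only after conceding the $t^{3\delta_0}\ln t$ loss onto $\sqrt{\mathcal E_0^{L_0}}$), hence a contribution $\le C\epsilon^4t^{-1}$. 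The $\sigma$-energy is treated identically, using $D_t\bar{\tilde w}_k\approx 2L_0\partial_\alpha^kD_t^2\bar\zeta$ and the decomposition of $D_tG$ from Proposition~\ref{estimate of dE_{1/2}/dt}. Summing over $0\le j\le k$ then gives \eqref{vec field estimate for k} and \eqref{vec field estimate for s-1}.

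\textbf{Main obstacle.} The hard part is term~(c) at the top order $k=s-1$: expanding $L_0\partial_\alpha^{s-1}G$ produces contributions in which one factor is $\partial_\alpha^sD_t\zeta$ or $\partial_\alpha^sD_t^2\zeta$ — controllable only in $L^2$, with no decay and, crucially, no available bound for $L_0\partial_\alpha^sD_t^i\zeta$ — while simultaneously $L_0$ has landed on a second factor, so a naive count is short of $t^{-1}$. As in Chapter~5 the only remedy is the transition-of-derivatives identities of Lemma~\ref{transition-lemma}: away from $|\alpha|\sim t$ the dangerous factors decay fast once a derivative is lowered onto a better factor, while near $|\alpha|\sim t$ one trades $\partial_\alpha\leftrightarrow\frac{it^2}{4\alpha^2}$ at the cost of $L_0$- and $\Omega_0$-errors, which is exactly what forces $\sqrt{\mathcal E_0^{L_0}}$ in place of $\sqrt{\mathcal E_{s-1}^{L_0}}$ and the $t^{3\delta_0}\ln t$ loss in \eqref{vec field estimate for s-1}. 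Verifying that, after choosing $\mu$ (say $\mu=1/5$) and $\delta_0$ small, all the exponents that appear still beat $t^{-1}$ is the bulk of the computation.
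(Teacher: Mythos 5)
Your proposal correctly identifies the three-way source decomposition for $\mathcal P(L_0\theta_k)$ and $\mathcal P(L_0\sigma_k)$ (original $G_k^\theta$ reappearing through the $\mathcal P$ in $[\mathcal P,L_0]$, the new commutator pieces, and $L_0\partial_\alpha^kG$ plus $L_0[\mathcal P,\partial_\alpha^k]\theta$), applies Wu's basic energy lemma, invokes Corollary~\ref{k order estimate of [L0,P]} and Proposition~\ref{vector field est for s-1 order on R} for the commutator, and re-runs the Chapter~5 localization/transition-of-derivatives analysis for the $L_0$-differentiated nonlinearity — which is exactly the paper's route, including the reason (no control on $L_0\partial_\alpha^sD_t^i\zeta$) for the $t^{3\delta_0}\ln t\sqrt{\mathcal E_0^{L_0}}$ loss at $k=s-1$. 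Part~(1) is likewise the same almost-holomorphic projection argument mirroring Lemma~\ref{Hs and order s energy}.
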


\begin{proof}
    (1) is verified by applying an argument similar to Lemma \ref{Hs and order s energy}.
    
    For (\ref{vec field estimate for k}) we only prove the case $k=0$, since others are similar to it (except the treatment of $L_0[D_t^2-iA\partial_\alpha,\partial_\alpha^k]\theta$, which will be shown in the proof for (\ref{vec field estimate for s-1})).
    
    We have
    \[
    (D_t^2-iA\partial_\alpha)\theta=G
    \]
    and thus
    \[
    (D_t^2-iA\partial_\alpha)L_0\theta=L_0G+[D_t^2-iA\partial_\alpha,L_0]\theta
    \]
    which leads to
    \begin{align*}
        \frac{d}{dt}E_{L_0\theta}^0(t)
        &=2Re\left\{\int\frac{D_tL_0\bar{\theta}}{A}L_0Gd\alpha\right\}-\int\frac{|D_tL_0\theta|^2}{A}\left(\frac{a_t}{a}\circ\kappa^{-1}\right)d\alpha\\
        &\ \ \ +2Re\left\{\int\frac{D_tL_0\bar{\theta}}{A}[D_t^2-iA\partial_\alpha,L_0]\theta d\alpha\right\}.
    \end{align*}
    According to Corollary \ref{k order estimate of [L0,P]}, we have
    \begin{align*}
        &\left|-\int\frac{|D_tL_0\theta|^2}{A}\left(\frac{a_t}{a}\circ\kappa^{-1}\right)d\alpha
        +2Re\left\{\int\frac{D_tL_0\bar{\theta}}{A}[D_t^2-iA\partial_\alpha,L_0]\theta d\alpha\right\}\right|\\
        \le&\ C\epsilon^2t^{-1-\delta}\norm{D_tL_0\theta}_{L^2}^2+2\norm{[D_t^2-iA\partial_\alpha,L_0]\theta}_{L^2}\norm{D_tL_0\bar{\theta}}_{L^2},\\
        \le&\ C\epsilon^2t^{-1}E_{L_0\theta}^0(t)+C\epsilon^2t^{-1}\mathcal{E}_0^{L_0}(t)+C\epsilon^3t^{-1}\sqrt{\mathcal{E}_0^{L_0}(t)},\\
        \le&\ C\epsilon^2t^{-1}\left(\mathcal{E}_0^{L_0}(t)+\epsilon\sqrt{\mathcal{E}_0^{L_0}(t)}\right).
    \end{align*}
    In addition, since each term of $G$ is cubic, one may exploit the technique of calculating $L_0b$ and the method used in Corollary \ref{k order estimate of [L0,P]} (together with Lemma \ref{s-1 order estimate for [L0,P]}, since we also need to control for large $|\alpha|$) to obtain $\norm{L_0G}_{L^2}\le C\epsilon^2t^{-1}\sqrt{\mathcal{E}_0^{L_0}(t)}+O(\epsilon^3t^{-1})$; now we get
    \begin{align*}
        \left|\frac{d}{dt}E_{L_0\theta}^0(t)\right|
        &\le C\norm{D_tL_0\theta}_{L^2}\norm{L_0G}_{L^2}+C\epsilon^2t^{-1}\left(\mathcal{E}_0^{L_0}(t)+\epsilon\sqrt{\mathcal{E}_0^{L_0}(t)}\right)),\\
        &\le C'\epsilon^2t^{-1}\left(\mathcal{E}_0^{L_0}(t)+\epsilon\sqrt{\mathcal{E}_0^{L_0}(t)}\right).
    \end{align*}
    The treatment of $\frac{d}{dt}E^0_{L_0\sigma}(t)$ is similar.
    
     For (\ref{vec field estimate for s-1}), simply use Proposition \ref{vector field est for s-1 order on R} to get
    \begin{align*}
        \norm{[D_t^2-iA\partial_\alpha,L_0]\theta_{s-1}}_{L^2}
        &\le C\epsilon^2t^{-1+\delta_0}\ln t\sqrt{\mathcal{E}_{0}^{L_0}(t)}+C\epsilon^3t^{-1}
    \end{align*}
    and use $\norm{D_tL_0\bar{\theta}_{s-1}}_{L^2}\le C\epsilon t^{2\delta_0}$ to derive
    \[
    \left|\int\frac{D_tL_0\bar{\theta}_{s-1}}{A}[D_t^2-iA\partial_\alpha,L_0]\theta_{s-1}d\alpha\right|
    \le C\epsilon^3t^{-1+3\delta_0}\ln t\sqrt{\mathcal{E}_0^{L_0}(t)}+C\epsilon^3t^{-1}\sqrt{\mathcal{E}_{s-1}^{L_0}(t)}.
    \]
    
    Also, we may apply its technique to derive the same bound for $\norm{L_0\partial_\alpha^{s-1}G}_{L^2}$. Hence we only need to consider
    \[
    \int\frac{D_tL_0\bar{\theta}_{s-1}}{A}L_0[D_t^2-iA\partial_\alpha,\partial_\alpha^{s-1}]\theta d\alpha.
    \]
    Recall that
    \[
    [A\partial_\alpha,\partial_\alpha^{s-1}]=-\sum_{j=1}^{s-1}C_{j,k}\partial_\alpha^jA\partial_\alpha^{s-j},
    \]
    where $C_{j,k}=k!/j!(k-j)!$, and thus
    \[
    L_0[A\partial_\alpha,\partial_\alpha^{s-1}]\theta=-\sum_{j=1}^{s-1}C_{j,k}\left((L_0\partial_\alpha^jA)\partial_\alpha^{s-j}\theta+\partial_\alpha^jA(L_0\partial_\alpha^{s-j}\theta)\right).
    \]
    Combining with $\norm{L_0A}_{H^s}\le C\epsilon^2t^{-\frac{1}{2}}\sqrt{\mathcal{E}_{s-1}^{L_0}(t)}$ (also, this requires some techniques to avoid the overflow, like we did before),$\norm{\theta_\alpha}_{W^{s-2,\infty}}\le C\epsilon t^{-\frac{1}{2}}$ and $\norm{A-1}_{W^{s-1,\infty}}\le\norm{A-1}_{H^s}\le C\epsilon^2t^{-1+\delta_0}\ln t$ (\textbf{Note that for $0\le k\le s-2$, there holds $\norm{A-1}_{W^{k,\infty}}\le C\epsilon^2t^{-1-\delta}$ for certain $\delta>0$, which makes the estimate for $k$ order vector field energy different from $s-1$ order}) yields
    \[
    \left|\int\frac{D_tL_0\bar{\theta}_k}{A}L_0[-iA\partial_\alpha,\partial_\alpha^{s-1}]\theta d\alpha\right|\le C\epsilon^2t^{-1}\mathcal{E}_{s-1}^{L_0}(t)+C\epsilon^3t^{-1+3\delta_0}\ln t\sqrt{\mathcal{E}^{L_0}_{0}}.
    \]
    The last term comes from the estimate for the extreme term:
    \begin{align*}
        \norm{\partial_\alpha^{s-1}A(L_0\partial_\alpha\theta)}_{L^2}\norm{D_tL_0\bar{\theta}_{s-1}}_{L^2}
        &\le\norm{A-1}_{H^s}\norm{D_tL_0\bar{\theta}_{s-1}}_{L^2}\sqrt{\mathcal{E}_0^{L_0}}\\
        &\le C\epsilon^3t^{-1+3\delta_0}\ln t\sqrt{\mathcal{E}_0^{L_0}}.
    \end{align*}
    A similar estimate works for the $[D_t^2,\partial_\alpha^{s-1}]$ part (recall that $\norm{b_\alpha}_{W^{s-2,\infty}}\le Ct^{-1-\delta}$), and this completes the proof.
\end{proof}

\begin{cor}\label{sec 6:L0bound}
    There holds
    \begin{equation}\label{vec energy for k}
        \mathcal{E}_k^{L_0}(t)\le\left[\left(\sqrt{\mathcal{E}_k^{L_0}(0)}+\epsilon\right)(1+t)^{\frac{C\epsilon^2}{2}}-\epsilon\right]^2\ \forall 0\le k\le s-2
    \end{equation}
    and
    \begin{equation}\label{vec energy for s-1}
        \mathcal{E}_{s-1}^{L_0}(t)\le (1+t)^{C\epsilon^2}\mathcal{E}_{s-1}^{L_0}(0)+\frac{C\epsilon^4}{3\delta_0-\frac{C\epsilon^2}{2}}(1+t)^{3\delta_0+\frac{C\epsilon^2}{2}}\ln(1+t)
    \end{equation}
\end{cor}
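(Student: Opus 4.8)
The plan is to convert the differential inequalities \eqref{vec field estimate for k} and \eqref{vec field estimate for s-1} of Theorem \ref{energy estimate for vec} into the stated bounds on $\mathcal{E}_k^{L_0}(t)$ by a Gronwall-type argument, handled separately for $k\le s-2$ and $k=s-1$ because the inhomogeneous terms differ.

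For $0\le k\le s-2$, I would start from
\[
\left|\frac{d}{dt}\mathcal{E}_k^{L_0}(t)\right|\le C\epsilon^2 t^{-1}\Big(\mathcal{E}_k^{L_0}(t)+\epsilon\sqrt{\mathcal{E}_k^{L_0}(t)}\Big),
\]
which follows by summing \eqref{vec field estimate for k} over $0\le j\le k$ (and noting the bound holds trivially for $0\le t\le 1$, so one may work with $1+t$). Setting $u(t)=\sqrt{\mathcal{E}_k^{L_0}(t)}$, the inequality becomes $|u'(t)|\le \tfrac{C\epsilon^2}{2}(1+t)^{-1}(u(t)+\epsilon)$, i.e. $\frac{d}{dt}(u+\epsilon)\le \tfrac{C\epsilon^2}{2}(1+t)^{-1}(u+\epsilon)$. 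Integrating the logarithmic derivative of $u+\epsilon$ from $0$ to $t$ gives $u(t)+\epsilon\le (u(0)+\epsilon)(1+t)^{C\epsilon^2/2}$, and squaring yields exactly
\[
\mathcal{E}_k^{L_0}(t)\le \Big[\big(\sqrt{\mathcal{E}_k^{L_0}(0)}+\epsilon\big)(1+t)^{\frac{C\epsilon^2}{2}}-\epsilon\Big]^2.
\]

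For $k=s-1$, summing \eqref{vec field estimate for s-1} together with all the lower-order bounds \eqref{vec field estimate for k} (and using \eqref{vec energy for k} to control $\mathcal{E}_0^{L_0}(t)\le C\epsilon^2(1+t)^{C\epsilon^2}$ from the already-established low-order case) gives
\[
\left|\frac{d}{dt}\mathcal{E}_{s-1}^{L_0}(t)\right|\le C\epsilon^2(1+t)^{-1}\mathcal{E}_{s-1}^{L_0}(t)+C\epsilon^4(1+t)^{-1+3\delta_0}\ln(1+t).
\]
Multiplying by the integrating factor $(1+t)^{-C\epsilon^2}$, one obtains $\frac{d}{dt}\big((1+t)^{-C\epsilon^2}\mathcal{E}_{s-1}^{L_0}(t)\big)\le C\epsilon^4(1+t)^{-1-C\epsilon^2+3\delta_0}\ln(1+t)$; since $\epsilon$ is small relative to $\delta_0$ we have $3\delta_0-C\epsilon^2>0$, so integrating from $0$ to $t$ (estimating $\int_0^t \tau^{-1+\beta}\ln(1+\tau)\,d\tau \le \tfrac{1}{\beta}(1+t)^{\beta}\ln(1+t)$ with $\beta=3\delta_0-C\epsilon^2$) and multiplying back by $(1+t)^{C\epsilon^2}$ yields
\[
\mathcal{E}_{s-1}^{L_0}(t)\le (1+t)^{C\epsilon^2}\mathcal{E}_{s-1}^{L_0}(0)+\frac{C\epsilon^4}{3\delta_0-\frac{C\epsilon^2}{2}}(1+t)^{3\delta_0+\frac{C\epsilon^2}{2}}\ln(1+t).
\]
The only genuinely delicate point is bookkeeping: one must check that the $\sqrt{\mathcal{E}_0^{L_0}}$ factor on the right of \eqref{vec field estimate for s-1} really can be absorbed (via the already-proven \eqref{vec energy for k} for $k=0$, which is why the two cases must be done in this order) and that the exponents appearing ($C\epsilon^2/2$ versus $C\epsilon^2$, the constant in the denominator $3\delta_0-\tfrac{C\epsilon^2}{2}$) match the stated form after the elementary integral estimates; everything else is routine Gronwall manipulation.
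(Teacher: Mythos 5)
Your argument follows the same route as the paper: substitute $u=\sqrt{\mathcal{E}_k^{L_0}}$ (equivalently integrate $d\mathcal{E}/(\mathcal{E}+\epsilon\sqrt{\mathcal{E}})$) for the homogeneous case $k\le s-2$, then treat $k=s-1$ as a linear Gronwall inequality with a power-log forcing term after feeding in the already-proved low-order bound for $\sqrt{\mathcal{E}_0^{L_0}}$; the integral estimate you quote is exactly what the paper obtains by integration by parts.

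One step is left implicit that the paper handles explicitly. When you pass from the right-hand side of \eqref{vec field estimate for s-1}, which still carries the term $C\epsilon^2 t^{-1}\cdot\epsilon\sqrt{\mathcal{E}_{s-1}^{L_0}}$, to your displayed inequality
\begin{equation*}
\left|\frac{d}{dt}\mathcal{E}_{s-1}^{L_0}(t)\right|\le C\epsilon^2(1+t)^{-1}\mathcal{E}_{s-1}^{L_0}(t)+C\epsilon^4(1+t)^{-1+3\delta_0}\ln(1+t),
\end{equation*}
you have silently dropped the $\epsilon\sqrt{\mathcal{E}_{s-1}^{L_0}}$ contribution. That term cannot be bounded by $\sqrt{\mathcal{E}_0^{L_0}}$ (it is the top-order energy) and is not a priori absorbable into $\mathcal{E}_{s-1}^{L_0}$ without some extra input. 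The paper closes this by invoking the bootstrap assumption \eqref{bootstrap-2}--\eqref{bootstrap-2'}, which gives $\sqrt{\mathcal{E}_{s-1}^{L_0}(t)}\le C\epsilon t^{2\delta_0}$, so that $C\epsilon^3 t^{-1}\sqrt{\mathcal{E}_{s-1}^{L_0}}\le C\epsilon^4 t^{-1+2\delta_0}$ is dominated by the inhomogeneous term already present; alternatively one can use Young's inequality $\epsilon\sqrt{\mathcal{E}_{s-1}^{L_0}}\le\tfrac12\epsilon^2+\tfrac12\mathcal{E}_{s-1}^{L_0}$, which feeds into the two existing terms. Either fix is one line, but the step must be stated. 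Beyond that, the exponent bookkeeping you carry out (e.g.\ $-1+3\delta_0$ instead of $-1+3\delta_0+\tfrac{C\epsilon^2}{2}$ before the integrating factor) differs from the paper's only by harmless reassignments of the generic constant $C$.
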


\begin{proof}
    Note that for $t\in[0,1]$, there holds
    \[
    \left|\frac{d}{dt}\mathcal{E}_k^{L_0}\right|\le C\epsilon^2\mathcal{E}_k^{L_0},\ \forall k\le s-2
    \]
    since each relevant term is bounded. So we rewrite (\ref{vec field estimate for k}) as
    \[
    \left|\frac{d}{dt}\mathcal{E}_k^{L_0}(t)\right|\le C\epsilon^2(1+t)^{-1}\left(\mathcal{E}_k^{L_0}+\epsilon\sqrt{\mathcal{E}_k^{L_0}}\right)
    \]
    to obtain
    \[
    \frac{d\mathcal{E}_k^{L_0}}{\mathcal{E}_k^{L_0}+\epsilon\sqrt{\mathcal{E}_k^{L_0}}}\le C\epsilon^2(1+t)^{-1}dt
    \]
    and we integrate it over $[0,t]$ to get
    \[
    2\ln\frac{\sqrt{\mathcal{E}_k^{L_0}(t)}+\epsilon}{\sqrt{\mathcal{E}_k^{L_0}(0)}+\epsilon}\le C\epsilon^2\ln(1+t)
    \]
    which simplifies to (\ref{vec energy for k}). Plugging it into (\ref{vec field estimate for s-1}) (also, we replace $t$ with $1+t$) yields
    \begin{align*}
        \frac{d}{dt}\mathcal{E}_{s-1}^{L_0}(t)
        &\le C\epsilon^2(1+t)^{-1}\left(\mathcal{E}_{s-1}^{L_0}+\epsilon\sqrt{\mathcal{E}_{s-1}^{L_0}}\right)\\
        &\ \ \ +C\epsilon^3(1+t)^{-1+3\delta_0}\left[\left(\sqrt{\mathcal{E}_k^{L_0}(0)}+\epsilon\right)(1+t)^{\frac{C\epsilon^2}{2}}-\epsilon\right]\ln(1+t)\\
        &\le C\epsilon^2(1+t)^{-1}\left(\mathcal{E}_{s-1}^{L_0}+\epsilon\sqrt{\mathcal{E}_{s-1}^{L_0}}\right)+C\epsilon^4(1+t)^{-1+3\delta_0+\frac{C\epsilon^2}{2}}\ln(1+t).
    \end{align*}
    According to the bootstrap assumption, $\sqrt{E_{s-1}^{L_0}(t)}\le C\epsilon t^{2\delta_0}$ and the inequality simplifies to
    \[
    \frac{d}{dt}\mathcal{E}_{s-1}^{L_0}(t)\le C\epsilon^2(1+t)^{-1}\mathcal{E}_{s-1}^{L_0}(t)+C\epsilon^4(1+t)^{-1+3\delta_0+\frac{C\epsilon^2}{2}}\ln(1+t)
    \]
    which is solvable. Indeed, set $s=-1+3\delta_0-\frac{C\epsilon^2}{2}$ and one gets
    \begin{align*}
        \mathcal{E}_{s-1}^{L_0}(t)
        &\le(1+t)^{C\epsilon^2}\mathcal{E}_{s-1}^{L_0}(0)+C\epsilon^4(1+t)^{C\epsilon^2}\int_0^t(1+t)^s\ln(1+t)dt\\
        &=(1+t)^{C\epsilon^2}\mathcal{E}_{s-1}^{L_0}(0)+C\epsilon^4(1+t)^{C\epsilon^2}\frac{(1+t)^{s+1}\ln(1+t)}{s+1}\\
        &\ \ \ -C\epsilon^4(1+t)^{C\epsilon^2}\left(\frac{(1+t)^{s+1}}{(s+1)^2}+\frac{1}{(s+1)^2}\right)\\
        &\le(1+t)^{C\epsilon^2}\mathcal{E}_{s-1}^{L_0}(0)+\frac{C\epsilon^4}{3\delta_0-\frac{C\epsilon^2}{2}}(1+t)^{3\delta_0+\frac{C\epsilon^2}{2}}\ln(1+t).
    \end{align*}
    This completes the proof.
\end{proof}

\begin{cor}\label{cor:Omega_0}
    There holds
    \begin{equation}
        \norm{\Omega_0\partial_\alpha\theta}_{H^{k}}+\norm{\Lambda\Omega_0 Q_\alpha}_{H^k}\le C\epsilon (1+t)^{\frac{C\epsilon^2}{2}},\ \forall 0\le k\le s-2
    \end{equation}
    and
    \begin{equation}
        \norm{\Omega_0\partial_\alpha\theta}_{H^{s-1}}+\norm{\Lambda\Omega_0 Q_\alpha}_{H^{s-1}}\le C\epsilon^2\sqrt{\frac{1}{3\delta_0-\frac{C\epsilon^2}{2}}}(1+t)^{\frac{3}{2}\delta_0+\frac{C\epsilon^2}{4}}\ln^{\frac{1}{2}}(1+t).
    \end{equation}
\end{cor}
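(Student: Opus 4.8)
The plan is to derive everything from the operator identity $\Omega_0\partial_\alpha=L_0\partial_t-\tfrac{t}{2}(\partial_t^2-i\partial_\alpha)$ (the one recorded in the remark following the definition of the vector fields), combined with the equation $(D_t^2-iA\partial_\alpha)\theta=G$ and the formula for $D_tQ$ in Proposition \ref{formula for derivatives of Q}. Applying the identity to $\theta$ gives $\Omega_0\partial_\alpha\theta=L_0\partial_t\theta-\tfrac{t}{2}(\partial_t^2-i\partial_\alpha)\theta$. For the first term I would write $\partial_t\theta=D_t\theta-b\theta_\alpha=\sigma-b\theta_\alpha$, so $L_0\partial_t\theta=L_0\sigma-L_0(b\theta_\alpha)$; by Theorem \ref{energy estimate for vec}(1) together with Lemma \ref{remainder of Dt theta} and Lemma \ref{structure of Dt sigma} (which give $\sigma=2D_t\zeta+O(\epsilon^2t^{-1/2})_{H^s}$) one controls $\norm{L_0\sigma}_{H^k}$ by $C\sqrt{\mathcal{E}_k^{L_0}}+C\epsilon^{3/2}t^{-1/4}\ln t$, while $L_0(b\theta_\alpha)$ is quadratic and is handled by the estimates for $L_0b$ already developed in the proofs of Lemma \ref{s-1 order estimate for [L0,P]} and Proposition \ref{vector field est for s-1 order on R} together with the bootstrap assumptions. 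For the second term I would expand, using $D_t=\partial_t+b\partial_\alpha$,
\[
(\partial_t^2-i\partial_\alpha)\theta=G+i(A-1)\theta_\alpha-2b\,\partial_\alpha\partial_t\theta-b^2\partial_\alpha^2\theta-(D_tb)\theta_\alpha ,
\]
and multiply by $\tfrac{t}{2}$. For $0\le k\le s-2$ every factor on the right carries strong decay ($\norm{A-1}_{W^{k,\infty}}\le C\epsilon^2t^{-1-\delta}$ by Remark \ref{decay for A-1 near t}, $\norm{\partial_\alpha b}_{W^{s-3,\infty}}\le C\epsilon^2t^{-3/2+\delta}$ by Lemma \ref{norms of b}, and $\norm{\partial_\alpha^kG}_{L^2}\le C\epsilon^3t^{-1-\delta}$), so the whole $\tfrac{t}{2}$-term is $O(\epsilon^2t^{-\delta})$ in $H^k$ and can be absorbed; inserting \eqref{vec energy for k} from Corollary \ref{sec 6:L0bound} then yields the first bound.

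For $\Lambda\Omega_0Q_\alpha$ I would proceed in the same way: $\Omega_0Q_\alpha=L_0\partial_tQ-\tfrac{t}{2}(\partial_t^2-i\partial_\alpha)Q$, with $\partial_tQ=D_tQ-bQ_\alpha$. By Proposition \ref{formula for derivatives of Q}, $D_tQ=-\tfrac{1}{2i}\theta+(\text{quadratic in }D_t\zeta,\ \zeta_\alpha-1)$, so $\Lambda L_0\partial_tQ=-\tfrac{1}{2i}\Lambda L_0\theta+(\text{quadratic})$; since $\theta=2(\zeta-\alpha)+O(\epsilon^2t^{-1/2})_{H^s}$ and $\Lambda$ commutes with $L_0,\partial_t,\partial_\alpha$ up to bounded multiples, $\norm{\Lambda L_0\theta}_{H^k}\le C\norm{L_0\Lambda(\zeta-\alpha)}_{H^k}+\cdots\le C\sqrt{\mathcal{E}_k^{L_0}}+\cdots$ by Theorem \ref{energy estimate for vec}(1). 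For the other term, taking one more $D_t$ of the formula for $D_tQ$ gives $D_t^2Q=-\tfrac{1}{2i}\sigma+(\text{cubic})$, hence
\[
(\partial_t^2-i\partial_\alpha)Q=-\tfrac{1}{2i}\sigma-iQ_\alpha+(\text{cubic}+b\text{-terms}),
\]
and the two linear contributions cancel because $\sigma=2D_t\zeta+\cdots$ and $Q_\alpha=D_t\zeta+\cdots$ by Proposition \ref{good-replace}, so $\tfrac{t}{2}(\partial_t^2-i\partial_\alpha)Q$ is again $t$ times a cubic/quadratic remainder, estimated exactly as above. Throughout, whenever the $\Omega_0$- or $\Lambda$-controlled versions are needed, I would substitute $D_t\zeta\to Q_\alpha$ and $D_t^2\zeta\to\tfrac{i}{2}\theta_\alpha$ via Proposition \ref{good-replace}.

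The delicate point is the highest order $k=s-1$. There $\partial_\alpha^{s-1}$ of the term $-2b\,\partial_\alpha\partial_t\theta$ produces $-2b\,\partial_\alpha^s\partial_t\theta\sim-2b\,\sigma_s$ (and for $Q$, analogous $b\,\partial_\alpha^sD_t^2\zeta$-type terms), and using only the trivial bounds $\norm{b}_{L^\infty}\le C\epsilon^2t^{-3/4}\ln t$ and $\norm{\sigma_s}_{L^2}\le C\epsilon$ (note $\sigma_s\in L^2$ because $D_t\zeta\in H^{s+1/2}$) would give $\tfrac{t}{2}\norm{b\,\sigma_s}_{L^2}\le C\epsilon^3t^{1/4}\ln t$, which grows too fast. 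The way around this is the asymptotic structure of $b$: by Corollary \ref{structure of b}, $b=-\tfrac{2\alpha}{t}|D_t^2\zeta|^2+O(\epsilon^2t^{-3/2})$ in the relevant region, so $\tfrac{t}{2}b\,\sigma_s=-\alpha|D_t^2\zeta|^2\sigma_s+O(\epsilon^3t^{-1/2})_{L^2}$, and the $L_0$- and $\Omega_0$-controlled decay of $D_t^2\zeta$ (via $|D_t^2\zeta|\lesssim|\alpha|^{-1/2}\norm{L_0D_t^2\zeta}_{L^2}+t|\alpha|^{-3/2}\norm{\Omega_0D_t^2\zeta}_{L^2}$ together with the bootstrap) gives $\norm{\alpha|D_t^2\zeta|^2}_{L^\infty}\le C\epsilon^2t^{O(\delta_0)}$, so the $t$-powers cancel and only slow growth survives; likewise $\tfrac{t}{2}\partial_\alpha^{s-1}G$ is controlled by $\norm{\partial_\alpha^{s-1}G}_{L^2}\le C\epsilon^3t^{-1}\ln t$, refined where necessary by the structural description of Proposition \ref{key prop}. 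Feeding \eqref{vec energy for s-1} of Corollary \ref{sec 6:L0bound} into the resulting inequality for $\norm{\Omega_0\partial_\alpha\theta}_{H^{s-1}}+\norm{\Lambda\Omega_0Q_\alpha}_{H^{s-1}}$ gives the stated second bound. This highest-order bookkeeping --- arranging that every $t$-prefactor cancels against genuine decay --- is the main obstacle of the proof.
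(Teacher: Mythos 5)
Your proposal follows essentially the same route as the paper: the operator identity $\Omega_0\partial_\alpha=L_0\partial_t-\tfrac{t}{2}(\partial_t^2-i\partial_\alpha)$, control of $L_0\partial_t$ via the $L_0$-energies and $\partial_t=D_t-b\partial_\alpha$, and treating $\tfrac{t}{2}(\partial_t^2-i\partial_\alpha)$ by relating it to the cubic quantities $(D_t^2-iA\partial_\alpha)\theta=G$ and Wu's formula for $Q$. You are right to flag the $k=s-1$ step for $\theta$ — the paper asserts $\norm{(\partial_t^2-i\partial_\alpha)\theta}_{H^k}\le\norm{G}_{H^k}+C\epsilon^3t^{-1}$ without spelling out why $\tfrac{t}{2}b\,\partial_\alpha^s\sigma$ does not overflow, and your use of Corollary \ref{structure of b} together with the Klainerman--Sobolev type bounds on $D_t^2\zeta$ is exactly the right mechanism to make that line rigorous.

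There is, however, one genuine gap in your treatment of the $Q$-part. You write $D_t^2Q=-\tfrac{1}{2i}\sigma+(\text{cubic})$ and then argue that $-\tfrac{1}{2i}\sigma-iQ_\alpha$ cancels "because $\sigma=2D_t\zeta+\cdots$ and $Q_\alpha=D_t\zeta+\cdots$ by Proposition \ref{good-replace}", concluding that $\tfrac{t}{2}(\partial_t^2-i\partial_\alpha)Q$ is $t$ times a "cubic/quadratic" remainder. This cancellation is only at linear order: Proposition \ref{good-replace} gives $\sigma-2D_t\zeta=O(\epsilon^2t^{-1/2})_{H^s}$ and $Q_\alpha-D_t\zeta=O(\epsilon^2t^{-1/2})_{H^s}$, so after the linear pieces cancel you are left with a \emph{quadratic} residual of size $\epsilon^2t^{-1/2}$, and this must also be absorbed by the quadratic pieces coming from $D_t q$ (where $q$ is the non-linear part of the $D_tQ$ formula). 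That second, quadratic-level cancellation is not automatic from what you wrote; if it failed, $\tfrac{t}{2}\cdot\epsilon^2t^{-1/2}=\epsilon^2t^{1/2}$ would grow and could not be absorbed. The paper sidesteps this entirely by invoking Wu's pre-computed cubic formula $(D_t^2-iA\partial_\alpha)Q=N_1+\cdots+N_5$, which already encodes all the cancellations, and then estimates each $N_j$ directly, including a nontrivial integration-by-parts identity that reduces $N_5$ to combinations of $N_3,N_4$ and a commutator. Your proposal should cite and use that formula rather than re-derive the cancellation, since the cancellation you observe is not enough on its own.

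One smaller inaccuracy: for $k\le s-2$ you claim the whole $\tfrac{t}{2}$-term for $\theta$ is $O(\epsilon^2t^{-\delta})$ in $H^k$. Even accepting the cubic structure, the $\tfrac{t}{2}\norm{\partial_\alpha^kG}_{L^2}$ contribution is only $O(\epsilon^3\ln t)$, which does not decay. This is still harmless — it is dominated by $\epsilon(1+t)^{C\epsilon^2/2}$ since $(1+t)^{C\epsilon^2/2}\ge 1+\tfrac{C\epsilon^2}{2}\ln(1+t)$ — but the stated decay rate should be corrected.
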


\begin{proof}
    Note that
    \begin{equation}\label{Omega-to-Dt}
        \Omega_0\partial_\alpha=L_0\partial_t-\frac{t}{2}(\partial_t^2-i\partial_\alpha)
    \end{equation}
    which implies that for any $0\le k\le s-1$,
    \[
    \norm{\Omega_0\partial_\alpha\theta}_{H^{s-1}}\le C\norm{L_0\partial_t\theta}_{H^{s-1}}+\frac{t}{2}\norm{(\partial_t^2-i\partial_\alpha)\theta}_{H^{s-1}}.
    \]
    For the second term, recall that $(D_t^2-iA\partial_\alpha)\theta=G$ is cubic, so
    \begin{align*}
        \norm{(\partial_t^2-i\partial_\alpha)\theta}_{H^{k}}
        \le\norm{G}_{H^{k}}+C\epsilon^3t^{-1}
        \le C\epsilon^3t^{-1}.
    \end{align*}
    
    For the first term, it's enough for us to note that $L_0\partial_t=L_0(D_t-b\partial_\alpha)$. Hence we have
    \[
    \norm{\Omega_0\partial_\alpha\theta}_{H^{k}}\le C\sqrt{\mathcal{E}_{k}^{L_0}}(t)+C\epsilon^3
    \le C\sqrt{\mathcal{E}_k^{L_0}(t)}
    \]
    i.e.
    \[
    \norm{\Omega_0\partial_\alpha\theta}_{H^k}
    \le
    \begin{cases}
        C\epsilon(1+t)^{\frac{C\epsilon^2}{2}},\ 0\le k\le s-2\\
        \displaystyle C\epsilon^2\sqrt{\frac{1}{3\delta_0-\frac{C\epsilon^2}{2}}}(1+t)^{\frac{3}{2}\delta_0+\frac{C\epsilon^2}{4}}\ln^{\frac{1}{2}}(1+t),\ k=s-1.
    \end{cases}
    \]
    Also, we may use formula (\ref{Omega-to-Dt}) to derive
    \[
    \norm{\Lambda\Omega_0 Q_\alpha}_{H^{k}}
    \le\norm{\Lambda L_0\partial_t Q}_{H^{k}}+\frac{t}{2}\norm{\Lambda(\partial_t^2-i\partial_\alpha)Q}_{H^{k}}.
    \]
    For the first term, we write
    \[
    \partial_t Q=D_tQ-b\partial_\alpha Q
    \]
    and we shall only verify that $\norm{\Lambda L_0 D_tQ}_{L^2}+\frac{t}{2}\norm{\Lambda(\partial_t^2-i\partial_\alpha)Q}_{L^2}\le C\sqrt{\mathcal{E}_0^{L_0}}$. Other terms or cases are similar to it. By utilizing (\ref{formula of Dt Q}), we have
    \begin{equation}\label{L0Dt Q}
        L_0D_tQ=-L_0[D_t\zeta,\mathcal{H}]\frac{Re\left\{\bar{\zeta}_\alpha D_t\zeta\right\}}{\zeta_\alpha}+\frac{1}{2}L_0(I-\mathcal{H})|D_t\zeta|^2-\frac{1}{2i}L_0\theta.
    \end{equation}
    For the first term, recall that
    \[
    L_0[f,\mathcal{H}]\frac{g}{\zeta_\alpha}=[f,\mathcal{H}]\frac{L_0g}{\zeta_\alpha}+[L_0f,\mathcal{H}]\frac{g}{\zeta_\alpha}-\frac{1}{\pi i}\int\frac{(L_0\zeta(\alpha)-L_0\zeta(\beta))(f(\alpha)-f(\beta))}{(\zeta(\alpha)-\zeta(\beta))^2}gd\beta+[f,\mathcal{H}]\frac{g}{\zeta_\alpha}
    \]
    and we have
    \begin{align*}
        \norm{L_0[D_t\zeta,\mathcal{H}]\frac{Re\left\{\bar{\zeta}_\alpha D_t\zeta\right\}}{\zeta_\alpha}}_{L^2}
        &\le C\norm{D_t\zeta}_{L^\infty}\norm{L_0Re\{\bar{\zeta}_\alpha D_t\zeta\}}_{L^2}+C\norm{L_0D_t\zeta}_{L^2}\norm{\mathcal{H}(\bar{\zeta}_\alpha D_t\zeta)}_{L^\infty}\\
        &\ \ \ +C\norm{D_t\zeta}_{L^\infty}\norm{L_0(\zeta_\alpha-1)}_{L^2}\norm{\bar{\zeta}_\alpha D_t\zeta}_{L^\infty}+\norm{D_t\zeta}_{L^\infty}\norm{\bar{\zeta}_\alpha D_t\zeta}_{L^2},\\
        &\le C\epsilon\sqrt{\mathcal{E}_{0}^{L_0}}+C\epsilon^2.
    \end{align*}
    Since
    \[
    (I-\mathcal{H})|D_t\zeta|^2
    =(I-\mathcal{H})(D_t\bar{\zeta}D_t\zeta)
    =[\mathcal{H},D_t\zeta]D_t\bar{\zeta},
    \]
    similar estimate works for the second term in (\ref{L0Dt Q}) and their derivatives, hence we are allowed to use the embedding $\norm{\Lambda L_0f}_{L^2}\le\sqrt{\norm{L_0f}_{L^2}\norm{\partial_\alpha L_0f}_{L^2}}$ while estimating the first and the second term of (\ref{L0Dt Q})\footnote{Such method doesn't cause the overflow of the derivative. First, if all derivatives apply to a term that doesn't involve vector field $L_0$, then the worst case is that there are $s+1/2$ derivatives on $D_t\zeta$ (after embedding), which is under control; second, if there are $s-1$ derivatives apply to $L_0D_t\zeta$, then we shall use the bootstrap assumption $\norm{\Lambda L_0D_t\zeta}_{H^{s-1}}\le C\epsilon t^{2\delta_0}$ instead of embedding. Note that each term here is at least quadratic, so the other factor will provide sufficient decay that recovers the $t^{2\delta_0}$ increase.}. For the last term, note that
    \[
    \norm{\Lambda L_0\theta}_{L^2}
    \le C\sqrt{\mathcal{E}_{0}^{L_0}}+C\epsilon.
    \]
    And we conclude that
    \[
    \norm{\Lambda L_0D_tQ}_{L^2}\le C\sqrt{\mathcal{E}_{0}^{L_0}}+C\epsilon\le C\sqrt{\mathcal{E}_0^{L_0}}.
    \]

Claim that
\begin{equation}\label{control of equation of Q}
   \frac{t}{2}\norm{\Lambda(D_t^2-iA\partial_\alpha)Q}_{L^2}\le C\epsilon^3\ln(1+t),
\end{equation}
which completes the proof. We have, according to \cite{Wu2009},
\begin{equation}
    \begin{split}
        (D_t^2-iA\partial_{\alpha})Q=& -\left[D_t\zeta, \mathcal{H}\frac{1}{\zeta_{\alpha}}+\bar{\mathcal{H}}\frac{1}{\bar{\zeta}_{\alpha}}\right](\bar{\zeta}_{\alpha}D_t^2\zeta)+[D_t\zeta, \bar{\mathcal{H}}]\Big(D_t\bar{\zeta} \frac{\partial_{\alpha}D_t\zeta}{\bar{\zeta}_{\alpha}}\Big)+D_t\zeta[D_t\zeta, \mathcal{H}]\frac{\partial_{\alpha}D_t\bar{\zeta}}{\zeta_{\alpha}}\\
        & -2[D_t\zeta, \mathcal{H}]\frac{D_t\zeta\cdot \partial_{\alpha}D_t\zeta}{\zeta_{\alpha}}+\frac{1}{\pi i}\int\Big(\frac{D_t\zeta(\alpha,t)-D_t\zeta(\beta,t)}{\zeta(\alpha,t)-\zeta(\beta,t)}\Big)^2 D_t\zeta \cdot \partial_{\beta}\zeta d\beta\\
        :=& N_1+N_2+N_3+N_4+N_5.
    \end{split}
\end{equation}
The symbol '$\cdot$' here means the inner product of 2d vector. i.e. for $z,w\in\mathbb{C}$, $z\cdot w=Re\{\bar{z}w\}=Re\{z\bar{w}\}$. Since $\norm{\Lambda f}_{L^2}\le\sqrt{\norm{f}_{L^2}\norm{\partial_\alpha f}_{L^2}}$, we only need to control $\norm{N_j}_{H^1}$. First, note that
\[
N_1=\frac{2}{\pi}\int\frac{(D_t\zeta(\alpha)-D_t\zeta(\beta))(\eta(\alpha)-\eta(\beta))}{|\zeta(\alpha)-\zeta(\beta)|^2}(\bar{\zeta}_\beta D_t^2\zeta)d\beta
\]
and we apply Lemma \ref{useful lemma} to derive $\norm{N_1}_{L^2}\le C\epsilon^3t^{-1}\ln t$. A similar estimate applies to $N_2$ and $N_4$, except that one should use Lemma \ref{lemma:transit2} to estimate $\norm{D_t\zeta\partial_\alpha D_t\zeta}_{L^\infty}$\footnote{Such lemma faces some difficulties when it comes to high order derivative. Namely, when we consider the $s-1$ order and use the embedding, we must deal with $[D_t\zeta,\bar{\mathcal{H}}]\partial_\alpha^s(D_t\bar{\zeta}\partial_\alpha D_t\zeta)$. For such case, we integrate by parts to move 1 derivative to the core and divide into several cases: $|\alpha|\le t$, $|\alpha|>t$ with $|\alpha-\beta|\le t/2$ and $|\alpha|>t$ with $|\alpha-\beta|>t/2$. Using the techniques in energy estimate and vector field estimate, one can handle each cases.}. For $N_3$, note that
\[
N_3=D_t\zeta(I-\mathcal{H})\left(D_t\zeta\frac{\partial_\alpha D_t\bar{\zeta}}{\zeta_\alpha}\right)
\]
which implies
\[
\norm{N_3}_{L^2}
\le\norm{D_t\zeta}_{L^2}\norm{(I-\mathcal{H})\left(D_t\zeta\frac{\partial_\alpha D_t\bar{\zeta}}{\zeta_\alpha}\right)}_{L^\infty}
\le C\epsilon^3t^{-1}\ln t.
\]
Here we have used Lemma \ref{L infty of hilbert}\footnote{It's obvious that such argument doesn't work for high order derivative case. In such case, one should keep the commutator expression of $N_3$ and use the method shown before.}. For $N_5$, write
\begin{align*}
    N_5
    &=\frac{1}{\pi i}\int\left(\frac{D_t\zeta(\alpha)-D_t\zeta(\beta)}{\zeta(\alpha)-\zeta(\beta)}\right)^2Re\{\bar{\zeta}_\beta D_t\zeta\}d\beta,\\
    &=\frac{1}{\pi i}\int\left(\frac{D_t\zeta(\alpha)-D_t\zeta(\beta)}{\zeta(\alpha)-\zeta(\beta)}\right)^2Re\{D_t\zeta\} d\beta+O(\epsilon^4t^{-\frac{5}{4}})_{L^2},\\
    &=\frac{1}{2\pi i}\int\left(\frac{D_t\zeta(\alpha)-D_t\zeta(\beta)}{\zeta(\alpha)-\zeta(\beta)}\right)^2D_t\zeta d\beta+\frac{1}{2\pi i}\int\left(\frac{D_t\zeta(\alpha)-D_t\zeta(\beta)}{\zeta(\alpha)-\zeta(\beta)}\right)^2D_t\bar{\zeta} d\beta\\
    &\ \ \ +O(\epsilon^4t^{-\frac{5}{4}})_{L^2},\\
    &=N_{51}+N_{52}+O(\epsilon^4t^{-\frac{5}{4}})_{L^2}.
\end{align*}
$N_{51}$ is actually quartic (refer to our treatment for $G_{21}$, which appeared in Chapter 5) and we get $\norm{N_{51}}_{L^2}\le C\epsilon^4t^{-\frac{5}{4}}$; for $N_{52}$, write
\[
N_{52}=\frac{1}{2\pi i}\int\left(\frac{D_t\zeta(\alpha)-D_t\zeta(\beta)}{\zeta(\alpha)-\zeta(\beta)}\right)^2\zeta_\beta D_t\bar{\zeta} d\beta+O(\epsilon^4t^{-\frac{5}{4}})_{L^2}
\]
and we integrate by parts to derive
\begin{align*}
    &\frac{1}{2\pi i}\int\left(\frac{D_t\zeta(\alpha)-D_t\zeta(\beta)}{\zeta(\alpha)-\zeta(\beta)}\right)^2\zeta_\beta D_t\bar{\zeta}d\beta\\
    =&\frac{1}{2\pi i}\int(D_t\zeta(\alpha)-D_t\zeta(\beta))^2D_t\bar{\zeta}d\frac{1}{\zeta(\alpha)-\zeta(\beta)},\\
    =&\frac{1}{\pi i}\int\frac{D_t\zeta(\alpha)-D_t\zeta(\beta)}{\zeta(\alpha)-\zeta(\beta)}D_t\bar{\zeta}\partial_\beta D_t\zeta d\beta
    -\frac{1}{2\pi i}\int\frac{(D_t\zeta(\alpha)-D_t\zeta(\beta))^2}{\zeta(\alpha)-\zeta(\beta)}\partial_\beta D_t\bar{\zeta}d\beta,\\
    =&[D_t\zeta,\mathcal{H}]\frac{D_t\bar{\zeta}\partial_\alpha D_t\zeta}{\zeta_\alpha}-\frac{1}{2}(D_t\zeta(\alpha))^2\frac{\partial_\alpha D_t\bar{\zeta}}{\zeta_\alpha}+D_t\zeta(\alpha)\mathcal{H}\left(D_t\zeta\frac{\partial_\alpha D_t\bar{\zeta}}{\zeta_\alpha}\right)\\
    &-\frac{1}{2}\mathcal{H}\left((D_t\zeta)^2\frac{\partial_\alpha D_t\bar{\zeta}}{\zeta_\alpha}\right),\\
    =&[D_t\zeta,\mathcal{H}]\frac{D_t\bar{\zeta}\partial_\alpha D_t\zeta}{\zeta_\alpha}-\frac{1}{2}D_t\zeta(\alpha)(I-\mathcal{H})\left(D_t\zeta\frac{\partial_\alpha D_t\bar{\zeta}}{\zeta_\alpha}\right)+\frac{1}{2}[D_t\zeta,\mathcal{H}]\frac{D_t\zeta\partial_\alpha D_t\bar{\zeta}}{\zeta_\alpha}\\
    =&[D_t\zeta,\mathcal{H}]\frac{Re\{D_t\zeta\partial_\alpha D_t\bar{\zeta}\}}{\zeta_\alpha}+\frac{1}{2}[D_t\zeta,\mathcal{H}]\frac{D_t\bar{\zeta}\partial_\alpha D_t\zeta}{\zeta_\alpha}-\frac{1}{2}N_3\\
    =&-\frac{1}{2}N_4-\frac{1}{2}N_3+\frac{1}{2}[D_t\zeta,\mathcal{H}]\frac{D_t\bar{\zeta}\partial_\alpha D_t\zeta}{\zeta_\alpha}
\end{align*}
Note that the commutator can be handled by the method shown above, so we conclude that $\norm{N_5}_{L^2}\le C\epsilon^3t^{-1}\ln t$. This completes the proof.
\end{proof}

\section{Decay estimates}
Due to Lemma \ref{bouns decay for small or big alpha}, we only need to make the decay estimate for $t^{4/5}\le|\alpha|\le t^{6/5}$.

Note that
\[
\begin{aligned}
	(D_t^2-iA\partial_\alpha)D_t\theta
    &=[D_t^2-iA\partial_\alpha,D_t]\theta+D_tG,\\
    &=-i\left(\frac{a_t}{a}\circ \kappa^{-1}\right)A\partial_\alpha\theta+D_tG
\end{aligned}
\]
and we have
\[
(D_t^2-i\partial_\alpha)D_t\theta
=D_tG-i\left(\frac{a_t}{a}\circ\kappa^{-1}\right)A\partial_\alpha\theta+i(A-1)\partial_\alpha D_t\theta
\]
Define $w=D_t^2\theta$ to get
\[
D_tw-i\partial_\alpha D_t\theta=D_tG+R'.
\]
Let $\chi(x)\in C_c^\infty(\mathbb{R})$ be a real smooth bump function such that $\int \chi(\alpha)d\alpha=1$. Fix $\nu>0$, then for $(\alpha,t)\in \mathbb{R}\times(0,\infty)$, define
\begin{equation}
	\tilde{E}(t)=\int w\bar{u}-D_t\theta \partial_t\bar{u}
\end{equation}
where $u(\alpha,t)=\chi\left(\frac{\alpha-\nu t}{\sqrt{t}}\right)e^{-\frac{it^2}{4\alpha}}$. Note that
\[
\tilde{E}(t)=2\int w\bar{u}-\frac{d}{dt}\int D_t\theta \bar{u}-\int b\partial_\alpha D_t\theta\bar{u}.
\]
\begin{rem}
    Similar construction of $\tilde{E}(t)$ was also employed in \cite{ai2022two}. \cite{ai2022two} takes $u(\alpha,t)=\nu^{3/2}\chi(\frac{\alpha-\nu t}{\nu^{3/2}\sqrt{t}})$. We note that, as we have formulated the water wave equations in a simpler manner, our analysis is considerably more straightforward.
\end{rem}
\begin{thm}\label{main theorem of decay estimate}
For $\nu\in [t^{-1/5}, t^{1/5}]$, 
	\[
	\tilde{E}(t)=C\sqrt{t}W(\nu t,t)+O(\epsilon t^{-\frac{1}{20}+\delta_0})
	\]
	where $W(\alpha,t)=e^{\frac{it^2}{4\alpha}}w(\alpha,t)$ and $C$ is independent of $t$.
\end{thm}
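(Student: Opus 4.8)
The plan is to compute $\tilde{E}(t)$ by splitting it into a main contribution, localized near $\alpha = \nu t$, plus error terms that decay like $t^{-1/20+\delta_0}$. Since $u(\alpha,t) = \chi\!\left(\tfrac{\alpha-\nu t}{\sqrt{t}}\right)e^{-it^2/4\alpha}$ is supported in a $\sqrt{t}$-neighborhood of $\nu t$, on this support one has $t^{4/5}\lesssim|\alpha|\lesssim t^{6/5}$ provided $\nu\in[t^{-1/5},t^{1/5}]$, so all the estimates from Sections 4--6 (in particular Proposition \ref{good-replace}, Lemma \ref{decouple}, Lemma \ref{bouns decay for small or big alpha}, and the decay bounds in the bootstrap assumptions) are available in the relevant region. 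First I would expand $\tilde{E}(t) = \int w\bar u - D_t\theta\,\partial_t\bar u$ using $\partial_t \bar u = \partial_t\!\left(\chi(\tfrac{\alpha-\nu t}{\sqrt t})\right)e^{it^2/4\alpha} + \chi(\tfrac{\alpha-\nu t}{\sqrt t})\,\tfrac{it}{2\alpha}e^{it^2/4\alpha}$; the term where $\partial_t$ falls on $\chi$ carries a factor $O(t^{-1/2})$ times a compactly supported bump, and since $\|D_t\theta\|_{L^\infty}\lesssim\epsilon t^{-1/2}$ near $\alpha\sim t$ (from Corollary \ref{decay for Dt zeta near t} and Lemma \ref{remainder of Dt theta}, recalling $D_t\theta = 2D_t\zeta + O(\epsilon^2 t^{-1/2})_{H^s}$), integrating over a set of measure $O(\sqrt t)$ gives a contribution of size $O(\epsilon t^{-1/2})$, which is acceptable.

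The core of the argument is to show that $\int \big(W(\alpha,t) - W(\nu t,t)\big)\overline{\chi(\tfrac{\alpha-\nu t}{\sqrt t})}\,d\alpha$ and the analogous commutator term are small, and then to recognize the remaining oscillatory integral as $C\sqrt t\, W(\nu t,t) + \text{error}$. For the first point I would write $W(\alpha,t)-W(\nu t,t) = \int_{\nu t}^\alpha \partial_\alpha W\,d\alpha'$ and use $\partial_\alpha W = e^{it^2/4\alpha}\big(\tfrac1\alpha L_0 w - \tfrac{t}{2\alpha^2}\Omega_0 w\big)$ (the profile-derivative identity, cf. Remark \ref{rem:whyL0s-1} and Lemma \ref{transition-lemma}); since $|\alpha-\nu t|\lesssim\sqrt t$ on the support and $\|L_0 w\|_{L^2} + \|\Omega_0 w\|_{L^2}\lesssim \epsilon t^{\delta_0}$ (from Corollary \ref{sec 6:L0bound} and Corollary \ref{cor:Omega_0}, using $w = D_t^2\theta = 2D_t^2\zeta + e$ via Lemma \ref{structure of Dt sigma}), a Cauchy--Schwarz estimate over the bump gives $\|\partial_\alpha W\|_{L^2(\mathrm{supp}\,u)}\lesssim \epsilon t^{-1+\delta_0}\cdot t^{1/5}$ (worst case $|\alpha|\sim t^{4/5}$), and integrating the displacement $|\alpha-\nu t|\lesssim \sqrt t$ produces an error $O(\epsilon t^{-1/20+\delta_0})$ after accounting for the $\sqrt t$ from the remaining integration measure. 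Precisely tracking these powers — $t^{1/5}$ from the lower bound on $|\alpha|$, $t^{1/2}$ from the support width, $t^{1/2}$ from the $L^2$-to-$L^1$ conversion, against $t^{-1}$ from $\partial_\alpha W$ — is what yields the exponent $-1/20$, and I would verify the arithmetic carefully.

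Having replaced $W(\alpha,t)$ by its value at $\nu t$, the main term becomes $W(\nu t,t)\int \chi\!\left(\tfrac{\alpha-\nu t}{\sqrt t}\right)\big(1 + \tfrac{t}{2\alpha}\big)\,d\alpha$ plus lower-order pieces; after the change of variables $\alpha = \nu t + \sqrt t\, y$ and using $\int\chi = 1$, together with $\tfrac{t}{2\alpha} = \tfrac{1}{2\nu} + O(t^{-1/2+1/5})$ on the support, this is $\sqrt t\,W(\nu t,t)\big(1 + \tfrac{1}{2\nu}\big) + \text{error}$. Wait — the constant $C$ in the statement is asserted to be \emph{independent of $t$}, so I should be careful: the $\tfrac{1}{2\nu}$ piece is not uniformly bounded, but since $\nu\in[t^{-1/5},t^{1/5}]$ this term is handled by absorbing it correctly; in fact the cleaner route is to use the identity $u = \chi(\cdot)e^{-it^2/4\alpha}$ satisfies $(\partial_t^2 - i\partial_\alpha)u = O(t^{-1})$-type relations so that $\int w\bar u - D_t\theta\,\partial_t\bar u$ is, up to the errors above, exactly $\sqrt t$ times $W(\nu t,t)$ with an absolute constant $C = \int\chi\,dy$ coming from the profile, and the $\nu$-dependent correction is itself an error term of the claimed size when $\nu\gtrsim t^{-1/5}$. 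The main obstacle is the second point: controlling the commutator term $\int b\,\partial_\alpha D_t\theta\,\bar u$ and, more seriously, the replacement $W(\alpha,t)\rightsquigarrow W(\nu t,t)$ uniformly down to $\nu\sim t^{-1/5}$, where $|\alpha|$ can be as small as $t^{4/5}$ and the profile derivative bounds are weakest; this requires the sharp vector-field estimates of Section 6 (Corollary \ref{cor:Omega_0}) and a delicate balancing of the powers of $t$, and is where I expect most of the work to lie. The term $\int D_tG\,\bar u$ and $\int R'\bar u$ contributions (entering through $\tfrac{d}{dt}\tilde E$ in the companion estimate, or directly via $D_t w$) are handled by the localization lemma as in the equation $(\partial_t^2-i\partial_\alpha)D_t\theta = -3i(\tfrac{t}{2\alpha})^3|D_t\zeta|^2 D_t\zeta + O(t^{-5/4-\delta})_{H^{s-1}}$ together with $\|\tfrac{a_t}{a}\circ\kappa^{-1}\|_{L^\infty}\lesssim t^{-1-\delta}$ from Proposition \ref{norm of at/a}, giving errors well below $t^{-1/20}$.
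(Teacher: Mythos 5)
Your overall strategy is right (localize near $\alpha=\nu t$, use the support to land in $t^{4/5}\lesssim|\alpha|\lesssim t^{6/5}$, compare $W(\alpha,t)$ to $W(\nu t,t)$ via a Hardy-type estimate), but your power counting for the main term has a genuine gap, and the fix is precisely the Transition-of-Derivatives trick that the paper uses and that your outline omits.

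Concretely: you propose to write $W(\alpha,t)-W(\nu t,t)=\int_{\nu t}^\alpha\partial_\alpha W$ with $\partial_\alpha W = e^{it^2/4\alpha}\bigl(\tfrac1\alpha L_0 w-\tfrac{t}{2\alpha^2}\Omega_0 w\bigr)$, and claim $\|\partial_\alpha W\|_{L^2(\mathrm{supp}\,u)}\lesssim\epsilon t^{-4/5+\delta_0}$. But the $\Omega_0$ piece carries the coefficient $\tfrac{t}{2\alpha^2}$, which at the bottom of the range $|\alpha|\sim t^{4/5}$ is of size $t\cdot t^{-8/5}=t^{-3/5}$, \emph{not} $t^{-4/5}$. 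With $\|\Omega_0 w\|_{L^2}\lesssim\epsilon t^{\delta_0}$ the resulting bound is $\|\partial_\alpha W\|_{L^2}\lesssim\epsilon t^{-3/5+\delta_0}$, and multiplying by the $t^{3/4}$ from the Hardy/measure factors gives $\epsilon t^{3/20+\delta_0}$ — which is \emph{growing}, not $O(\epsilon t^{-1/20+\delta_0})$. The paper sidesteps this by first using the equation to write $w=D_t^2\theta=i\partial_\alpha\theta+(\text{small})$ and then applying the derivative-raising identity $\partial_\alpha\theta=\tfrac{4\alpha^2}{it^2}\partial_\alpha^2\theta-\tfrac{4\alpha}{it^2}L_0\partial_\alpha\theta+\tfrac{2}{it}\Omega_0\partial_\alpha\theta$, so the Hardy estimate is applied to $\tfrac{4\alpha^2}{it^2}\Theta$ with $\Theta=ie^{it^2/4\alpha}\partial_\alpha^2\theta$. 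Differentiating this, the dangerous $\tfrac{t}{\alpha^2}\Omega_0$ factor is multiplied by $\tfrac{4\alpha^2}{it^2}$ and becomes the harmless $\tfrac{1}{t}\Omega_0$, while the $L_0$ piece carries $\tfrac{\alpha}{t^2}\lesssim t^{-4/5}$. This is what produces $\|\partial_\alpha(\tfrac{4\alpha^2}{it^2}\Theta)\|_{L^2}\lesssim\epsilon t^{-4/5+\delta_0}$ and, after multiplying by $t^{3/4}$, the claimed $\epsilon t^{-1/20+\delta_0}$. You flag that "the replacement $W(\alpha,t)\rightsquigarrow W(\nu t,t)$ uniformly down to $\nu\sim t^{-1/5}$ is where the work lies," which is exactly right, but without the derivative raise the exponent simply doesn't close.

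A secondary issue is the $\nu$-dependent coefficient $1+\tfrac1{2\nu}$ you run into. The paper avoids this altogether by the algebraic rearrangement $\tilde E(t)=2\int w\bar u-\frac{d}{dt}\int D_t\theta\,\bar u-\int b\,\partial_\alpha D_t\theta\,\bar u$, obtained by integrating $\partial_t\bar u$ by parts and using $\partial_t D_t\theta=w-b\partial_\alpha D_t\theta$; the phase-derivative term that causes your $\tfrac1{2\nu}$ is absorbed into the exact derivative $\frac{d}{dt}\int D_t\theta\,\bar u$, which is then shown to be a genuine error $O(\epsilon t^{-1/20})$ using the frequency decomposition $Q_\alpha=(Q_\alpha)_L+(Q_\alpha)_H$ and the $L^\infty$ bound on $\Omega_0(P_{>t^{-2}}Q_\alpha)$. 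Your proposal does not address the piece $\int\chi\,\partial_t U$ that emerges from this rearrangement, and the hand-wave "the $\nu$-dependent correction is itself an error term" is not a substitute for the rearrangement, because for $\nu\sim t^{-1/5}$ the factor $\tfrac1{2\nu}\sim t^{1/5}$ would overwhelm the claimed error bound unless the term is organized differently from the start.
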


\begin{proof}
        For the third term of $\tilde{E}(t)$, it's obvious that
        \[
        \left|\int b\partial_\alpha D_t\theta\bar{u}\right|\le\norm{b}_{L^\infty}\norm{\partial_\alpha D_t\theta}_{L^2}\norm{u}_{L^2}
        \le C\epsilon^3t^{-\frac{1}{4}}.
        \]
	For the second term, define $U=e^{\frac{it^2}{4\alpha}}D_t\theta$. We have
	\[
	\begin{aligned}
		\int\partial_t(D_t\theta\bar{u})
		=\int\partial_t(\chi U)
		&=\int U\partial_t\chi+\int\chi\partial_tU,\\
		&=-\frac{1}{2\sqrt{t}}\int\left(\nu+\frac{\alpha}{t}\right)\chi'\left(\frac{\alpha-\nu t}{\sqrt{t}}\right)U
            +\int\chi\partial_tU.
	\end{aligned}
	\]
	and
	\[
	\left(\int\chi\left(\frac{\alpha-\nu t}{\sqrt{t}}\right)^2d\alpha\right)^{\frac{1}{2}}=t^{\frac{1}{4}};
	\]
	similar calculation works for $\chi'$, so we get
	\[
	\frac{d}{dt}\int(D_t\theta\bar{u})=\int\chi\partial_tU+O(\epsilon t^{-\frac{1}{20}}).
	\]
 We invoke the frequency decomposition technique in Proposition \ref{norms of A-1} here. Write (we use Proposition \ref{good-replace} here, by noticing $D_t\theta=2D_t\zeta+e=2Q_\alpha+2R+e$).
 \[
 D_t\theta=2P_{>t^{-2}}Q_\alpha+2P_{\le t^{-2}}Q_\alpha+e+2R
 \]
 and we get
 \[
 \norm{P_{\le t^{-2}}Q_\alpha}_{L^\infty}\le Ct^{-1}\norm{P_{\le t^{-2}}Q_\alpha}_{L^2}\le C\epsilon t^{-1},\ 
 \norm{\partial_tP_{\le t^{-2}}Q_\alpha}_{L^\infty}\le C\epsilon t^{-1}.
 \]
 Hence (recall that $\norm{e}_{L^2}+\norm{R}_{L^2}\le C\epsilon^2 t^{-\frac{1}{2}}$)
 \[
 \int\chi\partial_tU=\int \frac{it}{2\alpha}e^{\frac{it^2}{4\alpha}}\bar{u}Q_\alpha+\int e^{\frac{it^2}{4\alpha}}\bar{u}\partial_tQ_\alpha
 =2\int\chi\partial_tU_H+O(\epsilon^2t^{-\frac{1}{4}})+O(\epsilon t^{-\frac{3}{4}})
 \]
 where $U_H=e^{\frac{it^2}{4\alpha}}P_{>t^{-2}}Q_\alpha$. Now we have
 \[
 \norm{\partial_tU_H}_{L^\infty}=\norm{\frac{1}{\alpha}\Omega_0(P_{>t^{-2}}Q_\alpha)}_{L^\infty}\le C\epsilon t^{-\frac{4}{5}+\delta_0}\ln t
 \]
 which implies
 \[
 \left|\int\chi\partial_tU\right|\le C\epsilon t^{\frac{1}{2}-\frac{4}{5}+\delta_0}\ln t\le C\epsilon t^{-\frac{1}{8}}.
 \]
Now it remains to consider the first term of $\tilde{E}(t)$. Note that
\begin{align*}
    \int w\bar{u}
    =\int D_t^2\theta\bar{u}
    &=\int i\partial_\alpha\theta\bar{u}+\int(i(A-1)\partial_\alpha\theta+G)\bar{u},\\
    &=\int i\left(\frac{4\alpha^2}{it^2}\partial_\alpha^2\theta-\frac{4\alpha}{it^2}L_0\partial_\alpha\theta+\frac{2}{it}\Omega_0\partial_\alpha\theta\right)\bar{u}+O(\epsilon^3t^{-\frac{1}{2}}),\\
    &=\int i\frac{4\alpha^2}{it^2}\partial_\alpha^2\theta\bar{u}+O(\epsilon t^{-\frac{1}{2}}),\\
    &=\int\frac{4\alpha^2}{it^2}\Theta\chi+O(\epsilon t^{-\frac{1}{2}}),\\
    &=\frac{4(\nu t)^2}{it^2}\Theta(\nu t,t)\int\chi+\int\left(\frac{4\alpha^2}{it^2}\Theta(\alpha,t)-\frac{4(\nu t)^2}{it^2}\Theta(\nu t,t)\right)\chi+O(\epsilon t^{-\frac{1}{2}}),\\
    &=\sqrt{t}\frac{4(\nu t)^2}{it^2}\Theta(\nu t,t)+\int\left(\frac{4\alpha^2}{it^2}\Theta(\alpha,t)-\frac{4(\nu t)^2}{it^2}\Theta(\nu t,t)\right)\chi+O(\epsilon t^{-\frac{1}{2}})
\end{align*}
where $\Theta=ie^{\frac{it^2}{4\alpha}}\partial_\alpha^2\theta$. Note that according to Lemma \ref{transition-lemma},
\[
\frac{4(\nu t)^2}{it^2}\Theta(\nu t,t)=ie^{\frac{it^2}{4(\nu t)}}\partial_\alpha\theta(\nu t,t)+O(\epsilon t^{-\frac{4}{5}+\delta_0})
=W+O(\epsilon t^{-\frac{1}{2}}).
\]
	Therefore, it suffices to notice (use Hardy's inequality)
	\[
	\begin{aligned}
		\left|\int\left(\frac{4\alpha^2}{it^2}\Theta(\alpha,t)-\frac{4(\nu t)^2}{it^2}\Theta(\nu t,t)\right)\chi\right|
		&=\left|\int\left(\frac{4\alpha^2}{it^2}\Theta(\alpha,t)-\frac{4(\nu t)^2}{it^2}\Theta(\nu t,t)\right)\frac{\alpha-\nu t}{\alpha-\nu t}\chi d\alpha\right|,\\
		&\le\left(\int\left|\frac{\frac{4\alpha^2}{it^2}\Theta(\alpha,t)-\frac{4(\nu t)^2}{it^2}\Theta(\nu t,t)}{\alpha-\nu t}\right|^2d\alpha\right)^{\frac{1}{2}}\left(\int|(\alpha-\nu t)\chi|^2 d\alpha\right)^{\frac{1}{2}},\\
		&\le C\norm{\partial_\alpha\left(\frac{4\alpha^2}{it^2}\Theta\right)}_{L^2(t^{4/5}\lesssim|\alpha|\lesssim t^{6/5})}t^{\frac{3}{4}},\\
		&\le C\norm{\frac{1}{t^2}e^{\frac{it^2}{4\alpha}}\left(\alpha L_0+\frac{t}{2}\Omega_0\right)\partial_\alpha^2\theta}_{L^2(t^{4/5}\lesssim|\alpha|\lesssim t^{6/5})}t^{\frac{3}{4}}\\
            &\ \ \ +C\norm{\frac{\alpha}{t^2}\Theta}_{L^2(t^{4/5}\lesssim|\alpha|\lesssim t^{6/5})}t^{\frac{3}{4}},\\
		&\le C\epsilon t^{-\frac{1}{4}+\mu+\delta_0},\\
            &\le C\epsilon t^{-\frac{1}{20}+\delta_0}.
	\end{aligned}
	\]
    Here we have used the fact that
    \[
    \norm{(\alpha-\nu t)\chi\left(\frac{\alpha-\nu t}{\sqrt{t}}\right)}_{L^2}=t^{\frac{3}{4}}\norm{\alpha\chi(\alpha)}_{L^2}
    =Ct^{\frac{3}{4}}.
    \]
	This completes the proof.
\end{proof}

\begin{rem}
    One may infer from the proof that
    \[
    \tilde{E}(t)=\int w\bar{u}+O(\epsilon t^{-\frac{1}{20}+\delta_0}).
    \]
\end{rem}

According to Theorem \ref{main theorem of decay estimate}, if $\tilde{E}(t)\le C$ for some constant $C$ which is independent of $\nu$, then we may take $\nu\in [t^{-1/5},t^{1/5}]$ to derive $W(\alpha,t)\le Ct^{-\frac{1}{2}}$ for any $t^{4/5}\le|\alpha|\le t^{6/5}$.

We now try to control $\|\tilde{E}\|_{L^\infty}$. Straight computation yields
\[
\begin{aligned}
	\frac{d}{dt}\tilde{E}(t)
	&=\int \bar{u}\partial_tw+w\partial_t\bar{u}-D_t\theta\partial_t^2\bar{u}-\partial_tD_t\theta\partial_t\bar{u},\\
	&=\int\bar{u}(D_tw-b\partial_\alpha w)+b\partial_\alpha D_t\theta\partial_t\bar{u}-D_t\theta\partial_t^2\bar{u},\\
    &=\int(i\partial_\alpha D_t\theta+D_tG+R')\bar{u}-b\partial_\alpha D_t^2\theta\bar{u}+b\partial_\alpha D_t\theta\partial_t\bar{u}-D_t\theta\partial_t^2\bar{u},\\
    &=\int(D_tG+R')\bar{u}-\int(\partial_t^2+i\partial_\alpha)\bar{u}D_t\theta+\int b(\partial_\alpha D_t\theta\partial_t\bar{u}-\partial_\alpha D_t^2\theta\bar{u}),\\
    &\overset{\text{def}}{=}I_1+I_2+I_3.
\end{aligned}
\]
And
\[
\begin{aligned}
	(-\partial_t^2+i\partial_\alpha)u
	&=-\partial_t\left(e^{-\frac{it^2}{4\alpha}}\partial_t\chi-\frac{it}{2\alpha}e^{-\frac{it^2}{4\alpha}}\chi\right)+i\left(e^{-\frac{it^2}{4\alpha}}\partial_\alpha\chi+\frac{it^2}{4\alpha^2}e^{-\frac{it^2}{4\alpha}}\chi\right),\\
	&=e^{-\frac{it^2}{4\alpha}}\left(\frac{it}{2\alpha}\partial_t\chi-\partial_t^2\chi+\frac{i}{2\alpha}\chi+\frac{t^2}{4\alpha^2}\chi+\frac{it}{2\alpha}\partial_t\chi+i\partial_\alpha\chi-\frac{t^2}{4\alpha^2}\chi\right),\\
	&=e^{-\frac{it^2}{4\alpha}}\left(-\partial_t^2\chi+\frac{it}{\alpha}\partial_t\chi+\frac{i}{2\alpha}\chi+i\partial_\alpha\chi\right).
\end{aligned}
\]
We now analyze $I_1,I_2,I_3$ respectively.
\begin{lemma}\label{estimate for I_2}
	We have 
	\begin{equation}
		I_2=\int\overline{(-\partial_t^2+i\partial_\alpha)u}D_t\theta=O(\epsilon t^{-1-\delta})
	\end{equation}
	for some $\delta>0$.	
\end{lemma}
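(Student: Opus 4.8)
The plan is to exploit the explicit structure of $u(\alpha,t)=\chi\left(\frac{\alpha-\nu t}{\sqrt t}\right)e^{-it^2/4\alpha}$ and the computation of $(-\partial_t^2+i\partial_\alpha)u$ already carried out above, and then pair the resulting expression against $D_t\theta$, using the bootstrap $L^2$ control on $D_t\theta$ together with the localization of the cutoff $\chi$ to a window of width $\sqrt t$ around $\alpha=\nu t$. Concretely, we have
\[
\overline{(-\partial_t^2+i\partial_\alpha)u}=e^{\frac{it^2}{4\alpha}}\left(-\partial_t^2\chi-\frac{it}{\alpha}\partial_t\chi-\frac{i}{2\alpha}\chi-i\partial_\alpha\chi\right),
\]
so $I_2=\int e^{\frac{it^2}{4\alpha}}\left(-\partial_t^2\chi-\frac{it}{\alpha}\partial_t\chi-\frac{i}{2\alpha}\chi-i\partial_\alpha\chi\right)D_t\theta\,d\alpha$. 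The strategy is to estimate each of the four terms separately.

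For the terms $\frac{i}{2\alpha}\chi$ and $i\partial_\alpha\chi$: since $\partial_\alpha\chi\left(\frac{\alpha-\nu t}{\sqrt t}\right)=\frac{1}{\sqrt t}\chi'\left(\frac{\alpha-\nu t}{\sqrt t}\right)$ has $L^2$ norm $\sim t^{-1/4}$, and on the support $|\alpha|\sim t$ so that $\frac{1}{\alpha}$ contributes $t^{-1}$ with $L^2$ norm $\sim t^{-3/4}$, Cauchy--Schwarz against $\norm{D_t\theta}_{L^2}\le C\epsilon$ (note $D_t\theta=2D_t\zeta+O(\epsilon^2 t^{-1/2})_{H^s}$ by Lemma \ref{remainder of Dt theta}, and $\norm{D_t\zeta}_{L^2}\le C\epsilon$) gives $O(\epsilon t^{-1/4})$ — which is not good enough by itself. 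The key observation is that these are oscillatory integrals: $e^{it^2/4\alpha}$ oscillates with $\partial_\alpha(t^2/4\alpha)=-t^2/4\alpha^2\sim t^{-1}$ on the support (for $|\alpha|\sim t$), so one gains by integrating by parts in $\alpha$, or equivalently by passing to the profile variable. The cleanest route is to replace $D_t\theta$ by its profile $U=e^{it^2/4\alpha}D_t\theta$ (equivalently, write $D_t\theta=e^{-it^2/4\alpha}U$), turning $I_2$ into $\int\left(-\partial_t^2\chi-\frac{it}{\alpha}\partial_t\chi-\frac{i}{2\alpha}\chi-i\partial_\alpha\chi\right)U\,d\alpha$; then, by Lemma \ref{transition-lemma} (applied to $D_t\theta$, using Proposition \ref{good-replace} to replace $D_t\theta$ by $2Q_\alpha$ modulo fast-decaying errors), we have $\partial_\alpha U=\frac{1}{\alpha}L_0f-\frac{t}{2\alpha^2}\Omega_0 f$ in profile form with $\norm{L_0 D_t\zeta}_{L^2}+\norm{\Omega_0\theta_\alpha}_{\cdots}\le C\epsilon t^{\delta_0}$; so we integrate by parts in $\alpha$ on the $\partial_\alpha\chi$ term to move the derivative onto $U$, and then the $L^2$ norm of $\partial_\alpha U$ restricted to $|\alpha|\sim t$ is $\lesssim t^{-1}\cdot t^{\delta_0}$, which combined with $\norm{\chi}_{L^2}\sim t^{1/4}$ yields $O(\epsilon t^{-3/4+\delta_0})$. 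The $\frac{1}{\alpha}\chi$ term is directly $O(\epsilon t^{-1}\cdot t^{1/4}\cdot\text{(pointwise }L^\infty\text{ decay of }D_t\theta))$; since $\norm{D_t\theta}_{L^\infty(|\alpha|\sim t)}\le C\epsilon t^{-1/2}$ by Corollary \ref{decay for Dt zeta near t} and the bootstrap, this is $O(\epsilon^2 t^{-1-1/4})$, which is acceptable.

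For the terms $-\partial_t^2\chi$ and $-\frac{it}{\alpha}\partial_t\chi$: here $\partial_t\chi\left(\frac{\alpha-\nu t}{\sqrt t}\right)=-\frac{1}{2\sqrt t}\left(\nu+\frac{\alpha}{t}\right)\chi'\left(\frac{\alpha-\nu t}{\sqrt t}\right)$. On the support we have $\alpha\sim \nu t$ and $\nu\in[t^{-1/5},t^{1/5}]$, so $|\partial_t\chi|\lesssim \frac{\nu}{\sqrt t}|\chi'|$ and its $L^2$ norm is $\lesssim \nu t^{-1/4}\le t^{1/5-1/4}=t^{-1/20}$. Against $\norm{D_t\theta}_{L^\infty(|\alpha|\sim \nu t)}\le C\epsilon t^{-1/2}$ times $\norm{1_{\text{supp}}}_{L^2}\sim t^{1/4}$... more carefully, pairing $\frac{t}{\alpha}\partial_t\chi\sim \frac{1}{\nu}\cdot\frac{\nu}{\sqrt t}\chi'=\frac{1}{\sqrt t}\chi'$ against $D_t\theta$ in $L^2\times L^2$ gives $\lesssim t^{-1/4}\cdot\epsilon=O(\epsilon t^{-1/4})$ — again borderline, so we again move to profiles and integrate by parts. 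After substituting $D_t\theta=e^{-it^2/4\alpha}U$ the $\frac{t}{\alpha}\partial_t\chi$ term becomes $\int \frac{t}{\alpha}\partial_t\chi\cdot U\,d\alpha$; integrating by parts in $\alpha$ (using $\partial_t\chi$ is still a function of $\frac{\alpha-\nu t}{\sqrt t}$, so $\partial_\alpha(\partial_t\chi)$ costs $t^{-1/2}$ relative to $\partial_t\chi$ and the $\frac{t}{\alpha}$ factor is smooth with $\partial_\alpha\sim \alpha^{-2}t$), one trades a factor $t^{-1/2}$ for the $\partial_\alpha U$ which itself is $O(\epsilon t^{-1+\delta_0})$ in $L^2$; the net gain suffices. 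Similarly $\partial_t^2\chi$ has $L^2$ norm $\lesssim \nu^2 t^{-3/4}\le t^{2/5-3/4}=t^{-7/20}$, and pairing with $\norm{D_t\theta}_{L^2}\le C\epsilon$ gives $O(\epsilon t^{-7/20})$ directly — already $O(\epsilon t^{-1-\delta})$ is false here, so one more oscillatory integration by parts is needed, reducing it to $O(\epsilon t^{-7/20-1/2+\delta_0})=O(\epsilon t^{-1-\delta})$ for $\delta_0$ small.

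\textbf{Main obstacle.} The delicate point is that naive Cauchy--Schwarz on each term only gives $O(\epsilon t^{-1/4})$ or at best $O(\epsilon t^{-7/20})$, which falls short of the required $t^{-1-\delta}$ by a full power of roughly $t^{3/4}$. One must genuinely exploit the oscillation of $e^{it^2/4\alpha}$ — i.e.\ pass to the profile $U=e^{it^2/4\alpha}D_t\theta$ and use that derivatives of $U$ (equivalently $L_0$ and $\Omega_0$ applied to $D_t\zeta$, controlled via the bootstrap assumptions \eqref{bootstrap-2}--\eqref{bootstrap-2'} and Corollary \ref{cor:Omega_0}) gain a factor $t^{-1+\delta_0}$ relative to $U$ itself on the region $|\alpha|\sim t$. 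The bookkeeping must be done carefully because $\nu$ is allowed to range over $[t^{-1/5},t^{1/5}]$ and every estimate must be uniform in $\nu$; in particular the $\partial_t^2\chi$ term, carrying two factors of $\nu/\sqrt t$, is the tightest, and one must check that after using the $\chi$-localization ($L^2$ mass $t^{1/4}$) plus one oscillatory integration by parts ($t^{-1/2}$ gain) plus the $\partial_\alpha U$ gain ($t^{-1+\delta_0}$ vs a baseline $t^{-1/2}$ decay of $D_t\theta$ replaced by an $L^2$ estimate) the powers indeed close with room to spare. The remaining terms are then routine given the tools already established (Lemma \ref{transition-lemma}, Proposition \ref{good-replace}, Corollary \ref{decay for Dt zeta near t}, Lemma \ref{remainder of Dt theta}).
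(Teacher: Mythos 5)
There is a genuine gap: your term-by-term strategy misses the crucial algebraic cancellation that drives the paper's estimate. The paper first observes that
\[
\frac{it}{\alpha}\partial_t\chi + i\partial_\alpha\chi = -\frac{it\bigl(\nu+\frac{\alpha}{t}\bigr)}{2\alpha}\partial_\alpha\chi + i\partial_\alpha\chi = \frac{i(\alpha-\nu t)}{2\alpha}\partial_\alpha\chi,
\]
so that these two terms, which individually have $L^2$ mass $\sim t^{-1/4}$ against $D_t\theta$, combine into a single expression carrying the small factor $\frac{\alpha-\nu t}{\alpha}$, which is $O(\sqrt{t}/(\nu t))$ on the support of $\chi$. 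You estimate each of $i\partial_\alpha\chi$ and $\frac{it}{\alpha}\partial_t\chi$ separately; but as you yourself compute, a single integration by parts on $\int\partial_\alpha\chi\, U$ gives only $O(\epsilon t^{-3/4+\delta_0})$ (and only when $\nu\sim 1$ — for $\nu\sim t^{-1/5}$ the bound $\norm{\partial_\alpha U}_{L^2(|\alpha|\sim\nu t)}\lesssim \epsilon t^{2\delta_0}/(\nu^2 t)$ degrades to $\epsilon t^{-3/5+2\delta_0}$, giving only $O(\epsilon t^{-7/20+2\delta_0})$). Neither is anywhere near $O(\epsilon t^{-1-\delta})$, and there is no obvious way to iterate without first performing the cancellation. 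The paper also needs a second, similar cancellation of the $\frac{4\alpha^2}{it^3}\chi U^*$ pieces when it integrates by parts one more time; you should look at this chain carefully rather than treating it as "routine."

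Your arithmetic for the $\partial_t^2\chi$ term is also off. You claim $\norm{\partial_t^2\chi}_{L^2}\lesssim t^{-7/20}$ and then say that "one more oscillatory integration by parts" gives $O(\epsilon t^{-7/20-1/2+\delta_0})=O(\epsilon t^{-1-\delta})$; but $-7/20-1/2=-17/20>-1$, so this does not close. Here the paper does something structurally different: it rewrites $\partial_t^2\chi = -\frac{1}{2}(\frac{\alpha}{t}+\nu)\partial_t\partial_\alpha\chi+\frac{\alpha}{2t^2}\partial_\alpha\chi$ and integrates by parts in $\alpha$ several times, eventually landing on the term $\frac{1}{\sqrt{t}}\bigl(\frac{\alpha}{t}+\nu\bigr)^2\chi'\partial_\alpha U$ and crucially exploiting that on the support $\frac{(\frac{\alpha}{t}+\nu)^2}{\alpha^2}\sim \frac{\nu^2}{\nu^2 t^2}= t^{-2}$, so the $\nu$-dependence cancels and the estimate is uniform in $\nu$. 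Finally, where you apply a single transition-of-derivative step to $D_t\theta$, the paper applies two in tandem (raising $D_t\theta\to\partial_\alpha D_t\theta\to\partial_t\partial_\alpha D_t\theta\approx\partial_\alpha D_t^2\theta$) to accumulate two powers of $\frac{\alpha^2}{t^2}$, which is what supplies the missing decay; your bookkeeping in the "main obstacle" paragraph, which credits only one such gain, does not reach $t^{-1-\delta}$.
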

\begin{proof}
	Actually, we have
	\[
	\partial_\alpha\chi=\frac{1}{\sqrt{t}}\chi'\left(\frac{\alpha-\nu t}{\sqrt{t}}\right)=-\frac{2\partial_t\chi}{\nu+\frac{\alpha}{t}}
	\]
	and thus
	\[
	\partial_t^2\chi
	=-\partial_t\left[\frac{1}{2}\left(\frac{\alpha}{t}+\nu\right)\partial_\alpha\chi\right]
	=-\frac{1}{2}\left(\frac{\alpha}{t}+\nu\right)\partial_t\partial_\alpha\chi+\frac{\alpha}{2t^2}\partial_\alpha\chi.
	\]
	This implies (we still use $U=e^{\frac{it^2}{4\alpha}}D_t\theta$)
	\begin{equation}\label{Dt2 chi}
		\begin{aligned}
			\int\overline{\partial_t^2\chi e^{-\frac{it^2}{4\alpha}}}D_t\theta
			&=-\frac{1}{2}\int\left(\frac{\alpha}{t}+\nu\right)\partial_\alpha\partial_t\chi 	U+\int\frac{\alpha}{2t^2}\partial_\alpha\chi U,\\
			&=\frac{1}{2}\int\frac{1}{t}\partial_t\chi U+\frac{1}{2}\int\left(\frac{\alpha}{t}+\nu\right)\partial_t\chi\partial_\alpha U-\frac{1}{2t^2}\int\chi U-\int\frac{\alpha}{2t^2}\chi\partial_\alpha U,\\
			&=-\frac{1}{4t}\int\left(\nu+\frac{\alpha}{t}\right)\partial_\alpha\chi U+\frac{1}{2}\int\left(\frac{\alpha}{t}+\nu\right)\partial_t\chi\partial_\alpha U-\frac{1}{2t^2}\int\chi U-\int\frac{\alpha}{2t^2}\chi\partial_\alpha U,\\
			&=-\frac{1}{4t^2}\int\chi U+\frac{1}{4t}\int\left(\nu+\frac{\alpha}{t}\right)\chi\partial_\alpha U-\frac{1}{4}\int\frac{\left(\frac{\alpha}{t}+\nu\right)^2}{\sqrt{t}}\chi'\left(\frac{\alpha-\nu t}{\sqrt{t}}\right)\partial_\alpha U\\
                &\ \ \ -\int\frac{\alpha}{2t^2}\chi\partial_\alpha U.
		\end{aligned}
	\end{equation}
	Recall that
	\[
	\partial_\alpha U=\partial_\alpha(e^{\frac{it^2}{4\alpha}}D_t\theta)=\frac{1}{\alpha^2}e^{\frac{it^2}{4\alpha}}\left(\alpha L_0+\frac{t}{2}\Omega_0\right)D_t\theta,
	\]
        and, for $\alpha$ in the support of $u(\cdot,t)$, we have
        \[
        \left|\frac{\alpha}{t}-\nu\right|\le\frac{C}{\sqrt{t}},
        \]
        and we obtain
        \begin{align*}
            \left|\frac{1}{4}\int\frac{\left(\frac{\alpha}{t}+\nu\right)^2}{\sqrt{t}}\chi'\left(\frac{\alpha-\nu t}{\sqrt{t}}\right)\partial_\alpha U\right|
            &\le \frac{C}{\sqrt{t}}\int\left(\left|\frac{2\alpha}{t}\right|+\frac{1}{\sqrt{t}}\right)^2\left|\chi'\left(\frac{\alpha-\nu t}{\sqrt{t}}\right)\right|\frac{1}{\alpha^2}\left|\alpha L_0D_t\theta+\frac{t}{2}\Omega_0D_t\theta\right|,\\
            &\le\frac{C}{t^2\sqrt{t}}\int\left|\chi'\left(\frac{\alpha-\nu t}{\sqrt{t}}\right)\right|\left|\alpha L_0D_t\theta+\frac{t}{2}\Omega_0D_t\theta\right|+O(\epsilon t^{-1-\delta}),\\
            &\le C\epsilon t^{-1-\frac{1}{20}+\delta_0}.
        \end{align*}
        (one should use the frequency decomposition method in Theorem \ref{main theorem of decay estimate} to replace $D_t\theta$ by its high frequency part). Other terms are treated similarly. Now, combining with (\ref{Dt2 chi}) yields
	\begin{equation}
		\left|\int\overline{\partial_t^2\chi e^{-\frac{it^2}{4\alpha}}}D_t\theta\right|\le C\epsilon t^{-1-\delta}.
	\end{equation}
	Also, observe that
	\[
	\frac{it}{\alpha}\partial_t\chi+i\partial_\alpha\chi
	=-\frac{it(\nu+\frac{\alpha}{t})}{2\alpha}\partial_\alpha\chi+i\partial_\alpha\chi
	=\frac{i(\alpha-\nu t)}{2\alpha}\partial_\alpha\chi,
	\]
	hence
	\[
	\begin{aligned}
		\int\overline{\left(\frac{it}{\alpha}\partial_t\chi+i\partial_\alpha\chi+\frac{i}{2\alpha}\chi\right)e^{-\frac{it^2}{4\alpha}}}D_t\theta
		&=-i\int\frac{\alpha-\nu t}{2\alpha}\partial_\alpha\chi e^{\frac{it^2}{4\alpha}}D_t\theta-i\int\frac{\chi}{2\alpha}e^{\frac{it^2}{4\alpha}}D_t\theta.
	\end{aligned}
	\]
    Again we use
    \[
    D_t\theta=\frac{4\alpha^2}{it^2}\partial_\alpha D_t\theta-\frac{4\alpha}{it^2}L_0D_t\theta+\frac{2}{it}\Omega_0D_t\theta,
    \]
    \[
    \partial_\alpha D_t\theta=-\frac{2\alpha}{it}\partial_t\partial_\alpha D_t\theta+\frac{2}{it}\Omega_0\partial_\alpha D_t\theta
    \]
    to obtain
    \[
    \int\overline{\left(\frac{it}{\alpha}\partial_t\chi+i\partial_\alpha\chi+\frac{i}{2\alpha}\chi\right)e^{-\frac{it^2}{4\alpha}}}D_t\theta
    =\int\frac{\alpha-\nu t}{2\alpha}\frac{8\alpha^3}{it^3}\partial_\alpha\chi U^*+\int\frac{\chi}{2\alpha}\frac{8\alpha^3}{it^3}U^*+O(\epsilon t^{-1-\delta})
    \]
    where $U^*=e^{\frac{it^2}{4\alpha}}\partial_\alpha D_t^2\theta$.

    Further computation yields
    \begin{align*}
        &\int\frac{\alpha-\nu t}{2\alpha}\frac{8\alpha^3}{it^3}\partial_\alpha\chi U^*+\int\frac{\chi}{2\alpha}\frac{8\alpha^3}{it^3}U^*\\
        &=-\int\partial_\alpha\left(\frac{\alpha-\nu t}{2\alpha}\frac{8\alpha^3}{it^3}\right)\chi U^*-\int\frac{\alpha-\nu t}{2\alpha}\frac{8\alpha^3}{it^3}\chi\partial_\alpha U^*
        +\int\frac{\chi}{2\alpha}\frac{8\alpha^3}{it^3}U^*,\\
        &=\int\frac{\alpha-\nu t}{2\alpha^2}\frac{8\alpha^3}{it^3}\chi U^*-\int\frac{\alpha-\nu t}{2\alpha}\frac{8\alpha^3}{it^3}\chi\partial_\alpha U^*+O(\epsilon t^{-1-\frac{1}{20}})
    \end{align*}
    which implies
    \begin{align*}
        \left|\int\overline{\left(\frac{it}{\alpha}\partial_t\chi+i\partial_\alpha\chi+\frac{i}{2\alpha}\chi\right)e^{-\frac{it^2}{4\alpha}}}D_t\theta\right|
        &\le C\epsilon t^{-1-\frac{1}{20}}+Ct^{\frac{1}{4}-\frac{1}{2}}\norm{\frac{4\alpha^2}{it^2}\partial_\alpha U^*}_{L^2(t^{4/5}\le|\alpha|\le t^{6/5})},\\
        &\le C\epsilon t^{-1-\frac{1}{20}+\delta_0}.
    \end{align*}
    
    This completes the proof.
 \end{proof}
 
\begin{lemma}\label{estimate for I_3}
    \begin{equation}
        I_3=\int b(\partial_\alpha D_t\theta\partial_t\bar{u}-\partial_\alpha D_t^2\theta\bar{u})=\frac{i|D_t^2\zeta(\nu t,t)|^2}{\nu}\tilde{E}(t)+O(\epsilon^3t^{-1-\delta}).
    \end{equation}
\end{lemma}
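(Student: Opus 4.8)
The plan is to extract the leading behavior of $b$ near $\alpha=\nu t$ using Corollary \ref{structure of b}, and then to recognize that the resulting integral reassembles the functional $\tilde E(t)$ itself up to a scalar multiple. First I would recall that $\nu\in[t^{-1/5},t^{1/5}]$ means that on the support of $u(\cdot,t)$ (which is contained in $\{|\alpha-\nu t|\le C\sqrt t\}$) we have $t^{4/5}\lesssim|\alpha|\lesssim t^{6/5}$, so Corollary \ref{structure of b} applies: $b(\alpha,t)=-\tfrac{2\alpha}{t}|D_t^2\zeta(\alpha,t)|^2+O(\epsilon^2 t^{-1-\delta})$. The error contribution, paired against $\partial_\alpha D_t\theta\,\partial_t\bar u$ and $\partial_\alpha D_t^2\theta\,\bar u$, is controlled by Cauchy--Schwarz using $\norm{\partial_\alpha D_t\theta}_{L^2}+\norm{\partial_\alpha D_t^2\theta}_{L^2}\lesssim\epsilon$ (from $\mathcal E_s$, Lemma \ref{Hs and order s energy}), $\norm{u}_{L^2}+\norm{\partial_t u}_{L^2}\le Ct^{1/4}$, and $\norm{b-(-2\alpha|D_t^2\zeta|^2/t)}_{L^\infty(\mathrm{supp}\,u)}\le C\epsilon^2 t^{-1-\delta}$; this gives $O(\epsilon^3 t^{-1-\delta})$ as required (one should replace $\partial_\alpha D_t^2\theta$ by $2\partial_\alpha D_t^3\zeta$-type quantities only if convenient, but the raw $H^s$ bounds suffice here).

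\textbf{Main step.} Having reduced to the leading term, I would next replace $|D_t^2\zeta(\alpha,t)|^2$ by its value at $\alpha=\nu t$. Write $|D_t^2\zeta(\alpha,t)|^2 = |D_t^2\zeta(\nu t,t)|^2 + \big(|D_t^2\zeta(\alpha,t)|^2-|D_t^2\zeta(\nu t,t)|^2\big)$; on the support of $u$ the difference is $O(\epsilon^2 t^{-1/2}\cdot t^{-1/2})=O(\epsilon^2 t^{-1})$ by the decay estimate for $\partial_\alpha D_t^2\zeta$ (bootstrap \eqref{bootstrap-3}) together with $|\alpha-\nu t|\le C\sqrt t$, and when paired with the $L^2$ norms above this again yields an $O(\epsilon^3 t^{-1-\delta})$ contribution (the net power is $t^{-1}\cdot t^{1/4}\cdot\epsilon$, which is $\le t^{-3/4}$; to reach $t^{-1-\delta}$ one uses the sharper pointwise statement $\norm{b}_{L^\infty(\mathrm{supp}\,u)}\lesssim\epsilon^2 t^{-1+\mu}$ from the remark after Corollary \ref{structure of b} and absorbs one factor of $\sqrt t$ via the extra $(\alpha-\nu t)$ weight, or more simply one notes that the error term itself already contains $|D_t^2\zeta|^2\lesssim\epsilon^2 t^{-1}$ so the bound is $\epsilon^4 t^{-1-\delta}$, which is acceptable). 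Similarly $\tfrac{2\alpha}{t}$ may be replaced by $\tfrac{2\nu t}{t}=2\nu$ up to $O(t^{-1/2})$, absorbed the same way. This leaves
\[
I_3 = -2\nu|D_t^2\zeta(\nu t,t)|^2\int\big(\partial_\alpha D_t\theta\,\partial_t\bar u - \partial_\alpha D_t^2\theta\,\bar u\big)\,d\alpha + O(\epsilon^3 t^{-1-\delta}).
\]

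\textbf{Closing identity.} The final step is to recognize the remaining integral. Using $u=\chi e^{-it^2/4\alpha}$ with $\chi=\chi((\alpha-\nu t)/\sqrt t)$, one has $\partial_\alpha\bar u = \tfrac{it^2}{4\alpha^2}\bar u + e^{it^2/4\alpha}\partial_\alpha\chi$. Integrating $\int\partial_\alpha D_t\theta\,\partial_t\bar u$ by parts in $\alpha$ against $\partial_\alpha D_t\theta=\partial_\alpha(e^{-it^2/4\alpha}U)$ and comparing with $\int\partial_\alpha D_t^2\theta\,\bar u$, the contributions where a derivative hits $\chi$ are lower order (they produce a factor $t^{-1/2}$ against the $L^2$ bounds, as in Lemma \ref{estimate for I_2}), and the main contributions combine so that $\int(\partial_\alpha D_t\theta\,\partial_t\bar u-\partial_\alpha D_t^2\theta\,\bar u)$ equals, modulo $O(\epsilon t^{-1-\delta})$, a multiple of $\int(D_t^2\theta\,\bar u - D_t\theta\,\partial_t\bar u)=\tilde E(t)$; the constant works out to $-\tfrac{1}{2\nu^2}$ after accounting for the $\tfrac{it^2}{4\alpha^2}$ factors and $\alpha\approx\nu t$. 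Tracking the sign and the factor $-2\nu|D_t^2\zeta(\nu t,t)|^2\cdot(-\tfrac{1}{2\nu^2})=\tfrac{|D_t^2\zeta(\nu t,t)|^2}{\nu}$ and the residual $i$ coming from $e^{-it^2/4\alpha}$ phase algebra gives $I_3=\tfrac{i|D_t^2\zeta(\nu t,t)|^2}{\nu}\tilde E(t)+O(\epsilon^3 t^{-1-\delta})$. The main obstacle is the bookkeeping in this last step: one must carefully track which terms genuinely reconstitute $\tilde E(t)$ versus which are genuinely negligible, and in particular verify that the phase factors $e^{\pm it^2/4\alpha}$ and the approximations $\alpha\approx\nu t$ do not corrupt the reconstruction of $\tilde E$ beyond the claimed error; the algebra is exactly parallel to the manipulations carried out in the proof of Lemma \ref{estimate for I_2} and Theorem \ref{main theorem of decay estimate}, so those computations can be imitated verbatim rather than re-derived.
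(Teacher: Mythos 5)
Your proposal takes a genuinely different route from the paper, and the difference is not a matter of bookkeeping: it introduces a real gap for small $\nu$.

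The paper does \emph{not} substitute the Corollary~\ref{structure of b} approximation for $b$ at the outset. Instead it first manipulates the combination
$\partial_\alpha D_t\theta\,\partial_t\bar u-\partial_\alpha D_t^2\theta\,\bar u$
using Lemma~\ref{transition-lemma} (replacing $\tfrac{it}{2\alpha}\partial_\alpha D_t\theta$ by $-\partial_t\partial_\alpha D_t\theta$ up to vector-field errors) to consolidate it into a \emph{single} integrand $-2\,\partial_\alpha D_t^2\theta\,\bar u$, and only then integrates by parts (picking up $\partial_\alpha\bar u\approx-\tfrac{it^2}{4\alpha^2}\bar u$), substitutes Corollary~\ref{structure of b}, and — this is the decisive step — converts $\tfrac{t}{2\alpha}D_t^2\bar\zeta$ into $\partial_\alpha D_t\bar\zeta$ via the transition of derivatives \emph{before} freezing at $\alpha=\nu t$. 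As a result the frozen coefficient is $D_t^2\zeta(\nu t,t)\,\partial_\alpha D_t\bar\zeta(\nu t,t)=O(\epsilon^2 t^{-1})$ with no inverse power of $\nu$, and the $O(\epsilon t^{-1/20+\delta_0})$ discrepancy between $\int w\bar u$ and $\tilde E(t)$ produces only $O(\epsilon^3 t^{-21/20+\delta_0})$. The Remark immediately after the lemma explains exactly why this is necessary.

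Your approach freezes $b\approx -2\nu|D_t^2\zeta(\nu t,t)|^2$ first and then tries to recognize $\tilde E(t)$ inside $\int(\partial_\alpha D_t\theta\,\partial_t\bar u-\partial_\alpha D_t^2\theta\,\bar u)$ by integrating by parts, which produces a prefactor $\tfrac{it^2}{4\alpha^2}$ that must itself be frozen at $\alpha=\nu t$ to extract the constant $-\tfrac{1}{2\nu^2}$. This is where the argument breaks. Several of the quantitative claims supporting it are wrong in ways that cannot be repaired by cosmetic adjustments:
\begin{itemize}
\item The bound $\|u\|_{L^2}+\|\partial_t u\|_{L^2}\le Ct^{1/4}$ is false: since $\partial_t\bar u=e^{it^2/4\alpha}\partial_t\chi+\tfrac{it}{2\alpha}\bar u$ and $\alpha\approx\nu t$ on the support, one has $\|\partial_t u\|_{L^2}\sim\nu^{-1}t^{1/4}$, which can be as large as $t^{1/5}t^{1/4}$. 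This $\nu^{-1}$ is exactly what must be tracked.
\item The claimed $\bigl||D_t^2\zeta(\alpha,t)|^2-|D_t^2\zeta(\nu t,t)|^2\bigr|=O(\epsilon^2 t^{-1})$ on $\mathrm{supp}\,u$ is off by $\sqrt t$: with $\|D_t^2\zeta\|_\infty,\|\partial_\alpha D_t^2\zeta\|_\infty\lesssim\epsilon t^{-1/2}$ and $|\alpha-\nu t|\lesssim\sqrt t$, the mean-value theorem gives $O(\epsilon^2 t^{-1/2})$, not $O(\epsilon^2 t^{-1})$.
\item Most seriously, the error from freezing $\tfrac{it^2}{4\alpha^2}\to\tfrac{i}{4\nu^2}$ is, by Cauchy--Schwarz with the $(\alpha-\nu t)$ weight, of size $\tfrac{1}{\nu^3 t}\cdot\|D_t\theta\|_{L^2}\cdot\tfrac{t^{3/4}}{\nu}\sim\tfrac{\epsilon}{\nu^4 t^{1/4}}$. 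For $\nu\sim t^{-1/5}$ this is $\epsilon\, t^{11/20}$, which \emph{grows}; even after multiplying by the prefactor $-2\nu|D_t^2\zeta(\nu t,t)|^2\lesssim\nu\epsilon^2 t^{-1}$ the contribution is of order $\epsilon^3 t^{-9/20}\gg\epsilon^3 t^{-1-\delta}$.
\end{itemize}
So the closing step (``the main contributions combine so that the integral equals, modulo $O(\epsilon t^{-1-\delta})$, a multiple of $\tilde E(t)$'') does not hold as stated. The integral $\int(\partial_\alpha D_t\theta\,\partial_t\bar u-\partial_\alpha D_t^2\theta\,\bar u)$ scales like $\nu^{-2}\tilde E(t)$, so any strategy that reconstitutes $\tilde E(t)$ from it by pointwise freezing of $\tfrac{1}{\alpha^2}$ will pay the $\nu^{-1}$ penalty the paper is at pains to avoid. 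You also leave the constant unverified (the claimed $-\tfrac{1}{2\nu^2}$ does not match the naive $-\tfrac{i}{4\nu^2}$ one gets from the replacement $\partial_\alpha\to\tfrac{it^2}{4\alpha^2}$). To fix the argument you would need to imitate the paper's order of operations: first consolidate the two terms into one integral against $\bar u$ via the Transition-of-Derivatives identity, then substitute for $b$, and trade the $\tfrac{t}{\alpha}$ weight for a derivative on $D_t\bar\zeta$ rather than freezing it at $\nu t$.
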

\begin{proof}
    We have
    \[
    \partial_t\bar{u}=e^{\frac{it^2}{4\alpha}}\partial_t\chi+\frac{it}{2\alpha}\bar{u}
    \]
    which implies
    \begin{align*}
        \partial_\alpha D_t\theta\partial_t\bar{u}
        &=e^{\frac{it^2}{4\alpha}}\partial_t\chi\partial_\alpha D_t\theta
        +\frac{it}{2\alpha}\partial_\alpha D_t\theta\bar{u},\\
        &=-\frac{\nu+\frac{\alpha}{t}}{2\sqrt{t}}e^{\frac{it^2}{4\alpha}}\chi'\partial_\alpha D_t\theta-\partial_t\partial_\alpha D_t\theta\bar{u}+\frac{1}{\alpha}\Omega_0(\partial_\alpha D_t\theta)\bar{u}.
    \end{align*}
    Note that 
    \[
    \partial_t\partial_\alpha D_t\theta=\partial_\alpha D_t^2\theta-\partial_\alpha(b\partial_\alpha D_t\theta)
    \]
    and we obtain
    \begin{align*}
        \partial_\alpha D_t\theta\partial_t\bar{u}-\partial_\alpha D_t^2\theta\bar{u}
        &=-2\partial_\alpha D_t^2\theta\bar{u}+\partial_\alpha(b\partial_\alpha D_t\theta)\bar{u}+\frac{1}{\alpha}\Omega_0(\partial_\alpha D_t\theta)\bar{u}-e^{\frac{it^2}{4\alpha}}\partial_t\chi\partial_\alpha D_t\theta,\\
        &=-2\partial_\alpha D_t^2\theta\bar{u}-\frac{it}{2\alpha}e^{\frac{it^2}{4\alpha}}\partial_t\chi\partial_\alpha\theta+O(\epsilon t^{-\frac{1}{2}-\delta})_{L^2}
    \end{align*}
    which implies
    \begin{align*}
        I_3
        &=-2\int b\partial_\alpha D_t^2\theta\bar{u}-\int b\frac{it}{2\alpha}e^{\frac{it^2}{4\alpha}}\partial_t\chi\partial_\alpha\theta+O(\epsilon^3t^{-1-\delta}),\\
        &=-2\int b\partial_\alpha D_t^2\theta\bar{u}+i\int|D_t^2\zeta|^2e^{\frac{it^2}{4\alpha}}\partial_t\chi\partial_\alpha\theta+O(\epsilon^3t^{-1-\delta}),\\
        &=-2\int b\partial_\alpha D_t^2\theta\bar{u}+O(\epsilon^3t^{-1-\delta}).
    \end{align*}
    We have used Corollary \ref{structure of b} and
    \[
    \norm{\partial_t\chi}_{L^2}\le C\norm{\left(\nu+\frac{\alpha}{t}\right)\partial_\alpha\chi}_{L^2}\le Ct^{-\frac{1}{20}}
    \]
    here. Integration by parts and observing that
    \[
    \partial_\alpha\bar{u}=\partial_\alpha(e^{\frac{it^2}{4\alpha}}\chi)=e^{\frac{it^2}{4\alpha}}\partial_\alpha\chi-\frac{it^2}{4\alpha^2}\bar{u}
    \]
    yields
    \begin{align*}
         I_3
         &=2\int b_\alpha D_t^2\theta\bar{u}+2\int
         bD_t^2\theta\partial_\alpha\bar{u}+O(\epsilon^3t^{-1-\delta}),\\
         &=-2\int \frac{it^2}{4\alpha^2}bw\bar{u}+2\int
         bD_t^2\theta e^{\frac{it^2}{4\alpha}}\partial_\alpha\chi+O(\epsilon^3t^{-1-\delta}),\\
         &=-2\int\frac{it^2}{4\alpha^2}bw\bar{u}+O(\epsilon^3t^{-1-\delta})
    \end{align*}
    since $\norm{b_\alpha}_{L^\infty}\le C\epsilon^2t^{-1-\delta}$. Apply Corollary \ref{structure of b} and define $F=e^{\frac{it^2}{4\alpha}}\partial_\alpha D_t^2\zeta$ to get
    \begin{align*}
        \int\frac{it^2}{4\alpha^2}bw\bar{u}
        &=-\int\frac{it}{2\alpha}|D_t^2\zeta|^2w\bar{u}+O(\epsilon^3t^{-1-\delta}),\\
        &=-i\int e^{-\frac{it^2}{4\alpha}}\partial_\alpha D_t\bar{\zeta}\frac{4\alpha^2}{it^2}Fw\bar{u}+O(\epsilon^3t^{-1-\delta}),\\
        &=-i\int\left(\frac{4\alpha^2}{it^2}F(\alpha)-\frac{4(\nu t)^2}{it^2}F(\nu t)\right)e^{-\frac{it^2}{4\alpha}}\partial_\alpha D_t\bar{\zeta}w\bar{u}\\
        &\ \ \ -i\frac{4(\nu t)^2}{it^2}F(\nu t,t)\int e^{-\frac{it^2}{4\alpha}}\partial_\alpha D_t\bar{\zeta}w\bar{u}+O(\epsilon^3t^{-1-\delta}),\\
        &=I_{31}-ie^{\frac{it^2}{4(\nu t)}}D_t^2\zeta(\nu t,t)\int e^{-\frac{it^2}{4\alpha}}\partial_\alpha D_t\bar{\zeta}w\bar{u}.
    \end{align*}
    It's clear that
    \[
    |I_{31}|\le C\norm{\sqrt{t}\partial_\alpha \left(\frac{4\alpha^2}{it^2}F\right)}_{L^2}\norm{\partial_\alpha D_t\bar{\zeta}}_{L^\infty}\norm{w}_{L^\infty}\norm{\bar{u}}_{L^2}
    \le C\epsilon^3t^{\frac{1}{2}-\frac{4}{5}-1+\frac{1}{4}+\delta_0}
    \le C\epsilon^3t^{-1-\frac{1}{20}+\delta_0}.
    \]
    In addition, note that
    \begin{align*}
        ie^{\frac{it^2}{4(\nu t)}}D_t^2\zeta(\nu t,t)\int e^{-\frac{it^2}{4\alpha}}\partial_\alpha D_t\bar{\zeta}w\bar{u}
        &=ie^{\frac{it^2}{4(\nu t)}}D_t^2\zeta(\nu t,t)\int\frac{2\alpha}{it}\bar{F}w\bar{u}+O(\epsilon^3 t^{-1-\delta}),\\
        &=ie^{\frac{it^2}{4(\nu t)}}D_t^2\zeta(\nu t,t)\frac{2(\nu t)}{it}\bar{F}(\nu t)\int w\bar{u}\\
        &\ \ \ +ie^{\frac{it^2}{4(\nu t)}}D_t^2\zeta(\nu t,t)\int\left(\frac{2\alpha}{it}\bar{F}-\frac{2(\nu t)}{it}\bar{F}(\nu t)\right)w\bar{u}+O(\epsilon^3t^{-1-\delta}),\\
        &=\frac{i|D_t^2\zeta|^2}{2\nu}\int w\bar{u}+I_{32}+O(\epsilon^3t^{-1-\delta}).
    \end{align*}
    Still, there holds
    \[
    |I_{32}|\le C\norm{D_t^2\zeta}_{L^\infty}\norm{\sqrt{t}\partial_\alpha\left(\frac{2\alpha}{it}F\right)}_{L^2}\norm{w}_{L^\infty}\norm{u}_{L^2}
    \le C\epsilon^3t^{-1-\frac{1}{20}+\delta_0}.
    \]
    Finally, we have
    \begin{align*}
        I_3&=2iD_t^2\zeta(\nu t,t)\frac{2(\nu t)}{it}\partial_\alpha D_t^2\bar{\zeta}(\nu t,t)\int w\bar{u}+O(\epsilon^3t^{-1-\delta})\\
           &=2iD_t^2\zeta(\nu t,t)\partial_\alpha D_t\bar{\zeta}(\nu t,t)\int w\bar{u}+O(\epsilon^3t^{-1-\delta})\\
           &=2iD_t^2\zeta(\nu t,t)\partial_\alpha D_t\bar{\zeta}(\nu t,t)\tilde{E}(t)+O(\epsilon^3t^{-1-\delta})\\
           &=2i\frac{|D_t^2\zeta|^2}{\nu}\tilde{E}(t)+O(\epsilon^3t^{-1-\delta}).
    \end{align*}
    This completes the proof.
\end{proof}

\begin{rem}
    The last part of the proof aims to avoid the appearance of an uncontrollable term. To be specific, if we write
    \[
    I_3=\frac{2i|D_t^2\zeta|^2}{\nu}\int w\bar{u}+O(\epsilon^3t^{-1-\delta})
    \]
    and substitute
    \[
    \tilde{E}(t)=\int w\bar{u}+O(\epsilon t^{-\frac{1}{20}+\delta_0})
    \]
    into it, then we must deal with the term $\nu^{-1}|D_t^2\zeta|^2O(\epsilon t^{-\frac{1}{20}+\delta_0})$, which is hard to analyze since we only know that $\nu^{-1}\le t^{1/5}$. By absorbing $2\nu t/it$ into $\partial_\alpha D_t^2\bar{\zeta}(\nu t,t)$, one can avoid this discussion. Such a trick will be applied in the proof of Lemma \ref{estimate for I_1} too, and we will not write it out again. Instead, we will simply use the following argument:
    \[
    \frac{i|D_t^2\zeta|^2}{\nu}\int w\bar{u}+O(\epsilon^3t^{-1-\delta})
    =\frac{i|D_t^2\zeta|^2}{\nu}\tilde{E(t)}+O(\epsilon^3t^{-1-\delta}).
    \]
\end{rem}

\begin{lemma}\label{estimate for I_1}
	\begin{equation}\label{structure of I_1}
		I_1=\int(D_tG+R')\bar{u}=\frac{2i|D_t^2\zeta(\nu t,t)|^2}{\nu}\tilde{E}(t)+O(\epsilon^3t^{-1-\delta})
	\end{equation}
	for some $\delta>0$.
\end{lemma}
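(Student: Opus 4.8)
\textbf{Plan for the proof of Lemma \ref{estimate for I_1}.} The term $I_1 = \int (D_tG + R')\bar u$ splits naturally into the contribution of $R'$ and the contribution of $D_tG$. For $R' = i(A-1)\partial_\alpha D_t\theta - i(\frac{a_t}{a}\circ\kappa^{-1})A\partial_\alpha\theta$, I would simply estimate $\norm{R'}_{L^2}$ using Proposition \ref{norms of A-1} and Proposition \ref{norm of at/a} together with the bootstrap bounds on $\partial_\alpha D_t\theta$ and $\partial_\alpha\theta$; since $\norm{A-1}_{L^\infty} + \norm{\frac{a_t}{a}\circ\kappa^{-1}}_{L^\infty} \le C\epsilon^2 t^{-1-\delta}$, one gets $\norm{R'\bar u}_{L^1}\le \norm{R'}_{L^2}\norm{u}_{L^2}\le C\epsilon^3 t^{-1-\delta+1/4}$; a slightly more careful split (exploiting that $A-1$ and $\frac{a_t}{a}$ have better decay for $|\alpha|\gtrsim t^{4/5}$, see Remark \ref{decay for A-1 near t}) recovers the clean $O(\epsilon^3 t^{-1-\delta})$ bound. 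So the substance is entirely in $\int D_tG\,\bar u$.

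For $\int D_tG\,\bar u$, the strategy mirrors the treatment of $E_s^\sigma$ in Proposition \ref{estimate of dE_{1/2}/dt}, but now with $D_tG$ tested against $\bar u$ (localized near $\alpha = \nu t$) rather than against $\partial_\alpha^s D_t^2\bar\zeta$. I would first expand $D_tG = D_tG_1 + D_tG_2$ using the formulas already recorded: the quartic pieces of $D_tG_2$ (those with three factors of $D_t\zeta$ and one of $\zeta-\bar\zeta$) decay like $t^{-5/4}$ in $L^2$, hence contribute $O(\epsilon^4 t^{-1})$ against $\bar u$; and the genuinely cubic pieces are the two integrals $\frac{2}{\pi i}\int\frac{(D_t\zeta(\alpha)-D_t\zeta(\beta))(D_t^2\zeta(\alpha)-D_t^2\zeta(\beta))}{(\zeta(\alpha)-\zeta(\beta))^2}(\zeta-\bar\zeta)_\beta\,d\beta$ and $\frac{1}{\pi i}\int(\cdots)^2(D_t\zeta - D_t\bar\zeta)_\beta\,d\beta$, together with the analogous pieces of $D_tG_1$. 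On the support of $u$ we have $t^{4/5}\lesssim|\alpha|\lesssim t^{6/5}$, so I would apply the Localization Lemma \ref{decouple} (Case I: $f=(D_t\zeta)_H$ and $D_t^2\zeta$ almost anti-holomorphic, $h=\bar\zeta_\alpha-1$ almost holomorphic) and Lemma \ref{decouple-2} to replace each nonlocal cubic expression by its local main term plus an $L^2$ error of size $\epsilon^3 t^{-5/4-\delta}$, which against $\bar u$ gives $O(\epsilon^3 t^{-1-\delta})$. By the computation already carried out for $1_{S(t)}I$ in \eqref{structure of I}, the surviving local main term of the $G_2$-part is $-\frac{it}{\alpha}D_t^2\zeta\,|\zeta_\alpha-1|^2$ (up to the frequency cutoff and the substitution $D_t\zeta\leftrightarrow Q_\alpha$ via Proposition \ref{good-replace}), while the $G_1$-part's main term has the form handled in the $\tilde G_{12}^*$ discussion. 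Rewriting $|\zeta_\alpha-1|^2 = |D_t^2\zeta|^2 + O(\epsilon^3 t^{-1})$ (from $\zeta_\alpha - 1 = iD_t^2\bar\zeta - (A-1)\bar\zeta_\alpha$) and using $D_t\theta = 2D_t\zeta + O(\epsilon^2 t^{-1/2})_{H^s}$ (Lemma \ref{remainder of Dt theta}), this local term becomes, after freezing $\alpha$ at $\nu t$ with a Hardy-inequality error as in Theorem \ref{main theorem of decay estimate}, a multiple of $\frac{i|D_t^2\zeta(\nu t,t)|^2}{\nu}\int w\bar u$; tracking the constant gives the coefficient $2$ in \eqref{structure of I_1}, and finally $\int w\bar u = \tilde E(t) + O(\epsilon t^{-1/20+\delta_0})$ converts this to $\frac{2i|D_t^2\zeta(\nu t,t)|^2}{\nu}\tilde E(t)$, with the last error absorbed exactly as explained in the remark following Lemma \ref{estimate for I_3} (multiplying by $\nu^{-1}|D_t^2\zeta|^2 \lesssim \epsilon^2 t^{1/5}$ is harmless once we argue at the level of $\tilde E$ rather than $\int w\bar u$).

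\textbf{Main obstacle.} The delicate point is the same one that recurs throughout Chapter 5: when the highest derivative (here one $\partial_\alpha$ coming from $\partial_\alpha D_t\theta$ inside $D_tw$, or the $\partial_\beta$ hitting $\zeta-\bar\zeta$ after integration by parts in $D_tG_2$) lands on a factor whose $L_0$-vector field we cannot control, the naive application of the Localization Lemma fails. I expect to have to run the Transition-of-Derivatives argument of Remark \ref{localization lemma for highest derivative} — writing $\partial_\alpha f = \frac{it^2}{4\alpha^2}f + \frac1\alpha L_0 f - \frac{t}{2\alpha^2}\Omega_0 f$ to move the dangerous derivative onto a factor with a good vector-field bound — before the cancellation structure of the main term can be exposed. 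A secondary nuisance is bookkeeping the numerous $O(\epsilon^3 t^{-1-\delta})$ and $O(\epsilon^4 t^{-1-\delta})$ errors generated by the frequency splitting $D_t\zeta = (D_t\zeta)_L + (D_t\zeta)_H$ and by the substitutions $D_t\zeta \to Q_\alpha$, $D_t^2\zeta \to -\tfrac12\theta_\alpha$; each is routine given Proposition \ref{good-replace}, Lemma \ref{Bernstein}, and Lemma \ref{norms of b}, but they must all be collected to land on the stated remainder. Since, as the authors note, every difficulty of $I_1$ already appears in Proposition \ref{formula of dE_s/dt} and Proposition \ref{estimate of dE_{1/2}/dt}, I would present the proof by citing those arguments wherever they apply and writing out only the $\bar u$-pairing and the freezing-at-$\nu t$ step in detail.
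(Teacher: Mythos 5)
Your overall strategy matches the paper's: dispose of $R'$, decompose $D_tG$ into the cubic pieces of $D_tG_2$ (treated via the localization/transition machinery culminating in \eqref{structure of I}) and the $D_tG_1$ pieces (handled as in the $\tilde G_{12}^*$ discussion of Proposition \ref{estimate of dE_{1/2}/dt}), then freeze at $\alpha = \nu t$ with a Hardy-inequality error and convert $\int w\bar u$ to $\tilde E(t)$ using the absorption device from the remark after Lemma \ref{estimate for I_3}. That is exactly the argument the paper runs.

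One small but real misstep is in your treatment of $R'$. You correctly note that the naive bound $\norm{R'}_{L^2}\norm{u}_{L^2} \lesssim \epsilon^3 t^{-3/4-\delta}$ is short by a factor of $t^{1/4}$, but the repair you point to — the improved decay of $A-1$ and $\frac{a_t}{a}$ on $|\alpha|\gtrsim t^{4/5}$ via Remark \ref{decay for A-1 near t} — only covers the $(A-1)\partial_\alpha D_t\theta$ piece; no analogous improved bound for $\frac{a_t}{a}\circ\kappa^{-1}$ near $|\alpha|\sim t$ is recorded (Proposition \ref{norm of at/a} gives only the global $t^{-1-\delta}$ decay). The paper instead handles the $\frac{a_t}{a}$ term by the $L^\infty$-$L^1$ pairing: $\bigl|\int (\frac{a_t}{a}\circ\kappa^{-1})A\partial_\alpha\theta\,\bar u\bigr| \le C\sqrt{t}\,\norm{\frac{a_t}{a}\circ\kappa^{-1}}_{L^\infty}\norm{A\partial_\alpha\theta}_{L^\infty}$, which is $\lesssim \epsilon^3 t^{-1-\delta}$ once you invoke $\norm{u}_{L^1}\sim\sqrt t$ and the decay $\norm{\partial_\alpha\theta}_{L^\infty}\lesssim\epsilon t^{-1/2}$ from the bootstrap. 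You should either use that pairing or first verify the improved $\frac{a_t}{a}$ decay near $|\alpha|\sim t$ (plausible by repeating the proof of Proposition \ref{norms of A-1}, but it is extra work). Finally, the transition-of-derivatives worry you highlight as the ``main obstacle'' is not actually present for this lemma, which is the zeroth-order version ($w=D_t^2\theta$, no $\partial_\alpha^k$); it becomes relevant only when extending to Corollary \ref{sec 7:decay bound}, where the paper itself explicitly calls out the extra commutator $\int[D_t^2-iA\partial_\alpha,\partial_\alpha^k]\theta\,\bar u$ as the new piece to handle.
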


\begin{proof}
	To begin with, we assert that
	\begin{equation}\label{estimate for R'}
		\left|\int R'\bar{u}\right|\le C\epsilon^3t^{-1-\delta}.
	\end{equation}
	To prove (\ref{estimate for R'}), first recall that
	\[
	R'=-i\left(\frac{a_t}{a}\circ\kappa^{-1}\right)A\partial_\alpha\theta+i(A-1)\partial_\alpha D_t\theta
	\]
	and $\norm{\left(\frac{a_t}{a}\circ\kappa^{-1}\right)}_{L^\infty}\le C\epsilon^2t^{-1-\delta}$, which implies
    \[
    \left|-i\int\left(\frac{a_t}{a}\circ\kappa^{-1}\right)A\partial_\alpha\theta\bar{u}\right|\le C\sqrt{t}\norm{\frac{a_t}{a}\circ\kappa^{-1}}_{L^\infty}\norm{A\partial_\alpha\theta}_{L^\infty}\le C\epsilon^3t^{-1-\delta}.
    \]
    Also, we have (use Remark \ref{decay for A-1 near t})
    \[
    \left|\int i(A-1)\partial_\alpha D_t\theta\bar{u}\right|
    \le C\epsilon^3t^{-\frac{5}{4}-\frac{1}{20}+\delta_0}\ln t\norm{\bar{u}}_{L^2}
    \le C\epsilon^3t^{-1-\frac{1}{20}+\delta_0}\ln t.
    \]
 
	Next we consider the term
	\[
	\int D_tG\bar{u}.
	\]
	We begin with a part of $D_tG$, namely,
 \begin{align*}
      I
      &=\frac{2}{\pi i}\int\frac{(D_t\zeta(\alpha)-D_t\zeta(\beta))(D_t^2\zeta(\alpha)-D_t^2\zeta(\beta))}{(\zeta(\alpha)-\zeta(\beta))^2}(\zeta-\bar{\zeta})_\beta d\beta\\
      &\ \  \ +\frac{1}{\pi i}\int\left(\frac{D_t\zeta(\alpha)-D_t\zeta(\beta)}{\zeta(\alpha)-\zeta(\beta)}\right)^2(D_t\zeta-D_t\bar{\zeta})_\beta d\beta.
 \end{align*}
 Recall (\ref{structure of I}) and we get
        \[
        \int I\bar{u}=-\int\frac{t^2}{\alpha^2}(D_t\zeta)|D_t^2\zeta|^2\bar{u}+O(\epsilon^3t^{-1-\delta}).
        \]
        Repeat the technique that we used in Lemma \ref{estimate for I_3} (namely, write $\frac{t^2}{\alpha^2}|D_t^2\zeta|^2=4|D_t^3\zeta|^2+error=4|\partial_\alpha D_t\zeta|^2+error$, then multiply $\frac{2\alpha}{it}$ to each of them) to obtain
        \begin{align*}
            \int I\bar{u}
            &=-\frac{|D_t^2\zeta|^2}{i\nu}\int w\bar{u}+O(\epsilon^3t^{-1-\delta}),\\
            &=\frac{i|D_t^2\zeta|^2}{\nu}\tilde{E}(t)+O(\epsilon^2t^{-1-\delta}).
        \end{align*}

    We still need to consider another part of $D_tG$, i.e.
    \begin{equation}\label{another part of DtG}
        \begin{aligned}
            &\frac{1}{\pi i}D_t\int\frac{(f(\alpha)-f(\beta))(D_t\bar{\zeta}(\alpha)-D_t\bar{\zeta}(\beta))}{(\zeta(\alpha)-\zeta(\beta))^2}\partial_\beta D_t\zeta d\beta\\
            &\ \ \ -\frac{1}{\pi i}D_t\int\frac{(\bar{f}(\alpha)-\bar{f}(\beta))(D_t\bar{\zeta}(\alpha)-D_t\bar{\zeta}(\beta))}{(\zeta(\alpha)-\zeta(\beta))^2}\partial_\beta D_t\zeta d\beta
        \end{aligned}
    \end{equation}
    where $f(\alpha)=\zeta-\alpha$.
    
    We only consider
    \[
    \tilde{G}_{12}^*=\frac{1}{\pi i}\int\frac{(\bar{f}(\alpha)-\bar{f}(\beta))(D_t^2\bar{\zeta}(\alpha)-D_t^2\bar{\zeta}(\beta))}{(\zeta(\alpha)-\zeta(\beta))^2}\partial_\beta D_t\zeta d\beta
    \]
    since others are similar or easier. 
    
    Claim that
    \begin{equation}
        \left|\int\tilde{G}_{12}^*\bar{u}\right|\le C\epsilon^3t^{-1-\delta}
    \end{equation}
    for some $\delta>0$.
    
    For $\delta>0$, define
    \[
    f_L=P_{<t^{-1/2-\delta}}f,\ f_H=f-f_L.
    \]
    According to the relevant conclusions in Chapter 5, we get
    \begin{align*}
        \int \tilde{G}_{12}^*\bar{u}
        &=-\int\frac{it^2}{2\alpha^2}(D_t\bar{\zeta})_HD_t^2\bar{\zeta}(\zeta_\alpha-1) e^{\frac{it^2}{4\alpha}}\chi+O(\epsilon^3t^{-1-\delta}),\\
        &=-\int\frac{t}{\alpha}(D_t^2\bar{\zeta})^2(\zeta_\alpha-1)e^{\frac{it^2}{4\alpha}}\chi+O(\epsilon^3t^{-1-\delta}),\\
        &=-\int\frac{t}{\alpha}e^{\frac{it^2}{2\alpha}}U^2V\chi+O(\epsilon^3t^{-1-\delta}),\\
        &=C\int e^{\frac{it^2}{2\alpha}}\partial_\alpha\left(\frac{\alpha}{t}U^2V\chi\right)+O(\epsilon^3t^{-1-\delta}),\\
        &=O(\epsilon^3t^{-1-\delta})
    \end{align*}
    as desired. Here
    \[
    U=e^{-\frac{it^2}{4\alpha}}D_t^2\bar{\zeta},\ V=e^{\frac{it^2}{4\alpha}}(\zeta_\alpha-1).
    \]
    This completes the proof of (\ref{structure of I_1}).
\end{proof}

Combining Lemma \ref{estimate for I_2}, \ref{estimate for I_3} and \ref{estimate for I_1} yields
\begin{thm}
    There exists $\delta>0$ such that
    \begin{equation}
        \frac{d}{dt}\tilde{E}(t)=\frac{3i|D_t^2\zeta(\nu t,t)|^2}{\nu}\tilde{E}(t)+O(\epsilon^3(1+t)^{-1-\delta}).
    \end{equation}
\end{thm}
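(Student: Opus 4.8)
The plan is to decompose $\frac{d}{dt}\tilde E(t)$ into the three pieces $I_1,I_2,I_3$ already introduced just before the statement, namely
\[
\frac{d}{dt}\tilde E(t)=\int(D_tG+R')\bar u
-\int\overline{(\partial_t^2+i\partial_\alpha)u}\,D_t\theta
+\int b\bigl(\partial_\alpha D_t\theta\,\partial_t\bar u-\partial_\alpha D_t^2\theta\,\bar u\bigr)
=:I_1+I_2+I_3,
\]
and then simply invoke the three lemmas that have been proved for exactly these terms: Lemma~\ref{estimate for I_2} gives $I_2=O(\epsilon t^{-1-\delta})$, Lemma~\ref{estimate for I_3} gives $I_3=\dfrac{2i|D_t^2\zeta(\nu t,t)|^2}{\nu}\tilde E(t)+O(\epsilon^3 t^{-1-\delta})$, and Lemma~\ref{estimate for I_1} gives $I_1=\dfrac{2i|D_t^2\zeta(\nu t,t)|^2}{\nu}\tilde E(t)+O(\epsilon^3 t^{-1-\delta})$. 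Wait — this would give a coefficient $\frac{4i}{\nu}|D_t^2\zeta|^2$, so first I would re-examine the bookkeeping: the statement asserts the coefficient $\frac{3i}{\nu}|D_t^2\zeta(\nu t,t)|^2$, so exactly one of $I_1,I_3$ should contribute $\frac{2i}{\nu}|D_t^2\zeta|^2\tilde E$ and the other $\frac{i}{\nu}|D_t^2\zeta|^2\tilde E$. Tracing through Lemma~\ref{estimate for I_3} carefully, the chain $I_3=-2\int\frac{it^2}{4\alpha^2}bw\bar u+O(\cdots)$ followed by Corollary~\ref{structure of b} (so $b\sim-\frac{2\alpha}{t}|D_t^2\zeta|^2$) and the Transition-of-Derivatives identities gives a single factor $\frac{i|D_t^2\zeta|^2}{\nu}\tilde E$ from $\int w\bar u$, so $I_3$ contributes $\frac{i|D_t^2\zeta|^2}{\nu}\tilde E$ (despite the "$2i$" written in the intermediate display, which I expect double-counts; I will state $I_3$'s contribution consistently with the factor-$3$ conclusion). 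Then $I_1$ supplies the remaining $\frac{2i|D_t^2\zeta|^2}{\nu}\tilde E$ as in Lemma~\ref{estimate for I_1}, and summing yields $\frac{3i|D_t^2\zeta(\nu t,t)|^2}{\nu}\tilde E(t)+O(\epsilon^3 t^{-1-\delta})$.

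Concretely, the steps in order are: (1) Record the identity $\frac{d}{dt}\tilde E=I_1+I_2+I_3$, which is the direct computation carried out in the paragraph preceding the theorem — differentiate $\tilde E(t)=\int w\bar u-D_t\theta\,\partial_t\bar u$, use the evolution equation $D_tw-i\partial_\alpha D_t\theta=D_tG+R'$ with $R'=-i(\tfrac{a_t}{a}\circ\kappa^{-1})A\partial_\alpha\theta+i(A-1)\partial_\alpha D_t\theta$, and integrate by parts in $\alpha$ to move the two $b\partial_\alpha$ terms and collect. (2) Cite Lemma~\ref{estimate for I_2} for $I_2$. (3) Cite Lemma~\ref{estimate for I_3} for $I_3$, keeping the $\tilde E(t)$ coefficient normalized so that the total matches $3i/\nu$. (4) Cite Lemma~\ref{estimate for I_1} for $I_1$. (5) Add the three estimates; the error terms are all $O(\epsilon^3(1+t)^{-1-\delta})$ (absorbing the $t^{-1-\delta}$-type remainders and the subleading $t^{-1-1/20+\delta_0}\ln t$ terms into a single $t^{-1-\delta}$ for a slightly smaller $\delta>0$), and the main terms add to $\frac{3i|D_t^2\zeta(\nu t,t)|^2}{\nu}\tilde E(t)$. (6) Note that the stated estimate is trivial for bounded $t$, so one restricts to $t\ge 1$, and recall $\nu\in[t^{-1/5},t^{1/5}]$ has been fixed throughout this section so all the "$|\alpha|\sim t$" localizations used in Lemmas~\ref{estimate for I_1}--\ref{estimate for I_3} apply on the support of $u(\cdot,t)=\chi\bigl(\frac{\alpha-\nu t}{\sqrt t}\bigr)e^{-it^2/4\alpha}$.

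The only genuine subtlety — hence the main point to be careful about rather than a real obstacle — is reconciling the numerical coefficient: the intermediate display in Lemma~\ref{estimate for I_3}'s proof carries a "$2i$", but the end-of-proof line there and the final theorem are consistent only if $I_3$ contributes $\frac{i}{\nu}|D_t^2\zeta|^2\tilde E$; I would double-check the factor of $2$ coming from $D_t\bar\theta_s=2D_t\bar\zeta+\cdots$ versus the factor from $\tilde E=\int w\bar u+O(\cdots)$ and present the clean accounting. Everything else is mechanical: the three lemmas have already done all the analytic work (the localization lemmas, the Transition-of-Derivatives identities, the structure of $b$ from Corollary~\ref{structure of b}, the decay of $\frac{a_t}{a}\circ\kappa^{-1}$ and of $A-1$ near $\alpha\sim t$, and the bootstrap bounds on the vector fields). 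So the proof of this theorem is a one-line assembly: $\frac{d}{dt}\tilde E(t)=I_1+I_2+I_3=\frac{3i|D_t^2\zeta(\nu t,t)|^2}{\nu}\tilde E(t)+O(\epsilon^3(1+t)^{-1-\delta})$, which is exactly the claim.
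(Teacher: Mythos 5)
Correct, and this is exactly the paper's own (one-line) argument: the theorem is obtained by summing Lemmas~\ref{estimate for I_2}, \ref{estimate for I_3}, and \ref{estimate for I_1} applied to the decomposition $\frac{d}{dt}\tilde E=I_1+I_2+I_3$ already recorded just before the statement. Your bookkeeping instinct is right---tracing the chain $I_3=-2\int\frac{it^2}{4\alpha^2}bw\bar u+O(\cdots)$ and $\int\frac{it^2}{4\alpha^2}bw\bar u=-\frac{i|D_t^2\zeta|^2}{2\nu}\int w\bar u+O(\cdots)$ yields $I_3=\frac{i}{\nu}|D_t^2\zeta|^2\tilde E+O(\cdots)$ exactly as the statement of Lemma~\ref{estimate for I_3} asserts, so the ``$2i$'' in the final display block of that lemma's proof is an arithmetic typo and the total coefficient is $\frac{3i}{\nu}$ as claimed.
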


\begin{cor}\label{Decay}
    \begin{equation}\label{decay}
        \norm{\tilde{E}(t)}_{L^\infty}\le\norm{\tilde{E}(0)}_{L^\infty}+\frac{C\epsilon^3}{\delta}(1-(1+t)^{-\delta}).
    \end{equation}
\end{cor}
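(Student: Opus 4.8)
The plan is to derive \eqref{decay} by integrating the differential equation from the preceding theorem, namely
\[
\frac{d}{dt}\tilde{E}(t)=\frac{3i|D_t^2\zeta(\nu t,t)|^2}{\nu}\tilde{E}(t)+O(\epsilon^3(1+t)^{-1-\delta}),
\]
treating the first term as an imaginary (hence modulus-preserving) rotation and the second as an integrable forcing. First I would introduce the integrating factor $\mu(t)=\exp\!\left(-\int_0^t \frac{3i|D_t^2\zeta(\nu s,s)|^2}{\nu}\,ds\right)$, which is well defined because by the bootstrap assumption \eqref{bootstrap-3} (together with Corollary \ref{decay for Dt zeta near t} and the fact that $\nu\in[t^{-1/5},t^{1/5}]$) the quantity $|D_t^2\zeta(\nu s,s)|^2/\nu$ is locally integrable, and since $|D_t^2\zeta(\nu s,s)|^2/\nu$ is real, $|\mu(t)|=1$ for all $t$. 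Then $\frac{d}{dt}(\mu(t)\tilde{E}(t))=\mu(t)\cdot O(\epsilon^3(1+t)^{-1-\delta})$, so that
\[
\mu(t)\tilde{E}(t)=\tilde{E}(0)+\int_0^t \mu(s)\,O(\epsilon^3(1+s)^{-1-\delta})\,ds.
\]

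Taking absolute values and using $|\mu(s)|=1$ gives
\[
|\tilde{E}(t)|\le |\tilde{E}(0)|+C\epsilon^3\int_0^t (1+s)^{-1-\delta}\,ds
\le |\tilde{E}(0)|+\frac{C\epsilon^3}{\delta}\bigl(1-(1+t)^{-\delta}\bigr),
\]
where the last elementary integral is $\int_0^t(1+s)^{-1-\delta}ds=\delta^{-1}(1-(1+t)^{-\delta})$. Since this bound is uniform in the parameter $\nu$ (the constant $C$ and the exponent $\delta$ coming from the preceding theorem do not depend on $\nu$, as noted in the remark following Theorem \ref{main theorem of decay estimate}), taking the supremum over $\nu\in[t^{-1/5},t^{1/5}]$ and over admissible $t$ on the left-hand side yields exactly \eqref{decay}. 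This also justifies reading $\|\tilde E(t)\|_{L^\infty}$ as the supremum over the relevant range of $\nu$.

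I do not expect any serious obstacle here: the statement is a routine Grönwall-type conclusion from the already-established ODE, and the only mild point to be careful about is that the coefficient $3i|D_t^2\zeta(\nu t,t)|^2/\nu$ is purely imaginary, so the homogeneous flow is an isometry on $\mathbb{C}$ and contributes no growth — this is precisely the structural gain that the "good structure" of the nonlinearity was designed to produce. One should also record that the argument is self-consistent with the bootstrap: the bound $\|\tilde E(t)\|_{L^\infty}\le C\epsilon^2$ that feeds back into \eqref{improve-3} follows since $|\tilde E(0)|\lesssim \epsilon^2$ (from the initial data assumptions and the definition of $\tilde E$) and $\epsilon^3/\delta\ll \epsilon^2$ for $\epsilon$ small. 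Combining Corollary \ref{Decay} with Theorem \ref{main theorem of decay estimate} then gives $|W(\nu t,t)|\lesssim \epsilon t^{-1/2}$ for $t^{4/5}\le |\alpha|\le t^{6/5}$, which together with Lemma \ref{bouns decay for small or big alpha} closes the decay estimate \eqref{improve-3}.
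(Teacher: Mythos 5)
Your proof is correct and follows exactly the same route as the paper: introduce the integrating factor $e^{-i\int_0^t f(s)\,ds}$ with $f(t)=3|D_t^2\zeta(\nu t,t)|^2/\nu$ real, observe that it has unit modulus, and integrate the forcing term $O(\epsilon^3(1+t)^{-1-\delta})$ to get the stated bound. The extra remarks on uniformity in $\nu$ and consistency with the bootstrap are fine but not part of the paper's (very terse) argument.
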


\begin{proof}
    Set $f(t)=\frac{3|D_t^2\zeta(\nu t,t)|^2}{\nu}\ge 0$ and we get (we still replace $t$ by $(1+t)$ here)
    \[
    \frac{d}{dt}\tilde{E}(t)=if(t)\tilde{E}(t)+O(\epsilon^3(1+t)^{-1-\delta})
    \]
    i.e.
    \[
    \left|\frac{d}{dt}\left(e^{-i\int_{0}^tf(s)ds}\tilde{E}(t)\right)\right|\le C\epsilon^3(1+t)^{-1-\delta}
    \]
    which implies (\ref{decay}).
\end{proof}

\begin{cor}
    For $t^{4/5}\le|\alpha|\le t^{6/5}$, there holds
    \begin{equation}
        \norm{D_t^2\theta(\alpha,t)}_{L^\infty}\le 4\epsilon t^{-\frac{1}{2}}.
    \end{equation}
\end{cor}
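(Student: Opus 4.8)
The plan is to combine the two ingredients already assembled in this section: the asymptotic identity for $\tilde{E}(t)$ from Theorem~\ref{main theorem of decay estimate} and the uniform bound from Corollary~\ref{Decay}. Recall that $\tilde{E}(t)$ depends on the parameter $\nu$; for fixed $t\ge 1$ and a given $\alpha$ with $t^{4/5}\le|\alpha|\le t^{6/5}$ we choose $\nu=\alpha/t$, so that $\nu\in[t^{-1/5},t^{1/5}]$ and $\nu t=\alpha$, placing us squarely in the range where both results apply. By Corollary~\ref{Decay} we have $\norm{\tilde{E}(t)}_{L^\infty}\le\norm{\tilde{E}(0)}_{L^\infty}+\frac{C\epsilon^3}{\delta}(1-(1+t)^{-\delta})\le C\epsilon^2$, using that $\tilde{E}(0)$ is controlled by the initial data (of size $\epsilon$, hence $\norm{\tilde E(0)}_{L^\infty}\le C\epsilon^2$ since $\tilde E$ is quadratic in the solution).

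Next I would feed this into Theorem~\ref{main theorem of decay estimate}, which states $\tilde{E}(t)=C\sqrt{t}\,W(\nu t,t)+O(\epsilon t^{-1/20+\delta_0})$ with $W(\alpha,t)=e^{it^2/4\alpha}w(\alpha,t)$ and $w=D_t^2\theta$. Since $|W(\alpha,t)|=|w(\alpha,t)|=|D_t^2\theta(\alpha,t)|$, solving for $W$ gives
\[
|D_t^2\theta(\alpha,t)|=\frac{1}{C\sqrt{t}}\bigl|\tilde{E}(t)-O(\epsilon t^{-1/20+\delta_0})\bigr|\le \frac{1}{C\sqrt{t}}\bigl(C\epsilon^2+C\epsilon t^{-1/20+\delta_0}\bigr)\le 4\epsilon t^{-1/2},
\]
for $\epsilon$ sufficiently small and $t\ge 1$ (the constant $4$ is not sharp; any fixed constant below the bootstrap threshold $100$ works, but $4$ suffices to improve the assumption). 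For $0\le t\le 1$ the bound is immediate from the bootstrap assumptions and local well-posedness, since all quantities are bounded on compact time intervals.

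The only genuine subtlety — and the main thing to be careful about — is the range of validity of $\nu$. Theorem~\ref{main theorem of decay estimate} requires $\nu\in[t^{-1/5},t^{1/5}]$, which under $\nu=\alpha/t$ corresponds exactly to $t^{4/5}\le|\alpha|\le t^{6/5}$, so this matches the hypothesis of the corollary precisely; there is no gap. One should also note that Corollary~\ref{Decay} gives a bound on $\norm{\tilde E(t)}_{L^\infty}$ that is uniform in the parameter $\nu$ (the constant $C$ in the error term of the ODE $\frac{d}{dt}\tilde E=\frac{3i|D_t^2\zeta(\nu t,t)|^2}{\nu}\tilde E+O(\epsilon^3(1+t)^{-1-\delta})$ is independent of $\nu$ on the relevant range, as checked in Lemmas~\ref{estimate for I_1}, \ref{estimate for I_2}, \ref{estimate for I_3}), so taking $\nu=\alpha/t$ pointwise in $\alpha$ and then the supremum over $\alpha$ is legitimate. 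Once these points are verified, the estimate follows by the elementary chain of inequalities above, and combined with Lemma~\ref{bouns decay for small or big alpha} (which handles $|\alpha|\ge t^{6/5}$ or $|\alpha|\le t^{4/5}$) this yields the full decay of $D_t^2\theta$, and hence — via Proposition~\ref{good-replace} and Lemma~\ref{structure of Dt sigma} — the improved decay estimates for $D_t^2\zeta$, $\partial_\alpha D_t\zeta$, and $\zeta_\alpha-1$ needed to close the bootstrap in Proposition~\ref{main-prop}.
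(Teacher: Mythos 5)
Your overall approach is exactly the paper's: combine the asymptotic identity $\tilde{E}(t)=C\sqrt{t}\,W(\nu t,t)+O(\epsilon t^{-1/20+\delta_0})$ from Theorem~\ref{main theorem of decay estimate} with the uniform bound from Corollary~\ref{Decay}, choosing $\nu=\alpha/t$ so that $\nu\in[t^{-1/5},t^{1/5}]$ exactly covers $t^{4/5}\le|\alpha|\le t^{6/5}$, and observe that the resulting bound is independent of $\nu$. Your observation that the error constants in Lemmas~\ref{estimate for I_1}--\ref{estimate for I_3} are uniform in $\nu$ over the relevant range is correct and worth noting.

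However, your justification of the bound $\norm{\tilde{E}(0)}_{L^\infty}\le C\epsilon^2$ is wrong, and it rests on a misreading of the structure of $\tilde E$. You assert $\tilde E$ is ``quadratic in the solution,'' but it is not: $\tilde E(t)=\int w\bar u - D_t\theta\,\partial_t\bar u\,d\alpha$ is a \emph{linear} pairing of the solution pair $(w,D_t\theta)=(D_t^2\theta,D_t\theta)$ against the fixed test function $u=\chi\big(\tfrac{\alpha-\nu t}{\sqrt t}\big)e^{-it^2/4\alpha}$ (which depends only on $\nu,t$, not on the solution). Consequently $\tilde E(0)=O(\epsilon)$, not $O(\epsilon^2)$. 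This mislabeling does not actually break the argument: from Corollary~\ref{Decay} one still gets $|\tilde E(t)|\le|\tilde E(0)|+C\epsilon^3/\delta=O(\epsilon)$, and then $\sqrt t\,|W(\nu t,t)|\le|\tilde E(t)|+O(\epsilon t^{-1/20+\delta_0})$ yields $|D_t^2\theta(\alpha,t)|\le C\epsilon t^{-1/2}$. What it does affect is the precision of the constant: the specific ``$4$'' in the corollary cannot be inferred from a phantom $O(\epsilon^2)$ gain; it has to come from tracking the explicit constant in $\tilde E(0)$ against the initial-data bound of Theorem~\ref{main}, plus the normalization $\int\chi=1$ and the fact that $D_t^2\theta=2D_t^2\zeta+O(\epsilon^2t^{-1/2})$ (Lemma~\ref{structure of Dt sigma}), so that any absolute constant below the bootstrap threshold suffices. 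The paper's own proof is equally terse on this last bookkeeping step, so the take-away is to correct the linearity-vs-quadraticity claim and acknowledge that the constant requires (straightforward but omitted) tracking rather than an automatic $\epsilon^2$ gain.
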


\begin{proof}
    Combining Corollary \ref{Decay} with Theorem \ref{main theorem of decay estimate} yields
    \[
    \norm{D_t^2\theta(\nu t,t)}_{L^\infty}\le 4\epsilon t^{-\frac{1}{2}};
    \]
    note that the right-hand side is independent of $\nu$ and we are done.
\end{proof}

\begin{cor}\label{sec 7:decay bound}
    For $t^{4/5}\le|\alpha|\le t^{6/5}$, there holds
    \begin{equation}
        \norm{(D_t^2\theta,\partial_\alpha D_t\theta,\theta_\alpha)}_{W^{s-2,\infty}\times W^{s-3,\infty}\times W^{s-2,\infty}}\le C\epsilon^3t^{-\frac{1}{2}}.
    \end{equation}
\end{cor}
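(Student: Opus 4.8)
The goal is to upgrade the pointwise bound $\norm{D_t^2\theta(\alpha,t)}_{L^\infty}\le 4\epsilon t^{-1/2}$ (valid on $t^{4/5}\le|\alpha|\le t^{6/5}$, and established in the previous corollary) to the full $W^{s-2,\infty}\times W^{s-3,\infty}\times W^{s-2,\infty}$ control of $(D_t^2\theta,\partial_\alpha D_t\theta,\theta_\alpha)$ on the same region, with the improved constant $C\epsilon^3 t^{-1/2}$. First I would recover the genuinely top-order spatial derivatives of $D_t^2\theta$ from the zeroth-order bound together with the vector field estimates. Concretely, for $1\le k\le s-2$ and $t^{4/5}\le|\alpha|\le t^{6/5}$, I apply formula \eqref{equ:increase a derivative 2} iteratively,
\[
\partial_\alpha^k(D_t^2\theta)=\left(\frac{it^2}{4\alpha^2}\right)^k D_t^2\theta+\sum_{j=1}^{k}\left(\frac{it^2}{4\alpha^2}\right)^{k-j}\left(\frac{1}{\alpha}L_0-\frac{t}{2\alpha^2}\Omega_0\right)\partial_\alpha^{j-1}(D_t^2\theta)+\cdots,
\]
so that the leading term has size $t^{2k}|\alpha|^{-2k}\cdot\epsilon t^{-1/2}\le C\epsilon t^{2k\mu}t^{-1/2}$ with $\mu=1/5$ (which, combined with the extra $t^{-\delta_0+\delta_0}$ slack, is still $\le C\epsilon t^{-1/2+}$ after the $\mu$ is tracked — as in Lemma \ref{bouns decay for small or big alpha}), while the correction terms are controlled by $\norm{L_0 D_t^2\zeta}_{H^{s-2}}$, $\norm{\Omega_0 D_t^2\zeta}_{H^{s-2}}$ type quantities (via Corollary \ref{cor:Omega_0} and the relation $D_t^2\theta=2D_t^2\zeta+e$ from Lemma \ref{structure of Dt sigma}), each of which is $\le C\epsilon t^{\delta_0}$ and hence, after multiplication by $t^{2(k-j)}|\alpha|^{-2(k-j)}\cdot|\alpha|^{-1}$ or $\cdot t|\alpha|^{-2}$, gives a net decay faster than $t^{-1/2}$. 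Sobolev embedding $W^{s-2,\infty}\hookleftarrow H^{s-1}$ closes this step; the $\epsilon^3$ (rather than $\epsilon$) gain comes from noting that $D_t^2\theta$ is itself cubic (one factor $D_t^2\zeta$ contributes $\epsilon$, but the relevant bounds in fact carry two more $\epsilon$'s through the structure $D_t^2\zeta-iA\zeta_\alpha=-i$ and Proposition \ref{good-replace}).

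\textbf{From $D_t^2\theta$ to $\partial_\alpha D_t\theta$ and $\theta_\alpha$.} For $\theta_\alpha$, I use the elliptic relation $\partial_\alpha\theta=-2iD_t^2\zeta+(A-1)\zeta_\alpha+\partial_\alpha(\mathcal{H}+\bar{\mathcal{H}})(\zeta-\alpha)$ from the proof of Proposition \ref{good-replace}: on $t^{4/5}\le|\alpha|\le t^{6/5}$ we have $\norm{A-1}_{W^{s-2,\infty}(|\alpha|\ge t^{4/5})}\le C\epsilon^2 t^{-5/4-1/20+\delta_0}\ln t$ by Remark \ref{decay for A-1 near t}, $\norm{\partial_\alpha(\mathcal{H}+\bar{\mathcal{H}})(\zeta-\alpha)}_{H^s}\le C\epsilon^2 t^{-1/2}$, and $\norm{D_t^2\zeta}_{W^{s-2,\infty}(S(t))}\le C\epsilon^3 t^{-1/2}$ from the previous two steps (using $D_t^2\theta=2D_t^2\zeta+e$ with $\norm{e}_{H^s}\le C\epsilon^2 t^{-1/2}$ and the already-established $W^{s-2,\infty}$ bound on the corrections); combining these gives $\norm{\theta_\alpha}_{W^{s-2,\infty}(S(t))}\le C\epsilon^3 t^{-1/2}$. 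For $\partial_\alpha D_t\theta$, I write $D_t\theta=\partial_t\theta+b\partial_\alpha\theta$, hence $\partial_\alpha D_t\theta=\partial_\alpha\partial_t\theta+\partial_\alpha(b\theta_\alpha)$; the second piece is negligible since $\norm{b}_{W^{s-2,\infty}}+\norm{b_\alpha}_{W^{s-3,\infty}}\lesssim \epsilon^2 t^{-1}$ by Lemma \ref{norms of b}, and for the first I apply \eqref{equ: lemma transition sec 4 2} to trade $\partial_t$ for a multiple of $\theta$ plus $\Omega_0$: $\partial_t\theta_\alpha=-\frac{it}{2\alpha}\theta_\alpha+\frac{1}{\alpha}\Omega_0\theta_\alpha$, so $\norm{\partial_\alpha^{k}\partial_t\theta}_{L^\infty(S(t))}\le \frac{t}{|\alpha|}\norm{\partial_\alpha^k\theta_\alpha}_{L^\infty}+\frac{1}{|\alpha|}\norm{\partial_\alpha^k\Omega_0\theta_\alpha}_{L^\infty}+\cdots$, where the first term is $\le Ct^{\mu}\cdot C\epsilon^3 t^{-1/2}$ (again absorbed by tracking $\mu$, as elsewhere) and the second is $\le Ct^{-1+\mu}\norm{\Omega_0\theta_\alpha}_{H^{s-2}}\le C\epsilon^3 t^{-1+\mu+\delta_0}$ via the bootstrap assumption and Sobolev embedding. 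This yields $\norm{\partial_\alpha D_t\theta}_{W^{s-3,\infty}(S(t))}\le C\epsilon^3 t^{-1/2}$.

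\textbf{Main obstacle.} The delicate point is bookkeeping the powers of $\mu$: every time I convert $\partial_\alpha$ into $\frac{it^2}{4\alpha^2}$ I pick up a factor $(t/|\alpha|)^2\le t^{2\mu}$, and I apply this up to $s-2$ times, so naively I would accumulate $t^{2(s-2)\mu}$. The resolution — exactly as in the preceding chapters — is that I only ever need one or two such conversions at a time: I reduce $\partial_\alpha^k(D_t^2\theta)$ to at most $\partial_\alpha(D_t^2\theta)$ or $D_t^2\theta$ plus vector-field correction terms that carry their own extra decay $t^{-1}$ or $t^{-1+2\delta_0}$, and these corrections more than compensate the $t^{2\mu}$ loss because $\mu$ and $\delta_0$ are both taken arbitrarily small with $\mu=1/5$ fixed only at the end; alternatively, one keeps the region split $\{t^{4/5}\le|\alpha|\le t\}$ versus $\{t\le|\alpha|\le t^{6/5}\}$ and on the first uses \eqref{equ:increase a derivative} to \emph{lower} derivatives instead. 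I expect the argument to be essentially a careful reprise of the proof of Lemma \ref{bouns decay for small or big alpha} combined with the elliptic relations in Proposition \ref{good-replace}, with no new analytic input beyond what Chapters 4--7 already supply; the only thing to be careful about is that the loss exponents stay strictly below $t^{-1/2}$ after all substitutions, which forces $\mu\le 1/10$ in the borderline terms and is consistent with the paper's standing choice $\mu=1/5$ only because the correction terms always supply an additional power of $t^{-1/2}$ or better.
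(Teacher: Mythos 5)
Your proposal has a fundamental gap in the very first step, and it does not follow the paper's method. Consider the transition formula you iterate,
\[
\partial_\alpha (D_t^2\theta)=\frac{it^2}{4\alpha^2}D_t^2\theta+\frac{1}{\alpha}L_0 (D_t^2\theta)-\frac{t}{2\alpha^2}\Omega_0 (D_t^2\theta).
\]
The vector-field terms are indeed harmless: $\frac{1}{|\alpha|}\norm{L_0 D_t^2\theta}_{L^\infty}\le C\epsilon t^{-4/5+\delta_0}$ and $\frac{t}{|\alpha|^2}\norm{\Omega_0 D_t^2\theta}_{L^\infty}\le C\epsilon t^{-3/5+\delta_0}$. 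But the \emph{leading} term carries \emph{no} vector field, so its size on $S(t)$ is simply $\frac{t^2}{4|\alpha|^2}|D_t^2\theta|$, which at $|\alpha|\sim t^{4/5}$ is of order $t^{2/5}\cdot\epsilon t^{-1/2}=\epsilon t^{-1/10}$. This is \emph{not} bounded by $C\epsilon t^{-1/2}$, and iterating only makes it worse: $k$ conversions cost $t^{2k/5}$. Your "main obstacle" paragraph asserts that vector-field corrections "more than compensate the $t^{2\mu}$ loss," but the loss lives in the leading term, which the corrections never touch. Nor does your fallback help: for $t^{4/5}\le|\alpha|\le t$ the lowering identity \eqref{equ:increase a derivative} has coefficient $\alpha^2/t^2\le 1$ (so it does not blow up), but to close it you need a top-order pointwise bound, and the best available is $\norm{\partial_\alpha^{s-1}D_t^2\zeta}_{L^\infty}\le C\epsilon t^{-1/6}\ln t$ from Lemma \ref{decay of D_tzeta} — strictly weaker than $t^{-1/2}$ — so iterating downward from there yields at best $t^{-1/6}\ln t$ for every intermediate order. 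In short: the region $t^{4/5}\le|\alpha|\le t^{6/5}$ is \emph{exactly} the range on which the elementary transition/vector-field machinery of Lemma \ref{bouns decay for small or big alpha} cannot reach $t^{-1/2}$, and that is the very reason Chapter 7 introduces the $\tilde{E}$-functional. You cannot recover derivative bounds on this region by bootstrapping from the zeroth-order pointwise bound.

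The paper's proof is genuinely different: one does \emph{not} upgrade the pointwise bound on $D_t^2\theta$ a posteriori. Instead, one repeats the entire $\tilde{E}$-functional argument of Theorem \ref{main theorem of decay estimate} and Lemmas \ref{estimate for I_2}--\ref{estimate for I_1} at each derivative level, with $\partial_\alpha^k\theta$ (resp.\ $\partial_\alpha^kD_t\theta$) in place of $\theta$ (resp.\ $D_t\theta$). The test function $u=\chi(\frac{\alpha-\nu t}{\sqrt{t}})e^{-it^2/4\alpha}$ is unchanged; the only new ingredient is the commutator $\int[D_t^2-iA\partial_\alpha,\partial_\alpha^k]\theta\,\bar{u}$ ($1\le k\le s-2$), which is treated by the explicit expansion from Lemma \ref{calculate for k order commutator} and the same estimates used for $I_1,I_3$. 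The crucial structural point — noted in the paper — is that the $\tilde{E}$-argument establishes the $L^\infty$ bound \emph{without} losing a derivative, so it closes at every order $k\le s-2$ and detects the profile decay of $\partial_\alpha^kD_t^2\theta$ at the edges of $S(t)$, which the pointwise/Sobolev approach cannot see. Secondary remark: $D_t^2\theta$ is not cubic (it is $2D_t^2\zeta+e$, with $D_t^2\zeta$ linear at leading order and $e$ quadratic), so the $\epsilon^3$ in the statement is not explained by the mechanism you describe; it appears to be a typographical slip for $\epsilon$, consistent with Corollary \ref{Decay} and the $10\epsilon$ improvement required in Proposition \ref{main-prop}.
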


\begin{proof}
    One simply repeats the process for $D_t^2\theta$ as above (note that we didn't lose any derivatives while taking $L^\infty$ norm, so the process closes). One should pay attention that when it comes to higher order derivatives of the equation, there is an extra term
    \[
    \int[D_t^2-iA\partial_\alpha,\partial_\alpha^k]\theta\bar{u}\ (1\le k\le s-2).
    \]
    For such a term, we explicitly compute the commutator (refer to Lemma \ref{calculate for k order commutator}) and treat it by the techniques that were shown in the proof of Lemma \ref{estimate for I_3} and Lemma \ref{estimate for I_1}.
\end{proof}

 %\section*{Data Availability Statement} Data sharing is not applicable to this article as no datasets were generated or analyzed during the study.

\appendix

\section{Singular integrals and commutators}

Let $m\geq 1$ be an integer. Denote
\begin{equation}
S_1(A,f)=p.v.\int \prod_{j=1}^m \frac{A_j(\alpha)-A_j(\beta)}{\gamma_j(\alpha)-\gamma_j(\beta)} \frac{f(\beta)}{\gamma_0(\alpha)-\gamma_0(\beta)}d\beta.
\end{equation}

\begin{equation}
S_2(A,f)=\int \prod_{j=1}^m \frac{A_j(\alpha)-A_j(\beta)}{\gamma_j(\alpha)-\gamma_j(\beta)} f_{\beta}(\beta)d\beta.
\end{equation}
\noindent We have the following commutator estimates, which can be found in \cite{Totz2012}, \cite{Wu2009}.
\begin{lemma}\label{singular}
We have the following results:

(1) Assume each $\gamma_j (j=0,...,m)$ satisfies 
\begin{equation}\label{gammaj}
C_{0,j}|\alpha-\beta|\leq |\gamma_j(\alpha)-\gamma_j(\beta)|\leq C_{1,j}|\alpha-\beta|.
\end{equation}
Then both $\|S_1(A,f)\|_{L^2}$ and $\|S_2(A,f)\|_{L^2}$ are bounded by
$$C\prod_{j=1}^m \|A_j'\|_{X_j}\|f\|_{X_0},$$
where one of the $X_0, X_1, ...X_m$ is equal to $L^2$ and the rest are $L^{\infty}$. The constant $C$ depends on $\|\gamma_j'\|_{L^{\infty}}^{-1}, j=0,1,..,m$.

(2) Let $s\geq 3$ be given, and assume (\ref{gammaj}) for each $\gamma_j$, then 
$$\|S_2(A,f)\|_{H^s}\leq C\prod_{j=1}^m \|A_j'\|_{Y_j}\|f\|_Z,$$
where for all $j=1,...,m$, $Y_j=H^{s-1}$ or $W^{[s-1]+1,\infty}$ and $Z=H^s$ or $W^{[s]+1,\infty}$. At most one of these $Y_j, Z$ norms is in $H^k$ ($k=s-1$ for $Y_j$ or $s$ for $Z$). The constant $C$ depends on $\norm{\partial_\alpha\gamma_j-1}_{H^{s-1}},\ j=1,...,m$.

(3) Assume (\ref{gammaj}) for each $\gamma_j$, then
\[
\norm{S_1(A,f)}_{L^\infty}\le C\left(\prod_{j=1}^m\norm{A'_j}_{W^{1,\infty}}\norm{f}_{W^{1,\infty}}+\prod_{j=1}^m\norm{A_j'}_{L^\infty}\norm{f}_{L^\infty}\ln r+\prod_{j=1}^m\norm{A_j'}_{L^\infty}\norm{f}_{L^2}r^{-\frac{1}{2}}\right)
\]
for any $r>1$. The constant $C$ depends on $\|\gamma_j'\|_{L^{\infty}}^{-1}, j=0,1,..,m$.
\end{lemma}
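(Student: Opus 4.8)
The final statement to be proven is Lemma~\ref{singular}, which is a compilation of standard multilinear singular integral estimates (Calderón commutator type bounds). Since the paper explicitly cites \cite{Totz2012} and \cite{Wu2009} for these results, I would not attempt a fully self-contained proof from scratch; rather, I would organize the argument as a reduction to the known Calderón commutator theory plus the additional $L^\infty$ refinement of part (3).

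\medskip

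\textbf{Proof plan.} The plan is to treat the three parts in increasing order of difficulty. For part (1), I would first reduce to the case where all the curves $\gamma_j$ equal the identity: writing $\frac{A_j(\alpha)-A_j(\beta)}{\gamma_j(\alpha)-\gamma_j(\beta)}=\frac{A_j(\alpha)-A_j(\beta)}{\alpha-\beta}\cdot\frac{\alpha-\beta}{\gamma_j(\alpha)-\gamma_j(\beta)}$ and using the chord-arc bound \eqref{gammaj}, the factors $\frac{\alpha-\beta}{\gamma_j(\alpha)-\gamma_j(\beta)}$ are bounded and harmless once one expands in a Neumann-type series (this is exactly the mechanism in \cite{david1984operateurs} used in Lemma~\ref{boundednesshilbert}). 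One is then left with the classical Calderón commutator $\int \prod_j \frac{A_j(\alpha)-A_j(\beta)}{\alpha-\beta}\frac{f(\beta)}{\alpha-\beta}\,d\beta$, whose $L^2$ boundedness with the stated norm distribution (one factor in $L^2$, the rest in $L^\infty$) is the Coifman--McIntyre--Meyer theorem; the bound for $S_2$ follows from that for $S_1$ by integrating by parts in $\beta$. For part (2), the $H^s$ estimate, I would differentiate under the integral sign, distribute the $s$ derivatives using the Leibniz rule, and note that each resulting term is again of the form $S_1$ or $S_2$ with the derivatives landing either on the $A_j'$ (lowering to $H^{s-1}$ or an $L^\infty$-type norm) or on $f$ (keeping it at $H^s$), together with commutator terms where $\partial_\alpha$ hits a kernel factor $\frac{1}{\gamma_j(\alpha)-\gamma_j(\beta)}$; the latter produce $\gamma_j''$ factors controlled by $\|\partial_\alpha\gamma_j-1\|_{H^{s-1}}$ via Sobolev embedding, and one then reinvokes part (1) on each piece. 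Tracking that at most one norm sits in the $L^2$-based space is a bookkeeping exercise in the Leibniz expansion.

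\medskip

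For part (3), the $L^\infty$ bound, I would split the integral defining $S_1(A,f)(\alpha)$ into three regions: $|\alpha-\beta|\le 1$, $1\le|\alpha-\beta|\le r$, and $|\alpha-\beta|\ge r$. On the near-diagonal region $|\alpha-\beta|\le 1$, I would use the principal-value cancellation $\mathrm{p.v.}\int_{|\alpha-\beta|\le 1}\frac{d\beta}{\gamma_0(\alpha)-\gamma_0(\beta)}=0$ (up to a bounded error) together with Taylor expansion of each difference quotient, writing $\frac{A_j(\alpha)-A_j(\beta)}{\gamma_j(\alpha)-\gamma_j(\beta)}=\frac{A_j'(\xi)}{\gamma_j'(\eta)}$ and $\frac{A_j(\alpha)-A_j(\beta)}{\gamma_j(\alpha)-\gamma_j(\beta)}-\frac{A_j'(\alpha)}{\gamma_j'(\alpha)}=O(|\alpha-\beta|)\|A_j'\|_{W^{1,\infty}}$, which yields the $\prod_j\|A_j'\|_{W^{1,\infty}}\|f\|_{W^{1,\infty}}$ contribution; this is exactly the computation already carried out in the proofs of Lemma~\ref{curve-flat hilbert} and Corollary~\ref{structure of b}, so I would cite those. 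On the middle region, one simply bounds each difference quotient by $\|A_j'\|_{L^\infty}$ and uses $\int_{1\le|\alpha-\beta|\le r}\frac{d\beta}{|\alpha-\beta|}\le C\ln r$, giving the $\prod_j\|A_j'\|_{L^\infty}\|f\|_{L^\infty}\ln r$ term. On the far region, Cauchy--Schwarz in $\beta$ against $\|f\|_{L^2}$ with $\|\frac{1}{|\alpha-\beta|}\mathbf 1_{|\alpha-\beta|\ge r}\|_{L^2}=Cr^{-1/2}$ gives the last term.

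\medskip

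\textbf{Main obstacle.} The genuinely hard analytic input — the $L^2$ boundedness of the multilinear Calderón commutator with $m$ Lipschitz symbols — is not something I would reprove; it is quoted from \cite{Totz2012, Wu2009} and ultimately rests on \cite{david1984operateurs}. Given that, the main obstacle in writing this lemma honestly is the combinatorial bookkeeping in part (2): one must verify that after applying $\partial_\alpha^s$ and expanding via Leibniz and via repeated commutation of $\partial_\alpha$ with the kernel, \emph{every} resulting term has exactly one factor in an $L^2$-Sobolev norm and all the rest in $L^\infty$-Sobolev norms, and that the curve-regularity hypothesis $\|\partial_\alpha\gamma_j-1\|_{H^{s-1}}<\infty$ (with $s\ge 3$, hence $H^{s-1}\hookrightarrow W^{1,\infty}$) suffices to absorb all the $\gamma_j''$ and higher derivatives generated. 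This is routine but needs care; everything else reduces to the three-region splitting and the cited boundedness theorems.
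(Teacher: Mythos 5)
The paper gives no proof of this lemma; it is stated in the appendix under the header sentence ``which can be found in \cite{Totz2012}, \cite{Wu2009},'' so there is no in-text argument to compare against, and citing the Calder\'on--Coifman--David--Meyer theory as the hard $L^2$ input is exactly what the paper does. Given that, your sketch is a sound account of how one would fill in the rest. For part (1), the one cosmetic caveat is that David's $L^2$ theorem is usually proved directly for chord-arc kernels rather than by a Neumann-series perturbation off the flat Calder\'on commutator; either way you land on the same cited result, and your integration-by-parts reduction of $S_2$ to $S_1$ is correct. Part (2) is, as you say, Leibniz bookkeeping plus Sobolev embedding $H^{s-1}\hookrightarrow W^{1,\infty}$ for $s\ge 3$ to absorb the curve derivatives; that this leaves exactly one $L^2$-based factor per term is routine. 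Part (3) is the least off-the-shelf piece, and your three-region split at $|\alpha-\beta|\le 1$, $1\le|\alpha-\beta|\le r$, $|\alpha-\beta|\ge r$ maps term by term onto the three summands of the stated bound; the paper runs the same mechanism in its special case Lemma~\ref{L infty of hilbert} (splitting there at $t^{-4}$ and $t^{4}$ and handling the near-diagonal by Hardy's inequality against $\|f\|_{\dot{H}^1}$ rather than the $W^{1,\infty}$ Taylor argument, a small variant), and the near-diagonal Taylor computation you invoke is the one carried out in Lemma~\ref{curve-flat hilbert} and Corollary~\ref{structure of b}, as you note. The only point worth spelling out a bit more is that in the near-diagonal region you need both to Taylor-expand the kernel factors and to subtract $f(\alpha)$ from $f(\beta)$ before invoking the p.v.\ cancellation; your $\|f\|_{W^{1,\infty}}$ factor implicitly encodes the latter, but it should be stated explicitly in a written-out proof.
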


\begin{lemma}\label{derivative_K_com}
    Let $K$ be such that $K$ or $(\alpha-\beta)K(\alpha,\beta;t)$ is continuous and bounded, and $K$ is smooth away from the diagonal $\Delta=\{(\alpha,\beta)| \alpha=\beta\}$. Denote
    \begin{equation}
        \boldsymbol{K}f(\alpha,t)=p.v. \int K(\alpha,\beta;t)f(\beta,t)d\beta
    \end{equation}
    Then we have 
    \begin{equation}
        \begin{split}
            [\partial_t, \boldsymbol{K}]f(\alpha,t)=&\int \partial_t K(\alpha,\beta;t)f(\beta,t)d\beta.\\
            [\partial_{\alpha},\boldsymbol{K}]f(\alpha,t)=& \int (\partial_{\alpha}+\partial_{\beta})K(\alpha,\beta;t)f(\beta,t)d\beta.\\
            [L_0, \boldsymbol{K}]f(\alpha,t)=& \int (\alpha\partial_{\alpha}+\beta\partial_{\beta}+\frac{1}{2}t\partial_t)K(\alpha,\beta;t)f(\beta,t)d\beta+\boldsymbol{K}f(\alpha,t)
        \end{split}
    \end{equation}
    for $f\in C^1(\mathbb{R};\mathbb{R}^d)$ vanisheses as $|\alpha|\rightarrow\infty$.
\end{lemma}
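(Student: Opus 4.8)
The statement to prove is Lemma \ref{derivative_K_com}, the commutator identities for a general principal-value operator $\boldsymbol{K}f(\alpha,t)=\mathrm{p.v.}\int K(\alpha,\beta;t)f(\beta,t)\,d\beta$ against $\partial_t$, $\partial_\alpha$, and $L_0$. The plan is to treat each identity in turn, the first two by direct differentiation under the integral sign (justified by the hypotheses on $K$ and the decay of $f$), and the third by combining the first two with the definition $L_0=\tfrac12 t\partial_t+\alpha\partial_\alpha$ plus a change-of-variables observation.

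First I would dispose of the $\partial_t$ identity. Since the integration variable $\beta$ does not interact with $t$, and $K$ (or $(\alpha-\beta)K$) is continuous and bounded and smooth off the diagonal with $\partial_t K$ enjoying the same structural bound, one may differentiate under the principal-value integral: writing $\boldsymbol{K}f=\lim_{\varepsilon\to 0}\int_{|\alpha-\beta|\ge\varepsilon}K f\,d\beta$, the $\varepsilon$-truncated integrals are genuinely differentiable in $t$ with $\partial_t\int_{|\alpha-\beta|\ge\varepsilon}Kf\,d\beta=\int_{|\alpha-\beta|\ge\varepsilon}(\partial_t K)f\,d\beta$, and the limit passes through because the singularity of $\partial_t K$ is no worse than that of $K$ (still of Cauchy type, so the principal value exists). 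This gives $[\partial_t,\boldsymbol{K}]f=\int\partial_t K\,f\,d\beta$.

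Next, the $\partial_\alpha$ identity. Here one differentiates $\int_{|\alpha-\beta|\ge\varepsilon}K(\alpha,\beta)f(\beta)\,d\beta$ in $\alpha$: there is a contribution from the $\alpha$-dependence of the kernel, $\int_{|\alpha-\beta|\ge\varepsilon}(\partial_\alpha K)f\,d\beta$, and a boundary contribution from the moving truncation at $|\alpha-\beta|=\varepsilon$. The boundary terms from $\beta=\alpha+\varepsilon$ and $\beta=\alpha-\varepsilon$ combine, and using that $(\alpha-\beta)K$ is bounded and continuous they cancel in the limit $\varepsilon\to 0$ against the principal-value prescription — equivalently, one integrates by parts once to transfer a derivative onto $f$ and recognizes $\partial_\alpha K + \partial_\beta K$ as the total kernel-derivative that survives, since the $\partial_\beta$ piece returns via integration by parts. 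The cleanest route is: $\partial_\alpha(\boldsymbol{K}f) = \boldsymbol{K}(\partial_\beta f) - \boldsymbol{K}(\partial_\beta f) + \int(\partial_\alpha K)f$; more precisely, write $\partial_\alpha\int K f = \int(\partial_\alpha K)f + \int(\partial_\beta K)f$ where the second term is produced by integrating by parts in $\beta$ after noting $\boldsymbol{K}(\partial_\beta f)=-\int(\partial_\beta K)f$ (boundary terms vanish by the decay of $f$ at infinity and the diagonal cancellation), so that $[\partial_\alpha,\boldsymbol{K}]f = \partial_\alpha(\boldsymbol{K}f)-\boldsymbol{K}(\partial_\alpha f) = \int(\partial_\alpha+\partial_\beta)K\,f\,d\beta$, using $\boldsymbol{K}(\partial_\alpha f)=\boldsymbol{K}(\partial_\beta f)$ is a misstatement — rather one uses $\partial_\alpha f(\beta)$ does not make sense, so the correct bookkeeping is to note $\boldsymbol{K}(f')=-\int(\partial_\beta K)f$ and $\partial_\alpha(\boldsymbol{K}f)=\int(\partial_\alpha K)f + [\text{diagonal boundary}]$, the boundary term equaling $\int(\partial_\beta K)f$ by the same truncation analysis, yielding the claim. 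Finally, for $L_0$: apply $\tfrac12 t\,[\partial_t,\boldsymbol K]+\alpha\,[\partial_\alpha,\boldsymbol K]$ and then observe that $\alpha\int(\partial_\alpha+\partial_\beta)K\,f = \int(\alpha\partial_\alpha+\beta\partial_\beta)K\,f + \int(\alpha-\beta)\partial_\beta K\,f$, and the last integral equals $-\int(\partial_\beta[(\alpha-\beta)K])f + \int K f = \boldsymbol{K}f$ up to boundary terms, after one more integration by parts; collecting the pieces gives $[L_0,\boldsymbol K]f=\int(\alpha\partial_\alpha+\beta\partial_\beta+\tfrac12 t\partial_t)K\,f\,d\beta + \boldsymbol{K}f$, which is exactly the extra $\boldsymbol{K}f$ term reflecting that $L_0$ is a scaling field and $\boldsymbol K$ is (formally) homogeneous of degree $-1$.

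The main obstacle is making the differentiation-under-the-principal-value-integral rigorous, i.e. controlling the moving-truncation boundary terms in the $\partial_\alpha$ (and hence $L_0$) computation and verifying that the resulting kernels $(\partial_\alpha+\partial_\beta)K$ and $(\alpha\partial_\alpha+\beta\partial_\beta+\tfrac12 t\partial_t)K$ still define bona fide principal-value operators. This is handled by the standing hypothesis that $K$ or $(\alpha-\beta)K$ is bounded and continuous and smooth away from the diagonal, which ensures both that the boundary contributions at $|\alpha-\beta|=\varepsilon$ are $O(1)$ and symmetric (so they cancel in the symmetric limit) and that the differentiated kernels retain the Cauchy-type singularity structure; the decay of $f\in C^1$ at infinity takes care of the boundary terms at $\pm\infty$ in every integration by parts. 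No estimates beyond these qualitative facts are needed, so once the truncation bookkeeping is set up carefully the three identities follow by the same elementary manipulations.
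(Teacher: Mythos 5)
The paper states Lemma~\ref{derivative_K_com} in the appendix without a proof, so there is no proof of record to compare against; I evaluate your argument on its own merits.

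Your $\partial_t$ identity is fine. Your $\partial_\alpha$ argument arrives at the right formula, but the writeup is muddled: the sentence labelled ``the cleanest route'' contains a self-cancelling term and a misstatement you immediately retract, and the moving-truncation boundary terms do \emph{not} cancel by symmetry as you assert. For a Cauchy-type kernel $K\sim c(\alpha)/(\alpha-\beta)$ near the diagonal, the two endpoint contributions at $\beta=\alpha\pm\varepsilon$ have the \emph{same} sign and add to roughly $2c(\alpha)f(\alpha)/\varepsilon$, which diverges. The clean device --- which you gesture at but never execute --- is the substitution $\beta=\alpha+\gamma$, after which the truncation $|\gamma|\ge\varepsilon$ is independent of $\alpha$ and $\partial_\alpha$ acts as a total derivative on $K(\alpha,\alpha+\gamma)f(\alpha+\gamma)$, producing $(\partial_\alpha+\partial_\beta)K\cdot f+K\cdot f'$ with no boundary term at all.

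The $L_0$ part has a genuine gap. You expand only $\tfrac12 t[\partial_t,\boldsymbol K]+\alpha[\partial_\alpha,\boldsymbol K]$, but by Leibniz $[\alpha\partial_\alpha,\boldsymbol K]=\alpha[\partial_\alpha,\boldsymbol K]+[\alpha,\boldsymbol K]\partial_\alpha$, and $[\alpha,\boldsymbol K]g(\alpha)=\int(\alpha-\beta)K(\alpha,\beta)g(\beta)\,d\beta\ne 0$; you have silently dropped the term $\int(\alpha-\beta)K\,f'(\beta)\,d\beta$. You then try to produce the $+\boldsymbol Kf$ correction from the residual $\int(\alpha-\beta)(\partial_\beta K)f\,d\beta$, asserting it equals $\boldsymbol Kf$ up to boundary terms; this is false (and there is a sign error in your intermediate expression). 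From $(\alpha-\beta)\partial_\beta K=\partial_\beta\bigl[(\alpha-\beta)K\bigr]+K$ one gets
\begin{align*}
\int(\alpha-\beta)(\partial_\beta K)f\,d\beta
&=\int\partial_\beta\bigl[(\alpha-\beta)K\bigr]f\,d\beta+\boldsymbol Kf
=\boldsymbol Kf-\int(\alpha-\beta)K\,f'\,d\beta,
\end{align*}
which is not $\boldsymbol Kf$ alone. The leftover $-\int(\alpha-\beta)Kf'$ is precisely cancelled by the $[\alpha,\boldsymbol K]\partial_\alpha f=\int(\alpha-\beta)Kf'$ you omitted --- equivalently, $\int\partial_\beta\bigl[(\alpha-\beta)Kf\bigr]d\beta=0$ under the decay and boundedness hypotheses, and expanding this total derivative is what merges the two pieces into the single $+\boldsymbol Kf$. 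Your final formula is correct only because two mistakes compensate; as written the argument does not close, and the $L_0$ computation needs to be redone tracking the $[\alpha,\boldsymbol K]\partial_\alpha$ term from the start.
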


\begin{lemma}\label{useful lemma}
			Suppose that $A_1\in L^2, f\in L^2\cap W^{1,\infty},\ \partial_\alpha A_i\in W^{1,\infty}\ (2\le i\le m)$, then for any $t>e$, there holds
			\[
			\begin{aligned}
				&\left\|\int\frac{\prod_{i=1}^m(A_i(\alpha)-A_i(\beta))}{(\zeta(\alpha)-\zeta(\beta))^m}fd\beta\right\|_{L^2}\\
				&\le C\norm{A_1}_{L^2}\left(\prod_{i\ge 2}\norm{\partial_\alpha A_i}_{W^{1,\infty}}\norm{f}_{W^{1,\infty}}+\prod_{i\ge 2}\norm{\partial_\alpha A_i}_{L^\infty}\norm{f}_{L^\infty}\ln t+\prod_{i\ge 2}\norm{\partial_\alpha A_i}_{L^\infty}\norm{f}_{L^2}t^{-\frac{1}{2}}\right).
			\end{aligned}
			\]
		\end{lemma}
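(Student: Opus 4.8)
\textbf{Proof proposal for Lemma \ref{useful lemma}.}

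The plan is to reduce the multilinear singular integral to the commutator estimate of Lemma \ref{singular}(3) by separating the ``near diagonal'' and ``far from diagonal'' contributions, and then to exploit the extra decay of $\norm{f}_{L^2}$ on the far region. First I would replace $\zeta(\alpha)-\zeta(\beta)$ in the denominator by $\alpha-\beta$; this is harmless by Lemma \ref{nearly alpha}, since $\frac{\zeta(\alpha)-\zeta(\beta)}{\alpha-\beta}$ and its reciprocal are bounded above and below uniformly (each power of $\frac{\alpha-\beta}{\zeta(\alpha)-\zeta(\beta)}$ costs at most a constant), so it suffices to bound
\[
\left\|\int\frac{\prod_{i=1}^m(A_i(\alpha)-A_i(\beta))}{(\alpha-\beta)^m}f(\beta)\,d\beta\right\|_{L^2}.
\]
Split the integral as $\int_{|\alpha-\beta|\le r}+\int_{|\alpha-\beta|>r}$ with $r=t$ (or any comparable threshold $>e$). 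On the near piece I would peel off the factor $A_1(\alpha)-A_1(\beta)=(\alpha-\beta)\int_0^1 A_1'(\alpha+\tau(\beta-\alpha))\,d\tau$ only when convenient; more directly, I treat the near piece as a truncated Calder\'on-type commutator with kernel $\frac{\prod_{i=1}^m(A_i(\alpha)-A_i(\beta))}{(\alpha-\beta)^m}\mathbf 1_{|\alpha-\beta|\le r}$, which is an $L^2$-bounded operator with norm $\lesssim \prod_{i=1}^m\norm{\partial_\alpha A_i}_{L^\infty}$ by Lemma \ref{singular}(1) (the truncation does not spoil the standard bound). Applying it to $A_1(\beta)\,f(\beta)$ and also to $f(\beta)$ with $A_1(\alpha)$ pulled out, one gets the near-diagonal contribution bounded by $C\prod_{i\ge 2}\norm{\partial_\alpha A_i}_{L^\infty}\norm{A_1 f}_{L^2}+C\prod_{i\ge 2}\norm{\partial_\alpha A_i}_{L^\infty}\norm{A_1}_{L^2}\norm{f}_{L^\infty}$, which is $\le C\norm{A_1}_{L^2}\prod_{i\ge2}\norm{\partial_\alpha A_i}_{L^\infty}\norm{f}_{L^\infty}$ after observing $\norm{A_1 f}_{L^2}\le\norm{A_1}_{L^2}\norm{f}_{L^\infty}$. (Alternatively one invokes Lemma \ref{singular}(3) directly with the role of one of the $A_i$ or $f$ in $L^2$; the point is that the first two terms of the bound in part (3) are exactly of this shape.)

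For the far piece $|\alpha-\beta|>r$, I would bound $|A_i(\alpha)-A_i(\beta)|$ crudely: for each $i\ge 2$ write $|A_i(\alpha)-A_i(\beta)|\le C\norm{\partial_\alpha A_i}_{L^\infty}|\alpha-\beta|$ when $|\alpha-\beta|$ is moderate but, more efficiently, just use that $\frac{|A_i(\alpha)-A_i(\beta)|}{|\alpha-\beta|}\le \norm{\partial_\alpha A_i}_{L^\infty}$, so that the kernel on the far region is dominated by $\prod_{i\ge 2}\norm{\partial_\alpha A_i}_{L^\infty}\cdot\frac{|A_1(\alpha)-A_1(\beta)|}{\alpha-\beta}\cdot\frac{1}{|\alpha-\beta|}\mathbf 1_{|\alpha-\beta|>r}$. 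Splitting $A_1(\alpha)-A_1(\beta)$ into the two terms $A_1(\alpha)$ and $-A_1(\beta)$, the first gives $|A_1(\alpha)|\int_{|\alpha-\beta|>r}\frac{|f(\beta)|}{|\alpha-\beta|^2}d\beta$, whose $L^2$ norm is $\le\norm{A_1}_{L^2}\cdot\norm{\,|\cdot|^{-2}\mathbf 1_{|\cdot|>r}}_{L^1}\norm{f}_{L^\infty}\lesssim r^{-1}\norm{A_1}_{L^2}\norm{f}_{L^\infty}$ by Young's inequality; the second gives $\big\|\int_{|\alpha-\beta|>r}\frac{A_1(\beta)f(\beta)}{|\alpha-\beta|^2}d\beta\big\|_{L^2}\le \norm{\,|\cdot|^{-2}\mathbf 1_{|\cdot|>r}}_{L^1}\norm{A_1 f}_{L^2}\lesssim r^{-1}\norm{A_1}_{L^2}\norm{f}_{L^\infty}$, again by Young. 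Since $r=t$, both are $\le C t^{-1}\norm{A_1}_{L^2}\prod_{i\ge2}\norm{\partial_\alpha A_i}_{L^\infty}\norm{f}_{L^\infty}$, which is absorbed into the stated bound; note this route even improves on the $\ln t$ and $t^{-1/2}$ terms, so to match the statement exactly it is enough to be less sharp (e.g.\ use the $\ln t$ from the intermediate annulus $1<|\alpha-\beta|\le t$ together with the $t^{-1/2}$ tail from $|\alpha-\beta|>t$ via Cauchy--Schwarz on $f$), exactly as in the proof of Lemma \ref{L infty of hilbert}.

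The main obstacle is purely bookkeeping: one must make sure that in every sub-term the factor carrying the $L^2$ norm is $A_1$ (or $A_1 f$, which is controlled by $\norm{A_1}_{L^2}\norm{f}_{L^\infty}$), so that the output norm is genuinely $\norm{A_1}_{L^2}$ times $L^\infty$-type norms of the remaining data, rather than, say, $\norm{f}_{L^2}$ times something — and to do this uniformly over the near/far decomposition and over the way $A_1(\alpha)-A_1(\beta)$ is split. The truncated Calder\'on commutator bound (the truncation-stable version of Lemma \ref{singular}(1)) is the only analytic input beyond Young's inequality, and it is standard; once it is in hand the estimate follows by assembling the pieces as above. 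I would close by noting that the hypotheses $\partial_\alpha A_i\in W^{1,\infty}$ for $i\ge 2$ and $f\in W^{1,\infty}$ are what allow the slightly sharper ``$W^{1,\infty}$'' version of the first term in part (3) of Lemma \ref{singular} to be invoked when the near-diagonal argument is run through that lemma instead of through part (1).
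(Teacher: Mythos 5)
Your proposal contains the paper's proof buried inside it, but the route you present as primary has a concrete gap, and the near/far decomposition you wrap around it is redundant.

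The paper's proof is short: split $A_1(\alpha)-A_1(\beta)$ into the two pieces $A_1(\alpha)$ and $-A_1(\beta)$. For the first piece, pull out $\norm{A_1}_{L^2}$ and bound the remaining singular integral in $L^\infty$ via Lemma~\ref{singular}(3), which is precisely where the $W^{1,\infty}$ term, the $\ln t$ term, and the $t^{-1/2}$ term come from. For the second piece, apply Lemma~\ref{singular}(1) to $A_1 f$ in $L^2$ to get $\prod_{i\ge 2}\norm{\partial_\alpha A_i}_{L^\infty}\norm{A_1 f}_{L^2}$, and use $\norm{A_1 f}_{L^2}\le\norm{A_1}_{L^2}\norm{f}_{L^\infty}$. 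That is the whole argument. Your parenthetical ``(Alternatively one invokes Lemma~\ref{singular}(3) directly \dots)'' is not an alternative — it is the step that actually makes the $A_1(\alpha)$ piece work, and it already contains the near/far (and $|\alpha-\beta|\le 1$ vs.\ $1<|\alpha-\beta|\le t$ vs.\ $|\alpha-\beta|>t$) decomposition you are reinventing by hand.

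The concrete gap in your ``direct'' route is the claimed near-piece bound $C\prod_{i\ge 2}\norm{\partial_\alpha A_i}_{L^\infty}\norm{A_1}_{L^2}\norm{f}_{L^\infty}$ for the $A_1(\alpha)$ term. To get $\norm{A_1}_{L^2}$ out you must bound $T_r f:=\int_{|\alpha-\beta|\le r}\frac{\prod_{i\ge 2}(A_i(\alpha)-A_i(\beta))}{(\alpha-\beta)^m}f(\beta)\,d\beta$ in $L^\infty$, but a Calder\'on-type singular integral is not bounded $L^\infty\to L^\infty$: the annulus $1<|\alpha-\beta|\le r$ already contributes $C\prod\norm{\partial_\alpha A_i}_{L^\infty}\norm{f}_{L^\infty}\ln r$, which with $r\sim t$ is exactly the $\ln t$ term in the statement. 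So your near-piece estimate, as written, is too strong to be true; it silently drops the $\ln t$ that the lemma is careful to keep. (It is also not correct to say the full $m$-factor kernel is covered by Lemma~\ref{singular}(1) — that lemma requires $\partial_\alpha A_j\in L^\infty$ for every $j$, whereas here $A_1$ is only in $L^2$; the split of $A_1(\alpha)-A_1(\beta)$ is forced for precisely this reason.) Once you take Lemma~\ref{singular}(3) as the tool for the $L^\infty$ estimate, the near/far split becomes unnecessary and you recover the paper's one-line proof; the reduction from $\zeta(\alpha)-\zeta(\beta)$ to $\alpha-\beta$ is also unnecessary since Lemma~\ref{singular} already allows general chord-arc $\gamma_j$.
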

		
\begin{proof}
We have
\[
\begin{aligned}
    &\left\|\int\frac{\prod_{i=1}^m(A_i(\alpha)-A_i(\beta))}{(\zeta(\alpha)-\zeta(\beta))^m}fd\beta\right\|_{L^2}\\
    &\le\|A_1\|_{L^2}\left\|\int\frac{\prod_{i\ge 2}(A_i(\alpha)-A_i(\beta))}{(\zeta(\alpha)-\zeta(\beta))^m}fd\beta\right\|_{L^\infty}
    +\left\|\int\frac{\prod_{i\ge 2}(A_i(\alpha)-A_i(\beta))}{(\zeta(\alpha)-\zeta(\beta))^m}A_1fd\beta\right\|_{L^2}\\
    &\le C\norm{A_1}_{L^2}\left(\prod_{i\ge 2}\norm{\partial_\alpha A_i}_{W^{1,\infty}}\norm{f}_{W^{1,\infty}}+\prod_{i\ge 2}\norm{\partial_\alpha A_i}_{L^\infty}\norm{f}_{L^\infty}\ln t+\prod_{i\ge 2}\norm{\partial_\alpha A_i}_{L^\infty}\norm{f}_{L^2}t^{-\frac{1}{2}}\right)\\
    &\ \ \ +C\prod_{i\ge 2}\|\partial_\alpha A_i\|_{L^\infty}\|A_1f\|_{L^2}\\
    &\le C\norm{A_1}_{L^2}\left(\prod_{i\ge 2}\norm{\partial_\alpha A_i}_{W^{1,\infty}}\norm{f}_{W^{1,\infty}}+\prod_{i\ge 2}\norm{\partial_\alpha A_i}_{L^\infty}\norm{f}_{L^\infty}\ln t+\prod_{i\ge 2}\norm{\partial_\alpha A_i}_{L^\infty}\norm{f}_{L^2}t^{-\frac{1}{2}}\right).
\end{aligned}
\]
Note that we have invoked Lemma \ref{singular}.
\end{proof}

\section{Frequency localization and Bernstein inequality}\label{sec-bernstein}
Let $d$ be a positive integer. Fix a real-valued radially symmetric bump function $\phi$ in $\mathbb{R}^d$ adapted to the interval $\{\xi: |\xi|\leq 2\}$ such that $\phi(\xi)\equiv 1$ for $|\xi|\leq 1$ and equals $0$ for $|\xi|\geq 2$. Let $N$ be a dyadic number. Define the Fourier multipliers
\begin{equation}
    \begin{split}
        &\widehat{P_{<N}f}(\xi):=\phi(\frac{\xi}{N})\hat{f}(\xi),\\
        &\widehat{P_{>N}f}(\xi):=\Big(1-\phi(\frac{\xi}{N})\Big)\hat{f}(\xi),\\
        &\widehat{P_{N}f}(\xi):=\Big(\phi(\frac{\xi}{N})-\phi(\frac{\xi}{2N})\Big)\hat{f}(\xi),\\
    \end{split}
\end{equation}
We record the Bernstein inequality as follows, which can be found, for example, on page 333 of \cite{tao2006nonlinear}.
\begin{lemma}[Bernstein inequality]\label{Bernstein}
    Let $s\geq 0$ and $1\leq p\leq q\leq \infty$. We have
    \begin{equation}
        \norm{P_{\geq N}f}_{L^p(\mathbb{R}^d)}\lesssim_{p,s,d} N^{-s}\norm{|\nabla|^s P_{\geq N}f}_{L^p(\mathbb{R}^d)},
    \end{equation}
        \begin{equation}
        \norm{P_{N}|\nabla|^{\pm s}f}_{L^p(\mathbb{R}^d)}\sim_{p,s,d} N^{\pm s}\norm{|\nabla|^s P_{N}f}_{L^p(\mathbb{R}^d)},
    \end{equation}

            \begin{equation}
        \norm{P_{\leq N}|\nabla|^{s}f}_{L^p(\mathbb{R}^d)}\sim_{p,s,d} N^{s}\norm{|\nabla|^s P_{\leq N}f}_{L^p(\mathbb{R}^d)},
    \end{equation}

       \begin{equation}
        \norm{P_{\leq N}f}_{L^q(\mathbb{R}^d)}\lesssim_{p,s,d} N^{\frac{d}{p}-\frac{d}{q}}\norm{P_{\geq N}f}_{L^p(\mathbb{R}^d)},
    \end{equation}

           \begin{equation}
        \norm{P_{N}f}_{L^q(\mathbb{R}^d)}\lesssim_{p,s,d} N^{\frac{d}{p}-\frac{d}{q}}\norm{P_{N}f}_{L^p(\mathbb{R}^d)},
    \end{equation}
\end{lemma}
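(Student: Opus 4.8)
The plan is to reduce every inequality in the list to two elementary facts. First, each of the operators $P_N$, $P_{<N}$, $P_{>N}$ is a Fourier multiplier whose symbol, after the rescaling $\xi\mapsto\xi/N$, is either a fixed bump function (adapted to a ball or an annulus, independent of $N$) or $1$ minus such a bump; correspondingly its convolution kernel is, up to an $N$-independent profile, of the form $N^d\psi(N\cdot)$. Second, on the frequency support of each projector the operator $|\nabla|^{\pm s}$ acts as multiplication by a symbol that, after the same rescaling, is $N^{\pm s}$ times a fixed smooth compactly supported (or Mikhlin-class) function. All of this is standard harmonic analysis (cf.\ \cite{tao2006nonlinear}), so the "proof" is really a bookkeeping of Young's inequality together with the scaling identity $\norm{N^d\psi(N\cdot)}_{L^r}=N^{d(1-1/r)}\norm{\psi}_{L^r}$.

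Concretely I would proceed as follows. Set $\psi:=\mathcal{F}^{-1}[\phi]$, a Schwartz function. Then $P_{<N}f=(N^d\psi(N\cdot))*f$ and $P_Nf=K_N*f$ with $K_N(x)=N^d\psi(Nx)-(N/2)^d\psi(Nx/2)$, whose $L^r$ norm is $N^{d(1-1/r)}$ times a constant depending only on $\psi$. Young's inequality $\norm{g*f}_{L^q}\le\norm{g}_{L^r}\norm{f}_{L^p}$ with $1+1/q=1/r+1/p$ then yields the $L^p\to L^q$ estimates for $P_{<N}$ and $P_N$ with the factor $N^{d/p-d/q}$. For the derivative equivalences, note that $|\xi|^{\pm s}(\phi(\xi/N)-\phi(\xi/2N))=N^{\pm s}m_\pm(\xi/N)$, where $m_\pm(\eta)=|\eta|^{\pm s}(\phi(\eta)-\phi(\eta/2))$ is $C_c^\infty$ and supported in a fixed annulus away from the origin; hence $|\nabla|^{\pm s}P_N=N^{\pm s}\widetilde{P}_N$ with $\widetilde{P}_N$ convolution against an $L^1$-bounded kernel, and composing with a fattened projector $\tilde{P}_N$ satisfying $\tilde{P}_N P_N=P_N$ gives both directions of $\norm{P_N|\nabla|^{\pm s}f}_{L^p}\sim N^{\pm s}\norm{P_Nf}_{L^p}$. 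The $P_{<N}$ version is identical, using $|\eta|^s\phi(\eta)\in C_c^\infty$ — which may now legitimately include the origin precisely because $s\ge 0$.

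The one point requiring slightly more care, and the only real obstacle, is $P_{\ge N}$: the symbol $1-\phi(\xi/N)$ is not compactly supported, so there is no single rescaled Schwartz kernel to feed into Young's inequality. Here I would write $(1-\phi(\xi/N))|\xi|^{-s}=N^{-s}n(\xi/N)$ with $n(\eta)=(1-\phi(\eta))|\eta|^{-s}$, decompose $P_{\ge N}=\sum_{M\ge N}P_M$ dyadically, apply the annular estimate of the previous step to each $P_M$, and sum the geometric series $\sum_{M\ge N}(M/N)^{-s}\lesssim 1$ to recover the clean factor $N^{-s}$; alternatively one invokes the H\"ormander--Mikhlin multiplier theorem for the symbol $n$ on $L^p$, $1<p<\infty$. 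All constants depend only on $\phi$, $s$, $p$, $d$, matching the statement, and the remaining entries of the list are the $q=p$ (or $\pm s$) special cases of, or follow verbatim from, the steps above.
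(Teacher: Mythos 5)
The paper gives no proof of this lemma; it simply refers the reader to page 333 of \cite{tao2006nonlinear}. Your write-up therefore supplies an argument where the paper only supplies a citation, and the argument you give is the standard one: rescaled kernels, the scaling identity $\norm{N^d\psi(N\cdot)}_{L^r}=N^{d(1-1/r)}\norm{\psi}_{L^r}$, Young's inequality, and either a dyadic resummation or H\"ormander--Mikhlin to handle the symbol $1-\phi(\xi/N)$, which is not compactly supported. The reduction of $|\nabla|^{\pm s}P_N$ to $N^{\pm s}$ times a fixed annular multiplier, the treatment of $P_{\ge N}$ by $P_{\ge N}=\sum_{M\gtrsim N}P_M$ and the convergent geometric series $\sum_{M\gtrsim N}(N/M)^s$ (which is where $s>0$ is used; $s=0$ is trivial), and the observation that the dyadic-sum route is the one that survives at $p\in\{1,\infty\}$ are all correct and self-contained.

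One small imprecision: you assert that $m_+(\eta)=|\eta|^s\phi(\eta)$ (and more generally $|\eta|^{\pm s}$ times a cutoff touching the origin) is $C_c^\infty$. For non-even $s>0$ the function $|\eta|^s$ is not $C^\infty$ at $\eta=0$, so this is literally false. What the argument actually needs, and what does hold for every $s\ge 0$, is that $\mathcal{F}^{-1}[|\eta|^s\phi(\eta)]\in L^1(\mathbb{R}^d)$; this follows because the symbol is compactly supported, smooth away from the origin, and H\"older of order $\{s\}$ at the origin after $\lfloor s\rfloor$ differentiations, which gives kernel decay of order $|x|^{-d-s}$. Alternatively, and perhaps more in keeping with the rest of your proof, the same dyadic decomposition $P_{\le N}=\sum_{M\lesssim N}P_M$ together with the annular estimate and $\sum_{M\lesssim N}(M/N)^s\lesssim 1$ avoids touching the origin entirely and proves the $P_{\le N}$ bound without appealing to any smoothness of $|\eta|^s$ at $0$. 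With that fix, the proof is complete and matches the standard treatment.
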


\newpage
\section{Estimate for half derivative}
\begin{lemma}\label{estimate for half derivative}
    We have
    \begin{equation}
\norm{\Lambda \mathcal{H}f}_{L^2}\leq C\norm{f}_{H^{1/2}}.
    \end{equation}
\end{lemma}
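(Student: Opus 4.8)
The plan is to bound $\norm{\Lambda\mathcal{H}f}_{L^2}$ by splitting the curved Hilbert transform as $\mathcal{H} = \mathbb{H} + (\mathcal{H}-\mathbb{H})$, handling the flat part by Fourier analysis and the error part by the smoothing effect of the kernel.

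First I would treat the flat contribution. Since $\mathbb{H}$ is a Fourier multiplier of modulus one and commutes with $\Lambda = |\partial_\alpha|^{1/2}$, we have $\norm{\Lambda\mathbb{H}f}_{L^2} = \norm{\Lambda f}_{L^2} = \norm{f}_{\dot H^{1/2}} \le \norm{f}_{H^{1/2}}$, so this term is immediately under control.

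Next I would deal with the difference $(\mathcal{H}-\mathbb{H})f$. Writing it as
\[
(\mathcal{H}-\mathbb{H})f(\alpha)=\frac{1}{\pi i}\int\left(\frac{\zeta_\beta(\beta)}{\zeta(\alpha)-\zeta(\beta)}-\frac{1}{\alpha-\beta}\right)f(\beta)\,d\beta,
\]
the kernel is $\frac{\zeta_\beta(\alpha-\beta)-(\zeta(\alpha)-\zeta(\beta))}{(\zeta(\alpha)-\zeta(\beta))(\alpha-\beta)}$. Under the bootstrap assumptions we have the chord-arc bound (Lemma \ref{nearly alpha}), $\norm{\zeta_\alpha-1}_{W^{1,\infty}}\le C\epsilon t^{-1/2}$ and $\norm{\zeta_{\alpha\alpha}}_{L^\infty}\le C\epsilon t^{-1/2}$. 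Following the argument in Lemma \ref{curve-flat hilbert}, the kernel is controlled pointwise (for $|\alpha-\beta|<1$, Taylor expansion shows it is $O(\norm{\zeta_{\alpha\alpha}}_{L^\infty})$; for $|\alpha-\beta|\ge 1$ it decays like $|\alpha-\beta|^{-1}$ times $\norm{\zeta_\alpha-1}_{L^\infty}$ or $|\alpha-\beta|^{-2}$ times $\norm{\zeta_\alpha-1}_{L^\infty}$). Consequently $\mathcal{H}-\mathbb{H}$ maps $L^2\to L^2$ with small norm, and after integrating by parts (using $(\mathcal{H}-\mathbb{H})f = \frac{1}{\pi i}\int\log\frac{\zeta(\alpha)-\zeta(\beta)}{\alpha-\beta}\,f_\beta\,d\beta$, exactly as in Lemma \ref{curve-flat hilbert}) one sees it gains a full derivative: $(\mathcal{H}-\mathbb{H})f$ lies in $H^1$ with $\norm{(\mathcal{H}-\mathbb{H})f}_{H^1}\le C\epsilon t^{-1/2}\norm{f}_{H^{1/2}}$ (in fact even $\le C\norm{f}_{L^2}$ suffices here). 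By interpolation between $L^2$ and $H^1$, or directly, $\norm{\Lambda(\mathcal{H}-\mathbb{H})f}_{L^2}\le C\norm{f}_{H^{1/2}}$.

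Combining the two pieces via the triangle inequality gives $\norm{\Lambda\mathcal{H}f}_{L^2}\le C\norm{f}_{H^{1/2}}$. The only mild subtlety — the main obstacle, if any — is making the half-derivative bookkeeping precise for the error term: one must verify that the commutator $[\Lambda,\mathcal{H}-\mathbb{H}]$ is also bounded on $L^2$ (or use the integration-by-parts representation to avoid the commutator entirely), but since the kernel of $\mathcal{H}-\mathbb{H}$ and its derivatives are Calderón–Zygmund-type with the regularity supplied by $\zeta_\alpha-1\in W^{1,\infty}$, this follows from the standard $T1$-type estimates already invoked in Lemma \ref{singular} and Lemma \ref{curve-flat hilbert}. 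No new ideas beyond those lemmas are required.
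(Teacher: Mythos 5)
Your decomposition $\mathcal{H}=\mathbb{H}+(\mathcal{H}-\mathbb{H})$ is a genuinely different route from the paper's, which constructs the analytic family $T_{s+iy}=|\partial_\alpha|^{s+iy}\mathcal{H}(I+|\partial_\alpha|^{s+iy})^{-1}$ and applies Stein's interpolation theorem to the single operator $\mathcal{H}$ directly, with no reference to the flat transform. Both arrive at the same conclusion, but they exchange different inputs. The paper's route nominally needs only the $L^2$ boundedness of $\mathcal{H}$ (David--Journ\'e); yours additionally uses the regularity of $\zeta$ supplied by the bootstrap to control $\mathcal{H}-\mathbb{H}$. Within the paper that cost is negligible. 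Incidentally, your route is arguably the safer one: the family $T_{s+iy}$ in the paper is problematic at $s=0$, because the symbol $(1+|\xi|^{iy})^{-1}$ has poles along $|\xi|=e^{(2k+1)\pi/y}$, so $T_{iy}$ is not actually $L^2$-bounded and the Stein-interpolation hypotheses fail as written (one would want, say, $(I+|\partial_\alpha|)^{-(s+iy)}$ in place of $(I+|\partial_\alpha|^{s+iy})^{-1}$). Your decomposition sidesteps this.

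One caution about how you close the error term. The parenthetical claim that $(\mathcal{H}-\mathbb{H})$ gains a full derivative, $\norm{(\mathcal{H}-\mathbb{H})f}_{H^1}\le C\norm{f}_{L^2}$, is \emph{not} what Lemma \ref{curve-flat hilbert} or Lemma \ref{singular} gives: Lemma \ref{curve-flat hilbert} maps $H^s\to H^s$ with no gain, and Lemma \ref{singular} is a Calder\'on--Zygmund ($L^2\to L^2$) estimate, not a smoothing one. The smoothing statement is in fact true under the bootstrap assumptions, but proving it requires keeping the $\alpha$-derivative on the kernel: one checks that $\partial_\alpha K(\alpha,\beta)=-\frac{\zeta_\alpha\zeta_\beta}{(\zeta(\alpha)-\zeta(\beta))^2}+\frac{1}{(\alpha-\beta)^2}$ is bounded on the diagonal (the leading singularities cancel to order $|\alpha-\beta|^4$; this uses $\zeta_{\alpha\alpha},\zeta_{\alpha\alpha\alpha}\in L^\infty$) and decays like $\epsilon|\alpha-\beta|^{-2}$ at infinity, so that $\partial_\alpha(\mathcal{H}-\mathbb{H})$ has an $L^1$ kernel and is $L^2$-bounded by Schur's test. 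That is a new computation, not a citation. Your alternative of interpolating $\mathcal{H}-\mathbb{H}$ (or $\mathcal{H}$ itself, using $\partial_\alpha\mathcal{H}f=\zeta_\alpha\mathcal{H}\frac{\partial_\alpha f}{\zeta_\alpha}$ for the $H^1$ endpoint) between $L^2\to L^2$ and $H^1\to H^1$ via $[L^2,H^1]_{1/2}=H^{1/2}$ is cleaner, needs only what is already in Lemma \ref{boundednesshilbert} and Lemma \ref{curve-flat hilbert} at $s=0,1$, and I would make that the main argument rather than the fallback.
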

\begin{proof}
The proof is fairly standard, but we provide the details here for completeness. Let $s\in [0,1]$ and $y\in \mathbb{R}$. Define a family of operators  $T_{s+iy}$ as follows:
$$T_{s+iy}:=|\partial_{\alpha}|^{s+iy}\mathcal{H}(I+|\partial_{\alpha}|^{s+iy})^{-1}.$$
Clearly, $T_{s+iy}$ is analytic on the strip $[0,1]\times \mathbb{R}$. We compute the following:
\begin{itemize}
\item [(1)] For any $T_{0+iy}$:
\begin{align*}
\norm{T_{0+iy}f}_{L^2}=&\norm{|\partial_{\alpha}|^{iy}\mathcal{H}(I+|\partial_{\alpha}|^{iy})^{-1}f}_{L^2}\\
    =& \norm{\mathcal{H}(I+|\partial_{\alpha}|^{iy})^{-1}f}_{L^2}\\
    \leq &\  C\norm{(I+|\partial_{\alpha}|^{iy})^{-1}f}_{L^2}\leq C\norm{f}_{L^2}.
\end{align*}
\item [(2)] For any $T_{1+iy}$:
\begin{align*}
\norm{T_{1+iy}f}_{L^2}=&\norm{|\partial_{\alpha}|^{1+iy}\mathcal{H}(I+|\partial_{\alpha}|^{1+iy})^{-1}f}_{L^2}\\
    =& \norm{\partial_{\alpha}\mathcal{H}(I+|\partial_{\alpha}|^{1+iy})^{-1}f}_{L^2}\\
    \leq &\  C\norm{(I+|\partial_{\alpha}|^{iy})^{-1}f}_{H^1}\leq C\norm{f}_{L^2},
\end{align*}
\end{itemize}
By Stein's analytic interpolation theorem, $T_{s+iy}$ is bounded on $L^2$, for any $(s,y)\in [0,1]\times\mathbb{R}$:
$$\norm{|\partial_{\alpha}|^{s+iy}\mathcal{H}(I+|\partial_{\alpha}|^{s+iy})^{-1}f}_{L^2}\leq C\norm{f}_{L^2}.$$
In particular, take $y=0$ and $s=1/2$ and let $g:=(I+|\partial_{\alpha}|^{1/2})^{-1}f$ or equivalently, $f=(I+|\partial_{\alpha}|^{1/2})g$, we obtain
$$\norm{\Lambda \mathcal{H}g}_{L^2}\leq C\norm{g}_{H^{1/2}},$$
as desired.
\end{proof}

\bibliography{qingtang}{}
\bibliographystyle{plain}

\end{document}